\documentclass[12pt,a4paper]{article}
\usepackage{epsfig}
\usepackage[T1]{fontenc}    
\usepackage{graphics}
\usepackage{graphicx}
\usepackage{pstricks,pst-coil,pst-fill,pst-plot}
\usepackage[backend=biber, style=numeric]{biblatex}
\usepackage[fleqn]{amsmath}    
\usepackage{amssymb}    
\usepackage{amsfonts}   
\usepackage{verbatim}   
\usepackage{mathrsfs}   
\usepackage{dsfont}
\usepackage{euscript}
\usepackage{yfonts}
\usepackage{enumerate}     
\usepackage{amsthm}         
\usepackage{marvosym}
\usepackage{stmaryrd}
\usepackage{vmargin}        
\usepackage{wasysym}		
\usepackage{hyperref}

\usepackage{csquotes}
\usepackage{enumitem}

\usepackage[british]{babel}

\DeclareFieldFormat[article,inbook,incollection,inproceedings,patent,thesis,unpublished]{title}{#1\isdot}

\DeclareFieldFormat{titlecase}{%
  \ifcurrentfield{title}
    {\MakeSentenceCase*{#1}}
    {#1}}

\renewbibmacro{in:}{}

\usepackage{color}
\usepackage{bbm}

\usepackage{tikz}
\usepackage{pgfplots}

\setmarginsrb{1.8cm}{2cm}{1.8cm}{2cm}{1cm}{1cm}{1cm}{1.6cm}
 \makeatletter
 \@addtoreset{equation}{section}
 \makeatother

\providecommand{\MR}{\relax\ifhmode\unskip\space\fi MR }

\providecommand{\href}[2]{#2}

\let\tend=\rightarrow

\long\def\symbolfootnote[#1]#2{\begingroup%
\def\thefootnote{\fnsymbol{footnote}}\footnote[#1]{#2}\endgroup}

\newtheorem{theorem}{Theorem}[section]
\newtheorem{prop}[theorem]{Proposition}
\newtheorem*{theorem*}{Theorem}
\newtheorem{cor}[theorem]{Corollary}
\newtheorem{defin}[theorem]{Definition}

\newtheorem{rem}[theorem]{Remark}
\newtheorem{lemme}[theorem]{Lemma}

\newcommand\beq{\begin{equation}}
\newcommand\enq{\end{equation}}
\newcommand\bem{\begin{multline}}
\newcommand\enm{\end{multline}}

\def\beqa{\begin{eqnarray}}
\def\eeqa{\end{eqnarray}}
\def\ba{\begin{array}}
\def\ea{\end{array}}
\def\det{\operatorname{det}}

\newcommand{\f}[2]{{\ensuremath{%
    \mathchoice%
    {\dfrac{#1}{#2}}
    {\dfrac{#1}{#2}}
    {\frac{#1}{#2}}
    {\frac{#1}{#2}}
}}}
\newcommand{\tf}[2]{\ensuremath{#1/#2}}
\def\a{\alpha}

\def\be{\beta}
\def\ga{\gamma}
\def\Ga{\Gamma}

\def\de{\delta}

\def\De{\Delta}
\def\eps{\epsilon}
\def\veps{\varepsilon}
\def\la{\lambda}
\def\La{\Lambda}

\def\sg{\sigma}
\def\vsg{\varsigma}

\def\Ups{\Upsilon}
\def\ups{\upsilon}
\def\th{\theta}

\def\vth{\vartheta}

\def\om{\omega}
\def\vp{\varphi}

\newcommand{\mc}[1]{\ensuremath{\mathcal{#1}}}
\newcommand{\mf}[1]{\ensuremath{\mathfrak{#1}}}

\newcommand{\bs}[1]{\ensuremath{\boldsymbol{#1}}}

\DeclareFontFamily{OT1}{pzc}{}
\DeclareFontShape{OT1}{pzc}{m}{it}{<-> s * [1.10] pzcmi7t}{}
\DeclareMathAlphabet{\mathpzc}{OT1}{pzc}{m}{it}

\def \i{ \mathrm i}

\newcommand{\ov}[1]{\ensuremath{\overline{#1}}}
\newcommand{\wt}[1]{\ensuremath{\widetilde{#1}}}
\newcommand{\wh}[1]{\ensuremath{\widehat{#1}}}

\newcommand{\Int}[2]{\ensuremath{\int\limits_{#1}^{#2}}}
\newcommand{\Oint}[2]{\ensuremath{\oint\limits_{#1}^{#2}}}

\newcommand{\sul}[2]{\ensuremath{\sum\limits_{#1}^{#2}}}
\newcommand{\pl}[2]{\ensuremath{\prod\limits_{#1}^{#2}}}

\newcommand{\R}{\ensuremath{\mathbb{R}}}
\newcommand{\Cx}{\ensuremath{\mathbb{C}}}

\newcommand{\Dp}[1]{\ensuremath{\partial_{#1}}}

\newcommand{\limit}[2]{\ensuremath{\underset{#1 \tend #2}{\longrightarrow} }}

\newcommand{\ex}[1]{\ensuremath{\e{e}^{#1}}}

\newcommand{\op}[1]{ \boldsymbol{ \texttt{#1} } }

\newcommand{\norm}[1]{\ensuremath{  || #1 || }}

\newcommand{\dd}{\mathrm{d}}
\newcommand{\e}[1]{\ensuremath{\mathrm{#1}}}

\newcommand{\intff}[2]{\ensuremath{ [  #1 \,; #2 ] }}
\newcommand{\intfo}[2]{\ensuremath{ [  #1 \,; #2 [ }}
\newcommand{\intof}[2]{\ensuremath{ ]  #1 \,; #2 ] }}
\newcommand{\intoo}[2]{\ensuremath{ ]  #1 \,; #2 [ }}

\newcommand{\intn}[2]{\ensuremath{[\![ \, #1 \,;\, #2 \,]\!]}}

\makeatletter
\def\widebreve{\mathpalette\wide@breve}
\def\wide@breve#1#2{\sbox\z@{$#1#2$}%
     \mathop{\vbox{\m@th\ialign{##\crcr
\kern0.08em\brevefill#1{0.8\wd\z@}\crcr\noalign{\nointerlineskip}%
                    $\hss#1#2\hss$\crcr}}}\limits}
\def\brevefill#1#2{$\m@th\sbox\tw@{$#1($}%
  \hss\resizebox{#2}{\wd\tw@}{\rotatebox[origin=c]{90}{\upshape(}}\hss$}
\makeatletter

\makeatletter
\usepackage{pict2e,picture}
\pgfkeys{/csteps/inner ysep/.initial=4pt,
    /csteps/inner xsep/.initial=4pt,
    /csteps/inner color/.initial=red,
    /csteps/outer color/.initial=blue,
}
\newsavebox\csteps@CBox
\newlength\csteps@XLength \newlength\csteps@YLength \newlength\csteps@YDepth \newlength\csteps@tmplen
\def\csteps@CircledParam#1#2{\sbox\csteps@CBox{#2}%
    \csteps@XLength=\wd\csteps@CBox\advance\csteps@XLength by\pgfkeysvalueof{/csteps/inner xsep}\relax
    \csteps@tmplen=\pgfkeysvalueof{/csteps/inner ysep}\relax
    \csteps@YDepth=\dp\csteps@CBox\advance\csteps@YDepth by 0.5\csteps@tmplen\relax
    \csteps@YLength=\ht\csteps@CBox\advance\csteps@YLength by\dp\csteps@CBox\advance\csteps@YLength by\pgfkeysvalueof{/csteps/inner ysep}\relax
    \typeout{DBG:#2\space X\space\the\csteps@XLength\space Y:\the\csteps@YLength\space D:\the\csteps@YDepth}%
    \raisebox{-#1\csteps@YDepth}{%
    \ifdim\csteps@XLength>\csteps@YLength
    \makebox[\csteps@XLength]{
        \makebox(0,\csteps@YLength){%
            \color{\pgfkeysvalueof{/csteps/outer color}}\put(0,0){\oval(\csteps@XLength,\csteps@YLength)}%
        }%
    \makebox(0,\csteps@YLength){%
        \put(-.5\wd\csteps@CBox,0){\textcolor{\pgfkeysvalueof{/csteps/inner color}}{#2}}%
    }}%
    \else
    \makebox[\csteps@YLength]{%
        \makebox(0,\csteps@YLength){%
            \color{\pgfkeysvalueof{/csteps/outer color}}\put(0,0){\circle{\csteps@YLength}}%
        }%
    \makebox(0,\csteps@YLength){%
        \put(-.5\wd\csteps@CBox,0){\textcolor{\pgfkeysvalueof{/csteps/inner color}}{#2}}%
     }}%
    \fi
    }%
}
\def\Circled#1{\csteps@CircledParam{1}{#1}}
\def\CircledTop#1{\csteps@CircledParam{0}{#1}}
\makeatother

\begin{document}
\title{Large deviations of the periodic Toda chain}
\author{Tamara Grava\thanks{SISSA, via Bonomea 265, 34136, Trieste, Italy, INFN sezione di Trieste
and  School of Mathematics, University of Bristol, UK \texttt{grava@sissa.it}},  Alice Guionnet\thanks{  CNRS, Unité de Mathématiques Pures et Appliquées (UMPA),
École Normale Supérieure de Lyon, 46, allée d'Italie, Lyon, France  \texttt{alice.guionnet@ens-lyon.fr}}, Karol K. Kozlowski \thanks{  CNRS, Laboratoire de
Physique (LPENS), École Normale Supérieure de Lyon,  46, allée d'Italie, Lyon, France \texttt{karol.kozlowski@ens-lyon.fr}},
Alex Little\thanks{CNRS, Laboratoire de
Physique (LPENS), École Normale Supérieure de Lyon,  46, allée d'Italie, Lyon, France \texttt{alexander.little@ens-lyon.fr}}}

\maketitle

\abstract{This work establishes a large deviation principle for the spectral measure of the Lax matrix associated to the periodic Toda chain of $N$ particles, subject
to a generalised Gibbs measure. This large deviation principle is governed by a rate function which can be regarded as a generalisation of the free energy of the
system. Such a large deviation principle is proven both for the model when the momentum is constrained to be zero and when it is allowed to fluctuate.
Moreover, the large deviation principle is proven directly at the level of the representation of the generalised Gibbs partition function
given in terms of the variables realising the classical separation of variables, \textit{i.e.} rectify the equations of motion.
As such, this work paves the way  towards the computation of the thermodynamic limit of
dynamical correlation functions in the Toda chain subject to generalised Gibbs ensemble statistics.}

\newpage

\tableofcontents

\newpage

\section{Introduction}

In 1967, Morikazu Toda \cite{TodaIntroTodaAndClassicalSolutionTodaChain} introduced a chain of oscillators with nearest-neighbour interactions governed by an exponential potential. This system, now known
as the Toda lattice, serves as the prototypical example of a many-body classical integrable system exhibiting nonlinear waves.
Toda’s original motivation for the model was to construct a nonlinear two-body interaction that admitted stable pulses—lattice solitons—and periodic waves.
For small displacements, the lattice approximates the anharmonic
chains studied by Fermi, Pasta, Ulam, and Tsingou (FPUT) \cite{FPUT} which exhibit integrable-like behaviour on short time scales.
The complete integrability of the Toda lattice was established by Flaschka \cite{Flaschka} and Manakov \cite{Manakov}, who introduced a non-canonical change of
variables that led them to express the equations of motion in the Lax form.
The latter implies the existence of a large number of explicit conserved quantities, which permits the detailed study of the
system's evolution under various boundary conditions.

While much is known about the deterministic evolution of the Toda chain, its behaviour starting from
\textit{random} initial data   has remained  much less well understood, 
especially at a rigorous level. In standard statistical mechanics, the statistical behaviour of local observables in an isolated system is expected to
converge to that described by a Gibbs distribution, a class of measures on the phase space that is invariant under the Hamiltonian dynamics. 
Recently much progress has been made  in \cite{Aggarwal2,Aggarwal1,SpohnGGEBook} to study  the dynamics  of the Toda lattice  with random
initial data sampled from the Gibbs ensemble.   One should however stress that, for integrable systems like the Toda chain,
the existence of a large number of \textit{local} conserved quantities suggests that the statistical behaviour is instead captured
by a generalisation of the standard Gibbs measure.
Indeed, physical intuition suggests that while local parts of the system "thermalise" due to interactions with the rest of the system,
this local equilibrium must respect all conservation laws. These considerations motivated the introduction of \textit{Generalised Gibbs Ensembles} (GGEs) which are
believed to characterise this local equilibrium. These take a similar form to a Gibbs measure except that the product of the energy and the inverse temperature is
replaced by a linear combination of all the conserved quantities of the system with associated generalised inverse temperatures.
In fact, such a linear combination can be wrapped up into a single function known as the "potential" associated to the GGE.
This modified notion of local equilibrium plays a central role in the rapidly developing theory of \textit{Generalised Hydrodynamics} (GHD).
The first  precursor of the theory  was developed by Spohn in 1983 for  the hard rods model, long before the term GHD was coined.
GHD was then discovered in quantum integrable systems in \cite{BCDF,CDY} and later extended to classical integrable systems in
\cite{Bastianello_2018,Bastianello_2022,BDE22,SpohnGGEBook} and  with  rigorous mathematical results in   \cite{Aggarwal2,Aggarwal1,Croydon_Sasada,FNRW}.
For this reason, a better understanding of generalised  Gibbs ensembles is essential for this theory to progress.

In this article, we prove a Large Deviation Principle (LDP) for the distribution of eigenvalues of the Lax matrix $\mathsf{L}$ for the $N$-periodic Toda lattice
subject to GGE statistics.
The study of these ensembles was pioneered by Spohn \cite{Spohn20}, who derived the convergence of the eigenvalue distribution for polynomial
potentials using transfer matrix approaches and comparisons with Dumitriu--Edelman $\beta$-ensembles \cite{DE}.
Subsequently, Guionnet and Memin \cite{GuionnetM22} established a LDP for the empirical measure of eigenvalues for more general potentials,
though the rate function remained non-explicit.  The explicit form of the rate function itself was
proposed by Doyon \cite{Doyon} using an analogue of the Landau--Lifshitz approach \cite{LandauLifschitzStatMechBook}
for the computation of the entropy of a free gas combined with knowledge of the scattering structure deriving from the model's integrability, 
see also \cite{YangYangNLSEThermodynamics}.  Likewise, Spohn \cite{SpohnGGEBook} derived a closed expression for the rate function
in the case of the open Toda chain using a specially chosen boundary potential,  so that some of the integration steps simplified.
Our rigorous approach builds on the use of the separated, \textit{aka}, action-angle,  variables which are known to trivialise the equations of motion.
This allows us to recast the  generalised Gibbs ensemble measure in these new coordinates which then allows for a direct
analysis,  yielding the large deviation principle
for the empirical measure of the  eigenvalues of the Lax matrix $\mathsf{L}$,  along with the explicit form of the rate function.  In addition to its directness and explicitness,
an important advantage of our approach is the universality of the separated variables representation for the
classical integrable models.  In particular, we expect that our approach could be applied to other classical integrable
models  whose  spectral  curve is hyperelliptic.
We also expect that our  result would  permit a better understanding of the dynamics of the  periodic Toda lattice
with generalised Gibbs ensemble initial data, an extension of the results obtained by Aggarwal \cite{Aggarwal2,Aggarwal1},
and the study of the hydrodynamic regime of its dynamical correlation functions on rigorous grounds.

\subsection{Background}

In 1967  Morikazu  Toda \cite{TodaIntroTodaAndClassicalSolutionTodaChain,Toda}
introduced  an infinite chain of oscillators  with nearest neighbour interactions of exponential type described by the Hamiltonian
\begin{equation}
\label{Toda}
\mc{H}_{\infty}\big(\{q_a\}_{a \in \mathbb{Z}}, \{p_a\}_{a \in \mathbb{Z}} \big) \, = \, \sul{ j \in \mathbb{Z} }{} \dfrac{p^2_j}{2} \, + \,
 \sul{ j \in \mathbb{Z} }{} V( q_{j+1}-q_j) \quad \e{with} \quad V(x)\, =\,  \e{e}^{-  x}+x-1.
\end{equation}
Above,  $ q_j  \in\R $  represents the position of the $j^{\e{th}}$ particle relative to a global equilibrium   and   $p_j\in\R$ its  conjugate  momentum.
 One endows the formal phase space with the canonical position-momentum Poisson bracket which yields
the following Hamiltonian equations
\begin{equation}
\label{Hamilton}
\dfrac{\mathrm{d}q_j}{\mathrm{d}t}=\partial_{p_j}\mc{H}_{\infty}=p_j, \quad \dfrac{\mathrm{d}p_j}{\mathrm{d}t}=-\partial_{q_j}\mc{H}_{\infty}= \ex{ q_{j-1}-q_j }-\ex{q_j-q_{j+1}},\quad j\in\mathbb{Z}.
\end{equation}
Since the Hamiltonian is translationally  invariant, the formal total momentum $\sum_{j\in\mathbb{Z}}p_j$ is conserved.
 For small displacements of the particles relative to their equilibrium,  the Toda lattice can be approximated by  an anharmonic  chain of oscillators,
 that  on a relatively short time scale, displays an integrable-like behaviour, first uncovered by Fermi,
Pasta, Ulam and Tsingou  \cite{FPUT}.

The  complete  integrability of the Toda lattice was   first   derived  by     Flaschka \cite{Flaschka} and Manakov \cite{Manakov} by introducing
the   non-canonical change of variables
\begin{equation}
\label{ab}
b_{j}=p_{j},\;\quad a_{j}= \ex{ \frac{1}{2} (q_{j}-q_{j+1}) }, \; j \in \mathbb{Z}
\end{equation}
with  $b_j\in\R$ and $a_j\in\R^+$. Note that  the  variables $ \{a_j\}_{j\in\mathbb{Z}}$ only specify
the differences between consecutive entries  $\{q_j\}_{j\in\mathbb{Z}}$, so the former only determines the latter up to an
overall shift.
In this work, we consider an $N$-particle periodised reduction of this model, namely when the infinite collection of
position-momentum variables satisfy
 \begin{equation}
 \label{periodic}
(q_{N+j},p_{N+j}) \, = \, (q_j+N\ell,p_j),\quad N\in \mathbb{N}.
\end{equation}
The quantity   $\ell\in\R$ appearing above is called  the  stretch parameter.  In the Flaschka--Manakov variables,
\eqref{periodic} turns into a genuine periodicity: $b_{j+N}=b_j$ and $a_{j+N}=a_j$, $j\in\mathbb{Z}$.
The periodicity constraint \eqref{periodic} now manifests itself as
\begin{equation}
 \prod_{k=1}^{N} a_k \, = \, \mathrm{e}^{- \frac{ N \ell }{2} } \,  \equiv \, \veps_N \quad \mathrm{with} \quad \ell>0 \;.
\label{ecriture des contraintes sur  ak et definition vepsN}
\end{equation}
Since the motion of the centre of mass of the system is trivial, one may fix the overall momentum of the system to $B \in \R$, \textit{i.e.}
\begin{equation}
\sul{k=1}{N} b_k \, = \, B  \;.
\label{ecriture des contraintes sur les bk}
\end{equation}
After the $N$-periodic reduction \eqref{periodic}, the above formal  Toda chain Hamiltonian in the Flaschka-Manakov variables reduces effectively to
the $N$-periodic Toda chain Hamiltonian
\beq
\mc{H}_{\e{per}}\big( \{a_j\}_1^N, \{b_j\}_1^N \big) \, = \, \sul{j=1}{N}\Big\{ \frac{1}{2}  b^2_j  \, + \,  a_j^2 \Big\}
\enq
with the variables being constrained to evolve in the manifold defined by
\eqref{ecriture des contraintes sur  ak et definition vepsN}-\eqref{ecriture des contraintes sur les bk}.
The associated Poisson structure with respect to the coordinates of $\bs{a}_N = (a_1,\dots,a_N)\in \R^N_+$ and $\bs{b}_N = (b_1,\dots,b_N)\in \R^N$ is
\begin{equation}
\label{Poisson_ab}
\{a_i,a_j\} = 0 = \{b_i,b_j\} \, , \quad \e{and} \quad  \{ a_i , b_j \} = \frac{ a_i }{ 2 } \big( \de_{ij}- \de_{i,j-1} \big) \quad \forall i,j=1,\dots,N,
\end{equation}
 up to incorporating the natural periodic identification for the boundary indices.

Thus, the Hamilton equations \eqref{Hamilton}  take the form
\begin{equation}
\label{flaschkamanakov}
\dfrac{\mathrm{d}b_j}{\mathrm{d}t}=\{b_j,\mc{H}_{\e{per}}\}=a_j^2-a_{j-1}^2,\quad \dfrac{\mathrm{d}a_j}{\mathrm{d}t}=\{a_j,\mc{H}_{\e{per}}\}=\frac{a_j}{2}(b_{j+1}-b_j),\quad
j\in \mathbb{Z} \,.
\end{equation}
The key observation by  Flaschka and Manakov  was to show  that the system of equations \eqref{flaschkamanakov} can be expressed in the so called Lax form.
Indeed, introduce the pair of matrices
{\tiny \beq
  \mathsf{L}^{(\la)}(\bs{a}_N,\bs{b}_N)   \, = \,
\left( \begin{matrix}
b_1 & a_1 &  &    \dots & \lambda^{-1} a_N \\
a_1 & b_2 & a_2 \\
 & a_2 & b_3 &   \\
& & & \ddots \\
& & & &   a_{N-1}\\
\lambda a_N & & &   a_{N-1} & b_N
\end{matrix} \right)  \; \; \e{and} \;\;
\mathsf{M}^{(\la)}(\bs{a}_N,\bs{b}_N)  \, =  \,   \frac{1}{2}
\left( \begin{matrix}
0 & a_1 &  &    \dots & -\lambda^{-1} a_N \\
-a_1 & 0 & a_2 \\
 & -a_2 & 0 &   \\
& & & \ddots \\
& & & &   a_{N-1}\\
\lambda a_N & & &   -a_{N-1} & 0
\end{matrix}\right) \, .
\nonumber
\enq   }
Then, a direct calculation shows that \eqref{flaschkamanakov} is equivalent to the Lax equation
\beq
\frac{\mathrm{d}}{\mathrm{d} t} \mathsf{L}^{(\la)}(\bs{a}_N,\bs{b}_N)   \,  =  \,
\big[  \mathsf{L}^{(\la)}(\bs{a}_N,\bs{b}_N)  , \mathsf{M}^{(\la)}(\bs{a}_N,\bs{b}_N)    \big]\, , \qquad    \forall \lambda \in \mathbb{C}\, .
\enq
The above implies that, for any $j \in \mathbb{N}$, $\e{tr}\big\{  \big(  \mathsf{L}^{(\la)}(\bs{a}_N,\bs{b}_N) \big)^{j} \big\}$
is a conserved quantity, \textit{i.e.} does not depend on $t$. This ensures that the
characteristic polynomial $\det_N\big[ \mu \mathtt{I} - \mathsf{L}^{(\la)}(\bs{a}_N,\bs{b}_N) \big]$ is also a conserved quantity,
this for all $\lambda, \mu \in \mathbb{C}$. Furthermore, it is important to stress that each conserved quantity
$\e{tr}\big\{  \big(  \mathsf{L}^{(\la)}(\bs{a}_N,\bs{b}_N) \big)^{j} \big\}$ is "local", in the sense that it is expressed as
$\sum_{k=1}^{N} \varrho_{k;j}(\bs{a}_N,\bs{b}_N)$ with a density $\varrho_{k;j}(\bs{a}_N,\bs{b}_N)$
whose expression only involves momenta and positions with indices at distance at most $j$ from $k$, up to
$N$-periodicity.

By expanding the determinant  $\det_N\big[ \mu \mathtt{I} - \mathsf{L}^{(\la)}(\bs{a}_N,\bs{b}_N)  \big]$ with respect to $\la$,
we find there is a monic polynomial $P$ of degree $N$ such that
$$\det_N \big[ \mu \mathtt{I} -  \mathsf{L}^{(\la)}(\bs{a}_N,\bs{b}_N)  \big] \,  = P(\mu) -  \veps_N (\lambda + \lambda^{-1}) \, , $$
where $ \veps_N$ is as in \eqref{ecriture des contraintes sur  ak et definition vepsN}. When $\lambda$ is a unimodular complex number,
$ \mathsf{L}^{(\la)}(\bs{a}_N,\bs{b}_N) $ is
Hermitian, and so the characteristic polynomial has a complete set of real roots.
In this case $\lambda + \lambda^{-1} \in [-2 ; 2]$, and we have that the polynomial
$P(\mu) - 2 \veps_N s$ has a complete set of real roots for all $s \in [-1;1]$.

Note that the tridiagonal part of the Lax matrix is associated to the second order finite difference operator on $\R^{\mathbb{Z}}$:
$\op{D}[\vp]_n \, = \, a_{n-1}\vp_{n-1} \, + \,  b_{n} \vp_{n} \, + \, a_{n-1} \vp_{n+1}$. When looking for its eigenvectors
in the context of the $N$-particle Toda chain, one is interested in $N$-periodic, resp. anti-periodic,
structure in the entries $\vp_{n+N}=\vp_n$, resp. $\vp_{n+N}=-\vp_{n}$. This set up
projects the infinite  system of equations for the spectrum of the tridiagonal operator to those of
the $N\times N$ matrices
\begin{align}
\mathsf{L}^+(\bs{a}_N,\bs{b}_N) \overset{\mathrm{def}}{=}  \mathsf{L}^{(+1)}(\bs{a}_N,\bs{b}_N)  \;, \quad \e{resp}.  \quad
\mathsf{L}^-(\bs{a}_N,\bs{b}_N) \overset{\mathrm{def}}{=} \mathsf{L}^{(-1)}(\bs{a}_N,\bs{b}_N) \;.
\label{definition des matrices L plus et moins}
\end{align}
This specification then singles out the two characteristic polynomials
\begin{align}
P^+(\mu) &= \det_N\big[\mu \mathtt{I} - \mathsf{L}^+(\bs{a}_N,\bs{b}_N)\big] = P(\mu)-2\veps_N= \prod_{j=1}^N (\mu-\lambda_j^+) \label{definition P+}\\
P^-(\mu) &=\det_N\big[\mu \mathtt{I} - \mathsf{L}^-(\bs{a}_N,\bs{b}_N)\big] = P(\mu)+2\veps_N= \prod_{j=1}^N (\mu-\lambda_j^-)  \label{definition P-}
\end{align}
where
\begin{align}
\label{defP2}
    P = \frac{P^+ + P^-}{2} \, , & &\text{ and } & & P^- = P^+ + 4 \veps_N \, .
\end{align}
Note that by equation \eqref{defP2} $\bs{\la}^-_N \, = \, (\la_1^-,\dots,\la_N^-)$
depends on $\bs{\la}^+_N \, = \, (\lambda_1^+,\dots,\lambda_N^+)$ and $\veps_N$.

The explicit integration of the equations of motion for the periodic Toda chain was achieved in the seminal papers
of Van Moerbeke \cite{vanMoerbeke76} and Kac--Van Moerbeke \cite{VanMoerbeke75}, see
also the monographs \cite{Babelon,Teschl}.
The Poisson bracket \eqref{Poisson_ab} introduced above is degenerate on the whole phase space $\R^N_+\times \R^N$.
Given $B\in\R$, the level set
\begin{equation}
\label{Mc}
\mathcal{M}_B \, = \, \Big\{ (\bs{a}_N, \bs{b}_N) \in\R_+^N\times \R^N \, : \,  \sum_{j=1}^N b_j=B \quad \e{and} \quad \prod_{j=1}^Na_j=\veps_N \Big\}\, ,
\end{equation}
is a  symplectic leaf for the periodic Toda flow   with respect to the Poisson bracket \eqref{Poisson_ab}. When restricted to such a leaf,
the Toda system has $(N-1)$ degrees of freedom and  its invariant manifolds are smooth tori.

There exists a precise geometric description  of $\mathcal{M}_B$ achieved in \cite{vanMoerbeke76}. One first needs to introduce the Dirichlet spectrum
which consists of the eigenvalues $\bs{\mu}_{N-1}\, =\, \big( \mu_1 , \dots, \mu_{N-1} \big)$
of the $(N-1) \times (N-1)$ matrix  $\op{L}_2(\bs{a}_N,\bs{b}_N)$,  obtained from  $\mathsf{L}^{\pm}(\bs{a}_N,\bs{b}_N)$ by deleting its first row and column.
Since $\op{L}_2(\bs{a}_N,\bs{b}_N)$ is an $(N-1) \times (N-1)$ submatrix of $\mathsf{L}^\pm(\bs{a}_N,\bs{b}_N)$,
its spectrum must interlace with the spectra of $\mathsf{L}^\pm(\bs{a}_N,\bs{b}_N)$
as follows:
\begin{equation}\label{interlacing}
    \lambda_N^+ > \lambda_N^- \geq \mu_{N-1} \geq \lambda_{N-1}^- > \lambda_{N-1}^+ \geq \mu_{N-2} \geq \lambda_{N-2}^+ > \dots
\end{equation}
In particular, the coordinates of $\bs{\mu}_{N-1}$  are strictly ordered $\mu_1 < \dots < \mu_{N-1}$
and one has
\begin{equation}
\mu_k \in [\lambda_k^- ; \lambda_{k+1}^-] \quad \mathrm{if} \,  N-k \; \mathrm{is} \; \mathrm{odd} \quad  \mathrm{and}
\quad \mu_k \in [\lambda_k^+ ;\lambda_{k+1}^+] \quad \mathrm{if} \; N-k \; \mathrm{is}\; \mathrm{even} .
\end{equation}
In fact, it follows from \eqref{interlacing} that $\boldsymbol{\lambda}_N^+\in \R^{N}_{\leq}$, $\boldsymbol{\lambda}_N^-\in \R^{N}_{\leq}$,
resp. $\boldsymbol{\mu}_{N-1}\in \R^{N-1}_{<}$, where, for further convenience, we introduced the open and closed Weyl chambers
\begin{align*}
\mathbb{R}^N_{<} &= \{ \boldsymbol{x}_N \in \mathbb{R}^N \, : \, x_1 < \dots < x_N \} \qquad \e{and} \qquad
\mathbb{R}^N_{\leq} &= \{ \boldsymbol{x}_N \in \mathbb{R}^N \, : \, x_1 \leq \dots \leq x_N \} \, .
\end{align*}
The relation between $\boldsymbol{\lambda}_N^+$, $\boldsymbol{\lambda}_N^-$ and $\boldsymbol{\mu}_{N-1}$ is depicted for the case $N=5$ in Figure \ref{figure}.
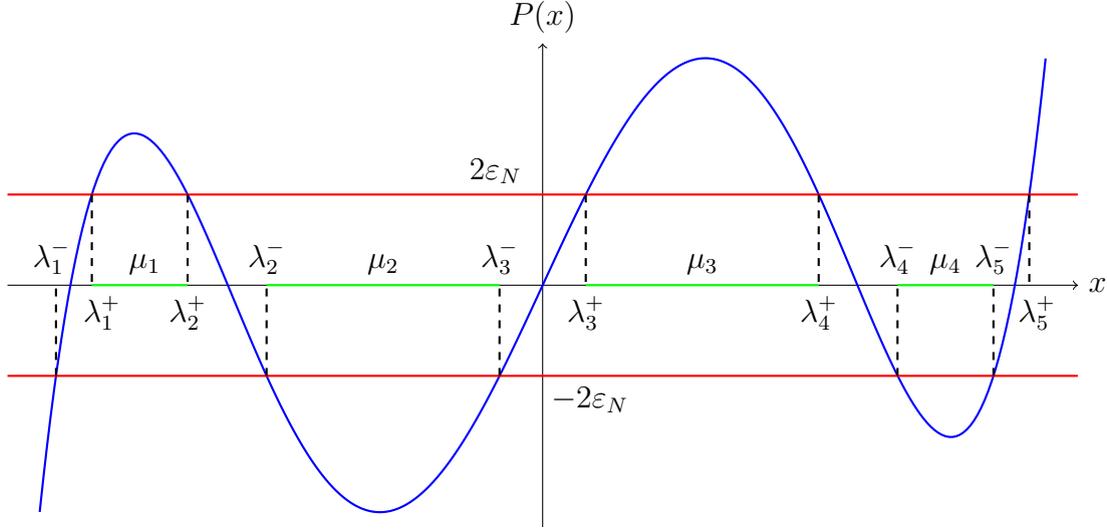
\begin{figure}\label{figure}
\centering
\begin{tikzpicture}
    \begin{axis}[
        width=0.9\textwidth,  
        height=8cm,           
        axis lines=middle,
        enlargelimits=false,
        xmin=-1.7, xmax=1.7,
        ymin=-0.4, ymax=0.4,
        xtick=\empty,
        ytick=\empty,
        xlabel={$x$},
        ylabel={$P(x)$},
        xlabel style={at={(axis description cs:1,0.5)},anchor=west},
        ylabel style={at={(axis description cs:0.5,1)},anchor=south},
        axis line style={->},
        ticks=none,
        clip=true,
    ]

    \addplot [
        domain=-1.7:1.7,
        samples=300,
        thick,
        restrict y to domain=-0.4:0.4,
        unbounded coords=discard,
        blue,
    ] {0.5*(x+1.5)*(x+1)*x*(x-1)*(x-1.5)};

    \draw [red, thick] (axis cs:-1.7,0.15) -- (axis cs:1.7,0.15);
    \draw [red, thick] (axis cs:-1.7,-0.15) -- (axis cs:1.7,-0.15);

    \draw [black, thick, dashed] (axis cs:-1.43199,0.15) -- (axis cs:-1.43199,0);
    \draw [black, thick, dashed] (axis cs:-1.12782,0.15) -- (axis cs:-1.12782,0);
    \draw [black, thick, dashed] (axis cs:0.137028,0.15) -- (axis cs:0.137028,0);
    \draw [black, thick, dashed] (axis cs:0.876926,0.15) -- (axis cs:0.876926,0);
    \draw [black, thick, dashed] (axis cs:1.54585,0.15) -- (axis cs:1.54585,0);

    \draw [black, thick, dashed] (axis cs:-1.54585,-0.15) -- (axis cs:-1.54585,0);
    \draw [black, thick, dashed] (axis cs:-0.876929,-0.15) -- (axis cs:-0.876929,0);
    \draw [black, thick, dashed] (axis cs:-0.137028,-0.15) -- (axis cs:-0.137028,0);
    \draw [black, thick, dashed] (axis cs:1.12728,-0.15) -- (axis cs:1.12728,0);
    \draw [black, thick, dashed] (axis cs:1.43199,-0.15) -- (axis cs:1.43199,0);

    \node[above] at (axis cs: -1.26391, 0) {$\mu_1$};
    \node[above] at (axis cs: -0.5069785, 0) {$\mu_2$};
    \node[above] at (axis cs: 0.506977, 0) {$\mu_3$};
    \node[above] at (axis cs: 1.279635, 0) {$\mu_4$};

    \node[below] at (axis cs: -1.40, 0) {$\lambda_1^+$};
    \node[below] at (axis cs: -1.12782, 0) {$\lambda_2^+$};
    \node[below] at (axis cs: 0.137028, 0) {$\lambda_3^+$};
    \node[below] at (axis cs: 0.876926, 0) {$\lambda_4^+$};
    \node[below] at (axis cs: 1.57, 0) {$\lambda_5^+$};

    \node[above] at (axis cs: -1.56, 0) {$\lambda_1^-$};
    \node[above] at (axis cs: -0.876929, 0) {$\lambda_2^-$};
    \node[above] at (axis cs: -0.137028, 0) {$\lambda_3^-$};
    \node[above] at (axis cs: 1.12728, 0) {$\lambda_4^-$};
    \node[above] at (axis cs: 1.43199, 0) {$\lambda_5^-$};

    \node[below] at (axis cs: 0.15, -0.15) {$-2\veps_N$};
    \node[above] at (axis cs: -0.15, 0.15) {$2\veps_N$};

    \draw [green, thick] (axis cs:-1.43199,0) -- (axis cs:-1.12782,0);
    \draw [green, thick] (axis cs:-0.876929,0) -- (axis cs:-0.137028,0);
    \draw [green, thick] (axis cs:0.137028,0) -- (axis cs:0.876926,0);
    \draw [green, thick] (axis cs:1.12728,0) -- (axis cs:1.43199,0);

    \end{axis}

\end{tikzpicture}
\caption{This figure illustrates the setting for $N = 5$. The intervals in green correspond to the domains where the variables $\mu_k$, $k\in \intn{1}{4}$
are located.}
\end{figure}

We shall denote, for short, the domain where the eigenvalues $\boldsymbol{\mu}_{N-1} \, = \, \big( \mu_1, \dots, \mu_{N-1} \big)$ live as
\begin{equation}
\mathcal{D} \, = \, \Big\{ \boldsymbol{\mu}_{N-1}\in \mathbb{R}^{N-1} \, : \; \mu_k \in [\lambda_k^{\ups_k};\lambda_{k+1}^{\ups_k}]  \Big\} \qquad \mathrm{with} \qquad
\ups_k \, = \, (-1)^{N-k} \;.
\label{definition domaine integration des mus}
\end{equation}
The domain $\mathcal{D}$ depends on $\boldsymbol{\lambda}_N^+$ (or $\boldsymbol{\lambda}_N^-$),
though we shall suppress writing this dependence explicitly in the following.

\vspace{2mm}

The restriction of the Poisson bracket \eqref{Poisson_ab} to the manifold  $\mathcal M_{B}$ is symplectic. The  canonical coordinates are constructed as follows.
For $(\bs{a}_N,\bs{b}_N) \in \mc{M}_{B}$, let $(\mf{r}_2(\mu_j),\dots,\mf{r}_N(\mu_j))^{\op{t}}$   be the eigenvector    of $\op{L}_2(\bs{a}_N,\bs{b}_N)$
associated with the eigenvalue $\mu_j$ and normalised so that $\mf{r}_2(\mu_j)=-\tf{a_N}{a_1}$. The fact that the
entries of $\bs{\mu}_{N-1}$ are strictly ordered ensures that  $ \ups_j \, \mf{r}_N(\mu_j)>0$, for $j \in \intn{1}{N}$.
This leads to the new system of canonical coordinates $\big(\bs{\mu}_{N-1}, \bs{\nu}_{N-1})$,
$\mu_j$ being canonically conjugated to $\nu_j$, with $\nu_{j}=\ln |\mf{r}_N(\mu_j)|$.
In the new coordinates, the canonical symplectic form $\omega=\sum_{a=1}^{N} \dd q_a\wedge \dd p_a$ restricted to $\mathcal{M}_{B}$  is given by the expression \cite{Date_Tanaka,Flaschka_McLaughlin,vanMoerbeke76}:
\begin{equation}
    \label{omega}
\omega_{\mid \mc{M}_{B} } \, = \, \sum_{j=1}^{N-1}  \mathrm{d}\mu_j\wedge  \mathrm{d}\nu_j \,.
\end{equation}
While $\mf{r}_N(\mu_j)$ has an explicit formula  in terms of the matrix entries of $ \mathsf{L}^\pm(\bs{a}_N,\bs{b}_N)$,
it turns out that one may express it solely in terms of
the spectral data $\bs{\la}_N^{+}$ through a relation only involving the polynomial $P$ defined in \eqref{defP2}:
\begin{equation}
    \label{vN}
\mf{r}_N(\mu_j)=\dfrac{P(\mu_j)\pm\sqrt{P^2(\mu_j)-4\veps_N^{2}}}{2\veps_N}\,,
\end{equation}
here the $\pm$ sign indicates that each $\mu_j$ selects {\it one sign} of the square root.
In other words, one should think of the $\mu_j$s as points on a Riemann surface, namely
 $  (\mu_j, w_j)  \in \Sigma$, $j \in \intn{1}{N-1}$,  where
$$\Sigma = \Big\{ (x,w) \in \mathbb{C}^2 \, : \, w^2 = P(x)^2 - 4 \varepsilon_N^2 \Big\} \, . $$
Observe that the product of the two possible choices with $+$ and $-$ in \eqref{vN} gives $1$. Hence,  away from the
boundary points $\mu_k \in \{ \la_k^{\ups_k}, \la_{k+1}^{\ups_k} \}$, one has that either $\ups_j\,\mf{r}_N(\mu_j)>1$
or $0 < \ups_j \,\mf{r}_N(\mu_j) < 1$

In fact, one can reconstruct $(\bs{a}_N,\bs{b}_N)\in \R^N_+\times \R^N$ from the data  \cite{Ferguson}
\begin{itemize}
\item $B\in\R$,
\item $\varepsilon_N>0$,
\item the vector $\bs{\mu}_{N-1} \in \R^{N-1}_{<}$,
\item the vector $\bs{\mf{n}}_{N-1} \in \R^{N-1}_{\bs{v}} = \big\{ \bs{\mf{n}}_{N-1} \in \R^N \; : \; \ups_j \mf{n}_j >0\big\}$.
\end{itemize}
%
%
%

Furthermore, there is a one-to-one analytic isomorphism between the above data and the space of $N$-periodic Jacobi matrices with $\sum_{j=1}^N b_j=B$ and
$\prod_{j=1}^Na_j=\varepsilon_N$ \cite{Korotyaev}.  The formula \eqref{vN} implies that the quantities
$$
\mf{n}_j:=\mf{r}_N(\mu_j), \quad j\in \intn{1}{N-1} \, ,
$$
are the so-called Floquet multipliers of the matrix $\mathsf{L}^{(\mf{n}_j)}(\bs{a}_N,\bs{b}_N) $ having eigenvalues $\mu_j$s, namely
$$
\det_N\big[\mu_j \mathtt{I} - \mathsf{L}^{(\mf{n}_j)}(\bs{a}_N,\bs{b}_N)  \big]  \, = \,  P(\mu_j) -  \veps_N (\mf{n}_j  +\mf{n}_j^{-1})=0.
$$
Hence, one  can reconstruct the degree $N$ monic polynomial $P$ defined in \eqref{defP2}  from
the data $\big( \varepsilon_N, B,  \bs{\mu}_{N-1}, \bs{\mf{n}}_{N-1}) \in \R \times \R_+ \times \R^{N-1}_{<} \times \R^{N-1}_{\bs{v}}  $,
by an interpolation of the conditions
 \begin{itemize}
 \item $P(\mu)=\mu^N-B\mu^{N-1}+\dots$
\item  $P(\mu_j) -  \veps_N (\mf{n}_j + \mf{n}_j^{-1})=0$, $j=1,\dots, N-1$.
\end{itemize}
Again, the symmetry $\mf{n}_j\hookrightarrow \mf{n}_j^{-1}$ leading to the $2^{N-1}$ degree is manifest.
We must stress, however, that the  above  problem has a solution only if the critical points $\vsg_1 < \vsg_2 < \dots < \vsg_{N-1}$ of the polynomial $P$
satisfy the constraint  $(-1)^{N-j}P(\vsg_j)\geq 2 \varepsilon_N$.

 Let $\bs{\mf{n}}_{N-1} \, = \, ( \mf{n}_1,\dots,\mf{n}_{N-1})$  be the Floquet multipliers.  Assuming $|\mf{n}_j|\neq 1$,
 the map from the data $( \varepsilon_N, B, \bs \mu_{N-1},\bs{\mf{n}}_{N-1} )\mapsto (\varepsilon_N,\bs{\mu}_{N-1}, \bs{\la}_N^+)$
 with the constraint $\sum_{j=1}^N\bs{\la}_j^+=B$  is a local diffeomorphism of mapping degree $2^{N-1}$ \cite{vanMoerbeke76}.

 In fact, the bottom line of  \cite{vanMoerbeke76} is that there exists an open subset $\wt{\mc{M}}_{B}$
 such that $\mc{M}_B\setminus \wt{\mc{M}}_{B} $ has measure zero and a diffeomorphism
\beq
\mc{F} \, : \,  \wt{\mc{M}}_{B} \tend \Big\{ \big( \bs{\la}_N^+ , \bs{\mu}_{N-1} \big) \;: \; \bs{\la}_N^+ \in \mc{A}_N\, ,  \;
\bs{\mu}_{N-1} \in \mc{D}  \;\; \e{and} \;\; \ov{\bs{\la}}_N^+ \, = \, B \Big\}
\enq
in which $\ov{\bs{\la}}_N^+ = \sum_{a=1}^{N} \la_a^+$ and
\beq
\mathcal{A}_N \overset{\mathrm{def}}{=} \Bigg\{ \boldsymbol{\lambda}^+_N \in \mathbb{R}^N_{<} \, : \,
 \prod_{i=1}^N ( x   - \lambda_i^+) \, + \,  4 \veps_N \text{ has all real, distinct roots}   \Bigg\} \, .
\label{definition ensembles AN}
\enq
Moreover, the symplectic form $\omega$ defined in \eqref{omega} may be explicitly recast in terms of  $\big( \bs{\la}_N^+ , \bs{\mu}_{N-1} \big)$
by taking $\bs{\la}_{N-1}^+$ to be the independent variables:
\beq
\omega=\sum_{i,j=1}^{N-1} \frac{\mathrm{d}\mu_i\wedge  \mathrm{d}\lambda_j^+}{\mu_i-\lambda_j^+}
\dfrac{\mu_i-\lambda_N^+}{\mu_i-\lambda_j^+}  \f{  \sqrt{ P^+(\mu_i) P^-(\mu_i) } } {   \prod_{j=1}^N  (\mu_i-\lambda_j^-) }\,.
\label{ecriture forme volume dans variables mu et lambda+}
\enq

\subsection{Generalised Gibbs ensemble}

In the following, we shall consider Toda chain Flaschka--Manakov variables $(\bs{a}_N,\bs{b}_N)$
sampled  from a  probability measure  invariant under the Toda time flow generated by $\mc{H}_{\e{per}}$.
Such invariant measures for the periodic Toda lattice can be considered on the whole phase space $\R^N_+\times \R^N$ (grand canonical ensemble) or on the subspace
$\mathcal{M}_{B}$   (microcanonical ensemble).
Given any such invariant measure,  a question of prime interest in physics is to estimate,
in the large-$N$ regime, the associated local correlation function,  defined as
\beq
\mathbb{E}\Big[ \pl{a=1}{\ell} r_{m_{\ell}}^{(\ell)}(t_{\ell}) \Big] \qquad \e{where} \qquad r^{(\ell)}_{a} \in \{q_a, p_a\}
\enq
and $(m_{\ell}, t_{\ell}) \in \mathbb{N} \times \R $ being kept finite in $N$.
An integrable sytem such as the Toda chain having $N$ degrees of freedom has $N$ local, functionally independent, conserved quantities.
The yet unproven physical expectation is that,  with regard to the behaviour as $N\tend +\infty$ of the local correlation functions,
one may consider many non-equivalent invariant measures, built from the tower of the local conserved quantities
$\mathrm{tr}\big\{ \big[\mathsf{L}^+(\bs{a}_N, \bs{b}_N)\big]^j \big\}$, $j\in \mathbb{N}$,  which may be conveniently combined as
$\mathrm{tr} \, \big[ V\big(   \mathsf{L}^+(\bs{a}_N,\bs{b}_N ) \big) \big]$.
This is in sharp contrast with a generic $N$-particle system, where one only has one local conserved charge given by the Hamiltonian
and one given by the total momentum--in the case of translational invariance. All others conserved quantities are non-local. The multitude
of local conserved charges forces one to consider more general classes of measures called generalised Gibbs ensembles.
 Spohn  studied  the generalised  Gibbs ensembles for the Toda lattice  \cite{Spohn20} given by the following probability densities
 \begin{equation}
 \label{eq:Gibbs_todaIntro}
\begin{split}
\mathrm{d}  \rho^{\theta,V}(\bs a,\bs b) &:=    \frac{1}{Z_{N}(\theta,V)}
	\exp\Big\{- \mathrm{tr} \,  \big[ V\big(   \mathsf{L}^+(\bs{a}_N,\bs{b}_N ) \big) \big] \Big\}
\prod_{j=1}^Na_j^{2\theta-1} \; \mathrm{d} \bs a_N  \mathrm{d} \bs b_N,\quad \theta>0\\
	\end{split}
\end{equation}
where $V:\R\to\R$ is a  continuous real   potential, bounded from below, and    with suitable growth at infinity (see section  \ref{Main_result}) where
we hereby fix the notation
\beq
\dd \bs{x}_N \, \equiv \, \pl{a=1}{N} \dd x_a \;.
\enq
 The partition function
 $$
Z_{N}(\theta,V) \, =  \hspace{-3mm}
\Int{\R_+^N\times\R^N}{} \hspace{-3mm} \exp\Big\{- \mathrm{tr} \,  \big[ V\big(   \mathsf{L}^+(\bs{a}_N,\bs{b}_N ) \big) \big] \Big\}
\prod_{j=1}^Na_j^{2\theta-1} \; \mathrm{d} \bs a_N  \mathrm{d} \bs b_N
$$
is the normalising constant of the measure.
	  The above measure is an  invariant measure for the  periodic  Toda flow
because  the volume form $\mathrm{d} \bs{a}_N  \mathrm{d} \bs{b}_N \propto \omega^{  N}$ is  time invariant.
Moreover, when $V(x)=x^2$ the variables $a_j$ and $b_j$ are independent random variables
and the partition functions can be easily computed  $Z_{N}(\theta, V(x)=x^2) \, = \, \big( \sqrt{2\pi}  \Gamma(\theta)\big)^N$ \cite{Spohn20}.

 Spohn    was able to derive the density of states of  the eigenvalues of
the Lax matrix  $\mathsf{L}^+ $ by comparison with the Dumitriu--Edelman  \cite{DE}  tri-diagonal representations of
 $\beta$-ensembles   at high temperature for the case $V(x)=x^2$ and   then using a transfer matrix approach
for  polynomial potentials  $V$. Then Guionnet and Memin \cite{GuionnetM22}
proved a large deviation principle, though with a non-explicit rate function, for the distribution of the empirical measure of the
eigenvalues   of the  Lax matrix  with general potentials having polynomial behaviour at infinity.
Similar results were also implemented  for  other integrable lattices  like the Ablowitz--Ladik lattice in \cite{MM1,MM2}.
The expression for the rate function for the Toda lattice was argued, on a theoretical physics level of rigour, independently,
in the works of  Doyon \cite{Doyon} and Spohn  \cite{SpohnGGEBook}.
In this manuscript we obtain the rate function   of the periodic Toda lattice,   by first expressing the generalised Gibbs measure
as a function of the eigenvalues of the periodic Lax matrix.
This is accomplished by using the Darboux coordinates
and then  integrating over the Dirichlet spectrum, which yields the  eigenvalue distribution of the periodic Lax matrix.
This distribution  can be interpreted  as a deformed orthogonal  ensemble where the  deformation term consists of determinants of hyperelliptic integrals.

We now fix the setting of our analysis. Consider the restriction of
the generalised Gibbs measure to the symplectic manifold $\mathcal{M}_{B=0}$ defined in \eqref{Mc}
\begin{equation}
 \label{constraint_measure}
\begin{split}
	\mathrm{d}  \rho^{V}_{\mathsf{c}}(\bs a,\bs b) &:=    \frac{1}{Z^{\mathsf{c}}_{N}(V)}
\exp\Big\{- \mathrm{tr} \,  \big[ V\big(   \mathsf{L}^+(\bs{a}_N,\bs{b}_N ) \big) \big] \Big\}
\delta(\prod_{j=1}^Na_j-\veps_N) \delta( \ov{\bs{b}}_N) \mathrm{d} \bs{a}_N   \mathrm{d} \bs{b}_N,\\
	\end{split}
		\end{equation}
where we recall that $ \ov{\bs{b}}_N  = \sum_{s=1}^{N} b_s$. Then we rewrite the above measure using the Darboux coordinates
introduced earlier, after restricting the measure to the open set $\wt{\mc{M}}_{0}$ so that the change of coordinates is well defined.
In the process, we  build on the explicit expression of the volume form $\om$ \eqref{ecriture forme volume dans variables mu et lambda+}
in terms of the variables $\bs{\mu}_{N-1}$ and $\bs{\la}_N^+$ subject to the constraints arising from $\mathcal{M}_{B=0}$,
so as to reexpress the full volume form $\frac{\omega^{N-1}}{(N-1)!}$.
Eventually, we get that the constrained  measure \eqref{constraint_measure}  expressed in the variables $\bs{\lambda}_N^+$ and $\bs{\mu}_{N-1}$
takes the form
\bem\label{GGibbs_todaIntro_c}
	 \mathrm{d}\rho_{\mathsf{c}}(\boldsymbol{\lambda}_N^+,\boldsymbol{\mu}_{N-1})
=
\frac{ \Delta(\boldsymbol{\lambda}_N^+)   \Delta(\boldsymbol{\mu}_{N-1})  \prod_{k=1}^{N}\mathrm{e}^{- V(\lambda_k^+)}  }
{ \mathcal{Z}_N^\mathsf{c} [V]\prod_{k=1}^{N-1} \sqrt{P^+(\mu_k) P^-(\mu_k)}}
\delta\big( \ov{\bs{\la}}_N^+\big)\,  \\
\times \mathbbm{1}_{ \mc{A}_N }(\bs{\la}^+_N)  \mathbbm{1}_{ \mc{D} }(\bs{\mu}_{N-1})
\mathrm{d}\boldsymbol{\mu}_{N-1} \mathrm{d}\boldsymbol{\lambda}^+_N \,.
\end{multline}
Here and in the following, we fix the convenient notation
\beq
 \ov{\bs{\la}}_N^+ \, = \, \sul{a=1}{N} \la_a^{+} \;,
\label{definition contraction N vecteur}
\enq
while $\Delta(\boldsymbol{\mu}_{N-1}) = \pl{ i < j }{ N-1}(\mu_j - \mu_i)$ is the Vandermonde determinant associated with the vector $\bs{\mu}_{N-1}$
while $\Delta(\boldsymbol{\lambda}_N^+)$ the one associated with $\boldsymbol{\lambda}_N^+$. Finally, $\mathcal{Z}_N^\mathsf{c} [V]$ is the normalising constant
that we shall specify shortly.

\begin{rem}
Note that $P^+$ and $P^-$ have the same sign everywhere on $\mathbb{R}$ except for the small intervals between $\lambda_k^+$ and $\lambda_k^-$.  Thus in particular
$P$ also takes the same sign as $P^+$ and $P^-$ on the complement of the small intervals between $\lambda_k^+$ and $\lambda_k^-$.
\end{rem}
By taking the $\bs{\mu}_{N-1}$-integral over the set
 $\mc{D}$ defined in \eqref{definition domaine integration des mus} we obtain the integral
\begin{align}
\mc{I}(\bs{\la}^+_N ; \veps_N) = \Int{\mathcal{D}}{} \frac{  \Delta(\boldsymbol{\mu}_{N-1})  }{\prod_{k=1}^{N-1} \sqrt{P^+(\mu_k) P^-(\mu_k)}}
\, \mathrm{d}\boldsymbol{\mu}_{N-1}
\label{density}
\end{align}
We arrive at the joint density of the roots $\bs{\la}_N^+$, \textit{i.e.} the distribution of eigenvalues of $\mathsf{L}^+(\bs{a}_N,\bs{b}_N)$,
in the constrained model:
\begin{equation}\label{constraineddensity}
\rho^+_{\mathsf{c}}(\boldsymbol{\lambda}_N^+) \, \mathrm{d}\boldsymbol{\lambda}_N^+ = \frac{1}{\mathcal{Z}_{N}^\mathsf{c} [V]}
\delta\big(  \ov{\bs{\la}}_N^+ \big) \, \mathbbm{1}_{ \mc{A}_N }(\bs{\la}^+_N)  \, \mc{I}(\bs{\la}^+_N ; \veps_N) \Delta(\boldsymbol{\lambda}_N^+)
   \pl{k=1}{N} \mathrm{e}^{-  V(\lambda_k^+)}\, \mathrm{d}\boldsymbol{\lambda}^+_N \;.
\end{equation}

Alternatively we can relax the constraint $ \sum_{k=1}^N \lambda_k^+=0$ and consider the joint density of the roots $\bs{\la}_N^+$ in the
unconstrained model
\begin{equation}\label{unconstraineddensity}
    \rho^+_{\mathsf{u}}(\boldsymbol{\lambda}_N^+) \, \mathrm{d}\boldsymbol{\lambda}_N^+ = \frac{1}{\mathcal{Z}_{N}^\mathsf{u} [V]}
\, \mathbbm{1}_{ \mc{A}_N }(\bs{\la}^+_N)  \, \mc{I}(\bs{\la}^+_N ; \veps_N) \Delta(\boldsymbol{\lambda}_N^+) \pl{k=1}{N}\mathrm{e}^{-  V(\lambda_k^+)}
\, \mathrm{d}\boldsymbol{\lambda}^+_N
\end{equation}

The respective normalisation constants, or partition functions, are given by
\begin{align*}
\mathcal{Z}_N^\mathsf{u} [V] &= \Int{\mathcal{A}_N}{} \mc{I}(\bs{\la}^+_N ; \veps_N)
\Delta(\boldsymbol{\lambda}_N^+) \pl{k=1}{N}\mathrm{e}^{-  V(\lambda_k^+)}\, \mathrm{d}\bs{\lambda}^+_N \\
\mathcal{Z}_N^\mathsf{c} [V] &= \Int{\mathcal{A}_N}{} \delta\big(  \ov{\bs{\la}}_N^+ \big) \mc{I}(\bs{\la}^+_N ; \veps_N)
\Delta(\boldsymbol{\lambda}_N^+) \pl{k=1}{N}\mathrm{e}^{-  V(\lambda_k^+)}\, \mathrm{d}\bs{\lambda}^+_N \, .
\end{align*}

For further purpose, let us define the monic polynomial
\begin{equation}\label{defP1}
P(x) \, = \, \frac{ P^+(x)   +  P^-(x) }{ 2 } \, = \, \pl{a=1}{N}(x-\eta_a)  \;.
\end{equation}
Since $P(\la_a^{\pm})=\pm 2 \veps_N$, it follows that  $\boldsymbol{\eta}_{N} \in \mathbb{R}^N_{ <  }$.
This leads to the inversion formula
\beq
P^\pm(x) \, =  \, P(x) \mp 2\veps_N  = \prod_{k=1}^N (x - \lambda_k^\pm) \, .
\label{definition P pm}
\enq

\subsection{Statement of the result}
\label{Main_result}

In order to state our results, we shall need to give a more precise definition of the probability measures which underlie what we refer to as the
constrained and unconstrained models.
Let $\mathcal{M}_1(\mathbb{R})$ be the space of Borel probability measures on $\mathbb{R}$ equipped with the weak topology. More precisely, given any (signed) Borel
measure $\mu$ on $\mathbb{R}$ of finite total variation, let us define the norm
\begin{align}
\norm{  \mu  } = \sup_{\substack{f : \mathbb{R} \to \mathbb{R} \\ \norm{  f }_{\mathrm{BL}}\leq 1}} \Big| \Int{\mathbb{R}}{}f( x ) \, \mathrm{d}\mu(x) \Big|
\end{align}
where
\beq
\norm{  f }_{\mathrm{BL}} = \sup_{x \in \mathbb{R}}|f(x)| + \sup_{\substack{x,y \in \mathbb{R} \\ x \neq y}} \left| \frac{f(x)-f(y)}{x-y}\right| \, .
\enq
Then for any $\mu, \nu \in \mathcal{M}_1(\mathbb{R})$ we impose the metric $\op{d}_{\mathrm{BL}}(\mu,\nu) = \norm{  \mu - \nu}$. This metrises weak convergence.  We let $ \mathfrak{B}(\mathcal{M}_{1}(\mathbb{R}))$  denote the Borel sets of $ \mathcal{M}_{1}(\mathbb{R})$.
We shall denote the open ball of radius $\de>0$ around $\mu$ with respect to this distance as
\beq
B(\mu,\de) \, = \, \Big\{ \nu \in \mc{M}_1(\R) \; : \; \op{d}_{\mathrm{BL}}(\mu,\nu)<\de \Big\} \;.
\label{definition boule BL de rayon delta en mu}
\enq

\begin{defin}[Empirical measure]
Given a vector $\bs{x}_N \in \mathbb{R}^N$, the associated empirical measure corresponds to
\begin{align}
\op{L}_N^{(\bs{x}_N)} \overset{\mathrm{def}}{=} \frac{1}{N}\sum_{k=1}^N \delta_{x_i} \, .
\end{align}
Note that the map $\iota : \mathbb{R}^N \to \mathcal{M}_1(\mathbb{R})$, $\iota(\bs{x}_N) := \op{L}_N^{(\bs{x}_N)}$ is continuous.
In particular this means that if $E \subset \mathcal{M}_1(\mathbb{R})$ is a Borel set then
$\iota^{-1}(E) = \big\{ \bs{x}_N \in \mathbb{R}^N \, : \, \op{L}_N^{(\bs{x}_N)} \in E \big\}$ is a Borel set.
\end{defin}

Let us introduce the two models we will study.
\begin{defin}[Unconstrained model] Given a Borel set $E \subset \mathcal{M}_1(\mathbb{R})$ define the mass function by
\begin{align*}
&\Pi_{N}: \mathfrak{B}(\mathcal{M}_1(\mathbb{R})) \longrightarrow \intff{0}{1} \\
    &\Pi_{N}[E] = \frac{1}{\mathcal{Z}_N^\mathsf{u} [V]}\Int{  \mathcal{A}_N }{}
\mc{I}(\bs{\la}^+_N ; \veps_N) \Delta(\boldsymbol{\lambda}_N^+) \pl{k=1}{N} \Big\{ \mathrm{e}^{- V(\lambda_k^+)} \Big\}
\, \mathbbm{1}_{E}(\op{L}_N^{(\boldsymbol{\lambda}_N^+)} )    \, \mathrm{d}\bs{\lambda}^+_N \, .
\end{align*}
Likewise we define the un-normalised mass function,
\begin{align*}
&\overline{\Pi}_{N}: \mathfrak{B}(\mathcal{M}_1(\mathbb{R})) \longrightarrow \intfo{0}{+\infty} \\
&\overline{\Pi}_{N}[E] = \frac{ (N-1)! }{  |\ln(2\veps_N)|^{N-1}}\Int{  \mathcal{A}_N }{} \mc{I}(\bs{\la}^+_N ; \veps_N)
\Delta(\boldsymbol{\lambda}_N^+) \pl{k=1}{N} \Big\{ \mathrm{e}^{- V(\lambda_k^+)} \Big\}
\, \mathbbm{1}_{E}(\op{L}_N^{(\boldsymbol{\lambda}_N^+)} )    \, \mathrm{d}\bs{\lambda}^+_N \, .
\end{align*}
\end{defin}

\begin{defin}[Constrained model] Define
\beq
\mathcal{M}_{1,\mathsf{c}}(\mathbb{R}) = \Big\{ \mu \in \mathcal{M}_1(\mathbb{R}) \, : \,
\Int{\mathbb{R}}{} |x| \, \mathrm{d}\mu(x) < +\infty \text{ and } \Int{\mathbb{R}}{} x \, \mathrm{d}\mu(x) = 0 \Big\}
\label{definition espace probas contraint}
\enq
equipped with the subspace topology.  In fact, this is a dense subset of $\mathcal{M}_1(\mathbb{R})$, though we shall not need this fact.
Given a Borel set $E \subset \mathcal{M}_{1,\mathsf{c}}(\mathbb{R})$ define the mass function by
\begin{align*}
&\Pi_{N,\mathsf{c}}: \mathfrak{B}(\mathcal{M}_{1,\mathsf{c}}(\mathbb{R})) \longrightarrow \intff{0}{1} \\
    &\Pi_{N,\mathsf{c}}[E] = \frac{1}{\mathcal{Z}_N^\mathsf{c} [V]} \Int{  \mathcal{A}_N }{} \delta\big(  \ov{\bs{\la}}_N^+\big)
\mc{I}(\bs{\la}^+_N ; \veps_N) \Delta(\boldsymbol{\lambda}_N^+) \pl{k=1}{N} \Big\{ \mathrm{e}^{- V(\lambda_k^+)} \Big\}
\, \mathbbm{1}_{E}(\op{L}_N^{(\boldsymbol{\lambda}_N^+)} )    \, \mathrm{d}\bs{\lambda}^+_N  \, .
\end{align*}
where $ \ov{\bs{\la}}_N^+ $ is as given in \eqref{definition contraction N vecteur}. Likewise we define the un-normalised mass function,
\begin{align*}
&\overline{\Pi}_{N,\mathsf{c}}: \mathfrak{B}(\mathcal{M}_{1,\mathsf{c}}(\mathbb{R})) \longrightarrow \intfo{0}{+\infty} \\
&\overline{\Pi}_{N,\mathsf{c}}[E] = \frac{ (N-1)! }{  |\ln(2\veps_N)|^{N-1} }\Int{  \mathcal{A}_N }{}
\delta\big(   \ov{\bs{\la}}_N^+ \big) \mc{I}(\bs{\la}^+_N ; \veps_N)
\Delta(\boldsymbol{\lambda}_N^+) \pl{k=1}{N} \Big\{ \mathrm{e}^{- V(\lambda_k^+)} \Big\}
\, \mathbbm{1}_{E}(\op{L}_N^{(\boldsymbol{\lambda}_N^+)} )    \, \mathrm{d}\bs{\lambda}^+_N \, .
\end{align*}
\end{defin}
In particular
$$\overline{\Pi}_{N}\big[\mathcal{M}_1(\mathbb{R}) \big] =
\frac{ (N-1)!}{  |\ln(2\veps_N)|^{N-1}} \mathcal{Z}_N^\mathsf{u} [V] \mbox{ and }\overline{\Pi}_{N,\mathsf{c}}\big[ \mathcal{M}_{1,\mathsf{c} }(\mathbb{R}) \big] =
    \frac{ (N-1)!}{ |\ln(2\veps_N)|^{N-1}} \mathcal{Z}_N^\mathsf{c} [V] \, .
    $$
Let us state our hypotheses on $V$.
\begin{enumerate}[label=(V\arabic*)]
\item \label{potentialh1} $V: \mathbb{R} \longrightarrow \mathbb{R}$ is differentiable and is such that there exists two finite non-negative constants $C_1,C_2$
so that, for every $x\in\mathbb R$,

$$\sup_{u\in [-1;1]}|V'(x+u)|\le C_1 V(x)+C_2\,.$$

\item \label{potentialh2} $V$ is continuous and grows algebraically at infinity, \textit{i.e.} there exists a $\theta > 0$ such that $$ 0 < \liminf_{|x| \to \infty} \frac{V(x)}{|x|^\theta} \leq \limsup_{|x| \to \infty}\frac{V(x)}{|x|^\theta} < +\infty \, .$$
For the constrained model we require that $\theta > 1$.
\end{enumerate}
Moreover, we will assume without loss of generality that the infimum of $V$ is zero, up to replacing $V$ by $V-\inf_{\mathbb{R}} V$. 
Let us now introduce the rate function which will appear in our upcoming large deviation principle. We assume $I[\mu]=+\infty$ if
$\Int{\mathbb{R}}{} V \, \mathrm{d}\mu = +\infty$ and otherwise it is given by
\begin{equation}\label{ratefunction}
\begin{split}
    &I : \mathcal{M}_1(\mathbb{R}) \longrightarrow \intof{-\infty}{+\infty} \\
    &I[\mu] \overset{\mathrm{def}}{=} \Int{\mathbb{R}}{} V(x) \, \mathrm{d}\mu(x) -
    \Int{\mathbb{R}}{} \ln \max\Big\{ 0 ,  1+ \frac{2}{\ell} \Int{\mathbb{R}}{} \ln|x-y| \, \mathrm{d}\mu(y) \Big\} \, \mathrm{d}\mu(x)
- \mathrm{Ent}[\mu]
\end{split}
\end{equation}
where
\begin{align}
\mathrm{Ent}[\mu] =     \begin{cases}
   - \Int{\mathbb{R}}{} \ln  \Big\{ \frac{\mathrm{d}\mu(x)}{\mathrm{d}x} \Big\} \, \mathrm{d}\mu(x) & \mu \ll \mathrm{d}x \\
    -\infty & \text{otherwise.}
    \end{cases}
\end{align}
The reader might worry that the integral formula for $\mathrm{Ent}[\mu]$ has a signed integrand, and there may be situations where the integral is indeterminate. This can be resolved by re-expressing $\Int{\mathbb{R}}{} V \, \mathrm{d}\mu - \mathrm{Ent}[\mu]$ in terms of the \textit{relative} entropy between $\mu$ and the probability measure $(\Int{\mathbb{R}}{} \mathrm{e}^{-V(y)} \, \mathrm{d}y)^{-1} \mathrm{e}^{-V(x)} \, \mathrm{d}x$. Such a relative entropy is always well defined and this explains our convention that if $\Int{\mathbb{R}}{} V \, \mathrm{d}\mu = +\infty$ then $I[\mu]= +\infty$.

\begin{rem}
We remark that a necessary condition for $I[\mu] < +\infty$ is that $\mu \in \mathcal{C}$ where
\begin{align}
\mathcal{C} = \Big\{ \mu \in \mathcal{M}_1(\mathbb{R}) \, : \,
1 + \frac{2}{\ell} \Int{\mathbb{R}}{} \ln|x-y| \, \mathrm{d}\mu(y) \geq 0 \text{ for } \mu-\mathrm{a.e.} \,  x \in \mathbb{R} \Big\} \, .
\end{align}
This condition arises from the requirement that eigenvalues live in $\mathcal{A}_N$, see Section \ref{section:condroots} of the Appendix.

\end{rem}

All the properties of this rate function relevant for our purposes will be established  in Section \ref{studygrf}.
We have collected together the  properties showing that it is a "good" rate function

\begin{prop}\label{propositiongrf} Assume Hypothesis \ref{potentialh2} holds.
Then $I : \mathcal{M}_1(\mathbb{R}) \longrightarrow \intof{-\infty}{+\infty}$ is lower semi-continuous and its level sets are compact. If moreover  Hypothesis \ref{potentialh2} holds with some $\theta>1$, then the restriction of $I$ to
$\mathcal{M}_{1,\mathsf{c}}(\mathbb{R})$ \eqref{definition espace probas contraint} also has compact level sets.

\end{prop}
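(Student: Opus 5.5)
We split $I$ into three pieces and treat each separately. Write $\gamma_V(\dd x) = Z_V^{-1}\mathrm{e}^{-V(x)}\dd x$ with $Z_V=\int_{\R}\mathrm{e}^{-V}$; this is finite by \ref{potentialh2}. Then
\begin{equation*}
\Int{\R}{} V\,\dd\mu-\mathrm{Ent}[\mu] \, = \, H(\mu|\gamma_V)-\ln Z_V ,
\end{equation*}
where $H$ is the relative entropy, consistently with the convention fixed after \eqref{ratefunction}. Hence
\begin{equation*}
I[\mu]=H(\mu|\gamma_V)-\ln Z_V-J[\mu], \qquad J[\mu]=\Int{\R}{}\ln\max\Big\{0,1+\tfrac{2}{\ell}U_\mu(x)\Big\}\,\dd\mu(x),
\end{equation*}
where $U_\mu(x)=\int\ln|x-y|\,\dd\mu(y)$ and $\ln 0=-\infty$. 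Relative entropy is lower semi-continuous with compact level sets (Donsker--Varadhan). The term $-J$ is bounded below by $-\sup J$ only if we control $J$ from above, so the plan is:
\begin{enumerate}[label=(\roman*)]
\item bound $J$ from above by a small multiple of $H$ plus a constant;
\item show $-J$ is lower semi-continuous on sets where $H$ is bounded.
\end{enumerate}

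For (i), use $\ln\max\{0,1+t\}\le\ln(1+t_+)\le C_\delta+\delta\,t_+$ for any $\delta>0$. Together with $\ln|x-y|\le\ln(1+|x|)+\ln(1+|y|)$ this gives
\begin{equation*}
J[\mu]\le C_\delta+\tfrac{4\delta}{\ell}\Int{\R}{}\ln(1+|x|)\,\dd\mu(x).
\end{equation*}
Since $V(x)\ge c|x|^\theta-C$, the entropy inequality yields $\int\ln(1+|x|)\,\dd\mu\le H(\mu|\gamma_V)+\ln\int\mathrm{e}^{\ln(1+|x|)}\,\dd\gamma_V$, and the last integral is finite. Choosing $\delta$ small gives
\begin{equation*}
I[\mu]\ge\tfrac12 H(\mu|\gamma_V)-C .
\end{equation*}
In particular $I>-\infty$, and $\{I\le M\}\subset\{H\le 2(M+C)\}$ is relatively compact (tight).

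For (ii), take $\mu_n\to\mu$ weakly with $\sup_n H(\mu_n|\gamma_V)<\infty$. We need $\limsup_n J[\mu_n]\le J[\mu]$. The entropy bound gives uniform integrability of $\ln(1+|x|)$ and a uniform $L\ln L$ bound on the densities $\dd\mu_n/\dd x$, which controls the short-distance singularity of the logarithm. Truncate $\ln|x-y|\le\ln\max(|x-y|,\eta)=:k_\eta(x,y)$; this kernel is continuous and bounded by $C+\ln(1+|x|)+\ln(1+|y|)$. Set $U^\eta_\nu(x)=\int k_\eta(x,y)\,\dd\nu(y)\ge U_\nu(x)$. Then $J[\mu_n]\le\int\ln\max\{0,1+\tfrac{2}{\ell}U^\eta_{\mu_n}\}\,\dd\mu_n$, and for fixed $\eta$ the functional $\nu\mapsto\int\!\!\int F(x,k_\eta\text{-potential})$ is weakly continuous under the uniform integrability. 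Here we use that $U^\eta_{\mu_n}\to U^\eta_\mu$ locally uniformly and that $\ln(\max\{0,1+t\}+\epsilon)$ is continuous in $t$. This gives $\limsup_n J[\mu_n]\le\int\ln(\max\{0,1+\tfrac2\ell U^\eta_\mu\}+\epsilon)\,\dd\mu$. Finally let $\eta\downarrow0$ and then $\epsilon\downarrow0$. Since $U^\eta_\mu\downarrow U_\mu$, monotone convergence (from above, with an integrable majorant) yields $\limsup_n J[\mu_n]\le J[\mu]$, including the case $J[\mu]=-\infty$. Combined with the lower semi-continuity of $H$, this gives lower semi-continuity of $I$ on $\{H\le K\}$. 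At points where $H$ is unbounded along the sequence, (i) already forces $I[\mu_n]\to\infty$, so $I$ is lower semi-continuous everywhere. Closed plus tight then gives compact level sets. The main obstacle is this upper semi-continuity of $J$: the outer $\ln$ is unbounded below and the inner log kernel is singular. The monotone truncation argument above is what I would try first.

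For the constrained statement with $\theta>1$, the level sets of $I$ intersected with $\mathcal{M}_{1,\mathsf{c}}(\R)$ are compact in $\mathcal{M}_1(\R)$, and it remains to show they lie in a compact subset of $\mathcal{M}_{1,\mathsf c}$. On $\{H\le K\}$ the entropy inequality with $\mathrm{e}^{\alpha|x|^{\theta'}}$, $1<\theta'<\theta$, bounds $\int|x|^{\theta'}\dd\mu$ uniformly. This makes $|x|$ uniformly integrable, so $\int x\,\dd\mu_n\to\int x\,\dd\mu$ along weakly convergent sequences. Hence the limit still has zero mean, the level set $\{I\le M\}\cap\mathcal{M}_{1,\mathsf c}$ is closed in $\mathcal{M}_1(\R)$, and therefore it is compact in the subspace topology.
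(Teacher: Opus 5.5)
Your proof is correct, and it is organised differently from the paper's.

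\textbf{Your route.} You take $H(\mu\,|\,\gamma_V)$ as the coercive, lower semicontinuous piece and bound the interaction term $J$ from above by $\tfrac12 H+C$. You then prove upper semicontinuity of $J$ only sequentially, along entropy-bounded sequences. The short-distance singularity is removed by truncating the kernel from above, which is the harmless direction; so the uniform $L\ln L$ control you mention is never actually used. The large-distance growth is handled by the uniform integrability of $\ln(1+|x|)$ that the entropy bound provides.

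\textbf{The paper's route.} The paper proves $I[\mu]\ge\frac14\int V\,\mathrm{d}\mu-C$ and gets compactness from tightness of $\{\int V\,\mathrm{d}\mu\le L\}$. It obtains lower semicontinuity of $\int V\,\mathrm{d}\mu-J[\mu]$ globally, writing it (Lemmas~\ref{Itildesupremum}--\ref{Itildecontinuous}) as a supremum over $(\eta_1,\eta_2,M_1,M_2)$ of weakly continuous functionals. In these, $\eta_1\eta_2(V(x)+V(y))$ is subtracted inside the kernel to tame it at infinity, and both logarithms are floored. This needs no a priori bounds, and the same regularised functionals are reused in the large deviation upper bound (Lemma~\ref{Jcontinuous}).

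\textbf{Comparison.} Your argument is more elementary and self-contained, but it leans on step (i) to reduce to entropy-bounded sequences. Grouping $\int V\,\mathrm{d}\mu$ with the entropy also has a concrete advantage. On $\mathcal{M}_1(\mathbb{R})$ the map $\mu\mapsto-\mathrm{Ent}[\mu]$ on its own is not lower semicontinuous. Take $\nu$ with a bounded, compactly supported density and $\mu_n=(1-\frac1n)\nu+\frac1n u_n$, with $u_n$ the uniform law on $[0,e^{n^2}]$. Then $\mu_n\to\nu$ weakly while $-\mathrm{Ent}[\mu_n]\to-\infty$. So the one-line step in the proof of Lemma~\ref{lemmalsc} tacitly requires moving a small multiple of $\int V\,\mathrm{d}\mu$ onto the entropy term, which is exactly what your use of $H(\mu\,|\,\gamma_V)$ does.

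\textbf{Constrained part.} Here the two arguments rest on the same idea. A uniform $|x|^{\theta'}$-moment bound with $\theta'>1$ on the level set makes the first moment weakly continuous there, so the zero-mean condition survives weak limits. The paper gets this bound from $\int V\,\mathrm{d}\mu\le L$; you get it from the entropy inequality.
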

Our main theorem is a large deviation principle
for the un-normalised measures $ \overline{\Pi}_{N}$ and $\overline{\Pi}_{N,\mathsf{c}}$.

\begin{theorem}[Large deviation principle for the un-normalised mass function]\label{maintheorem}
The un-normalised mass function of the unconstrained model $\overline{\Pi}_{N}$ on $ \mathcal{M}_1(\mathbb{R})$
obeys a large deviation principle  at speed $N$ and rate function $I$. That is, for any open set $O \subset \mathcal{M}_1(\mathbb{R})$,
\begin{align}\label{unnormalisedLDPlowerbound}
\liminf_{N \to +\infty} \frac{1}{N}\ln \overline{\Pi}_{N}[O] \geq - \inf_{\mu \in O}I[\mu]
\end{align}
and, for any closed set $F \subset \mathcal{M}_1(\mathbb{R})$,
\begin{align}\label{unnormalisedLDPupperbound}
\limsup_{N \to +\infty} \frac{1}{N}\ln \overline{\Pi}_{N}[F] \leq - \inf_{\mu \in F}I[\mu] \, .
\end{align}
Likewise the un-normalised mass function of the constrained model $\overline{\Pi}_{N,\mathsf{c}}$ on  $\mathcal{M}_{1,\mathsf{c}}(\mathbb{R})$
also obeys a large deviation principle at speed $N$ and rate function $I$. That is, for any open set $O \subset \mathcal{M}_{1,\mathsf{c}}(\mathbb{R})$,
\begin{align}\label{ldlbtheo}
    \liminf_{N \to +\infty} \frac{1}{N}\ln \overline{\Pi}_{N,\mathsf{c}}[O] \geq - \inf_{\mu \in O}I[\mu]
\end{align}
and, for any closed set $F \subset \mathcal{M}_{1,\mathsf{c}}(\mathbb{R})$,
\begin{align}
    \limsup_{N \to +\infty} \frac{1}{N}\ln \overline{\Pi}_{N,\mathsf{c}}[F] \leq - \inf_{\mu \in F}I[\mu] \, .
\end{align}
\end{theorem}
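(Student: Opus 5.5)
The plan rests on a precise large-$N$ asymptotic analysis of the hyperelliptic integral $\mc{I}(\bs{\la}^+_N;\veps_N)$, after which the argument is a standard Laplace-type/Ben Arous--Guionnet scheme.

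\textbf{Step 1: asymptotics of $\mc{I}$.} Since $\mc{D}$ is a product of intervals, we would first try to show that $\Delta(\bs{\mu}_{N-1})$ can be replaced, up to factors $\e{e}^{o(N)}$, by a quantity depending only on $\bs{\la}^+_N$. Then $\mc{I}$ is approximately a product of one-dimensional integrals $\int \dd\mu/\sqrt{P^+(\mu)P^-(\mu)}$ over the gaps $[\la_k^{\ups_k};\la_{k+1}^{\ups_k}]$. One could either exploit the interlacing ordering, writing the Vandermonde as a determinant of monomials integrated row by row, or bound $\ln|\mu_j-\mu_i|$ between its values at gap endpoints. Near each band edge the integrand has an inverse-square-root singularity, and the gap integral is governed by $\ln(2\veps_N)^{-1}$-type logarithms coming from the ratio $P(\mu)^2/(4\veps_N^2)$.

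With $\veps_N=\e{e}^{-N\ell/2}$, the width of the gap around $\la_k^+$ is roughly $\veps_N/|P'(\la_k^+)|$. The factor $|P'(\la_k)| = \prod_{j\neq k}|\la_k-\la_j|$ equals $\exp\{N\int\ln|\la_k-y|\,\dd\op{L}_N(y)\}$. Hence the ratio $\ln(\text{gap scale})/\ln(2\veps_N)$ is $1+\frac{2}{\ell}\int\ln|x-y|\,\dd\op{L}_N(y)$ evaluated at $x=\la_k$.

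The target estimate is therefore
$$\frac{(N-1)!}{|\ln 2\veps_N|^{N-1}}\,\mc{I}(\bs{\la}^+_N;\veps_N)\,\Delta(\bs{\la}^+_N)=\exp\Big\{N\int \ln\max\Big(0,1+\tfrac{2}{\ell}\int\ln|x-y|\,\dd\op{L}_N(y)\Big)\dd\op{L}_N(x)+o(N)\Big\},$$
uniformly on suitable sets. Here the Vandermonde $\Delta(\bs{\la}^+)$ cancels against the $\prod|P'(\la_k)|^{-1/2}$-type factors produced by the $\mu$-integrals, combined with $\Delta(\bs{\mu})$. The $(N-1)!$ compensates for the ordering of the $\la$'s into the Weyl chamber, which is what produces the $-\mathrm{Ent}[\mu]$ term in $I$ via Sanov's theorem. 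Membership in $\mc{A}_N$ would be handled through the condition recorded in the Appendix section on the conditions on the roots, which forces the logarithmic potential bound defining $\mc{C}$.

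\textbf{Step 2: LDP.} Given Step 1, the mass $\overline{\Pi}_N$ is a tilt of i.i.d.\ sampling from $\e{e}^{-V}\dd x$ by $\exp\{-N\Phi(\op{L}_N)\}$, where $\Phi[\mu]=-\int\ln\max\{0,1+\frac{2}{\ell}U^\mu\}\,\dd\mu$ and $U^\mu(x)=\int\ln|x-y|\,\dd\mu(y)$. $\Phi$ is not continuous, so Varadhan's lemma does not apply directly. We would follow the method of \cite{GuionnetM22}, proving exponential tightness from Hypothesis \ref{potentialh2}.

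For the upper bound on balls $B(\mu,\de)$, we would use the upper semicontinuity of $\mu\mapsto\int\ln\max\{0,1+\frac2\ell U^\mu\}\dd\mu$, obtained by truncating the logarithm at $\ln|x-y|\vee(-M)$ and using the monotone limit. This gives $\limsup\frac1N\ln\overline{\Pi}_N[B(\mu,\de)]\le -I[\mu]+o_\de(1)$, and the bound for closed sets follows from exponential tightness together with Proposition \ref{propositiongrf}.

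For the lower bound, we would restrict to configurations near quantiles of a regularised $\mu$ with $1+\frac2\ell U^\mu>\eta>0$, compactly supported and with smooth density. Such $\mu$ are $I$-dense by a convolution-and-truncation argument. On small boxes around the quantiles, Step 1 holds with $o(N)$ errors, and the box volumes produce the entropy. Hypothesis \ref{potentialh1} controls the variation of $V$ over the boxes.

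\textbf{Step 3: constrained model.} Here $\delta(\ov{\bs{\la}}^+_N)$ is handled by integrating over the shift $\la_k\mapsto\la_k+t/N$. This leaves $\mc{I}$, $\Delta$ and $\mc{A}_N$ invariant and changes $V$ only by $O(1)$ per particle, controlled by Hypothesis \ref{potentialh1}. The result is to compare the constrained mass with the unconstrained mass of $\{|\int x\,\dd\op{L}_N|<\de\}$ up to $\e{e}^{o(N)}$. The condition $\theta>1$ gives uniform integrability of $x$, so the mean constraint is closed and continuous on level sets, and Proposition \ref{propositiongrf} gives compactness.

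\textbf{Main obstacle.} The hardest step will be Step 1, specifically making the asymptotics of $\mc{I}$ uniform. This means controlling $\Delta(\bs{\mu})$ versus the $\la$-data, and treating configurations where some $1+\frac2\ell U(\la_k)$ is near $0$ or negative. There gaps close, $\mc{A}_N$ is marginal, and the $\ln\max\{0,\cdot\}$ truncation must be justified as an upper bound. For this we would first try crude bounds: each gap integral is at most $\pi$ plus a logarithm, which suffices for the upper bound.
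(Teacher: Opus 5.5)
You identify Step 1 as the crux, and that is where the proposal has a genuine gap. The mechanisms you suggest for its upper half lose exponential factors, which is fatal at speed $N$.

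\emph{Independent gap integration overcounts.} The $k$-th gap integral has logarithmic singularities of sizes $\ln\delta_k^{-1}$ and $\ln\delta_{k+1}^{-1}$ at its two ends, where $\delta_j=|\lambda_j^+-\lambda_j^-|$. Integrating the gaps independently (``$\pi$ plus a logarithm'' each) therefore yields $\prod_{k=1}^{N-1}(\ln\delta_k^{-1}+\ln\delta_{k+1}^{-1})\approx 2^{N-1}\prod_j\ln\delta_j^{-1}$, even with perfect control of the smooth factors. The true size is $\sum_m\prod_{j\neq m}\ln\delta_j^{-1}$, so your upper bound would read $-I+\ln 2$. Bounding $|\mu_j-\mu_i|$ by endpoint values is worse: it costs a factor $(N-1)!$ for equispaced $\lambda$'s. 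The point is that $\Delta(\boldsymbol{\mu}_{N-1})$ cannot be traded for a function of $\boldsymbol{\lambda}^+_N$; it is exactly what suppresses configurations with $\mu_j$ and $\mu_{j+1}$ both next to $\lambda_{j+1}$. The paper exploits this as follows: it splits each gap into halves (patterns $\boldsymbol{\sigma}_{N-1}$), writes each piece as a determinant in the basis $1/(\mu-\lambda_i^{\pm})$, subtracts column $j+1$ from column $j$ at every step down so that the $\mathrm{O}(N)$ term $\ln\delta_{j+1}^{-1}$ cancels, and applies Hadamard's inequality to gain $(\gamma N)^{-1/2}$ per step down.

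\emph{The provable bound involves both spectra.} Decoupling $\sqrt{P^+P^-}$ requires Cauchy--Schwarz, and the resulting bound (Proposition \ref{integralupperbound}) is not a function of $\mathrm{L}_N^{(\boldsymbol{\lambda}^+_N)}$ alone. The bound on $\mathcal{I}\,\Delta(\boldsymbol{\lambda}^+_N)$ is a product of three pieces:
\begin{itemize}
\item the ratio $\sqrt{\Delta(\boldsymbol{\lambda}^+_N)/\Delta(\boldsymbol{\lambda}^-_N)}$, which is unbounded as a gap closes;
\item factors $\mathcal{T}$ that are sub-exponential only on average (Lemma \ref{ABbound}, Proposition \ref{CNbound});
\item $\prod_j\max\{\ln\delta_j^{-1},\gamma N\}$, where $\ln\delta_j^{-1}=|\ln 4\varepsilon_N|+\sum_{k\neq j}\ln|\lambda_j^+-\lambda_k^-|$ mixes the two spectra.
\end{itemize}
The Vandermonde ratio disappears only after integration, through the Jacobian $|\det[\partial\lambda^+_i/\partial\lambda^-_j]|=\Delta(\boldsymbol{\lambda}^-_N)/\Delta(\boldsymbol{\lambda}^+_N)$ (Proposition \ref{jacobian}). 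Passing to a single empirical measure uses the Henrici--Kappeler bound (Proposition \ref{kappeler}) and the cut-off $\gamma N$, removed only at the very end; this cut-off is also what handles the marginal configurations. Your linearised gap width $\varepsilon_N/|P'(\lambda_k)|$ is valid only when gaps are exponentially small. Consequently, the ``tilted i.i.d.'' description of $\overline{\Pi}_N$ in Step 2 is not available.

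\emph{Lower bound.} Your restriction idea is sound and is in fact simpler than the paper's route, which goes through $\sqrt{P^+P^-}\le|P|$, the determinant $\det_{N-1}[\mathrm{A}]$ and the conditional negativity of the logarithmic kernel. Confining each $\mu_k$ to an $N^{-p}$-window at its left endpoint, with $p>2+\gamma$ and under the hypotheses of Proposition \ref{detlowerbound2}, gives $\mathcal{I}\,\Delta(\boldsymbol{\eta}_N)\ge(1-o(1))\prod_{k<N}(\ln\delta_k^{-1}-p\ln N)$.

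However, boxes around quantiles produce the wrong entropy at speed $N$. Let $\rho$ be the density of $\mu$ and $q_k$ its $k/N$-quantiles. Even with boxes equal to the full quantile cells, $N!\int_{\prod_k[q_{k-1},q_k]}\prod_k\mathrm{e}^{-V(\lambda_k)}\,\mathrm{d}\boldsymbol{\lambda}\approx\frac{N!}{N^N}\,\mathrm{e}^{-N\int(V+\ln\rho)\,\mathrm{d}\mu}\approx\mathrm{e}^{-N}\mathrm{e}^{-N\int(V+\ln\rho)\,\mathrm{d}\mu}$, so you get $-I[\mu]-1$. You need typical i.i.d.\ samples of $\mu$ on a high-probability good event, as in the paper's Jensen argument built on Lemma \ref{probabilitybound}.

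\emph{Two smaller slips.} Your displayed target estimate carries $(N-1)!$ on the left, and that factor is not $\mathrm{e}^{o(N)}$; the factorial belongs to the symmetrisation of the Weyl chamber. In Step 3, the per-particle shift must be $o(1)$, not $\mathrm{O}(1)$, for \ref{potentialh1} to give an $\mathrm{e}^{o(N)}$ error.
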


Since $\frac{1}{N}\ln \left[ \frac{(N-1)!}{ |\ln(2\veps_N)|^{N-1}} \right]$ has a finite limit as $N \to +\infty$,
the LDP shows that $\frac{1}{N} \ln \mathcal{Z}_N^\mathsf{u} [V]$ and $\frac{1}{N} \ln \mathcal{Z}_N^\mathsf{c} [V]$ converge as $N \to +\infty$.
Their limits are related to the infimum of the relevant rate function.
Since $\Pi_{N}[E] = \frac{\overline{\Pi}_{N}[E]}{\overline{\Pi}_{N}[\mathcal{M}_1(\mathbb{R})]}$ and
$\Pi_{N,\mathsf{c}}[E] = \frac{\overline{\Pi}_{N,\mathsf{c}}[E]}{\overline{\Pi}_{N,\mathsf{c}}[\mathcal{M}_{1,\mathsf{c}}(\mathbb{R})]}$,
an immediate corollary to Theorem \ref{maintheorem}
is the following large deviation principle for the normalised measures $ \Pi_{N}$
and $ \Pi_{N,\mathsf{c}}$.
\begin{cor}[Large deviation principle for the normalised mass function]
The normalised mass function of the unconstrained model $\Pi_{N}$ obeys a large deviation principle
at speed $N$ and rate function $I - \!\! \underset{\mu \in \mathcal{M}_1(\mathbb{R})}{\inf}\!\! I[\mu ]$.
That is, for any open set $O \subset \mathcal{M}_1(\mathbb{R})$,
\begin{align}
\liminf_{N \to +\infty} \frac{1}{N}\ln \Pi_{N}[O] \geq
- \Big\{ \inf_{\mu \in O}I[\mu] - \inf_{\mu \in \mathcal{M}_1(\mathbb{R})} \hspace{-3mm} I[\mu] \Big\}
\end{align}
and, for any closed set $F \subset \mathcal{M}_1(\mathbb{R})$,
\begin{align}
    \limsup_{N \to +\infty} \frac{1}{N}\ln \Pi_{N}[F] \leq
- \Big\{ \inf_{\mu \in F}I[\mu]-\inf_{\mu \in \mathcal{M}_1(\mathbb{R})} \hspace{-3mm} I[\mu] \Big\} \, .
\end{align}
Likewise the normalised mass function of the constrained model $\Pi_{N,\mathsf{c}}$ also obeys a large deviation principle
on  $ \mathcal{M}_{1,\mathsf{c} }(\mathbb{R})$  at speed $N$ and rate function $I- \underset{ \mu \in \mathcal{M}_{1,\mathsf{c}}(\mathbb{R})}{\inf}\hspace{-3mm}I[\mu]$.
\end{cor}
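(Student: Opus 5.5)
The corollary follows from Theorem \ref{maintheorem} by normalising, and the plan is to subtract the asymptotics of the total mass. The whole space $\mathcal{M}_1(\mathbb{R})$ is both open and closed. Applying \eqref{unnormalisedLDPlowerbound} and \eqref{unnormalisedLDPupperbound} to $E=\mathcal{M}_1(\mathbb{R})$ gives
\begin{align*}
\lim_{N\to+\infty}\frac{1}{N}\ln \overline{\Pi}_{N}\big[\mathcal{M}_1(\mathbb{R})\big] \, = \, -\inf_{\mu\in\mathcal{M}_1(\mathbb{R})} I[\mu] \, \equiv \, -I_{\min} \;.
\end{align*}
The same argument applies in the constrained case, with $\mathcal{M}_{1,\mathsf{c}}(\mathbb{R})$, which is open and closed in its own subspace topology.

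The only point needing care is that $I_{\min}$ is finite, so that the subtraction is legitimate.

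For $I_{\min}>-\infty$, I use Proposition \ref{propositiongrf}. The function $I$ is lower semi-continuous, takes values in $\intof{-\infty}{+\infty}$, and has compact level sets. Hence, if $I_{\min}<+\infty$, the infimum is attained on a non-empty compact level set. Since $I$ never takes the value $-\infty$, this gives $I_{\min}>-\infty$. In the constrained case the same holds, using the compactness of level sets of the restriction of $I$ to $\mathcal{M}_{1,\mathsf{c}}(\mathbb{R})$, which is where $\theta>1$ enters.

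For $I_{\min}<+\infty$, I exhibit an explicit $\mu$ with $I[\mu]<+\infty$. Take $\mu_R$ uniform on $[-R;R]$.
\begin{itemize}
\item It has finite entropy and finite $\int V\,\mathrm{d}\mu_R$, since $V$ is continuous.
\item For $x\in[-R;R]$, a direct computation gives $\int\ln|x-y|\,\mathrm{d}\mu_R(y)\geq \ln R - c$ for a universal constant $c$; the minimum is attained at the endpoints and equals $\ln(2R)-1$.
\item Choosing $R$ so large that $1+\frac{2}{\ell}(\ln R-c)\geq 1$ makes the logarithmic term $\int\ln\max\{0,\cdot\}\,\mathrm{d}\mu_R$ finite, and in fact non-negative.
\end{itemize}
Thus $I[\mu_R]<+\infty$. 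Since $\mu_R$ is symmetric with compact support, it lies in $\mathcal{M}_{1,\mathsf{c}}(\mathbb{R})$, so the constrained infimum is finite as well.

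It then suffices to write, for any Borel $E$,
\begin{align*}
\frac1N\ln\Pi_N[E] \, = \, \frac1N\ln\overline{\Pi}_N[E]-\frac1N\ln\overline{\Pi}_N\big[\mathcal{M}_1(\mathbb{R})\big] \;.
\end{align*}
The second term converges to $-I_{\min}$, which is finite. Taking $\liminf$ for open $O$ and $\limsup$ for closed $F$, and invoking Theorem \ref{maintheorem} for the first term, yields the stated bounds with rate function $I-I_{\min}$. The constrained model is handled identically.

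The main (mild) obstacle is verifying that the normaliser's exponential rate is finite, that is, checking finiteness of $I$ at some trial measure. This is settled by the uniform measure on a large symmetric interval, once $R$ is large compared with $\mathrm{e}^{-\ell/2}$.
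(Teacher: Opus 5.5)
Your proposal is correct and follows the paper's route: the paper treats this corollary as immediate from Theorem \ref{maintheorem}, using $\Pi_N[E]=\overline{\Pi}_N[E]/\overline{\Pi}_N[\mathcal{M}_1(\mathbb{R})]$ and the LDP applied to the whole space. Your extra check that $\inf I$ is finite is a useful addition, and the lower bound also follows directly from \eqref{ratefunctionlowerbound} since $V\geq 0$. There is one harmless slip: the logarithmic potential of the uniform measure on $[-R;R]$, namely $\frac{1}{2R}\big[(R+x)\ln(R+x)+(R-x)\ln(R-x)\big]-1$, is convex and even, so its minimum on $[-R;R]$ is $\ln R-1$ at $x=0$, not at the endpoints (which give the maximum $\ln(2R)-1$); your bound $\geq \ln R-c$ still holds with $c=1$, so the argument is unaffected.
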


Let us gather the main properties of the minimisers of our rate functions, proved in Section \ref{studygrf}.
\begin{prop}\label{propositiongrfmin} Assume Hypothesis \ref{potentialh2} holds.
Then $I : \mathcal{M}_1(\mathbb{R}) \longrightarrow \intof{-\infty}{+\infty}$  is strictly convex and achieves its minimum value  at a unique probability measure $\nu_{{\ell}} \in \mathcal{M}_1(\mathbb{R})$. If moreover  Hypothesis \ref{potentialh2} holds for some $\theta>1$, then the restriction of $I$ to
$\mathcal{M}_{1,\mathsf{c}}(\mathbb{R})$ \eqref{definition espace probas contraint} is strictly convex and it  achieves its minimal
value at a unique probability measure $\nu_{{\ell}, \mathsf{c}} \in \mathcal{M}_{1,\mathsf{c}}(\mathbb{R})$.
\end{prop}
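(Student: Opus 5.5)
The plan is to get existence of a minimiser from Proposition \ref{propositiongrf}, and uniqueness from strict convexity. So the real content of the statement is strict convexity of $I$ on the convex set $\{I<+\infty\}$. For existence, I first check that $I\not\equiv+\infty$. Take $\mu$ uniform on $[-L;L]$ with $L$ large. Then $\int\ln|x-y|\,\dd\mu(y)\geq \ln L-c$ on $[-L;L]$, so $1+\frac{2}{\ell}\int\ln|x-y|\,\dd\mu(y)>1$ on the support. Both $\int V\,\dd\mu$ and $\mathrm{Ent}[\mu]$ are finite, so $I[\mu]<+\infty$, and $\mu$ is centred, so it also lies in $\mathcal{M}_{1,\mathsf{c}}(\R)$. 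Since $I$ is lower semi-continuous with compact level sets, and $I>-\infty$ everywhere by the codomain stated in Proposition \ref{propositiongrf}, the infimum is attained. The same holds for the restriction to $\mathcal{M}_{1,\mathsf{c}}(\R)$ when $\theta>1$, by the second part of that proposition.

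For convexity, I rewrite the non-linear part of $I$ as a perspective functional. For $\mu=\rho\,\dd x$ with $I[\mu]<+\infty$, set
\beq
F_\mu(x)=\int_{\R}\Big(1+\frac{2}{\ell}\ln|x-y|\Big)\dd\mu(y) .
\nonumber
\enq
This is affine in $\mu$ on probability measures. The rate function then reads
\beq
I[\mu]=\int_{\R} V\,\dd\mu+\int_{\R} h\big(\rho(x),F_\mu(x)\big)\,\dd x .
\nonumber
\enq
Here $h(r,f)=r\ln(r/f)$ for $r,f>0$, $h(0,f)=0$ for $f\geq 0$, and $h(r,f)=+\infty$ for $r>0,\,f\leq 0$. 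This extension exactly encodes the $\max\{0,\cdot\}$ convention and the constraint $\mu\in\mathcal{C}$. The function $h$ is the perspective of $-\ln$, hence jointly convex and lower semi-continuous on $\intfo{0}{+\infty}\times\R$. Since $\mu\mapsto(\rho,F_\mu)$ is affine and $\mu\mapsto\int V\,\dd\mu$ is linear, $I$ is convex along segments $\mu_t=t\mu_1+(1-t)\mu_0$. The set $\mathcal{M}_{1,\mathsf{c}}(\R)$ is convex, so this also covers the constrained problem. A technical point is integrability: splitting $\ln|x-y|$ into positive and negative parts should make $F_\mu$ well defined whenever $\int V\,\dd\mu<\infty$ (because $V$ grows algebraically) and $\mu$ has finite entropy.

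The step I expect to be the main obstacle is strictness. The perspective $h$ is not strictly convex: equality in $h(\rho_t,F_t)\leq t h(\rho_1,F_1)+(1-t)h(\rho_0,F_0)$ holds exactly when $\rho_1/F_1=\rho_0/F_0$ a.e. on the common support, and, off the common support, when the vanishing density is matched by $F=0$ on the other support. So if $I[\mu_t]=tI[\mu_1]+(1-t)I[\mu_0]$, I would deduce $\rho_1F_{\mu_0}=\rho_0F_{\mu_1}$ a.e., and I then need to show this forces $\mu_0=\mu_1$. My first attempt is to write $\nu=\mu_1-\mu_0$, which has zero mass, and exploit the positivity of the logarithmic energy $-\iint\ln|x-y|\,\dd\nu(x)\dd\nu(y)>0$ for $\nu\neq0$. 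Multiplying the equality relation by suitable test functions, for instance $\ln(\rho_1/\rho_0)$ or the potential $U^\nu$, and integrating should produce this energy with a definite sign, contradicting $\nu\neq0$.

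If that fails, the fallback is a second-order argument. I would differentiate $t\mapsto I[\mu_t]$ twice and isolate the contribution $\int\rho_t^{-1}(\dot\rho-\rho_t\dot F/F_t)^2\,\dd x$. Its vanishing gives $\dot\rho=\rho_t\dot F/F_t$, which I would combine with $\int\dot\rho\,\dd x=0$ and with the Euler--Lagrange characterisation of minimisers to rule out a non-trivial flat direction. Once strict convexity is established, two distinct minimisers would make the midpoint strictly better, so the minimiser $\nu_\ell$ is unique. The same argument applied inside the convex set $\mathcal{M}_{1,\mathsf{c}}(\R)$ gives the uniqueness of $\nu_{\ell,\mathsf{c}}$.
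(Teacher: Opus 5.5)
Your architecture is the paper's: existence from lower semicontinuity and compact level sets, convexity from joint convexity of $(u,v)\mapsto u\ln(u/v)$ composed with the affine map $\mu\mapsto(\rho,F_\mu)$, and uniqueness from strict convexity. Your check that $I\not\equiv+\infty$, via a wide centred uniform measure, is a useful addition the paper omits.

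\textbf{The gap.} The decisive step, showing that equality forces $\mu_0=\mu_1$, is left at ``should produce'', with a fallback, so strict convexity is not yet proved. Both of your routes in fact reach the paper's pointwise relation. From $\rho_1F_{\mu_0}=\rho_0F_{\mu_1}$ and affinity you get $(\rho_1-\rho_0)F_{\mu_t}=\rho_t(F_{\mu_1}-F_{\mu_0})$ a.e. Hence $\rho_1-\rho_0=\frac{\rho_t}{F_{\mu_t}}(F_{\mu_1}-F_{\mu_0})$ on $\{\rho_t>0\}$, and $\rho_0=\rho_1=0$ elsewhere. This is exactly the vanishing of your $\int\rho_t^{-1}(\dot\rho-\rho_t\dot F/F_t)^2$.

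\textbf{How to close it.} One integration finishes the argument. Multiply the relation by $F_{\mu_1}-F_{\mu_0}=\frac{2}{\ell}U^\nu$, where $U^\nu(x)=\int\ln|x-y|\,\mathrm{d}\nu(y)$, and integrate:
\[
\frac{2}{\ell}\iint\ln|x-y|\,\mathrm{d}\nu(x)\,\mathrm{d}\nu(y)=\int_{\{\rho_t>0\}}\frac{\rho_t}{F_{\mu_t}}\big(F_{\mu_1}-F_{\mu_0}\big)^2\,\mathrm{d}x\;\geq\;0 .
\]
The left side is $\leq 0$ and vanishes only for $\nu=0$, by the Fourier representation $-\int_0^\infty t^{-1}|\hat\nu(t)|^2\,\mathrm{d}t$. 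All integrals converge, since $I[\mu_t]<+\infty$ forces $\iint|\ln|x-y||\,\mathrm{d}\mu_t\,\mathrm{d}\mu_t<+\infty$, which controls the cross terms.

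\textbf{Comparison with the paper.} The paper gets the same relation from $\partial_\alpha^2$ of the integrand. That first requires showing that equality at one $\alpha$ propagates to all $\alpha\in(0,1)$. It then multiplies by $F_{\mu_\alpha}(\rho_1-\rho_0)/\rho_\alpha$ instead, putting $\int(\rho_1-\rho_0)^2F_{\mu_\alpha}/\rho_\alpha\,\mathrm{d}x$ on the non-negative side. Your first-order version, via the equality case of the perspective at a single $t$, avoids the propagation step. Drop the Euler--Lagrange fallback. Strict convexity concerns arbitrary pairs of measures, and in the paper uniqueness of solutions to the Euler--Lagrange equation is itself deduced from strict convexity.

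\textbf{A smaller point, which the paper also passes over.} It tacitly uses $u\ln(u/v)$ only on $\mathbb{R}^+\times\mathbb{R}^+$. Your $h$ is undefined at $(0,f)$ with $f<0$, and it does not ``exactly encode'' $\mathcal{C}$, since $\mathcal{C}$ constrains $F_\mu$ only $\mu$-a.e.
\begin{itemize}
\item Setting $h(0,f)=0$ there matches $I$ but destroys convexity. For example, $h(1,0)=+\infty$ while $\tfrac12 h(0,-1)+\tfrac12 h(2,1)=\ln 2$.
\item Setting $h(0,f)=+\infty$, the closed perspective, keeps convexity. But the identity $I[\mu]=\int V\,\mathrm{d}\mu+\int h(\rho,F_\mu)\,\mathrm{d}x$ then needs $F_\mu\geq 0$ Lebesgue-a.e.\ whenever $I[\mu]<+\infty$.
\end{itemize}
That property is what guarantees $F_{\mu_t}\geq tF_{\mu_1}>0$ where $\rho_0=0<\rho_1$, i.e.\ $I[\mu_t]<+\infty$. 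It does hold, and you should prove it, as follows. The function $p(x)=\int\ln|x-y|\,\mathrm{d}\mu(y)$ is upper semicontinuous, so $p\geq-\ell/2$ $\mu$-a.e.\ extends to all of $\mathrm{supp}\,\mu$. On a bounded gap of the support, $p$ is concave with boundary $\liminf$'s at least its endpoint values. On an unbounded gap it is concave and tends to $+\infty$. So $p\geq-\ell/2$ everywhere; this is a one-dimensional form of Lemma~\ref{Lemme Principe Domination}.
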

Finally, let us relate the minimiser of $I$ on $\mathcal{M}_1(\mathbb{R}) $  to the minimiser $\mu_P$ of the free energy of a high-temperature $\beta=P/N$-ensemble,
\begin{equation}\label{def:CoulombEntropy} J^C_P[\mu] \overset{\mathrm{def}}{=} \Int{\mathbb{R}}{} V(x) \, \mathrm{d}\mu(x) - P
    \Int{\mathbb{R}^2}{}  \ln|x-y| \, \mathrm{d}\mu(y) \,   \mathrm{d}\mu(x) - \mathrm{Ent}[\mu] \, .\end{equation}
    
According to Lemmata 3.2 and 3.6  of \cite{GuionnetM22}, $J^C_P$ achieves its minimum value at a  unique probability measure $\mu_P$ which satisfies
$$ \dd\mu_P(x) = \frac{1}{\mathsf{Z}_P} \exp\Big\{ -V(x)+2P\Int{\mathbb{R}}{} \ln|x-y|\dd\mu_P(y) \Big\} \dd x$$
where $\mathsf{Z}_P$ is a normalisation constant such that $\mu_P$ is  a probability measure. In the case where the hard constraint,
see \eqref{ecriture des contraintes sur  ak et definition vepsN},
$\de \Big( \prod_{j=1}^N a_j- \mathrm{e}^{-\frac{N\ell}{2}}  \Big)$
is  replaced by a soft constraint $(\prod_{j=1}^N a_j)^{2P}$, which amounts to a Laplace transform of
our un-normalised density with respect to $\ell$, \cite{Spohn20} showed that the equilibrium measure of the Toda chain is given by $\partial_P(P\mu_P)$ when the
potential $V$ is a polynomial. This was generalised to the case where $V$  is continuous and scales like $ax^{2k}$ at infinity, with $k$ an integer and $a$ a positive
real number, see Lemma 4.5 of \cite{GuionnetM22}.
In order to use results from this paper, we hereafter assume that $V$ satisfies this growth condition at infinity
(although we do not expect this to be a relevant assumption). In particular, thanks to the results of Section 4 in \cite{GuionnetM22}, we know that
  $P\mapsto  P\int f \, \dd \mu_P$ is continuously differentiable for every bounded and continuous function $f$. We can extend  the deep relation between $\beta$-ensembles and the GGE for the Toda chain discovered in \cite{Spohn20} as follows:
\begin{lemme}\label{relBeta} Let $V$ be continuous and satisfy $V(x) = (1+\mathrm{o}(1)) a x^{2k}$ as $|x| \to \infty$ for some $a > 0$, and let $\nu_{{\ell}}$ be the unique minimiser of
$I$ in $\mc{M}_1(\mathbb R)$. Then, for every bounded continuous function $f$
$$\Int{\mathbb{R}}{} f \,  \dd \nu_{{\ell}}=\partial_s\Big( s\Int{\mathbb{R}}{} f \,  \dd \mu_{s}\Big)\Big|_{s=\frac{1}{\ell}\mathsf{m}(\ell)}
$$
with 
$$\mathsf{m}(\ell) = \Int{\mathbb{R}}{} \frac{\mathrm{d}\nu_{\ell}(x)}{1 + \frac{2}{\ell}\Int{\mathbb{R}}{} \ln |x-y| \, \mathrm{d}\nu_{\ell}(y)} \, .$$
\end{lemme}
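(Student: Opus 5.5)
The plan is to characterise $\nu_\ell$ through its Euler--Lagrange equation and to match it with the analogous characterisation of $\partial_s(s\mu_s)$.

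\textbf{Step 1: Euler--Lagrange equation for $\nu_\ell$.} Since $\nu_\ell$ minimises the strictly convex functional $I$ (Proposition~\ref{propositiongrfmin}), I perturb it as $\nu_\ell+t(\rho-\nu_\ell)$ for probability measures $\rho$ with finite $I$, and differentiate at $t=0^+$. Write $U(x)=1+\frac{2}{\ell}\int\ln|x-y|\,\dd\nu_\ell(y)$, and take $U>0$ $\nu_\ell$-a.e. (justified in Step 4). Differentiating the middle term gives $-\int \ln U\,\dd(\rho-\nu_\ell)$ plus the cross term
\[
-\frac{2}{\ell}\int\!\!\int \frac{\ln|x-y|}{U(x)}\,\dd\nu_\ell(x)\,\dd(\rho-\nu_\ell)(y).
\]
Set $W(y)=\int \frac{\ln|x-y|}{U(x)}\dd\nu_\ell(x)$. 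The stationarity condition is then that
\[
V-\ln U-\frac{2}{\ell}W+\ln\frac{\dd\nu_\ell}{\dd x}
\]
is constant on the support, and the support is all of $\R$ because the entropy term forces a positive density. Hence
\[
\dd\nu_\ell(x)=C\,U(x)\exp\Big\{-V(x)+\frac{2}{\ell}W(x)\Big\}\dd x .
\]

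\textbf{Step 2: identify the $\beta$-ensemble inside.} Define the positive measure $\dd\sigma=U^{-1}\dd\nu_\ell$, whose total mass is $\mathsf m(\ell)$, and its normalisation $\hat\sigma=\sigma/\mathsf m(\ell)$. Then $\frac{2}{\ell}W=2s\int\ln|x-y|\,\dd\hat\sigma(y)$ with $s=\mathsf m(\ell)/\ell$. Dividing the formula of Step 1 by $U$ gives
\[
\dd\hat\sigma= C' \exp\Big\{-V+2s\int\ln|\cdot-y|\,\dd\hat\sigma\Big\}\dd x .
\]
This is exactly the fixed-point equation characterising $\mu_s$. That equation has a unique solution: it is the Euler--Lagrange equation of the strictly convex functional $J^C_s$ of \cite{GuionnetM22}, Lemmata~3.2 and~3.6. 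Therefore $\hat\sigma=\mu_s$, and
\[
\dd\nu_\ell=\mathsf m(\ell)\,U\,\dd\mu_s, \qquad U(x)=1+2s\int\ln|x-y|\,\dd\mu_s(y).
\]

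\textbf{Step 3: compare with $\partial_s(s\mu_s)$.} Differentiate the fixed-point equation for $\mu_s$ in $s$, writing $\dot\mu_s=\partial_s\mu_s$. This gives
\[
\partial_s\mu_s=\mu_s\Big(2\int\ln|\cdot-y|\,\dd\mu_s+2s\int\ln|\cdot-y|\,\dd\dot\mu_s-c_s\Big),
\]
so the signed measure $\tau=\partial_s(s\mu_s)=\mu_s+s\dot\mu_s$ satisfies the linear equation
\[
\tau=\mu_s\Big(1+2s\int\ln|\cdot-y|\,\dd\mu_s+2s\int\ln|\cdot-y|\,\dd(\tau-\mu_s)-sc_s\Big).
\]
From Step 2, $\nu_\ell=\mathsf m(\ell)\,\mu_s(1+2s\int\ln|\cdot-y|\,\dd\mu_s)$, and it has total mass $1=\int\dd\tau$. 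I will verify that $\nu_\ell$ solves the same linear equation as $\tau$, equivalently that $\nu_\ell-\tau$ solves the homogeneous equation
\[
h=\mu_s\Big(2s\int\ln|\cdot-y|\,\dd h - \kappa\Big),\qquad \int\dd h=0 .
\]
For this I plan to use the identity $\int\ln|x-y|\,\dd\nu_\ell(y)=\frac{\ell}{2}(U(x)-1)$ and the normalisation condition $1=\mathsf m(\ell)\int U\dd\mu_s$. To conclude $h=0$, pair the equation with $\int\ln|\cdot-y|\,\dd h$: the operator $h\mapsto h-2s\mu_s\int\ln|\cdot-y|\,\dd h$ is injective on mass-zero measures because $-\iint\ln|x-y|\,\dd h\,\dd h\ge0$ for such $h$, which is the positivity of the logarithmic energy. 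This injectivity is the standard argument behind the differentiability statement in Section~4 of \cite{GuionnetM22}. Testing against bounded continuous $f$ then gives the lemma.

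\textbf{Step 4: the main obstacle.} The hardest step is the rigour of Step 1. One must show that $U>0$ $\nu_\ell$-a.e., so that the $\max\{0,\cdot\}$ in $I$ is inactive and $\ln U$ is differentiable. One must also justify differentiating under the integral sign given the logarithmic singularities and the $1/U$ weight. I would first argue that $U$ cannot vanish on a set of positive $\nu_\ell$-mass, since otherwise $I=+\infty$. I would then establish continuity and regularity of $\nu_\ell$'s density, via Hypothesis~\ref{potentialh2} and the fixed-point form, using a truncation $\max\{\delta,U\}$ and letting $\delta\to0$. A secondary delicacy is the consistency in Step 3: the self-consistent value $s=\mathsf m(\ell)/\ell$ must be matched with the normalisation, and it is this matching that fixes the evaluation point in the lemma.
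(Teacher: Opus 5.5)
Your overall strategy works, and Steps 1--2 are essentially the paper's argument (your $\hat\sigma$ is the paper's $\tilde\chi_\ell$). However, Step 2 contains a wrong formula that propagates into Step 3. Since $\mu_s=\hat\sigma=\mathsf{m}(\ell)^{-1}U^{-1}\nu_\ell$ and $s=\mathsf{m}(\ell)/\ell$,
\[
2s\int\ln|x-y|\,\mathrm{d}\mu_s(y)=\frac{2}{\ell}\int\frac{\ln|x-y|}{U(y)}\,\mathrm{d}\nu_\ell(y)=\frac{2}{\ell}W(x),
\]
which is not $U(x)-1=\frac{2}{\ell}\int\ln|x-y|\,\mathrm{d}\nu_\ell(y)$. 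Hence $U\neq 1+2s\int\ln|\cdot-y|\,\mathrm{d}\mu_s(y)$, and the formula $\nu_\ell=\mathsf{m}(\ell)\,\mu_s\bigl(1+2s\int\ln|\cdot-y|\,\mathrm{d}\mu_s(y)\bigr)$ that opens Step 3 is false. With it, $\nu_\ell-\tau$ would not solve the homogeneous equation: that would require $\int\ln|\cdot-y|\,\mathrm{d}(\nu_\ell-\mathsf{m}(\ell)\mu_s)(y)$ to be constant.

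Keep only the correct identity $\int\ln|x-y|\,\mathrm{d}\nu_\ell(y)=\frac{\ell}{2}(U(x)-1)$. Together with $\nu_\ell=\mathsf{m}(\ell)\,U\mu_s$ it gives in one line
\[
\nu_\ell=\mu_s\Bigl(\mathsf{m}(\ell)+2s\int\ln|\cdot-y|\,\mathrm{d}\nu_\ell(y)\Bigr),
\]
which is the same affine equation as the one for $\tau$ (whose constant is $1-sc_s$). Then $h=\nu_\ell-\tau$ has mass zero and satisfies $h=\mu_s(2sL_h-\kappa)$, with $L_h=\int\ln|\cdot-y|\,\mathrm{d}h(y)$. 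Taking total masses forces $\kappa=2s\int L_h\,\mathrm{d}\mu_s$. Pairing the equation with $L_h$ then gives $\iint\ln|x-y|\,\mathrm{d}h(x)\,\mathrm{d}h(y)=2s\,\mathrm{Var}_{\mu_s}(L_h)\geq0$, whereas the left side is $\leq0$ for mass-zero $h$, with equality only if $h=0$. You should write out this use of the mass constraint.

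Two smaller points. First, you need $\mathsf{m}(\ell)<\infty$ before normalising $\sigma$; the paper gets this by integrating the Euler--Lagrange equation against $\nu_\ell$, which yields $I[\nu_\ell]-1+\mathsf{m}(\ell)=C_\ell$. Second, for Step 1 the paper avoids truncation altogether. By convexity of $I$ the quotient $t^{-1}\bigl(I[(1-t)\nu_\ell+t\mu]-I[\nu_\ell]\bigr)$ is monotone in $t$, so monotone convergence justifies the derivative. Normalised perturbations $(1+\phi)\nu_\ell$ with $|\phi|\le\frac12$ then upgrade the resulting a.e. lower bound on your stationarity expression to equality.

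Where you genuinely differ from the paper is Step 3. The paper does not linearise at fixed $s$. It takes the relation $\frac{\mathrm{d}\chi_P}{\mathrm{d}x}=\bigl(C'_P+2P\int\ln|x-y|\,\mathrm{d}\chi_P(y)\bigr)\frac{\mathrm{d}\mu_P}{\mathrm{d}x}$ for $\chi_P=\partial_P(P\mu_P)$ from \cite{GuionnetM22} and solves it for $\mu_P$. Substituting into the $\beta$-ensemble equation shows that $\chi_P$ solves the Euler--Lagrange equation of $I$ with $\ell=C'_P/P$, whence $\chi_P=\nu_{C'_P/P}$ by uniqueness of the minimiser of $I$.

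That route reuses only the two uniqueness results already available (strict convexity of $I$ and of $J^C_P$) and gives the correspondence in both directions. But it identifies $\chi_P$ at the parameter $C'_P/P$. To evaluate at $s=\mathsf{m}(\ell)/\ell$ for a prescribed $\ell$, one still needs $C'_s=\mathsf{m}(\ell)$ there, a matching the paper does not spell out. Your uniqueness statement for the affine equation delivers it directly: $h=0$ gives a posteriori $\kappa=0$, i.e.\ $C'_s=1-sc_s=\mathsf{m}(\ell)$. The cost is that you must check the integrability behind the Fourier representation of the logarithmic energy of $h$ (finite logarithmic energy, $L_h\in L^2(\mu_s)$). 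This uses the tails of $\mu_s$, $\int V\,\mathrm{d}\nu_\ell<\infty$, and the regularity of $\tau$ from \cite{GuionnetM22}.
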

This lemma is proven at the end of  Section \ref{sec:proofmin}.  By way of comparison to \cite{Spohn20}, note that we must evaluate $\partial_s(s\mu_s)$ at $s=\mathsf{m}(\ell)/\ell$ rather than $s=P$, the parameter of the soft constraint imposed on the periodicity in the $q_i$'s. It is natural that such a change of variables needs to be done since under the soft constraint imposed in \cite{Spohn20} the mean of the $r_i$'s does not go to $P$ but rather the derivative of the limiting free energy with respect to the parameter $P$, \textit{i.e.} $J^C_P[\mu_P]$.

\subsection{Heuristics}
Since the complete proof of Theorem \ref{maintheorem} involves many technicalities,  the reader may find it helpful to outline the overall strategy and to explain \textit{morally} why the result is true, so that the subsequent manipulations appear less arbitrary. The main theorem is established by the standard "three step procedure" to prove LDPs. One starts by establishing, as $N\tend + \infty$, both lower and upper bounds on the log-probabilities of balls
$B(\mu,\de)$ of shrinking radius $\delta \searrow 0$. When one has such upper and lower bounds on balls, this implies a \textit{weak large deviation principle}, see Theorem 4.1.11 of \cite{DemboZ01}. Hence, Propositions \ref{lowerboundonballs} and \ref{upperboundonballs} given below imply that $\big\{ \overline{\Pi}_{N,\mathsf{c}} \big\}_{N\ge 1}$ and $\big\{ \overline{\Pi}_{N} \big\}_{N\ge 1}$ satisfy a weak large deviation principle with speed $N$ and rate function $I$ on their respective spaces.

\begin{prop}[Lower bound on balls]\label{lowerboundonballs} Let
$B(\mu, \delta)$ be as in \eqref{definition boule BL de rayon delta en mu}
and let $B_{\mathsf{c}}(\mu, \delta) = B(\mu, \delta) \cap \mathcal{M}_{1,\mathsf{c}}(\mathbb{R})$,
\textit{c.f.} \eqref{definition espace probas contraint}. Then, for any $\mu \in \mathcal{M}_1(\mathbb{R})$, we have
\begin{align}
\liminf_{\delta \searrow 0} \liminf_{N \to +\infty} \frac{1}{N} \ln \overline{\Pi}_{N}[B(\mu,\delta)] \geq - I[\mu] \, .
\end{align}
Similarly, for any $\mu \in \mathcal{M}_{1,\mathsf{c}}(\mathbb{R})$, we have
\begin{align}\label{wldlbtheo}
    \liminf_{\delta \searrow 0} \liminf_{N \to +\infty} \frac{1}{N} \ln \overline{\Pi}_{N,\mathsf{c}}[B_{\mathsf{c}}(\mu,\delta)] \geq - I[\mu] \, .
\end{align}
\end{prop}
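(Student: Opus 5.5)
We may assume $I[\mu]<+\infty$, since otherwise there is nothing to prove. The plan is to first reduce to nice measures, then localise the $\boldsymbol{\lambda}_N^+$-integral near a well-spaced configuration whose empirical measure approximates $\mu$, and finally bound $\mathcal{I}(\boldsymbol{\lambda}_N^+;\varepsilon_N)$ from below on that set.

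\textbf{Step 1 (approximation).} I would show that it suffices to treat $\mu$ with a continuous density, compact support, and
\[
\phi_\mu(x)=1+\tfrac{2}{\ell}\int\ln|x-y|\,\mathrm{d}\mu(y)\geq c>0
\]
on the support. In the constrained case I would also require $\int x\,\mathrm{d}\mu=0$. The tool is an approximating sequence $\mu_k\to\mu$ weakly with $I[\mu_k]\to I[\mu]$. I would build it by:
\begin{itemize}[label=--]
\item truncating $\mu$ to a compact set and translating to restore zero mean;
\item convolving with a small smooth kernel;
\item dilating slightly, $x\mapsto(1+\eta)x$, which raises $\phi_\mu$ by $\tfrac{2}{\ell}\ln(1+\eta)$ and so strictly enforces the constraint $\phi_\mu>0$.
\end{itemize}
Continuity of each term of $I$ along this sequence uses \ref{potentialh2} and the standard concavity/regularity of the logarithmic potential under convolution. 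The map $x\mapsto \ln\max\{0,\cdot\}$ is handled by monotone convergence, using $\phi_{\mu_k}\geq c_k>0$.

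\textbf{Step 2 (localisation).} For such $\mu$ I would take quantiles $x_1<\dots<x_N$ of $\mu$ and restrict to the set
\[
\{|\lambda_k^+-x_k|\leq N^{-2}\}.
\]
In the constrained case I would shift so the sum vanishes, then integrate out one coordinate against the $\delta$. On this set $\mathrm{L}_N^{(\boldsymbol{\lambda}_N^+)}\in B(\mu,\delta)$ for $N$ large. Moreover:
\begin{itemize}[label=--]
\item $\Delta(\boldsymbol{\lambda}_N^+)$ and $\prod_k e^{-V(\lambda_k^+)}$ are controlled by their values at $\boldsymbol{x}_N$, using \ref{potentialh1} for $V$;
\item the volume of the set costs only $e^{-O(N\ln N)}$. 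This is offset by $(N-1)!$ and by $\Delta(\boldsymbol{x}_N)\approx e^{N^2\iint\ln|x-y|}$ in a way I expect to combine with the $\mathcal{I}$ estimate to give exactly the entropy term. I expect this bookkeeping to produce $-\mathrm{Ent}[\mu]+\int V\,\mathrm{d}\mu$, as in the standard high-temperature $\beta$-ensemble lower bound.
\end{itemize}
Membership in $\mathcal{A}_N$ follows from $\phi_\mu\geq c>0$. Indeed, at the critical points $\varsigma_j$ one has
\[
|P^+(\varsigma_j)|\approx \exp\Big\{N\int\ln|\varsigma_j-y|\,\mathrm{d}\mu(y)\Big\}\gg 4\varepsilon_N=4e^{-N\ell/2},
\]
which is exactly the content of the Appendix on the condition for real roots.

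\textbf{Step 3 (lower bound on $\mathcal{I}$), the main obstacle.} Here I must extract $\prod_k\phi_\mu(x_k)$ times the Vandermonde-type factors. The key observation is that on the gap $[\lambda_k^{\upsilon_k},\lambda_{k+1}^{\upsilon_k}]$ the integral
\[
\int \frac{\mathrm{d}\mu_k}{\sqrt{P^+P^-}}
\]
is essentially
\[
\int \frac{\mathrm{d}\mu_k}{|P(\mu_k)|}\times(\text{boundary terms}).
\]
The set where $|P|$ is close to $2\varepsilon_N$ has logarithmic length, namely $\arccosh$-type behaviour. This yields a contribution of order
\[
\frac{\ln(|P(\varsigma_k)|/2\varepsilon_N)}{N|P'|}\approx \frac{N\ell\,\phi_\mu(x_k)/2}{N|P'(x_k)|}.
\]
To make this rigorous I would restrict each $\mu_k$ to the gap, bound $\Delta(\boldsymbol{\mu}_{N-1})$ from below by its value with $\mu_k$ pinned near the gap midpoint (interlacing gives this up to $e^{o(N^2)}$ errors that cancel against $\Delta(\boldsymbol{\lambda})$), and use $\prod_k |P'(x_k)|^{-1}\approx\Delta(\boldsymbol{x})^{-2}$. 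The factor $|\ln 2\varepsilon_N|^{N-1}=(N\ell/2)^{N-1}$ is then absorbed by the normalisation in $\overline{\Pi}_N$, leaving $\exp\{N\int\ln\phi_\mu\,\mathrm{d}\mu\}$.

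The delicate point is uniform control of $\ln|P(\mu)|$ on each gap against $N\int\ln|\cdot-y|\,\mathrm{d}\mu$. Near the gap edges the local terms dominate, and I would handle them by discarding a neighbourhood of relative size $\tfrac{1}{2}$ of each gap.
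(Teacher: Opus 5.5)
There is a genuine gap. At speed $N$ every approximation must be accurate up to factors $\mathrm{e}^{o(N)}$, and neither Step~2 nor Step~3 meets this. Step~1 is essentially the paper's Lemma~\ref{lemmeapprox} and is fine.

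\textbf{Step 2 (entropy).} The box $\{|\lambda^+_k-x_k|\le N^{-2}\}$ has volume $(2N^{-2})^N=\mathrm{e}^{-2N\ln N+O(N)}$. The factor $(N-1)!$ only returns $\mathrm{e}^{N\ln N+O(N)}$, so you obtain $\frac1N\ln\overline{\Pi}_N[B(\mu,\delta)]\ge-\ln N+O(1)$, which is vacuous.

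Nothing else compensates. As your own Step~3 assumes, $\mathcal{I}(\boldsymbol{\lambda}^+_N;\varepsilon_N)$ behaves like $|\ln 2\varepsilon_N|^{N-1}\mathrm{e}^{O(N)}/\Delta(\boldsymbol{\lambda}^+_N)$. So the Vandermonde cancels, and up to $\mathrm{e}^{o(N)}$ the density of $\overline{\Pi}_N$ is $N!$ times a mean-field product $\prod_j\phi_{\mathrm{L}_N}(\lambda_j)\mathrm{e}^{-V(\lambda_j)}$ on the Weyl chamber, with no microscopic repulsion. In such an essentially independent-particle model the entropy is carried by the whole event $\{\mathrm{L}_N\in B(\mu,\delta)\}$, not by a neighbourhood of a single configuration. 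Even the best version of your idea, one particle in each quantile cell of $\mu$, has $\mu^{\otimes N}$-probability $N!/N^N\approx\mathrm{e}^{-N}$, and would only give $-I[\mu]-1$.

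The paper proceeds differently. It symmetrises, tilts by $\prod_j\varrho(x_j)$, and applies Jensen's inequality under $\mu^{\otimes N}$ conditioned on a good event $\mathcal{D}(\delta)$. This event imposes spacings $\ge N^{-3}$, discrete log-potential $\ge\kappa/4$, a small centre of mass, and empirical measure close to $\mu$. Its probability tends to one by Sanov's theorem and a fourth-moment estimate (Lemmata~\ref{concentration} and~\ref{probabilitybound}). This yields exactly $\int V\,\mathrm{d}\mu-\mathrm{Ent}[\mu]-\int\ln\phi_\mu\,\mathrm{d}\mu$, and the constraint is handled through an auxiliary variable with density $\varrho$.

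\textbf{Step 3 (lower bound on $\mathcal{I}$).} Errors of size $\mathrm{e}^{o(N^2)}$ are useless at this speed. Pinning $\mu_k$ at the gap midpoint is also the wrong place. The factor $\ln(1/\delta_k)\sim N$ comes from the exponentially many scales $\delta_k\ll\mu_k-\lambda_k^{\upsilon_k}\ll\lambda_{k+1}^{\upsilon_k}-\lambda_k^{\upsilon_k}$ adjacent to the gap edges (and symmetrically at the right edge). Discarding edge neighbourhoods kills this factor. Keeping both edges instead lets $\mu_k$ and $\mu_{k+1}$ crowd the same root, so $\Delta(\boldsymbol{\mu}_{N-1})$ cannot be pulled out of the $(N-1)$-fold integral by comparison with a fixed configuration.

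The missing idea is an exact reduction that avoids any such comparison:
\begin{enumerate}
\item By AM--GM, $\sqrt{P^+P^-}\le|P|$ on each gap.
\item The integral $\int_{\mathcal{D}}\Delta(\boldsymbol{\mu}_{N-1})\prod_k|P(\mu_k)|^{-1}\,\mathrm{d}\boldsymbol{\mu}_{N-1}$ is then a determinant of one-dimensional integrals. By row operations and partial fractions it equals $N\det_{N-1}[\mathsf{A}]/\Delta(\boldsymbol{\eta}_N)$, with explicit logarithmic entries (Proposition~\ref{exactlowerbound}).
\item This Vandermonde cancels exactly against the Jacobian $\Delta(\boldsymbol{\eta}_N)/\Delta(\boldsymbol{\lambda}^+_N)$ of $\boldsymbol{\eta}_N\mapsto\boldsymbol{\lambda}^+_N$ (Proposition~\ref{jacobian}).
\item Finally, $\det_{N-1}[\mathsf{A}]$ is bounded below with only subexponential loss, by treating it as a perturbation of $|\ln 2\varepsilon_N|$ times a diagonal matrix. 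This uses that $(\ln|\eta_k-\eta_j|)_{k\ne j}$ is almost negative semi-definite on $\{\sum_k x_k=0\}$ (Lemma~\ref{almostposdef}), together with the determinant AM--GM bound of Lemma~\ref{detlowerbound}.
\end{enumerate}
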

\begin{prop}[Upper bound on balls]\label{upperboundonballs} For any $\mu \in \mathcal{M}_1(\mathbb{R})$, we have
\begin{align}\label{prop:wldub}
\limsup_{\delta \searrow 0} \limsup_{N \to +\infty} \frac{1}{N} \ln \overline{\Pi}_{N}[B(\mu,\delta)] \leq - I[\mu] \, .
\end{align}
Similarly, for any $\mu \in \mathcal{M}_{1,\mathsf{c}}(\mathbb{R})$, we have
\begin{align}\label{wldubco}
\limsup_{\delta \searrow 0} \limsup_{N \to +\infty} \frac{1}{N}\ln \overline{\Pi}_{N,\mathsf{c}}[B_{\mathsf{c}}(\mu,\delta)] \leq - I[\mu] \, .
\end{align}
\end{prop}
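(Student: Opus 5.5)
The upper bound on balls will follow from an explicit upper bound on the integrand of $\overline{\Pi}_N$, expressed through a functional of the empirical measure that is lower semicontinuous up to controllable errors.

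\emph{Step 1: bound the hyperelliptic integral.} I would first bound $\mc{I}(\bs{\la}^+_N;\veps_N)$ pointwise on $\mc{A}_N$. Since $\mc{D}$ is a product of intervals $[\la_k^{\ups_k};\la_{k+1}^{\ups_k}]$, the Vandermonde $\Delta(\bs{\mu}_{N-1})$ factorises approximately. More precisely, the interlacing forces $|\mu_j-\mu_i|$ to be comparable to $|\la_j^+-\la_i^+|$ up to neighbour corrections. The remaining one-dimensional integrals $\int_{\la_k}^{\la_{k+1}} \dd\mu/\sqrt{P^+(\mu)P^-(\mu)}$ are periods of the spectral curve. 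Near the endpoints $|P|\approx 2\veps_N$, and on the gap $|P(\mu)|$ is exponentially large. So each period is asymptotically
\[
\frac{1}{|P'|}\cdot 2\ln\frac{\max|P|}{2\veps_N}\approx \frac{2}{|P'(\la_k)|}\Big(\tfrac{N\ell}{2}+\sum_a \ln|\eta-\la_a|\Big).
\]
This is how the factor $1+\frac{2}{\ell}\int\ln|x-y|\dd\mu(y)$ appears, after extracting $|\ln(2\veps_N)|\approx N\ell/2$. The prefactor $(N-1)!/|\ln 2\veps_N|^{N-1}$ is exactly tuned to absorb these $N\ell/2$ factors, together with the combinatorial factor of order $N^{N}$ coming from $\prod_k|P'(\la_k)|^{-1}$ combined with the Vandermondes. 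Combining $\Delta(\bs\la^+_N)\Delta(\bs\mu_{N-1})/\prod_k|P'(\la_k^+)|$, the Coulomb energies cancel to leading order: $\prod_k|P'(\la_k)|=\Delta^2$. This is why no $\iint\ln|x-y|$ term appears in $I$. It leaves an upper bound of the form
\[
\overline\Pi_N\text{-integrand}\le e^{o(N)}\prod_k e^{-V(\la_k)}\,\max\Big\{0,\,1+\tfrac{2}{\ell}\tfrac1N\sum_{j\ne k}\ln|\la_k-\la_j|+\mathrm{err}\Big\}\,\frac{N!}{N^N}\cdots .
\]
The error here is controlled by the local spacings.

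\emph{Step 2: regularise and pass to the empirical measure.} The quantity $\frac1N\sum_{j\neq k}\ln|\la_k-\la_j|$ is not continuous in $\op{L}_N$. I would bound it from above by $\int \ln_\eta|\la_k-y|\dd\op{L}_N(y)$ with a truncated logarithm $\ln_\eta=\ln\max(\eta,|\cdot|)$, cut off at large distances using Hypothesis \ref{potentialh2}. This is continuous, so on $B(\mu,\delta)$ it is within $o_\delta(1)$ of $\int\ln_\eta|x-y|\dd\mu(y)$ uniformly in $x$ on compacts. The product $\prod_k\max\{0,\cdot\}$ is then rewritten as $\exp\{N\int\ln\max(0,g_\eta)\dd\op{L}_N\}$. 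Next, the integral over $\bs\la^+_N$ with weight $e^{-V}$ restricted to $\op{L}_N\in B(\mu,\delta)$ is handled by Sanov's theorem, which is the standard source of the entropy term. Concretely, the ordered-to-unordered $N!$ and $N^{-N}$ factors produce $-\mathrm{Ent}$ through the relative entropy with respect to $e^{-V}\dd x/Z$. Applying the Sanov upper bound to the continuous functional $\int V\dd\nu-\int\ln\max(0,g_{\eta,\nu})\dd\nu$, after truncating $V$ via exponential tightness, gives
\[
\limsup_{N}\tfrac1N\ln\overline\Pi_N[B(\mu,\delta)]\le -\inf_{\nu\in \ov{B(\mu,\delta)}} I_\eta[\nu]+o_\delta(1).
\]
Letting $\delta\to0$ and then $\eta\to0$ uses lower semicontinuity and monotone convergence $I_\eta\uparrow I$, the latter from Proposition \ref{propositiongrf}.

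\emph{Step 3: constrained case and main obstacle.} For $\overline\Pi_{N,\mathsf c}$ the same pointwise bound holds. The $\delta(\ov{\bs\la}^+_N)$ is handled by integrating out one eigenvalue; this costs only a polynomial factor, which is negligible at speed $N$, using $\theta>1$ for tail control. The main obstacle is Step 1: obtaining a uniform upper bound on the periods when eigenvalues cluster or gaps are small, where $\max|P|$ on a gap may be close to $2\veps_N$. I would first try the crude estimate that the period is at most $\pi/\sqrt{|P'(\la_k)P'(\la_{k+1})|}$ times the logarithmic factor. I would then show that configurations where the error term matters have exponentially negligible contribution, using the $\mathcal{A}_N$ constraint.
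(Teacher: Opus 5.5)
\textbf{Assessment.} Your Step 2 (truncated logarithm, tilting or Sanov, Donsker--Varadhan) is essentially the second half of the paper's argument. Step 1, however, has a genuine gap, and it is the whole difficulty; the mechanisms you propose there cannot work at speed $N$.

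\textbf{Why Step 1 fails.} The $\mu_j$-integrals do not decouple into one-dimensional periods, because $\Delta(\boldsymbol{\mu}_{N-1})$ couples them, and no pointwise treatment of this Vandermonde on $\mathcal{D}$ survives. Take even the sharp pointwise bound $|\mu_j-\mu_i|\le\lambda_{j+1}-\lambda_i$ for $i<j$. The true size of $\mathcal{I}$ is about $\Delta(\boldsymbol{\lambda}^+_N)^{-1}\prod_j\ln\delta_j^{-1}$. Up to polynomial factors, the bound overshoots this by $\prod_{i<j\le N-1}\frac{\lambda_{j+1}-\lambda_i}{\lambda_j-\lambda_i}$, which is $\mathrm{e}^{N\ln N(1+\mathrm{o}(1))}$ for equally spaced eigenvalues. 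So the cancellation between $\Delta(\boldsymbol{\mu}_{N-1})$ and $\prod_k\sqrt{P^+(\mu_k)P^-(\mu_k)}$ has to be implemented exactly, not ``up to neighbour corrections''.

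Your period asymptotics also hold only when $\boldsymbol{\lambda}^\pm_N$ are exponentially close and well separated. That is the regime of Lemma \ref{lambdaexpansion}, which serves the lower bound. At a general point of $\mathcal{A}_N$, Proposition \ref{kappeler} only gives $|\lambda_k^+-\lambda_k^-|\le 2\pi\mathrm{e}^{-\ell/2}/N$. Eigenvalues may cluster, and the ratio $\Delta(\boldsymbol{\lambda}^+_N)/\Delta(\boldsymbol{\lambda}^-_N)$ is not controlled pointwise. Arguing that such configurations are ``exponentially negligible using the $\mathcal{A}_N$ constraint'' is circular: it presupposes an upper bound on the density there, which is exactly what is being proved.

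\textbf{The missing idea.} What is missing is the paper's exact determinantal treatment.
\begin{itemize}
\item Split each $\mu_j$-interval into halves $\mathcal{K}_j^{\sigma_j}$ and factor off the singular pair $(\mu-\lambda^+_{j+\sigma_j})(\mu-\lambda^-_{j+\sigma_j})$.
\item Separate $P^+$ from $P^-$ by Cauchy--Schwarz and write each piece as a determinant.
\item Multiply by the partial-fraction matrix $\op{A}(\boldsymbol{\lambda}^\pm_N)$, whose determinant is $\pm\Delta(\boldsymbol{\lambda}^\pm_N)^{-1}$. This gives $\mathcal{I}\le\prod_{\pm}\{\mathcal{S}^{\pm}/\Delta(\boldsymbol{\lambda}^{\pm}_N)\}^{1/2}$, with only Cauchy--Schwarz losses.
\item Bound the $2^{N-1}$-term sum $\mathcal{S}^\pm$ using column operations at down-steps, which cancel the $\mathrm{O}(N)$ entries $\ln\delta_{j+1}^{-1}$ that would otherwise be counted twice.
\item Apply Hadamard's inequality and use that each down-step is suppressed by a factor of order $N^{-1/2}$; together these give Proposition \ref{integralupperbound}.
\end{itemize}

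Non-generic configurations are not discarded. Their cost is stored in $\mathcal{T}(\boldsymbol{\lambda}^\pm_N)$, whose integral against $\mathrm{e}^{-W}$ is only $\mathrm{e}^{\mathrm{O}(N^{7/8})}$ (Lemmas \ref{ABbound}--\ref{Bcond}, with the cluster decomposition of Proposition \ref{CNbound}). The leftover $\sqrt{\Delta(\boldsymbol{\lambda}^+_N)/\Delta(\boldsymbol{\lambda}^-_N)}$ is removed by a further Cauchy--Schwarz and the Jacobian identity of Proposition \ref{jacobian}. Once such a uniform pointwise bound is available on all of $\mathcal{A}_N$, your Step 2 and the treatment of the constraint go through essentially as in the paper.
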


The lower bound on balls is proven in Section \ref{section:ldlb} whilst the upper bound on balls is proven in Section \ref{section:ldub}. The final step is to prove \textit{exponential tightness} of the sequence of probability measures; this is done in  Section \ref{section:ldub}.

\begin{prop}[Exponential tightness]\label{exponentialtightness}

There exists a sequence of compact sets $K_L \subset \mathcal{M}_1(\mathbb{R})$ ($L > 0$) such that

\begin{align}
\limsup_{L \to +\infty} \limsup_{N \to +\infty} \frac{1}{N} \ln \overline{\Pi}_{N}[K_L^c] = - \infty \, .
\end{align}
Likewise, if we let $K_{L,\mathsf{c}} = K_L \cap \mathcal{M}_{1,\mathsf{c}}(\mathbb{R})$
then (we claim, according to our earlier choice of $K_L$) $K_{L,\mathsf{c}}$
is compact in the subspace topology of $\mathcal{M}_{1,\mathsf{c}}(\mathbb{R})$
\begin{align}
\limsup_{L \to +\infty} \limsup_{N \to +\infty} \frac{1}{N} \ln \overline{\Pi}_{N,\mathsf{c}}[K_{L,\mathsf{c}}^c] = - \infty \, .
\end{align}
where here the complement is taken in $\mathcal{M}_{1,\mathsf{c}}(\mathbb{R})$.
\end{prop}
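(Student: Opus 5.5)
We take $K_L=\{\mu\in\mathcal{M}_1(\mathbb{R}) : \int V\,\mathrm{d}\mu\le L\}$. By \ref{potentialh2}, $V(x)\ge c|x|^\theta - C$, so $K_L$ is tight. It is also closed in the weak topology, since $\mu\mapsto\int V\,\mathrm{d}\mu$ is lower semi-continuous, being a supremum of $\int \min(V,M)\,\mathrm{d}\mu$. Hence $K_L$ is compact by Prokhorov. For the constrained space we use $\theta>1$: on $K_L$ the family $|x|$ is uniformly integrable, because $\int_{|x|>R}|x|\,\mathrm{d}\mu\le R^{1-\theta}(L+C)/c$. Therefore the map $\mu\mapsto\int x\,\mathrm{d}\mu$ is weakly continuous on $K_L$, and $K_{L,\mathsf{c}}=K_L\cap\{\int x\,\mathrm{d}\mu=0\}$ is a closed subset of a compact set, hence compact in the subspace topology.

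For the mass estimate we split the weight as $\mathrm{e}^{-V}=\mathrm{e}^{-V/2}\mathrm{e}^{-V/2}$. On $K_L^c$ we have $\sum_k V(\lambda_k^+)>NL$, so
\begin{align*}
\overline{\Pi}_N[K_L^c]\le \mathrm{e}^{-NL/2}\,\frac{(N-1)!}{|\ln(2\veps_N)|^{N-1}}\int_{\mathcal{A}_N}\mc{I}(\bs{\la}^+_N;\veps_N)\,\Delta(\bs{\la}^+_N)\prod_k\mathrm{e}^{-V(\lambda_k^+)/2}\,\mathrm{d}\bs{\la}^+_N .
\end{align*}
It then suffices to show that the remaining quantity, which is $\overline{\Pi}_N[\mathcal{M}_1(\mathbb{R})]$ computed for the potential $V/2$, is at most $\mathrm{e}^{CN}$. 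Since $V/2$ satisfies the same hypotheses, this reduces to a crude bound $\limsup_N\frac1N\ln\overline{\Pi}_N[\mathcal{M}_1(\mathbb{R})]<\infty$ for any admissible potential. In the constrained case we apply the same split, with the $\delta(\ov{\bs{\la}}_N^+)$ factor carried along.

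\textbf{Main obstacle.} The crude upper bound on the total mass has to control $\mc{I}(\bs{\la}^+_N;\veps_N)\Delta(\bs{\la}^+_N)$ by $\mathrm{e}^{O(N)}\prod_k(1+|\lambda_k^+|)^{O(1)}$-type factors, or more precisely by the upper-bound machinery.

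\emph{First attempt.} The cleanest route is to reuse the estimates that establish Proposition \ref{upperboundonballs}, which must produce a bound of the form $\overline{\Pi}_N[E]\le \mathrm{e}^{o(N)}\exp(-N\inf_E \tilde I)$ with $\tilde I$ bounded below. Since $\inf I>-\infty$ by Proposition \ref{propositiongrfmin}, taking $E$ to be the whole space gives the claim.

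\emph{Fallback.} If that machinery only handles small balls, we bound directly. On each interval of $\mathcal{D}$ the factor $1/\sqrt{P^+P^-}$ has integrable inverse-square-root singularities at the band edges. Its integral over the $k$-th gap, multiplied by the corresponding Vandermonde-type factors, is of order $|\ln(2\veps_N)|\sim N\ell/2$ divided by the product of distances to the other roots. This is exactly why the normalisation $(N-1)!/|\ln(2\veps_N)|^{N-1}$ appears: $\mc{I}$ behaves like $|\ln 2\veps_N|^{N-1}$ times a determinant of size $\mathrm{e}^{O(N)}$ relative to $\Delta(\bs{\la}^+_N)^{-1}$ up to polynomial factors. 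The product $\mc{I}\Delta$ is then bounded by $\mathrm{e}^{CN}\prod_k(1+|\lambda_k^+|)^{C'}$, up to the $(N-1)!$ compensation. These polynomial factors are absorbed by $\mathrm{e}^{-V/2}$ thanks to the algebraic growth in \ref{potentialh2}.

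We finish by taking $\limsup_N\frac1N\ln$, which gives at most $-L/2+C$, and then letting $L\to\infty$.
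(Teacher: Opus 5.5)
Your choice of $K_L$, the compactness argument (including uniform integrability of $|x|$ on $K_L$ when $\theta>1$ in the constrained case), and the final step all match the paper. The mass estimate is organised differently. You extract the decay $\mathrm{e}^{-NL/2}$ at the outset by splitting $\mathrm{e}^{-V}=\mathrm{e}^{-V/2}\mathrm{e}^{-V/2}$, which reduces everything to a crude bound $\overline{\Pi}_N[\mathcal{M}_1(\mathbb{R})]\le \mathrm{e}^{CN}$ for the admissible potential $V/2$. The paper instead keeps $V$ and runs the upper-bound machinery with $\mathbbm{1}_E$. It then extracts the decay from $J_{\bs{\mf{r}}}$ with $\phi\equiv 0$, using $J_{\bs{\mf{r}}}[\mu]\le -\frac14\int V\,\mathrm{d}\mu + C'$. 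Because of this it needs an extra application of Corollary \ref{Vrelation} to transfer $\int V\,\mathrm{d}\op{L}_N^{(\bs{\la}_N^+)}>L$ to $\op{L}_N^{(\bs{\la}_N^-)}$. Your split is cleaner and makes the reduction modular.

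The point to repair is how you justify the crude bound. It does not follow from $\inf I>-\infty$. The ball estimates of Proposition \ref{upperboundonballs} say nothing about the total mass without exponential tightness, so that appeal is circular. The bound produced by the machinery involves $J_{\bs{\mf{r}}}$, not $I$. What you should invoke is \eqref{PiBarBound1} with $E$ the whole space, together with the elementary fact $\sup_{\mu}J_{\bs{\mf{r}}}[\mu]<+\infty$. This holds because $V\ge 0$, $\phi$ is bounded, and $f_{\kappa,\mathbf{M}}$ is bounded above. After pulling out $\mathrm{e}^{N\sup J_{\bs{\mf{r}}}}$, the remaining integral is at most $\mathrm{e}^{CN^{7/8}}$ exactly as in the proof of Proposition \ref{wldubc}. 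That proof uses Cauchy--Schwarz, the Jacobian of Proposition \ref{jacobian}, and Lemmata \ref{ABbound}, \ref{Acond}, \ref{Bcond}. Since $V/2$ satisfies \ref{potentialh1}--\ref{potentialh2} with the same $\theta$, this gives what you need in both models.

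Your fallback would not work as written. Nothing in your sketch, or in the paper, establishes a pointwise bound of the form $\mc{I}(\bs{\la}_N^+;\veps_N)\,\Delta(\bs{\la}_N^+)\le \mathrm{e}^{CN}|\ln(2\veps_N)|^{N-1}\prod_k(1+|\la_k^+|)^{C'}$. Proposition \ref{integralupperbound} only yields a bound containing $\sqrt{\Delta(\bs{\la}_N^+)/\Delta(\bs{\la}_N^-)}$, which is unbounded on $\mc{A}_N$. It also contains the factors $\mc{T}$ involving $\ln^2|x_i-x_j|$. Both are controlled only after integration, through the change of variables $\bs{\la}_N^+\mapsto\bs{\la}_N^-$ and Proposition \ref{CNbound}.
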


It is well known (see Lemma 1.2.8 of \cite{DemboZ01}) that when both exponential tightness and a weak large deviation principle hold, then a full large deviation
principle follows.  Hence, Propositions \ref{lowerboundonballs},  \ref{upperboundonballs} and \ref{exponentialtightness} given below imply Theorem \ref{maintheorem}.

To prove Propositions \ref{lowerboundonballs} and \ref{upperboundonballs} the central point  is to estimate the joint density of the eigenvalues for the constrained
\eqref{constraineddensity} and unconstrained models \eqref{unconstraineddensity}, which contains the highly non-trivial $(N-1)$-fold integral
$\mc{I}(\bs{\la}^+_N ; \veps_N) $. If this term were replaced by a constant, the large deviations would already be known, see \textit{e.g.} Theorem 1.1 of
\cite{Garcia19} and Theorem 3.1 of \cite{GuionnetM22}. Estimating $\mc{I}(\bs{\la}^+_N ; \veps_N) $ is in fact quite challenging.
Its integrand is given by a ratio of products each involving $N^2$ terms. It so happens that there are fine cancellations when $N\tend + \infty$
between the numerator and denominator which result, ultimately, in an exponential-scale behaviour of order $N$ for $\Delta(\bs{\la}^+_N) \mc{I}(\bs{\la}^+_N ; \veps_N) $.
We stress that, on top of the mentioned structure, the denominator of the integrand of $\mc{I}(\bs{\la}^+_N ; \veps_N)$ vanishes as a square root of the distance
to $ \Dp{}\mc{D}$ and exhibits a coalescence as $N\tend +\infty$ of two square roots generating a pole in the limit.
This makes it impossible to use any kind of rough estimates in the intermediate steps.

The core idea to understanding these cancellations is that we expect the $\lambda_a^+$, $\lambda_a^-$ and $\eta_a$ to all be very close to each other, indeed
exponentially close, as they are respectively the zeroes of $P^\pm$ and $P$, where $P^\pm$ and $P$ differ by a constant of order $\veps_N$ which goes to zero
exponentially fast. Hence in the large $N$ limit, the two inverse square root singularities at $\lambda_a^+$ and $\lambda_a^-$ tend towards a single non-integrable
singularity. When $N$ is large but finite, $|\lambda_{j}^+-\lambda_j^-|$ and $|\lambda_{j+1}^+-\lambda_{j+1}^-|$ are typically much smaller than
$|\lambda_{j+1}^\pm-\lambda_j^\pm|$, and so we have an accumulation of mass at the two endpoints of the integration interval of $\mu_j$. Hence let us assume that $\mu_j$
is very close to one of the endpoints, and discard the rest of the integration region. Next, we observe that if $\mu_j$ is near the right endpoint, and $\mu_{j+1}$ is
near the left endpoint, $|\mu_j - \mu_{j+1}|$ is small, and hence the Vandermonde $\Delta(\boldsymbol{\mu}_{N-1})$ is small, and so such configurations will be
penalised. Hence the dominant configurations will be those in which each $\mu_j$ pairs up one-to-one with a $\lambda_a^\pm$.
Since there are $N-1$ of the former and $N$ of the latter, one $\lambda_k^\pm$ will be left over. For such a setting,
\beq
\mu_j \left\{ \ba{c} j=1,\dots, k-1 \\
   j = k, \dots, N-1  \ea \right. \;\;  \e{will} \, \e{be} \, \e{close} \, \e{to} \;\; \left\{\ba{c}  \la_j^{\ups_j}  \vspace{1mm} \\ \la_{j+1}^{\ups_j} \ea \right. \;.
\nonumber
\enq

Putting all these ideas together, if we assume that, for some $p>1$,
$|\lambda_a^{+} - \lambda_a^{-}| \ll N^{-p} \ll |\lambda_{b+1}^{\pm} - \lambda_b^{\pm}|$ for all $a \in \intn{1}{N}$ and $b \in \intn{1}{N-1}$,  we expect
that in the leading order approximation
\begin{align*}
\mc{I}(\bs{\la}^+_N ; \varepsilon_N) \simeq \sum_{k=1}^N \prod_{j=1}^{k-1} \bigg\{  \Int{\lambda_j^{\ups_j}}{\lambda^{\ups_j}_j + N^{-p}} \hspace{-3mm} \mathrm{d}\mu_j \bigg\}
\prod_{j=k+1}^{N} \bigg\{  \Int{\lambda_j^{\ups_j} - N^{-p}}{\lambda^{\ups_j}_j} \hspace{-3mm} \mathrm{d}\mu_j \bigg\}
\frac{\Delta(\boldsymbol{\mu}_{N-1})}{\prod_{ a=1  }^{N}\sqrt{P^+(\mu_a) P^-(\mu_a)}} \, .
\end{align*}
Under the integral sign, given the expected structure of integration variables, we expect
$$\Delta(\boldsymbol{\mu}_{N-1}) \simeq \Delta(\boldsymbol{\lambda}_N^+) \prod_{\substack{j=1 \\ j \neq k}}^N |\lambda_k^+ - \lambda_j^+|^{-1} \, .$$
Furthermore, we roughly expect
\beq
P^\pm(\mu_a) \simeq \left\{ \ba{cc} (P^{\pm})^\prime(\lambda_a^+)(\mu_a - \lambda_a^{\pm})   & a=1,\dots, k-1   \vspace{2mm} \\
 (P^{\pm})^\prime(\lambda_{a+1}^+)(\mu_a - \lambda_{a+1}^{\pm})   & a=k,\dots, N-1   \ea \right. \, .
\enq
$P^+$ and $P^-$ only differ by a constant, hence $(P^+)^\prime = (P^-)^\prime$. As a consequence
\begin{align*}
\mc{I}(\bs{\la}^+_N ; \varepsilon_N) \simeq \sum_{k=1}^N \frac{\Delta(\boldsymbol{\lambda}_N^+)}{\prod_{j=1}^N |(P^+)^\prime(\lambda_j^+)|} &
\prod_{j=1}^{k-1} \bigg\{  \Int{\lambda_j^{\ups_j}}{\lambda^{\ups_j}_j + N^{-p}} \hspace{-3mm}
\frac{1}{\big\{ |\mu  - \lambda_j^+| |\mu - \lambda_j^-|\big\}^{1/2 } } \,  \mathrm{d}\mu
\bigg\} \\
&\times \prod_{j=k+1}^{N} \bigg\{  \Int{\lambda_j^{\ups_j} - N^{-p}}{\lambda^{\ups_j}_j}  \hspace{-2mm}
\frac{1}{ \big\{ |\mu  - \lambda_j^+| |\mu - \lambda_j^-|\big\}^{1/2 } }
\,  \mathrm{d}\mu  \bigg\} \, .
\end{align*}
Then, we observe that
\begin{align*}
\Int{\lambda_j^{\ups_j}}{\lambda^{\ups_j}_j + N^{-p}}  \hspace{-3mm}
\frac{1}{ \big\{ |\mu  - \lambda_j^+| |\mu - \lambda_j^-|\big\}^{1/2 } } \,  \mathrm{d}\mu  =
\ln |\lambda_j^+ - \lambda_j^-|^{-1} + \mathrm{O}(\ln N) \, .
\end{align*}
$|\lambda_j^+ - \lambda_j^-|$ is exponentially small hence the first term is of scale $N$. We may further simplify the presumed leading order approximant
to $\mc{I}(\bs{\la}^+_N ; \varepsilon_N) $
by observing that $\frac{\Delta(\boldsymbol{\lambda}_N^+)}{\prod_{j=1}^N |(P^+)^\prime(\lambda_j^+)|} = \frac{1}{\Delta(\boldsymbol{\lambda}_N^+)}$. If we hypothesise that we are in the regime where $\mathrm{e}^{-CN} \leq |\lambda_j^+ - \lambda_j^-| \leq \mathrm{e}^{-cN}$ for some $c, C > 0$, then we have
\begin{align*}
\mc{I}(\bs{\la}^+_N ; \varepsilon_N) \simeq \frac{\mathrm{e}^{\mathrm{O}(\ln N)}}{\Delta(\boldsymbol{\lambda}_N^+)} \prod_{j=1}^N \ln |\lambda_j^+ - \lambda_j^-|^{-1} \, .
\end{align*}
From the relation $P^+ - P^- = -4\varepsilon_N$ we have
\begin{align*}
    \ln |\lambda_j^+ - \lambda_j^-| =\ln 4 \varepsilon_N -\sum_{\substack{k=1 \\ k\neq j}}^N \ln |\lambda_k^+ - \lambda_j^-| \,.
\end{align*}
Hence, all-in-all, we expect something like
\beq
\mc{I}(\bs{\la}^+_N ; \varepsilon_N) \simeq \frac{|\ln \varepsilon_N |^N}{\Delta(\boldsymbol{\lambda}_N^+)} \exp\Big\{ N \Int{x \neq y}{}
\ln \Big[ 1+ \frac{2}{\ell} \Int{\R}{} \ln|x-y| \, \mathrm{d}\op{L}_N^{(\bs{\lambda}_N^+)}(y) \Big]
\, \mathrm{d}\op{L}_N^{(\bs{\lambda}_N^-)}(x)  + \mathrm{o}(N) \Big\}  \, .
\enq
The Vandermonde will cancel with the Vandermonde appearing in the density, while $|\ln \varepsilon_N |^N$
is $\bs{\lambda}_N^+$ independent so we are free to divide it out. In this way, we are left with something at exponential scale $N$ and we see how the the double-
logarithmic term present in the rate function \eqref{ratefunction} arises, since the measures $\op{L}_N^{(\bs{\lambda}_N^+)}$ and $\op{L}_N^{(\bs{\lambda}_N^-)}$ will be
close.

Let us now sketch how to turn these rough ideas into a rigorous argument.
For the lower bound, we show in Lemma \ref{lambdaexpansion} that under a hypothesis which roughly implies  that the zeroes $(\eta_a)_{1\le a\le N}$ of $P$ are
sufficiently far apart, see \eqref{ecriture hypotheses sur les quantites petites}, an event that will be shown to have a large probability, we have
\beq
\la_{a}^{\ups}\, = \, \eta_a \, +  \, 2 \ups \f{ \veps_N }{P^{\prime}(\eta_a) } (1+\mathrm{o}(1)) \, ,  \quad \e{for} \; \e{any} \quad
\ups \in \{\pm\} \; \; \e{and} \;\; a \in \intn{1}{N}\, .
\enq
To estimate $\mc{I}$ in this regime, we first show in Proposition \ref{exactlowerbound} that
$$
\mc{I}(\bs{\la}^+_N ; \veps_N)  \,  \geq  \, N    \frac{ \det_{N-1}[ \op{A} ] }{ \De(\bs{\eta}_N) } \, ,
$$
where  $\op{A}$ is the $(N-1)\times (N-1)$ matrix with entries
$
A_{ks} = \ln\Big|  \big(\lambda_{k+1}^{\ups_k} - \eta_s \big) / \big(\lambda_{k}^{\ups_k} - \eta_s \big) \Big|$.
We can then lower bound  $\mc{I}$ in this regime (see the proof of Corollary \ref{detlowerbound2}) roughly by
$$    \frac{ N   }{ \De(\bs{\eta}_N) } |\ln (2\veps_N)|^{N-1} \pl{j=1}{N}  \bigg( 1- \frac{   \ln|P^\prime(\eta_j)|  }{ \ln(2\veps_N) } - C \frac{\ln N}{N} \bigg)^{1- \frac{1}{N}}$$
from which the desired estimate follows, as we expect that when $ \op{L}^{(\bs{\eta}_N)}_N \in B(\mu, \de)$ we roughly have

$$1-\frac{   \ln|P^\prime(\eta_j)|  }{ \ln(2\veps_N) }\simeq
1+\frac{2}{\ell} \Int{\mathbb{R}}{} \ln|\eta_j-y| \, \dd\mu(y)\,.$$

To derive the complementary upper bound, a key step is Proposition \ref{integralupperbound} where we show that for any fixed $\gamma>0$ there exists a constant $C$ finite so that
$$
\mc{I}(\bs{\la}^+_N ; \veps_N) \leq \mathrm{e}^{CN^{1- \frac{1}{4}}} \sqrt{\frac{ \mc{T}(\boldsymbol{\lambda}^+_N) \mc{T}(\boldsymbol{\lambda}^-_N)}{\Delta(\boldsymbol{\lambda}_N^+) \Delta(\boldsymbol{\lambda}_N^-)} } \cdot  \prod_{j=1}^N \max\{ \ln |\lambda_j^+ - \lambda_j^-|^{-1}   , \gamma N \}
$$
where $\mc{T}$ is an explicit function \eqref{Tdef} which, when integrated over, will be ultimately shown to provide subleading contributions
in comparison to the scale of the large deviations, see Lemmata \ref{ABbound}, \ref{Acond} and \ref{Bcond}. The parameter $\gamma$
plays the role of a regularisation parameter which we will take to $0$ at the end of our analysis. The proof of the large deviation upper bound then requires dealing
with the factor $\Delta(\bs{\lambda}_N^+)/ \Delta(\bs{\lambda}_N^-)$ which in fact can be eliminated by noticing that this is the Jacobian of the change of
variables $\bs{\lambda}_N^-\mapsto \bs{\lambda}_N^+$, see Proposition \ref{jacobian}.

\section{Lower bound on balls}\label{section:ldlb}

In this section we prove Proposition \ref{lowerboundonballs} for the constrained model. The proof for the unconstrained model follows exactly the same procedure, save
for missing a few extra steps which are needed to handle the conditioning. We do this in three steps: firstly we establish a fully general lower bound on the joint
density of $\boldsymbol{\eta}_{N}$, a set of roots related to $\boldsymbol{\lambda}^+_{N}$ through \eqref{defP1}-\eqref{definition P pm}. Secondly we simplify  this
lower bound further, conditionally to certain hypotheses; and, thirdly, we apply these conditional lower bounds, showing that these hypotheses obtain with high
probability  that $\boldsymbol{\lambda}^+_{N}$ is well approximated by $\boldsymbol{\eta}_{N}$ what results in a lower bound on the $\boldsymbol{\lambda}^+_{N}$
density.

\subsection{Exact lower bound on the density}

Let us begin by establishing the following preliminary  lower bound on the integrand of $ \mc{I}(\bs{\la}^+_N ; \veps_N) $. 
\begin{prop}\label{exactlowerbound} Let $\mathcal{I}$ be defined by \eqref{density}. Then, one has
\begin{align}
\mc{I}(\bs{\la}^+_N ; \veps_N)  \,  \geq  \, N    \frac{ \det_{N-1}[ \op{A} ] }{ \De(\bs{\eta}_N) } \, .
\end{align}
where  $\op{A}$ is the $(N-1)\times (N-1)$ matrix with entries
\begin{equation}
A_{ks} = \ln\left| \frac{ \lambda_{k+1}^{\ups_k} - \eta_s }{ \lambda_{k}^{\ups_k} - \eta_s} \right|  \; , \quad \bs{\eta}_N \in \R^N_{<}\,
\label{definition matrice A}
\end{equation}
being the roots introduced in \eqref{defP1}.
\end{prop}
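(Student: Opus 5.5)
The plan is to replace the square root in the integrand of $\mc{I}(\bs{\la}^+_N;\veps_N)$ by the simpler quantity $|P(\mu_k)|$, write the resulting integrand as a Cauchy-type determinant, and integrate it row by row.

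\textbf{Step 1: comparison of the integrand.} On each interval $[\la_k^{\ups_k};\la_{k+1}^{\ups_k}]$ the polynomial $P$ satisfies $|P(\mu)|\ge 2\veps_N$. Indeed these are exactly the bands where $P^+P^-=P^2-4\veps_N^2\ge 0$ (see Figure \ref{figure}), and there $P$ has the constant sign $\ups_k$. Hence
\[
0\le P^+(\mu)P^-(\mu)=P(\mu)^2-4\veps_N^2\le P(\mu)^2 \quad\text{on }\mc{D},
\]
so $1/\sqrt{P^+(\mu_k)P^-(\mu_k)}\ge 1/|P(\mu_k)|$. Since $\bs{\mu}_{N-1}\in\R^{N-1}_<$ on $\mc{D}$, we have $\De(\bs{\mu}_{N-1})>0$, and therefore
\[
\mc{I}(\bs{\la}^+_N;\veps_N)\ \ge\ \Int{\mc{D}}{}\frac{\De(\bs{\mu}_{N-1})}{\prod_{k=1}^{N-1}|P(\mu_k)|}\,\dd\bs{\mu}_{N-1}.
\]
Because the sign of $P(\mu_k)$ is the fixed $\ups_k$ on the $k$-th interval, the absolute values can be removed at the cost of an overall constant sign $\prod_k\ups_k$. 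The integrand is then a rational function of the $\mu_k$.

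\textbf{Step 2: Cauchy-type determinant.} I would use the bordered Cauchy identity for the $N\times N$ matrix $\op{M}(\bs{\mu})$ whose first $N-1$ rows are $\big(1/(\mu_k-\eta_s)\big)_{s=1}^N$ and whose last row is $(1,\dots,1)$. Because $\prod_{s=1}^N(\mu_k-\eta_s)=P(\mu_k)$, this identity reads
\[
\det_N\op{M}(\bs{\mu}) \;=\; \pm\,\frac{\De(\bs{\mu}_{N-1})\,\De(\bs{\eta}_N)}{\prod_{k=1}^{N-1}P(\mu_k)}.
\]
It can be obtained as a limit of the square Cauchy determinant by sending an extra variable $\mu_N\to\infty$ after multiplying the last row by $\mu_N$. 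Consequently the integrand equals $\pm\det_N\op{M}(\bs{\mu})/\De(\bs{\eta}_N)$. Each $\mu_k$ enters only row $k$, and $\mc{D}$ is a product of intervals, so by multilinearity the integral of the determinant is the determinant of the row-wise integrals. The $\eta_s$ lie outside every interval, since $|P|\ge 2\veps_N>0$ there, so
\[
\Int{\la_k^{\ups_k}}{\la_{k+1}^{\ups_k}}\frac{\dd\mu}{\mu-\eta_s}=\ln\left|\frac{\la_{k+1}^{\ups_k}-\eta_s}{\la_k^{\ups_k}-\eta_s}\right|=A_{ks}.
\]
This produces the $N\times N$ matrix with rows $(A_{k1},\dots,A_{kN})$ for $k\le N-1$ and a last row of ones.

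\textbf{Step 3: removing a column.} The key observation is that each full row sum vanishes:
\[
\sum_{s=1}^N A_{ks}=\ln\left|\frac{P(\la_{k+1}^{\ups_k})}{P(\la_k^{\ups_k})}\right|=\ln\frac{2\veps_N}{2\veps_N}=0,
\]
because $P(\la_a^{\pm})=\pm2\veps_N$. Adding all columns to the last one turns that column into $(0,\dots,0,N)^{\op{t}}$. Expanding along it gives $N\det_{N-1}[\op{A}]$, where $\op{A}$ consists of the first $N-1$ columns. This yields the bound $\mc{I}\ge N\det_{N-1}[\op{A}]/\De(\bs{\eta}_N)$.

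\textbf{Main obstacle.} The delicate point is bookkeeping the global sign. The sign from the Cauchy identity, the sign $\prod_k\ups_k$ from removing absolute values, and the sign of the cofactor expansion must combine to $+1$. This is consistent with the left-hand side being positive, so at worst one obtains the bound with $|\det_{N-1}[\op{A}]|$. I would fix the sign by checking it directly, either in a degenerate configuration or by counting transpositions, rather than tracking it abstractly.
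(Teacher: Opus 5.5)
Your proof is correct. Its outer structure is the same as the paper's: the pointwise bound $\sqrt{P^+P^-}\le|P|$ on the bands (the paper phrases it as AM--GM), multilinearity of the determinant over the product domain $\mc{D}$, and the final step using $\sum_{s=1}^N A_{ks}=0$ together with the column operation $\bs{C}_N\hookrightarrow\sum_s\bs{C}_s$ (Lemma \ref{columnoperations}).

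The difference is the middle step. The paper proceeds in several stages:
\begin{itemize}
\item it writes $\De(\bs{\mu}_{N-1})$ as a Vandermonde determinant and integrates, obtaining $\det_{N-1}\big[\int\mu^{j-1}/P(\mu)\,\dd\mu\big]$;
\item it uses row operations to replace $\mu^{j-1}$ by $\prod_{k<j}(\mu-\eta_k)$, then factors the result as $\op{B}=\op{A}\op{C}$ via partial fractions;
\item it squares up the rectangular factors into $\wh{\op{A}}\,\wh{\op{C}}$, verifying the last row by a residue identity and reading off $\det\wh{\op{C}}$ from triangularity.
\end{itemize}
Your bordered Cauchy identity packages all of this into one pointwise formula. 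In effect, the paper's factorisation is a proof of that identity carried out after integration. Your route is shorter and avoids the somewhat ad hoc extension of $\op{A}$ and $\op{C}$. The paper's route is self-contained and needs neither an external determinant formula nor a limiting argument.

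The sign is not actually an obstacle. The integrand $\De(\bs{\mu}_{N-1})/\prod_k|P(\mu_k)|$ is positive on $\mc{D}$, and its integral equals $\pm N\det_{N-1}[\op{A}]/\De(\bs{\eta}_N)$. So it equals $N|\det_{N-1}[\op{A}]|/\De(\bs{\eta}_N)\ge N\det_{N-1}[\op{A}]/\De(\bs{\eta}_N)$; the absolute-value version is stronger than the claim, not weaker.

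The sign can also be fixed in one line. Your limiting argument gives $\det_N\op{M}(\bs{\mu})=\De(\bs{\mu}_{N-1})\prod_{i<j}(\eta_i-\eta_j)/\prod_kP(\mu_k)$. Here $\prod_{i<j}(\eta_i-\eta_j)=(-1)^{N(N-1)/2}\De(\bs{\eta}_N)$, which cancels exactly against $\prod_{k=1}^{N-1}\ups_k=(-1)^{N(N-1)/2}$. Hence the integrand is exactly $\det_N\op{M}(\bs{\mu})/\De(\bs{\eta}_N)$. The final cofactor expansion along the $(N,N)$ entry carries sign $+1$. In particular $\det_{N-1}[\op{A}]>0$, as in the paper.
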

\begin{proof}

By the
 inequality of arithmetic and geometric means (for short, the AM-GM inequality) we have
\beq
\sqrt{P^+(\mu)P^-(\mu)} \, \leq  \, \Big| \frac{P^+(\mu) + P^-(\mu)}{2} \Big| \, = \, (-1)^{N-k}  P(\mu) \quad  \e{for} \quad  \mu \in \intff{ \lambda_k^{\ups_k} }{ \lambda_{k+1}^{\ups_k} } \, ,
\enq
with $P$ as introduced in \eqref{defP1}. This leads to the lower bound involving the roots of $P$:
\bem
\mc{I}(\bs{\la}^+_N ; \veps_N) \, \geq (-1)^{\frac{N(N-1)}{2}} \, \pl{k=1}{N-1} \bigg\{ \Int{\lambda_k^{\ups_k}}{\lambda_{k+1}^{\ups_k}} \hspace{-2mm} \mathrm{d}\mu_k   \bigg\}
\f{ \Delta( \bs{\mu}_{N-1} )  }{  \pl{k=1}{N-1} \pl{u=1}{N}  (\mu_k-\eta_u)    } \\
\, = \, (-1)^{\frac{N(N-1)}{2}} \det\bigg[  \Int{ \lambda_k^{\ups_k} }{ \lambda_{k+1}^{\ups_k} }    \f{ \mu^{j-1} }{  \pl{u=1}{N}  (\mu-\eta_u)  }    \dd\mu   \bigg]
\, = \, (-1)^{\frac{N(N-1)}{2}} \det_{N-1}\big[ \op{B} \big] \;.
\end{multline}
In the last line, we have performed elementary row operations to replace $\mu^{j-1}$ with the monic polynomial $\prod_{k=1}^{j-1} (\mu-\eta_k)$, leading to
\begin{align}
B_{kj}  =   \Int{ \lambda_k^{\ups_k} }{ \lambda_{k+1}^{\ups_k} }
\pl{s=j}{N} \Big\{ \frac{1}{ \mu-\eta_s }  \Big\} \, \dd \mu \qquad \e{with} \qquad  k,j \in \intn{1}{ N-1 }\;.
\label{definition matrice tile Bkj}
\end{align}
Note that the integration is over an interval in which $P^+(\mu)P^-(\mu) \geq 0$, whereas one has $P^+(\eta_s)P^-(\eta_s) = - 4 \veps_N^2 < 0$, hence the singularity at $\mu = \eta_s$ is strictly outside of the integration domain and so the integral is well defined.


By a partial fraction decomposition we may write
\begin{align*}
\prod_{s=j}^{N}(\mu-\eta_s)^{-1} = \sum_{s=j}^N \frac{1}{\mu-\eta_s} \prod_{\substack{\ell=j  \\ \ell \neq s}}^N \frac{1}{(\eta_s - \eta_\ell)} \;,
\end{align*}
which permits explicit integration leading to
\begin{align*}
B_{kj} &= \sum_{s=j}^N \ln\left| \frac{\lambda_{k+1}^{\ups_k} - \eta_s}{\lambda_{k}^{\ups_k}  - \eta_s} \right| \prod_{\substack{\ell=j  \\ \ell \neq s}}^N \frac{1}{(\eta_s - \eta_\ell)}
                                            & & k,j \in \intn{1}{ N-1 } \, .
\end{align*}
If, by an abuse of notation, we let $\op{A}$ be the $(N-1) \times N$ matrix with matrix elements given by \eqref{definition matrice A}, and we let
$\op{C}$ be the $N \times (N-1)$ matrix with entries
\begin{align}
%
%
%
C_{sj} &= \mathbbm{1}_{s \geq j} \prod_{\substack{\ell=j  \\ \ell \neq s}}^N \frac{1}{(\eta_s - \eta_\ell)}  \label{definition matrice C}
\end{align}
with $j\in \intn{1}{N-1}$ and $s \in\intn{1}{N}$,  then $\op{B} = \op{A}\op{C}$.  Note that $\op{A}, \op{C}$ are rectangular matrices.
However we may extend $\op{C}$ to a $N\times N$ matrix $\wh{\op{C}}$, by defining $\wh{C}_{sj}$ as in \eqref{definition matrice C}, with now $s,j\in \intn{1}{N}$
and upon understanding an empty product as $1$.

Next, we would like to extend $\op{A}$ to an $N \times N$ square matrix $\wh{\op{A}}$ in a determinant preserving way,
\textit{viz}. $\det_{N-1}\!\big[ \op{B} \big]  =  \det_{N}\!\big[ \wh{\op{B}} \big]$
with $\wh{ \op{B} } = \wh{ \op{A} } \, \wh{ \op{C} }$. We define the $N\times N$ matrix $\wh{ \op{B} }$ as
\beq
\wh{B}_{Nj} \, = \, \de_{Nj} \quad \e{for} \quad  j \in \intn{1}{N} \qquad \e{and} \qquad \wh{B}_{kj} \, = \,B_{kj}
\enq
 with $k,j \in \intn{1}{N-1}$, and $B_{kj}$ still given by \eqref{definition matrice tile Bkj}.
It remains to define  $\wh{B}_{kN}$ for $k \in \intn{1}{N-1}$. We set $\wh{A}_{ks}=A_{ks}$ for $k \in \intn{1}{N-1} $ and $s \in \intn{1}{N} $. Further,
the expression for $\wh{C}_{sj}$ then unambigously fixes
\begin{align*}
\wh{B}_{kN} \,=\, \wh{A}_{kN} C_{NN} \, = \, \wh{A}_{kN} & & k\in \intn{1}{N-1} \;,
\end{align*}
hence fixing the expression for $ \wh{B}_{kN} $ with $k=\intn{1}{N-1}$. The remaining equation
\begin{align*}
\delta_{N j} \, = \, \wh{B}_{Nj} \, = \,  \sum_{s=j}^N \wh{A}_{Ns} \wh{C}_{sj}  & & j\in \intn{1}{N}
\end{align*}
can be satisfied by taking $\wh{A}_{Nk} \, = \,  1$ for $k=\intn{1}{N}$. This holds because
\begin{align*}
\delta_{N j} = \frac{1}{2\pi \i } \Oint{ \Ga(\{\eta_a\} ) }{}  \pl{s=j}{N} \frac{ 1 }{z-\eta_s }\, \mathrm{d}z  \, = \,
\sum_{s=j}^N  \pl{ \substack{\ell=j \\ \ell \neq s } }{N} \frac{ 1 }{ \eta_s-\eta_\ell } & & \forall j\in \intn{1}{N}
\end{align*}
where $\Ga(\{\eta_a\} )$ is an index $1$ loop around $\eta_1, \dots, \eta_N$ and the integral can be taken by evaluating the
residues of the poles located either inside or outside of $\Ga(\{\eta_a\} )$.

These manipulations allow us to factorise the determinant and lead to
\begin{align}
\mc{I}(\bs{\la}^+_N ; \veps_N)  \,  \geq  \,  (-1)^{\frac{N(N-1)}{2}} \det_N\!\big[ \wh{\op{B}}\,  \big] \,  = \,    \frac{ \det_N[ \, \wh{\op{A}} \, ] }{ \De(\bs{\eta}_N) }
\end{align}
where $\wh{\op{A}}$ is now our modified matrix
\begin{align}
\wh{A}_{kj} &= \ln\left| \frac{\lambda_{k+1}^{ \ups_k } - \eta_j }{\lambda_{k}^{\ups_k} - \eta_j} \right| & & k\in \intn{1}{N-1}, \quad j\in \intn{1}{N} \\
\wh{A}_{Nj} &= 1 & & j \in \intn{1}{N} \, .
\end{align}
Finally, by Lemma \ref{columnoperations} one may simplify $\det_N[ \, \wh{\op{A}}]$ further by carrying out column operations, yielding Proposition \ref{exactlowerbound}. \end{proof}
\begin{lemme}\label{columnoperations}
We have the relation,
\beq
\det_N[ \, \wh{\op{A}}]  \, = \, N \det_{N-1}[ \op{A} ]
\enq
with $\op{A}$ as introduced in \eqref{definition matrice A}, $\wh{A}_{kj}=A_{kj}$ and $\wh{A}_{Nj}=1$ where $k \in \intn{1}{N-1}$, $j\in \intn{1}{N}$.

\end{lemme}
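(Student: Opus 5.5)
The identity follows from a single column operation, because each of the first $N-1$ rows of $\wh{\op{A}}$ sums to zero. The plan is to prove that vanishing, then replace the last column by the sum of all columns and expand.

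\textbf{Step 1: the row sums vanish.} Fix $k\in\intn{1}{N-1}$. Summing the $N$ logarithms in row $k$ and using the factorisation \eqref{defP1}, $P(x)=\prod_{j=1}^N(x-\eta_j)$, gives
\begin{align*}
\sum_{j=1}^N \wh{A}_{kj} \, = \, \ln\left| \frac{\prod_{j=1}^N(\lambda_{k+1}^{\ups_k}-\eta_j)}{\prod_{j=1}^N(\lambda_{k}^{\ups_k}-\eta_j)}\right| \, = \, \ln\left|\frac{P(\lambda_{k+1}^{\ups_k})}{P(\lambda_{k}^{\ups_k})}\right| .
\end{align*}
By \eqref{definition P pm}, $P(\lambda_a^{\pm})=\pm 2\veps_N$. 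Both endpoints carry the same superscript $\ups_k$, so the numerator and denominator both have modulus $2\veps_N$. Hence the ratio has modulus $1$ and the row sum vanishes. This step is the only place where the structure of the problem enters, namely the fact that both endpoints of each integration interval in $\mc{D}$ are roots of the same polynomial $P^{\ups_k}$.

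\textbf{Step 2: the column operation.} Add columns $1,\dots,N-1$ of $\wh{\op{A}}$ to column $N$; this does not change the determinant. By Step 1, the new last column has zeros in rows $1,\dots,N-1$. Its entry in row $N$ equals $\sum_{j=1}^N \wh{A}_{Nj}=N$, since that row consists of ones.

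\textbf{Step 3: expansion.} Expand the determinant along the new last column. Only the $(N,N)$ entry is nonzero, and its cofactor sign is $(-1)^{2N}=+1$. The corresponding minor is the upper-left $(N-1)\times(N-1)$ block with entries $A_{ks}$, $k,s\in\intn{1}{N-1}$, which is exactly $\op{A}$ of \eqref{definition matrice A}. Therefore $\det_N[\wh{\op{A}}]=N\det_{N-1}[\op{A}]$.

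No step here is a real obstacle. The only care needed is to check that the absolute values cancel exactly, which they do because the two endpoints share the sign $\ups_k$.
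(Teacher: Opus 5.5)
Your proposal is correct and follows essentially the same argument as the paper: show that each of the first $N-1$ rows of $\wh{\op{A}}$ sums to zero because $|P(\lambda_{k+1}^{\ups_k})|=|P(\lambda_k^{\ups_k})|=2\veps_N$, then replace the last column by the sum of all columns and expand along it. Your write-up differs only in spelling out the cofactor expansion; the paper leaves that step implicit.
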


\begin{proof}

We first establish that $\sum_{j=1}^N A_{kj} = N \delta_{k N}$.

For $k = N$ this is trivial, so assume $k \in \intn{1}{N-1}$. Then
\begin{align*}
\sum_{j=1}^N A_{kj} \,  = \, \sum_{j=1}^N  \ln\left| \lambda_{k+1}^{\ups_k} - \eta_j  \right| - \sum_{j=1}^N  \ln\left|\lambda_{k}^{ \ups_k } - \eta_j\right|
= \ln |P(\lambda_{k+1}^{ -\ups_{k+1}} )| - \ln |P(\lambda_{k}^{\ups_k})| \, .
\end{align*}
But by the equations $P = P^+ - 2\veps_N = P^- + 2\veps_N$, $|P(\lambda_{k+1}^{-\ups_{k+1}})| = |P(\lambda_{k}^{\ups_k})|  = 2\veps_N$ which establishes the desired identity.

Next, on the level of $\det_N[ \, \wh{\op{A}} \, ]$ we perform the column operation $\bs{C}_N \hookrightarrow \sul{s=1}{N} \bs{C}_s$, which yields the claim.

\end{proof}

%




\subsection{Estimates on the roots}\label{prelim}

Consider the polynomial $P(x) \, = \,  \pl{a=1}{N}( x  -\eta_a)$. In the following, we make two assumptions on its roots $\eta_1, \dots, \eta_N$. We assume there exists constants $\varkappa_{N}$ and $\vth_N$ such that
\beq
\sg_{N;a}\, \overset{\mathrm{def}}{=} \,  \f{ \veps_N }{   | P^{\prime}(\eta_a)  |  }  \, \leq \, \varkappa_{N} \qquad \e{and} \qquad
\f{ \sg_{N;a} }{ |\eta_{ka} | } \, \leq \,\vth_N \;,
\label{ecriture hypotheses sur les quantites petites}
\enq
for any $k\not=a$ and where we fix the shorthand notation $\eta_{ba} \, = \, \eta_b - \eta_a$.
The upper bounds should be at least such that $N \vth_N \tend 0$ and $\varkappa_N \tend 0$. By Proposition \ref{Pzetalowerbound2}, this implies that 
\begin{equation}\label{Pzetalowerbound3}
    \min_{k\in \intn{1}{N-1}}|P(\zeta_k)| \geq \frac{1}{4}\vth_N^{-1} \veps_N > 2 \veps_N
\end{equation}
for $N$ sufficiently large. Then by Lemma \ref{etalambdaequivalence}, this implies that the polynomials $P(x) \mp 2 \veps_N$ both have a complete set of a real simple roots, which we denote  $\big\{ \la_{a}^{\pm} \big\}_{a=1}^N$,
\beq
P(x) \mp 2 \veps_N \; = \; \pl{a=1}{N}\big( x -\la_a^{\pm} \big) \;.
\enq

\begin{lemme}\label{lambdaexpansion}

Under the Hypothesis \eqref{ecriture hypotheses sur les quantites petites},  we have that for any $a\in  \intn{1}{N}$, $\ups \in \{\pm\}$,
\beq
\la_{a}^{\ups}\, = \, \eta_a \, +  \, 2 \ups \f{ \veps_N }{P^{\prime}(\eta_a) } \, + \, \e{O}\big(\sg_{N;a} N \vth_N \big) \;,
\enq
with a remainder that is uniform in $a$ and $N$.
\end{lemme}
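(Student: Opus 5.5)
The plan is a Newton/fixed-point argument around each root $\eta_a$ of $P$, carried out on a small interval whose radius is a constant multiple of $\sg_{N;a}$. Fix $a$ and $\ups$, and consider the equation $P(x) = 2\ups\veps_N$, that is, $P^{\mp}(x)=0$ with the matching sign. Write $x=\eta_a + t$. Then
\beq
P(\eta_a+t) \, = \, P^{\prime}(\eta_a)\, t \pl{k\neq a}{} \Big( 1 + \f{t}{\eta_{ak}} \Big),
\enq
because $P(\eta_a+t)=t\prod_{k\neq a}(\eta_{ak}+t)$ and $P^\prime(\eta_a)=\prod_{k\neq a}\eta_{ak}$. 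The equation therefore becomes
\beq
t \, = \, \f{2\ups\veps_N}{P^\prime(\eta_a)} \, G_a(t)^{-1}, \qquad G_a(t) \, = \, \pl{k\neq a}{}\Big(1+\f{t}{\eta_{ak}}\Big).
\enq

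First I restrict to $|t|\le 4\sg_{N;a}$. The second hypothesis in \eqref{ecriture hypotheses sur les quantites petites} gives $|t/\eta_{ak}|\le 4\vth_N$ for every $k\neq a$. Since $N\vth_N\to0$, one has $\sum_{k\neq a}|t/\eta_{ak}|\le 4N\vth_N$, which is small. Consequently $|\ln G_a(t)| \le C N\vth_N$, and so $G_a(t)^{-1} = 1+\e{O}(N\vth_N)$ uniformly in $t$, $a$ and $N$. Likewise $|\partial_t \ln G_a(t)| \le \sum_k |\eta_{ak}|^{-1}(1-4\vth_N)^{-1} \le C N \vth_N/\sg_{N;a}$.

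Next, the map $\Phi(t)=2\ups\veps_N P^\prime(\eta_a)^{-1}G_a(t)^{-1}$ sends $[-4\sg_{N;a};4\sg_{N;a}]$ into $[-3\sg_{N;a};3\sg_{N;a}]$. Its Lipschitz constant there is at most $2\sg_{N;a}\cdot CN\vth_N/\sg_{N;a}=\e{O}(N\vth_N)<1$, so $\Phi$ is a contraction for large $N$. Its unique fixed point $t_a^\ups$ gives a real root of $P-2\ups\veps_N$. It also satisfies
\beq
t^\ups_a \, = \, \f{2\ups\veps_N}{P^\prime(\eta_a)}\big(1+\e{O}(N\vth_N)\big) \, = \, \f{2\ups\veps_N}{P^\prime(\eta_a)} + \e{O}(\sg_{N;a}N\vth_N).
\enq

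The remaining and most delicate step is labelling: I must check that this root is exactly the $a$-th root $\la_a^\ups$. The intervals $[\eta_a-4\sg_{N;a};\eta_a+4\sg_{N;a}]$ are pairwise disjoint and increasing in $a$, because $4\sg_{N;a}\le 4\vth_N|\eta_{a\pm1}-\eta_a|$, which is far below half the gap. This yields $N$ distinct roots of the degree-$N$ polynomial $P-2\ups\veps_N$, one in each interval and ordered as the $\eta_a$. These are therefore all its roots, which is consistent with Lemma \ref{etalambdaequivalence}. Since $\la_1^\ups<\dots<\la_N^\ups$, the root lying near $\eta_a$ is $\la_a^\ups$.

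All the constants involved depend only on the smallness of $N\vth_N$, so the remainder is uniform in $a$ and $N$. The first hypothesis $\sg_{N;a}\le\varkappa_N$ is not needed beyond making the statement meaningful.
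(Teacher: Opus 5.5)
Your proof is correct, but its quantitative step takes a different route from the paper's. The paper first localises the roots by a sign-change argument. It evaluates $P-2s\veps_N$ at $\eta_a\pm4\veps_N/|P^{\prime}(\eta_a)|$ for every $s\in[-1;1]$. The intermediate value theorem and root counting then give exactly one root in each of the disjoint intervals $I_a$, so $P$ is invertible near $\eta_a$. It then Taylor-expands $P^{-1}$ to second order at $0$ and bounds the remainder through $(P^{-1})^{\prime\prime}=-P^{\prime\prime}/(P^{\prime})^3$. This needs separate explicit estimates on $I_a$, namely $P^{\prime}=P^{\prime}(\eta_a)(1+\e{O}(N\vth_N))$ and $|P^{\prime\prime}/P^{\prime}|\le 2(1+\e{O}(N\vth_N))\sum_{k\neq a}|\eta_{ak}|^{-1}$.

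You instead use the exact factorisation $P(\eta_a+t)=P^{\prime}(\eta_a)\,t\,G_a(t)$, which turns the root equation into a fixed-point problem. Banach's theorem gives existence and uniqueness in $[-4\sg_{N;a};4\sg_{N;a}]$. The expansion then follows at once, in relative form, from $G_a^{-1}=1+\e{O}(N\vth_N)$, with no second derivative or inverse-function remainder to control.

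Both arguments rest on the same estimate $\prod_{k\neq a}(1+t/\eta_{ak})=1+\e{O}(N\vth_N)$ for $|t|\le4\sg_{N;a}$. Your labelling step (disjoint ordered intervals plus counting the $N$ roots) coincides with the paper's. Your version is shorter and more self-contained. The paper's is the more standard perturbative argument and records bounds on $P^{\prime}$ and $P^{\prime\prime}$ over $I_a$ along the way.

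One minor notational slip: since $P^{\pm}=P\mp2\veps_N$, the equation $P(x)=2\ups\veps_N$ reads $P^{\ups}(x)=0$ rather than $P^{\mp}(x)=0$. This does not affect the argument.
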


\begin{proof}

One first observes that owing to the second bound in \eqref{ecriture hypotheses sur les quantites petites}, $\eta_{a}\not=\eta_b$ if $a\not=b$.
This implies that $P$ is invertible in some neighbourhood of each of the roots $\eta_a$. Now, we obtain an a priori bound on the \textit{loci} of the roots $\la_a^{\pm}$.

Consider the auxiliary polynomial
\begin{align*}
    P_s(x) \, = \, P(x) \, - \,  2 s  \veps_N \qquad \e{with} \qquad  s  \in \intff{ - 1 }{ 1 } \;.
\end{align*}
Then, one has that
\begin{align*}
    P_s\Big(\eta_{a} \pm 4  \tfrac{ \veps_N }{ |P^{\prime}(\eta_a)| }  \Big) &= \pm   \tfrac{ 4 \veps_N }{ |P^{\prime}(\eta_a)| }
\pl{ \substack{ b=1 \\b  \not=a} }{N} \bigg( \eta_{ab} \, \pm\, \     \tfrac{ 4 \veps_N }{ |P^{\prime}(\eta_a)| } \bigg) \, - \, 2 s  \veps_N \\
&= \pm   \tfrac{ 4 \veps_N }{ |P^{\prime}(\eta_a)| } |P^{\prime}(\eta_a)| (-1)^{N-a}
\pl{ \substack{ b=1 \\ b \not=a} }{N} \bigg( 1 \, \pm\, \     \tfrac{ 4 \veps_N }{ \eta_{ab} |P^{\prime}(\eta_a)| } \bigg) \, - \, 2 s  \veps_N \\
&= 4 \veps_N \Big\{ \pm (-1)^{N-a}\, - \, \f{s}{2} \, + \, \e{O}\big( N \vth_N \big) \Big\} \;.
\end{align*}
Thus, $P_s$ changes sign at least once on
\beq
I_{a}\, = \, \intff{\eta_{a} - 4  \tfrac{ \veps_N }{ |P^{\prime}(\eta_a)| } }{ \eta_{a} + 4  \tfrac{ \veps_N }{ |P^{\prime}(\eta_a)| } }
\enq
and thus admits one root on this interval. Observe that $I_{a}\cap I_{b} \, =\,  \emptyset$ for $a\not=b$. Indeed, by symmetry one may suppose that $b>a$. Then
\beq
\eta_{b} - 4  \tfrac{ \veps_N }{ |P^{\prime}(\eta_b)| } \, -\, \eta_{a} - 4  \tfrac{ \veps_N }{ |P^{\prime}(\eta_a)| }  \, > \,
\eta_{ba} \, \Big( 1 \, - \, \e{O}\big(\vth_N\big) \Big)
\enq
Thus, $P_s$ admits at least $N$ roots $\la_a^{s}$, $a\in \intn{1}{N}$, with  $\la_a^{s} \in I_a$.
Since $P_s$ has exactly $N$ roots, this entails that it has exactly one root in $I_a$.

In particular, by setting $s=\pm 1$, one gets that $\la^{\pm}_a \in I_a$ and that $P$ is a smooth diffeomorphism on the interval $I_a$. Then, by the Taylor series expansion up to the second order, one gets
that

\beq
\la_a^{\ups} \,  = \, P^{-1}(2 \ups \veps_N) \, = \, P^{-1}(0) \,  +\, \f{ 2 \ups \veps_N  }{ P^{\prime}\big( P^{-1}(0) \big) }
\, + \, \e{O}\bigg( \veps_N^2 \sup_{u \in \intff{0}{2 \ups \veps_N} }\big|   \big( P^{-1} \big)^{\prime\prime}(u) \big|  \bigg) \;.
\label{Taylor rest Linfty pour lambda pm}
\enq
Now, one has through explicit calculations
\begin{align*}
 \big( P^{-1} \big)^{\prime\prime}(u) \; = \; - \f{  P^{\prime\prime}\big(  P^{-1}(u) \big) }{  \big[  P^{\prime}\big(  P^{-1}(u) \big) \big]^3 } \;.
\end{align*}
Since, for any $u \in  \intff{-2  \veps_N }{2  \veps_N }$,  $P^{-1}(u) \in I_a$, one has that
\begin{align*}
\norm{ \big( P^{-1} \big)^{\prime\prime} }_{ L^{\infty}\big( \intff{0}{2 \ups \veps_N} \big) }  \; \leq \;
 \Big| \Big| \tfrac{  P^{\prime\prime}  }{  (  P^{\prime}  )^3 }  \Big| \Big|_{ L^{\infty}(  I_a )}   \;.
\end{align*}
Now, for any $t\in \intff{-1}{1}$, it holds
\begin{align*}
    &P^{\prime}\big(\eta_a+ 2 t \sg_{N;a} \big) \; = \; \sul{ \substack{k=1 \\ k \not= a} }{N} 2 t \sg_{N;a}\pl{ \substack{s=1 \\ s \not= a,k} }{N} \Big( \eta_{as} + 2 t \sg_{N;a} \Big)
\; + \; \pl{ \substack{s=1 \\ s \not= a } }{N} \Big( \eta_{as} + 2 t \sg_{N;a} \Big) \\
 \, &= \, P^{\prime}(\eta_a) \Big\{  \pl{ \substack{s=1 \\ s \not= a } }{N} \Big(1 +  \tfrac{ 2 t \sg_{N;a} }{  \eta_{as} } \Big)
 \, + \,  \sul{ \substack{k=1 \\ k \not= a} }{N} \f{ 2 t \sg_{N;a} }{ \eta_{ak} }  \pl{ \substack{s=1 \\ s \not= a,k} }{N} \Big(1 +  \tfrac{ 2 t \sg_{N;a} }{  \eta_{as} } \Big) \Big\} 
 \, = \, P^{\prime}(\eta_a)  \Big( 1+\e{O}\big(N\vth_N\big) \Big) \;.
\end{align*}

Next one has
\begin{align*}
    P^{\prime\prime}(u) \, = \, \sul{ \substack{k, \ell=1 \\ k\not= \ell } }{N} \pl{ \substack{s=1 \\ s \not= k,\ell} }{N}\hspace{-2mm} \big( u-\eta_{s} \big)
 \, = \, \sul{ \substack{k, \ell=1 \\ k\not= \ell, \\ k, \ell\not= a } }{N} (u-\eta_a) \hspace{-2mm} \pl{ \substack{s=1 \\ s \not= k,\ell,a} }{N} \hspace{-2mm} \big( u-\eta_{s} \big)
\, + \, 2   \sul{ \substack{k=1 \\ k \not= a} }{N}  \pl{ \substack{s=1 \\ s \not= a,k} }{N} \big( u - \eta_{s} \big) \;.
\end{align*}
This decomposition thus leads to
\begin{align*}
P^{\prime\prime}\big(\eta_a+ 2 t \sg_{N;a} \big) \; = \;
\sul{ \substack{k, \ell=1 \\ k\not= \ell, \\ k, \ell\not= a } }{N} 2 t \sg_{N;a}\pl{ \substack{s=1 \\ s \not= a,k,\ell} }{N} \Big( \eta_{as} + 2 t \sg_{N;a} \Big)
\, + \, 2   \sul{ \substack{k=1 \\ k \not= a} }{N}  \pl{ \substack{s=1 \\ s \not= a,k} }{N}\Big( \eta_{as} + 2 t \sg_{N;a} \Big)  \\
\; = \;  2   \sul{ \substack{k=1 \\ k \not= a} }{N}  \f{ P^{\prime}\big(\eta_a)   }{ \eta_{ak} } \pl{ \substack{s=1 \\ s \not= a,k} }{N}\Big( 1 +  \tfrac{ 2 t \sg_{N;a} }{  \eta_{as} } \Big)
\, + \, \sul{ \substack{k, \ell=1 \\ k\not= \ell, \\ k, \ell\not= a } }{N}  \f{2  t \sg_{N;a} P^{\prime}\big(\eta_a)   }{ \eta_{ak} \eta_{a\ell} }
\pl{ \substack{s=1 \\ s \not= a,k,\ell } }{N}\Big( 1 +  \tfrac{ 2 t \sg_{N;a} }{  \eta_{as} } \Big)
\end{align*}
Thus, direct bounds yield
\begin{equation*}
 \Big| \f{ P^{\prime\prime} }{ P^{\prime} }\big(\eta_a+ 2 t \sg_{N;a} \big) \Big| \, \leq \, 2 \Big( 1+\e{O}\big(N\vth_N\big) \Big) \sul{ \substack{k=1 \\ k\not= a} }{N}  \f{ 1  }{ |\eta_{ak}| }  \;.
\end{equation*}
Then, inserting the above into \eqref{Taylor rest Linfty pour lambda pm} allows one to conclude.

\end{proof}

There are a few consequences of the expansion for the roots $\la_{a}^{\pm}$ provided by Lemma \ref{lambdaexpansion}. First of all,
the $\la_{a}^{\pm}$ are increasingly ordered,
\beq
\la_1^{\ups_1}\, <\, \cdots \, < \, \la_N^{\ups_N} \;, \quad \e{for} \quad \ups_{a}\in \{ \pm \} \;.
\enq
Indeed, for $N$ large enough, we have
\begin{eqnarray*}
 \la_{a+1}^{\ups} - \la_a^{\ups^{\prime}}  \, & \geq& \,  \big( \eta_{a+1}-\eta_{a} \big) \cdot \bigg\{ 1 \, - \,
 \f{ 2 \veps_N }{ | (\eta_{a+1}-\eta_{a}) P^{\prime}(\eta_a)|}
\, - \, \f{ 2 \veps_N }{ | (\eta_{a+1}-\eta_{a}) P^{\prime}(\eta_{a+1})|}  \, + \, \e{O}(N \vth_N^2)  \bigg\}  \\
& =& \, \big( \eta_{a+1}-\eta_{a} \big) \cdot \big\{ 1 \, - \,    \e{O}( \vth_N)  \big\} >0 \;.
\end{eqnarray*}
Further, owing to
\beq
P^{\prime}\big(\eta_a\big) \, = \, (-1)^{N-a} |P^{\prime}\big(\eta_a\big)|
\enq
the roots $\la_{a}^{\pm}$ interlace in a manner depending on $N$.
%
%
%
%
%
%
%
%
For example, for $N$ odd, the roots are ordered as
\beq\label{ecriture entrelacement intervalles des lambdas}
\la_1^{-}\, < \, \la_1^{+} \, < \, \la_2^{+}\, < \, \la_2^{-} \, < \, \la_{3}^{-} \, < \, \cdots \, < \,
\la_N^{-}\, < \, \la_N^{+} \;.
\enq
This interlacing means that, again for $N$ odd,
\beq
\intoo{ \la_{2k-1}^{-} }{  \la_{2k}^{-} } \supset  \intoo{ \la_{2k-1}^{+} }{  \la_{2k}^{+} }
\qquad \e{and} \qquad
\intoo{ \la_{2k}^{-} }{  \la_{2k+1}^{-} } \subset  \intoo{ \la_{2k}^{+} }{  \la_{2k+1}^{+} }  \;
\label{ecriture entrelacement intervalles des mus}
\enq
whereas for $N$ even the inclusions are reversed.

\subsection{Asymptotic lower bound on the density}

In this section, we shall assume that the entries of $\bs{\eta}_N$
satisfy the hypotheses \eqref{ecriture hypotheses sur les quantites petites}. This will allow us to build on Lemma \ref{lambdaexpansion}
so as to obtain an explicit lower bound on $\det_{N-1}[ \op{A} ] $.

\begin{prop}\label{detlowerbound2}
Let $\kappa>0$ and $\gamma >1$. Assume that $\bs{\eta}_N\in \R^N$ is such that
\begin{itemize}
\item[i)] $1- \frac{ \ln|P^\prime(\eta_k)| }{ \ln(2\veps_N) }  \, \geq \,  \kappa$ for all $k \in \intn{1}{N}$;
\item[ii)] $|\eta_a - \eta_b| \, \geq \, N^{-\ga}$ for all $a \neq b \in \intn{1}{N}$;
\item[iii)] $N^{\f{3}{2}} \vartheta_N \to 0$ as $N \to +\infty$.
\end{itemize}

 Let $\op{A}$ be as defined in \eqref{definition matrice A}. Then, there exists $N_0 \in \mathbb{N}$ such that, for all $N \geq N_0$, the following lower bound holds
\bem
\det_{N-1}[ \op{A} ] \, \geq  \,  | \ln(2\veps_N)|^{N-1} (1 - \mathrm{O}( N^{\f{3}{2}}  \vartheta_N)) \\
\times \prod_{j=1}^N  \Big( 1- \frac{ \ln|P^\prime(\eta_j)| }{\ln(2\veps_N)}    - \mathrm{O}\big( \frac{\ln N}{N}\big) - \mathrm{O}\big( N^{\f{3}{2}} \vartheta_N \big) \Big)^{1- \frac{1}{N}} \;.
\end{multline}
\end{prop}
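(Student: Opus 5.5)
The plan is to rewrite $\det_{N-1}[\op{A}]$ exactly as a quantity built from an $N\times N$ "diagonal plus logarithmic kernel" matrix $\op{M}$, and then lower bound that quantity. The Lagrange-interpolation form $\prod_j x_j^{1-1/N}$ of the target bound should then come out of an AM--GM step.

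\textbf{Step 1: exact decomposition of $\op{A}$.} Put $L=|\ln(2\veps_N)|$ and $x_j = 1-\ln|P^\prime(\eta_j)|/\ln(2\veps_N)$, so that $x_j\geq\kappa$ by i). Lemma \ref{lambdaexpansion} gives $|\la_a^{\pm}-\eta_a| = \tfrac{2\veps_N}{|P^\prime(\eta_a)|}(1+\mathrm{O}(N\vartheta_N))$. Using it, write
$A_{ks} = \ln|\la_{k+1}^{\ups_k}-\eta_s| - \ln|\la_{k}^{\ups_k}-\eta_s|$.
Introduce the $N\times N$ symmetric matrix
$$\op{M} = L\,\mathrm{diag}(x_1,\dots,x_N) + \op{H}, \qquad H_{ms}=\ln|\eta_m-\eta_s| \;(m\neq s), \qquad H_{mm}=0 .$$
The point is that $-\ln|\la_m^{\pm}-\eta_m| = L x_m + \mathrm{O}(N\vartheta_N)$, while for $s\neq m$ one has $\ln|\la_m^{\pm}-\eta_s| = H_{ms} + \mathrm{O}(\vartheta_N)$. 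Hence $\wh{\op{A}} = \big[\,\op{E}\,;\,\bs{1}^{\op{t}}\big]$ with $\op{E}\op{M}$ replaced by $\op{E}\op{M}+\op{R}$, where $\op{E}$ is the $(N-1)\times N$ difference matrix $(\op{E}\bs{v})_k = v_{k}-v_{k+1}$ and $\op{R}$ collects the $\mathrm{O}(N\vartheta_N)$ and $\mathrm{O}(\vartheta_N)$ errors. Ignoring $\op{R}$ for the moment, write $\wh{\op{A}} = \big[\op{E}\,;\,\bs{w}^{\op{t}}\big]\op{M}$ with $\bs{w}^{\op{t}} = \bs{1}^{\op{t}}\op{M}^{-1}$. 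Since $\det\big[\op{E};\bs{w}^{\op{t}}\big] = \pm\sum_j w_j$, Lemma \ref{columnoperations} yields
$$\det_{N-1}[\op{A}] \, = \, \frac{1}{N}\,\det_N[\op{M}]\;\bs{1}^{\op{t}}\op{M}^{-1}\bs{1}\qquad(\text{up to } \op{R}).$$
The errors from $\op{R}$ are then reinstated by a multiplicative perturbation: they are relative errors of size $\mathrm{O}(N\vartheta_N)$ on entries of size $\gtrsim L$ or $\gtrsim\ln N$. Summed over a row in the weighted norm used below, this should cost exactly the $\mathrm{O}(N^{3/2}\vartheta_N)$ factors in the statement, which is where iii) enters.

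\textbf{Step 2: leading order and AM--GM.} To leading order $\det_N[\op{M}] \approx L^N\prod_j x_j$ and $\bs{1}^{\op{t}}\op{M}^{-1}\bs{1}\approx \sum_j (Lx_j)^{-1}$. By AM--GM,
$$\sum_j \frac{1}{x_j} \, \geq \, N\Big(\prod_j x_j\Big)^{-1/N},$$
so the product is at least $L^{N-1}\prod_j x_j^{1-1/N}$, which is precisely the shape of the claimed bound. To make this rigorous, factor
$$\op{M} = L\,\op{X}^{1/2}(\mathrm{I}+\op{K})\op{X}^{1/2},\qquad \op{X}=\mathrm{diag}(x), \quad \op{K}=L^{-1}\op{X}^{-1/2}\op{H}\op{X}^{-1/2}.$$
Then $\bs{1}^{\op{t}}\op{M}^{-1}\bs{1} = L^{-1}\bs{u}^{\op{t}}(\mathrm{I}+\op{K})^{-1}\bs{u}$ with $u_j=x_j^{-1/2}$, and $\det\op{M} = L^N\prod_j x_j\,\det(\mathrm{I}+\op{K})$. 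The target therefore reduces to
$$\det(\mathrm{I}+\op{K})\cdot\frac{\bs{u}^{\op{t}}(\mathrm{I}+\op{K})^{-1}\bs{u}}{\|\bs{u}\|^2}\;\geq\;\prod_j\Big(1-\mathrm{O}\big(\tfrac{\ln N}{N x_j}\big)\Big),$$
after which the $\mathrm{O}(\ln N/N)$ terms are absorbed into the factors $x_j$.

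\textbf{Step 3 (main obstacle): controlling $\op{K}$.} By ii) and the fact that the $\eta$'s lie in a region of size at most polynomial in $N$, the entries of $\op{H}$ are $\mathrm{O}(\ln N)$. Since $L\sim N\ell/2$ and $x_j\ge\kappa$, the entries of $\op{K}$ are $\mathrm{O}(\ln N/N)$, and $\mathrm{tr}\,\op{K}=0$. However, $\|\op{K}\|_{\mathrm{op}}$ need not be small: the logarithmic kernel has an eigenvalue of order $N$ in the nearly-constant direction. I therefore would not expand $\ln\det(\mathrm{I}+\op{K})$ naively. Instead I would try two things in turn:
\begin{enumerate}[label=(\alph*)]
\item Apply a Gershgorin/Ostrowski-type lower bound row by row to $\op{X}+L^{-1}\op{H}$, since $\prod_k(x_k-r_k)$ is exactly the form $\prod_j(x_j-\mathrm{O}(\ln N/N))$. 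Here $r_k = L^{-1}\sum_{s\ne k}|H_{ks}|$ would need to be $\mathrm{O}(\ln N/N)$, which is too crude if the $H_{ks}$ do not cancel.
\item Exploit cancellations through the identity $\sum_s \ln|\la_k^{\pm}-\eta_s| = \ln(2\veps_N)$, i.e. the zero row sums of $\op{A}$ established in Lemma \ref{columnoperations}. The aim is to work in the $(N-1)$-dimensional space orthogonal to $\bs{1}$, where the large mode of $\op{H}$ is projected out and only $\mathrm{O}(\ln N)$-sized effects remain. This is what the combination $\det\op{M}\cdot\bs{1}^{\op{t}}\op{M}^{-1}\bs{1}$, a Schur-complement / bordered determinant, naturally does.
\end{enumerate}
Making step (b) quantitative, with the error distributed so that each factor loses only $\mathrm{O}(\ln N/N)+\mathrm{O}(N^{3/2}\vartheta_N)$, is the step I expect to be hardest. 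Once it is done, the stated bound follows for $N\ge N_0$: the $\mathrm{O}(\ln N/N)$ and $\mathrm{O}(N^{3/2}\vartheta_N)$ terms are then small compared with $\kappa$, which keeps every factor positive.
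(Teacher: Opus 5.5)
There is a genuine gap. Step~3, which you flag as the main obstacle, is the whole content of the proposition, and neither (a) nor (b) supplies the idea that closes it. There is also a sign slip in Step~1 that hides the structure you need.

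Your own entrywise facts, $-\ln|\la_m^{\pm}-\eta_m| = Lx_m+\mathrm{O}(N\vartheta_N)$ and $\ln|\la_m^{\pm}-\eta_s| = H_{ms}+\mathrm{O}(\vartheta_N)$, give $A_{ks}\approx(\op{E}\op{M})_{ks}$ with $\op{M}=L\op{X}-\op{H}$, not $L\op{X}+\op{H}$. Indeed, for $s\notin\{k,k+1\}$ one has $A_{ks}\approx H_{k+1,s}-H_{ks}$, which is the opposite of $(\op{E}(L\op{X}+\op{H}))_{ks}$. With the correct sign, the identity $\sum_{s\neq m}H_{ms}=\ln|P^\prime(\eta_m)|=L(x_m-1)$ yields $\op{M}\bs{1}=L\bs{1}$ exactly. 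Several things follow:
\begin{itemize}
\item $\bs{1}^{\op{t}}\op{M}^{-1}\bs{1}=N/L$, not $\approx\sum_j(Lx_j)^{-1}$.
\item The ``large mode'' of $\op{H}$ is cancelled exactly by $L\op{X}$.
\item Your bordered identity, which is fine as algebra, collapses to $\det_{N-1}[\op{A}]\approx L^{N-1}\det_N[\op{X}-L^{-1}\op{H}]=L^{N-1}\det_{N-1}\big[(\op{X}-L^{-1}\op{H})_{\mid\mc{V}_N^{(0)}}\big]$. This is exactly what Lemma~\ref{Lemme reecriture determinant A comme perturbation de diagonale} establishes.
\end{itemize}
With your sign, the kernel would enter with the unfavourable sign, and no lower bound could be extracted from positivity.

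The missing ingredient is that the logarithmic kernel is almost negative semidefinite on zero-sum vectors: $(\bs{x},\op{H}\bs{x})\leq C\ln N\,\|\bs{x}\|^2$ for $\bs{x}\in\mc{V}_N^{(0)}$ (Lemma~\ref{almostposdef}). This does not follow from the row-sum identity or from any Gershgorin-type estimate; it comes from potential theory. The argument runs as follows.
\begin{itemize}
\item Smear each $x_s\delta_{\eta_s}$ uniformly over an interval of half-length $N^{-\gamma-1}$.
\item Hypothesis ii) makes the off-diagonal smearing error $\mathrm{O}(\|\bs{x}\|^2)$, while the self-energy costs $(\gamma+1)\ln N\,\|\bs{x}\|^2$.
\item Conclude with $\iint\ln|s-t|\,\dd\mu(s)\,\dd\mu(t)\leq 0$, valid for a compactly supported signed measure of zero total mass and finite energy \cite{SaffT97}.
\end{itemize}
This uses no bound on the spread of the $\eta_a$. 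Your claim that they lie in a polynomially large region, so that $H_{ms}=\mathrm{O}(\ln N)$, does not follow from i)--iii).

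Granting this, the lower bound goes through in three steps.
\begin{itemize}
\item $-L^{-1}\op{H}+C\frac{\ln N}{N}$ is positive semidefinite on $\mc{V}_N^{(0)}$.
\item By Lemma~\ref{Lemma lower bound on determinant of sum of positive definite}, the restricted determinant is therefore at least $\det_{N-1}[\op{D}_{\mid\mc{V}_N^{(0)}}]$, with $\op{D}=\op{X}-C\frac{\ln N}{N}$.
\item Lemma~\ref{detlowerbound} evaluates this as $\frac{1}{N}\det_N[\op{D}]\sum_k\La_k^{-1}\geq(\prod_k\La_k)^{1-\frac{1}{N}}$. This is where AM--GM genuinely enters; your Step~2 guesses for $\det\op{M}$ and $\bs{1}^{\op{t}}\op{M}^{-1}\bs{1}$ are each off, even though their product has the right shape.
\end{itemize}
The remainders are absorbed by the same monotonicity. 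After the reduction, their Hilbert--Schmidt norm is at most $\tilde{C}N^{3/2}\vartheta_N$, so adding $\tilde{C}N^{3/2}\vartheta_N\,\mathrm{I}$ makes them positive semidefinite. Your ``multiplicative perturbation'' remark does not yet establish this.
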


Observe first of all that the hypotheses of Proposition \ref{detlowerbound2} imply the bounds \eqref{ecriture hypotheses sur les quantites petites}.
We thus start the proof by inserting the estimates of Lemma \ref{lambdaexpansion} into the definition
of $\op{A}$ from \eqref{definition matrice A}. We find, for $j \in \intn{1}{N-1} \setminus \{k,k+1\}$,
$$A_{kj} = \ln|\eta_{k+1} - \eta_j| - \ln|\eta_k - \eta_j| + \mathrm{O}(\vartheta_N) \, .$$
For $j = k+1 \in \intn{2}{N-1}$ we find
$$A_{k k+1} = \ln (2 \veps_N ) -  \ln|P^\prime(\eta_{k+1})| - \ln \left| \eta_{k} - \eta_{k+1}\right| +\mathrm{O}( N \vartheta_N)$$
and for $j = k \in \intn{1}{N-1}$
$$A_{kk} = -\ln (2 \veps_N) + \ln|P^\prime(\eta_k) | + \ln \left| \eta_{k} - \eta_{k+1}\right| + \mathrm{O}( N \vartheta_N) \, .$$
Let $\op{E}$ be an $(N-1) \times (N-1)$ matrix with the following entries
\begin{align*}
E_{kj} =
\begin{cases}
\ln|\eta_{k+1}-\eta_j| - \ln|\eta_k - \eta_j| \;, & j \in \intn{1}{N-1} \setminus \{ k,k+1\}  \vspace{3mm} \\
 \ln |P^\prime(\eta_{k})| + \ln \left| \eta_{k} - \eta_{k+1}\right|  \; , & j = k \in  \intn{1}{N-1}  \vspace{3mm} \\
 - \ln |P^\prime(\eta_{k+1})| - \ln \left| \eta_{k} - \eta_{k+1}\right| \;, & j = k+1 \in  \intn{2}{N-1} \\
\end{cases}
\end{align*}
and let $\op{F}$ be the $(N-1) \times (N-1)$ matrix given by
\begin{align*}
\op{F} = \left( \begin{matrix}
-1 & 1 \\
& -1 & 1  \\
& & -1 & 1 \\
& & &&  \ddots &1  \\
& & & & & -1  \\
\end{matrix} \right)
\end{align*}
so that
$$A_{kj} = \ln(2\veps_N) F _{kj}+ E_{kj} + \mathrm{O}\Big( \big[1 + N ( \de_{kj}+\de_{k+1j} )  \big]\vartheta_N\Big) \, .$$
Clearly $\det_{N-1}[ \op{F}] =(-1)^{N-1}$. We then find that
\beq
\det_{N-1} [ \op{A} ] \,  = \,   (-1)^{N-1} \det_{N-1}\big[ \ln(2\veps_N) \op{I}_{N-1}  + \op{F}^{-1} \op{E} + \mathrm{O}(N \vartheta_N)  \big]    \;.
\label{ecriture det A relie a det M}
\enq
We stress that the remainder is to be understood entrywise. The inverse of $\op{F}$ may be computed as $(\op{F}^{-1})_{kj} = - \mathbbm{1}_{k \leq j}$.  Then
\begin{align}
( \op{F}^{-1}\op{E} )_{kj} = - \ln|\eta_N -\eta_j| + \mathbbm{1}_{k \neq j} \ln|\eta_k - \eta_j| - \delta_{kj} \ln|P^\prime(\eta_j)| \, .
\label{ecriture F moins 1 E}
\end{align}
\begin{lemme}
\label{Lemme reecriture determinant A comme perturbation de diagonale}

The following relation holds
\bem
\det_{N-1}\big[ \op{A} \big]  \, = \,   | \ln(2\veps_N)|^{N-1}
   \det_N\bigg[  \Big( 1- \frac{1}{\ln(2\veps_N)}  \ln|P^{\prime}(\eta_j)| \Big) \delta_{kj}  + \frac{ N }{ \ln(2\veps_N) } M_{kj} \\
\, +  \, \mathrm{O}\Big( \big( 1+ N \de_{jN} \mathbbm{1}_{k \leq N-1}   \big)\vartheta_N \Big) \bigg]  \, .
\end{multline}
in which the matrix $\op{M}$ is defined as
\begin{align}
M_{kj} \,  =  \, \f{1}{N}\mathbbm{1}_{k \neq j} \ln| \eta_k -\eta_j| \, = \,
    \begin{cases}
\frac{1}{N} \ln|\eta_k - \eta_j| & k \neq j  \vspace{2mm}\\
0 & k = j
\end{cases} \, .
\label{definition matrices M}
\end{align}

\end{lemme}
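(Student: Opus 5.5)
The plan is to start from \eqref{ecriture det A relie a det M} together with the explicit form \eqref{ecriture F moins 1 E} of $\op{F}^{-1}\op{E}$, and to realise the resulting $(N-1)\times(N-1)$ determinant as an $N\times N$ bordered determinant by elementary row and column operations. The two ingredients will be the exact identity
\begin{equation*}
\ln|P^{\prime}(\eta_j)| \, = \, \sul{\substack{k=1 \\ k\neq j}}{N} \ln|\eta_k-\eta_j| ,
\end{equation*}
which comes from $P^{\prime}(\eta_j)=\prod_{k\neq j}(\eta_j-\eta_k)$, and the symmetry of $\op{M}$.

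\textbf{Step 1: normalise the prefactor.} Since $\ln(2\veps_N)<0$, we have $(-1)^{N-1}\ln(2\veps_N)^{N-1}=|\ln(2\veps_N)|^{N-1}$. Factoring $\ln(2\veps_N)$ out of each row in \eqref{ecriture det A relie a det M} therefore gives
\begin{equation*}
\det_{N-1}[\op{A}] \, = \, |\ln(2\veps_N)|^{N-1}\det_{N-1}\big[\op{X} - \bs{1}\,\bs{w}^{\op{t}} + \e{O}(N\vartheta_N)\big] .
\end{equation*}
Here $X_{kj}=\big(1-\tfrac{\ln|P^\prime(\eta_j)|}{\ln(2\veps_N)}\big)\delta_{kj}+\tfrac{N}{\ln(2\veps_N)}M_{kj}$, $w_j=\ln|\eta_N-\eta_j|/\ln(2\veps_N)$, and $\bs{1}$ is the vector of ones. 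The error is entrywise and has even gained a factor $|\ln(2\veps_N)|^{-1}$. The rank-one term $-\bs{1}\bs{w}^{\op{t}}$ adds the same amount to every entry of a given column. Because $\op{X}$ is symmetric, transposing replaces it with $-\bs{w}\bs{1}^{\op{t}}$, which adds the same amount to every entry of a given row.

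\textbf{Step 2: identify the bordered determinant.} Let $\op{G}$ be the $N\times N$ matrix appearing in the statement, without the error term. Its entries are $G_{kj}=X_{kj}$ for $k,j\le N-1$, $G_{kN}=\ln|\eta_k-\eta_N|/\ln(2\veps_N)$, $G_{Nj}=\ln|\eta_N-\eta_j|/\ln(2\veps_N)$ and $G_{NN}=1-\ln|P^\prime(\eta_N)|/\ln(2\veps_N)$. By the identity above, every column of $\op{G}$ sums to exactly $1$. I will replace row $N$ by the sum of all rows, which turns it into the row $(1,\dots,1)$, and then subtract column $N$ from each column $j\le N-1$. The last row becomes $(0,\dots,0,1)$, and the upper-left block becomes $X_{kj}-\ln|\eta_k-\eta_N|/\ln(2\veps_N)$, that is $\op{X}-\bs{w}\bs{1}^{\op{t}}$. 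Hence $\det_N[\op{G}]=\det_{N-1}[\op{X}-\bs{w}\bs{1}^{\op{t}}]=\det_{N-1}[(\op{X}-\bs{1}\bs{w}^{\op{t}})^{\op{t}}]$, which is exactly the main term of Step 1.

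\textbf{Step 3: transport the errors.} This is the main obstacle, since the operations in Step 2 mix entries and the remainders have to land in the pattern claimed in the statement. I will add the $\e{O}(N\vartheta_N)$ remainder of Step 1 to the upper-left block of $\op{G}$ before undoing the operations of Step 2. Undoing the column subtraction adds, in each row $k\le N-1$, a combination of that row's errors to column $N$. The entries of column $N$ therefore pick up errors of size $\e{O}(N\vartheta_N)$, while the other entries keep only $\e{O}(\vartheta_N)$. I will do this bookkeeping on the non-transposed version of the matrix, so that the $N\vartheta_N$-sized diagonal and super-diagonal errors inherited from $A_{kk}$ and $A_{k,k+1}$ are not spread over a whole row. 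Undoing the row-sum operation does not affect rows $k\le N-1$, and row $N$ stays exact. This yields the stated remainder $\e{O}\big((1+N\de_{jN}\mathbbm{1}_{k\le N-1})\vartheta_N\big)$. If some entry turns out to carry more than this, I would instead expand the determinant multilinearly in the error rows, using $N^{3/2}\vartheta_N\to0$, and absorb the excess.
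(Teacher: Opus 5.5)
Your main-term computation is correct, and the argument is essentially the paper's. The paper borders $\ln(2\veps_N)\op{I}_{N-1}+\op{F}^{-1}\op{E}$ with the row $(\ln|\eta_{Nj}|)_{j<N}$ and the column $(0,\dots,0,\ln(2\veps_N))^{\op{t}}$. It then performs $\bs{L}_k\hookrightarrow\bs{L}_k+\bs{L}_N$ and $\bs{C}_N\hookrightarrow\bs{C}_N-\sum_{j<N}\bs{C}_j$, using the same identity $\ln|P^{\prime}(\eta_j)|=\sum_{k\neq j}\ln|\eta_{jk}|$. Up to pulling out $\ln(2\veps_N)$ first instead of last, this is your Step 2 with rows and columns swapped, run in reverse.

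The bookkeeping in Step 3, however, does not match the operations you defined in Step 2.
\begin{itemize}
\item Write $R$ for the $(N-1)\times(N-1)$ remainder of Step 1, with entries $\e{O}(\vartheta_N)$. On your transposed route it enters the upper-left block as $R^{\op{t}}$.
\item Undoing the column subtraction adds column $N$ to the other columns. It sends no error into column $N$.
\item Undoing the replacement of row $N$ by the sum of rows subtracts the perturbed rows $1,\dots,N-1$ from $(1,\dots,1)$. So the entry $(N,j)$, $j\le N-1$, picks up $-\sum_{k}R_{jk}=\e{O}(N\vartheta_N)$.
\end{itemize}
Hence row $N$ is \emph{not} exact on your route, and you land on the transpose of the claimed error pattern.

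Either of two one-line fixes works. First, you can perform the swapped operations directly on $\op{X}-\bs{1}\bs{w}^{\op{t}}+R$: replace column $N$ of $\op{G}$ by the sum of all columns, then subtract row $N$ from rows $k\le N-1$. Undoing these puts $-\sum_{j<N}R_{kj}$ into entry $(k,N)$ and leaves row $N$ exact, which is exactly the paper's computation. Second, you can keep your route and transpose the final $N\times N$ matrix, using that $\op{G}$ is symmetric.

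Your stated reason for avoiding the transpose is also off. After multiplication by $\op{F}^{-1}$, the remainder in \eqref{ecriture det A relie a det M} is uniformly $\e{O}(N\vartheta_N)$ entrywise, with no surviving diagonal or super-diagonal structure. After division by $\ln(2\veps_N)$ it is uniformly $\e{O}(\vartheta_N)$. The last-column errors are just sums of $N-1$ such entries, so no multilinear expansion, and no use of $N^{3/2}\vartheta_N\to0$, is needed.
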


\begin{proof}

Starting from \eqref{ecriture det A relie a det M}, after adding an extra line and column, one obtains
\begin{align*}
\det_{N-1} [- \op{A} ] \,  = \,  \frac{1}{\ln(2\veps_N)}   \det_{N}\left[  \ba{cc} \ln(2\veps_N) \de_{kj}  + (\op{F}^{-1} \op{E})_{kj} + \mathrm{O}(N \vartheta_N) & 0 \\
                                                    \ln |\eta_{Nj}|     &      \ln(2\veps_N)  \ea \right]    \;.
\end{align*}
Then, after performing the line operations $\bs{L}_k \hookrightarrow \bs{L}_k+\bs{L}_N$ and using \eqref{ecriture F moins 1 E}, one gets
\begin{align*}
\det_{N-1} [- \op{A}] \,  = \,   \frac{1}{\ln(2\veps_N)}   \det_{N}\left[  \ba{cc} \ln\Big( \f{ 2\veps_N }{ |P^{\prime}(\eta_{j}) | } \Big) \de_{kj}  + N M_{kj} + \mathrm{O}(N \vartheta_N) &  \ln(2\veps_N) \\
                                                    \ln |\eta_{Nj}|     &      \ln(2\veps_N) \ea  \right]    \;.
\end{align*}
Now, we perform the column operation $\bs{C}_N \hookrightarrow \bs{C}_N   -   \sul{j=1}{N-1} \bs{C}_{j} \, = \,  \wt{\bs{C}}_N $, which yields
$$
\big( \wt{\bs{C}}_N  \big)_N \, = \, \ln(2\veps_N) \, - \, \sul{k=1}{N-1} \ln |\eta_{Nk}|  \, = \, \ln\Big( \f{ 2\veps_N }{ |P^{\prime}(\eta_{N}) | } \Big) \de_{NN} \, + \, N M_{NN} \,+ \,  \mathrm{O}(N \vartheta_N) \, 
$$
 and also for $j<N$
\begin{eqnarray*}
\big( \wt{\bs{C}}_N  \big)_j \, &=& \, - \ln\Big( \f{ 2\veps_N }{ |P^{\prime}(\eta_{j}) | } \Big)
\, - \, \sul{ \substack{k=1 \\  k\not= j} }{N-1} \ln |\eta_{jk}| \,- \,  \mathrm{O}(N^2 \vartheta_N) +\ln(2\veps_N)  \, = \, \ln |\eta_{jN}| \,+ \,  \mathrm{O}(N^2 \vartheta_N)  \\
&=& NM_{jN}+ \,  \mathrm{O}(N^2 \vartheta_N)  \\
\end{eqnarray*}
Thus, we get
\beq
\det_{N-1} [- \op{A}] \,  = \,   \frac{1}{\ln(2\veps_N)}   \det_{N}\bigg[    \ln\Big( \f{ 2\veps_N }{ |P^{\prime}(\eta_{j}) | } \Big) \de_{kj}  + N M_{kj}   \,+ \,   N^{\delta_{jN}\mathbbm{1}_{k\leq N-1}}\mathrm{O}(N \vartheta_N)  \bigg]    \;.
\enq
Pulling out $\ln(2\veps_N)$ from each line, using that $\ln(2\veps_N) = - |\ln(2\veps_N)|$ is of order $N$  yields the claim.

\end{proof}

We now establish that the matrix $\op{M}$ introduced in \eqref{definition matrices M} is almost negative definite when projected onto appropriate subspaces of $\R^N$.
 This will allow us to develop an effective lower bound of the determinant obtained in Lemma \ref{Lemme reecriture determinant A comme perturbation de diagonale}.

\begin{lemme}\label{almostposdef}
Let $\big(\bs{y}_N,\bs{x}_N\big)=\sul{s=1}{N}y_ax_a$, $\|\bs{x}\|^{2}=\big(\bs{x}_N,\bs{x}_N\big)$ and set 
\beq
\mc{V}^{(0)}_N \, = \,  \Big\{ \bs{x}_N \in \mathbb R^N \, : \, \sul{k=1}{N} x_k \, = \,  0 \Big\} \, .
\label{definition sous espace V0}
\enq
Let $\ga > 1$ and $\bs{\eta}_N \in  \R^N$ be such that $|\eta_a - \eta_{b}| \geq N^{- \ga}$ for all $a \neq b$.
Then, there is a constant $C > 0$ such that the $N \times N$ matrix $\op{M}$ introduced in \eqref{definition matrices M} satisfies
\beq
\big( \bs{x}_N,  \op{M} \, \bs{x}_N  \big) \, \leq \,  C \frac{\ln N}{N} \norm{ \bs{x}_N }^2 \quad   for   \,  all  \quad  \bs{x}_N \in \mc{V}^{(0)}_N \, .
\enq
\end{lemme}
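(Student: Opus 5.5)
We plan to exploit the fact that $-\ln|x|$ is a conditionally positive definite kernel, in the sense of its Fourier representation. For $x \neq 0$ and any fixed $\epsilon>0$,
\[
\ln|x| = \int_0^{\infty} \frac{\mathrm{e}^{-t}-\cos(tx)}{t}\, \mathrm{d}t ,
\]
so that for $\bs{x}_N\in\mc{V}^{(0)}_N$ the constant parts cancel. The integral $\sum_{k,j} x_kx_j\cos(t(\eta_k-\eta_j)) = |\sum_k x_k \mathrm{e}^{\i t\eta_k}|^2\ge 0$ then gives a nonpositive contribution.

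The difficulty is the diagonal, where $\ln|\eta_k-\eta_k|=-\infty$ and $M_{kk}=0$. We therefore regularise at scale $h=N^{-\gamma}/2$, say, by replacing $\ln|x|$ with the smoothed kernel
\[
\phi_h(x)=\int \ln|x-u|\,\rho_h(u)\,\mathrm{d}u,
\]
where $\rho_h$ is the uniform density on $[-h;h]$; equivalently, each $\delta_{\eta_k}$ is smeared into the uniform measure on $[\eta_k-h;\eta_k+h]$. Then
\[
\sum_{k,j}x_kx_j\iint \ln|u-v|\,\mathrm{d}\rho_h(u-\eta_k)\,\mathrm{d}\rho_h(v-\eta_j)\le 0
\]
for $\bs{x}_N\in\mc{V}^{(0)}_N$. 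This holds because the logarithmic energy of a compactly supported signed measure of total mass zero with finite energy is nonpositive, which is standard and follows from the Fourier representation $-\iint\ln|u-v|\,\mathrm{d}\nu\,\mathrm{d}\nu=\int_0^\infty |\hat\nu(t)|^2\,\mathrm{d}t/t\geq 0$.

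Next, we compare $N M$ with this smeared quadratic form entrywise. For $k\ne j$ we have $|\eta_k-\eta_j|\ge 2h$, so the smeared kernel differs from $\ln|\eta_k-\eta_j|$ by a Taylor error of order $h^2/|\eta_k-\eta_j|^2$. On the diagonal, the smeared self-energy $\iint\ln|u-v|\,\mathrm{d}\rho_h\,\mathrm{d}\rho_h=\ln h+c_0$ is of order $-\gamma\ln N$. Hence
\[
(\bs{x}_N, N\op{M}\bs{x}_N)\le -(\ln h+c_0)\|\bs{x}_N\|^2+\sum_{k\ne j}|x_k||x_j|\,\mathrm{O}\Big(\frac{h^2}{\eta_{kj}^2}\Big).
\]
The first term is $\le C\ln N\,\|\bs{x}_N\|^2$.

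For the off-diagonal remainder, the Schur test bounds its operator norm by $\max_k\sum_{j\neq k} h^2/\eta_{kj}^2$. Since the $\eta$'s are $N^{-\gamma}=2h$-separated, the $m$-th nearest neighbour on each side lies at distance at least $2mh$, so the sum is at most $2\sum_m 1/(4m^2)=\mathrm{O}(1)$. Dividing by $N$ then yields the claimed $C\frac{\ln N}{N}\|\bs{x}_N\|^2$.

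The main obstacle is the Taylor comparison of the smeared log with $\ln|\eta_k-\eta_j|$ when $|\eta_k-\eta_j|$ is only comparable to $h$, where the expansion is not small but only bounded. We handle it by choosing $h$ a fixed fraction, e.g. $h=N^{-\gamma}/4$, so that $|u-v|\ge |\eta_{kj}|/2$ on the supports, and by using the exact second-order bound $|\ln|a+s|-\ln|a| - s/a|\le C s^2/a^2$ for $|s|\le |a|/2$. The linear terms integrate to zero by symmetry of $\rho_h$, so only $\mathrm{O}(h^2/\eta_{kj}^2)$ survives and the Schur summation above applies.
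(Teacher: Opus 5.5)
Your proposal is correct and is essentially the paper's argument. You smear each point mass into a uniform density, use that a neutral, compactly supported signed measure has nonpositive logarithmic energy, and pay $-\ln h=\mathrm{O}(\ln N)$ for the diagonal self-energy. The one difference is the smearing scale: the paper uses half-width $N^{-\gamma-1}$, a factor $N$ below the minimal spacing, so each off-diagonal correction is $\mathrm{O}(N^{-1})$ by a first-order bound and the crude estimate $(\sum_k|x_k|)^2\le N\|\bs{x}_N\|^2$ suffices. Your choice $h\asymp N^{-\gamma}$ instead requires the second-order Taylor cancellation and the Schur-test summation over $N^{-\gamma}$-separated points, and it buys only an irrelevant improvement of the constant in front of $\ln N$.
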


\begin{proof}
Without loss of generality let us assume $\| \bs{x}_N \|^2 = 1$. Consider the function
$$f_{\bs{x}_N}(\xi) = \frac{1}{2} N^{\gamma + 1} \sum_{s=1}^N x_s \mathbbm{1}_{ \intff{ -N^{-\ga - 1} }{ N^{-\ga - 1}  }  }\big( \xi-\eta_s \big)  \, .$$
Then
\begin{align*}
  \Xi[ f_{\bs{x}_N} ] &:=  \Int{\mathbb R^2}{} \ln|s- t| f_{\bs{x}_N}(s) f_{\bs{x}_N}(t) \, \mathrm{d}s \, \mathrm{d}t \\
  &= \sum_{\substack{k,j=1 \\ k \neq j}}^N x_k x_j \ln|\eta_k - \eta_j|
                + \frac{1}{4} \sum_{\substack{k,j=1 \\ k \neq j}}^N x_k x_j
                \Int{-1}{1} \ln\left| 1+  N^{- \ga - 1} \frac{u - v }{\eta_k - \eta_j} \right|\, \mathrm{d}u \, \mathrm{d}v  \\
&\quad  - (\gamma+1) \ln N  + \frac{1}{4} \Int{-1}{1}  \ln|u -  v|\, \mathrm{d}u \, \mathrm{d}v \\
&= N \big( \bs{x}_N, \op{M} \bs{x}_N \big)  \,  + \, \mathrm{O}(\ln N)\, .
\end{align*}
Finally we claim that $\Xi[ f_{\bs{x}_N} ]\leq 0$ which would prove the claim.  Lemma 1.8 of \cite{SaffT97} states that given two compactly supported, finite mass,
positive Borel measures $\mu_1, \mu_2$ having equal mass on $\Cx$, the following holds. Assume that each has a finite logarithmic energy:
\beq
-\Int{\Cx^2}{} \ln|z-w| \dd \mu_a(z) \dd \mu_a(w) \, < \, +\infty
\enq
Then, the logarithmic energy $\Xi$  of $\mu=\mu_1-\mu_2$ is non-negative. In our case, we may set
\beq
\dd \mu_{\pm}(\xi) \, = \, f_{\bs{x}_N}^{(\pm)}(\xi) \dd \xi  \;, \quad
f_{\bs{x}_N}^{(\pm)}(\xi) = \frac{1}{2} N^{\ga + 1} \sul{ \substack{s=1 \\ \pm x_s>0 } }{N} |x_s| \mathbbm{1}_{ \intff{ -N^{-\ga - 1} }{ N^{-\ga - 1}  }  }\big( \xi-\eta_s \big) \;.
\enq
The measures $\mu_{\pm}$ are Lebesgue continuous, positive and compactly supported. They have finite logarithmic energy owing to the Lebesgue integrability of the logarithm. Moreover, they have the same mass as $\bs{x}_{N}\in \mc{V}^{(0)}_N$.
Then, the measure $\dd \mu(\xi) \, = \, f_{\bs{x}_N}(\xi) \dd \xi \, = \, \dd \mu_+(\xi)-\dd \mu_{-}(\xi)$, satisfies the hypotheses of the mentioned lemma, in particular has zero mass since
 $\bs{x}_N \in \mc{V}^{(0)}_N$. Thus, $\Xi[ f_{\bs{x}_N} ]\leq 0$.
\end{proof}
\begin{lemme}\label{detlowerbound}

Let $\op{D} = \mathrm{diag}(\La_1, \dots, \La_N)$ be a diagonal matrix with positive eigenvalues $\La_k > 0$ for all $k \in \intn{1}{N}$ and let $\mc{V}^{(0)}_N$ be as in \eqref{definition sous espace V0}
and $\big(\mc{V}^{(0)}_N\big)^{\perp}$ be its orthogonal complement, so that $\R^N \, = \, \mc{V}^{(0)}_N  \overset{\perp}{\oplus} \big(\mc{V}^{(0)}_N\big)^{\perp} $.
The matrix $\op{D}$ admits the block decomposition with respect to this direct sum decomposition
\beq
\op{D} \, = \, \left( \ba{cc}  \op{D}_{\mid  \mc{V}^{(0)}_N}  & \op{D}_{12} \\
                                        \op{D}_{21}         &    \op{D}_{\mid \big(\mc{V}^{(0)}_N\big)^{\perp} } \ea \right) \;.
\enq
Then, it holds
$$\det_{N-1}\big[ \op{D}_{\mid  \mc{V}^{(0)}_N} \big] \, \geq \,  (\La_1 \dots \La_N)^{1 - \frac{1}{N}} \, .$$
\end{lemme}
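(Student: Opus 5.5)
The plan is to compute $\det_{N-1}\big[\op{D}_{\mid \mc{V}^{(0)}_N}\big]$ exactly through a Schur complement identity, and then conclude with the AM--GM inequality. Let $\bs{u}_N = N^{-1/2}(1,\dots,1)$ be the unit vector spanning $\big(\mc{V}^{(0)}_N\big)^{\perp}$. Complete it to an orthonormal basis of $\R^N$ using an orthonormal basis of $\mc{V}^{(0)}_N$. In this basis $\op{D}$ takes the block form of the statement, and $\op{D}_{\mid (\mc{V}^{(0)}_N)^{\perp}} = (\bs{u}_N,\op{D}\bs{u}_N)$ is a scalar. Since $\op{D}$ is positive definite, its compression $\op{D}_{\mid \mc{V}^{(0)}_N}$ is also positive definite. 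Its determinant is therefore positive and it is invertible.

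First, I apply the block-inverse (Schur complement) formula. The $(2,2)$ entry of $\op{D}^{-1}$ in this basis equals the inverse of the Schur complement $\op{D}_{\mid (\mc{V}^{(0)}_N)^{\perp}} - \op{D}_{21}\big(\op{D}_{\mid \mc{V}^{(0)}_N}\big)^{-1}\op{D}_{12}$. Combined with $\det_N[\op{D}] = \det_{N-1}\big[\op{D}_{\mid \mc{V}^{(0)}_N}\big]\cdot(\text{Schur complement})$, this gives the identity
\begin{equation*}
\det_{N-1}\big[\op{D}_{\mid \mc{V}^{(0)}_N}\big] \, = \, \det_N[\op{D}] \cdot \big(\bs{u}_N, \op{D}^{-1}\bs{u}_N\big) \, = \, \Big(\prod_{k=1}^N \La_k\Big) \cdot \frac{1}{N}\sum_{k=1}^N \frac{1}{\La_k} \;.
\end{equation*}
This is the form of Cramer's rule for a principal minor, written in the rotated basis. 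The basis change is orthogonal, so $\det_N[\op{D}] = \prod_k \La_k$ is unchanged.

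Second, I apply the AM--GM inequality to the positive numbers $\La_k^{-1}$. It gives $\frac{1}{N}\sum_{k=1}^N \La_k^{-1} \geq \big(\prod_{k=1}^N \La_k\big)^{-1/N}$. Inserting this into the identity above yields $\det_{N-1}\big[\op{D}_{\mid \mc{V}^{(0)}_N}\big] \geq (\La_1\cdots\La_N)^{1-\frac1N}$, which is the claim.

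The only step needing care is the Schur complement identity: one must check that the relevant block is indeed $(\op{D}^{-1})_{22} = (\bs{u}_N,\op{D}^{-1}\bs{u}_N)$ in the orthonormal basis, and that the compression is invertible. Invertibility is ensured by positive definiteness, so this step is routine.
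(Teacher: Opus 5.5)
Your proof is correct. It reaches the same intermediate identity as the paper,
\[
\det_{N-1}\big[\op{D}_{\mid \mc{V}^{(0)}_N}\big]=\det_N[\op{D}]\cdot\frac{1}{N}\sum_{k=1}^N\La_k^{-1},
\]
and then applies the same AM--GM step. The only difference is how you derive the identity.

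The paper works in the non-orthogonal basis $\bs{v}_k=\bs{e}_k-\bs{e}_N$ of $\mc{V}^{(0)}_N$. It writes the determinant of the compression as the ratio of Gram determinants $\det_{N-1}[(\bs{v}_a,\op{D}\bs{v}_b)]/\det_{N-1}[(\bs{v}_a,\bs{v}_b)]$, which requires an auxiliary map $\op{W}$ and some basis-change bookkeeping. It then evaluates numerator and denominator through their rank-one structures, $\La_b\delta_{ab}+\La_N$ and $\delta_{ab}+1$, obtaining $\det_N[\op{D}]\sum_k\La_k^{-1}$ and $N$.

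You instead use the complementary-minor (Cramer/Jacobi) identity $\det M_{11}=\det M\cdot(M^{-1})_{NN}$ in an orthonormal basis adapted to $\mc{V}^{(0)}_N\oplus(\mc{V}^{(0)}_N)^{\perp}$. This reduces everything to $(\bs{u}_N,\op{D}^{-1}\bs{u}_N)$, which is immediate for diagonal $\op{D}$.

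Your route is shorter and more conceptual. It shows directly that for any invertible $\op{D}$, the compression onto a hyperplane $\bs{u}^{\perp}$ (with $\bs{u}$ a unit vector) has determinant $\det_N[\op{D}]\,(\bs{u},\op{D}^{-1}\bs{u})$, and it avoids $\op{W}$ entirely. The paper's route stays in explicit coordinates and never inverts $\op{D}$, at the price of the longer change-of-basis argument.

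A small remark: if you phrase the identity through the adjugate formula rather than the block-inverse formula, you only need $\det_N[\op{D}]\neq 0$, not invertibility of the compression. Positive definiteness gives you the latter anyway, so this changes nothing here.
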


In other words, the determinant of $\op{D}$ restricted to $\mc{V}^{(0)}_N$ may be approximately bounded from below by the determinant on the full space $\mathbb{R}^N$.

\begin{proof}

Let $\bs{w}_1, \dots, \bs{w}_{N-1}$ be an orthonormal basis for $\mc{V}^{(0)}_N$ and let $\bs{v}_k \, = \, \bs{e}_k - \bs{e}_N \in \mc{V}^{(0)}_N$ where $\bs{e}_k \in \mathbb{R}^N$
is the unit vector with one in the $k^{\e{th}}$ entry and zero in all other entries. Further, we set $\ov{\bs{e}}_N=\sul{k=1}{N} \bs{e}_k$. Finally, we denote by  $\bs{w}_k^{*}$
the linear dual to $\bs{w}_k$ with respect to the canonical scalar product of $\R^N$. There exists $\op{W} \in GL_N(\R)$ such that
\beq
\bs{v}_k \, = \, \op{W} \bs{w}_k\;, \quad k \in \intn{1}{N-1} \quad \e{and} \quad
\ov{\bs{e}}_N \, = \, \op{W} \, \ov{\bs{e}}_N \;.
\enq
We denote by $\op{P}_{\perp}$ the orthogonal projector on $\mc{V}^{(0)}_N$. One has
\beq
\op{W} \, = \, \sul{a=1}{N-1} \bs{v}_a \otimes \bs{w}_a^{*} \, + \,  \ov{\bs{e}}_N \otimes \ov{\bs{e}}_N^{\, *} \quad \e{and} \quad
\op{P}_{\perp} \, = \, \sul{a=1}{N-1} \bs{w}_a \otimes \bs{w}_a^{*}\;.
\enq
Further,
\beq
\det_{N-1}\big[ \op{D}_{\mid  \mc{V}^{(0)}_N} \big]  \, = \, \det_{N}\big[  \overline{\bs{e}}_N \otimes \overline{\bs{e}}_N^* \, + \,  \op{P}_{\perp}  \op{D}  \op{P}_{\perp} \big]
\enq
Next, one observes that
\beq
\op{P}_{\perp}  \op{D}  \op{P}_{\perp}  \, = \, \sul{a,b=1}{N-1}  \bs{w}_a \otimes \bs{w}_b^{*}  \, \big(  \bs{w}_a, \op{D} \bs{w}_b \big)
\enq
and that $ \op{P}_{\perp} \op{W} = \op{W} \op{P}_{\perp} $. Thus, denoting by $\op{W}^{\mathtt{t}}$ the transpose of $\op{W}$, one has
\begin{eqnarray*}
\det_{N-1}\big[ \op{D}_{\mid  \mc{V}^{(0)}_N} \big] \, &=& \,
\f{ \det_{N}\big[  \op{W}^{\mathtt{t}} \big( \overline{\bs{e}}_N \otimes \overline{\bs{e}}_N^* \, + \,  \op{P}_{\perp}  \op{D}  \op{P}_{\perp} \big) \op{W} \big] }
{  \det_{N}\big[  \op{W}^{\mathtt{t}} \op{W} \big]  }
\, = \, \f{ \det_{N-1}\big[ \op{P}_{\perp}  \op{W}^{\mathtt{t}}  \op{D} \op{W}   \op{P}_{\perp} \big]  }{   \det_{N}\big[  \op{W}^{\mathtt{t}} \op{W} \big]   }  \\
\, &=& \, \f{ \det_{N-1}\big[ \big(  \bs{w}_a, \op{W}^{\mathtt{t}}  \op{D} \op{W}  \bs{w}_b \big) \big]  }{   \det_{N-1}\big[ \big(  \bs{w}_a, \op{W}^{\mathtt{t}}  \op{W}  \bs{w}_b \big) \big] }\, = \,
\, = \, \f{ \det_{N-1}\big[ \big(  \bs{v}_a,  \op{D}  \bs{v}_b \big) \big]  }{   \det_{N-1}\big[ \big(  \bs{v}_a,  \bs{v}_b \big) \big] } \;.
\end{eqnarray*}
The matrix entries of the above ratio of determinants can be computed explicitly
\beqa
 \big(  \bs{v}_a,  \op{D}  \bs{v}_b \big) \, = \,  \big(  \bs{e}_a-\bs{e}_N,  \La_b \bs{e}_b - \La_N \bs{e}_N \big) \, = \, \La_{b} \de_{ab} \, + \, \La_N \\
 \big(  \bs{v}_a,   \bs{v}_b \big) \, = \, \de_{ab} \, + \, 1 \;.
\eeqa
By using that for rank 1 matrices $\op{X}$, $\det_{N-1}\big[\op{I}_{N-1} \, + \, \op{X} \big]   \, = \, 1 \, + \, \e{tr}\big[ \op{X} \big] $, one gets
\beqa
\det_{N-1}\big[ \big(  \bs{v}_a,  \bs{v}_b \big) \big] & = & 1+N-1 \, = \, N  \\
\det_{N-1}\big[ \big(  \bs{v}_a,  \op{D}  \bs{v}_b \big) \big] & = & \pl{k=1}{N-1} \La_k \cdot\Big( 1+ \La_N \sul{k=1}{N-1}\La_k^{-1} \Big) \, = \, \det_N[ \op{D} ] \sul{k=1}{N} \f{1}{\La_k} \;.
\eeqa
The AM-GM inequality yields
\beq
\f{1}{N} \sul{k=1}{N} \f{1}{\La_k}  \, \geq \,  \Big(  \pl{k=1}{N} \f{1}{\La_k} \Big)^{ \f{1}{N} }
\enq
hence leading to the claim.
\end{proof}
We also state the well known lemma:
\begin{lemme}
\label{Lemma lower bound on determinant of sum of positive definite}
Let $\op{S}_1,\op{S}_2 \geq 0$ be two real valued, positive semi-definite $N\times N$ matrices such that $\op{S}_1 \, = \, \op{S}_1^{\, \mathtt{t}}$ is symmetric.
Then
\beq
\det_N\big[ \op{S}_1 \, + \,  \op{S}_2 \big] \, \geq \, \det_N[ \op{S}_1] \, .
\enq
\end{lemme}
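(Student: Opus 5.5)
Since $\op{S}_1$ is a symmetric positive semi-definite matrix, the plan is to diagonalise it orthogonally and split into two cases according to whether it is invertible.

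\emph{Singular case.} If $\det_N[\op{S}_1]=0$, it suffices to show $\det_N[\op{S}_1+\op{S}_2]\geq 0$.

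\emph{Invertible case.} If $\op{S}_1$ is positive definite, we write $\op{S}_1=\op{S}_1^{1/2}\op{S}_1^{1/2}$ with $\op{S}_1^{1/2}$ symmetric and invertible. Then
\begin{equation*}
\det_N\big[\op{S}_1+\op{S}_2\big]=\det_N[\op{S}_1]\,\det_N\big[\op{I}_N+\op{T}\big],
\qquad \op{T}=\op{S}_1^{-1/2}\op{S}_2\,\op{S}_1^{-1/2}.
\end{equation*}
The matrix $\op{T}$ satisfies $(\bs{x}_N,\op{T}\bs{x}_N)=(\op{S}_1^{-1/2}\bs{x}_N,\op{S}_2\,\op{S}_1^{-1/2}\bs{x}_N)\geq 0$. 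It then remains to show $\det_N[\op{I}_N+\op{T}]\geq 1$ whenever $\op{T}$ is positive semi-definite in the quadratic-form sense, i.e.\ $(\bs{x}_N,\op{T}\bs{x}_N)\ge 0$, but not necessarily symmetric.

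\emph{Main obstacle: possibly non-symmetric $\op{S}_2$.} If $\op{S}_2$ is symmetric, then so is $\op{T}$. Its eigenvalues $t_k$ are then nonnegative and $\det_N[\op{I}_N+\op{T}]=\prod_k(1+t_k)\geq 1$. The statement only imposes symmetry on $\op{S}_1$, however, so I would handle a general $\op{T}$ as follows.

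\emph{Eigenvalue argument for general $\op{T}$.} Every real eigenvalue $t$ of $\op{T}$ is $\geq 0$: for a real eigenvector $\bs{x}$, one has $t\norm{\bs{x}}^2=(\bs{x},\op{T}\bs{x})\ge0$. For a complex eigenpair $t=\alpha+\i\beta$ with eigenvector $\bs{x}=\bs{u}+\i\bs{v}$, taking real parts of $\bar{\bs{x}}^{\mathtt{t}}\op{T}\bs{x}=t\norm{\bs{x}}^2$ gives
\begin{equation*}
(\bs{u},\op{T}\bs{u})+(\bs{v},\op{T}\bs{v})=\alpha\norm{\bs{x}}^2,
\end{equation*}
where the imaginary cross terms cancel. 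Hence $\alpha\geq0$, and each conjugate pair contributes $|1+t|^2\geq (1+\alpha)^2\geq 1$ to the determinant. Consequently $\det_N[\op{I}_N+\op{T}]=\prod_k(1+t_k)\geq1$.

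\emph{Conclusion of the singular case.} Let $\op{S}_1^{(\epsilon)}=\op{S}_1+\epsilon\op{I}_N$ with $\epsilon>0$. This matrix is symmetric positive definite, and by the invertible case
\begin{equation*}
\det_N\big[\op{S}_1^{(\epsilon)}+\op{S}_2\big]\ge\det_N\big[\op{S}_1^{(\epsilon)}\big]>0.
\end{equation*}
Letting $\epsilon\searrow0$ and using continuity of the determinant gives $\det_N[\op{S}_1+\op{S}_2]\geq 0=\det_N[\op{S}_1]$. This completes both cases.
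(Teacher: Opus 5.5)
Your proof is correct and complete. The paper gives no proof of this lemma; it is simply quoted as ``well known''. There is therefore no competing argument to compare with.

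What your write-up adds is the treatment of a non-symmetric $\op{S}_2$, and this case is genuinely needed. In the proof of Proposition~\ref{detlowerbound2} the lemma is applied with $\op{S}_1=\op{X}_{\e{red}}$ and $\op{S}_2=\op{G}=\op{B}+\wt{C}N^{3/2}\vartheta_N\op{I}_N$. This $\op{G}$ need not be symmetric; it is only positive in the sense of real quadratic forms, $(\bs{x}_N,\op{G}\bs{x}_N)\ge 0$.

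Your key observation handles exactly this situation. A real matrix with nonnegative real quadratic form has all its eigenvalues in the closed right half-plane, so $|1+t_k|\ge 1$ and $\det_N[\op{I}_N+\op{T}]\ge 1$. For symmetric $\op{S}_2$ one could instead quote Weyl monotonicity of eigenvalues or Minkowski's determinant inequality, but neither covers the case the paper actually uses.

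Two cosmetic points:
\begin{itemize}
\item The cross terms $\i\big[(\bs{u},\op{T}\bs{v})-(\bs{v},\op{T}\bs{u})\big]$ do not cancel. They are purely imaginary, so they simply drop out when you take real parts.
\item The split into singular and invertible cases is unnecessary. Letting $\epsilon\searrow 0$ in $\det_N[\op{S}_1+\epsilon\op{I}_N+\op{S}_2]\ge\det_N[\op{S}_1+\epsilon\op{I}_N]$ gives the inequality for every symmetric positive semi-definite $\op{S}_1$ at once.
\end{itemize}
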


\vspace{3mm}

\begin{proof}[Proof of Proposition \ref{detlowerbound2}]

Denote by $\op{X}$ the matrix arising in the \textit{rhs} of  Lemma \ref{Lemme reecriture determinant A comme perturbation de diagonale}:
\begin{align*}
X_{jk} \, = \, \bigg( 1  - \frac{ \ln|P^{\prime}(\eta_j)| }{ \ln(2\veps_N) }     \bigg) \delta_{kj} \,  + \,
\frac{ N }{ \ln(2\veps_N) }   M_{kj}   \, + \, B_{jk}
\end{align*}
with $\op{B}_{jk} \, = \, \mathrm{O}\Big( \big( 1+ N \de_{jN} \mathbbm{1}_{k \leq N-1}   \big)\vartheta_N \Big)$. Then,
\beq
\norm{ \hspace{-2mm} \op{ B} }_{\e{HS}}^2 \, \leq \, C^{\prime} \sul{j,k=1}{N} \vartheta_N^2   \big( 1+ N^2 \de_{jN} \mathbbm{1}_{k \leq N-1}   \big)
\, \leq  \,  \wt{C}^2  N^3 \vartheta_N^2 \;.
\label{norme matrice B HS}
\enq
for some $\wt{C}>0$. $\op{X}$ may be recast  as $\op{X}\, = \, \op{D} \, + \, \wt{\op{M}} \, + \,  \op{G}$, where
\begin{align*}
D_{jk} & =  \bigg( 1- \frac{  \ln|P^{\prime}(\eta_j)| }{\ln(2\veps_N)}   \, - \, C\f{\ln N }{ N} \, - \, \wt{C} N^{\f{3}{2}} \vth_N \bigg) \delta_{kj} \\
\wt{M}_{jk} & = \frac{ N }{ \ln(2\veps_N) }   M_{kj} \, + \, C\f{\ln N }{ N} \de_{jk}
\end{align*}
Here, $C$ is the constant arising in Lemma \ref{almostposdef} while $\wt{C}$ is the constant arising in an upper bound on the Hilbert-Schmidt norm, \textit{c.f.} \eqref{norme matrice B HS}, of the remainder
present in the expression of $\op{X}$. Finally, let
\beq
G_{jk} \, = \, B_{jk} \, + \,  \wt{C}   N^{\f{3}{2}} \vartheta_N  \de_{jk}
\enq
so that $G_{jk}$ is positive semi-definite by \eqref{norme matrice B HS}. Further, we set $\op{X}_{\e{red}}\,=\, \op{D}\, +\, \wt{\op{M}}$. First, observe that
\begin{equation*}
\op{X}_{\e{red}} \cdot \ov{\bs{e}}_N \, = \,  \Big( 1 \, - \, \wt{C}  N^{\f{3}{2}} \vartheta_N  \Big)\cdot \ov{\bs{e}}_N \qquad \e{with} \qquad  \ov{\bs{e}}_N \, = \, \sul{k=1}{N} \bs{e}_k
\end{equation*}
and $\bs{e}_k$ being the vector with $1$ in $k^{\e{th}}$ position and $0$ elsewhere.
Now, upon adopting the notations introduced in Lemma \ref{detlowerbound} and recalling the definition \eqref{definition sous espace V0} of $\mc{V}_N^{(0)}$,
one has that $\op{X}_{\e{red}}$ has the block form decomposition
\begin{equation*}
\op{X}_{\e{red}}  \, = \, \left( \ba{cc} \big( \op{D}\, +\, \wt{\op{M}} \big)_{ \mid \mc{V}_N^{(0)} }  & 0 \\
                                                                        0                       &   1 \, - \, \wt{C}  N^{\f{3}{2}} \vartheta_N    \ea \right) \;.
\end{equation*}

By virtue of Lemma \ref{almostposdef}, $ \wt{\op{M}}_{ \mid \mc{V}_N^{(0)} }$ is positive definite.
Next, one has that $\op{D}_{ \mid \mc{V}_N^{(0)} }\, =\, \op{P}_{\perp} \op{D} \op{P}_{\perp} $ with $\op{P}_{\perp} $ the orthogonal projector on $\mc{V}_N^{(0)}$.
Observe that $\op{D}$ is positive definite for $N$ large enough by hypothesis. Then,
for any $\bs{x}_N \in \R^N$, $\big( \bs{x}_N , \op{D}_{ \mid \mc{V}_N^{(0)} } \bs{x}_N  \big) \, = \, \big(  \op{P}_{\perp}\bs{x}_N , \op{D} \op{P}_{\perp}\bs{x}_N  \big) \geq 0$.
This entails that $\big( \op{D}\, +\, \wt{\op{M}} \big)_{ \mid \mc{V}_N^{(0)} } $ is positive definite. It is also symmetric.
Hence, by Lemma \ref{Lemma lower bound on determinant of sum of positive definite}
\begin{align*}
\det_N[\op{X} ] \, &\geq \, \det_N\big[\op{X}_{\e{red}} \big] \, = \,  \Big( 1 \, - \, \wt{C}  N^{\f{3}{2}} \vartheta_N  \Big) \cdot
\det_{N-1}\big[  \big( \op{D}\, +\, \wt{\op{M}} \big)_{ \mid \mc{V}_N^{(0)} }  \big] \\
\, &\geq \Big( 1 \, - \, \wt{C}  N^{\f{3}{2}} \vartheta_N  \Big) \,\det_{N-1}\big[  \op{D}_{ \mid \mc{V}_N^{(0)} }  \big]  \, \geq \,  \Big( 1 \, - \, \wt{C}  N^{\f{3}{2}} \vartheta_N  \Big) \cdot \big\{ \det_{N }[   \op{D} ] \big\}^{ \f{N-1}{N} }
\end{align*}
where we have used, again, Lemma \ref{Lemma lower bound on determinant of sum of positive definite}, and finally invoked Lemma \ref{detlowerbound}.
This entails the claim.

\end{proof}

\subsection{Proof of the lower bound on balls centered at good measures}

In this section, we will prove the lower bound on balls centred at good probability measures that satisfy the following hypotheses. 
\begin{defin}\label{defgood} For $\kappa>0$, we let $\mathcal G_{\kappa}$ be the set of probability measures $\Ups$ on $\mathbb R$ which are absolutely continuous with respect to 
 Lebesgue measure with density $\dd \Ups(x) \, = \, \varrho(x) \dd x$ such that
\begin{enumerate}
\item $\Ups$ is compactly supported with bounded density, $\varrho \in L^{\infty}(\R)$;
\item $\Int{\mathbb{R}}{} \ln(1+|x|)^4 \, \varrho(x)\, \mathrm{d}x  < +\infty$;
\item $1+ \frac{2}{\ell} \Int{\mathbb{R}}{} \ln |x-y| \, \varrho(y) \, \mathrm{d}y \geq \kappa$ for all $x\in \mathbb R$. 
\end{enumerate}
\end{defin}

\subsubsection{Properties of good probability measures}

\begin{lemme}\label{concentration} Let  $\kappa>0$ and $\Ups\in \mathcal G_{\kappa}$. Then, for $N \geq 2$,
\begin{enumerate}
\item For all $\delta > 0$
$$\Ups^{\otimes N} \Big[ \big\{\bs{x}_N \in \R^N \, : \,  |x_k - x_j| \leq N^{-1 - \delta} \quad \text{ for some } \quad k \neq j \big\} \Big] \, \leq \,  \norm{\varrho}_{ L^{\infty}(\R) } N^{1-\delta} \, .$$
\item There is a constant $C^\prime > 0$ such that
$$\Ups^{\otimes N}\Big[ \Big\{ \bs{x}_N \in \R^N \, : \, 1+ \frac{2}{\ell N} \sum_{\substack{j=1 \\ j \neq k} }^N \ln|x_k - x_j| \, \leq \frac{\kappa}{2}
            \quad \text{ for some } \quad  k \in \intn{1}{ N } \Big\} \Big] \,  \leq \, C^\prime N^{-1}  \, .$$
\item Let  $\delta \geq 2$ and set $\mc{F} = \mc{F}_1 \cap \mc{F}_2 $ where
\begin{align*}
\mc{F}_1 &= \big\{  \bs{x}_N \in \mathbb{R}^N \, : \, |x_k - x_j | \geq N^{-1-\delta} \quad  \forall k \neq j \in  \intn{1}{N} \big\}  \vspace{4mm} \\
\mc{F}_2 &= \Big\{ \bs{x}_N \in \mathbb{R}^N \, : \, 1 \, - \,  \sul{\substack{j=1 \\ j \neq k}}{N} \frac{ \ln|x_k - x_j|  }{ \ln(2\veps_N) }  \, \geq  \, \frac{\kappa}{2} \quad \forall k \in \intn{1}{N} \Big\} \, .
\end{align*}
Then
\begin{align*}
&\Int{\mathbb{R}^N}{} \mathbbm{1}_{ \mc{F} }(\bs{x}_N) \, \ln\bigg( 1 - \sul{\substack{j=1 \\ k \neq j}}{N}  \frac{  \ln|x_k - x_j| }{ \ln(2\veps_N) } \bigg) \,\dd^{N}\Ups(\bs{x}_N) \\
&= \Int{\mathbb{R}}{} \ln\left( 1 - \frac{N-1}{\ln(2\veps_N)} \Int{\mathbb{R}}{} \ln|x - y| \, \dd \Ups(y) \right) \, \dd \Ups(x) + \mathrm{O}(N^{-\frac{1}{2}}) \, .
\end{align*}
\end{enumerate}
\end{lemme}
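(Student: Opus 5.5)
The three assertions are independent, and I will prove them in order using only elementary probability: a union bound for 1, a fourth-moment Markov inequality for 2, and a Lipschitz/$L^2$ comparison for 3. Throughout, let $R$ be such that $\mathrm{supp}\,\Ups\subset\intff{-R}{R}$, and set
\[
U(x)=\Int{\R}{}\ln|x-y|\,\varrho(y)\,\dd y .
\]
The basic a priori estimate is the uniform moment bound
\[
\sup_{|x|\le R}\Int{\R}{}\big|\ln|x-y|\big|^{p}\varrho(y)\,\dd y\;\le\;\norm{\varrho}_{L^\infty(\R)}\Int{-1}{1}|\ln|u||^{p}\,\dd u+\ln(1+2R)^{p}\;<\;+\infty ,\qquad p\le 4 .
\]
It follows from splitting the integral according to whether $|x-y|\le 1$ or $|x-y|>1$, using the bounded density on the first piece and the compact support on the second. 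In particular $U$ is bounded above on $\intff{-R}{R}$. By item 3 of Definition \ref{defgood}, $U(x)\ge \frac{\ell}{2}(\kappa-1)$, so $U$ is bounded on the support of $\Ups$.

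For assertion 1, I will use a union bound over the at most $N^2/2$ pairs $k<j$. Conditionally on $x_k$, one has $\Ups\big[\{|x_j-x_k|\le N^{-1-\delta}\}\big]\le 2N^{-1-\delta}\norm{\varrho}_{L^\infty(\R)}$. Summing over pairs gives $\norm{\varrho}_{L^\infty(\R)}N^{1-\delta}$.

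For assertion 2, I fix $k$ and condition on $x_k=x\in\intff{-R}{R}$. Then $Y_j=\ln|x-x_j|-U(x)$, $j\neq k$, are i.i.d., centred, and have fourth moments bounded uniformly in $x$ by the estimate above. Since
\[
\tfrac{1}{N}\sum_{j\neq k}\ln|x-x_j| \;=\; U(x)-\tfrac{U(x)}{N}+\tfrac1N\sum_{j\ne k} Y_j
\]
and $U$ is bounded, for $N$ large the event in assertion 2 forces $\big|\frac1N\sum_{j\neq k}Y_j\big|\ge \frac{\ell\kappa}{8}$. The fourth-moment bound $\mathbb E\big[(\sum_j Y_j)^4\big]\le C N^2$ together with Markov's inequality gives probability at most $C N^{-2}$ for each $k$. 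A union bound over $k$ then yields $C'N^{-1}$.

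For assertion 3, I will read $k$ as a fixed index, say $k=1$ by symmetry. Write $g=1-\sum_{j\neq 1}\ln|x_1-x_j|/\ln(2\veps_N)$ and $h(x)=1-\frac{N-1}{\ln(2\veps_N)}U(x)$. Since $\ln(2\veps_N)=-\frac{N\ell}{2}+\ln 2$, we have $h\ge\kappa-\mathrm{O}(N^{-1})\ge\kappa/2$ for $N$ large, and $h$ is bounded above uniformly. On $\mc F$ one also has $g\ge\kappa/2$. Since $\ln$ is $\frac{2}{\kappa}$-Lipschitz on $\intfo{\kappa/2}{+\infty}$,
\[
\mathbbm 1_{\mc F}\,|\ln g-\ln h(x_1)|\le \frac{2}{\kappa\,|\ln(2\veps_N)|}\Big|\sum_{j\neq1}Y_j\Big| .
\]
Its expectation is at most $\frac{C}{N}\big(\mathbb E(\sum_j Y_j)^2\big)^{1/2}=\mathrm{O}(N^{-1/2})$. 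It remains to remove the indicator from the $h$-term. Because $\ln h$ is bounded, $\big|\mathbb E[\mathbbm 1_{\mc F^c}\ln h(x_1)]\big|\le C\,\Ups^{\otimes N}[\mc F^c]$. Assertion 1 with $\delta\ge 2$ gives $\Ups^{\otimes N}[\mc F_1^c]=\mathrm{O}(N^{-1})$. For $\mc F_2^c$, I rerun the argument of assertion 2, with the normalisation $\ln(2\veps_N)$ replaced by $-N\ell/2$ at a cost of $\mathrm{O}(N^{-1})$ absorbed into the threshold, which also gives $\mathrm{O}(N^{-1})$. Both contributions are within the claimed $\mathrm{O}(N^{-1/2})$.

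The main obstacle is the uniformity of the log-moment bounds in the conditioning point $x$. The fourth-moment Markov argument needs the per-$k$ bound $N^{-2}$ in order to survive the union bound over $k$, and this is exactly where bounded density and compact support (item 1 of Definition \ref{defgood}) are essential.
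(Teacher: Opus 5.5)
Your proposal is correct and follows essentially the same route as the paper's proof: a union bound over pairs for item~1; a fourth-moment Markov bound on the centred variables $\ln|x_k-x_j|-\int\ln|x_k-y|\,\dd\Ups(y)$, combined with a union bound over $k$, for item~2; and, for item~3, a Lipschitz comparison of the two logarithms on $\mc{F}$ together with $\Ups^{\otimes N}[\mc{F}^{\e{c}}]=\mathrm{O}(N^{-1})$ from items~1--2. You are more careful than the written proof on two small points: you keep the $\mathrm{O}(N^{-1})$ correction $-\frac{2}{\ell N}\int\ln|x_k-y|\,\dd\Ups(y)$, and you account for the normalisation $\ln(2\veps_N)$ versus $-N\ell/2$ when bounding $\Ups^{\otimes N}[\mc{F}_2^{\e{c}}]$; in addition, using the boundedness of $\ln h$ on $\mathrm{supp}\,\Ups$ instead of Cauchy--Schwarz improves the complement term to $\mathrm{O}(N^{-1})$.
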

\begin{proof}
The first claim follows from a direct union bound. Let us then proceed to establishing the second claim. For any $k \not= j  \in \intn{1}{N}$,
we introduce
\beq
Y_{kj} \, = \,  \ln|x_k - x_j| - \Int{\mathbb{R}}{} \ln|x_k - y| \varrho(y) \, \mathrm{d}y \, .
\enq
We first establish that for  any $\eta > 0$ there exists a $C_\eta > 0$ such that
$$\Ups^{\otimes N}\Big[  \Big|  \frac{1}{N} \sul{\substack{j=1 \\ j \neq k} }{N} Y_{kj} \Big| \geq \eta \Big] \leq C_\eta N^{-2} \, .$$
Indeed, by Markov's inequality
$$\Ups^{\otimes N}\Big[  \Big|  \frac{1}{N} \sul{\substack{j=1 \\ j \neq k} }{N} Y_{kj} \Big| \geq \eta \Big] \leq
\frac{1}{(\eta N)^4} \mathbb{E}_{\Ups^{\otimes N}} \Big[ \Big(\sul{\substack{j=1 \\ j \neq k} }{N} Y_{kj} \Big)^{4}\Big] \, .$$
Note that by independence and centering, 
$
\mathbb{E}_{\Ups^{\otimes N}} \Big[ Y_{kj_1} \dots  Y_{kj_{\ell}} \Big] \, =\, 0 $ if there is  $j_b\not \in \{ j_a \}_{a\not=}^{\ell} $.
Thus, expanding the product and using Cauchy-Schwartz  yields
$$
 \mathbb{E}_{\Ups^{\otimes N}} \Big[ \Big(\sul{\substack{j=1 \\ j \neq k} }{N} Y_{kj} \Big)^{4}\Big]
%
%
%
\leq \, \big( 3 (N-1)(N-2)+(N-1) \big)\mathbb{E}_{\Ups^{\otimes 2}} \big[ Y_{12}^4 \big] \;.
$$
By using $|a+b|^{k}\leq 2^{k-1}(|a|^k+|b|^k)$, one has that
\begin{align*}
Y_{12}^4 &\leq 2^3 \Big[ \big( \ln|x_1-x_2| \big)^4 + \Big( \Int{\mathbb{R}}{} \ln|x_1 - y| \, \varrho(y) \, \mathrm{d}y\Big)^4 \Big] \; .
\end{align*}
This ensures that
\beq
\mathbb{E}_{\Ups^{\otimes 2}}[Y_{12}^4] \,\leq \, 2^4 \Int{\mathbb{R}^2}{} \big( \ln|x_1-x_2| \big)^4  \, \mathrm{d}\Ups(x_1) \, \mathrm{d}\Ups(x_2)  \, .
\enq
We finally claim that the right-hand side is finite. Indeed,  because of (ii) in Definition \ref{defgood},
\begin{align*}
\Int{ \mathbb{R}^2 }{} \left| \ln|x-y| \right|^k \varrho(x) \varrho(y) \, \mathrm{d}x \, \mathrm{d}y &=
                \Int{\mathbb{R}^2}{} \mathbbm{1}_{|x-y| \leq 1}\left| \ln|x-y| \right|^k \varrho(x) \varrho(y) \, \mathrm{d}x \, \mathrm{d}y \\
    &\quad + \Int{\mathbb{R}^2}{} \mathbbm{1}_{|x-y| > 1}\left| \ln|x-y| \right|^k \varrho(x) \varrho(y) \, \mathrm{d}x \, \mathrm{d}y \\
    &\leq 2 k! \norm{\varrho}_{L^{\infty}(\R) }  + 2^k\Int{\mathbb{R}}{} \ln(1+|x|)^k \varrho(x) \, \mathrm{d}x < +\infty \, .
\end{align*}
Now, one observes that
\bem
\Ups^{\otimes N}\Big[ \Big\{ \bs{x}_N \in \R^N \, : \, \exists k\in \intn{1}{N} \; \e{such} \, \e{that} \;  1+ \frac{2}{\ell N} \sul{\substack{j=1 \\ j \neq k} }{N} \ln|x_k - x_j| \, \leq \frac{\kappa}{2}  \Big\} \Big]  \\
\leq \, \sul{k=1}{N}
\Ups^{\otimes N}\Big[ \Big\{ \bs{x}_N \in \R^N \, : \,  1+ \frac{2}{\ell N} \sum_{\substack{j=1 \\ j \neq k} }^N \ln|x_k - x_j| \, \leq \frac{\kappa}{2}  \Big\} \Big]  \, .
\end{multline}
Setting $q(x) \, = \, 1+\tfrac{2}{\ell} \Int{\R}{} \ln|x-y| \dd \Ups(y)$, we get by (iii)
\beq
1+ \frac{2}{\ell N} \sul{\substack{j=1 \\ j \neq k} }{N} \ln|x_k - x_j| \, = \, q(x_k) \, + \, \f{2}{\ell N}\sul{\substack{j=1 \\ j \neq k} }{N} Y_{kj}
\, \geq  \, \kappa \,  +\, \f{2}{\ell N}\sul{\substack{j=1 \\ j \neq k} }{N} Y_{kj} \,.
\enq
  Thus,
\bem
\Big\{ \bs{x}_N \in \R^N \, : \,  1+ \frac{2}{\ell N} \sul{\substack{j=1 \\ j \neq k} }{N} \ln|x_k - x_j| \, \leq \frac{\kappa}{2} \; \;  \dd \Ups(x_k) \; \e{a.e.} \Big\} \\
\,  \subset  \, \Big\{ \bs{x}_N \in \R^N \, : \,  \f{1}{N} \Big| \sul{\substack{j=1 \\ j \neq k} }{N} Y_{kj} \Big| \, \geq \frac{\kappa \ell }{4 }  \; \;  \dd \Ups(x_k) \; \e{a.e.} \Big\}  \;.
\end{multline}
The above thus establishes the second property owing to the previous estimates.
We finally prove the third point. 
Firstly, we use the basic inequality, for $x,y > 0$, $|\ln x - \ln y | \leq \frac{|x-y|}{\min\{ x, y \}}$. From this and by taking $N$ large enough, we have
\bem
\bigg| \Int{\mathbb{R}^N}{} \mathbbm{1}_{ \mc{F} }(\bs{x}_N)  \, \Big\{  \ln\Big( 1 -  \sum_{\substack{j=1 \\ k \neq j}}^N \frac{ \ln|x_k - x_j| }{\ln(2\veps_N)} \Big)
- \ln\Big( 1 - \frac{N-1}{\ln(2\veps_N)} \Int{\mathbb{R}}{} \ln|x_k - y|  \mathrm{d}\Ups(y) \Big) \Big\}   \,\dd^{N}\Ups(\bs{x}_N) \bigg| \\
\leq  \frac{ \tf{2}{\kappa} }{|\ln(2\veps_N)|} \Int{\mathbb{R}^N}{}    \bigg|  \sul{\substack{j=1 \\ k \neq j}}{N} \Big\{Y_{kj}
\Big\} \bigg|
 \,\dd^{N}\Ups(\bs{x}_N)  
    = \mathrm{O}(N^{-\frac{1}{2}})
\end{multline}
by the same manipulations that appear  to prove the previous point. To prove the claim, it remains to bound
\begin{align*}
\Xi^{\e{c}}  = & \Int{\mathbb{R}^N}{}  \mathbbm{1}_{ \mc{F}^{\e{c}} }(\bs{x}_N) \Big| \ln\Big( 1-\frac{N-1}{\ln(2\veps_N)} \Int{\mathbb{R}}{} \ln|x_k - y| \mathrm{d}\Ups(y) \Big)   \Big| \,\dd^{N}\Ups(\bs{x}_N)  \\
 \leq &\sqrt{\Ups^{\otimes N}[ \mc{F}^{\e{c}}] } \sqrt{\Int{\mathbb{R}}{} \Big| \ln\Big( 1-\frac{N-1}{\ln(2\veps_N)} \Int{\mathbb{R}}{} \ln|x  - y| \mathrm{d}\Ups(y) \Big)   \Big|^2 \, \mathrm{d}\Ups(x) }  \, .
\end{align*}
One observes that $\mathrm{d}\Ups(x)$ a.e.,
\bem
1-\frac{N-1}{\ln(2\veps_N)} \Int{\mathbb{R}}{} \ln|x  - y| \mathrm{d}\Ups(y)   \\
\,= \,  \f{N-1}{ N - \tf{ 2 \ln 2 }{\ell} } \Big( 1 + \f{2}{\ell} \Int{\mathbb{R}}{} \ln|x  - y| \mathrm{d}\Ups(y) \Big)
\, + \, 1 \, - \, \f{N-1}{ N - \tf{ 2 \ln 2 }{\ell} } \geq \f{\kappa}{2}
\end{multline}
provided that $N$ is large enough. Hence, this leads to
\beq
\Xi^{\e{c}} \,  \leq   \,\sqrt{\Ups^{\otimes N}[ \mc{F}^{\e{c}}] }  \sqrt{ 2  | \ln \f{\kappa}{2}|^2 \, + \, \f{32}{\ell^3}   \Int{\mathbb{R}}{} (\ln|x  - y|)^2 \mathrm{d}^2\Ups(x,y) }
\enq
By a union bound $\Ups^{\otimes N}[ \mc{F}^{\e{c}} ]  \, \leq \, \Ups^{\otimes N}[\mc{F}_1^{\e{c}}] \, + \, \Ups^{\otimes N}[ \mc{F}_2^{\e{c}}]\, = \,  \mathrm{O}(N^{-1})$
by virtue of the estimates from $\e{i)}$ and $\e{ii)}$  since $\delta \geq 2$.
\end{proof}

We now introduce the events $\mc{E}_{1}, \mc{E}_{2},\mc{E}= \mc{E}_1 \cap \mc{E}_2$ similar to $\mc{F}_{1}, \mc{F}_{2}, \mc{F}$ of the previous lemma  but restricted to ordered particles: 
\begin{align*}
\mc{E}_1 & = \big\{  \bs{\eta}_N \in \R^N_{<} \, : \,  |\eta_k - \eta_j | \geq N^{-3} \quad  \forall k \neq j \in \intn{1}{N}   \big\} \\
\mc{E}_2 & = \Big\{  \bs{\eta}_N \in \R^N_{<} \, : \, \, 1 - \frac{1}{\ln(2\veps_N)} \sul{\substack{j=1 \\ j \neq k}}{N} \ln|\eta_k - \eta_j| \geq \frac{\kappa}{4} \quad \forall k \in \intn{1}{N}  \Big\} \; .
\end{align*}
Our first claim is that the event $\mc{E}$ is contained within the integration domain $\mc{A}_N$ introduced in \eqref{definition ensembles AN}.
\begin{lemme}
\label{Lemme map etaN to La+N et ses ptes}
For $N$ sufficiently large (depending on $\kappa > 0$) there exists $c>0$ such that, for all $\bs{\eta}_N\in \mc{E}$ we have
\beq
\sg_{N;a}\, = \, \f{ 2 \veps_N }{  |P^{\prime}(\eta_a) | } \,  \leq  \, \ex{-c N} \qquad \e{and} \qquad \f{\sg_{N;a}}{ |\eta_k-\eta_a|} \, \leq \,  \ex{-c N}
\enq
this uniformly in $a , k\in \intn{1}{N}$, $a \neq k$.
Given $P(X)\, = \, \pl{a=1}{N}(X-\eta_a)$, the polynomials $P_{+} \, = \, P \, - \, 2 \veps_N$
admits $N$ real roots $[\bs{\La}_N^{+}(\bs{\eta}_N)]_1\, < \, \cdots \, < \, [\bs{\La}_N^{+}(\bs{\eta}_N)]_N $. The map
\beq
\bs{\La}_N^{+} \; : \; \Big\{ \ba{ccc }
 \mc{E} \tend \R^N \\
\bs{\eta}_N &\mapsto &  \La_N^{+}(\bs{\eta}_N) \in \R^N  \ea
\enq
whose coordinates are such that
\beq
\pl{a=1}{N}(X-\eta_a)\, - \, 2\veps_N \, =\,  \pl{a=1}{N}\big( X \, - \, [\bs{\La}_N^{+}(\bs{\eta}_N)]_a \big)
\enq
is well defined, satisfies $  [\bs{\La}_N^{+}(\bs{\eta}_N)]_1 \, < \, \cdots \, < \,  [\bs{\La}_N^{+}(\bs{\eta}_N)]_N   $ for any $\bs{\eta}_N\in \mc{E}$.
Furthermore, we have
\beq
\bs{\La}_N^{+}(\mc{E}) \subset \mc{A}_N\, ,
\enq
$\bs{\La}_N^{+}$ is a smooth diffeomorphism from $\mc{E}$ onto $\bs{\La}_N^{+}(\mc{E})$.
\end{lemme}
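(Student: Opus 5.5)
The plan is to read the two bounds directly off the definitions of $\mc{E}_1$ and $\mc{E}_2$, then feed them into the hypotheses \eqref{ecriture hypotheses sur les quantites petites}. Lemma \ref{lambdaexpansion} and the ordering/interlacing consequences established after it then do most of the remaining work.

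\textbf{Step 1: the two exponential bounds.} Since $\ln(2\veps_N) = \ln 2 - N\ell/2 <0$, the condition defining $\mc{E}_2$ reads
\beq
\ln|P'(\eta_k)| \, = \, \sum_{j\neq k}\ln|\eta_k-\eta_j| \, \geq \, -\big(1-\tfrac{\kappa}{4}\big)|\ln(2\veps_N)| .
\enq
Hence
\beq
\sg_{N;a} \, = \, 2\veps_N/|P'(\eta_a)| \, \leq \, (2\veps_N)^{\kappa/4} \, = \, \ex{-\kappa N\ell/8+\mathrm{O}(1)} .
\enq
Combining this with $|\eta_k-\eta_a|\geq N^{-3}$ from $\mc{E}_1$ gives $\sg_{N;a}/|\eta_{ka}|\leq N^3 \ex{-\kappa N \ell/8+\mathrm{O}(1)}\leq \ex{-cN}$ for any $c<\kappa\ell/8$ and $N$ large.

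\textbf{Step 2: real, ordered roots and membership in $\mc{A}_N$.} Step 1 gives \eqref{ecriture hypotheses sur les quantites petites} with $\varkappa_N=\vth_N=\ex{-cN}$, so $N\vth_N\to0$. By \eqref{Pzetalowerbound3} together with Lemma \ref{etalambdaequivalence}, or directly from the interval argument in the proof of Lemma \ref{lambdaexpansion}, both $P\mp 2\veps_N$ have $N$ simple real roots, one in each disjoint interval $I_a$. This defines $[\bs{\La}_N^+(\bs{\eta}_N)]_a=\la_a^+$, strictly increasing in $a$ by the ordering inequality following Lemma \ref{lambdaexpansion}. To get $\bs{\La}^+_N(\mc{E})\subset\mc{A}_N$, note that $\prod_i(x-\la_i^+)+4\veps_N = P+2\veps_N = P^-$. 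Its roots $\la_a^-$ are real and distinct by the same argument with $s=-1$, and $\bs{\la}_N^+\in\R^N_<$.

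\textbf{Step 3: smooth diffeomorphism.} For smoothness I will use the implicit function theorem applied to $F(\bs{\eta},x)=\prod_b(x-\eta_b)-2\veps_N$. At $x=\la_a^+$ we have $\partial_x F = P'(\la_a^+) = P'(\eta_a)(1+\mathrm{O}(N\vth_N))\neq0$ by the estimate on $P'$ over $I_a$ from the proof of Lemma \ref{lambdaexpansion}, so each $\la_a^+$ is smooth in $\bs{\eta}_N$.

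Injectivity is immediate, since the inverse is explicit: $\bs{\eta}_N$ is the ordered root vector of $\prod_a(X-\la_a^+)+2\veps_N$. This inverse is smooth on the open set where that polynomial has simple real roots, which contains $\bs{\La}^+_N(\mc{E})$ by Step 2 since $\eta_a$ are simple. So $\bs{\La}_N^+$ is a smooth bijection onto its image with a smooth inverse.

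The main care point is that $\mc{E}$ is closed rather than open. I will handle this by proving everything on a slightly enlarged open neighbourhood, replacing $N^{-3}$ and $\kappa/4$ by, e.g., $N^{-3}/2$ and $\kappa/8$, on which Steps 1–3 hold verbatim, and then restricting to $\mc{E}$. Alternatively, one can check directly that the Jacobian $\partial\la_a^+/\partial\eta_b = P(\la_a^+)/\big((\la_a^+-\eta_b)P'(\la_a^+)\big)$ is nonsingular.
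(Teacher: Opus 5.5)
Your proposal is correct and follows the paper's proof. The exponential bounds on $\sg_{N;a}$ and $\sg_{N;a}/|\eta_k-\eta_a|$ are read off from $\mc{E}_2$ and $\mc{E}_1$ exactly as you do; your exponent $(2\veps_N)^{\kappa/4}$ is the right one for the stated definition of $\mc{E}_2$, while the paper's displayed $\kappa/2$ is a slip that only changes the value of $c$. The remaining claims then come from feeding these bounds into the root analysis around Lemma \ref{lambdaexpansion} and Propositions \ref{openness}--\ref{diffeo}, and your implicit-function-theorem plus explicit-inverse argument is precisely the proof of Proposition \ref{diffeo}, so you are just spelling out the paper's one-line citation.
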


\begin{proof}

We establish the results by showing that any $ \bs{\eta}_N \in  \mc{E}$ is such that the control parameters $\varkappa_N$ and $\vth_N$ introduced in \eqref{ecriture hypotheses sur les quantites petites}
may be taken exponentially small.
One has
\beq
2 \sg_{N;a}\, = \, \f{ 2 \veps_N }{  |P^{\prime}(\eta_a) | } \, = \, \ex{\ln (2\veps_N)  - \sul{ j\not=a }{N} \ln |\eta_{ja}| } \, \leq \, \ex{ \f{\kappa}{2} \ln (2\veps_N)  }
\, \leq \, 2^{\f{\kappa}{2}} \ex{-\f{N\ell}{4}\kappa }
\enq
Likewise,
\beq
\f{\sg_{N;a}}{ |\eta_k-\eta_a|} \, \leq \, N^{3} 2^{\f{\kappa}{2}-1} \ex{-\f{N\ell}{4}\kappa } \;.
\enq
Thus, one may take $\varkappa_N\, = \, \ex{-cN}$ and $\vth_N \, = \, \ex{-c^{\prime}N}$.
The rest of the claim follows directly from Propositions \ref{openness}-\ref{diffeo}.

\end{proof}

We now introduce several auxiliary sets that will be of use in establishing the lower bound.
\beqa
\mc{L}^+_{\mu, \de} & = & \Big\{ \bs{\la}^+_N \in \mc{A}_N \; : \; \op{L}^{(\bs{\la}_N^+)}_N \in B(\mu, \de) \Big\}  \\
\mc{N}_{\mu, \de} & = & \Big\{ \bs{\eta}_N\in \big(\bs{\La}^+_N\big)^{-1}\big(\mc{A}_N \big)
\; : \; \op{L}^{(\bs{\eta}_N)}_N \in B(\mu, \de) \Big\} \, .
\eeqa

\begin{lemme}\label{lemmainc}

Let $\mu \in \mc{M}_{1}(\R)$. For every $\de>0$, there exists $N_0$ such that $N\geq N_0$ implies that
$$ \bs{\La}_N^+\big( \mc{N}_{\mu, \f{\de}{2} } \big) \cap \bs{\La}_N^+\big(\mc{E}\big) \subset \mc{L}^+_{\mu, \de} \; .$$
\end{lemme}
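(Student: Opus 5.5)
Take $\bs{\la}^+_N$ in the left-hand side. Then $\bs{\la}^+_N = \bs{\La}^+_N(\bs{\eta}_N)$ with $\bs{\eta}_N \in \mc{N}_{\mu,\de/2}$, and also $\bs{\la}^+_N = \bs{\La}^+_N(\bs{\eta}'_N)$ with $\bs{\eta}'_N \in \mc{E}$. The first step is to identify $\bs{\eta}_N$ with $\bs{\eta}'_N$. The roots of $P=P^++2\veps_N$ are determined by $\bs{\la}^+_N$, and $\bs{\La}^+_N$ is defined by $\prod_a(X-\eta_a)-2\veps_N=\prod_a(X-[\bs{\La}^+_N(\bs{\eta}_N)]_a)$. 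Since both vectors are ordered, this shows that $\bs{\La}^+_N$ is injective on ordered vectors, so $\bs{\eta}_N=\bs{\eta}'_N \in \mc{N}_{\mu,\de/2}\cap\mc{E}$. Moreover $\bs{\la}^+_N\in\mc{A}_N$, either by the definition of $\mc{N}_{\mu,\de/2}$ or by Lemma \ref{Lemme map etaN to La+N et ses ptes}. It therefore only remains to show that $\op{d}_{\mathrm{BL}}\big(\op{L}^{(\bs{\la}^+_N)}_N,\mu\big)<\de$.

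By the triangle inequality, $\op{d}_{\mathrm{BL}}(\op{L}^{(\bs{\la}^+_N)}_N,\mu)\le \op{d}_{\mathrm{BL}}(\op{L}^{(\bs{\la}^+_N)}_N,\op{L}^{(\bs{\eta}_N)}_N)+\de/2$, and it suffices to make the first term smaller than $\de/2$, uniformly on $\mc{E}$, for $N\ge N_0$. For any $f$ with $\norm{f}_{\mathrm{BL}}\le 1$ the Lipschitz constant of $f$ is at most $1$, hence
\[
\Big|\Int{\R}{} f\,\dd\op{L}^{(\bs{\la}^+_N)}_N-\Int{\R}{} f\,\dd\op{L}^{(\bs{\eta}_N)}_N\Big|\le \frac1N\sum_{a=1}^N|\la_a^+-\eta_a| \le \max_a|\la_a^+-\eta_a|.
\]
Now Lemma \ref{Lemme map etaN to La+N et ses ptes} shows that on $\mc{E}$ the hypotheses \eqref{ecriture hypotheses sur les quantites petites} hold with $\varkappa_N,\vth_N\le \ex{-cN}$. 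Lemma \ref{lambdaexpansion} then applies with a remainder uniform in $a$, giving
\[
|\la_a^+-\eta_a|\le 2\f{\veps_N}{|P'(\eta_a)|}+\e{O}(\sg_{N;a}N\vth_N)\le C\ex{-cN},
\]
uniformly over $\mc{E}$ and $a$. One could also use the cruder a priori localisation $\la_a^+\in I_a$ established in that proof, which gives $|\la_a^+-\eta_a|\le 4\veps_N/|P'(\eta_a)|$ directly. Choosing $N_0$ with $C\ex{-cN_0}<\de/2$ concludes the argument.

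The only delicate point is the matching of the indices: the $a$-th root $\la_a^+$ must be compared with $\eta_a$ under the same ordering. This is guaranteed by Lemma \ref{lambdaexpansion}, since $\la_a^+\in I_a$, the intervals $I_a$ are disjoint and ordered, and each contains exactly one root. The remaining concern is uniformity of the constants over $\mc{E}$, which the explicit exponential bounds of Lemma \ref{Lemme map etaN to La+N et ses ptes} provide, with $\kappa$ fixed.
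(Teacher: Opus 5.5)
Your proof is correct and follows the paper's argument. The paper likewise identifies the unique $\bs{\eta}_N\in\mc{E}\cap\mc{N}_{\mu,\frac{\de}{2}}$ with $\bs{\la}^+_N=\bs{\La}^+_N(\bs{\eta}_N)$, uses $\la_a^+-\eta_a=\mathrm{O}(\ex{-cN})$ on $\mc{E}$ to get $\op{d}_{\mathrm{BL}}\big(\op{L}^{(\bs{\la}^+_N)}_N,\op{L}^{(\bs{\eta}_N)}_N\big)\le\de/2$, and concludes by the triangle inequality. You additionally spell out three points the paper leaves implicit: the injectivity of $\bs{\La}^+_N$, the Lipschitz estimate on the BL distance, and the membership $\bs{\la}^+_N\in\mc{A}_N$.
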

\begin{proof}

If $ \bs{\la}_N^+ \in \bs{\La}_N^+\big( \mc{N}_{\mu, \f{\de}{2} } \big) \cap \bs{\La}_N^+\big(\mc{E}\big) $, then, by Lemma \ref{Lemme map etaN to La+N et ses ptes}  there exists
a unique $\bs{\eta}_N \in \mc{E}\cap \mc{N}_{\mu, \f{\de}{2} } $ such that $\bs{\la}_N^+ \, = \,\bs{\La}_N^+\big( \bs{\eta}_N \big)$.
Furthermore, one has that
\beq
\la_a^+ - \eta_a \, = \, \e{O}\big( \ex{-cN} \big) \, ,  \qquad \e{so} \; \e{that} \quad \e{d}_{\e{BL}}\big( \op{L}^{(\bs{\la}_N^+)}_N  , \op{L}^{(\bs{\eta}_N)}_N \big) \,= \, \e{O}(\ex{-c N} )\leq \tf{\de}{2} \;,
\enq
provided that $N$ is large enough. Since, by construction,  $\e{d}_{\e{BL}}\big( \mu  , \op{L}^{(\bs{\eta}_N)}_N \big)  \leq \tf{\de}{2}$, the claim follows.

\end{proof}

\subsubsection{Weak large deviation lower bound}

We are now in a position to establish the weak large deviation lower bound for $\mu\in \mathcal{G}_{\kappa}$, $\kappa>0$, \textit{c.f.}
Definition \ref{defgood}.
We concentrate on the constrained model and therefore assume that
$\int  x \,  \dd\mu(x)=0$, since in this case the proof is more complex because some additional arguments are needed to deal with the constraint.
Dealing with the unconstrained case afterwards is direct.
Finally we assume that $\mu$ is compactly supported. We then prove in this section that for such measures, \eqref{wldlbtheo} holds, namely:
\begin{lemme}\label{ldlbgood}
Let $\mu\in \mathcal{G}_{\kappa}$ for some $\kappa > 0$, and also $\int x \, \dd\mu(x)=0$. Then
\begin{align}\label{ldlblemma}
\lim_{\delta \searrow   0}  \liminf_{N \to +\infty} \frac{1}{N}\ln \overline{\Pi}_{N,\mathsf{c}}[B_{\mathsf{c}}(\mu,\delta)] \geq - I[\mu] \;,
\end{align}
with $B_{\mathsf{c}}(\mu,\delta)$ as given in the statement of Proposition \ref{lowerboundonballs}.

\end{lemme}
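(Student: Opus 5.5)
The plan is to change variables from $\bs{\la}^+_N$ to $\bs{\eta}_N$ via the diffeomorphism $\bs{\La}^+_N$ of Lemma \ref{Lemme map etaN to La+N et ses ptes}, restrict to the good event $\mc{E}\cap\mc{N}_{\mu,\de/2}$, and then apply the lower bounds of Propositions \ref{exactlowerbound} and \ref{detlowerbound2}.

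\textbf{Change of variables.} Since $P^+=P-2\veps_N$ differs from $P$ only by a constant, the coefficient map $\bs{\eta}_N\mapsto$ (coefficients of $P$) and $\bs{\la}^+_N\mapsto$ (coefficients of $P^+$) have Jacobians $\De(\bs{\eta}_N)$ and $\De(\bs{\la}^+_N)$. Hence
\[
\dd\bs{\la}^+_N=\frac{\De(\bs{\eta}_N)}{\De(\bs{\la}^+_N)}\,\dd\bs{\eta}_N .
\]
The constraint is preserved as well, since $\ov{\bs{\la}}^+_N=\ov{\bs{\eta}}_N$ (the subleading coefficient is unchanged). By Lemma \ref{lemmainc}, for large $N$ we therefore get
\[
\overline{\Pi}_{N,\mathsf{c}}[B_{\mathsf{c}}(\mu,\de)]\ge \frac{(N-1)!}{|\ln 2\veps_N|^{N-1}}\int_{\mc{E}\cap\mc{N}_{\mu,\de/2}}\delta(\ov{\bs{\eta}}_N)\,\mc{I}\,\De(\bs{\la}^+_N)\prod_k \e{e}^{-V(\la_k^+)}\frac{\De(\bs{\eta}_N)}{\De(\bs{\la}^+_N)}\,\dd\bs{\eta}_N .
\]
Proposition \ref{exactlowerbound} gives $\mc{I}\ge N\det_{N-1}[\op{A}]/\De(\bs{\eta}_N)$, so both Vandermonde factors cancel completely. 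The hypotheses of Proposition \ref{detlowerbound2} hold on $\mc{E}$: condition i) holds with $\kappa/4$, since $\ln|P'(\eta_k)|=\sum_{j\neq k}\ln|\eta_{kj}|$; condition ii) holds with $\ga=3$; and condition iii) holds by the exponential bounds of Lemma \ref{Lemme map etaN to La+N et ses ptes}. The factor $|\ln 2\veps_N|^{N-1}$ then cancels, and the integrand is bounded below by
\[
N!\,\prod_j\Big(q_N(\eta_j)-\e{O}\big(\tfrac{\ln N}{N}\big)\Big)^{1-1/N}\e{e}^{-V(\eta_j)+\e{o}(1)},\qquad q_N(\eta_j)=1-\sum_{k\ne j}\frac{\ln|\eta_{jk}|}{\ln 2\veps_N} .
\]
Here $V(\la^+_k)$ is replaced by $V(\eta_k)$ using (V1) and the fact that $|\la^+_k-\eta_k|\le \e{e}^{-cN}$. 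Since $q_N\ge\kappa/4$ on $\mc{E}$, the $\e{O}(\ln N/N)$ error only changes $\ln q_N$ by $o(1)$ per factor.

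\textbf{Symmetrisation and Jensen.} Next I would drop the ordering: $N!\int_{\R^N_<}$ becomes an integral over $\R^N$ of a symmetric function. I then write the Lebesgue measure as $\prod_j\varrho(\eta_j)^{-1}\dd\Ups^{\otimes N}$ with $\Ups=\mu$, restricting to $\varrho>0$. Jensen's inequality applied to the normalised probability $\mu^{\otimes N}$ conditioned on $\mc{F}\cap\{\op{L}_N\in B(\mu,\de/2)\}\cap\{|\ov{\bs{\eta}}_N|\text{ small}\}$ gives an exponent of the form
\[
\mathbb{E}\Big[\sum_j\big(\ln q_N(\eta_j)-V(\eta_j)-\ln\varrho(\eta_j)\big)\Big]+\ln(\text{probability of the event}).
\]
By Lemma \ref{concentration}(iii) the first term equals $N\big(\int\ln(1+\frac2\ell\int\ln|x-y|\dd\mu(y))\dd\mu-\int V\dd\mu+\e{Ent}[\mu]\big)+o(N)$, that is $-NI[\mu]+o(N)$. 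The integrals of $V$ and $\ln\varrho$ are finite because $\mu$ is compactly supported with bounded density; if $\int\ln\varrho\,\dd\mu$ is infinite the bound is trivial. The event has probability $1-o(1)$ by Lemma \ref{concentration}(i),(ii) and the law of large numbers (Sanov). Conditioning changes the expectations only by $o(N)$, which I would check with the same Cauchy--Schwarz bound used in part (iii).

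\textbf{The delta constraint (main obstacle).} The step I expect to be hardest is the factor $\delta(\ov{\bs{\eta}}_N)$, which is singular with respect to $\mu^{\otimes N}$. I would handle it by disintegration. Write $\bs{\eta}_N=\bs{x}_N-\ov{\bs{x}}_N/N$ with $\bs{x}_N$ sampled from $\mu^{\otimes N}$, and integrate the $\delta$ against the density of $\ov{\bs{x}}_N$, which is $\e{e}^{o(N)}$ at $0$ by a local CLT or simply by the boundedness of $\varrho$; this is where $\int x\,\dd\mu=0$ is used. Alternatively, one variable $\eta_N=-\sum_{j<N}\eta_j$ can be fixed, and the Jensen argument applied to the remaining $N-1$ variables under $\mu^{\otimes(N-1)}$, on the event that $\eta_N$ lies in the support and the good events still hold. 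The shift by $\ov{\bs{x}}_N/N=\e{O}(N^{-1/2})$ preserves $\mc{E}$ and the ball up to $o(1)$ in $\op{d}_{\e{BL}}$, and it changes each $V(\eta_j)$ by $o(1)$ via (V1). Finally, letting $\de\searrow0$ yields \eqref{ldlblemma}.
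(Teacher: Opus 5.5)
Up to the constraint, your argument is the paper's. It uses the change of variables of Proposition \ref{jacobian}, the Vandermonde cancellation via Proposition \ref{exactlowerbound}, Proposition \ref{detlowerbound2} on $\mathcal{E}$, symmetrisation, tilting by $\varrho$, Jensen, Lemma \ref{concentration} and Sanov. The gap is exactly the step you flag, the factor $\delta(\overline{\boldsymbol{\eta}}_N)$, and neither of your sketches closes it as written.

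In the first sketch, boundedness of $\varrho$ only gives an \emph{upper} bound $p_N\le\|\varrho\|_{L^\infty}$ on the density $p_N$ of $\overline{\boldsymbol{x}}_N$. You need a lower bound, so you are forced to a local limit theorem. Moreover, after disintegrating, Jensen is applied under $\mu^{\otimes N}(\,\cdot\,|\,\overline{\boldsymbol{x}}_N=0)$, a law on a null set. Lemma \ref{concentration}, Sanov, and your Cauchy--Schwarz argument for conditioning on events of probability $1-o(1)$ do not apply to it. (Also, on $\{\overline{\boldsymbol{x}}_N=0\}$ there is no shift $\overline{\boldsymbol{x}}_N/N$ left to control, so the two halves of that sketch do not fit together.)

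In the second sketch, take $\eta_1,\dots,\eta_{N-1}$ i.i.d.\ $\mu$ and $\eta_N=-\sum_{j<N}\eta_j$. Then the event $\{\eta_N\in\mathrm{supp}\,\mu\}$ has probability of order $N^{-1/2}$, not $1-o(1)$, and even bounding it below needs a local limit estimate. Every conditioning error must then be redone and is amplified by a factor of order $N^{1/4}$. Worse, $\eta_N$ depends on all the other coordinates. So the conditions $|\eta_N-\eta_j|\ge N^{-3}$ and $1-\sum_{j<N}\ln|\eta_N-\eta_j|/\ln(2\varepsilon_N)\ge\kappa/4$ are not covered by Lemma \ref{concentration}, which concerns i.i.d.\ coordinates. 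Both routes can presumably be repaired, but only with local-limit and conditional concentration estimates that the proposal does not supply.

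The missing idea is an exact disintegration that keeps everything under a genuine product measure. After solving $\eta_N=-\overline{\boldsymbol{\eta}}_{N-1}$, insert $1=\frac1N\int_{\mathbb{R}}\varrho\big(\tfrac{\xi}{N}-\overline{\boldsymbol{\eta}}_{N-1}\big)\,\mathrm{d}\xi$. Then change variables by $x_j=\eta_j+\xi/N$ for $j<N$ and $x_N=-\overline{\boldsymbol{\eta}}_{N-1}+\xi/N$. This map has unit Jacobian, satisfies $\overline{\boldsymbol{x}}_N=\xi$, and gives $\boldsymbol{\eta}_N=\widehat{\boldsymbol{x}}_N:=\boldsymbol{x}_N-\frac{\overline{\boldsymbol{x}}_N}{N}(1,\dots,1)$. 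Hence, exactly,
\[
\int\delta(\overline{\boldsymbol{\eta}}_N)F(\boldsymbol{\eta}_N)\,\mathrm{d}\boldsymbol{\eta}_N=\frac1N\int_{\mathbb{R}^N}\varrho(x_N)\,F(\widehat{\boldsymbol{x}}_N)\,\mathrm{d}\boldsymbol{x}_N .
\]
Tilting by $\prod_{j<N}\varrho(x_j)$ then produces an expectation under $\mu^{\otimes N}$ itself, with all $N$ coordinates i.i.d.

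Since $\widehat{\boldsymbol{x}}_N$ and $\boldsymbol{x}_N$ have the same pairwise differences, the events defining $\widetilde{\mathcal{E}}$ and your factors $q_N$ are unchanged. The only cost is the centre-of-mass shift in $V$ and in the empirical measure. This is handled on $\mathcal{H}=\{|\overline{\boldsymbol{x}}_N|/N\le N^{-1/4}\}$, whose complement has probability at most $N^{-1/2}\int x^2\,\mathrm{d}\mu$ by Chebyshev. That is where $\int x\,\mathrm{d}\mu=0$ enters, and no local CLT is needed.

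A minor slip: $\int\ln\varrho\,\mathrm{d}\mu$ is automatically finite for a bounded, compactly supported density. Were it $-\infty$, one would have $I[\mu]=-\infty$, so that case would not be trivial.
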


We start the proof of this lemma by simplifying our previous lower bounds. First,  by Lemma \ref{lemmainc}, we have
for $N$ large enough
\beq
\bs{\La}_N^+\big( \mc{N}_{ \mu, \frac{\delta}{2} } \big) \cap \bs{\La}_N^+\big(\mc{E}\big)  \subset   \mc{L}^+_{\mu, \de} \;.
\enq
Next, let us change variables in \eqref{density}, this after reducing the integration domain to
$\bs{\La}_N^+\big( \mc{N}_{\mu, \f{\de}{2} }  \cap  \mc{E}\big) $.  Proposition \ref{jacobian} then yields
\bem
\overline{\Pi}_{N,\mathsf{c}}\big[ B_{\mathsf{c}}(\mu,\delta) \big]
\geq \frac{ (N-1)!}{  | \ln(2\veps_N)|^{N-1}} \hspace{-4mm} \Int{   \mc{E} \cap \mc{N}_{ \mu, \frac{\delta}{2} }    }{} \hspace{-4mm}
 \Delta\big(  \bs{\eta}_N  \big)\,   \mc{I}\big( \bs{\La}^+_N\big( \bs{\eta}_N \big); \veps_N \big)
 \pl{k=1}{N} \Big\{ \mathrm{e}^{-  V ( [\bs{\La}^+_N( \bs{\eta}_N )]_k  )  }  \Big\} \\
\times  \delta\big(  \ov{\bs{\eta}}_N \big) \, \mathrm{d} \bs{\eta}_N \; .
\end{multline}
Above, $ \ov{\bs{\eta}}_N = \sul{a=1}{N} \eta_a$ and we have used the fact that since the polynomials $P$ and $P^+$ differ by only a constant,
we must have $\ov{\bs{\eta}}_N =  \ov{\bs{\la}}_N^+ $. Moving forward, we invoke the lower bound of Proposition \ref{exactlowerbound} which allows for the replacement
\beq
 \mc{I}\big( \bs{\La}^+_N\big( \bs{\eta}_N \big); \veps_N \big)  \, \geq \, \f{ N }{  \De ( \bs{\eta}_N ) }  \det_{N-1}\big[ \op{A}(\bs{\eta}_N) \big]
\enq
with the $(N-1)\times (N-1)$ matrix $\op{A}$ as introduced in \eqref{definition matrice A}.
Above, we have stressed its dependence on $\bs{\eta}_N$.
Then, since $\bs{\eta}_N \in \mc{E}$, by invoking the lower bound on $\det_{N-1}\big[ \op{A}(\bs{\eta}_N) \big]$ obtained in
Corollary \ref{detlowerbound2}, we get for some $C,c>0$
\bem
 \overline{\Pi}_{N,\mathsf{c}}\Big[ B_{\mathsf{c}}(\mu,\delta) \Big]  \geq  N!  \, \Big( 1 - \e{O}(\ex{-cN}) \Big) \, \hspace{-4mm} \Int{   \mc{E} \cap \mc{N}_{ \mu, \frac{\delta}{2} }    }{} \hspace{-4mm}
 \pl{j=1}{N}  \bigg( 1- \frac{   \ln|P^\prime(\eta_j)|  }{ \ln(2\veps_N) } - C \frac{\ln N}{N} \bigg)^{1- \frac{1}{N}}
\\
\times  \pl{k=1}{N} \Big\{ \mathrm{e}^{-  V ( [\bs{\La}^+_N( \bs{\eta}_N )]_k  )  }  \Big\}  \delta\big( \ov{\bs{\eta}}_N\big)
\cdot \mathrm{d}\bs{\eta}_N
\end{multline}
We now symmetrise the integral. For that, we observe that given $\bs{\eta}_N\in \R^N$ with pairwise distinct coordinates, there exists a unique $\sg \in \mf{S}_N$,
such that $\eta_{\sg(1)}<\cdots < \eta_{\sg(N)}$. We then set $\bs{\eta}_N^{\sg}\,= \, \big( \eta_{\sg(1)}, \dots , \eta_{\sg(N)} \big)$
which allows us to define
\beq
\big[ \La_N^{+}(\bs{\eta}_N)  \big]_{k} \, = \, \big[ \La_N^{+}(\bs{\eta}_N^{\sg} )  \big]_{ \sg^{-1}(k) }
\label{definition transfo pour coord permutes}
\enq
Further, we set    $\wt{\mc{E}}  = \wt{\mc{E}}_1 \cap \wt{\mc{E}}_2$ where
\begin{align}
\wt{\mc{E}}_1 & = \{  \bs{\eta}_N \in \R^N \, : \,  |\eta_k - \eta_j | \geq N^{-3} \quad  \forall k \neq j \in \intn{1}{N}  \}  \label{definition E tile 1} \\
\wt{\mc{E}}_2 & = \Big\{  \bs{\eta}_N \in \R^N \, : \, \, 1 - \frac{1}{\ln(2\veps_N)} \sul{\substack{j=1 \\ j \neq k}}{N} \ln|\eta_k - \eta_j| \geq \frac{\kappa}{4} \quad \forall k \in \intn{1}{N}   \Big\}
\label{definition E tile 2}
\end{align}
and adopt the convention
\beq
\wt{\mc{N}}_{\mu, \de} \,  = \,  \Big\{ \bs{\eta}_N\in \R^N  \; : \; \op{L}^{(\bs{\eta}_N)}_N \in B(\mu, \de) \Big\} \;.
\label{definition N tile mu delta}
\enq
With these notations at hand, we arrive at
\bem
 \overline{\Pi}_{N,\mathsf{c}}\big[ B_{\mathsf{c}}(\mu,\delta) \big]  \geq    \Big( 1 - \e{O}(\ex{-cN}) \Big) \,    \Int{  \R^{N-1}   }{}
\mathbbm{1}_{  \wt{\mc{E}} \cap \wt{\mc{N}}_{ \mu, \frac{\delta}{2} }  }(\bs{\eta}_N)
   \\
\times \pl{j=1}{N} \bigg\{ \mathrm{e}^{-  V ( [\La^+_N( \bs{\eta}_N )]_j  )  }  \Big( 1-  \sul{ \substack{ s=1 \\ s\not= j} }{N}\frac{   \ln|\eta_j-\eta_s|  }{\ln(2\veps_N)}    - C \frac{\ln N}{N} \Big)^{1- \frac{1}{N}}
 \bigg\}_{ \mid \eta_N=-\sul{s=1}{N-1} \eta_s }  \mathrm{d} \bs{\eta}_{N-1} \;.
\end{multline}
Note that all the functions building up the integrand are well-defined by \eqref{definition transfo pour coord permutes}
and Lemma \ref{Lemme map etaN to La+N et ses ptes}. Also, above,
we agree upon $\bs{\eta}_N \, = \, \big( \bs{\eta}_{N-1}, -\sul{s=1}{N-1} \eta_s  \big)$ in view of the constraint.
Let $ \rho$ be the density of $\mu$.  We introduce an extra variable to deal with this constraint   by writing
$$1 = \frac{1}{N}\Int{\mathbb{R}}{} \varrho \Big(\frac{1}{N}\xi - \sul{j=1}{N-1} \eta_j\Big) \, \mathrm{d}\xi \, .$$
Then make the change of variables $(\bs{\eta}_{N-1},\xi) \hookrightarrow \bs{x}_N$ with $\bs{x}_N = X_N(\bs{\eta}_{N-1},\xi)$ where
\beq
X_N(\bs{\eta}_{N-1},\xi) \, = \, \Big( \eta_1+\f{\xi}{N}, \dots, \eta_{N-1} + \f{\xi}{N}, -\sul{s=1}{N-1} \eta_s +\f{\xi}{N} \Big) \;.
\enq
It is direct to check that $X_N:\R^N \tend \R^N$ is a smooth diffeomorphism having unit Jacobian, \textit{viz}. $\det_{N}\big[ \op{D}_{ (\bs{\eta}_{N-1},\xi)  }X_N \Big]=1$,
and such that
\beq
X_N^{-1}(\bs{x}_{N}) \, = \, \Big( x_1- \f{ \ov{\bs{x}}_N }{N}, \dots, x_{N-1}- \f{ \ov{\bs{x}}_N}{N}, \ov{\bs{x}}_N \Big)  \qquad \e{with} \qquad \ov{\bs{x}}_N
\, = \, \sul{s=1}{N} x_s\; .
\enq
We set
\beq
\mc{W}_{\de} \, = \, X_N \Big( \big\{ \bs{\eta}_{N-1} \in \R^{N-1} \, : \, \big(  \bs{\eta}_{N-1} , - \sul{s=1}{N-1} \eta_s \big) \in \wt{\mc{E}} \cap \wt{\mc{N}}_{ \mu, \frac{\delta}{2} }  \big\}  \times \R  \Big)
\enq
which leads to the lower bound
\bem
 \overline{\Pi}_{N,\mathsf{c}}\big[ B_{\mathsf{c}}(\mu,\delta) \big]  \geq    \f{ 1 - \e{O}(\ex{-cN})  }{ N }  \,    \Int{  \mc{W}_{\de}  }{}   \varrho \big( x_N \big)
   \\
\times \pl{j=1}{N} \bigg\{ \mathrm{e}^{-  V ( [\bs{\La}^+_N( \wh{\bs{x}}_N )]_j  )  }  \Big( 1-  \sul{ \substack{ s=1 \\ s\not= j} }{N}\frac{   \ln|x_j-x_s|  }{\ln(2\veps_N)}    - C \frac{\ln N}{N} \Big)^{1- \frac{1}{N}}
 \bigg\}    \cdot \mathrm{d} \bs{x}_N \;.
\end{multline}
Here, we have employed the shorthand notation $\boldsymbol{\eta}_{N}(\boldsymbol{x}_N) \equiv \wh{\bs{x}}_N \, = \, \bs{x}_N \, - \, \f{ \ov{\bs{x}}_N  }{ N }\ov{\bs{e}}_N$ with $\ov{\bs{e}}_N=\sul{k=1}{N} \bs{e}_k$
and $\bs{e}_k$ being the vector having unity in the $k^{\e{th}}$ coordinate. We further restrict the integration to the domain
\beq
 \mc{W}_{\de}  \cap \mc{Q}_N  \qquad \e{with} \qquad \mc{Q}_N \, = \, \big(  \e{supp}\big[ \mu \big]  \big)^N
\enq
and tilt the measure by writing
$$1 =  \varrho(x_1)\dots  \varrho(x_{N-1})  \cdot \pl{k=1}{N-1}\Big\{ \mathrm{e}^{- \ln  \varrho (x_k)}  \Big\}\, .$$
We now reduce again the integration domain to a more suitable one. First of all, we introduce the subset
\beq
\mc{H} \, =\,  \Big\{  \Big| \frac{  \ov{ \bs{x}}_N }{ N }  \Big| \, \leq \,  N^{-\frac{1}{4}} \Big\}\, ,
\label{ecriture definiton H}
\enq
with an entrywise remainder. In particular, the control on the centre of mass of $\bs{x}_N$ entails that
$$\bs{\La}_N^{+}(\wh{\bs{x}}_N)\, = \, \wh{\bs{x}}_N \, + \, \e{O}\big( \ex{-c N} \big)\, = \, \bs{x}_N+ \e{O}\big( N^{-\f{1}{4}} \big) \,. $$
Furthermore, if $\bs{x}_N \in \mc{W}_{\de}$, then, by construction,
$\wh{\bs{x}}_N = \bs{\eta}_N \in X_N^{-1}(\bs{x}_N) \in  \wt{\mc{E}} \cap \wt{\mc{N}}_{ \mu, \frac{\delta}{2} }$
and is such that $\eta_N \, = \, - \sul{s=1}{N-1}\eta_s$. Then,  because of $|\eta_a-\eta_b|=|x_a-x_b|$, one has that
\beq\label{rec}
\f{\kappa}{4} \leq \, 1 \, - \,  \sul{ \substack{ s=1 \\ s\not= j} }{N}\frac{   \ln|\eta_j-\eta_s|  }{\ln(2\veps_N)}
\, = \, 1 \, - \,  \sul{ \substack{ s=1 \\ s\not= j} }{N}\frac{   \ln|x_j - x_s|  }{\ln(2\veps_N)}
\enq
and $|x_a-x_b|\geq N^{-3}$, so that $\bs{x}_N \in \wt{\mc{E}}_2\cap\wt{\mc{E}}_1=\wt{\mc{E}}$. Furthermore,
it is direct to check for such $\bs{x}_N$s that $\e{d}_{\e{BL}}\big( \op{L}_N^{(\wh{\bs{x}}_N)},  \op{L}_N^{(\bs{x}_N)} \big) \leq \tf{\de}{4}$
provided that $N$ is large enough. Hence, if
\beq
\e{d}_{\e{BL}}\big(\mu,  \op{L}_N^{(\bs{x}_N)} \big) \leq \tf{\de}{4} \quad \e{then} \quad
\e{d}_{\e{BL}}\big( \op{L}_N^{(\wh{\bs{x}}_N)}, \mu \big) \leq \tf{\de}{2} \;.
\enq
Thus,
\beq
\mc{H}\cap \mc{W}_{\de} \,  \supset \, \mc{H}\cap  \wt{\mc{E}} \cap \wt{\mc{N}}_{ \mu, \f{\de}{4} }  \;.
\enq
All-in-all,  
this leads to the lower bound with $\mc{D}(\de) \, := \,  \mc{H}\cap  \wt{\mc{E}} \cap \wt{\mc{N}}_{ \mu, \f{\de}{4} }  \cap \mc{Q}_N $, 
\bem
 \overline{\Pi}_{N,\mathsf{c}}\big[ B_{\mathsf{c}}(\mu,\delta) \big]  \geq    \f{ 1 - \e{O}(\ex{-cN})  }{ N }  \ex{- \f{c^{\prime}}{ \kappa} \ln N}  \Int{  \mc{D}(\de)  }{}  \pl{s=1}{N} \Big\{  \varrho ( x_s ) \Big\}
\cdot \pl{j=1}{N-1} \Big\{ \ex{- \ln  \varrho ( x_j ) } \Big\}
   \\
\times \pl{j=1}{N} \bigg\{ \mathrm{e}^{-  V ( x_j+\e{O}(N^{-\tf{1}{4}}) )    }  \Big( 1-  \sul{ \substack{ s=1 \\ s \not= j} }{N}\frac{   \ln|x_j-x_s|  }{\ln(2\veps_N)}    \Big)^{1- \frac{1}{N}}
 \bigg\}    \cdot \mathrm{d} \bs{x}_N \;.
\end{multline}
Finally, we apply Jensen's inequality to the probability measure on $\R^N$
\beq
\dd \nu_{N}(\bs{x}_N) \, := \, \mathbbm{1}_{\mc{D}(\de) }(\bs{x}_N)  \cdot \f{    \dd^N \mu(\bs{x}_N)   }
                    {   \mu^{\otimes N}\big[ \mc{D}(\de) \big]     }
\enq
which yields
%
%
\bem
 \ln \overline{\Pi}_{N,\mathsf{c}}\big[ B_{\mathsf{c}}(\mu,\delta) \big]  \geq    - \e{O}(\ex{-cN}) -\ln  N - \f{c^{\prime}}{ \kappa} \ln N
 \, + \, \ln \Big(  \big(\mu\big)^{\otimes N}\big[ \mc{D}(\de) \big]  \Big)  \\
 \,- \, N \Int{}{}  V \big(  x_1+\e{O}(N^{-\tf{1}{4}}) \big)     \dd  \nu_{N}(\bs{x}_N)
\, - \, (N-1) \Int{}{}    \ln  \varrho ( x_1 )  \dd  \nu_{N}(\bs{x}_N)
   \\
\, + \,(N-1) \Int{}{} \ln \bigg( 1-  \sul{ s=1  }{N-1}\frac{   \ln|x_N-x_s|  }{\ln(2\veps_N)}      \bigg)\dd  \nu_{N}(\bs{x}_N)  \;.
 \label{ecriture borne inf sur mesure des boules en mu rayon delta}
\end{multline}

We now establish an auxiliary lemma that will allow us to understand the $N \to +\infty$ scaling of the \textit{rhs} of the above equation.

\begin{lemme}\label{probabilitybound} Let $\kappa, \delta  >0$ and let $\mu\in \mathcal{G}_{\kappa}$, which in particular means $\mu$ has bounded density and is compactly supported. Assume also that $\int x \, \dd\mu(x)=0$. Then,
given $\wt{\mc{N}}_{\mu,\de}$ as in \eqref{definition N tile mu delta}, $\wt{\mc{E}} = \wt{\mc{E}}_1\cap \wt{\mc{E}}_2$ with $\wt{\mc{E}}_a$ as introduced in \eqref{definition E tile 1}-\eqref{definition E tile 2}
and $\mc{H}$ as in \eqref{ecriture definiton H}, there exists $C > 0$ such that
$$ \mu^{\otimes N}[\mc{D}(\delta)]= \mu^{\otimes N}\Big[\wt{\mc{N}}_{ \mu, \f{\de}{4} } \cap  \wt{\mc{E}} \cap  \mc{H}  \Big] \, \geq  \, 1 - \frac{C}{\sqrt{N}} \, .$$
\end{lemme}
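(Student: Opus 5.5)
We bound the complement by a union bound,
\[
1-\mu^{\otimes N}[\mc{D}(\de)] \leq \mu^{\otimes N}\big[\wt{\mc{N}}_{\mu,\f{\de}{4}}^{\e{c}}\big] + \mu^{\otimes N}\big[\wt{\mc{E}}_1^{\e{c}}\big] + \mu^{\otimes N}\big[\wt{\mc{E}}_2^{\e{c}}\big] + \mu^{\otimes N}\big[\mc{H}^{\e{c}}\big],
\]
and show that each term is $\e{O}(N^{-1/2})$. Note that $\mc{Q}_N$ carries full $\mu^{\otimes N}$ mass, so it costs nothing.

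\textbf{The $\wt{\mc{E}}_1$ term.} This follows from Lemma \ref{concentration} i) with $\delta=2$, giving $\norm{\varrho}_{L^\infty(\R)}N^{-1}$.

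\textbf{The $\wt{\mc{E}}_2$ term.} We rewrite the defining inequality of $\wt{\mc{E}}_2$. Since $\ln(2\veps_N) = -\tfrac{N\ell}{2}+\ln 2$, we have
\[
1-\frac{1}{\ln(2\veps_N)}\sum_{j\neq k}\ln|x_k-x_j| = 1+\frac{2}{\ell N}\big(1+\e{O}(N^{-1})\big)\sum_{j\neq k}\ln|x_k-x_j| .
\]
The remainder $\e{O}(N^{-1})\cdot\frac{1}{N}\sum_j|\ln|x_k-x_j||$ needs control. On the compact support, $\ln|x_k-x_j|$ is bounded above. On $\wt{\mc{E}}_1$ it is also bounded below by $-3\ln N$, so the remainder is $\e{O}(\ln N/N)$ there. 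Hence, for $N$ large, $\wt{\mc{E}}_1\cap\wt{\mc{E}}_2^{\e{c}}$ is contained in the event of Lemma \ref{concentration} ii), whose probability is at most $C'N^{-1}$ (the threshold $\kappa/2$ there versus $\kappa/4$ here leaves room). Alternatively, one can rerun the fourth-moment Markov argument directly on the rescaled sum.

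\textbf{The $\mc{H}$ term.} Since $\int x\,\dd\mu=0$ and $\mu$ is compactly supported, Chebyshev gives
\[
\mu^{\otimes N}\big[|\ov{\bs{x}}_N/N|>N^{-1/4}\big]\leq N^{1/2}\,\e{Var}(x)/N=\e{O}(N^{-1/2}).
\]
This term produces the $C/\sqrt{N}$ rate.

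\textbf{The $\wt{\mc{N}}$ term.} We need $\mu^{\otimes N}[\e{d}_{\e{BL}}(\op{L}_N^{(\bs{x}_N)},\mu)\geq\de/4]=\e{O}(N^{-1/2})$, which we expect to be the main technical point. Since $\mu$ has support in some $[-R,R]$, functions with $\norm{f}_{\e{BL}}\leq 1$ restricted to $[-R,R]$ form a compact set in the sup norm (Arzelà--Ascoli). We pick a finite $\de/12$-net $f_1,\dots,f_m$, with $m$ depending only on $\de$ and $R$. Then
\[
\e{d}_{\e{BL}}\big(\op{L}_N^{(\bs{x}_N)},\mu\big)\leq \max_i\Big|\frac1N\sum_k f_i(x_k)-\int f_i\,\dd\mu\Big|+\frac{\de}{6}.
\]
A union bound over $i$ combined with Chebyshev gives a bound $m\,(12/\de)^2N^{-1}$, which is $\e{O}(N^{-1})$.

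Summing the four contributions yields $1-C/\sqrt{N}$.
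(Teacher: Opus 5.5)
Your proof is correct. The overall structure matches the paper's: a union bound, with $\wt{\mc{E}}^{\e{c}}$ handled by Lemma \ref{concentration}, and $\mc{H}^{\e{c}}$ handled by Chebyshev and the vanishing first moment, which is the term that fixes the $N^{-1/2}$ rate.

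The genuine difference is in the $\wt{\mc{N}}_{\mu,\de/4}^{\e{c}}$ term. The paper invokes Sanov's theorem to get exponential decay $C\ex{-c_\de N}$. You instead exploit the compact support of $\mu$: an Arzel\`a--Ascoli finite $\de/12$-net of the bounded-Lipschitz unit ball restricted to $[-R;R]$, combined with Chebyshev for each net element and a union bound, which yields only $\e{O}(N^{-1})$.

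Your route is entirely elementary and non-asymptotic, with explicit constants valid for every $N$. Sanov's upper bound, by contrast, is a $\limsup$ statement and only gives the exponential estimate for $N$ large. The weaker polynomial rate costs you nothing, since the $\mc{H}$ term already caps the bound at $N^{-1/2}$. Sanov's argument, for its part, does not use compactness of the support and gives a much stronger rate for that term, which the lemma does not need.

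You are also more careful than the paper on $\wt{\mc{E}}_2$. The paper cites Lemma \ref{concentration} directly, although its part ii) is stated with the normalisation $\tfrac{2}{\ell N}$ rather than $-1/\ln(2\veps_N)$. Your observation that the discrepancy is $\e{O}(\ln N/N)$ on $\wt{\mc{E}}_1\cap(\e{supp}\,\mu)^N$, absorbed by the gap between the thresholds $\kappa/4$ and $\kappa/2$, closes that small gap. Your remark that $\mc{Q}_N$ carries full mass justifies the first equality in the statement.
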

\begin{proof}
We prove this by a union bound. By Lemma \ref{concentration}, $ \mu^{\otimes N}  \big[ \wt{\mc{E}}^{\e{c}} \big] \,  \leq  \, \frac{C^\prime}{N}$ for some constant $C^\prime > 0$. Next,
it holds
\beq
 \mu^{\otimes N} \big[ \mc{H}^{\e{c}} \big]  \, \leq \,  \sqrt{N}  \Int{\R^N}{} \Big( \f{ \ov{\bs{x}}_N }{ N } \Big)^2 \,\mathrm{d}^N \mu(\bs{x}_N)
\, = \, \f{1}{\sqrt{N}  } \Int{\R}{} x^2 \,\mathrm{d} \mu(x)  
\label{ecriture estimee Mrkov sur Hc}
\enq
by Markov's inequality and the fact that $\mu$ has zero first moment. Finally, by Sanov's theorem (Theorem 6.1.3 of \cite{DemboZ01}), there exists a finite constant $C$ and $c_\delta>0$ for $\delta>0$ so that
\begin{equation}\label{sanov}
\mu^{\otimes N} \big[ \wt{\mc{N}}_{ \mu, \f{\de}{4} }^{\e{c}}  ] \,  \le \,  C  \ex{-c_\delta N}\,.
\end{equation}
\end{proof}

We now continue estimating the large-$N$ behaviour of the building blocks in \eqref{ecriture borne inf sur mesure des boules en mu rayon delta}. For $N$ large enough, if $x \in \mathrm{supp}\, \mu$ then $x+\e{O}( N^{-\tf{1}{4}} ) \in \mathfrak{K} := \{ x \in \mathbb{R}\, : \, \mathrm{dist}(x,\mathrm{supp}\, \mu) \leq 1\}$. Thus, one has
\bem
\Int{}{}  V \big(  x_1+\e{O}(N^{-\f{1}{4}}) \big)     \dd  \nu_N(\bs{x}_N) \, = \,
\e{O}\bigg( \norm{V}_{L^{\infty}( \mathfrak{K} ) }
\f{  \mu^{\otimes N}\big[ (\mc{D}(\de))^{\e{c}} \big]     } {   \mu^{\otimes N}\big[ \mc{D}(\de) \big]     } \bigg)  \\
\ +\,
\Int{}{}  \f{   V \big(  x+\e{O}(N^{-\f{1}{4}}) \big)     }{   \mu^{\otimes N}\big[ \mc{D}(\de) \big]     } \dd   \mu(x)   \limit{N}{+\infty} \Int{}{}  V (  x )     \dd   \mu(x)
\end{multline}
by Lemma \ref{probabilitybound} and dominated convergence since $V$ is uniformly continuous on compact sets.
Next,
$$\Int{}{}    \ln \big[ \varrho ( x_1 ) \big]  \dd  \nu_N(\bs{x}_N) \\
= \Int{}{}    \ln  \big[ \varrho ( x  )  \big] \f{ \dd   \mu(x ) }{  \mu^{\otimes N}\big[ \mc{D}(\de) \big]     }
\, - \,  \Int{}{}    \ln  \big[\varrho ( x_1 ) \big]  \f{ \mathbbm{1}_{ (\mc{D}(\de))^{\e{c}} }(\bs{x}_N) }{  \mu^{\otimes N}\big[ \mc{D}(\de) \big]     }
\dd^N   \mu(\bs{x}_N) \;.
$$
This second term may be estimated by Cauchy--Schwarz
$$\bigg|  \Int{}{}     \ln  \big[\varrho ( x_1 )\big]  \mathbbm{1}_{ (\mc{D}(\de))^{\e{c}} }(\bs{x}_N)   \dd^N   \mu(\bs{x}_N)  \bigg| \\
\leq \Big\{ \mu^{\otimes N}\big[ (\mc{D}(\de))^{\e{c}} \big] \Big\}^{ \f{1}{2} } \cdot
\Big\{ \Int{}{}       |\ln  \varrho ( x )|^2    \varrho ( x ) \dd x \Big\}^{ \f{1}{2} } \, .
$$
%
%
%
The second term in the product may be bounded by a constant by Definition \ref{defgood}(i), since $x \mapsto |\ln  \varrho ( x )|^2    \varrho ( x )$ is a bounded function supported on a compact set, while the first term is $\mathrm{O}(N^{-\frac{1}{4}})$
by Lemma \ref{probabilitybound}. Hence, all-in-all,
$$\Int{}{}    \ln  \varrho ( x_1 )  \dd  \nu_N(\bs{x}_N) \overset{N\to+\infty}{\longrightarrow}
                \Int{}{}    \ln  \varrho ( x  )   \dd   \mu(x )   \, .$$
Finally, we estimate the behaviour of the last contribution in \eqref{ecriture borne inf sur mesure des boules en mu rayon delta}.
\begin{align*}
    &\Int{}{} \ln \bigg( 1-  \sul{ s=1  }{N-1}\frac{   \ln|x_N-x_s|  }{\ln(2\veps_N)}      \bigg)\dd  \nu_N(\bs{x}_N) \\
    &=  \frac{1}{ \mu^{\otimes N}\big[ \mc{D}(\de) \big] }  \Int{}{} \ln \bigg( 1-   \frac{ N-1    }{\ln(2\veps_N) }
    \Int{}{}  \ln|x-y|  \dd  \mu(y)    \bigg)\dd  \mu(x) + \mathrm{O}(N^{-\frac{1}{2}}) \\
    &\; -  \frac{1}{ \mu^{\otimes N}\big[ \mc{D}(\de) \big] } \Int{}{} \ln \bigg( 1-  \sul{ s=1  }{N-1}\frac{   \ln|x_N-x_s|  }{\ln(2\veps_N)}      \bigg)
     \mathbbm{1}_{ (\mc{D}(\de))^{\e{c}} }(\bs{x}_N) \, \dd^N  \mu( \bs{x}_N ) \\
     &\geq \Big( 1+ \mathrm{O}(N^{-\frac{1}{2}}) \Big)
     \Int{}{} \ln \bigg( 1-   \frac{ N-1    }{\ln(2\veps_N) } \Int{}{}  \ln|x-y|  \dd  \mu(y)    \bigg)\dd  \mu(x) + \mathrm{O}(N^{-\frac{1}{2}}) \\
    &\; - \Big(1+ \mathrm{O}(N^{-\frac{1}{2}}) \Big) \mu^{\otimes N}\big[ (\mc{D}(\de))^{\e{c}} \big]
    \ln \bigg( 1-  \frac{ (N-1)   \ln|C|  }{\ln(2\veps_N)}      \bigg)
\end{align*}
where $C = \sup \{ |x-y| \, : \, x,y \in \mathrm{supp}\, \mu \}$. 
%
%
%
%
%
%
%
%
%
    \\
%
%
%
%
%
%
%
%

%
%
%
Here, we have invoked Lemma \ref{concentration}(iii) and Lemma \ref{probabilitybound}.
One may directly observe, thanks to the property (iii) of Definition \ref{defgood}, that  the terms present in the final lines of the bound tend to
$$  \Int{\mathbb{R}}{} \ln\bigg( 1 + \frac{2}{\ell} \Int{\mathbb{R}}{} \ln |x - y|  \, \mathrm{d} \mu(y) \bigg)  \, \mathrm{d} \mu(x) \; . $$
From all this, we conclude from \eqref{ecriture borne inf sur mesure des boules en mu rayon delta}
 that for any $\delta>0$, 
\begin{align}
    \liminf_{N \to +\infty}  \frac{1}{N} \ln \overline{\Pi}_{N,\mathsf{c}}\big[ B_{\mathsf{c}}(\mu,\delta) \big] \, \geq  \, -  I[  \mu ]  \, ,
\label{equation finale preuve lower bound good measures ldp}
\end{align}
in which $I$ is the rate function introduced in \eqref{ratefunction}. Taking $\delta \searrow 0$ concludes the proof in the case of "good"
measures in the sense of definition \ref{defgood}.

\subsection{Weak large deviation lower bound for general measures}

We are now in position to establish the weak large deviation lower bound for general measures, namely \eqref{wldlbtheo}. To do that, we approximate any measure with finite free energy by a sequence of measures which are good in the sense of Definition \ref{defgood}. In fact, we prove in Section \ref{sec:continuity along  good subsequences} the following lemma: 

\begin{lemme}\label{lemmeapprox}
Let $\mu$ be a probability measure such that $I[\mu] <+\infty$.
Then there exists a sequence of probability measures $(\mu_p)_{p\ge 1} $ so that
\begin{enumerate}
    \item For every $p\in \mathbb{N}^*$, $d_{\mathrm{BL}}(\mu,\mu_p)\le 1/p$,
    \item For $p$ large enough, there exists $\kappa_p>0$
    \begin{align*}
        &1+\frac{2}{\ell}\int\ln|x-y|\mathrm{d}\mu_p(y)\ge \kappa_p \, , &  &\forall x\in \mathbb{R} \; .
    \end{align*}
    \item $\mu_p$ is compactly supported.
    \item If $\int x \, \mathrm{d}\mu(x)=0$, then $\int x \, \mathrm{d}\mu_p(x)=0$ for every integer number $p$.
    \item $\mu_p$ has bounded density with respect to Lebesgue measure.
    \item $I[\mu_p] \longrightarrow I[\mu]$ as $p \to +\infty$.
\end{enumerate}

\end{lemme}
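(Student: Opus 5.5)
We build $\mu_p$ from $\mu$ in three stages: truncate, smooth, then mix with a fixed good measure. We may assume $I[\mu]<+\infty$, so $\mu\in\mathcal{C}$, $\int V\,\mathrm{d}\mu<\infty$, $\mu$ has a density $\varrho$ with finite entropy, and by \ref{potentialh2} $\mu$ has a finite moment of order $\theta$.

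\emph{Truncation.} Set $\mu^{(R)}=\mu(\cdot\,|\,[-R;R])$ and, in the constrained case, shift it so that its mean is zero. The shift is $\mathrm{o}(1)$ as $R\to\infty$ because $\int|x|\,\mathrm{d}\mu<\infty$.

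\emph{Smoothing.} Convolve $\mu^{(R)}$ with the uniform law on $[-\epsilon;\epsilon]$. This keeps the support compact and the mean zero, and gives a bounded density.

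\emph{Mixing.} Let $\mu^{(R,\epsilon)}$ denote the smoothed measure and set $\mu_{R,\epsilon,t}=(1-t)\mu^{(R,\epsilon)}+t\,\omega$, where $\omega$ is a fixed centred, compactly supported, bounded-density measure, for instance the uniform law on $[-L;L]$. We choose $L$ large so that $1+\frac{2}{\ell}\int\ln|x-y|\,\mathrm{d}\omega(y)\geq 1+\frac{2}{\ell}(\ln L-1)=:2\kappa_\omega>0$ on the support.

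Property (2) holds for the mixture on every $x$ in the (compact) support. By linearity, the potential $U_\nu(x)=1+\frac{2}{\ell}\int\ln|x-y|\,\mathrm{d}\nu(y)$ satisfies $U_{\mu_{R,\epsilon,t}}=(1-t)U_{\mu^{(R,\epsilon)}}+tU_\omega$. To control $U_{\mu^{(R,\epsilon)}}$ from below we need two facts. First, convolving with a uniform kernel only increases the logarithmic potential, since $\ln|\cdot|$ is subharmonic, so the averaged log exceeds its value at the centre. Second, truncation removes only far mass, whose contribution to $\int\ln|x-y|\,\mathrm{d}\mu(y)$ on compacts is controlled via the $\theta$-moment. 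These give $U_{\mu^{(R,\epsilon)}}\geq -\eta(R)$ on the support of $\mu$, up to the renormalisation factor $\mu([-R;R])^{-1}$, where $\eta(R)\to0$.

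Off the support of $\mu$, $U_\mu$ could be negative. Condition (ii) of the lemma is stated for all $x\in\mathbb{R}$, but only $x$ in the support matters for the rate function and for Lemma \ref{concentration}. We therefore plan to take $\omega$ with support containing that of $\mu^{(R,\epsilon)}$ and use $U_\omega\geq 2\kappa_\omega$ there; if (ii) is genuinely needed everywhere, we note that $U\to+\infty$ at infinity. This gives $\kappa_p=t\kappa_\omega$ once $\eta(R)$ is small compared with $t$. Choosing $t=t_p\to0$, then $R_p\to\infty$ and $\epsilon_p\to0$ in that order yields properties (1)--(5), with $d_{\mathrm{BL}}\leq t+\mu([-R;R]^c)+\epsilon+|\text{shift}|$.

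\emph{Continuity of $I$ (property (6)).} This is the main step, handled term by term.

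The term $\int V\,\mathrm{d}\mu_p$ converges by \ref{potentialh1}: since $|V(x+u)-V(x)|\leq C_1V(x)+C_2$ for $|u|\leq\epsilon$, dominated convergence applies to the convolution, and monotone convergence applies to the truncation.

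For the entropy, $\mathrm{Ent}$ is concave, so $\mathrm{Ent}[\mu_p]\geq(1-t)\mathrm{Ent}[\mu^{(R,\epsilon)}]+t\,\mathrm{Ent}[\omega]$. Convolution increases entropy, and truncation changes it continuously in $R$ because $\int\varrho|\ln\varrho|<\infty$, which follows from finite entropy together with the $V$-moment. This gives $\liminf\,\mathrm{Ent}[\mu_p]\geq\mathrm{Ent}[\mu]$. The reverse inequality comes from upper semicontinuity of $\mathrm{Ent}$ relative to $e^{-V}\mathrm{d}x$ under $\int V\,\mathrm{d}\mu_p\to\int V\,\mathrm{d}\mu$.

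The hardest term is $\int\ln U_{\mu_p}\,\mathrm{d}\mu_p$, because $\ln$ blows up where $U_\mu$ vanishes. For the lower bound (which is all Proposition \ref{lowerboundonballs} needs, so we could content ourselves with $\limsup I[\mu_p]\leq I[\mu]$), we use concavity of $\ln$ and linearity of $U$. This gives $\ln U_{\mu_p}\geq(1-t)\ln U_{\mu^{(R,\epsilon)}}+t\ln U_\omega$ wherever both potentials are positive, and the subharmonic comparison gives $U_{\mu^{(R,\epsilon)}}\geq U_{\mu^{(R)}}\geq U_\mu-\eta(R)$. We then conclude using dominated convergence on $\{U_\mu>\eta\}$ and Fatou's lemma. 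We expect that uniform integrability of $\ln U$ near the zero set of $U_\mu$, where only the mixing with $\omega$ saves us, will be the delicate point. If the full convergence stated in (6) fails there, we fall back on proving $\limsup_p I[\mu_p]\leq I[\mu]$, which suffices for \eqref{wldlbtheo}.
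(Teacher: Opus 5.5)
Your construction (truncate and re-centre by a shift, smooth, then mix with a fixed good $\omega$) differs from the paper's, which dilates by $\rho>1$, compactifies on a window $[-K_1;K_2]$ chosen to keep the mean zero, and then smooths. The shift is a valid and even simpler way to get (iv), and it needs no atomlessness. The $V$ and entropy parts of your (vi) are also fine. However, there is a genuine gap in controlling $U_\nu(x)=1+\frac{2}{\ell}\int\ln|x-y|\,\mathrm{d}\nu(y)$.

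Your claim that convolving with the uniform law $u_\epsilon$ on $[-\epsilon;\epsilon]$ only increases the logarithmic potential is false. On $\mathbb{R}$ one has $U_{\nu\ast u_\epsilon}(x)=\frac{1}{2\epsilon}\int_{-\epsilon}^{\epsilon}U_\nu(x-s)\,\mathrm{d}s$. This is an average over a real segment, not over a circle, so subharmonicity does not apply. Moreover, $s\mapsto\ln|z-s|$ is strictly concave on $[-\epsilon;\epsilon]$ whenever $|z|>\epsilon$. Hence smoothing strictly \emph{lowers} $U_\nu$ at every point at distance $>\epsilon$ from $\mathrm{supp}\,\nu$, and also at support points near which $U_\nu$ is strictly concave. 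So neither $U_{\mu^{(R,\epsilon)}}\geq U_{\mu^{(R)}}$ nor (ii) for the smoothed measure is established.

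You also genuinely need a bound off $\mathrm{supp}\,\mu^{(R,\epsilon)}$. Indeed, $\mathrm{supp}\,\mu_p\supset\mathrm{supp}\,\omega$ fills the gaps of $\mathrm{supp}\,\mu$, and $I[\mu_p]<\infty$ requires $U_{\mu_p}>0$ there; the fact that $U\to+\infty$ at infinity says nothing about gaps. The missing tool is the principle of domination (Lemma \ref{Lemme Principe Domination}). For compactly supported $\mu^{(R)}$, the a.e. bound $U_{\mu^{(R)}}\geq-\eta(R)$ (obtained from $U_\mu\geq0$ $\mu$-a.e. as in the paper's estimates of $\mathcal{T}_1,\mathcal{T}_3$) holds at every $x\in\mathbb{R}$. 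Only then does the averaging formula pass it to the smoothed measure (Lemma \ref{principdom}).

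The second gap is the logarithmic term in (vi), which you leave open. Your split $\ln U_{\mu_p}\geq(1-t)\ln U_{\mu^{(R,\epsilon)}}+t\ln U_\omega$ discards exactly the floor that mixing provides. Near $\{U_\mu=0\}$ there is then nothing to dominate with, and Fatou's lemma points the wrong way. Your fallback is not weaker: since $I$ is lower semicontinuous (Lemma \ref{lemmalsc}) and $\mu_p\to\mu$, the bound $\limsup_p I[\mu_p]\leq I[\mu]$ is equivalent to (vi). It is precisely what the paper proves, see \eqref{contI}.

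The paper's device is to dilate first. The identity $U_{\op{s}_{\rho\#}\mu}(\rho x)=U_\mu(x)+\frac{2}{\ell}\ln\rho$ gives a uniform positive floor, so the compactification limit follows by dominated convergence. The final limit is monotone, $\int\ln\big(U_\mu+\frac{2}{\ell}\ln\rho\big)\,\mathrm{d}\mu\geq\int\ln U_\mu\,\mathrm{d}\mu$, so no uniform integrability is needed. The smoothing limit is not treated term by term but through convexity of $I$ and translation invariance of its non-potential terms (Lemma \ref{entropybound}).

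Your mixing route can be repaired along the same lines. Take $\nu=(1-t)\mu^{(R)}+t\omega$ with $\eta(R)\leq t\kappa_\omega$, and keep the floor:
\begin{itemize}
\item $U_\nu\geq(1-t)U_\mu/\mu([-R;R])+t\kappa_\omega$ holds $\mu^{(R)}$-a.e.;
\item $U_\nu\geq t\kappa_\omega$ holds everywhere, by domination.
\end{itemize}
Since $\ln U_\mu\in L^1(\mu)$, this gives $\int\ln U_\nu\,\mathrm{d}\nu\geq\int\ln U_\mu\,\mathrm{d}\mu-\mathrm{o}(1)$ as $R\to\infty$ and then $t\to0$. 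Then smooth last and invoke Lemma \ref{entropybound}.
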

It is enough to prove the weak large deviation lower bound \eqref{wldlbtheo}  for measures $\mu$ such that $I[\mu]$ is finite, since otherwise the bound is trivial. Taking $\mu$ so that $I[\mu]$ is finite, Lemma \ref{lemmeapprox} shows that for any $\delta>0$, we can find a $p_0 \in \mathbb{N}^*$ such that for all $p\geq p_0$, we have $\mu_{p}\in B_{\mathsf{c}}(\mu,\frac{\delta}{2})$ and where $\mu_{p}$ satisfies all the hypotheses of Lemma \ref{ldlbgood}.
We observe that $B_{\mathsf{c}}(\mu_{p},\frac{\delta}{2})\subset B_{\mathsf{c}}(\mu,\delta)$, and so \eqref{equation finale preuve lower bound good measures ldp},
\textit{c.f.} the proof of Lemma \ref{ldlbgood}, implies that
$$\liminf_{N\rightarrow +\infty}\frac{1}{N}\ln \overline{\Pi}_{N,\mathsf{c}}\big[ B_{\mathsf{c}}(\mu,\delta)\big] \ge
\liminf_{N\rightarrow +\infty}\frac{1}{N}\ln \overline{\Pi}_{N,\mathsf{c}}\big[ B_{\mathsf{c}}\big(\mu_{p},\tfrac{\delta}{2}\big)\big] \ge -I[\mu_{p}]\,.$$
Sending $p \to +\infty$, the last point of Lemma \ref{lemmeapprox} implies that our lower bound goes to $I[\mu]$. Finally, we notice that \textit{rhs} does not depend on $\delta$, hence we may send $\delta \searrow 0$, which yields \eqref{wldlbtheo}.

\section{Upper bound on balls and exponential tightness}\label{section:ldub}

\subsection{Upper bound on the density}

In this section we prove the following upper bound on the joint eigenvalue density, more precisely on
$\mc{I}(\bs{\la}^+_N;\veps_N)$ introduced in \eqref{density}. This provides the first step
towards establishing the upper bound on balls.
\begin{prop}\label{integralupperbound}
Let $V$ satisfy \ref{potentialh1} and \ref{potentialh2} and fix $\gamma > 0$.
Then there is a constant $C >0$ (which may depend on $\gamma$) such that, for $N$ sufficiently large, the upper bound holds
\begin{align}
\mc{I}(\bs{\la}^+_N ; \veps_N) \leq \mathrm{e}^{CN^{1- \frac{1}{4}}} \sqrt{\frac{ \mc{T}(\boldsymbol{\lambda}^+_N) \mc{T}(\boldsymbol{\lambda}^-_N)}{\Delta(\boldsymbol{\lambda}_N^+) \Delta(\boldsymbol{\lambda}_N^-)} } \cdot  \prod_{j=1}^N \max\{ \ln |\lambda_j^+ - \lambda_j^-|^{-1}   , \gamma N \} \;.
\end{align}
Above, given $\boldsymbol{x}_N\in \mathbb R^{N}$ we agree upon:
\begin{align}\label{Tdef}
    \mc{T}(\boldsymbol{x}_N) =  \prod_{k=1}^N \Big( 1 + \frac{1}{\gamma N} \ln^2(1+|x_k|) \Big)^5 \prod_{j=1}^N \Big( 1 + \frac{6}{\gamma N^{\frac{5}{4}}} \sum_{\substack{ i = 1 \\ i \neq j}}^N \ln^2|x_i - x_j| \Big)^4  \, .
\end{align}

\end{prop}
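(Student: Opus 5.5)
The natural route is to split the square root symmetrically between $P^+$ and $P^-$ and then apply Cauchy--Schwarz to the $\bs{\mu}_{N-1}$-integral. Writing
\[
\frac{\Delta(\bs{\mu}_{N-1})}{\prod_k\sqrt{P^+(\mu_k)P^-(\mu_k)}}=\prod_k\frac{\sqrt{\Delta(\bs{\mu}_{N-1})}\,w_k^{+}}{|P^+(\mu_k)|^{1/2}}\cdot\frac{\sqrt{\Delta(\bs{\mu}_{N-1})}\,w_k^{-}}{|P^-(\mu_k)|^{1/2}},
\]
with suitable weights $w_k^\pm$, Cauchy--Schwarz gives $\mc{I}\le\sqrt{\mc{J}^+\mc{J}^-}$. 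Here
\[
\mc{J}^\pm=\int_{\mc{D}}\Delta(\bs{\mu}_{N-1})\prod_k\frac{(w_k^\pm)^2}{|P^\pm(\mu_k)|}\,\dd\bs{\mu}_{N-1}.
\]
Without weights, $\mc{J}^\pm$ would be a determinant of integrals of $\mu^{j-1}/P^\pm(\mu)$, exactly as in Proposition \ref{exactlowerbound}. The same partial-fraction and row/column manipulations then factorise it as $\det[\wh{\op{A}}^\pm]/\Delta(\bs{\la}^\pm_N)$, which is the source of $\sqrt{\Delta(\bs{\la}_N^+)\Delta(\bs{\la}_N^-)}$ in the denominator. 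The catch is that $1/P^\pm$ has non-integrable poles at the endpoints $\la_k^\pm$ of $\mc{D}$. The plan is therefore first to regularise: split each $\mu_k$-interval into a bulk part at distance $\ge|\la_j^+-\la_j^-|$ (or $\ge e^{-\gamma N}$) from the endpoints, and small boundary layers near each endpoint. In the boundary layers, bound $\sqrt{P^+P^-}$ directly from below by $|P'|\sqrt{|\mu-\la^+||\mu-\la^-|}$, whose integral produces $\ln|\la_j^+-\la_j^-|^{-1}$. In the bulk, the logarithms truncate at $\gamma N$. This is how the factor $\max\{\ln|\la_j^+-\la_j^-|^{-1},\gamma N\}$ arises.

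Concretely, the steps are as follows.
\begin{enumerate}
\item Expand $\Delta(\bs{\mu}_{N-1})=\det[\mu_k^{j-1}]$ and replace the monomials by $\prod_{s<j}(\mu-\la_s^\pm)$. This rewrites each $\mc{J}^\pm$ as $\det[\op{A}^\pm\op{C}^\pm]$ with $A^\pm_{ks}$ a truncated version of $\ln|(\la_{k+1}-\la^\pm_s)/(\la_k-\la_s^\pm)|$, and $\op{C}^\pm$ triangular with determinant $1/\Delta(\bs{\la}^\pm_N)$.
\item Bound $\det[\op{A}^\pm]$ by Hadamard's inequality (row norms). The diagonal-type entries are of size $\max\{\ln|\la_j^+-\la_j^-|^{-1},\gamma N\}$. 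The off-diagonal entries are $\ln|\la_k-\la_s|$ differences and contribute, relative to the diagonal, a factor $1+\frac{1}{\gamma N}\sum_i\ln^2|x_i-x_j|$ per row; these are the second product in $\mc{T}$.
\item Entries involving large $|x_k|$ come from the polynomial growth $\ln(1+|x_k|)$ and produce the first product in $\mc{T}$.
\end{enumerate}

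The exponents $5,4$ and the $N^{5/4}$ scaling come from the way Hadamard's inequality is combined with Cauchy--Schwarz and from the powers taken in the splitting. The residual $\mathrm{e}^{CN^{3/4}}$ absorbs combinatorial factors: the $2^{N}$ choices of boundary layer versus bulk, the $(N-1)!$ from expanding $\Delta$ (compensated by the determinant structure), and the polynomial-in-$N$ losses from Hadamard's inequality after rescaling rows by $\gamma N$.

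The main obstacle is the Hadamard step. Hadamard gives $\prod_k\|\text{row}_k\|$, and to keep the loss subexponential one must normalise each row by its dominant entry $\max\{\ln|\la_k^+-\la_k^-|^{-1},\gamma N\}\ge\gamma N$. The relative size of the off-diagonal entries must then be controlled by $\frac{1}{(\gamma N)^2}\sum_i\ln^2|\la_i-\la_k|$, and the awkward cases are near-coincident roots, where $\ln^2$ is large. I would first try choosing the cutoff between bulk and boundary layer at scale $N^{-1/4}$ relative to the dominant term. This is what yields the $N^{5/4}$ and $N^{1-1/4}$ exponents, and it is exactly what makes $\mc{T}$ appear with the stated constants.
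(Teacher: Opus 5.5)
There is a genuine gap, and it sits exactly where the difficulty of the proposition lies. Since $\mathcal{J}^\pm=+\infty$, you must split each $\mu_k$-interval \emph{before} applying Cauchy--Schwarz. The halves $\mathcal{K}_k^0,\mathcal{K}_k^1$ already suffice: on each half only one pair $\lambda_a^\pm$ is singular, and you factor out $\{(\mu_k-\lambda_a^+)(\mu_k-\lambda_a^-)\}^{-1/2}$ for that pair. The splitting produces a sum over $2^{N-1}$ configurations ($3^{N-1}$ with your bulk/layer splitting). Your claim that this factor is absorbed into $\mathrm{e}^{CN^{3/4}}$ is false. $2^{N}=\mathrm{e}^{N\ln 2}$ lives on the large-deviation scale, so it would only give $\limsup_N\frac{1}{N}\ln\overline{\Pi}_N[B(\mu,\delta)]\leq -I[\mu]+c$ with $c>0$, which breaks the match with the lower bound. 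Your Hadamard step also fails for a single configuration. Suppose $\mu_j$ sits near the right end of its interval and $\mu_{j+1}$ near the left end of its own. Then both rows (in your indexing) have their dominant entry, of size $\ln\delta_{j+1}^{-1}$ with $\delta_{j+1}=|\lambda^+_{j+1}-\lambda^-_{j+1}|$, in the same column. Hadamard then yields $(\ln\delta_{j+1}^{-1})^2$, which is not dominated by $\prod_{a=1}^N\max\{\ln\delta_a^{-1},\gamma N\}$, because the $\ln\delta_a^{-1}$ can differ wildly. If instead you truncate at both endpoints without splitting, each row carries two dominant entries of opposite signs, of size $|\ln(2\veps_N)|$ as for $\op{A}$ in Proposition~\ref{exactlowerbound}. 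Hadamard then overestimates by a factor of order $2^{N/2}$.

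The missing idea is the paper's control of the configuration sum. Encode $\bs{\sigma}_{N-1}\in\{0,1\}^{N-1}$ by $\sigma_1$ and its set $X$ of level changes. At each step down $(\sigma_j,\sigma_{j+1})=(1,0)$, columns $j$ and $j+1$ of $\op{M}^{\varsigma}_{\bs{\sigma}_{N-1}}$ share their dominant row $j+1$, with entries $\ln|\Phi(\Delta_j/(2\delta_{j+1}))|$ and $\ln|\Phi(\Delta_{j+1}/(2\delta_{j+1}))|$. Subtracting these columns cancels $\ln\delta_{j+1}^{-1}$, leaving $O(\ln N)$ plus log-gap terms, so the new column has norm $O(\sqrt{N})$ up to log-distance terms. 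Off the down steps, $j\mapsto j+\sigma_j$ is injective, so the remaining dominant entries contribute at most $(\gamma N)^{-1-|X_\downarrow|}\prod_{a=1}^N\max\{\ln\delta_a^{-1},\gamma N\}$. Hence each down step gains a factor $(\gamma N)^{-1/2}$. Up and down steps alternate, so $|X_\downarrow|\geq(|X|-1)/2$, which is a gain of $(\gamma N)^{-1/4}$ per level change, and $\sum_X(\gamma N)^{-|X|/4}\prod_{j\in X}a_j=\prod_j\bigl(1+(\gamma N)^{-1/4}a_j\bigr)$. This, and not a bulk/boundary cutoff at scale $N^{-1/4}$, is where both $\mathrm{e}^{CN^{3/4}}$ and the $N^{-5/4}$ in $\mathcal{T}$ come from. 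Finally, your boundary-layer bound $\sqrt{P^+P^-}\geq|P'|\sqrt{|\mu-\lambda^+||\mu-\lambda^-|}$ is not uniform on $\mathcal{A}_N$. Nothing controls how $P^\pm(\mu)/(\mu-\lambda_a^\pm)$ varies across the layer when roots are close. The paper avoids this by keeping $P^\pm_{k+\sigma_k}(\mu_k)$ exactly and eliminating it through the triangular matrix $\op{A}(\bs{\lambda}^\pm_N)$ of Lemma~\ref{Lemme ecriture action inverse A}.
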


The proof is intricate, and proceeds in several steps, which we describe in the following. We start by obtaining a first structured upper bound on
$\mc{I}(\bs{\la}^+_N ; \veps_N) $. Prior to that, we observe that the integration domain
$\intff{ \lambda_j^{\ups_k} }{ \lambda_{j+1}^{\ups_k} }$  for $\mu_j$ introduced in \eqref{definition domaine integration des mus},
may be decomposed into the left half and right half, \textit{i.e.} $\mathcal{K}_j^0 \cup \mathcal{K}_j^1$, where
\beq
\mathcal{K}_j^0  \, =  \, \Big[ \lambda_j^{\ups_j} ; \tfrac{\lambda_j^{\ups_j} + \lambda_{j+1}^{\ups_j}}{2}\Big]  \qquad \e{and} \qquad
\mathcal{K}_j^1 \, = \,  \Big[ \tfrac{\lambda_j^{\ups_j} + \lambda_{j+1}^{\ups_j}}{2}; \lambda_{j+1}^{\ups_j}\Big] \, .
\label{ecriture decomposition sur Kj sigma}
\enq

\begin{lemme}
\label{Lemme First upper bound on I}
One has the upper bound
\beq
\mc{I}(\bs{\la}^+_N ; \veps_N)  \, \leq \,  \pl{\vsg = \pm}{} \Big\{  \f{ \mc{S}^{\vsg}   }{ \Delta(\bs{\lambda}_N^{\vsg}) } \Big\}^{\f{1}{2}}
\label{Ibound}
\enq
with $\mc{S}^{\vsg} $ expressed as a combinatorial sum of determinants
\begin{align}\label{Sdef}
\mc{S}^{\vsg}\overset{\mathrm{def}}{=} \hspace{-5mm}
\sul{ \bs{\sg}_{N-1} \in \{ 0, 1\}^{N-1} }{} | \det_N\big[ \op{M}_{ \bs{\sg}_{\small N-1} }^{\, \vsg} \big]  |
\end{align}
involving the $N\times N$ matrix  $\op{M}_{ \bs{\sg}_{\small N-1} }^{\, \vsg}$ having entries
\beq
\big( \op{M}_{\bs{\sg}_{N-1}}^{\, \vsg}\big)_{ij} \overset{\mathrm{def}}{=} \left\{ \ba{c c c }
\Int{\mathcal{K}_j^{\sigma_j}}{} \frac{\mu - \lambda_{j+\sigma_j}^{\vsg} }{ \mu - \lambda_i^{\vsg} }
\frac{1}{\big\{ (\mu - \lambda_{j+\sigma_j}^+) (\mu - \lambda_{j+\sigma_j}^-) \big\}^{ 1/2 }  } \, \mathrm{d}\mu   & for & j \leq N-1   \vspace{3mm} \\
1 & for & j = N    \ea \right.  \, .
\label{definition matrice M sigma}
\enq
\end{lemme}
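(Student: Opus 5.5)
We must bound $\mc{I}=\int_{\mc D}\Delta(\bs\mu_{N-1})\prod_k (P^+P^-(\mu_k))^{-1/2}\,\dd\bs\mu_{N-1}$. The plan is to split each integration interval $[\lambda_j^{\ups_j};\lambda_{j+1}^{\ups_j}]=\mc K_j^0\cup\mc K_j^1$. This writes $\mc I=\sum_{\bs\sg_{N-1}}\mc I_{\bs\sg}$, where in $\mc I_{\bs\sg}$ the variable $\mu_j$ ranges over $\mc K_j^{\sg_j}$. On the chamber $\mc D$ the Vandermonde is positive, so each $\mc I_{\bs\sg}\geq 0$.

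In each piece, factor the integrand as
\[
\Delta(\bs\mu)\prod_k \frac{1}{\sqrt{P^+P^-(\mu_k)}} \;=\; \prod_{\vsg=\pm}\Big(\Delta(\bs\mu)\prod_k \frac{1}{|P^{\vsg}(\mu_k)|}\cdot w_{\bs\sg}(\bs\mu)\Big)^{1/2},
\]
with $w_{\bs\sg}(\bs\mu)=\prod_k |\mu_k-\la^{\vsg}_{k+\sg_k}|/\big(|\mu_k-\la^+_{k+\sg_k}||\mu_k-\la^-_{k+\sg_k}|\big)^{1/2}$. Here the extra factors $|\mu_k-\la^{\vsg}_{k+\sg_k}|^{\pm1/2}$ inserted in the two $\vsg$-factors multiply to $1$, because $\prod_{\vsg}|\mu_k-\la^{\vsg}_{k+\sg_k}|$ is exactly the product of the two nearby roots. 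We then apply Cauchy--Schwarz to get
\[
\mc I_{\bs\sg}\leq \prod_{\vsg}\Big(\int \Delta(\bs\mu)\prod_k \frac{|\mu_k-\la^{\vsg}_{k+\sg_k}|}{|P^{\vsg}(\mu_k)|\,\{(\mu_k-\la^+_{k+\sg_k})(\mu_k-\la^-_{k+\sg_k})\}^{1/2}}\,\dd\bs\mu\Big)^{1/2}.
\]
A second Cauchy--Schwarz in $\ell^2$ over $\bs\sg$ then gives $\mc I\le\prod_\vsg\big(\sum_{\bs\sg}J^\vsg_{\bs\sg}\big)^{1/2}$, where $J^\vsg_{\bs\sg}$ denotes the $\vsg$-integral. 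It remains to show $J^\vsg_{\bs\sg}=|\det_N \op M^\vsg_{\bs\sg}|/\Delta(\bs\la^\vsg_N)$.

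For this we use the partial fraction expansion
\[
\frac{1}{P^\vsg(\mu)}=\sum_i \frac{1}{(P^\vsg)'(\la_i^\vsg)(\mu-\la_i^\vsg)}.
\]
We also use the classical identity
\[
\Delta(\bs\mu_{N-1})\prod_k \frac{1}{P^\vsg(\mu_k)}=\frac{\pm 1}{\Delta(\bs\la^\vsg_N)}\det_N\Big[\frac{1}{\mu_j-\la_i^\vsg}\;(j\le N-1)\;\Big|\;1\;(j=N)\Big],
\]
which is a Cauchy-type determinant bordered by a column of ones. It can be checked by comparing poles and degrees, or by the same residue/column manipulation used in Proposition \ref{exactlowerbound} with the roles of $\bs\mu$ and $\bs\la$ exchanged. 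We insert this identity with the factor $(\mu_j-\la^\vsg_{j+\sg_j})$ multiplied into column $j$ and integrate column by column (multilinearity of the determinant). This produces exactly the entries $(\op M^\vsg_{\bs\sg})_{ij}$ of \eqref{definition matrice M sigma}.

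Absolute values must be handled carefully, since $P^\vsg$ has constant sign on $\mc K_j^{\sg_j}$ minus the tiny gap. The product $\Delta(\bs\mu)\prod_k|P^\vsg(\mu_k)|^{-1}$ has a fixed sign on the whole region, $(-1)^{N(N-1)/2}$ times the sign pattern of the $(-1)^{N-k}$, as in Proposition \ref{exactlowerbound}. Likewise $(\mu_j-\la_{j+\sg_j}^\vsg)/\{(\mu_j-\la^+_{j+\sg_j})(\mu_j-\la^-_{j+\sg_j})\}^{1/2}$ has a fixed sign on $\mc K_j^{\sg_j}$, once the square root is taken to be the positive root of the positive product. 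So the integrand of $J^\vsg_{\bs\sg}$ equals a global sign times the determinantal expression, and $J^\vsg_{\bs\sg}=|\det \op M^\vsg_{\bs\sg}|/\Delta(\bs\la^\vsg_N)$.

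I expect the main obstacle to be this sign bookkeeping together with the integrability of the entries. The singularity at $\mu=\la_i^\vsg$ is either at the endpoint $\la^\vsg_{j+\sg_j}$, where it is cancelled by the numerator, or outside $\mc K_j^{\sg_j}$. The remaining $\{\cdot\}^{-1/2}$ singularity is integrable. The root $\la^{-\vsg}_{j+\sg_j}$ is near the endpoint but outside the interior of $\mc K_j^{\sg_j}$ by the interlacing \eqref{interlacing}.
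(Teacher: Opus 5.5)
Your proof is correct and follows the paper's argument step for step: the splitting into $\mathcal{K}_j^{0}\cup\mathcal{K}_j^{1}$, the factorisation through $P^{\pm}_{k+\sigma_k}$ (your $|\mu_k-\lambda^{\vsg}_{k+\sigma_k}|/|P^{\vsg}(\mu_k)|$ is exactly $1/|P^{\vsg}_{k+\sigma_k}(\mu_k)|$), Cauchy--Schwarz inside each $\mathcal{I}_{\boldsymbol{\sigma}_{N-1}}$ and then over $\boldsymbol{\sigma}_{N-1}$, and the evaluation of the resulting integrals as $N\times N$ determinants. The only variation is that you invoke the bordered Cauchy determinant identity directly (e.g.\ as the $\mu_N\to\infty$ limit of Cauchy's formula), whereas the paper derives the same identity via the Vandermonde in the monic polynomials $q^{\pm}_{i-1}$ and the triangular matrix $\op{A}(\boldsymbol{\lambda}^{\pm}_N)^{-1}$ of Lemma \ref{Lemme ecriture action inverse A}; your sign and integrability bookkeeping (where it is the signed product $\Delta(\boldsymbol{\mu}_{N-1})\prod_k P^{\vsg}(\mu_k)^{-1}$, not its absolute value, that has constant sign $(-1)^{N(N-1)/2}$) also justifies the step $\mathcal{I}_{\boldsymbol{\sigma}_{N-1}}\le\sqrt{|\mathcal{I}^+_{\boldsymbol{\sigma}_{N-1}}\mathcal{I}^-_{\boldsymbol{\sigma}_{N-1}}|}$, which the paper leaves implicit.
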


Recall that the integration domain is such that  $(\mu - \lambda_{j+\sigma_j}^+) (\mu - \lambda_{j+\sigma_j}^-)  > 0$ on its interior,
so that the square root appearing in the definition of $\op{M}_{ \bs{\sg}_{\small N-1} }^{\, \vsg}$ is well-defined and strictly positive.

\begin{proof}

By using the decomposition of the integration domain into the sets introduced in \eqref{ecriture decomposition sur Kj sigma}, one gets
\beq
\mathcal{I}(\bs{\la}^+_N ; \veps_N)  \, =  \hspace{-3mm} \sul{ \bs{\sg}_{N-1} \in \{ 0, 1\}^{N-1} }{} \hspace{-3mm}  \mathcal{I}_{ \bs{\sg}_{N-1}} (\bs{\la}^+_N ; \veps_N)
\enq
with 
\beq
\mathcal{I}_{ \bs{\sg}_{N-1} }(\bs{\la}^+_N ; \veps_N)   \, = \,  \Int{ \R^N }{}\Delta(\boldsymbol{\mu}_{N-1})
\pl{s=1}{N-1}\bigg\{  \frac{ \mathbbm{1}_{ \mc{K}_s^{\sg_s} }(\mu_s)  }{ \sqrt{P^+(\mu_s) P^-(\mu_s)}}  \bigg\}
  \, \mathrm{d}\boldsymbol{\mu}_{N-1} \, .
\enq
For fixed $\bs{\sg}_{N-1}\in \{ 0, 1\}^{N-1}$, one has the factorisation
$$\pl{k=1}{N-1} \sqrt{P^+(\mu_k) P^-(\mu_k)} \,  = \,
\pl{k=1}{N-1}  \sqrt{P_{k+\sigma_k}^+(\mu_k) P_{k+\sigma_k}^-(\mu_k)}
\pl{k=1}{N-1}   \sqrt{(\mu_k - \lambda_{k+\sigma_k}^+) (\mu_k - \lambda_{k+\sigma_k}^-)} $$
where
$$P_k^\pm ( X ) \overset{\mathrm{def}}{=}  \prod_{\substack{\ell = 1 \\ \ell \neq k}}^N( X  - \lambda_\ell^\pm) \, .$$
Also, we stress that all square roots are well-defined and non-negative for $\mu_k\in \mc{K}_k^{\sg_k}$.
Next,  let us define
\begin{align}
\mathcal{I}_{ \bs{\sg}_{N-1} }^{\pm} (\bs{\la}^+_N ; \veps_N)  & =  \hspace{-2mm} \Int{\R^{N-1} }{} \hspace{-3mm}
\Delta(\boldsymbol{\mu}_{N-1}) \prod_{k=1}^{N-1} \Bigg\{ \frac{  \mathbbm{1}_{ \mc{K}_k^{\sg_k} }(\mu_k)  }
{ P_{k+\sigma_k}^\pm (\mu_k)  \big\{ (\mu_k - \lambda_{k+\sigma_k}^+)  (\mu_k - \lambda_{k+\sigma_k}^-) \big\}^{1/2} }
 \Bigg\}  \, \mathrm{d}\boldsymbol{\mu}_{N-1}  \, .
\end{align}
Then, by Cauchy--Schwarz's inequality, we get
 \begin{equation}
\mathcal{I}_{ \bs{\sg}_{N-1} }^{\pm} (\bs{\la}^+_N ; \veps_N)
\leq \sqrt{ \big| \mathcal{I}_{ \bs{\sg}_{N-1} }^{+} (\bs{\la}^+_N ; \veps_N)  \,   \mathcal{I}_{ \bs{\sg}_{N-1} }^{-} (\bs{\la}^+_N ; \veps_N)  \big| } \;.
\end{equation}
For any monic polynomials $q_{i}$ of degree $i$, we can write, by representing $\Delta(\boldsymbol{\mu}_{N-1})$
as a Vandermonde determinant, 

\beq
\mathcal{I}_{ \bs{\sg}_{N-1} }^{\pm} (\bs{\la}^+_N ; \veps_N)  \, = \,  \det_{N-1}\bigg[ \Int{\mathcal{K}_j^{\sigma_j}}{}
\frac{ q^{\pm}_{i-1}(\mu) }{ P^{\pm}(\mu) }
\frac{ \mu - \lambda_{j+\sigma_j}^{\pm} }{  \big\{  (\mu - \lambda_{j+\sigma_j}^+) (\mu - \lambda_{j+\sigma_j}^-) \big\}^{1/2}  }
\, \mathrm{d}\mu   \bigg] \, .
\enq
We will choose 
\beq
q_i^\pm(X) = \prod_{\ell=1}^i( X  - \lambda_\ell^\pm)\,.
\label{definition polynome qi pm}
\enq
Now applying Cauchy--Schwarz on the level of the sum over $\bs{\sg}_{N-1}$, one gets
 \beq
\mathcal{I} (\bs{\la}^+_N ; \veps_N)  \, \leq \,
\bigg\{ \sul{\bs{\sg}_{N-1} \in \{ 0, 1\}^{N-1} }{} \hspace{-5mm} \big| \mathcal{I}_{ \bs{\sg}_{N-1} }^{+} (\bs{\la}^+_N ; \veps_N)   \big|  \bigg\}^{ \f{1}{2} }
\, \cdot \,
\bigg\{ \sul{\bs{\sg}_{N-1} \in \{ 0, 1\}^{N-1} }{} \hspace{-5mm} \big| \mathcal{I}_{ \bs{\sg}_{N-1} }^{-} (\bs{\la}^+_N ; \veps_N)  \big| \bigg\}^{ \f{1}{2} } \, .
\enq
For $\bs{\sg}_{N-1}\in\{ 0,1\}^{N-1}$, we define the $N \times N$ matrix
\begin{align*}
\big( Q^{ \pm }_{ \bs{\sg}_{N-1} } \big)_{ij} \, = \,
\left\{ \ba{cc} \Int{\mathcal{K}_j^{\sigma_j}}{} \frac{q_{i-1}^\pm(\mu)}{P^\pm(\mu)}
\frac{\mu - \lambda_{j+\sigma_j}^\pm }{
\big\{ (\mu - \lambda_{j+\sigma_j}^+) (\mu - \lambda_{j+\sigma_j}^-) \big\}^{1/2}
} \, \mathrm{d}\mu & j \leq N-1 \\
\delta_{iN} & j = N  \ea \right.    \;.
\end{align*}
 Because the final column is all zeroes except for the final entry, we have
\begin{align}
\mathcal{I}_{ \bs{\sg}_{N-1} }^{\pm} (\bs{\la}^+_N ; \veps_N)  \, = \, \det_N \big[ \op{Q}^{ \pm }_{ \bs{\sg}_{N-1} } \big] \, .
\end{align}
Given $\bs{x}_N \in \mathbb{R}^N$ having pairwise distinct entries, define the $N \times N$ matrix $\op{A}(\boldsymbol{x}_N)$ as
\beq
A(\boldsymbol{x}_N)_{i j} \overset{\mathrm{def}}{=} \mathbbm{1}_{i \leq j}  \prod_{\substack{m= i \\ m \neq j }}^N \frac{1}{x_j - x_m}  \, .
\label{definition matrice A2}
\enq
By virtue of Lemma \ref{Lemme ecriture action inverse A}, it holds
\begin{align}
  \det_N\big[ \op{Q}^{ \pm }_{ \bs{\sg}_{N-1} } \big] = \det_N[\op{A}(\boldsymbol{\lambda}_N^\pm)]
\det_N\big[\op{A}(\boldsymbol{\lambda}_N^\pm)^{-1}\op{Q}^{ \pm }_{ \bs{\sg}_{N-1} } \big] = \frac{  (-1)^{ N \f{N-1}{2} } }{\Delta(\boldsymbol{\lambda}^\pm_N)}
\det_N[ \op{M}_{\bs{\sg}_{N-1}}^{\, \vsg}]
\end{align}
where $ \op{M}_{\bs{\sg}_{N-1}}^{\, \vsg}$ is as defined in \eqref{definition matrice M sigma}.
Thus, overall, we have \eqref{Ibound}.
\end{proof}

To proceed with the bounds, we need a few auxiliary results.

\begin{lemme}\label{exactMformula}
The matrix entry $\big( M_{\bs{\sg}_{N-1}}^{\, \vsg} \big)_{ j+\sigma_j, j }$ can be evaluated in closed form
$$\big( M_{\bs{\sg}_{N-1}}^{\, \vsg} \big)_{ j+\sigma_j, j } \,  =  \,
\ln\left|\Phi\left(\frac{1}{2}\frac{\Delta_j}{\delta_{j+\sigma_j}}\right)\right| \qquad \text{for} \qquad j \in \intn{1}{N-1} \;. $$
Here, we introduced $\Phi(x) := 2x+1+2\sqrt{x(1+x)}$, and have set
\beq
\delta_j \, =  \, |\lambda_j^+ - \lambda_j^-|  \qquad and \qquad  \Delta_j \, = \,  \lambda_{j+1}^{\ups_j}  \, - \,  \lambda_{j}^{\ups_j} \, .
\enq
\end{lemme}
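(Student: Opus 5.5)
Setting $i=j+\sigma_j$ in \eqref{definition matrice M sigma} makes the numerator $\mu-\lambda_{j+\sigma_j}^{\vsg}$ cancel against the denominator $\mu-\lambda_i^{\vsg}$. The entry therefore reduces, independently of $\vsg$, to
\[
\big( M_{\bs{\sg}_{N-1}}^{\, \vsg} \big)_{ j+\sigma_j, j } \, = \, \Int{\mathcal{K}_j^{\sigma_j}}{} \frac{\dd\mu}{\big| (\mu - \lambda_{j+\sigma_j}^+) (\mu - \lambda_{j+\sigma_j}^-) \big|^{1/2}} \;,
\]
where the positivity of the product on the interior of the domain (noted after the statement of Lemma \ref{Lemme First upper bound on I}) allows us to insert absolute values. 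My plan is simply to evaluate this elementary integral explicitly.

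\textbf{Geometry of the interval.} Set $k=j+\sigma_j$. For $\sigma_j=0$ the interval is $\mathcal{K}_j^0=[\lambda_j^{\ups_j};\lambda_j^{\ups_j}+\Delta_j/2]$. Its left endpoint $\lambda_j^{\ups_j}$ is one of the two roots $\lambda_j^\pm$, and the other root $\lambda_j^{-\ups_j}$ lies outside the interval, on the side away from it, at distance $\delta_j$. This follows from the interlacing \eqref{interlacing}: the $\mu$-domain of $\mu_j$ starts at $\lambda_j^{\ups_j}$ and excludes the gap between $\lambda_j^+$ and $\lambda_j^-$, on which $P^+P^-<0$.

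For $\sigma_j=1$ the situation is mirrored. The right endpoint of $\mathcal{K}_j^1$ is $\lambda_{j+1}^{\ups_j}$, and the other root $\lambda_{j+1}^{-\ups_j}=\lambda_{j+1}^{\ups_{j+1}}$ lies beyond it at distance $\delta_{j+1}$. In both cases, after translating and possibly reflecting, the integral becomes
\[
\Int{0}{\Delta_j/2} \frac{\dd t}{\sqrt{t(t+\delta_k)}} \;,
\]
where $t$ is the distance from $\mu$ to the endpoint root.

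\textbf{Evaluation.} The antiderivative is $2\ln\big(\sqrt{t}+\sqrt{t+\delta_k}\big)$, equivalently $\mathrm{arccosh}(1+2t/\delta_k)$. Evaluating it, the integral equals
\[
2\ln\frac{\sqrt{\Delta_j/2}+\sqrt{\Delta_j/2+\delta_k}}{\sqrt{\delta_k}} \, = \, \ln\Big(\sqrt{x}+\sqrt{1+x}\Big)^2 \qquad \text{with} \qquad x=\frac{\Delta_j}{2\delta_k} \;.
\]
Expanding the square gives $(\sqrt{x}+\sqrt{1+x})^2=2x+1+2\sqrt{x(1+x)}=\Phi(x)$, which is exactly the claimed formula.

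\textbf{Main obstacle.} The only delicate point is the orientation check in the geometry step, namely that the second root always lies outside $\mathcal{K}_j^{\sigma_j}$ on the far side and at distance exactly $\delta_k$. This must hold for every parity of $N-j$. I would verify it directly from \eqref{interlacing} and the definition $\ups_j=(-1)^{N-j}$, using $\lambda_{j+1}^{-\ups_j}=\lambda_{j+1}^{\ups_{j+1}}$. Once the orientation is settled, the rest of the argument is routine.
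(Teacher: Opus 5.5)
Your proof is correct and follows the same route as the paper. After the cancellation at $i=j+\sigma_j$, the paper translates (and reflects) so that the endpoint root sits at the origin, then rescales by $\delta_{j+\sigma_j}$ to obtain $\int_0^{\Delta_j/(2\delta_{j+\sigma_j})}\frac{\mathrm{d}\mu}{\sqrt{\mu(\mu+1)}}$, and notes that this integrand is $\frac{\mathrm{d}}{\mathrm{d}\mu}\ln\Phi(\mu)$; this is your antiderivative $2\ln\big(\sqrt{t}+\sqrt{t+\delta_k}\big)$ in rescaled form. Your explicit orientation check, namely that the partner root $\lambda_{j+\sigma_j}^{-\ups_j}$ lies just outside $\mathcal{K}_j^{\sigma_j}$ at distance $\delta_{j+\sigma_j}$, is a detail the paper leaves implicit in the phrase ``translating and rescaling''.
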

\begin{proof}
By translating and rescaling,
$$\big( M_{\bs{\sg}_{N-1}}^{\, \vsg} \big)_{ j+\sigma_j, j } \,  =  \,
\Int{0}{ \tf{ \Delta_j }{ (2\delta_{j+\sigma_j})} } \frac{1}{\sqrt{\mu(\mu+1)}} \, \mathrm{d}\mu \, . $$
At this stage, it remains to observe that $\frac{1}{\sqrt{\mu(\mu+1)}} = \frac{\mathrm{d}}{\mathrm{d}\mu} \ln |\Phi(\mu)|$.
\end{proof}


\begin{cor}\label{matrixelementbound2}
One has the lower and upper bounds
$$-\ln (N+1) - \ln |\delta_{j+\sigma_j}| + \ln|\lambda_{j+1}^{\vsg} - \lambda_j^{\vsg}|
\leq \big( M_{\bs{\sg}_{N-1}}^{\, \vsg} \big)_{ j+\sigma_j, j }
\leq \ln 3 - \ln |\delta_{j+\sigma_j}| + \sul{s=0}{1} \ln(|\lambda_{j+s}^{\vsg}| + 1)  \, .$$
Moreover, $\big( M_{\bs{\sg}_{N-1}}^{\, \vsg} \big)_{ j+\sigma_j, j } \geq 0$.
\end{cor}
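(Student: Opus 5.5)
The plan is to start from the closed form of Lemma \ref{exactMformula}, $\big( M_{\bs{\sg}_{N-1}}^{\, \vsg} \big)_{ j+\sigma_j, j } = \ln|\Phi(x)|$ with $x = \tfrac{\Delta_j}{2\delta_{j+\sigma_j}}$, and to bound $\Phi$ elementarily. Non-negativity is immediate: for $x\geq 0$ one has $\Phi(x) = 2x+1+2\sqrt{x(1+x)} \geq 1$, so $\ln\Phi(x)\geq 0$. Equivalently, the entry is the integral of a positive integrand over an interval of non-negative length.

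For the upper bound I would use $\sqrt{x(1+x)}\leq x+\tfrac12$, which gives $\Phi(x)\leq 4x+2$, and therefore
$$\ln\Phi(x)\leq \ln\big(2\Delta_j/\delta_{j+\sigma_j}+2\big).$$
Next I bound $\Delta_j=|\lambda_{j+1}^{\ups_j}-\lambda_j^{\ups_j}|$. By the interlacing \eqref{interlacing}, the interval $[\lambda_j^{\ups_j};\lambda_{j+1}^{\ups_j}]$ is contained in $[\lambda_j^{\vsg}\wedge\lambda_j^{-\vsg};\lambda_{j+1}^{\vsg}\vee\lambda_{j+1}^{-\vsg}]$. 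The plan is therefore to control $\Delta_j$ and $\delta_{j+\sigma_j}$ together by quantities such as $|\lambda_j^\vsg|+|\lambda_{j+1}^\vsg|+\delta_j+\delta_{j+1}$, and then use
$$a+b+c\le 3\max\{a,b,c\}\leq 3(1+a)(1+b)\quad\text{for } c\le 1.$$
When $\delta$ is not small, one absorbs it by dividing through: $2\Delta/\delta+2\le 3(1+|\lambda_j|)(1+|\lambda_{j+1}|)/\delta$ whenever $\Delta\le |\lambda_j|+|\lambda_{j+1}|+\delta$ up to the required constants. The main bookkeeping obstacle is that $\delta_j$ and $\delta_{j+1}$ both enter $\Delta_j$, while only $\delta_{j+\sigma_j}$ appears in the denominator. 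I expect to handle this by using that $\Delta_j$ is measured between endpoints of the $\ups_j$-type, and to check case by case according to the parity interlacing \eqref{ecriture entrelacement intervalles des lambdas} that $\Delta_j\le|\lambda^\vsg_{j+1}-\lambda^\vsg_j|+\delta_{j+\sigma_j}$ or a similar estimate holds. I would check this step first, since the constant $\ln 3$ suggests exactly such a crude triangle bound.

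For the lower bound I would use $\Phi(x)\geq 2x+1 \ge 2x$, so
$$\ln\Phi\geq \ln \Delta_j-\ln\delta_{j+\sigma_j}.$$
It then remains to compare $\Delta_j$ with $|\lambda_{j+1}^\vsg-\lambda_j^\vsg|$. By interlacing, the $\ups_j$-interval is the inner one for one sign and the outer one for the other, so when $\vsg=\ups_j$ the bound is immediate. In the other case, $\Delta_j\geq |\lambda^\vsg_{j+1}-\lambda^\vsg_j|-\delta_j-\delta_{j+1}$, and this can be small. To get the loss $\ln(N+1)$, I would use the identity $P^+-P^-=-4\veps_N$, which says the gaps $\delta_j$ are the shifts of the roots of $P$ at levels $\pm2\veps_N$. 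Since $P$ is monotone on the relevant inter-critical-point region, a concavity or convexity argument for $\ln|P|$ between consecutive roots (the function $\ln|P|$ being a sum of $N$ logarithms) should give
$$\Delta_j\geq \frac{|\lambda^\vsg_{j+1}-\lambda^\vsg_j|}{N+1}.$$
This comparison, which requires a root-spacing estimate, is where I expect the real difficulty to lie. If it fails, I would instead fall back on using $\Phi(x)\ge 1$ together with $x\ge \tfrac{1}{2(N+1)}\cdot$ the corresponding ratio.
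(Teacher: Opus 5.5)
The non-negativity argument is fine and the upper bound is fine in outline, but the lower bound has a genuine gap. In the case $\vsg=-\ups_j$ you drop the additive constant ($\Phi(x)\ge 2x+1\ge 2x$) and then need $\Delta_j\ge|\lambda^\vsg_{j+1}-\lambda^\vsg_j|/(N+1)$. That estimate is false.

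The $\ups_j$-interval is always the \emph{inner} one, i.e.\ the set around $\zeta_j$ on which $|P|\ge 2\veps_N$. It shrinks to $\{\zeta_j\}$ as $|P(\zeta_j)|\downarrow 2\veps_N$, which $\mc{A}_N$ permits since it only requires $|P(\zeta_j)|>2\veps_N$. The outer interval does not shrink. Already for $N=2$, $P(x)=x^2-c$ with $c\downarrow 2\veps_N$, one has $\Delta_1=2\sqrt{c-2\veps_N}\to 0$, $\lambda_2^+-\lambda_1^+\to 4\sqrt{\veps_N}$ and $\delta_1=\delta_2\to 2\sqrt{\veps_N}$. So your intermediate bound $\ln(\Delta_1/\delta_1)$ tends to $-\infty$, while the target $\ln\big((\lambda_2^+-\lambda_1^+)/(3\delta_1)\big)$ stays bounded. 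No convexity property of $\ln|P|$ can give the estimate you want, and your fallback rests on the same lower bound for $x$, so it fails too.

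The missing idea is to keep the constant: $\Phi(x)\ge 2x+1=(\Delta_j+\delta_{j+\sigma_j})/\delta_{j+\sigma_j}$ (the paper uses $\Phi(x)\ge 4x+1$). The numerator $\Delta_j+\delta_{j+\sigma_j}$ is the distance from the outer endpoint $\lambda^\vsg_{j+\sigma_j}$ to the inner endpoint $\lambda^{\ups_j}_{j+1-\sigma_j}$. That inner endpoint lies on the other side of $\zeta_j$, so the numerator is at least $|\lambda^\vsg_{j+\sigma_j}-\zeta_j|$. Lemma~\ref{Lemme espacement racine lambda pm et zeta j}, which follows from $\sum_k(\zeta_j-\lambda^\vsg_k)^{-1}=0$, then gives $\lambda^\vsg_{j+1}-\lambda^\vsg_j=|\lambda^\vsg_{j+1}-\zeta_j|+|\lambda^\vsg_j-\zeta_j|\le (N+1)|\lambda^\vsg_{j+\sigma_j}-\zeta_j|$. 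This is where the $\ln(N+1)$ comes from: the $\delta_{j+\sigma_j}$ in the numerator, not $\Delta_j$, handles the degenerate configurations.

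For the upper bound, your bookkeeping worry disappears. Because the $\ups_j$-interval is always inner, $\Delta_j\le|\lambda^\vsg_{j+1}-\lambda^\vsg_j|$ for both signs of $\vsg$. Your bound $\Phi(x)\le 4x+2$ then suffices, provided $\delta_{j+\sigma_j}\le 1$ when $\vsg=\ups_j$; for $\vsg=-\ups_j$ one even has $\Delta_j+\delta_{j+\sigma_j}\le \lambda^\vsg_{j+1}-\lambda^\vsg_j$. That smallness cannot be absorbed by dividing through, since the bound fails for large $\delta$. It must come from Proposition~\ref{kappeler} for $N$ large, as in the paper.
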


\begin{proof}

Positivity of the matrix elements follows directly from their integral representation.
Furthermore, one has the lower and upper bounds $4x + 1 \leq \Phi(x) \leq 4x+3$.
Let us prove the lower bound for the case $\vsg=+$, the case $\vsg=-$ being similar. Using Lemma \ref{exactMformula} we see that $\big( M_{\bs{\sg}_{N-1}}^{\, +} \big)_{ j+\sigma_j, j } \geq \ln| 2 \frac{\Delta_j}{\delta_{j+\sigma_j}}+1|$. In the case $\ups_j = +$, we may then immediately bound $\ln| 2 \frac{\Delta_j}{\delta_{j+\sigma_j}}+1| \geq \ln| \frac{\Delta_j}{\delta_{j+\sigma_j}}|$ which proves the claim. Hence let us assume $\ups_j = -$. Following the same lower bound, we find
$$\big( M_{\bs{\sg}_{N-1}}^{\, +} \big)_{ j+\sigma_j, j } \geq \ln|\Delta_j + \delta_{j+\sigma_j} | - \ln|\delta_{j+\sigma_j}| \, .$$
In particular we have
$$\Delta_j + \delta_{j+\sigma_j} = \begin{cases}
\lambda_{j+1}^+ - \lambda_j^- & \sigma_j = 1\\
\lambda_{j+1}^- - \lambda_j^+ & \sigma_j = 0
\end{cases} \geq \begin{cases}
\lambda_{j+1}^+ - \zeta_j & \sigma_j = 1\\
\zeta_j - \lambda_j^+ & \sigma_j = 0
\end{cases}$$
where $ \zeta_j $ is the unique zero of $P^\prime$ that lies in between the two consecutive roots $\lambda_j^- $ and $ \lambda_{j+1}^-$. By Lemma \ref{Lemme espacement racine lambda pm et zeta j} we see that
$$\lambda_{j+1}^+ - \lambda_j^+ = |\lambda_{j+1}^+ - \zeta_j| + |\lambda_{j}^+ - \zeta_j| \leq (N+1) \min\{ |\lambda_{j+1}^+ - \zeta_j|  , |\lambda_{j}^+ - \zeta_j| \}$$
which completes the proof of the lower bound. The upper bound proceeds similarly since Lemma \ref{exactMformula} implies  that $\big( M_{\bs{\sg}_{N-1}}^{\, +} \big)_{ j+\sigma_j, j } \leq \ln| 2 \frac{\Delta_j}{\delta_{j+\sigma_j}}+3|$. If $\nu_j = -$, then clearly $$\Delta_j + \delta_{j+\sigma_j} \leq \lambda_{j+1}^+ - \lambda_j^+ \leq (1+|\lambda_j^+|) (1+|\lambda_{j+1}^+|) \, .$$
If $\nu_j = +$ then we may use the fact that for $N$ sufficiently large $\delta_{j+\sigma_j} \leq 1$ (by Proposition \ref{kappeler}). Hence $\Delta_j + \delta_{j+\sigma_j} \leq |\lambda_{j+1}^+ - \lambda_{j}^+|+1 \leq (1+|\lambda_j^+|) (1+|\lambda_{j+1}^+|)$.
\end{proof}

 There is a convenient way to parameterise an element $ \bs{\sg}_{N-1} \in \{ 0, 1\}^{N-1}$ in terms of "up/down" steps.

\begin{defin}[Level change, step up, step down] Let $\bs{\sg}_{N-1} \in \{ 0,1\}^{N-1}$.  One says that $\sigma_j$ with $j \in  \intn{1}{N-2}$
is a "level change" if $\sigma_j \neq \sigma_{j+1}$. Level changes come in two forms:
\begin{itemize}
\item "steps up" corresponding to $(\sigma_j, \sigma_{j+1}) = (0,1)$
\item "steps down" corresponding to $(\sigma_j, \sigma_{j+1}) = (1,0)$.
\end{itemize}
There is a natural identification $\bs{\sg}_{N-1} \mapsto \big(\sigma_1, X(\bs{\sg}_{N-1}) \big)$
where $X(\bs{\sg}_{N-1}) \subset \intn{1}{N-2}$ is the set of level changes.  That is, $\sigma_1$ is the first element in the sequence and
$$X(\bs{\sg}_{N-1}) \,  = \, \big\{ j \in \intn{1}{N-2} \, : \,  \sigma_j \neq \sigma_{j+1} \big\} \, .$$
Note that one has the partitioning $X(\bs{\sg}_{N-1}) \, = \,  X_\downarrow(\bs{\sg}_{N-1}) \cup X_{\uparrow}(\bs{\sg}_{N-1})$ where
$X_\downarrow(\bs{\sg}_{N-1})$ is the set of down steps and $X_{\uparrow}(\bs{\sg}_{N-1})$ is the set of up steps of $\bs{\sg}_{N-1}$.
\end{defin}

\begin{prop}
\label{Proposition upper bound on S via A et B mathfrak}

Recall $\delta_j = |\lambda_j^+ - \lambda_j^-|$. Then,  for every $\gamma>0$, $\mc{S}^{\vsg}$ introduced in \eqref{Sdef} can be bounded from above by
\bem
\mc{S}^{\vsg} \, \leq \,  \frac{1}{\gamma \sqrt{N} } \prod_{j=1}^{N}   \Big\{  \ln |\delta_{j}|^{-1} \vee \gamma N \Big\}
 \hspace{-3mm} \sum_{ \bs{\sg}_{N-1} \in \{ 0, 1\}^{N-1} } \hspace{-3mm}  (\gamma N)^{- \frac{1}{2}|X_\downarrow( \bs{\sg}_{N-1} )|}  \\
\times \prod_{\substack{j=1 \\ j \not\in X_\downarrow( \bs{\sg}_{N-1} )}}^{N-1}  \hspace{-3mm}
\Big\{ \mathfrak{A}_j^{\vsg}(\bs{\sg}_{N-1}) \Big\}^{\f{1}{2}}
 \prod_{\substack{j=1 \\ j \in X_\downarrow( \bs{\sg}_{N-1} )}}^{N-1}  \hspace{-3mm}
\Big\{ \frac{ \mathfrak{B}_j^{\vsg}(\bs{\sg}_{N-1}) }{\gamma N }    \Big\}^{\f{1}{2}}
\label{Sbound}
\end{multline}
where
\beq
 \mathfrak{A}_j^{\vsg}(\bs{\sg}_{N-1})  \, = \,
 \left( 1 + \frac{\ln 3}{\gamma N }  + \frac{\ln(|\lambda_{j+1}^{\vsg} | + 1)+\ln(|\lambda_{j}^{\vsg}| + 1)}{\gamma N }  \right)^2
+ \sum_{\substack{i=1 \\ i \neq j+\sigma_j}}^N \frac{ \big|\big( M_{\bs{\sg}_{N-1}}^{\, \vsg} \big)_{ij} \big|^2}{\gamma^2 N^2}
\label{definition mf A j vsg de sigma N-1}
\enq
and
\beq
 \mathfrak{B}_j^{\vsg}(\bs{\sg}_{N-1}) \, = \, \sum_{i=1}^N \big| \big( R_{\bs{\sg}_{N-1}}^{\, \vsg} \big)_{ij}\big|^2 \;.
\enq
 The matrix $\op{R}_{\bs{\sg}_{N-1}}^{\, \vsg} $ appearing above is defined as
\beq
\big( R_{\bs{\sg}_{N-1}}^{\, \vsg} \big)_{ij} \overset{\mathrm{def}}{=}
\big( M_{\bs{\sg}_{N-1}}^{\, \vsg} \big)_{ij} - \mathbbm{1}_{j \in X_\downarrow(\bs{\sg}_{N-1})} \big( M_{\bs{\sg}_{N-1}}^{\, \vsg} \big)_{ij+1} \, .
\label{definition matrice R}
\enq

\end{prop}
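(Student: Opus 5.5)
The plan is to bound each summand $|\det_N[\op{M}^{\vsg}_{\bs{\sg}_{N-1}}]|$ separately by Hadamard's inequality, after a column operation adapted to $\bs{\sg}_{N-1}$, and then sum over $\bs{\sg}_{N-1}$. By Lemma \ref{exactMformula} and Corollary \ref{matrixelementbound2}, the only potentially large entry of column $j\le N-1$ is the nonnegative entry in row $j+\sigma_j$; it is of size $\ln|\delta_{j+\sigma_j}|^{-1}$. The difficulty is that two columns can carry their large entry in the same row. This happens exactly when $j+\sigma_j=(j+1)+\sigma_{j+1}$, i.e. $(\sigma_j,\sigma_{j+1})=(1,0)$, i.e. $j\in X_\downarrow(\bs{\sg}_{N-1})$. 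In that case the two columns are nearly parallel, and a naive Hadamard bound would overcount by a factor $\ln|\delta_{j+1}|^{-1}$.

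\textbf{Step 1 (column operations).} For every $j\in X_\downarrow(\bs{\sg}_{N-1})$, replace column $j$ by column $j$ minus column $j+1$; this produces exactly the matrix $\op{R}^{\vsg}_{\bs{\sg}_{N-1}}$ of \eqref{definition matrice R}. If $j$ is a down step then $\sigma_{j+1}=0$, so $j+1$ is not a down step, column $j+1$ is never modified, and the operations commute. Hence $\det_N[\op{M}^{\vsg}_{\bs{\sg}_{N-1}}]=\det_N[\op{R}^{\vsg}_{\bs{\sg}_{N-1}}]$.

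\textbf{Step 2 (Hadamard's inequality).} Apply Hadamard to the columns of $\op{R}$. The last column consists of ones and has norm $\sqrt N$. For $j\in X_\downarrow$ the column norm is $\sqrt{\mathfrak{B}^{\vsg}_j}$, which we write as $\gamma N\,(\gamma N)^{-1/2}\{\mathfrak{B}_j^{\vsg}/(\gamma N)\}^{1/2}$.

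For $j\notin X_\downarrow$, set $L_k:=\ln|\delta_k|^{-1}\vee\gamma N$. The upper bound of Corollary \ref{matrixelementbound2}, together with positivity of the entry and $L_k\ge\gamma N$, gives
$$0\le (M^{\vsg}_{\bs{\sg}_{N-1}})_{j+\sigma_j,j}\le L_{j+\sigma_j}\Big(1+\tfrac{\ln3+\ln(|\lambda^{\vsg}_j|+1)+\ln(|\lambda^{\vsg}_{j+1}|+1)}{\gamma N}\Big).$$
Every other entry of the column is bounded using $L_{j+\sigma_j}^2\ge\gamma^2N^2$. Together these give a column norm of at most $L_{j+\sigma_j}\{\mathfrak{A}^{\vsg}_j(\bs{\sg}_{N-1})\}^{1/2}$.

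\textbf{Step 3 (combinatorics of the indices).} By the observation in the first paragraph, the map $j\mapsto j+\sigma_j$ is injective on the $N-1-|X_\downarrow|$ non-down columns, with values in $\intn{1}{N}$. Exactly $1+|X_\downarrow|$ indices $k\in\intn{1}{N}$ are therefore left unused, and each of them satisfies $L_k\ge\gamma N$.

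Assign the factor $\gamma N$ coming from each down column to a distinct unused index, bounding it above by the corresponding $L_k$. Exactly one unused index remains; inserting $1\le L_k/(\gamma N)$ for it completes the full product $\prod_{k=1}^N L_k$ at the cost of a factor $(\gamma N)^{-1}$. Combined with $\sqrt N$ from the last column, this yields the prefactor $\frac{1}{\gamma\sqrt N}$, and the leftover factors are $(\gamma N)^{-\frac12|X_\downarrow|}$ and $\prod\{\mathfrak{B}_j/\gamma N\}^{1/2}$. Summing over $\bs{\sg}_{N-1}$ gives \eqref{Sbound}.

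The main obstacle is Step 3: checking that the large diagonal-type entries can be matched injectively to the factors $L_k$ once the down-step collisions have been removed, so that no $\ln|\delta|^{-1}$ appears twice. The rest is routine bookkeeping.
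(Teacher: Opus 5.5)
Your proposal is correct and is essentially the paper's own proof. It uses the same column operation on the down steps to produce $\op{R}^{\vsg}_{\bs{\sg}_{N-1}}$, and Hadamard's inequality with Corollary~\ref{matrixelementbound2} bounding the row-$(j+\sigma_j)$ entry of each non-down column; the injectivity of $j\mapsto j+\sigma_j$ off $X_\downarrow(\bs{\sg}_{N-1})$ then recovers $\prod_{k=1}^N\{\ln|\delta_k|^{-1}\vee\gamma N\}$ at a cost of $(\gamma N)^{-1-|X_\downarrow|}$, and your reassignment of the down columns' $\gamma N$ factors to unused indices is just a rephrasing of that final inequality.
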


\begin{proof}

Starting from the representation \eqref{Sdef},
one performs the following  column operations on $\op{M}_{\bs{\sg}_{N-1}}^{\, \vsg}$ that leave the determinant invariant.
For each  down step $j \in X_{\downarrow}(\bs{\sg}_{N-1})$, we replace the $j^{\e{th}}$ column with the $j^{\e{th}}$
minus the $(j+1)^{\e{th}}$ column. This operation is independent of the order in which rows are subtracted since $j$ being a step down
prevents  $j+1$ from being a step down and leads to the replacement

\begin{align}
\mc{S}^{\vsg}\overset{\mathrm{def}}{=} \hspace{-5mm}
\sul{ \bs{\sg}_{N-1} \in \{ 0, 1\}^{N-1} }{} \hspace{-4mm} \big| \det_N\big[ \op{R}_{ \bs{\sg}_{  N-1} }^{\, \vsg} \big]  \big|
\end{align}
with $\op{R}_{ \bs{\sg}_{  N-1} }^{\, \vsg}$ as introduced in  \eqref{definition matrice R}.
At this stage, one applies Hadamard's inequality to find
\beqa
  \mc{S}^{\vsg}  & \leq & \sqrt{N} \hspace{-5mm} \sul{ \bs{\sg}_{ N-1}  \in \{ 0, 1\}^{N-1} }{}
  \pl{j=1}{N-1} \bigg\{\sum_{i=1}^N \big| \big( R_{ \bs{\sg}_{  N-1} }^{\, \vsg} \big)_{ij} \big|^2 \bigg\}^{\f{1}{2} }  \\
& =&  \sqrt{N} \hspace{-5mm}  \sul{ \bs{\sg}_{ N-1}  \in \{ 0, 1\}^{N-1} }{}\hspace{-5mm}
\prod_{\substack{j=1 \\ j \not\in X_\downarrow(\bs{\sg}_{  N-1}) }}^{N-1} \hspace{-5mm}
            \bigg\{\sum_{i=1}^N \big| \big( R_{ \bs{\sg}_{  N-1} }^{\, \vsg} \big)_{ij} \big|^2 \bigg\}^{\f{1}{2} }
\hspace{-5mm}  \prod_{\substack{j=1 \\ j \in X_\downarrow(\bs{\sg}_{  N-1}) }}^{N-2}\hspace{-5mm}
                    \bigg\{\sum_{i=1}^N \big| \big( R_{ \bs{\sg}_{  N-1} }^{\, \vsg} \big)_{ij} \big|^2 \bigg\}^{\f{1}{2} }  \\
&=   & \sqrt{N} \hspace{-5mm}   \sul{ \bs{\sg}_{ N-1}  \in \{ 0, 1\}^{N-1} }{} \hspace{-5mm}
\prod_{\substack{j=1 \\ j \not\in X_\downarrow(\bs{\sg}_{  N-1}) }}^{N-1}  \hspace{-5mm}
\bigg\{  \big| \big( M_{ \bs{\sg}_{  N-1} }^{\, \vsg} \big)_{j+\sg_j \, j} \big|^2 +
\sum_{\substack{i=1 \\ i \neq j+\sigma_j}}^N \big| \big( M_{ \bs{\sg}_{  N-1} }^{\, \vsg} \big)_{ij} \big|^2 \bigg\}  \\
&& \times \hspace{-5mm}  \prod_{\substack{j=1 \\ j \in X_\downarrow(\bs{\sg}_{  N-1}) } }^{N-2}\hspace{-5mm}
\bigg\{\sum_{i=1}^N \big| \big( R_{ \bs{\sg}_{  N-1} }^{\, \vsg} \big)_{ij} \big|^2 \bigg\}^{\f{1}{2} }    \;.
\eeqa
Note that $\sqrt{N}$ comes from the contribution of the $N^{\e{th}}$ column to the Hadamard bound.
Fix $\gamma  > 0$. By Corollary \ref{matrixelementbound2}, one has the upper bound
$$ \big| \big( M_{ \bs{\sg}_{  N-1} }^{\, \vsg} \big)_{j+\sg_j \, j} \big|  \, \leq  \,
\ln 3 + \big\{  \ln |\delta_{j+\sigma_j}|^{-1} \vee \gamma N \big\} + \ln(| \lambda_j^{\vsg} | + 1) + \ln(|\lambda_{j+1}^{\vsg} | + 1) $$
leading to
$$ \big| \big( M_{ \bs{\sg}_{  N-1} }^{\, \vsg} \big)_{j+\sg_j \, j} \big|^2  \, \leq  \,
 \big\{  \ln |\delta_{j+\sigma_j}|^{-1}  \vee \gamma N \big\}^2
\cdot \bigg[ 1 + \frac{\ln 3 + \ln(| \lambda_j^{\vsg}| + 1) + \ln(|\lambda_{j+1}^{\vsg} | + 1)}{\gamma N }  \bigg]^2  \, .$$
The latter then yields
$$
\mc{S}^{\vsg}   \leq   \sqrt{N} \hspace{-5mm}   \sul{ \bs{\sg}_{ N-1}  \in \{ 0, 1\}^{N-1} }{} \hspace{-5mm}
\prod_{\substack{j=1 \\ j \not\in X_\downarrow(\bs{\sg}_{  N-1})  }}^{N-1}
\hspace{-6mm}  \big\{  \ln |\delta_{j+\sigma_j}|^{-1}  \vee \gamma N \big\}   \times
\pl{ \substack{j=1 \\ j \not\in X_\downarrow(\bs{\sg}_{  N-1}) }}{N-1} \hspace{-6mm}  \Big\{\mathfrak{A}_j^{\vsg}(\bs{\sg}_{N-1})
\Big\}^{\f{1}{2}}  \prod_{\substack{j=1 \\ j \in X_\downarrow(\bs{\sg}_{  N-1}) } }^{N-2}\hspace{-5mm}
\Big\{\mathfrak{B}_j^{\vsg}(\bs{\sg}_{N-1})  \Big\}^{\f{1}{2} }    \;.
$$

Now observe that if $j,k \not\in X_\downarrow(\bs{\sg}_{  N-1})$ and $j \neq k$, then one also has $j+\sigma_j \neq k+\sigma_k$.
Indeed, one may assume that $j < k$ without loss of generality. Then if $j+\sigma_j = k+\sigma_k$, the only way this can happen is if $k = j+1$, $\sigma_j = 1$ and $\sigma_{j+1}  = 0$. But then $j$ would be step down, contradicting the assumption.
This property entails
$$\pl{ \substack{j=1 \\ j \not\in X_\downarrow(\bs{\sg}_{  N-1})} }{N-1} \big\{ \ln |\delta_{j+\sigma_j}|^{-1} \vee \gamma N \big\}
\, \leq \,  (\gamma N)^{-1-|X_\downarrow(\bs{\sg}_{  N-1})|} \pl{j=1}{N} \big\{  \ln |\delta_{j}|^{-1} \vee  \gamma N \big\}  \, ,  $$
thus providing one with the last bound necessary to conclude.
\end{proof}




We will now obtain upper bounds, first for $\mathfrak{A}_j^{\vsg}(\bs{\sg}_{N-1})$  and then $ \mathfrak{B}_j^{\vsg}(\bs{\sg}_{N-1})$.
This requires us to obtain a few auxiliary estimates on the matrix entries  $\big( M_{ \bs{\sg}_{  N-1} }^{\, \vsg} \big)_{i j}$.
Let us denote, for short,
\beq
\big( M_{ \bs{\sg}_{  N-1} }^{\, \vsg} \big)_{i j} \Big|_{ \mid \sg_j = a}\,  = \, \big( \mf{m}_{a}^{\vsg}\big)_{ij} \;,
\quad \e{with} \quad a \in \{0,1 \} \, .
\label{definition element de matrice mathfrak m a ij}
\enq
Thus, more explicitly, it holds for $i \neq j$
\begin{align*}
\big( \mf{m}_{0}^{\vsg}\big)_{ij}  &:= \Int{\mathcal{K}_j^0}{}
\frac{\mu -\lambda_j^{\vsg}}{\mu - \lambda_i^{\vsg}} \frac{1}{\sqrt{(\mu - \lambda_j^+)(\mu -\lambda_j^-)}} \, \mathrm{d}\mu
\end{align*}
while, for $i \neq j+1$,
\begin{align*}
\big( \mf{m}_{1}^{\vsg}\big)_{ij} &:=
\Int{\mathcal{K}_j^1}{} \frac{\mu -\lambda_{j+1}^{\vsg}}{\mu - \lambda_i^{\vsg}} \frac{1}{\sqrt{(\mu - \lambda_{j+1}^+)(\mu -\lambda_{j+1}^-)}}
\, \mathrm{d}\mu \, .
\end{align*}

\begin{prop}[Bounds on the matrix elements]\label{matrixelementbounds}
The following bounds hold,
\begin{align*}
\big| \big( \mf{m}_{0}^{\vsg}\big)_{ij} \big|  &\leq
\left\{  \ba{cc} \ln|\lambda_{j+1}^{\vsg} - \lambda_i^{\vsg}|  - \ln|\lambda_j^{\vsg} - \lambda_i^{\vsg}|  + \pi &  j > i   \vspace{2mm}  \\
\ln 2 + \mathbbm{1}_{\{ \ups_j = -\vsg \}}
\sqrt{ \Big| \tfrac{   \lambda_j^{-\vsg} - \lambda_j^{\vsg}  }{ \lambda_i^{\vsg} \, - \lambda_j^{-\vsg} }  \Big| }
                        \ln\frac{\sqrt{2}+1}{\sqrt{2}-1}   &  j < i   \ea \right.  \vspace{2mm} \\
\big| \big( \mf{m}_{1}^{\vsg}\big)_{ij} \big| &\leq
\left\{  \ba{cc} \ln|\lambda_{i}^{\vsg} - \lambda_j^{\vsg}|  - \ln|\lambda_i^{\vsg} - \lambda_{j+1}^{\vsg}|  + \pi &  i >j+1   \vspace{2mm}\\
\ln 2  + \mathbbm{1}_{\{ \ups_j = -\vsg \}} \sqrt{ \Big| \tfrac{  \lambda_{j+1}^{\vsg} - \lambda_{j+1}^{-\vsg}   }
            {\lambda_{j+1}^{-\vsg} - \lambda_{i}^{\vsg} }  \Big| }
                                                            \ln \frac{\sqrt{2}+1}{\sqrt{2}-1} &  i < j+1 \, . \ea \right.
\end{align*}

\end{prop}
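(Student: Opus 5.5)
The plan is a direct estimate of the integrals. The first step rewrites the part of the integrand that does not involve $\lambda_i^{\vsg}$. For $\big( \mf{m}_{0}^{\vsg}\big)_{ij}$, on $\mc{K}_j^0$ both factors $\mu-\lambda_j^{\pm}$ have the same sign, so
\begin{equation*}
\frac{\mu-\lambda_j^{\vsg}}{\sqrt{(\mu-\lambda_j^+)(\mu-\lambda_j^-)}} \, = \, \pm\sqrt{\frac{\mu-\lambda_j^{\vsg}}{\mu-\lambda_j^{-\vsg}}} \;.
\end{equation*}
The endpoint of $\mc{K}_j^0$ is $a=\lambda_j^{\ups_j}$. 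If $\ups_j=\vsg$, then $\lambda_j^{-\vsg}$ lies outside the integration interval, beyond $\lambda_j^{\vsg}$. The ratio under the root is then in $[0;1]$, and this factor is bounded by $1$. If $\ups_j=-\vsg$, then $a=\lambda_j^{-\vsg}$. Setting $t=\mu-a$ and $\delta_j=|\lambda_j^+-\lambda_j^-|$, the factor equals $\sqrt{1+\delta_j/t}\le 1+\sqrt{\delta_j/t}$. In both cases
\begin{equation*}
\big|\big( \mf{m}_{0}^{\vsg}\big)_{ij}\big| \, \leq \, \Int{\mc{K}_j^0}{}\frac{\dd\mu}{|\mu-\lambda_i^{\vsg}|} \, + \, \mathbbm{1}_{\{\ups_j=-\vsg\}}\Int{\mc{K}_j^0}{}\sqrt{\frac{\delta_j}{\mu-a}}\,\frac{\dd\mu}{|\mu-\lambda_i^{\vsg}|} \, ,
\end{equation*}
and I would bound the two terms separately.

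\emph{Case $j>i$.} Here $\lambda_i^{\vsg}$ lies to the left of the interval, by the interlacing \eqref{interlacing}. The first integral is at most $\ln|\lambda_{j+1}^{\vsg}-\lambda_i^{\vsg}|-\ln|\lambda_j^{\vsg}-\lambda_i^{\vsg}|$. To get this, I would check that $\mc{K}_j^0\subset[\lambda_j^{\vsg};\lambda_{j+1}^{\vsg}]$ when $\ups_j=\vsg$. When $\ups_j=-\vsg$, I would check that the enlargement coming from the endpoints $\lambda^{-\vsg}$ only decreases the logarithm. This uses that the left endpoint is $\ge\lambda_j^{\vsg}$, and that the midpoint of $[\lambda_j^{\ups_j};\lambda_{j+1}^{\ups_j}]$ stays below $\lambda_{j+1}^{\vsg}$ by the ordering \eqref{interlacing}.

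For the second integral, set $c=a-\lambda_i^{\vsg}$ and extend the range of $t$ to $[0;+\infty)$. This gives
\begin{equation*}
\sqrt{\delta_j}\Int{0}{+\infty}\frac{\dd t}{\sqrt t\,(t+c)} \, = \, \pi\sqrt{\delta_j/c} \;.
\end{equation*}
Interlacing forces $\lambda_i^{\vsg}\le\min(\lambda_j^+,\lambda_j^-)$ for $i<j$, so $c\ge\delta_j$ and the term is at most $\pi$. Thus the factor $\sqrt{\delta_j/c}$ is simply dropped here, which is why the stated bound for $j>i$ carries a plain $\pi$.

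\emph{Case $j<i$.} Now $\lambda_i^{\vsg}\ge\lambda_{j+1}^{\ups_j}$ lies to the right, and $\mu$ ranges only over the left half of the interval. For $\mu\in\mc{K}_j^0$ this gives $\lambda_i^{\vsg}-\mu\ge\mu-a$, and the right end of $\mc{K}_j^0$ is at distance at least $\tfrac12(\lambda_i^{\vsg}-a)$ from $\lambda_i^{\vsg}$. The first integral is therefore at most $\ln\frac{\lambda_i^{\vsg}-a}{\lambda_i^{\vsg}-\mathrm{mid}}\le\ln 2$. For the second, set $c=\lambda_i^{\vsg}-a$ and substitute $t=cs$ with $s\in[0;\tfrac12]$. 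This yields
\begin{equation*}
\sqrt{\delta_j/c}\Int{0}{1/2}\frac{\dd s}{\sqrt s\,(1-s)} \, = \, \sqrt{\delta_j/c}\,\ln\frac{\sqrt2+1}{\sqrt2-1}
\end{equation*}
via $s=u^2$. Since $c=|\lambda_i^{\vsg}-\lambda_j^{-\vsg}|$, this is exactly the stated bound.

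The bounds on $\mf{m}_1^{\vsg}$ follow from the same computation after the reflection $\mu\mapsto-\mu$. This exchanges the roles of $\lambda_j$ and $\lambda_{j+1}$, turns $\mc{K}_j^1$ into a left half-interval, and turns $i>j+1$ into the "far side" case. The main obstacle is not the integrals but the bookkeeping of the relative positions of $\lambda^{\pm}_j$, $\lambda^{\pm}_{j+1}$ and $\lambda_i^{\vsg}$ in the two parity cases of $\ups_j$. It is these relative positions that justify $c\ge\delta_j$ and the replacement of the $\ups_j$-endpoints by $\vsg$-endpoints inside the logarithms. I would handle this by a case check against \eqref{interlacing}.
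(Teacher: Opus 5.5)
Your proposal is correct and follows essentially the same route as the paper. Both write the non-$\lambda_i^{\vsg}$ factor as a square root bounded by $1$ or by $1+\sqrt{\delta_j/(\mu-a)}$, and both bound the resulting logarithmic integral and square-root integral using the interlacing of the roots; your extension of the range to $[0;+\infty)$ gives the same $\pi$ as the paper's $\arctan$ bound, and your reflection $\mu\mapsto-\mu$ for $\mf{m}_1^{\vsg}$ just packages the case analysis the paper writes out explicitly.
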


\begin{proof}

\vspace{1mm}

  $\bullet${\it Case  $\sg_j=0$ and $\ups_j = \vsg$.}

One has
\beq
 \big( \mf{m}_{0}^{\vsg}\big)_{ij}  = \hspace{-2mm} \Int{\lambda_j^{\vsg} }{\lambda_j^{\vsg} + \frac{1}{2}\Delta_j}\hspace{-2mm}
 \frac{\mu -\lambda_j^{\vsg} }{\mu - \lambda_i^{\vsg} } \frac{1}{\sqrt{(\mu - \lambda_j^+)(\mu -\lambda_j^-)}} \, \mathrm{d}\mu
\nonumber
\enq
where $\Delta_j = \lambda_{j+1}^{\vsg} - \lambda_j^{\vsg}$. Then, with $i\not=j$,
\beq
\big|  \big( \mf{m}_{0}^{\vsg}\big)_{ij}  \big| \, \leq \,
\bigg|   \Int{\lambda_j^{\vsg} }{\lambda_j^{\vsg} + \frac{1}{2}\Delta_j}\hspace{-2mm}  \frac{  \mathrm{d}\mu  }{ \mu - \lambda_i^{\vsg} }  \,  \bigg|
\, =  \,
\Big|  \ln|\lambda_j^{\vsg}  - \lambda_i^{\vsg}  + \frac{1}{2}\Delta_j| - \ln|\lambda_j^{\vsg}  - \lambda_i^{\vsg}  | \Big|  \, .
\nonumber
\enq
In the case $j > i$ we can bound
$\lambda_j^{\vsg}  - \lambda_i^{\vsg}  +  \frac{1}{2}\Delta_j  \, \leq  \, \lambda_{j+1}^{\vsg} - \lambda_i^{\vsg}$.
Conversely if $i > j$ then $\lambda_i^{\vsg} - \lambda_j^{\vsg} - \frac{1}{2}\Delta_j \, \geq \, \frac{1}{2}(\lambda_i^{\vsg} - \lambda_j^{\vsg})$.
This covers the case of interest.

\vspace{2mm}
  $\bullet${\it Case $\sg_j=0$ and $\ups_j = -\vsg$.}
\vspace{2mm}

Now, one has
\begin{align*}
 \big( \mf{m}_{0}^{\vsg}\big)_{ij}    &=
 \Int{\lambda_j^{-\vsg} }{\lambda_j^{-\vsg} + \frac{1}{2}\Delta_j}\hspace{-2mm}  \frac{\mu -\lambda_j^{\vsg} }{\mu - \lambda_i^{\vsg}}
\frac{1}{\sqrt{(\mu - \lambda_j^+)(\mu -\lambda_j^-)}} \, \mathrm{d}\mu
\end{align*}
where $\Delta_j = \lambda_{j+1}^{-\vsg}   - \lambda_{j}^{-\vsg} $.
Then
\begin{align*}
\big|  \big( \mf{m}_{0}^{\vsg}\big)_{ij}  \big|  &=
\bigg| \hspace{-2mm}   \Int{\lambda_j^{-\vsg} }{\lambda_j^{-\vsg} + \frac{1}{2}\Delta_j}\hspace{-2mm}
\frac{1}{ \mu - \lambda_i^{\vsg} } \Big\{ 1 + \tfrac{\lambda_j^{-\vsg} - \lambda_j^{\vsg} }{\mu -\lambda_j^{-\vsg}} \Big\}^{\f{1}{2}}
\, \mathrm{d}\mu  \bigg|
\leq
\bigg| \hspace{-2mm} \Int{\lambda_j^{-\vsg} }{\lambda_j^{-\vsg} + \frac{1}{2}\Delta_j}\hspace{-4mm}
\frac{1}{\mu - \lambda_i^{\vsg}} \bigg( 1 + \sqrt{\tfrac{\lambda_j^{-\vsg} - \lambda_j^{\vsg} } {\mu -\lambda_j^{-\vsg}} } \, \bigg)
\, \mathrm{d}\mu  \bigg|  \\
&= \bigg| \,
\ln \Big| \tfrac{ \lambda_j^{-\vsg} - \lambda_i^{\vsg} + \frac{1}{2}\Delta_j }{ \lambda_j^{-\vsg} - \lambda_i^{\vsg} } \Big| \,  + \,
2\sqrt{\lambda_j^{-\vsg} - \lambda_j^{\vsg}}
\Int{0}{\sqrt{ \tf{\Delta_j}{2}}} \hspace{-2mm} \frac{1}{\mu^2 - (\lambda_i^{\vsg} - \lambda_j^{-\vsg}) }  \, \mathrm{d}\mu \bigg| \, .
\end{align*}
First, consider the case $j > i$. Then one bounds the log contribution by using that
$$\lambda_j^{-\vsg} - \lambda_i^{\vsg} \geq \lambda_j^{\vsg} - \lambda_i^{\vsg}  \qquad \e{and} \qquad
\lambda_{j+1}^{-\vsg}  - \lambda_i^{\vsg} \leq \lambda_{j+1}^{\vsg} - \lambda_i^{\vsg}\, , $$
so that
$$
\ln \bigg| \frac{\lambda_j^{-\vsg} - \lambda_i^{\vsg} + \frac{1}{2}\Delta_j}{\lambda_j^{-\vsg} - \lambda_i^{\vsg}} \bigg|  \, \leq \,
\ln \bigg| \frac{\lambda_j^{-\vsg} - \lambda_i^{\vsg} + \Delta_j}{\lambda_j^{-\vsg} - \lambda_i^{\vsg}} \bigg|
\leq
\ln|\lambda_{j+1}^{\vsg} - \lambda_i^{\vsg}|  - \ln|\lambda_j^{\vsg} - \lambda_i^{\vsg}| \, .$$
The integral term gives
\begin{align*}
2 \sqrt{ \lambda_j^{-\vsg} - \lambda_j^{\vsg} } \Int{ 0 }{ \sqrt{ \tf{\Delta_j}{2} } }
\frac{1}{ \mu^2 + \lambda_j^{-\vsg} - \lambda_i^{\vsg}  }  \, \mathrm{d}\mu  \, = \,
2 \underbrace{  \sqrt{ \tfrac{  \lambda_j^{-\vsg} - \lambda_j^{\vsg} }{  \lambda_j^{-\vsg} - \lambda_i^{\vsg} } } }_{ \leq 1 }
\underbrace{ \Int{0}{ \sqrt{  \tf{\Delta_j}{ [2(\lambda_j^{-\vsg} - \lambda_i^{\vsg})]} }} \hspace{-4mm} \frac{ \mathrm{d}\mu }{\mu^2 + 1 }
   }_{ \leq \frac{\pi }{2} } \leq \pi \, .
\end{align*}
Now, consider the case $i > j$. Then, as before, one can bound $\lambda_i^{\vsg} - \lambda_j^{-\vsg} \geq \Delta_j$, thus ensuring that
$$\ln\bigg| \frac{\lambda_i^{\vsg} -\lambda_j^{-\vsg} }{ \lambda_i^{\vsg} -\lambda_j^{-\vsg} - \frac{1}{2}\Delta_j} \bigg| \leq \ln 2 \, .$$
Finally, the integral term goes as
\begin{align*}
2\sqrt{ \frac{\lambda_j^{-\vsg} - \lambda_j^{\vsg} }{  \lambda_i^{\vsg} - \lambda_j^{-\vsg} }  } \hspace{6mm}
\bigg|  \hspace{-8mm} \Int{0}{ \sqrt{ \tf{\Delta_j }{ [2(  \lambda_i^{\vsg} - \lambda_j^{-\vsg})] }  } }   \hspace{-8mm}
\frac{ \mathrm{d}\mu  }{ \mu^2 - 1 }  \bigg|
\leq 2\sqrt{ \frac{\lambda_j^{-\vsg} - \lambda_j^{\vsg} }{  \lambda_i^{\vsg} - \lambda_j^{-\vsg} }  } \,
\bigg|  \Int{0}{ \frac{1}{\sqrt{2}} }  \frac{   \mathrm{d}\mu  }{ \mu^2 - 1 }   \bigg|
= \sqrt{ \frac{\lambda_j^{-\vsg} - \lambda_j^{\vsg} }{  \lambda_i^{\vsg} - \lambda_j^{-\vsg} }  }  \, \ln\frac{\sqrt{2}+1}{\sqrt{2}-1}
\end{align*}

\vspace{2mm}
  $\bullet$ {\it Case $\sg_j=1$ and $\ups_j = \vsg$.}
\vspace{2mm}
One has, by a similar argument,
\begin{align*}
\big|  \big( \mf{m}_{1}^{\vsg}\big)_{ij}  \big|  \leq
\bigg| \Int{ \lambda^{\vsg}_{j+1} - \frac{\Delta_j}{2} }{ \lambda^{\vsg}_{j+1} }
\frac{\mathrm{d}\mu  }{\mu - \lambda_i^{\vsg} } \bigg|
= \Bigg|  \ln \bigg| \frac{ \lambda_{j+1}^{\vsg} - \lambda_i^{\vsg} }
                { \lambda_{j+1}^{\vsg} - \lambda_i^{\vsg} - \frac{1}{2}\Delta_j } \bigg|\, \Bigg| \, .
\end{align*}
When $j+1 > i$, one can bound $\lambda_{j+1}^{\vsg}  - \lambda_i^{\vsg} \,  \geq \,  \Delta_j$ which entails that
$$\big|  \big( \mf{m}_{1}^{\vsg}\big)_{ij}  \big|  \leq \ln 2 \, .$$

\noindent Finally, if $i > j+1$, one bounds
$$ \big|  \big( \mf{m}_{1}^{\vsg}\big)_{ij}  \big|  \, \leq  \,
\ln|\lambda_i^{\vsg} - \lambda_j^{\vsg} | - \ln| \lambda_i^{\vsg} - \lambda_{j+1}^{\vsg} |  \, .$$

\vspace{2mm}
  $\bullet${\it Case  $\sg_j=1$ and $\ups_j = -\vsg$.}
\vspace{2mm}
To start with, it holds
\begin{align*}
 \big|  \big( \mf{m}_{1}^{\vsg}\big)_{ij}  \big|  \,  \leq \,
 \Bigg| \ln \bigg| \frac{\lambda_{j+1}^{-\vsg} - \lambda_i^{\vsg} }{ \lambda_{j+1}^{-\vsg} - \lambda_i^{\vsg} - \frac{1}{2}\Delta_j} \bigg|
- 2\sqrt{\lambda_{j+1}^{\vsg} - \lambda_{j+1}^{-\vsg}}  \hspace{-3mm}
\Int{0}{ \sqrt{  \tf{\Delta_j}{2} }  }  \hspace{-3mm} \frac{ \mathrm{d}\mu }{\mu^2 + \lambda_i^{\vsg} - \lambda_{j+1}^{-\vsg}}   \, \Bigg| \, .
\end{align*}
If $i > j+1$, then in the same manner as before
$$  \big|  \big( \mf{m}_{1}^{\vsg}\big)_{ij}  \big| \leq \ln|\lambda_i^{\vsg} - \lambda_j^{\vsg} |
- \ln|\lambda_i^{\vsg} - \lambda_{j+1}^{\vsg}|  +  \pi \, .$$
If $j+1 > i$ then $\Delta_j \leq \lambda_{j+1}^{-\vsg} - \lambda_i^{\vsg}$, and so by the same methods as before
$$ \big|  \big( \mf{m}_{1}^{\vsg}\big)_{ij}  \big|  \, \leq \, \ln 2  + \sqrt{ \tfrac{ \lambda_{j+1}^{\vsg} - \lambda_{j+1}^{-\vsg}}
{ \lambda_{j+1}^{-\vsg} - \lambda_{i}^{\vsg} } }  \ln \frac{ \sqrt{2} + 1 }{ \sqrt{2} - 1 } \, .$$
This concludes the proof.
\end{proof}

We are  now finally in position to bound $\mathfrak{A}^\vsg_j$.

\begin{prop}\label{Mprodbound}
There exists $C > 0$, possibly depending on $\gamma > 0$, such that uniformly in $\bs{\sg}_{N-1} \in \{ 0, 1\}^{N-1}$
\beq
\prod_{\substack{j=1 \\ j \not\in X_\downarrow(\bs{\sg}_{N-1})} }^{N-1} \hspace{-6mm}
\Big\{ \mathfrak{A}^{\vsg}_j(\bs{\sg}_{N-1}) \Big\}^{\f{1}{2}}
\leq C \prod_{j=1}^N \bigg( 1 + \frac{\ln(|\lambda_{j}^{\vsg}| + 1)}{\gamma N }  \bigg)^2
\prod_{j=1}^{N} \bigg( 1 + \frac{3}{\gamma N^2} \sum_{\substack{i=1 \\ i \neq j}}^N \ln^2|\lambda_i^{\vsg} - \lambda_j^{\vsg}| \bigg) \, .
\enq
\end{prop}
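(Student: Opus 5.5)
Since $\mathfrak{A}_j^{\vsg}\geq 1$, we treat it as a sum of a "diagonal" part and an "off-diagonal" part and take logarithms. We use $(a+b)^{1/2}\le a^{1/2}(1+b/a)^{1/2}\le a^{1/2}(1+b)^{1/2}$ when $a\ge1$, with
$a=\big(1+\tfrac{\ln 3}{\gamma N}+\tfrac{\ln(|\lambda_{j+1}^{\vsg}|+1)+\ln(|\lambda_{j}^{\vsg}|+1)}{\gamma N}\big)^2$ and $b$ the sum over $i\neq j+\sigma_j$ of $|(M^{\vsg}_{\bs{\sg}_{N-1}})_{ij}|^2/(\gamma N)^2$. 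This gives
\[
\{\mathfrak{A}_j^{\vsg}\}^{1/2}\le \Big(1+\tfrac{\ln 3}{\gamma N}+\tfrac{\ln(|\lambda_{j+1}^{\vsg}|+1)+\ln(|\lambda_{j}^{\vsg}|+1)}{\gamma N}\Big)\Big(1+\sum_{i\neq j+\sigma_j}\tfrac{|(M^{\vsg}_{\bs{\sg}_{N-1}})_{ij}|^2}{\gamma^2N^2}\Big)^{1/2}.
\]
For the first factor we use $1+x+y+z\le (1+x)(1+y)(1+z)$. The product over $j$ of $(1+\tfrac{\ln 3}{\gamma N})$ is at most $e^{\ln 3/\gamma}$, which is a constant. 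Each $\lambda_k^{\vsg}$ appears at most twice (as $j$ or as $j+1$), so the product over $j$ is bounded by $C\prod_k(1+\ln(|\lambda_k^{\vsg}|+1)/(\gamma N))^2$. This is exactly the first product in the claim.

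For the off-diagonal factor we plug in Proposition \ref{matrixelementbounds}. The entries $(M^{\vsg}_{\bs{\sg}_{N-1}})_{ij}$ with $i\neq j+\sigma_j$ are the $(\mathfrak{m}^{\vsg}_{\sigma_j})_{ij}$, except possibly the entry $i=j$ when $\sigma_j=1$. That entry is handled by the "$i<j+1$" bound $\ln 2+\ldots$, as in the proof of that proposition; when $\sigma_j=0$ the case $i=j+1$ falls under "$j<i$". In every case we obtain
\[
|(M)_{ij}|\le c_0+\big|\ln|\lambda^{\vsg}_{j+1}-\lambda^{\vsg}_i|\big|+\big|\ln|\lambda^{\vsg}_{j}-\lambda^{\vsg}_i|\big|+\text{(square-root term)},
\]
with the log terms present only in the "far" regimes. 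The square-root term is $\sqrt{\delta_j/|\lambda_i^{\vsg}-\lambda_j^{-\vsg}|}$, and we bound it by a constant using the interlacing \eqref{interlacing}: for $i\neq j$ the gap $|\lambda_i^{\vsg}-\lambda_j^{-\vsg}|$ dominates $\delta_j=|\lambda_j^+-\lambda_j^-|$, since $\lambda_j^{-\vsg}$ lies between $\lambda_j^{\vsg}$ and the neighbouring root on the far side. Hence $|M_{ij}|^2\le 3c_1^2+3\ln^2|\lambda^{\vsg}_{j+1}-\lambda^{\vsg}_i|+3\ln^2|\lambda^{\vsg}_{j}-\lambda^{\vsg}_i|$, where terms with $i\in\{j,j+1\}$ are absent because those entries fall in the bounded regime. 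Summing over $i$ and dividing by $\gamma^2N^2$, the constant part contributes $3c_1^2N/(\gamma^2N^2)$. This yields a factor $(1+O(1/N))^{N}=O(1)$, which goes into $C$.

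For the rest we use $(1+u+v)^{1/2}\le(1+u)^{1/2}(1+v)^{1/2}$ and $\sqrt{1+x}\le 1+x$. This splits the factor into $(1+\tfrac{3}{\gamma^2N^2}\sum_{i\neq j}\ln^2|\lambda_i-\lambda_j|)^{1/2}$ times the same expression with $j+1$ in place of $j$. Each root index $k$ then appears at most twice over the product on $j$, so we obtain $\prod_k(1+\tfrac{3}{\gamma^2N^2}\sum_{i\neq k}\ln^2|\lambda^{\vsg}_i-\lambda^{\vsg}_k|)$. Since the constant $\gamma$ may be absorbed (the statement allows $C$ to depend on $\gamma$, and we may assume $\gamma\ge1$ or enlarge the displayed constant accordingly), this matches the second product.

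The main obstacle is the uniform constant bound on the square-root correction. We have to check carefully, via the ordering \eqref{ecriture entrelacement intervalles des lambdas} and the general interlacing \eqref{interlacing}, that $\lambda_j^{-\vsg}$ sits between $\lambda_j^{\vsg}$ and $\lambda_i^{\vsg}$ in the relevant sign configuration $\ups_j=-\vsg$, so that the ratio is at most $1$. Without this, the factor could be large when $\lambda^{\pm}$ are not exponentially close, and the bound would fail.
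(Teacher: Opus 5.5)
Your architecture is the paper's. You split $\mathfrak{A}_j^{\varsigma}$ into the diagonal factor and $1+\sum_{i\neq j+\sigma_j}|(M^{\varsigma}_{\boldsymbol{\sigma}_{N-1}})_{ij}|^2/(\gamma N)^2$, use that each root index occurs at most twice, and insert Proposition~\ref{matrixelementbounds}. Your remark on $\gamma^2N^2$ versus $\gamma N^2$ is fair, since the paper's own derivation also produces $\gamma^2N^2$. The gap is exactly the step you flag at the end, and the resolution you propose there does not work.

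In Proposition~\ref{matrixelementbounds} the square-root corrections survive only when $\upsilon_j=-\varsigma$ and one of two configurations holds: $\lambda_j^{-\varsigma}$ lies \emph{between} $\lambda_j^{\varsigma}$ and $\lambda_i^{\varsigma}$ (for $i>j$), or $\lambda_{j+1}^{-\varsigma}$ lies between $\lambda_i^{\varsigma}$ and $\lambda_{j+1}^{\varsigma}$ (for $i\le j$). The opposite configuration, where $\lambda_j^{\varsigma}$ separates the two points and the ratio is trivially at most $1$, was already absorbed into the constant $\pi$ there. Betweenness says nothing about which of the two sub-gaps is larger, so $\delta_j\le\lambda_i^{\varsigma}-\lambda_j^{-\varsigma}$ does not follow, and it is false. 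Take $N=3$, $\varsigma=+$, $j=2$ (so $\upsilon_2=-$), and $\boldsymbol{\lambda}_3^+=L(-a,0,1)$ with $a\searrow0$. Tune $L$ so that $|P^+(\zeta_2)|$ is only slightly above $4\varepsilon_N$, which is allowed in $\mathcal{A}_3$. Then $\zeta_2\approx 2L/3$, $\lambda_2^-\approx\zeta_2$, and $\delta_2/(\lambda_3^+-\lambda_2^-)\approx2$.

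The ordering $\lambda_j^{\varsigma}<\lambda_j^{-\varsigma}<\zeta_j<\lambda_{j+1}^{-\varsigma}<\lambda_{j+1}^{\varsigma}$ yields, term by term, only $\delta_j/(\lambda_i^{\varsigma}-\lambda_j^{-\varsigma})\le(\zeta_j-\lambda_j^{\varsigma})/(\lambda_{j+1}^{\varsigma}-\zeta_j)$. This can be of order $N$ (Lemma~\ref{Lemme espacement racine lambda pm et zeta j}). Used term by term, it puts $\mathrm{O}(N^2)$ into $\sum_i|M_{ij}|^2$, hence an $\mathrm{O}(1)$ increment in every factor and a fatal $\mathrm{e}^{cN}$ after the product over $j$.

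The missing idea is to bound the sum over $i$ rather than the individual terms, which is what Lemma~\ref{reciprocalbound} does.
\begin{itemize}
\item When squaring the bounded-regime entries, keep the correction separate. Writing $r_{ij}$ for the ratio under the square root, use $(\ln2+c\sqrt{r_{ij}})^2\le2(\ln2)^2+2c^2r_{ij}$, so that the ratios themselves appear.
\item Replace $\lambda_j^{-\varsigma}$ (resp.\ $\lambda_{j+1}^{-\varsigma}$) by $\zeta_j$ in the denominators. This only increases the ratios.
\item Since $\zeta_j$ is a critical point of $P^{\varsigma}$, one has $\sum_{k\le j}(\zeta_j-\lambda_k^{\varsigma})^{-1}=\sum_{k>j}(\lambda_k^{\varsigma}-\zeta_j)^{-1}$. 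This moves $\delta_j\sum_{i>j}(\lambda_i^{\varsigma}-\zeta_j)^{-1}$ to the near side, $\delta_j\sum_{i\le j}(\zeta_j-\lambda_i^{\varsigma})^{-1}$.
\item On the near side each term is at most $1$, because $\delta_j\le\zeta_j-\lambda_j^{\varsigma}$. The $\delta_{j+1}$ terms are treated symmetrically.
\end{itemize}
The total is at most $N$, which is exactly the $\mathrm{O}(N)$ your argument needs, and the remainder of your proof then goes through unchanged.

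A termwise bound by an absolute constant can in fact be obtained on $\mathcal{A}_N$. However, it requires the non-local constraint $|P^{\varsigma}(\zeta_{j-2})|>4\varepsilon_N$ on the neighbouring interval of the same type, combined with the balance at $\zeta_j$. Betweenness alone never gives it.
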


\begin{proof}

Using $1+a+b \leq (1+a)(1+b)$ for $a,b\geq 0$ allows one to bound \eqref{definition mf A j vsg de sigma N-1} as
\bem
\mathfrak{A}^{\vsg}_j(\bs{\sg}_{N-1})   \leq
\bigg( 1 + \frac{\ln 3}{\gamma N } \bigg)^2
\bigg( 1 + \frac{\ln(|\lambda_{j}^{\vsg}| + 1)}{\gamma N }  \bigg)^2  \\
\times \bigg( 1 + \frac{\ln(|\lambda_{j+1}^{\vsg}| + 1)}{\gamma N }  \bigg)^2
\bigg( 1 + \sum_{\substack{i=1 \\ i \neq j+\sigma_j}}^N \hspace{-3mm} \frac{ | (M^{\vsg}_{\bs{\sg}_{N-1}} )_{ij} |^2}{\gamma^2 N^2}  \bigg) \, .
\end{multline}
Hence, using $(1+x)\le \mathrm{e}^{x}$, we get
\begin{equation}\label{bbn}
\prod_{\substack{j=1 \\ j \not\in X_\downarrow(\bs{\sg}_{N-1})}}^{N-1} \hspace{-6mm}
\Big\{ \mathfrak{A}^{\vsg}_j(\bs{\sg}_{N-1})  \Big\}^{\f{1}{2}}
\leq \mathrm{e}^{\frac{\ln 3}{\gamma}} \prod_{j=1}^N \left( 1 + \frac{\ln(|\lambda_{j}^{\vsg}| + 1)}{\gamma N }  \right)^2
\hspace{-4mm}\prod_{\substack{j=1 \\ j \not\in X_\downarrow(\bs{\sg}_{N-1})}}^{N-1} \hspace{-4mm}
\bigg\{ 1 + \sum_{\substack{i=1 \\ i \neq j+\sigma_j}}^N\hspace{-3mm}  \frac{  | (M^{\vsg}_{\bs{\sg}_{N-1}} )_{ij} |^2 }{\gamma^2 N^2}  \bigg\}^{\f{1}{2}} \, .
\end{equation}
Next,  one bounds the last term in the above right-hand side
\begin{align*}
\sum_{\substack{i=1 \\ i \neq j+\sigma_j}}^N | (M^{\vsg}_{\bs{\sg}_{N-1}} )_{ij} |^2 \, =  \,
\sum_{\substack{i = 1 \\ i \neq j,j+1}}^N  | (M^{\vsg}_{\bs{\sg}_{N-1}} )_{ij} |^2
\, + \,  | (M^{\vsg}_{\bs{\sg}_{N-1}} )_{j+1-\sg_j , j } |^2 \, .
\end{align*}
For $i \not\in \{ j,j+1 \}$, one has
\beq
| (M^{\vsg}_{\bs{\sg}_{N-1}} )_{ij} |^2  \leq \big|  \big( \mf{m}_{0}^{\vsg}\big)_{ij}  \big|^2
\, + \, \big|  \big( \mf{m}_{1}^{\vsg}\big)_{ij}  \big| ^2 \,,
\enq
with $ \mf{m}_{a}^{\vsg}$ as introduced in \eqref{definition element de matrice mathfrak m a ij}.
Further
\beq
| (M^{\vsg}_{\bs{\sg}_{N-1}} )_{j+1-\sigma_j, j}|^2 = \left\{ \ba{ccc}
\big|  \big( \mf{m}_{0}^{\vsg}\big)_{j+1j}  \big|^2 & \e{if} & \sigma_j = 0  \vspace{2mm}\\
\big|  \big( \mf{m}_{1}^{\vsg}\big)_{jj}  \big|^2 &   \e{if} & \sigma_j = 1 \ea \right.  \; ,
\enq
thus, leading to the bound
$| (M^{\vsg}_{\bs{\sg}_{N-1}} )_{j+1-\sigma_j, j}|^2 \leq \big|  \big( \mf{m}_{0}^{\vsg}\big)_{j+1j}  \big|^2 +
\big|  \big( \mf{m}_{1}^{\vsg}\big)_{jj}  \big|^2$.
Inserting the estimates obtained in Proposition \ref{matrixelementbounds}, one gets
\begin{align*}
&\sum_{\substack{i=1 \\ i \neq j+\sigma_j}}^N | (M^{\vsg}_{\bs{\sg}_{N-1}} )_{ij} |^2  \\
&\leq \sum_{i=1}^{j-1} \left[ \ln|\lambda_{j+1}^{\vsg} - \lambda_i^{\vsg}| - \ln|\lambda_j^{\vsg} - \lambda_i^{\vsg}| + \pi \right]^2
+ \sum_{i=j+2}^{N} \left[ \ln 2 + \mathbbm{1}_{\{ \ups_j = -\vsg\} } \sqrt{\tfrac{|\lambda_j^{-\vsg} - \lambda_j^{\vsg}|}{\lambda_i^{\vsg} - \lambda_j^{-\vsg} }} \ln \frac{\sqrt{2}+1}{\sqrt{2}-1} \right]^2 \\
&+ \sum_{i=1}^{j-1} \left[ \ln 2 + \mathbbm{1}_{\{ \ups_j = -\vsg \} } \sqrt{\tfrac{|\lambda_{j+1}^{-\vsg} - \lambda_{j+1}^{\vsg}|}
{\lambda_{j+1}^{-\vsg} - \lambda_i^{\vsg} }} \ln \frac{\sqrt{2}+1}{\sqrt{2}-1} \right]^2
+ \sum_{i=j+2}^{N} \left[  \ln|\lambda_i^{\vsg} - \lambda_j^{\vsg}| - \ln|\lambda_i^{\vsg} - \lambda_{j+1}^{\vsg}| + \pi \right]^2  \\
&+ \bigg( \ln 2 + \mathbbm{1}_{\{ \ups_j = -\vsg \} } \sqrt{\tfrac{|\lambda_j^{-\vsg} - \lambda_j^{\vsg}|}{ \lambda_{j+1}^{\vsg} - \lambda_j^{-\vsg} }}
\ln \frac{\sqrt{2}+1}{\sqrt{2}-1} \bigg)^2
+ \bigg( \ln 2 + \mathbbm{1}_{\{ \ups_j = -\vsg\} } \sqrt{\tfrac{|\lambda_{j+1}^{-\vsg} - \lambda_{j+1}^{\vsg}|}{\lambda_{j+1}^{-\vsg} - \lambda_j^{\vsg}}}
\ln \frac{\sqrt{2}+1}{\sqrt{2}-1} \bigg)^2 \, .
\end{align*}
Combining terms and applying Jensen's inequality (so that $(\sul{j=1}{k} |a_j|)^2 \leq k \sul{j=1}{k} a_j^2$) we have
\begin{align*}
\sum_{\substack{i=1 \\ i \neq j+\sigma_j}}^N| (M^{\vsg}_{\bs{\sg}_{N-1}} )_{ij} |^2   &  \leq
3 \sum_{\substack{i=1 \\ i \neq j+1}}^{N}  \ln^2|\lambda_{j+1}^{\vsg} - \lambda_i^{\vsg}|
+  3 \sum_{\substack{i=1 \\ i \neq j}}^{N} \ln^2|\lambda_{j}^{\vsg} - \lambda_i^{\vsg}|   \\
&+2 \mathbbm{1}_{\{ \ups_j = - \vsg\} } \bigg( \ln \frac{\sqrt{2}+1}{\sqrt{2}-1} \bigg)^2
\bigg[ \sum_{i=j+1}^{N} \frac{|\lambda_j^{-\vsg} - \lambda_j^{\vsg} | }{ \lambda_i^{\vsg} - \lambda_j^{-\vsg} }
+ \sum_{i=1}^{j} \frac{ |\lambda_{j+1}^{-\vsg} - \lambda_{j+1}^{\vsg} | }{ \lambda_{j+1}^{-\vsg} - \lambda_i^{\vsg}  } \bigg]  \\
&+ \big(3  \pi^2 + 2 (\ln 2)^2 \big) N   \, .
\end{align*}

By invoking Lemma \ref{reciprocalbound}, we deduce that there exists a $\bs{\sg}_{N-1}$-independent $C > 0$  such that
\begin{align*}
\sum_{\substack{i=1 \\ i \neq j+\sigma_j}}^N | (M^{\vsg}_{\bs{\sg}_{N-1}} )_{ij} |^2 &\leq
3 \sum_{\substack{i=1 \\ i \neq j+1}}^{N}  \ln^2|\lambda_{j+1}^{\vsg} - \lambda_i^{\vsg}|
+  3 \sum_{\substack{i=1 \\ i \neq j}}^{N}  \ln^2|\lambda_{j}^{\vsg} - \lambda_i^{\vsg}| +C N   \, .
\end{align*}
Plugging this estimate into \eqref{bbn} proves the claim.

\end{proof}

It remains to bound $\mathfrak{B}_j^{\vsg}(\bs{\sg}_{N-1})$.

\begin{prop}
\label{Proposition Borne sup sur produit des B sigma}

There exists $C > 0$ large enough, in particular satisfying $\frac{C}{\gamma} \geq 1$, so that
 uniformly in $\bs{\sg}_{N-1} \in \{0,1\}^{N-1}$
\begin{equation}\label{Rprodbound}
\hspace{-6mm} \prod_{j \in X_\downarrow(\bs{\sg}_{N-1})} \hspace{-4mm} \Big\{ \frac{ \mathfrak{B}_j^{\vsg}(\bs{\sg}_{N-1}) }{\gamma N} \Big\}^{\f{1}{2}}
  \leq   \hspace{-4mm} \prod_{j \in X(\bs{\sg}_{N-1})}  \hspace{-5mm} \bigg\{ \frac{C}{\gamma}
+  \frac{6}{\gamma N} \sul{s=0}{2} \Big[ \ln^2(|\lambda^{\vsg}_{j+s} | + 1) + \,
\sul{\substack{i = 1 \\ i \neq j+s }}{N} \ln^2 |\lambda_i^{\vsg}  - \lambda_{j+s}^{\vsg}|  \Big] \bigg\} \;.
\end{equation}

\end{prop}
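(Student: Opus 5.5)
The plan is to bound each factor $\mathfrak{B}_j^{\vsg}(\bs{\sg}_{N-1})$ with $j \in X_\downarrow(\bs{\sg}_{N-1})$ individually, and then take the product. For such a $j$ we have $\sigma_j = 1$ and $\sigma_{j+1} = 0$. Hence column $j$ of $\op{R}^{\vsg}_{\bs{\sg}_{N-1}}$ is $(\mf{m}_1^{\vsg})_{\cdot j} - (\mf{m}_0^{\vsg})_{\cdot j+1}$. Both columns carry the same singular weight $\{(\mu-\lambda_{j+1}^+)(\mu-\lambda_{j+1}^-)\}^{-1/2}$ near $\lambda_{j+1}^{\ups_j}$. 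Note that $\ups_{j+1} = -\ups_j$, so this is the same endpoint approached from both sides. The key observation is that the large diagonal entries $(M)_{j+1,j}$ and $(M)_{j+1,j+1}$, which by Lemma \ref{exactMformula} are both of size $\ln \delta_{j+1}^{-1}$, do not cancel exactly. Instead, their difference is controlled by the logarithm of the ratio $\Delta_j/\Delta_{j+1}$ of neighbouring gap lengths. Concretely, by Lemma \ref{exactMformula},
\begin{equation*}
R_{j+1,j} = \ln\Big|\Phi\Big(\tfrac{\Delta_j}{2\delta_{j+1}}\Big)\Big| - \ln\Big|\Phi\Big(\tfrac{\Delta_{j+1}}{2\delta_{j+1}}\Big)\Big| ,
\end{equation*}
and the bounds $4x+1 \le \Phi(x) \le 4x+3$ give $|R_{j+1,j}| \le \ln 3 + |\ln \Delta_j| + |\ln \Delta_{j+1}|$.

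The next step is to bound these logarithms of gaps. I would write $\Delta_j$ and $\Delta_{j+1}$ in terms of $\lambda^{\vsg}$-gaps, up to the small shifts $\delta$, using the interlacing \eqref{interlacing} and Lemma \ref{Lemme espacement racine lambda pm et zeta j}; this costs an $\mathrm{O}(\ln N)$ term as in Corollary \ref{matrixelementbound2}. Then I would use $\ln^2|\lambda_{j+1}^{\vsg}-\lambda_j^{\vsg}| \le \sum_{i\neq j}\ln^2|\lambda_i^{\vsg}-\lambda_j^{\vsg}|$, and for large gaps the bound $|\ln|x-y|| \le \ln(1+|x|)+\ln(1+|y|)$. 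The remaining entries $i \neq j+1$ of the column are bounded exactly as in the proof of Proposition \ref{Mprodbound}. I would apply Proposition \ref{matrixelementbounds} to both $(\mf{m}_1^{\vsg})_{ij}$ and $(\mf{m}_0^{\vsg})_{i,j+1}$, then use $(a+b)^2 \le 2a^2+2b^2$, and absorb the square-root terms via Lemma \ref{reciprocalbound}. Since these are now entries in columns $j$ and $j+1$, the logarithmic sums involve $\lambda_j, \lambda_{j+1}, \lambda_{j+2}$, which matches the range $s=0,1,2$ in \eqref{Rprodbound}. Summing over $i$ yields
\begin{equation*}
\mathfrak{B}_j^{\vsg} \le C N + 6\sum_{s=0}^{2}\Big[\ln^2(|\lambda_{j+s}^{\vsg}|+1) + \sum_{i\neq j+s}\ln^2|\lambda_i^{\vsg}-\lambda_{j+s}^{\vsg}|\Big],
\end{equation*}
where the $\ln^2 N$ contribution is absorbed into $CN$ for $N$ large. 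Dividing by $\gamma N$ gives the bracket in \eqref{Rprodbound}, but without the square root.

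To conclude, I would handle the square root by noting that the bracket is at least $C/\gamma \ge 1$, so $\sqrt{x} \le x$. Then I would enlarge the product from $X_\downarrow$ to $X$ by inserting the factors indexed by $X_\uparrow$, each of which is $\ge 1$.

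The main obstacle is the diagonal entry $R_{j+1,j}$. One must make sure that the $\ln \delta_{j+1}^{-1}$ singularities genuinely cancel between the two half-interval integrals, and that what remains is controlled by $\lambda^{\vsg}$-gaps rather than $\lambda^{\pm}$-mixed gaps. This requires the careful comparison of $\Delta_j$ with $\lambda^{\vsg}_{j+1}-\lambda^{\vsg}_j$, as done in Corollary \ref{matrixelementbound2}.
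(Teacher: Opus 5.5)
Your architecture is the paper's: isolate $R_{j+1,j}$, bound the other entries of columns $j$ and $j+1$ via Proposition~\ref{matrixelementbounds}, absorb the square-root terms, then drop the square roots using $C/\gamma\ge 1$ and enlarge the product from $X_\downarrow$ to $X$. Using Lemma~\ref{reciprocalbound} at the indices $j$ and $j+1$ in place of Lemma~\ref{Lemme borne sup somme lambda pm pour borne des Bj sigma} is fine; it only gives $2N$ instead of $N+1$.

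The step that fails as written is your treatment of $R_{j+1,j}$. The inequality $|R_{j+1,j}|\le \ln 3+|\ln\Delta_j|+|\ln\Delta_{j+1}|$ is true, but it discards $\delta_{j+1}$, and after that no conversion to $\lambda^{\vsg}$-gaps is possible. Since $\ups_{j+1}=-\ups_j$, exactly one of $\Delta_j,\Delta_{j+1}$ is a gap between two roots of $P^{-\vsg}$. Like every $\mu$-interval, it is the \emph{inner} gap around a critical point $\zeta$ of $P$. Proposition~\ref{etalambdaequivalence} allows $|P(\zeta)|=2\veps_N+t$ with $t\searrow 0$ inside $\mc{A}_N$. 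In that limit this gap is of order $\sqrt{t}$. Meanwhile the $\lambda^{\vsg}$-gap around the same $\zeta$ contains the gap between the two adjacent roots of $P$ and need not shrink. So $\ln^2$ of that $\Delta$ is not controlled by the right-hand side of \eqref{Rprodbound}. Lemma~\ref{Lemme espacement racine lambda pm et zeta j} cannot give a lower bound on $\Delta$ alone in terms of $\lambda^{\vsg}$-gaps.

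The cure is to keep $\delta_{j+1}$. From $4x+1\le\Phi(x)\le 4x+3$ one gets
\[
|R_{j+1,j}|\le \ln 3+\big|\ln(\Delta_j+\delta_{j+1})-\ln(\Delta_{j+1}+\delta_{j+1})\big| .
\]
Each $\Delta+\delta_{j+1}$ is the distance between a root of $P^{+}$ and a root of $P^{-}$ lying on opposite sides of $\zeta_j$, resp.\ $\zeta_{j+1}$. Such a distance is at least the distance from $\zeta$ to the adjacent root of $P^{\vsg}$. By Lemma~\ref{Lemme espacement racine lambda pm et zeta j} this is at least $(N+1)^{-1}$ times a $\lambda^{\vsg}$-gap. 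It is also at most $\lambda^{\vsg}_{j+2}-\lambda^{\vsg}_{j}\le(1+|\lambda^{\vsg}_{j}|)(1+|\lambda^{\vsg}_{j+2}|)$.

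This is exactly what Corollary~\ref{matrixelementbound2} does: its proof works with $\Delta_j+\delta_{j+\sigma_j}$, not $\Delta_j$. The paper obtains \eqref{matrixelementbound3} by applying the corollary's upper bound to one of $M_{j+1,j},M_{j+1,j+1}$ and its lower bound to the other, so the $\ln\delta_{j+1}$ terms cancel. Your closing remark points to the right corollary, but the quantity to compare with $\lambda^{\vsg}$-gaps is $\Delta+\delta_{j+1}$, not $\Delta$. With that correction your argument coincides with the paper's.
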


\begin{proof}

For any $j \in X_\downarrow(\bs{\sg}_{N-1})$ one has $(\sigma_j , \sigma_{j+1}) = (1 , 0)$. One thus separates
\begin{align*}
\mathfrak{B}_j^{\vsg}(\bs{\sg}_{N-1}) = \sum_{\substack{i=1 \\ i \neq j+1}}^N | (R^{\vsg}_{\bs{\sg}_{N-1}} )_{ij} |^2
\, +  \,  | (R^{\vsg}_{\bs{\sg}_{N-1}} )_{j+1 j} |^2
\end{align*}
where
\begin{align*}
(R^{\vsg}_{\bs{\sg}_{N-1}} )_{j+1j}    \, =  \, (M^{\vsg}_{\bs{\sg}_{N-1}} )_{j+1 j} \,  -  \,  (M^{\vsg}_{\bs{\sg}_{N-1}} )_{j+1 j+1}
\, =  \,
\ln\left|\Phi\left(\frac{1}{2}\frac{\Delta_j}{\delta_{j+1}}\right)\right|
- \ln\left|\Phi\left(\frac{1}{2}\frac{\Delta_{j+1}}{\delta_{j+1}}\right)\right| \, .
\end{align*}
Corollary \ref{matrixelementbound2} thus ensures that
\begin{equation}\label{matrixelementbound3}
\begin{split}
\big| (R^{\vsg}_{\bs{\sg}_{N-1}} )_{j+1j}   \big|
& \leq   \ln 3(N+1)  + \ln(|\lambda^{\vsg}_{j+2}| + 1) + \ln(|\lambda^{\vsg}_{j+1}| + 1) + \ln(|\lambda^{\vsg}_{j}| + 1)  \\
%
&\quad + |\ln|\lambda_{j+2}^{\vsg} - \lambda_{j+1}^{\vsg}||+   |\ln|\lambda_{j+1}^{\vsg} - \lambda_{j}^{\vsg}|| \, .
\end{split}
\end{equation}
The key point here is that the $\ln|\delta_{j+1}|$ term which is $\mathrm{O}(N)$ cancels, leaving only $\mathrm{O}(\ln N)$.
This is what motivated the column operation. Then
\beqa
\mathfrak{B}_j^{\vsg}(\bs{\sg}_{N-1}) & \leq  &
  2\sum_{\substack{i=1\\ i \neq j+1}}^N
  \Big[   \big|(M^{\vsg}_{\bs{\sg}_{N-1}} )_{i j}\big|^2 \, + \,  \big|(M^{\vsg}_{\bs{\sg}_{N-1}} )_{i j+1} \big|^2 \Big]
  \, + \,  \big| (R^{\vsg}_{\bs{\sg}_{N-1}} )_{j+1j}   \big|^2 \\
&=&  2\sum_{\substack{i=1\\ i \neq j+1}}^N \Big[   |(\mf{m}^{\vsg}_{1})_{ij}|^2 + | (\mf{m}^{\vsg}_{0})_{ij+1} |^2 \Big]
  \, + \,  \big| (R^{\vsg}_{\bs{\sg}_{N-1}} )_{j+1j}   \big|^2  \nonumber
%
%
%
%
\eeqa
Then, inserting the bounds provided by Proposition \ref{matrixelementbounds} and those given in (\ref{matrixelementbound3}), one gets
\begin{scriptsize}
\begin{align*}
\mathfrak{B}_j^{\vsg}(\bs{\sg}_{N-1})  &\leq  2\sum_{i=1}^j \bigg[   \Big|\ln 2
+ \mathbbm{1}_{\{ \ups_j = -\vsg\}} \sqrt{ \tfrac{|\lambda_{j+1}^{\vsg} - \lambda_{j+1}^{-\vsg} | }{ \lambda_{j+1}^{-\vsg} \, - \, \lambda_{i}^{\vsg} } }
\ln \frac{\sqrt{2}+1}{\sqrt{2}-1}\Big|^2
+ \Big| \ln|\lambda_{j+2}^{\vsg} - \lambda_i^{\vsg}|  - \ln|\lambda_{j+1}^{\vsg} - \lambda_i^{\vsg}|  + \pi \Big|^2 \bigg] \\
&+ 2\sum_{i=j+2}^N \bigg[   \Big| \ln|\lambda_{i}^{\vsg} - \lambda_j^{\vsg}|  - \ln|\lambda_i^{\vsg} - \lambda_{j+1}^{\vsg}|  + \pi  \Big|^2
+ \Big| \ln 2 + \mathbbm{1}_{\{ \ups_{j+1}= - \vsg \}}
\sqrt{ \tfrac{|\lambda_{j+1}^{-\vsg} - \lambda_{j+1}^{\vsg}| }{  \lambda_i^{\vsg} - \lambda_{j+1}^{-\vsg} }}
\ln\frac{\sqrt{2}+1}{\sqrt{2}-1}  \Big|^2 \bigg] \\
&+ \bigg[  \ln [ 3(N+1)]  \, + \, \sul{s=0}{2} \ln(|\lambda^{\vsg}_{j+s} | + 1)
\, +\, \sul{s=0}{1}\big| \ln|\lambda_{j+1+s}^{\vsg} - \lambda_{j+s}^{\vsg}| \big|  \bigg]^2 \\
%
%
%
%
%
%
%
%
%
%
%
%
%
%
&\leq 2\sum_{i=1}^j \bigg[   2\mathbbm{1}_{\{ \ups_j = -\vsg\}} \tfrac{ |\lambda_{j+1}^{\vsg} - \lambda_{j+1}^{-\vsg} |}
{\lambda_{j+1}^{-\vsg} - \lambda_{i}^{\vsg}}
\Big( \ln \tfrac{\sqrt{2}+1}{\sqrt{2}-1}\Big)^2
\, +\,  3 \sul{s=1}{2}\ln^2|\lambda_{j+s}^{\vsg} - \lambda_i^{\vsg}|   \bigg] \\
& + 2 \sum_{i=j+2}^N \bigg[   3 \sul{s=0}{1}  \ln^2|\lambda_{i}^{\vsg} - \lambda_{j+s}^{\vsg}| 
+ 2\mathbbm{1}_{\{ \ups_{j+1}= -\vsg \}} \tfrac{|\lambda_{j+1}^{-\vsg} - \lambda_{j+1}^{\vsg}|}{\lambda_i^{\vsg} - \lambda_{j+1}^{-\vsg} }
\Big( \ln\frac{\sqrt{2}+1}{\sqrt{2}-1}\Big)^2    \bigg]\\
&+ CN +   6 \sul{s=0}{2} \ln^2 (|\lambda^{\vsg}_{j+s} | + 1)
+ 6 \sul{s=0}{1}\ln^2|\lambda_{j+1+s}^{\vsg} - \lambda_{j+s}^{\vsg}|     \\
&\leq CN + 6 \sul{s=0}{2} \sul{\substack{i = 1 \\ i \neq j + s }}{N}
\ln^2|\lambda_i^{\vsg} - \lambda_{j+s}^{\vsg}|    \, +\,  6 \sul{s=0}{2}\ln^2 (|\lambda^{\vsg}_{j+s} | + 1)  \\
&+ 4 \Big( \ln\tfrac{\sqrt{2}+1}{\sqrt{2}-1}\Big)^2 \bigg[ \mathbbm{1}_{\{ \ups_j = -\vsg \}}
\sum_{i=1}^j  \frac{|\lambda_{j+1}^{\vsg} - \lambda_{j+1}^{-\vsg} |}{\lambda_{j+1}^{-\vsg} - \lambda_{i}^{\vsg}}
+  \mathbbm{1}_{\{ \ups_{j+1}= -\vsg\}}
\sum_{i=j+2}^N \frac{|\lambda_{j+1}^{-\vsg} - \lambda_{j+1}^{\vsg}|}{\lambda_i^{\vsg} - \lambda_{j+1}^{-\vsg} }  \bigg] \, .
\end{align*}
\end{scriptsize}
Thus, by invoking Lemma \ref{Lemme borne sup somme lambda pm pour borne des Bj sigma}, one concludes that
there exists a pure constant $C > 0$ such that for any $j \in X_\downarrow( \bs{\sg}_{N-1} )$
\beq
\mathfrak{B}_j^{\vsg}( \bs{\sg}_{N-1} ) \, \leq  \,   CN
\, + \, 6\sul{s=0}{2} \bigg[   \ln^2 (|\lambda^{\vsg}_{j+s} | + 1) \, + \,
 \sum_{\substack{i = 1 \\ i \neq j+s }}^N \ln^2|\lambda_i^{\vsg} - \lambda_{j+s}^{\vsg}|  \bigg] \, .
\enq

It remains to take the square root of the bound and then the product over $j \in X_\downarrow( \bs{\sg}_{N-1} )$.
The latter can be bounded by a product over $X( \bs{\sg}_{N-1} )$, since all the terms in the product are greater than $1$.
For the same reason, one may drop the square roots arising there.

\end{proof}

\begin{prop}
\label{Proposition Upper bound on S}
Let $\mc{S}^{\vsg}$ be as given by \eqref{Sdef} and let $\gamma > 0$. Then, there exists a constant $C > 0$, possibly depending on
$\gamma$, such that for $N$ large enough
\bem
\mc{S}^{\vsg}  \leq \mathrm{e}^{C N^{1 - \frac{1}{4}}} \pl{j=1}{N} \big\{ \ln |\lambda_j^+ - \lambda_j^-|^{-1} \vee \gamma N \big\} \\
\times \prod_{j=1}^N  \bigg( 1 + \frac{6}{\gamma N  } \ln^2(|\lambda_{j}^{\vsg}| + 1) \bigg)^5
 \; \prod_{j=1}^{N} \bigg( 1 +
\frac{6}{(\gamma N)^{\frac{5}{4}}} \sum_{\substack{i=1 \\ i \neq j}}^N \ln^2|\lambda_i^{\vsg} - \lambda_j^{\vsg}| \bigg)^4   \, .
\end{multline}
\end{prop}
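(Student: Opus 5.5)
We start from Proposition \ref{Proposition upper bound on S via A et B mathfrak}, which already extracts the factor $\prod_{j=1}^N\{\ln|\delta_j|^{-1}\vee\gamma N\}$ together with a prefactor $(\gamma\sqrt N)^{-1}$. We then insert Proposition \ref{Mprodbound} for the product of the $\mathfrak{A}_j^{\vsg}$, which is uniform in $\bs{\sg}_{N-1}$, and Proposition \ref{Proposition Borne sup sur produit des B sigma} for the product over down steps of $\{\mathfrak{B}_j^{\vsg}/(\gamma N)\}^{1/2}$. The $\mathfrak{A}$-part is already of the desired shape: $(1+\frac{1}{\gamma N}\ln^2(1+|\lambda_j|))$ dominates $(1+\frac{1}{\gamma N}\ln(1+|\lambda_j|))^2$ up to a factor $e^{C/\gamma N}$ per $j$, hence $e^{C}$ overall. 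Likewise $1+\frac{3}{\gamma N^2}\sum_i\ln^2$ is dominated by $1+\frac{6}{(\gamma N)^{5/4}}\sum_i\ln^2$ for large $N$. What remains is
\[
\Sigma:=\sum_{\bs{\sg}_{N-1}}(\gamma N)^{-\frac12|X_\downarrow|}\prod_{j\in X(\bs{\sg}_{N-1})} \Big\{\tfrac{C}{\gamma}+\tfrac{6}{\gamma N}\sum_{s=0}^2 b_{j+s}\Big\},
\qquad b_k:=\ln^2(|\lambda_k^{\vsg}|+1)+\sum_{i\neq k}\ln^2|\lambda_i^{\vsg}-\lambda_k^{\vsg}|.
\]

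The key step is the combinatorial summation over $\bs{\sg}_{N-1}$. Since up and down steps alternate, $|X|\le 2|X_\downarrow|+1$, so $(\gamma N)^{-\frac12|X_\downarrow|}\le (\gamma N)^{1/2}(\gamma N)^{-|X|/4}$. The $(\gamma N)^{1/2}$ cancels the prefactor $(\gamma\sqrt N)^{-1}$ up to a constant. Each $\bs{\sg}_{N-1}$ is encoded by $(\sigma_1,X)$ with $X$ an arbitrary subset of $\intn{1}{N-2}$. Hence
\[
\Sigma\le 2\sqrt{\gamma N}\sum_{X\subset\intn{1}{N-2}}\prod_{j\in X}\frac{w_j}{(\gamma N)^{1/4}} = 2\sqrt{\gamma N}\prod_{j=1}^{N-2}\Big(1+\frac{w_j}{(\gamma N)^{1/4}}\Big),
\]
where $w_j$ is the curly bracket above. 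We then split $w_j/(\gamma N)^{1/4}\le \frac{C}{\gamma(\gamma N)^{1/4}}+\frac{6}{\gamma^{5/4}N^{5/4}}\sum_{s=0}^2 b_{j+s}$ and use $1+a+b\le(1+a)(1+b)$. The constant parts give $\prod_j(1+C'N^{-1/4})\le e^{C'N^{3/4}}$, which is the origin of $e^{CN^{1-\frac14}}$. The other parts give
\[
\prod_j\prod_{s=0}^2\Big(1+\tfrac{6}{(\gamma N)^{5/4}}b_{j+s}\Big)\le\prod_k\Big(1+\tfrac{6}{(\gamma N)^{5/4}}b_k\Big)^3 ,
\]
since each index $k$ is hit by at most three values of $j$.

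Finally we split $1+\frac{6}{(\gamma N)^{5/4}}b_k\le \big(1+\frac{6}{(\gamma N)^{5/4}}\ln^2(|\lambda_k|+1)\big)\big(1+\frac{6}{(\gamma N)^{5/4}}\sum_i\ln^2|\lambda_i-\lambda_k|\big)$. The first factor is at most $1+\frac{6}{\gamma N}\ln^2(|\lambda_k|+1)$. Combining with the $\mathfrak{A}$-part, the exponents add to at most $2+3=5$ for the $\ln^2(1+|\lambda|)$ product and $1+3=4$ for the pairwise-log product, matching the statement. The main obstacle is exactly the combinatorial sum. Without the $(\gamma N)^{-1/2}$ gained per down step, the $2^{N-1}$ choices of $\bs{\sg}_{N-1}$ would produce an $e^{cN}$ loss. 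We need this gain to beat the entropy $\binom{N}{|X|}$, which is guaranteed by the alternation $|X|\le 2|X_\downarrow|+1$ and leaves only the subexponential $e^{CN^{3/4}}$. Constants depend on $\gamma$, and the ratios $6/\gamma^{5/4}$ versus $6/\gamma$ are harmless: we adjust by the assumption $C/\gamma\ge1$ or absorb them into $C$ after enlarging the exponents.
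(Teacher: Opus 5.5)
Your proposal is correct and follows the paper's proof essentially verbatim. You start from Proposition~\ref{Proposition upper bound on S via A et B mathfrak}, insert Propositions~\ref{Mprodbound} and~\ref{Proposition Borne sup sur produit des B sigma}, use $|X_\downarrow(\bs{\sg}_{N-1})|\geq \frac{1}{2}|X(\bs{\sg}_{N-1})|-\frac{1}{2}$ together with the identity $\sum_{X}(\gamma N)^{-|X|/4}\prod_{j\in X}a_j=\prod_{j}\big(1+(\gamma N)^{-1/4}a_j\big)$, and conclude with $1+a+b\leq(1+a)(1+b)$. The only difference is that you spell out the bookkeeping the paper leaves implicit: the origin of $\mathrm{e}^{CN^{3/4}}$ and the at-most-threefold overlap of the indices $j+s$, which yields the exponents $5$ and $4$.
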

\begin{proof}
One starts by observing the bound
$|X_\downarrow(\bs{\sg}_{N-1})| \geq \frac{1}{2}|X(\bs{\sg}_{N-1})| - \frac{1}{2}$ and note the identity
$$\sum_{X \subset \intn{1}{N-2}} (\gamma N)^{-\frac{1}{4}|X|} \prod_{j\in X} a_j
\, =  \, \prod_{j=1}^{N-2} \Big(1 + (\gamma  N)^{- \frac{1}{4}} a_j \Big) \, .$$
Then, starting from Proposition \ref{Proposition upper bound on S via A et B mathfrak},
further invoking Propositions \ref{Mprodbound} and  \ref{Proposition Borne sup sur produit des B sigma},
along with the trival bounds $1+a + b \leq (1+a)(1+b)$ for $a,b\geq 0$, allows one to conclude.
\end{proof}

\begin{proof} [Proof of Proposition \ref{integralupperbound}]

Starting from the upper bound given in Lemma \ref{Lemme First upper bound on I}, one invokes
Proposition \ref{Proposition Upper bound on S} to conclude.

\end{proof}

\subsection{Proof of the upper bound on balls}

To prove the upper bound on balls we must now integrate our bound in Proposition \ref{integralupperbound}. We will first get preliminary upper bounds on the density and finally derive the weak large deviations upper bounds.

\subsubsection{Preliminary upper bounds} 

A key result that we will use is due to Henrici and Kappeler (Proposition B.1 of \cite{Kappeler})
which reads as follows.
\begin{prop}[Henrici-Kappeler]\label{kappeler} Let $\bs{\la}_N^+\in \mc{A}_N$. Then, for every $k\in \intn{1}{N}$,

$$|\lambda_k^+-\lambda_k^-|\le \frac{2\pi \mathrm{e}^{-\ell/2}}{N}\,.$$
\end{prop}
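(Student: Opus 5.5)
The plan is to read the bound as a geometric fact about the "band" $[\lambda_k^+;\lambda_k^-]$ (or $[\lambda_k^-;\lambda_k^+]$), i.e. a connected component of $E=\{x\in\R:\,|P(x)|\le 2\veps_N\}$. First I check the endpoints. On $\mc{A}_N$ both $P\mp2\veps_N$ have only real simple roots. The critical values therefore satisfy $|P(\vsg_j)|\ge 2\veps_N$, so $E$ is a union of $N$ disjoint compact intervals with endpoints $\lambda_k^{\pm}$. The tool is the Floquet (quasi‑momentum) function
\[
G(z)=\tfrac1N\,\mathrm{arccosh}\Big(\tfrac{P(z)}{2\veps_N}\Big),\qquad z\in\Cx\setminus E ,
\]
with the branch fixed by $G(z)=\ln z+\tfrac{\ell}{2}+\mathrm{O}(z^{-1})$ as $z\to\infty$; recall $-\tfrac1N\ln\veps_N=\ell/2$. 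Then $\mathrm{Re}\,G$ is the Green function of $\Cx\setminus E$ with pole at infinity, so the logarithmic capacity of $E$ is exactly $c:=\mathrm{e}^{-\ell/2}$. Moreover $G'(z)=P'(z)/\big(N\sqrt{P(z)^2-4\veps_N^2}\big)$.

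\textbf{Step 1 (comb / slit map).} On the upper half‑plane $G$ is the classical Marchenko–Ostrovskii comb map: $\mathrm{Re}\,G=0$ on $E$, and $\mathrm{Im}\,G$ is constant on each gap. Along the $k$-th band, $P/(2\veps_N)$ runs monotonically from $\pm1$ to $\mp1$, so $\mathrm{Im}\,G$ increases by exactly $\pi/N$. Set $H=\mathrm{e}^{G}$ and extend it by reflection. I expect $H$ to be a conformal bijection from $\Cx\setminus E$ onto a slit exterior domain $\Omega=\{|w|>1\}\setminus\{\text{finitely many radial slits on the rays } \arg w=\pi j/N\}$. Its inverse $\psi=H^{-1}:\Omega\to\Cx\setminus E$ should have the expansion $\psi(w)=c\,w+b_0+\mathrm{O}(w^{-1})$ at infinity. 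Each band is the image under $\psi$ of a unit‑circle arc of angular length $\pi/N$ (traversed from both sides, upper and lower). Hence
\[
|\lambda_k^+-\lambda_k^-|=\int_{\text{arc}_k}|\psi'(w)|\,|\mathrm{d}w|\le \frac{\pi}{N}\,\sup_{|w|=1}|\psi'(w)| .
\]

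\textbf{Step 2 (derivative bound — the main obstacle).} It remains to show that $|\psi'(w)|\le 2c$ on $|w|=1$. The single‑interval case is the benchmark: for $E=[-2c;2c]$ one has $\psi(w)=c(w+w^{-1})$, and then $|\psi'|=c|1-w^{-2}|\le 2c$ with equality at $w=\pm\mathrm{i}$. This is consistent with the constant $2\pi\mathrm{e}^{-\ell/2}/N$ in the statement.

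My first attempt would use the symmetry $\psi(\bar w)=\overline{\psi(w)}$ together with the radial‑slit structure of $\Omega$. Passing to $\zeta=\ln w$ turns $\Omega$ into a half‑strip minus horizontal slits. On that domain $\partial_\zeta\psi=w\psi'$, and $\mathrm{Im}\,\partial_\zeta\psi$ is harmonic with controlled sign: $\psi$ is real along the slits and has real boundary values on $E$. I would then write a Herglotz/Poisson representation for $w\psi'(w)$ (equivalently for $1/G'$) in terms of a positive measure, normalised by the behaviour at infinity. The aim is the majorant $|w\psi'(w)|\le c\,|w-w^{-1}|$, obtained by comparing with the extremal one‑interval case through the maximum principle.

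\textbf{Fallback for Step 2.} If this fails, I would instead use the explicit formula for the equilibrium density on the band,
\[
\omega(x)=\frac{|P'(x)|}{\pi N\sqrt{4\veps_N^2-P(x)^2}} .
\]
Each band carries $\omega$‑mass exactly $1/N$, so it suffices to prove the pointwise bound $\omega(x)\ge 1/(2\pi c)$ on $E$. To get this, I would combine $\sqrt{4\veps_N^2-P^2}\le 2\veps_N$ with a lower bound on $|P'(x)|$. That lower bound would come from writing $\ln|P'(x)/P(\vsg)|$ as a sum over roots, and using $|P(\vsg_j)|\ge 2\veps_N$ at the two neighbouring critical points.

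Combining Step 1 with either form of Step 2 yields the claimed bound $|\lambda_k^+-\lambda_k^-|\le 2\pi\mathrm{e}^{-\ell/2}/N$, uniformly in $k$.
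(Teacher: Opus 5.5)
The paper gives no proof of this statement. It is imported from Henrici--Kappeler (their Proposition~B.1), so your potential-theoretic route is necessarily different from anything in the text. The reduction in your proposal is sound. $\mathrm{Re}\,G$ is the Green function of $E$, $\mathrm{cap}(E)=c=\mathrm{e}^{-\ell/2}$, and each band carries equilibrium mass $1/N$. The claim is therefore equivalent to $\omega\ge 1/(2\pi c)$ on $E$, i.e.\ to $|\psi'|\le 2c$ on the unit-circle arcs.

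One correction to Step~1: $H=\mathrm{e}^{G}$ is not single-valued on $\mathbb{C}\setminus E$. Since $\oint G'\,\mathrm{d}z=2\pi\mathrm{i}\,\omega(B_k)=2\pi\mathrm{i}/N$ around each band, $H$ picks up the factor $\mathrm{e}^{2\pi\mathrm{i}/N}$. It is a conformal bijection only from $\mathbb{C}_+$ onto the upper half of the slit domain, or from $\mathbb{C}\setminus[\min E;\max E]$ after reflecting across the two unbounded gaps. This is harmless, because you only use the upper side of each band.

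The genuine gap is Step~2. It is the whole content of the proposition, it is left unproved, and neither mechanism you propose can deliver it.

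\emph{The majorant.} $|w\psi'(w)|\le c|w-w^{-1}|$ is false inside $\Omega$. At a slit tip $w_j$, which is the image of the critical point $\varsigma_j$ in a gap, an interior angle $2\pi$ is mapped onto an angle $\pi$. Hence $\psi(w)-\varsigma_j\sim (w-w_j)^{1/2}$ and $|\psi'|\to\infty$ there, so no maximum-principle comparison on $\Omega$ can produce this majorant.

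\emph{The Herglotz representation.} $1/G'$ (equivalently $w\psi'(w)$) is Herglotz with normalisation $z+\mathrm{O}(1)$ at infinity. The capacity $c$ does not appear in it, so its representation yields no bound in terms of $c$.

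\emph{The fallback.} Using $\sqrt{4\varepsilon_N^2-P^2}\le 2\varepsilon_N$ forces you to prove $|P'|\ge N\varepsilon_N^{1-1/N}$ on all of $E$. This is false near band edges adjacent to nearly closed gaps. For $N=2$ and $P(x)=x^2-m$ with $m\searrow 2\varepsilon_N$, one has $|P'(\sqrt{m-2\varepsilon_N})|\to 0$, while you need $2\varepsilon_N^{1/2}$. There the small $|P'|$ is compensated only by the small $\sqrt{4\varepsilon_N^2-P^2}$, which you discarded.

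What is missing is the right Herglotz function, namely $f=\cosh G=\tfrac12(H+H^{-1})$. Equivalently, $2c\,f=\psi_0\circ\psi^{-1}$, where $\psi_0(w)=c(w+w^{-1})$ is your own one-interval benchmark.

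\emph{Properties of $f$.} Since $G(\mathbb{C}_+)\subset\{\mathrm{Re}\,\theta>0,\ 0<\mathrm{Im}\,\theta<\pi\}$ and $\cosh$ maps this half-strip onto $\mathbb{C}_+$, $f$ is a Nevanlinna function with $f(z)=z/(2c)+\mathrm{O}(1)$ at infinity. On $E$ it is real, with $f(x)=\cos\big(\pi\,\omega((x,+\infty))\big)$. Its representing measure $\nu$ is carried by the closures of the inner gaps.

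\emph{The derivative bound.} In the interior of each band,
$f'(x)=\frac{1}{2c}+\int (t-x)^{-2}\,\mathrm{d}\nu(t)\ge \frac{1}{2c}$.

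\emph{Conclusion.} Integrating over the $k$-th band, on which $\pi\,\omega((x,+\infty))$ moves between consecutive multiples of $\pi/N$, gives
$|\lambda_k^+-\lambda_k^-|\le 2c\big(\cos\frac{\pi(k-1)}{N}-\cos\frac{\pi k}{N}\big)\le 4c\sin\frac{\pi}{2N}\le \frac{2\pi c}{N}$.
This is exactly your inequality $|\psi'(\mathrm{e}^{\mathrm{i}t})|\le 2c|\sin t|$. It holds only on the circle, which is why it must come from the boundary behaviour of $f$ and not from an interior majorant.
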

From this  Proposition \ref{kappeler}  and  \ref{potentialh1}, we have the following corollary (recall that we assumed without loss of generality that $V$ is non-negative). 
\begin{cor}\label{Vrelation}
For $N$ sufficiently large, there are constants $C, \tilde{C} > 0$ such that
$$\sul{k=1}{N} |V(\lambda_k^+) - V(\lambda_k^-)|  \leq \frac{C}{N} \sul{k=1}{N}
\big\{ {V}(\lambda_k^+) + V(\lambda_k^-) \big\} + \tilde{C} \, .$$
\end{cor}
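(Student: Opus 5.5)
The plan is a termwise comparison. For each $k$ we bound $|V(\lambda_k^+)-V(\lambda_k^-)|$ by the mean value theorem and control the derivative through Hypothesis \ref{potentialh1}, using that $\lambda_k^+$ and $\lambda_k^-$ are very close.

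By Proposition \ref{kappeler}, $|\lambda_k^+-\lambda_k^-|\le 2\pi \mathrm{e}^{-\ell/2}/N$. For $N$ large enough, namely $N\geq 2\pi\mathrm{e}^{-\ell/2}$, this gives $|\lambda_k^+-\lambda_k^-|\le 1$. Since $V$ is differentiable, the mean value theorem gives some $\xi_k$ between $\lambda_k^-$ and $\lambda_k^+$ with
\[
|V(\lambda_k^+)-V(\lambda_k^-)| = |V'(\xi_k)|\,|\lambda_k^+-\lambda_k^-| \, .
\]
We write $\xi_k=\lambda_k^+ + u$ with $|u|\le 1$. Hypothesis \ref{potentialh1}, applied at the point $x=\lambda_k^+$, then yields
\[
|V'(\xi_k)| \le \sup_{|u|\le1}|V'(\lambda_k^++u)| \le C_1 V(\lambda_k^+)+C_2 \, .
\]
We could equally use $\lambda_k^-$ as the base point. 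Since $V\ge 0$, we may bound $V(\lambda_k^+)\le V(\lambda_k^+)+V(\lambda_k^-)$, which matches the symmetric form of the claim.

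Combining the two bounds gives
\[
|V(\lambda_k^+)-V(\lambda_k^-)|\le \frac{2\pi \mathrm{e}^{-\ell/2}}{N}\Big( C_1\big\{V(\lambda_k^+)+V(\lambda_k^-)\big\}+C_2\Big) \, .
\]
Summing over $k\in\intn{1}{N}$, the constant term contributes $N\cdot \frac{2\pi \mathrm{e}^{-\ell/2}}{N}\,C_2 = 2\pi\mathrm{e}^{-\ell/2}C_2$, which is independent of $N$. This proves the claim with $C=2\pi \mathrm{e}^{-\ell/2}C_1$ and $\tilde C = 2\pi \mathrm{e}^{-\ell/2}C_2$.

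There is no genuine obstacle here. The only points to check are that the $1/N$ gap from Proposition \ref{kappeler} fits inside the unit window required by \ref{potentialh1}, which holds for $N$ large, and that nonnegativity of $V$ (the normalisation $\inf V=0$) lets us symmetrise the right-hand side.
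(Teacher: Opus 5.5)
Your proposal is correct and follows essentially the same route as the paper: the mean value theorem combined with Hypothesis \ref{potentialh1} and the Henrici--Kappeler bound $|\lambda_k^+-\lambda_k^-|\le 2\pi\mathrm{e}^{-\ell/2}/N$. The only cosmetic difference is that the paper applies \ref{potentialh1} at both base points $\lambda_k^{\pm}$ and averages, giving $C=\pi\mathrm{e}^{-\ell/2}C_1$, whereas you use one base point and then invoke $V\ge 0$ to symmetrise, giving the immaterially larger constant $2\pi\mathrm{e}^{-\ell/2}C_1$.
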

\begin{proof}
Using  \ref{potentialh1} with $x= \lambda_k^+$ and $\lambda_k^-$ we have
$$ |V(\lambda_k^+) - V(\lambda_k^-)|  \leq \bigg(\frac{C_{1} }{2} \{ V(\lambda_k^+) + V(\lambda_k^-) \} +C_{2} \bigg)| \lambda_k^+-\lambda_k^-|  $$
and the result follows from Proposition \ref{kappeler} with $C= \pi \mathrm{e}^{-\ell/2} C_{1} $ and $\tilde{C} =2\pi \mathrm{e}^{-\ell/2} C_{2} $.
\end{proof}
Moving forward, fix $0 < \eps \leq \frac{1}{2}$ and a bounded Lipschitz function $\phi : \mathbb{R} \to \mathbb{R}$ and define
\begin{align}\label{Wchoice}
    W(x) = \eps V(x) - \phi(x) + \ln\Big( \Int{\mathbb{R}}{} \mathrm{e}^{\phi(y) - \eps V(y)}\, \mathrm{d}y  \Big) \, .
\end{align}
Note that by construction $\Int{\mathbb{R}}{} \mathrm{e}^{-W(x)} \, \mathrm{d}x = 1$. We will later introduce this density $\mathrm{e}^{-W}$ as an auxiliary construct to make sure that some  integrals converge (which is why we take $\eps>0$) but also to get bounds that are uniform in the function $\phi$, allowing ultimately to optimise over this function to obtain the relative entropy, in a way similar to the proof of Sanov's theorem (Theorem 6.2.10 of \cite{DemboZ01}).
Next, let $M > 0$, then define the regularised logarithm
\beq
\ln_{M} : \mathbb{R}\longrightarrow \mathbb{R}
\quad \e{with} \quad \ln_{M}(x) = \ln \max \big\{ x, \frac{1}{M} \big\} \, .
\label{definition ln M regularise}
\enq
Note that $\ln_{M}$ is defined for all real arguments, and $| \ln_{M}(x) - \ln_{M}(y)| \leq M |x-y|$.
\begin{lemme}\label{Jcontinuous}
Let  $\eta, \kappa, M_0, M_1, M_2,  M_3> 0$, and assume that  $\kappa < \frac{1}{2}$.
Fixing the notation $\mathbf{M}=(M_0,\dots, M_3)$, define the continuous and bounded function
$$f_{\kappa, \mathbf{M}}(x,y) \overset{\mathrm{def}}{=}
\max\Big\{   \ln_{M_0} |x - y| - \frac{\ell \kappa}{4M_1} V(x)  - \frac{\ell \kappa}{4M_1} V(y),
- M_{2} \Big\}  \, .$$
Next, given $\bs{\mf{r}} \, = \, (\eta, \kappa,\mathbf{M})$
define the functional $J_{\bs{\mf{r}}} : \mathcal{M}_1(\mathbb{R}) \longrightarrow \mathbb{R}$
%
%
\beqa
J_{\bs{\mf{r}}}[\mu]  &\overset{\mathrm{def}}{=} & \Int{\mathbb{R}}{}\ln_{M_1}\Big( 1+ \frac{2}{\ell}
\Int{\mathbb{R}}{} f_{\kappa, \mathbf{M}}(x,y) \, \mathrm{d}\mu (y)+ \eta \Big) \, \mathrm{d}\mu(x) \nonumber \\
 && \qquad + \Int{\mathbb{R}}{}\max\Big\{  -(1-\kappa) V(x)  + W(x),  - M_3\Big\} \,  \mathrm{d}\mu(x) \, .
\label{definition fnelle Jr}
\eeqa
Then $J_{\bs{\mf{r}}}$ is continuous with respect to the weak topology.
\end{lemme}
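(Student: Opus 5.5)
The functional $J_{\bs{\mf{r}}}$ splits into two pieces, and I will show each is continuous for the weak topology separately. The main tools are two facts: integrals of bounded continuous functions are continuous in $\mu$, and $\mu^{\otimes 2}$ depends continuously on $\mu$ in the weak topology.

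The second piece is
\[
\mu\mapsto \Int{\mathbb{R}}{}\max\{-(1-\kappa)V(x)+W(x),-M_3\}\,\dd\mu(x).
\]
Since $W = \eps V - \phi + \ln\big(\int \ex{\phi-\eps V}\big)$ and $\eps\le \tfrac12$, $\kappa<\tfrac12$, we have $-(1-\kappa)V+W = -(1-\kappa-\eps)V - \phi + \mathrm{const}$ with $1-\kappa-\eps>0$. Because $V\ge 0$ and $\phi$ is bounded, this function is continuous and bounded from above. After the truncation by $-M_3$ it is also bounded from below. The integrand is therefore bounded and continuous, and this piece is weakly continuous by definition of the weak topology.

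For the first piece, set $g_\mu(x) = \int f_{\kappa,\mathbf M}(x,y)\,\dd\mu(y)$. First, $f_{\kappa,\mathbf M}$ is continuous, because $\ln_{M_0}$ is continuous (indeed Lipschitz) and $V$ is continuous. It is bounded below by $-M_2$. It is bounded above because $\ln_{M_0}|x-y|\le \ln(1+|x|)+\ln(1+|y|)$ while $V$ grows like $|x|^\theta$ by \ref{potentialh2}, so the subtracted terms $\tfrac{\ell\kappa}{4M_1}V$ dominate at infinity. Hence $f_{\kappa,\mathbf M}$ is bounded and continuous on $\R^2$. Since $\ln_{M_1}$ is $M_1$-Lipschitz, the first piece satisfies
\[
\Big|\int \ln_{M_1}\big(1+\tfrac2\ell g_\mu+\eta\big)\dd\mu - \int \ln_{M_1}\big(1+\tfrac2\ell g_\nu+\eta\big)\dd\nu\Big| \le \tfrac{2M_1}{\ell}\int |g_\mu-g_\nu|\,\dd\mu + \Big|\int h_\nu\,\dd\mu - \int h_\nu\,\dd\nu\Big|,
\]
where $h_\nu = \ln_{M_1}(1+\tfrac2\ell g_\nu+\eta)$. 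The function $h_\nu$ is continuous, since $g_\nu$ is continuous by dominated convergence. It is bounded, since $g_\nu$ is uniformly bounded by $\|f\|_\infty$.

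It remains to control both terms in the bound above. Fix $\mu$ and take $\mu_n\to\mu$ weakly. The cleanest route is to write the whole first piece as
\[
\int F\Big(x, \int f(x,y)\,\dd\mu(y)\Big)\,\dd\mu(x)
\]
with $F$ bounded and Lipschitz in its second argument, and then to prove $\sup_{x\in K}|g_{\mu_n}(x)-g_\mu(x)|\to 0$ on compacts $K$. Since $f$ is bounded, tightness of $(\mu_n)$ allows us to restrict attention to compacts.

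The key step, and the main obstacle, is this local uniform convergence of $g_{\mu_n}$. I will obtain it from equicontinuity: on $K\times\R$ the family $\{f(\cdot,y)\}_y$ is equicontinuous. For $|y|$ large, $f(x,y)=-M_2$ identically once $V(y)$ is large enough, uniformly in $x\in K$. On bounded $y$-sets, $f$ is uniformly continuous. By Arzelà–Ascoli-type reasoning, pointwise convergence of $g_{\mu_n}$ then upgrades to uniform convergence on $K$.

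Combining the pieces: $\int |g_{\mu_n}-g_\mu|\,\dd\mu_n\to0$ by uniformity on compacts together with tightness, and $\int h_\mu\,\dd\mu_n\to\int h_\mu\,\dd\mu$ by weak convergence. This gives continuity of $J_{\bs{\mf{r}}}$. Since $\mathcal M_1(\R)$ with $\op{d}_{\mathrm{BL}}$ is metric, sequential continuity suffices.
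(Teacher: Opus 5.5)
Your proposal is correct and follows the paper's route. You split off the bounded continuous potential term, use the Lipschitz property of $\ln_{M_1}$ with a triangle inequality, and reduce everything to uniform convergence of $\int f_{\kappa,\mathbf{M}}(x,y)\,\mathrm{d}\mu_n(y)$. The paper gets that uniformity over all $x\in\mathbb{R}$ at once, from $f_{\kappa,\mathbf{M}}$ being equal to $-M_2$ outside a compact subset of $\mathbb{R}^2$ and hence uniformly continuous; you get it on compacts via equicontinuity plus pointwise convergence and finish with tightness, which supplies the justification the paper leaves implicit.
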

\begin{proof}
$$\mu \mapsto \Int{\mathbb{R}}{}\max\Big\{  - (1-\kappa) V(x)  + W(x),  - M_3\Big\} \,  \mathrm{d}\mu(x)$$
is manifestly continuous since $\max\big\{  - (1-\kappa) V   + W,  - M_3\big\}$ is bounded and continuous
(by virtue of $\eps+\kappa <1 $).
$$\mu \mapsto
\Int{\mathbb{R}}{}\ln_{M_1}\Big( 1+ \frac{2}{\ell} \Int{\mathbb{R}}{} f_{\kappa, \mathbf{M}}(x,y)
\, \mathrm{d}\mu (y)+ \eta \Big) \, \mathrm{d}\mu(x)$$
is continuous by the same argument as in the proof of Lemma \ref{Itildecontinuous}.
\end{proof}
\begin{lemme}\label{regularisedbound}
Pick  $\eta, \kappa, M_0, M_1, M_2,  M_3> 0$, with  $\kappa < \frac{1}{2}$, set $\mathbf{M}=(M_0,\dots, M_3)$,
denote  $\bs{\mf{r}} \, = \, (\eta, \kappa,\mathbf{M})$. Let  $\gamma = \frac{\ell}{2 M_1}$. Then, for $N$ sufficiently large,
it holds
\beq
\prod_{j=1}^{N}    \Big\{  \big\{  \ln |\lambda_j^+ - \lambda_j^- |^{-1}  \vee \gamma N \big\} \,
\cdot \mathrm{e}^{- V(\lambda_j^\pm) + \frac{\kappa}{2}V(\lambda_j^\pm) + W(\lambda_j^\pm)}  \Big\} \, \leq \,
|\ln(\veps_N)|^{N} \,  \mathrm{e}^{N J_{\bs{\mf{r}}}[ \op{L}_N^{(\boldsymbol{\lambda}_N^\pm)}] } \, .
\enq
with $J_{\bs{\mf{r}}}$ as given by in \eqref{definition fnelle Jr}.

\end{lemme}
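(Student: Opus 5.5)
The plan is to prove the bound factor by factor after pulling out $|\ln \veps_N|^N$, reducing each factor to a regularised logarithmic potential of the empirical measure $\op{L}_N^{(\bs{\la}_N^\pm)}$. I treat the sign $\pm$ for which the bound is claimed; the other case is symmetric. The starting point is the exact identity coming from $P^+ - P^- = -4\veps_N$. Evaluating $P^+$ at $\la_j^-$ gives $|\la_j^- - \la_j^+|\prod_{k\neq j}|\la_j^- - \la_k^+| = 4\veps_N$, hence
\begin{equation*}
\frac{\ln|\la_j^+ - \la_j^-|^{-1}}{|\ln\veps_N|} \, = \, 1 \, - \, \frac{\ln 4}{|\ln \veps_N|} \, + \, \frac{2}{\ell N}\sum_{k\neq j}\ln|\la_j^- - \la_k^+| \;.
\end{equation*}
Since $\gamma N/|\ln\veps_N| = \gamma \cdot \frac{2}{\ell} = \frac{1}{M_1}$, taking the maximum with $\gamma N$ turns this into
\begin{equation*}
\ln\big\{\ln|\la_j^+ - \la_j^-|^{-1}\vee\gamma N\big\} \, = \, \ln|\ln\veps_N| \, + \, \ln_{M_1}\Big(1+\frac{2}{\ell N}\sum_{k\neq j}\ln|\la_j^- - \la_k^+| \, + \, \mathrm{O}(N^{-1})\Big) \, .
\end{equation*}
The $\mathrm{O}(N^{-1})$ term will eventually be absorbed into $\eta$.

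Next I make the pair interaction depend on a single family $\bs{\la}_N^\pm$. Proposition \ref{kappeler} gives $|\la_k^+ - \la_k^-|\leq C/N$, so each argument $|\la_j^- - \la_k^+|$ is within $2C/N$ of $|\la_j^\pm - \la_k^\pm|$. Using $\ln|t+s| \leq \ln_{M_0}|t| + M_0|s|$, each logarithm is at most $\ln_{M_0}|\la_j^\pm-\la_k^\pm| + 2CM_0/N$. After averaging over $k$, this error is $\mathrm{O}(M_0/N)$, so it is absorbed in $\eta$ for $N$ large.

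I then pass from $\ln_{M_0}$ to $f_{\kappa,\mathbf{M}}$ using the pointwise inequality
\begin{equation*}
\ln_{M_0}|x-y| \, \leq \, f_{\kappa,\mathbf{M}}(x,y) \, + \, \frac{\ell\kappa}{4M_1}\big(V(x)+V(y)\big) \, .
\end{equation*}
The missing diagonal term $k=j$ is added back at cost at most $M_2/N$ (since $f \geq -M_2$), again absorbed in $\eta$. At this stage the argument of $\ln_{M_1}$ is bounded by $a_j+b_j$, where
\begin{equation*}
a_j \, = \, 1+\frac{2}{\ell}\int f_{\kappa,\mathbf{M}}(\la_j^\pm,y)\,\dd\op{L}_N^{(\bs{\la}_N^\pm)}(y)+\eta \, , \qquad b_j \, = \, \frac{\kappa}{2M_1}\Big(V(\la_j^\pm) + \frac{1}{N}\sum_k V(\la_k^\pm)\Big) \, .
\end{equation*}

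To peel off $b_j$, I use that $\ln_{M_1}$ is nondecreasing and $M_1$-Lipschitz, which gives $\ln_{M_1}(a_j+b_j)\leq \ln_{M_1}(a_j)+M_1 b_j$. Summing over $j$ produces $\sum_j\ln_{M_1}(a_j) = N\times$(first term of $J_{\bs{\mf{r}}}$) plus a penalty $\kappa\sum_j V(\la_j^\pm)$. This penalty must be cancelled by the exponential factor $\prod_j \e{e}^{-(1-\kappa/2)V(\la_j^\pm)+W(\la_j^\pm)}$. Finally, $-(1-\kappa)V+W\leq \max\{-(1-\kappa)V+W,-M_3\}$ produces the second term of $J_{\bs{\mf{r}}}$.

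The main obstacle is this bookkeeping of the $V$-weights, since the naive Lipschitz bound costs $\kappa\sum V$ while only $\frac{\kappa}{2}\sum V$ is available. I would first try the sharper concavity bound $\ln(a+b)\leq\ln a+b/a$ on the region $a\geq 1$. I would also split $b_j$ according to whether the $V(\la_k)$ average is charged to index $j$ or to $k$, so that each $V(\la_j^\pm)$ is charged at most $\frac{\kappa}{2}$. If the constants still do not close, I would rescale the weight $\frac{\ell\kappa}{4M_1}$ inside $f$, which only changes $J_{\bs{\mf{r}}}$ harmlessly. If the roles of $\la^+$ and $\la^-$ in the $V$-terms do not match, Corollary \ref{Vrelation} lets me exchange them at cost $\e{O}(1)+\frac{C}{N}\sum V$, again absorbed for $N$ large.
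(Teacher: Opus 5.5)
\textbf{You follow the paper's route, and the obstacle you hit is real: the lemma as stated is false.}

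Up to the last step your argument is the paper's proof almost verbatim:
the identity from $P^+-P^-=-4\veps_N$ (the paper simply drops $-\ln 4/|\ln\veps_N|$ by monotonicity);
Proposition~\ref{kappeler} with the $M_0$-Lipschitz bound to pass to a single family;
insertion of the $V$-weights;
and the $M_1$-Lipschitz bound for $\ln_{M_1}$.
The only cosmetic difference is the diagonal term $k=j$. The paper restores it before introducing $f_{\kappa,\mathbf{M}}$, via $\ln_{M_0}|\la_j^{\ups}-\la_j^{-\ups}|=-\ln M_0$; you restore it afterwards, via $f_{\kappa,\mathbf{M}}\geq -M_2$.

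Your count at the last step is the correct one. Lowering the $\ln_{M_1}$-argument by $\frac{\kappa}{2M_1}\big(V(x)+\int V\,\dd\op{L}_N\big)$ costs $\frac{\kappa}{2}V(x)+\frac{\kappa}{2}\int V\,\dd\op{L}_N$. After integrating in $x$ this is $\kappa\int V\,\dd\op{L}_N$. The paper records this cost as $\frac{\kappa}{2}\int V\,\dd\op{L}_N$, and that arithmetic slip is exactly what makes the exponents add up to $-(1-\kappa)V+W$.

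The inequality fails for admissible parameters. Fix $M_1$, take $\eta\leq 10^{-2}/M_1$, $M_0\gg M_1^2$, and $M_2,M_3$ large. Let $V$ equal $v=1/\kappa$ on an interval containing $[0;L]$, where $1+\frac{2}{\ell}\ln\frac{L}{4}=\frac{2}{M_1}$. The arcsine law $\mu$ of $[0;L]$ then satisfies $1+\frac{2}{\ell}\int\ln|x-y|\,\dd\mu(y)=\frac{2}{M_1}$ on its support.

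Let $\bs{\la}_N^+=\bs{\La}_N^+(\bs{\eta}_N)$, with $\bs{\eta}_N$ the $N$-quantiles of $\mu$. These are $\gtrsim N^{-2}$-separated, and $1+\frac{2}{\ell N}\sum_{k\neq j}\ln|\eta_j-\eta_k|\geq\frac{2}{M_1}-\mathrm{O}(\frac{\ln N}{N})$. So Lemma~\ref{Lemme map etaN to La+N et ses ptes} gives $\bs{\la}_N^+\in\mc{A}_N$, with $\bs{\la}_N^\pm$ exponentially close to $\bs{\eta}_N$.

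Compare the two sides along this sequence:
on the left, the $\ln_{M_1}$-arguments are $\geq\frac{2}{M_1}-o(1)$;
in $J_{\bs{\mf{r}}}$ they tend to $\frac{2}{M_1}-\frac{\kappa v}{M_1}+\eta+\mathrm{O}(M_0^{-1/2})=\frac{1}{M_1}+\eta+\mathrm{O}(M_0^{-1/2})$;
the $W$-terms coincide;
the $V$-terms differ by $\frac{\kappa}{2}v=\frac12$ in favour of the right-hand side.
Hence
\[
\liminf_{N\to+\infty}\frac{1}{N}\ln\frac{\mathrm{LHS}}{\mathrm{RHS}}\;\geq\;\ln 2-\ln\big(1+\eta M_1+\mathrm{O}(M_1M_0^{-1/2})\big)-\frac{1}{2}\;>\;0 \, .
\]

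\textbf{Which of your remedies can work.} No sharpening can rescue the argument as stated.
The concavity bound on $\{a_j\geq 1\}$ is useless exactly where the counterexample lives, namely $a_j\approx 1/M_1$, where the Lipschitz constant $M_1$ of $\ln_{M_1}$ is attained.
Redistributing the charges between $j$ and $k$ cannot change their total $\kappa\sum_j V(\la_j^\pm)$.
Corollary~\ref{Vrelation} has nothing to do with this factor $2$.

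Your third option is the correct repair. With weight $\frac{\ell\kappa}{8M_1}$ in place of $\frac{\ell\kappa}{4M_1}$ in $f_{\kappa,\mathbf{M}}$, the cost becomes $\frac{\kappa}{2}\int V\,\dd\op{L}_N$ and your proof closes verbatim. This proves a modified lemma, but the change is harmless downstream: Lemma~\ref{Jcontinuous}, Proposition~\ref{wldubc}, and the limits $M_2,M_0\nearrow+\infty$, $\kappa\searrow0$ in Lemma~\ref{lemme:wldubco} only use that this weight is a positive multiple of $\kappa$.
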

\begin{proof}
Starting from $P^+-P^-=4\veps_N$, one gets
\begin{align*}
&\pl{j=1}{N}     \big\{  \ln |\lambda_j^+ - \lambda_j^-|^{-1} \vee  \gamma N \big\} \\
&= |\ln(\veps_N)|^N \pl{j=1}{N}     \max\Big\{ 1  - \frac{\ln 4}{|\ln(\veps_N)|} +
\frac{1}{| \ln(\veps_N)|}
\sul{\substack{k=1 \\ k \neq j}}{N} \ln |\lambda_j^{\ups} - \lambda_k^{-\ups}|,  \frac{\gamma N}{|\ln(\veps_N)|} \Big\} \\
&\leq   |\ln(\veps_N)|^N \pl{j=1}{N}     \max\Big\{
1 + \frac{1}{| \ln(\veps_N)|} \sul{\substack{k=1 \\ k \neq j}}{N}
\ln |\lambda_j^{\ups} - \lambda_k^{-\ups} |,  \frac{\gamma N}{|\ln(\veps_N)|} \Big\} \\
&\leq   |\ln(\veps_N)|^N \pl{j=1}{N}     \max\Big\{ 1  +  \frac{1}{| \ln(\veps_N)|}
\sul{\substack{k=1 \\ k \neq j}}{N} \ln_{M_0} |\lambda_j^{\ups} - \lambda_k^{-\ups} |,  \frac{\gamma N}{|\ln(\veps_N)|} \Big\} \\
&=   |\ln(\veps_N)|^N \pl{j=1}{N}     \max\Big\{ 1  + \frac{1}{| \ln(\veps_N)|} \sul{\substack{k=1 }}{N}
\ln_{M_0} |\lambda_j^{\ups} - \lambda_k^{-\ups} |  + \frac{\ln M_0}{ |\ln(\veps_N)|},  \frac{\gamma N}{|\ln(\veps_N)|} \Big\} \;.
\end{align*}
Note that, in the intermediate steps, we have invoked  Proposition \ref{kappeler} which ensures that for $N$ sufficiently large, one has
$|\lambda_j^{\ups} - \lambda_j^{-\ups}| \leq \frac{1}{M_0}$ .  Again, by Proposition \ref{kappeler} and the Lipschitz property of $\ln_{M_0}$, we have
$$ \frac{1}{N}\sul{\substack{k=1 }}{N} \ln_{M_0} |\lambda_j^{\ups} - \lambda_k^{-\ups}| \, \leq \,
\Int{\mathbb{R}}{} \ln_{M_0} |\lambda_j^{\ups} - y| \, \mathrm{d}\op{L}_N^{(\boldsymbol{\lambda}_N^{\ups})}(y) + \frac{CM_0}{N} \, .$$
Further, given any fixed $\eta > 0$, one has that $\frac{ C M_0 }{ |\ln(\veps_N)| }  \, +  \, \frac{ \ln M_0 }{ |\ln(\veps_N)| }  \, \leq  \, \eta$
for $N$ sufficiently large. Hence
\bem
\prod_{j=1}^{N}     \max\{  \ln |\lambda_j^+ - \lambda_j^-|^{-1} , \gamma N \}   \\
\leq  |\ln(\veps_N)|^N
\exp\bigg\{  N\Int{\mathbb{R}}{}    \ln_{M_1} \Big(1 + \frac{2}{\ell} \Int{\mathbb{R}}{} \ln_{M_0} |x - y|\, \mathrm{d}
\op{L}_N^{(\boldsymbol{\lambda}_N^{\ups})}(y) + \eta\Big)  \, \mathrm{d}\op{L}_N^{(\boldsymbol{\lambda}_N^{\ups})}(x) \bigg\}\, .
\nonumber
\end{multline}
Finally, by using that the Lipschitz constant of $\ln_{M_1}$ is $M_1$ and that this function is defined on $\R$, one observes that
\begin{align*}
& \Int{\mathbb{R}}{} \ln_{M_1} \Big(1 + \frac{2}{\ell} \Int{\mathbb{R}}{} \ln_{M_0} |x - y|\,
\mathrm{d}\op{L}_N^{(\boldsymbol{\lambda}_N^{\ups})}(y) + \eta\Big)\, \mathrm{d}\op{L}_N^{(\boldsymbol{\lambda}_N^{\ups})} (x) \\
&\leq \Int{\mathbb{R}}{} \ln_{M_1} \Big(1 + \frac{2}{\ell}
\Int{\mathbb{R}}{} \Big[ \ln_{M_0} |x - y| -  \frac{\ell \kappa}{4M_1}V(x) -  \frac{\ell \kappa}{4M_1} V(y) \Big] \,
\mathrm{d} \op{L}_N^{(\boldsymbol{\lambda}_N^{\ups})}(y)  + \eta\Big) \, \mathrm{d}\op{L}_N^{(\boldsymbol{\lambda}_N^{\ups})} (x) \\
 &\quad + \frac{\kappa}{2} \Int{\mathbb{R}}{} V(x) \, \mathrm{d}\op{L}_N^{(\boldsymbol{\lambda}_N^{\ups})}(x) \, .
\end{align*}
\end{proof}
To move forward, we need to introduce a few auxiliary integrals. Given $U\in \mc{C}^0(\R)$ growing fast enough at infinity
and exponentially integrating to unity
\beq
U(x) \,  \geq \,   \frac{4}{7}  \ln^{14} (|x|+1)  + 2 \ln(|x|+1)+ \tilde{C} \quad \e{and} \quad
\Int{\mathbb{R}}{} \mathrm{e}^{-U(x)}\, \mathrm{d}x = 1 \, ,
\label{ecriture hypothese sur U}
\enq
for some $\tilde{C} \in \R$, for any $p \in \mathbb{N}$, consider the two $N$-fold integrals
\beqa
A_N^{(p)}[U] & = & \Int{ \mathbb{R}^N }{} \prod_{j=1}^N \Big\{ 1 + \frac{6}{\gamma N  } \ln^2(|x_j| + 1)  \Big\}^p  \pl{k=1}{N} \mathrm{e}^{-  U(x_k)}\, \mathrm{d}\boldsymbol{x}_N
\label{definition integrale AN libre}\\
B_N^{(p)}[U]  & = & \Int{ \mathbb{R}^N }{} \prod_{j=1}^{N} \Big\{ 1 + \frac{6}{(\gamma N)^{\frac{5}{4}}} \sul{\substack{i=1 \\ i \neq j}}{N} \ln^2|x_i - x_j| \Big\}^p
\pl{k=1}{N} \mathrm{e}^{-  U(x_k) }\, \mathrm{d}\boldsymbol{x}_N  \;.
\label{definition integrale BN libre}
\eeqa
These have constrained counterparts
\beqa
A_{N, \mathsf{c}}^{(p)}[U] &= &
\Int{ \mathbb{R}^N }{}   \de \big( \ov{\bs{x}}_N  \big)   \, \prod_{j=1}^N \Big\{ 1 + \frac{6}{\gamma N  } \ln^2(|x_j| + 1)  \Big\}^p  \pl{k=1}{N}\mathrm{e}^{-  U (x_k) }
\, \mathrm{d}\boldsymbol{x}_N  \label{definition integrale AN contrainte}\\
B_{N, \mathsf{c}}^{(p)}[U]  & = &
\Int{ \mathbb{R}^N }{}   \de\big( \ov{\bs{x}}_N  \big)   \prod_{j=1}^{N} \Big\{ 1 + \frac{6}{(\gamma N)^{\frac{5}{4}}} \sul{\substack{i=1 \\ i \neq j}}{N} \ln^2|x_i - x_j| \Big\}^p
\pl{k=1}{N} \mathrm{e}^{-  U (x_k) }\, \mathrm{d}\boldsymbol{x}_N
\label{definition integrale BN contrainte}
\eeqa
with $\ov{\bs{x}}_N=\sul{a=1}{N} x_a$.
The estimates of the upper bound on balls will strongly rely on having appropriate upper bounds on the large-$N$ behaviour of the above integrals.

\begin{lemme}\label{ABbound} Fix $p \in \mathbb{N}$, $6^{\frac{4}{5}} \geq \ga>0$ and $U \in \mc{C}^0(\R)$ satisfying \eqref{ecriture hypothese sur U}.
Then there exists $C > 0$ such that, for $N$-large enough, the $N$-fold integrals defined in \eqref{definition integrale AN libre}-\eqref{definition integrale BN libre}
are upper bounded as
\beq
A_N^{(p)}[U] \, \leq \, C \qquad and  \qquad B_{N}^{(p)}[U]  \, \leq \, \mathrm{e}^{C N^{\frac{7}{8}}  } \; .
\enq
\end{lemme}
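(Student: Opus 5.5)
The plan is to treat the two integrals separately. $A_N^{(p)}[U]$ factorises completely, while $B_N^{(p)}[U]$ is a product over pairs. For $B_N^{(p)}[U]$ I would reduce to an exponential of a $U$-statistic and split the logarithm into a far-field part and a near-diagonal part. Throughout I would use that $U$ is continuous and bounded below, so $\mathrm{e}^{-U}$ is a bounded probability density. I would also use that \eqref{ecriture hypothese sur U} gives $\int \exp\{t \ln^2(1+|x|)\}\,\mathrm{e}^{-U(x)}\,\mathrm{d}x<+\infty$ for every $t>0$, since $U$ dominates $\tfrac47\ln^{14}(|x|+1)$. Every quantity $\int \ln^{2q}(1+|x|)\mathrm{e}^{-U(x)}\mathrm{d}x$ is then finite.

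For $A_N^{(p)}[U]$, Fubini gives $A_N^{(p)}[U]=\big(\int (1+\tfrac{6}{\gamma N}\ln^2(|x|+1))^p \mathrm{e}^{-U(x)}\mathrm{d}x\big)^N$. Expanding the binomial and using the finiteness of the $\ln$-moments, the inner integral equals $1+\mathrm{O}(1/N)$, because $\int\mathrm{e}^{-U}=1$. Hence $A_N^{(p)}[U]\le (1+C/N)^N\le \mathrm{e}^{C}$.

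For $B_N^{(p)}[U]$, I would first use $1+a\le \mathrm{e}^a$ to get
\[
B_N^{(p)}[U]\le \int_{\mathbb{R}^N} \exp\Big\{\epsilon_N\sum_{i<j}\ln^2|x_i-x_j|\Big\}\prod_{k}\mathrm{e}^{-U(x_k)}\,\mathrm{d}\bs{x}_N ,\qquad \epsilon_N=\frac{12p}{(\gamma N)^{5/4}} .
\]
I would then write $\ln^2|x-y|\le h_{\rm far}(x,y)+L^2+g_L(x-y)$ with $L=N^{1/16}$. Here $h_{\rm far}=2\ln^2(1+|x|)+2\ln^2(1+|y|)$ covers $|x-y|\ge1$, using $\ln|x-y|\le \ln(1+|x|)+\ln(1+|y|)$. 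The constant $L^2$ covers $\mathrm{e}^{-L}\le|x-y|<1$. The remainder $g_L(u)=\ln^2|u|\,\mathbbm{1}_{|u|<\mathrm{e}^{-L}}$ covers the near diagonal.

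The constant part contributes $\exp\{\epsilon_N N^2 L^2\}=\exp\{\mathrm{O}(N^{3/4+1/8})\}=\mathrm{e}^{\mathrm{O}(N^{7/8})}$, which is the source of the exponent $\tfrac78$. The far part becomes a product of one-body factors $\exp\{4\epsilon_N (N-1)\ln^2(1+|x_k|)\}$, with $\epsilon_N N=\mathrm{O}(N^{-1/4})$. These can be absorbed by Hölder's inequality into a single-site integral of the form $\big(\int \exp\{cN^{-1/4}\ln^2(1+|x|)\}\mathrm{e}^{-U}\big)^N\le (1+CN^{-1/4})^N=\mathrm{e}^{\mathrm{O}(N^{3/4})}$. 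The splitting between this factor and the near-diagonal factor would be done by Cauchy--Schwarz with exponent $2$, which only doubles the constants.

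The main obstacle is the near-diagonal factor $\mathbb{E}_{U^{\otimes N}}\big[\exp\{2\epsilon_N\sum_{i<j}g_L(x_i-x_j)\}\big]$, because $g_L$ is unbounded. I would write $\exp\{2\epsilon_N g_L\}=1+\phi$, where $\phi\ge0$ is supported on $\{|u|<\mathrm{e}^{-L}\}$. Since $\epsilon_N\to0$, one has $\int\phi(u)\,\mathrm{d}u\le 2\mathrm{e}^{-L}$. Expanding $\prod_{i<j}(1+\phi(x_i-x_j))$ as a sum over graphs $G$ on $\intn{1}{N}$, each forest can be integrated leaf by leaf using $\|\mathrm{e}^{-U}\|_\infty<\infty$. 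Each edge then costs at most $C\mathrm{e}^{-L}$.

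Graphs with cycles are the delicate point. There I would try to dominate by a spanning forest, using that for each vertex only $\mathrm{O}(1)$ neighbours can lie within $\mathrm{e}^{-L}$ with non-negligible probability. Alternatively, I would first apply Hölder's inequality in $j$: bound the product over pairs by $\prod_j \big(\mathbb{E}\exp\{N\epsilon_N\sum_{i\neq j} g_L(x_i-x_j)\}\big)^{1/N}$, and for fixed $x_j$ the variables $x_i$, $i\neq j$, are independent. Each factor then becomes $\big(1+\int(\mathrm{e}^{N\epsilon_N g_L}-1)\mathrm{d}u\,\|\mathrm{e}^{-U}\|_\infty\big)^{N-1}$, and since $N\epsilon_N g_L$ has exponential moments near $0$ this is $\le (1+C\mathrm{e}^{-L/2})^{N}=\mathrm{e}^{o(1)}$. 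I would expect this Hölder route to work and to be cleaner than the graph expansion.

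Collecting the three factors gives $B_N^{(p)}[U]\le \mathrm{e}^{CN^{7/8}}$. The restriction $\gamma\le 6^{4/5}$ only enters through the harmless normalisation of $\epsilon_N$.
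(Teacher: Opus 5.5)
Your treatment of $A_N^{(p)}[U]$ matches the paper's. Your one-body treatment of the far field is also fine; it even gives $\mathrm{e}^{\mathrm{O}(N^{3/4})}$ there, better than the paper's H\"older bound on $D_N$.

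\textbf{The near-diagonal part fails at your very first step.} Replacing $\prod_j\{1+\cdots\}^p$ by $\exp\{\epsilon_N\sum_{i<j}\ln^2|x_i-x_j|\}$ destroys integrability. For every $\epsilon>0$,
\[
\int_{|u|<1}\mathrm{e}^{\epsilon\ln^2|u|}\,\mathrm{d}u \;=\; 2\int_0^{+\infty}\mathrm{e}^{\epsilon t^2-t}\,\mathrm{d}t \;=\;+\infty .
\]
Since $\mathrm{e}^{-U}$ is continuous and positive, the right-hand side of your first display is therefore $+\infty$ for every $N\ge 2$. For the same reason two later claims are false:
\begin{itemize}
\item $\int\phi\le 2\mathrm{e}^{-L}$ fails: in fact $\int\phi=+\infty$, however small $\epsilon_N$ is.
\item $N\epsilon_N g_L$ does not have exponential moments near $0$: $|\ln|u||$ has exponential moments only of order $<1$, and $\ln^2|u|$ has none.
\end{itemize}
Both your graph expansion and your H\"older route inherit this defect.

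\textbf{Keeping the polynomial form does not rescue the H\"older-in-$j$ route.} Set $F_j=\big(1+\frac{c}{N^{5/4}}\sum_{i\neq j}\ln^2|x_i-x_j|\big)^p$. You would need $\mathbb{E}[F_j^N]\ge\mathbb{E}\big[(1+cN^{-5/4}\ln^2|x_1-x_j|)^{pN}\big]$. The region $|x_1-x_j|\le \mathrm{e}^{-2pN}$ alone contributes about $\mathrm{e}^{-2pN}(1+4cp^2N^{3/4})^{pN}=\mathrm{e}^{\frac{3p}{4}N\ln N+\mathrm{O}(N)}$. Hence $\prod_j(\mathbb{E}[F_j^N])^{1/N}\ge \mathrm{e}^{\frac{3p}{4}N\ln N-\mathrm{O}(N)}$, far above $\mathrm{e}^{CN^{7/8}}$. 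Raising each factor to the power $N$ over-amplifies rare close encounters. In the true integrand such an encounter only affects the factors of the few particles involved.

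\textbf{What is missing} is a way to localise close encounters while keeping the polynomial structure. The paper does this as follows:
\begin{itemize}
\item It splits $\ln^2=\mathcal{L}\mathrm{n}_0^2+\mathcal{L}\mathrm{n}_1^2$ and sends the far part to the one-body integral $D_N$ by Cauchy--Schwarz.
\item It bounds the near part $C_N^{(q)}[U]$ in Proposition \ref{CNbound} through an $\epsilon$-cluster decomposition with $\epsilon=N^{-3-3p/4}$.
\item Pairs in different clusters contribute at most $\ln^2\epsilon$ each.
\item Inside a cluster of size $n$, H\"older plus the Chernoff bound of Lemma \ref{chernoff} costs a factor $(CN^{3/8})^{\mathrm{O}(np)}$.
\item This cost is compensated by the cluster volume $(N\epsilon)^{n-1}$ and by the combinatorial count $\sum\prod_k 1/(n_k-1)!=K^{N-K}/(N-K)!$.
\end{itemize}
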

\begin{proof}

The first integral factorises directly into a product of one-dimensional integrals
\begin{align*}
A_N^{(p)}[U]  &= \Big[ \Int{\mathbb{R}}{} \Big\{ 1 + \frac{6}{\gamma N  }\ln^2(|x| + 1)  \Big\}^{p} \mathrm{e}^{-U(x)} \, \mathrm{d}x \Big]^N \, .
\end{align*}
The upper bounds on $U$ ensure that $\Int{\mathbb{R}}{} \ln^m(1+|x|) \, \mathrm{e}^{-U(x)} \, \mathrm{d}x < +\infty$ for every $m \geq 0$. Hence,
by using a binomial expansion one infers that $A_N^{(p)}[U]   \leq \big( 1 + \tf{C}{N} \big)^N \, \leq \, \ex{C}$.

To deal with $ B_{N}^{(p)}[U] $, one first decomposes $\ln^2|x| = \mc{L}\e{n}^2_0|x| + \mc{L}\e{n}^2_1|x|$ wherein
$$ \mc{L}\e{n}_0|x| = \mathbbm{1}_{\{ |x| \leq 1\}  } \ln|x|  \qquad \e{and} \qquad
\mc{L}\e{n}_1|x| = \mathbbm{1}_{\{ |x| \geq 1\} } \ln|x| \, . $$
Then, one has that $\mc{L}\e{n}^2_1|x-y| \, \leq \, 2 \ln^2(1+|x|) +  2\ln^2(1+|y|)$.
Since  $6\gamma^{-\frac{5}{4}} \geq 1$, $(1+x)^{6\gamma^{-\frac{5}{4}} }\ge 1+6\gamma^{-\frac{5}{4}} x$ for $x\ge 0$, so that the following bounds hold
\begin{align*}
&\pl{j=1}{N}\Big\{ 1 + \frac{6}{(\gamma N)^{\frac{5}{4}}} \sum_{\substack{i=1 \\ i \neq j}}^N \ln^2|x_i - x_j| \Big\}^p \, \leq \,
\pl{j=1}{N} \Big\{ 1 + \frac{1}{N^{\frac{5}{4}}} \sum_{\substack{i=1 \\ i \neq j}}^N  \ln^2|x_i - x_j| \Big\}^{6p \gamma^{-\frac{5}{4}} } \\
&\leq \pl{j=1}{N}  \Big\{ 1 + \frac{1}{N^{\frac{5}{4}}} \sum_{\substack{i=1 \\ i \neq j}}^N \mc{L}\e{n}_0^2|x_i - x_j| \Big\}^{ 6p\gamma^{-\frac{5}{4}} }
   \\
& \times \Big\{ 1 + \frac{2}{N^{\frac{5}{4}}} \sum_{\substack{i=1 }}^N  \ln^2(1+|x_i|) \Big\}^{ 6N p\gamma^{-\frac{5}{4}} }
\pl{j=1}{N} \Big\{ 1 + \frac{2}{N^{\frac{1}{4}}}   \ln^2(1+|x_j|) \Big\}^{ 6p\gamma^{-\frac{5}{4}} }  \, .
\end{align*}
By applying $1+\sul{s=1}{N} a_s \leq \pl{s=1}{N}(1+a_s)$ and using $\big( 1 + x \big)^{\ga} \, \leq \, \big( 1 + x / \be \big)^{\ga \be} $ one gets that
\bem
 \Big\{ 1 + \frac{2}{N^{\frac{5}{4}}} \sum_{\substack{i=1 }}^N  \ln^2(1+|x_i|) \Big\}^{ 6 N p\gamma^{-\frac{5}{4}} }
  \pl{j=1}{N} \Big\{ 1 + \frac{2}{N^{\frac{1}{4}}}   \ln^2(1+|x_j|) \Big\}^{ 6p\gamma^{-\frac{5}{4}} }   \\
\, \leq \,  \pl{j=1}{N} \Big\{ 1 + \frac{4}{N^{\frac{5}{4}}} \ln^2(1+|x_j|) \Big\}^{ 6 N p\gamma^{-\frac{5}{4}} } \;.
\end{multline}

The Cauchy--Schwarz inequality leads to
$B_{N}^{(p)}[U] \leq \Big\{ C_{N}^{(12 p\gamma^{-\frac{5}{4}})}[U] \, D_{N}^{(12 p \gamma^{-\frac{5}{4} },\frac{5}{4})}[U] \Big\}^{\f{1}{2}}$ where
\beq
C_{N}^{(q)}[U] \, = \, \Int{ \mathbb{R}^N }{}  \pl{j=1}{N}\Big( 1 + \frac{1}{N^{\frac{5}{4}}} \sum_{\substack{i=1 \\ i \neq j}}^N \mc{L}\e{n}_0^2|x_i - x_j| \Big)^{ q }
\pl{k=1}{N} \mathrm{e}^{-  U (x_k) }\, \mathrm{d}\boldsymbol{x}_N
\label{definition integrale multiple CNp}
\enq
is upper bounded by Proposition \ref{CNbound} as  $C_{N}^{(q)}[U] \,  \leq \, \mathrm{e}^{C N^{\frac{3}{4}} \ln^2 N}$ for some $C > 0$.
Further, one has for $q,\alpha>0$
\beq
D_{N}^{(q,\alpha)}[U] = \Int{\mathbb{R}^N}{}  \pl{i=1 }{N} \Big\{ 1 + \frac{4}{N^{\alpha}} \ln^2(1+|x_i|) \Big\}^{N q} \pl{k=1}{N} \mathrm{e}^{ - U(x_k)} \, \mathrm{d}\boldsymbol{x}_N \;,
\nonumber
\enq
where $q = 12 p \gamma^{-\frac{5}{4} }$.
By applying the $N$-product version of H\"older's inequality, one gets,
\beq
D_{N}^{(q,\alpha)}[U] \, \leq  \, \Int{\mathbb{R}}{} \Big\{ 1 + \frac{4}{N^{\alpha}}  \ln^2(1+|x|) \Big\}^{N^2 q} \mathrm{e}^{-U(x)} \, \mathrm{d}x \;.
\enq
Substituting the lower bound on $U$, yields, after the change of variables $u=\ln (1+|x|)$
\begin{align*}
D_{N}^{(q,\alpha)}[U] \leq 2\mathrm{e}^{-\tilde{C}} \Int{\R^+}{ } \mathrm{e}^{4 q N^{2-\alpha} u^2 - \frac{4}{7} u^{14} - u  } \, \mathrm{d}u
\leq 2\mathrm{e}^{-\tilde{C}}  \mathrm{e}^{\sup_{u \geq 0 }[4 q N^{2-\alpha} u^2 - \frac{4}{7} u^{14} ]} \, .
\end{align*}
The function $u \mapsto 4 q N^{2-\alpha} u^2 - \frac{4}{7} u^{14}$ admits its maximum on $\R^+$ at $u^* = (q N^{2-\alpha})^{\frac{1}{12}}$, thereby yielding 
$\sup_{u \geq 0 }\Big\{ 4 q N^{2-\alpha} u^2 - \frac{4}{7} u^{14} \Big\} = \frac{24}{7} q^{\frac{7}{6}}N^{\frac{7}{6}(2-\alpha)} \,$ and therefore
\begin{equation}\label{boundD}
D_{N}^{(q,\alpha)}[U] \leq 2\mathrm{e}^{-\tilde{C}} \mathrm{e}^{\frac{24}{7} q^{\frac{7}{6}}N^{\frac{7}{6}(2-\alpha)}}\,.\end{equation}
In particular $D_{N}^{(12 p \gamma^{-\frac{5}{4} }, \frac{5}{4})}[U]  \leq \mathrm{e}^{C N^{\frac{7}{8}}}$ for some constant $C >0$, $N$ being sufficiently large, which gives the desired  bound 
on $B_{N}^{(p)}[U] $.

\end{proof}
\begin{lemme}\label{Acond}
Let $p \in \mathbb{N}$ and $\ga>0$ be fixed while  $U\in \mc{C}^0(\R)$ satisfies  \eqref{ecriture hypothese sur U}.
Then there exists a $C> 0 $ such that the $N$-fold integral introduced in \eqref{definition integrale AN contrainte}
\begin{align*}
&A_{N, \mathsf{c}}^{(p)}[U]   \leq C \, .
\end{align*}
\end{lemme}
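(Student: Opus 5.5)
The plan is to reduce the constrained integral to a one-dimensional density estimate for a sum of i.i.d. variables. Set $g(x) = \{1 + \tfrac{6}{\gamma N}\ln^2(|x|+1)\}^p \mathrm{e}^{-U(x)}$. Then $A_{N,\mathsf{c}}^{(p)}[U] = g^{*N}(0)$, the $N$-fold convolution of $g$ evaluated at the origin. Writing $Z_N = \int_{\mathbb{R}} g$, the factorisation argument of Lemma \ref{ABbound} already gives $Z_N^N \leq \mathrm{e}^{C}$ uniformly in $N$. So it suffices to show that the density at $0$ of $S_N = X_1 + \dots + X_N$, with $X_k$ i.i.d. of law $g/Z_N$, stays bounded.

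Integrating out $x_N$ through the delta gives
\begin{equation*}
A_{N,\mathsf{c}}^{(p)}[U] = \int_{\mathbb{R}^{N-1}} g\Big(-\sum_{k=1}^{N-1} x_k\Big) \prod_{k=1}^{N-1} g(x_k)\, \mathrm{d}\boldsymbol{x}_{N-1} \leq \|g\|_{L^\infty(\mathbb{R})}\, Z_N^{N-1}.
\end{equation*}
Since $U$ is continuous and tends to $+\infty$ by \eqref{ecriture hypothese sur U}, it is bounded below. Also $\mathrm{e}^{-U(x)/2}(1+\ln^2(1+|x|))^p$ is bounded, because $U$ grows faster than any power of $\ln(1+|x|)$. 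With $6/(\gamma N) \leq 1$ for $N$ large, this gives a uniform bound $\|g\|_{L^\infty(\mathbb{R})} \leq C$. Combined with $Z_N^{N-1} \leq \max(1, Z_N)^N \leq \mathrm{e}^{C}$, which is the estimate from Lemma \ref{ABbound}, the claim $A_{N,\mathsf{c}}^{(p)}[U] \leq C$ follows.

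The only point to check is that the binomial-expansion argument in Lemma \ref{ABbound} really gives $Z_N \leq 1 + C/N$. This rests on $\int \mathrm{e}^{-U} = 1$ together with the finiteness of the log-moments $\int \ln^{2m}(1+|x|)\mathrm{e}^{-U(x)}\,\mathrm{d}x$, which holds by \eqref{ecriture hypothese sur U}. Hence $Z_N^{N-1} \leq \mathrm{e}^{C}$. I expect no real obstacle: the delta constraint costs at most a sup-norm factor, and no local limit theorem is needed because we only want an upper bound.
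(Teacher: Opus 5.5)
Your argument is correct. It takes a genuinely simpler route than the paper's, and it relies less on the hypothesis on $U$.

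\textbf{The paper's route.} After integrating out $x_N$ through the delta function, the paper bounds only the density factor $\mathrm{e}^{-U(-\bar{x}_{N-1})}$ by a constant. It keeps the unbounded weight $\{1+\frac{6}{\gamma N}\ln^2(|\bar{x}_{N-1}|+1)\}^p$ evaluated at the sum, and then applies Cauchy--Schwarz. The first factor is controlled by Lemma~\ref{ABbound} with $p$ replaced by $2p$. The second factor is treated as $\mathbb{E}[f(Y)]$ with $Y=|X_1+\dots+X_{N-1}|$, using a Chebyshev tail bound and an integration by parts.

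\textbf{Your route.} You keep the weight and the density together at the eliminated coordinate. Since $g=w\,\mathrm{e}^{-U}$ is bounded uniformly in $N$, the constraint costs only a factor $\|g\|_{L^\infty(\mathbb{R})}$. Everything then reduces to $Z_N^{N-1}\le Z_N^{N}=A^{(p)}_N[U]$, using $Z_N\ge 1$ because $w\ge 1$. This is exactly the first half of Lemma~\ref{ABbound}, whose binomial-expansion argument works for every $\gamma>0$, not only $\gamma\le 6^{4/5}$.

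\textbf{What your route buys.} It avoids the doubling $p\to 2p$ and any concentration estimate for the sum. It also needs less of the hypothesis on $U$. Boundedness of $g$ and finiteness of the log-moments already follow from $U\ge 2\ln(1+|x|)+\tilde C$. By contrast, the paper's Chebyshev step needs $\mathrm{var}(X_1)<\infty$, and that relies on the super-polynomial decay supplied by the $\ln^{14}$ term. The paper's approach would matter only if the weight at the constrained variable could not be absorbed by $\mathrm{e}^{-U}$, which is not the situation here.
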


\begin{proof}
Evaluating the $\delta$-function over the final variable $x_N$, applying the Cauchy--Schwarz inequality, and observing that $\mathrm{e}^{-U}$ is bounded, one finds
\begin{align*}
A_{N, \mathsf{c}}^{(p)}[U]  &\leq C \Bigg\{ \Int{\mathbb{R}^{N-1} }{} \prod_{j=1}^{N-1} \Big\{ 1 + \frac{6}{\gamma N  } \ln^2(|x_j| + 1)  \Big\}^{2p}
\pl{k=1}{N-1} \mathrm{e}^{-  U(x_k)}\, \mathrm{d}\boldsymbol{x}_{N-1}  \Bigg\}^{\f{1}{2}} \\
    &\quad \times \Bigg\{ \Int{\mathbb{R}^{N-1} }{} \Big\{ 1 + \frac{6}{\gamma N  } \ln^2(|x_1 + \dots + x_{N-1}| + 1)  \Big\}^{2p}
    \pl{k=1}{N-1}\mathrm{e}^{- U(x_k)}\, \mathrm{d}\boldsymbol{x}_{N-1} \Bigg\}^{\f{1}{2}}  \, .
\end{align*}
The first integral may be upper bounded by virtue of Lemma \ref{ABbound}.  Thus, one only needs to bound the second integral.
Consider a sequence of real-valued iid random variables $\{ X_i \}_{i=1}^N$, each distributed with density $\mathrm{e}^{-U}$.
Then introduce $Y = |X_1 + \dots + X_{N-1}|$ and define $f(x) = \big( 1 + \frac{6}{\gamma N} \ln^2(1+x) \big)^p$. $f$ is smooth on $\R^+$.
Then the second integral arising in the Cauchy--Schwarz estimate corresponds to $\mathbb{E}[f(Y)]$.
Moreover, for $x>(N-1) \mathbb{E}[X_1] $, Chebyshev's inequality implies
\begin{eqnarray*}
\mathbb{P}[ \, Y \geq x ]&\le& \mathbb{P} \Big[  \big| \sul{a=1}{N-1} X_a -  \mathbb{E}[X_a] \big| \geq x -(N-1) \mathbb{E}[X_1]  \Big]  \\
&\le  & \frac{1 }{(x-(N-1)  \mathbb{E}[X_1] )^{2}} \cdot  \mathrm{var}\big( \sul{a=1}{N} X_a \big) \, = \, \frac{  N }{(x-(N-1)  \mathbb{E}[X_1] )^{2}}   \mathrm{var}\big( X_1 \big)\,\end{eqnarray*}
where we note that $\mathrm{var}(X_1)<+\infty$ owing to the hypotheses on $U$. Moreover, 
an integration by parts yields
\begin{eqnarray*}
 \mathbb{E}[ f(Y)] &=  &
 \Int{0}{+\infty} f^\prime(x) \mathbb{P}\big[ Y  \geq x  \big] \, \mathrm{d}x + f(0) \\
 &\le & f((N-1) \mathbb{E}[X_1]+N )+\Int{(N-1) \mathbb{E}[X_1]+N}{+\infty} f'(x)   \, \frac{  N  \mathrm{var}\big( X_1 \big)}{(x-(N-1)  \mathbb{E}[X_1] )^{2}}  \, \mathrm{d}x
\, .\end{eqnarray*}
where we bounded $\mathbb{P}\big[ Y  \geq x  \big]$ from above by one on $[0, (N-1) \mathbb{E}[X_1]+N]$, using that $f'$ is non-negative.
Furthermore, $f(CN)$ is uniformly bounded in $N$ and the upper bound 
$f^\prime(x) \leq \frac{12p}{\gamma N} (1+ \frac{6}{\gamma N} \ln^2(1+x))^{p-1} $ shows that the last term in the above right-hand side is also uniformly bounded. 
\end{proof}

\begin{lemme}\label{Bcond}
Let $p \in \mathbb{N}$, $0 < \eps \leq \frac{1}{2}$ $\kappa>0$, let $V$ satisfy \ref{potentialh1} and \ref{potentialh2} and let $W$
be as introduced in \eqref{Wchoice}. 
Then, there exists $C>0$ such that the $N$-fold integral introduced in \eqref{definition integrale BN contrainte}
admits the upper bound $B_{N, \mathsf{c}}^{(p)}[W+\tfrac{\kappa}{4}V]\,  \leq \,  \mathrm{e}^{ C N^{ \frac{7}{8} }  }$ for $N$ large enough. This $C$ may depend on $p$.
\end{lemme}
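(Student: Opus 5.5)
The plan is to mimic the proof of Lemma \ref{ABbound} for $B_N^{(p)}$, and to handle the delta constraint the way Lemma \ref{Acond} does. Write $U = W+\tfrac{\kappa}{4}V$. Up to normalisation, $\mathrm{e}^{-U}$ is proportional to $\mathrm{e}^{\phi - (\eps+\kappa/4)V}$ with $\phi$ bounded. By \ref{potentialh2}, $U(x)\ge c|x|^\theta - C$, so $\mathrm{e}^{-U}$ is bounded and has all moments finite. It also dominates any polylogarithmic growth: after subtracting a constant and renormalising (which costs only a factor $\mathrm{e}^{O(N)}$-free constant per variable, i.e. a factor $Z^N$ that I will absorb), $U$ satisfies the lower bound in \eqref{ecriture hypothese sur U}. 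More precisely, I will write $\mathrm{e}^{-U} = Z\,\mathrm{e}^{-\tilde U}$ with $\int \mathrm{e}^{-\tilde U}=1$, where $Z=\int \mathrm{e}^{-U}\le 1$ because $\tfrac\kappa4 V\ge0$ and $\int \mathrm{e}^{-W}=1$. So $Z^N\le 1$, and it suffices to work with $\tilde U$, which satisfies \eqref{ecriture hypothese sur U} for a suitable $\tilde C$.

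First, exactly as in the proof of Lemma \ref{ABbound}, I will bound the integrand pointwise by
\[
\pl{j=1}{N}\Big(1+N^{-5/4}\sul{i\neq j}{}\mc{L}\e{n}_0^2|x_i-x_j|\Big)^{q'}\pl{j=1}{N}\Big(1+\tfrac{4}{N^{5/4}}\ln^2(1+|x_j|)\Big)^{Nq'}.
\]
The first factor is the short-distance part $F_0(\bs x_N)$ and the second, $F_1(\bs x_N)$, is the large-distance part. Then I integrate $x_N$ out against the delta, setting $x_N=-\sum_{a<N}x_a$, and apply Cauchy--Schwarz in the remaining $N-1$ variables with weight $\pl{k<N}{}\mathrm{e}^{-\tilde U(x_k)}$. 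The bounded factor $\mathrm{e}^{-\tilde U(x_N)}\le C$ is pulled out. This reduces the problem to estimating $\int F_0^2$ and $\int F_1^2$ with $x_N$ replaced by minus the sum of the others.

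For $F_1^2$: the factors with $j<N$ are handled by the H\"older argument leading to \eqref{boundD}, giving $\mathrm{e}^{CN^{7/8}}$. The extra factor $(1+4N^{-5/4}\ln^2(1+|\sum_{a<N}x_a|))^{2Nq'}$ I will bound using $\ln(1+|\sum x_a|)\le \ln N+\max_a\ln(1+|x_a|)\le \ln N + \sum_a \ln(1+|x_a|)$, or more simply $\ln^2(1+|\sum x_a|)\le 2\ln^2 N+2\sum_a\ln^2(1+|x_a|)$. This turns the extra factor into $\mathrm{e}^{O(N^{-1/4}\ln^2N)}$ times a product of one-variable factors of the same type as in $D_N$. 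These are absorbed by a further H\"older step and again controlled by \eqref{boundD}, since the exponent stays $O(N^{2-5/4})$ per variable.

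For $F_0^2$, the terms not involving index $N$ are the unconstrained quantity $C^{(2q')}_{N-1}$, bounded by Proposition \ref{CNbound} by $\mathrm{e}^{CN^{3/4}\ln^2N}$. The main obstacle is the terms involving $x_N=-\sum_{a<N} x_a$. These are singular when $x_i$ approaches $-\sum_{a<N}x_a$, and the variables are no longer independent. I would handle them first by another Cauchy--Schwarz that separates the $j=N$ row and the $i=N$ entries. Then, for fixed $x_1,\dots,x_{N-2}$, I would change variables $x_{N-1}\mapsto s=\sum_{a<N}x_a$. This has unit Jacobian, and the density of $x_{N-1}$ is bounded, so the constrained variable can be treated as an additional free variable with a bounded (not normalised) weight. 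I then re-run the argument of Proposition \ref{CNbound} with a bounded density in place of $\mathrm{e}^{-U}$ for one coordinate. Its proof only uses local integrability of $\ln^{2m}$ against bounded densities, which gives a bound $\mathrm{e}^{CN^{3/4}\ln^2 N}$ again. Combining all factors yields $B_{N,\mathsf c}^{(p)}[W+\tfrac\kappa4V]\le \mathrm{e}^{CN^{7/8}}$.
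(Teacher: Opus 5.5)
Your treatment of the large-distance factor $F_1$ is fine: the bound $\ln^2(1+|\sum_{a<N}x_a|)\le 2\ln^2N+2\sum_a\ln^2(1+|x_a|)$ does reduce it to $D_N$-type integrals. The step you flag as the main obstacle, however, does not go through as described.

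After the change of variables $x_{N-1}\mapsto s$, the weight carried by $s$ is $\mathrm{e}^{-\tilde U(s-\sum_{a\le N-2}x_a)}$. If you replace it, as proposed, by a bounded but non-normalised weight, the $s$-integral diverges. The reason is that the interaction factor $\prod_{j}(1+N^{-5/4}\mc{L}\e{n}_0^2|x_j+s|)^{m}$ is $\ge 1$ for every $s\in\R$.

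Your claim that the proof of Proposition \ref{CNbound} "only uses local integrability of $\ln^{2m}$ against bounded densities" is also inaccurate. That proof integrates the leftmost point of each $\epsilon$-cluster over an unbounded range against $\mathrm{e}^{-U}$, using $\int\mathrm{e}^{-U}=1$; this is where the $1/K!$ comes from. If instead you keep the true weight, it is normalised in $s$ but depends on all of $x_1,\dots,x_{N-2}$, so the factorisation over clusters that the proof relies on is lost.

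In addition, the pair $(N-1,N)$ produces $\mc{L}\e{n}_0^2|2s-\sum_{a\le N-2}x_a|$. This is not a function of a difference of the new coordinates, so it does not fit the structure of Proposition \ref{CNbound} either. A genuinely new argument is needed here, for instance writing the star-product as $1+(\prod-1)$ to localise $s$ within distance $1$ of some $x_{j_0}$. Your sketch does not supply it.

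The missing idea, which the paper uses and which makes all of this unnecessary, is translation invariance. The integrand $\prod_j\{1+\ldots\sum_{i\neq j}\ln^2|x_i-x_j|\}^p$ depends only on differences. The paper's steps are:
\begin{enumerate}
\item After eliminating $x_N$, insert $1=\int h(\xi)\,\dd\xi$ with $h(\xi)=N^2\mathbbm{1}_{\{|\xi|\le N^{-2}/2\}}$.
\item Shift $x_i\to x_i-\xi/N$, then $\xi\to\xi+\ov{\bs{x}}_{N-1}$, and relabel $\xi$ as $x_N$. This turns the constrained integral into an unconstrained $N$-fold one with weights $\mathrm{e}^{-U(x_k-\ov{\bs{x}}_N/N)}\,h(\ov{\bs{x}}_N)$.
\item Since $|\ov{\bs{x}}_N/N|\le N^{-3}/2$, hypothesis \ref{potentialh1} and the Lipschitz bound on $\phi$ show the shifted weights cost only $\mathrm{e}^{O(1)}$. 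Together with $h\le N^2$, the constrained integral is at most $CN^2$ times an unconstrained integral of type $B_N^{(p)}$.
\item Lemma \ref{ABbound} then concludes.
\end{enumerate}
This route never breaks the pairwise structure, so neither $F_0$ nor $F_1$ needs separate treatment.
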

\begin{proof}
One begins by evaluating the $\delta$-function with respect to $x_N$.
Then, for $h \in L^1(\R)$ being the density of a probability measure, one obtains
\bem
B_{N, \mathsf{c}}^{(p)}[W+\tfrac{\kappa}{4}V]  \, = \,
\Int{\mathbb{R}^{N-1}\times \mathbb{R} }{}
\prod_{j=1}^{N-1} \bigg\{ 1 + \frac{6}{(\gamma N)^{\frac{5}{4}}} \sum_{\substack{i=1 \\ i \neq j}}^{N-1} \ln^2|x_i - x_j|
+ \frac{6}{(\gamma N)^{\frac{5}{4}}} \ln^2\big| x_j + \ov{\bs{x}}_{N-1} \big|   \bigg\}^p \\
         \times \Big\{ 1 + \frac{6}{(\gamma N)^{\frac{5}{4}}} \sum_{i=1}^{N-1} \ln^2\big|x_i + \ov{\bs{x}}_{N-1} \big|  \Big\}^p
    \pl{k=1}{N-1} \mathrm{e}^{ - ( W + \frac{\kappa}{4} V)(x_k) } \\
        \times \mathrm{e}^{- ( W + \frac{\kappa}{4} V)(-\ov{\bs{x}}_{N-1} ) } \, h(\xi) \, \mathrm{d}\boldsymbol{x}_{N-1} \, \mathrm{d}\xi \, ,
\end{multline}
where we fix the notation $\ov{\bs{x}}_ {r}=\sul{s=1}{r} x_s$.
At this stage, one makes the substitutions: $x_i \to x_i -  \xi/N$ for all $i \in \intn{1}{N-1}$, followed by  $\xi \to \xi +  \ov{\bs{x}}_ {N-1} $.
Finally, one relabels $\xi$ as $x_N$. All-in-all, one gets
\bem
B_{N, \mathsf{c}}^{(p)}[W+\tfrac{\kappa}{4}V]  \, = \,
  \Int{\mathbb{R}^{N}  }{}  \prod_{j=1}^{N} \Big\{ 1 + \frac{6}{(\gamma N)^{\frac{5}{4}}} \sum_{\substack{i=1 \\ i \neq j}}^N \ln^2|x_i - x_j| \Big\}^p \\
\times \pl{k=1}{N}\mathrm{e}^{-  (W + \frac{\kappa}{4} V)(x_k - \f{ \overline{\bs{x}}_N}{N} ) }  h(\overline{\bs{x}}_N) \,  \mathrm{d}\boldsymbol{x}_N \;.
\end{multline}
At this stage, one makes the choice $h(\xi) = N^2 \mathbbm{1}_{\{ |\xi| \leq \frac{1}{2}N^{-2} \} }$, thus implying that the integration domain is restricted to
$|\overline{\bs{x}}_N| \leq N^{-2}/2$.
By \ref{potentialh1}, it holds that on this domain
\begin{align*}
-(W  + \frac{\kappa}{4}V)(x_k - \f{ \overline{\bs{x}}_N}{N}  ) \,  \leq \,  - W(x_k)
+ \frac{(\eps + \frac{\kappa}{4})C_2 + \| \phi \|_{\mathrm{Lip}}}{N}  + \Big( \frac{(\eps + \frac{\kappa}{4})C_1}{N} - \frac{\kappa}{4} \Big) V(x_k) \, .
\end{align*}
For  sufficiently large $N$, one furthermore has that $ \frac{(\eps + \frac{\kappa}{4})C_1}{N} - \frac{\kappa}{4} \leq 0$.
Inserting this above shows that  that $B_{N, \mathsf{c}}^{(p)}[W+\tfrac{\kappa}{4}V]  \leq C N^2 B_{N}^{(p)}[W+\tfrac{\kappa}{4}V]  $.
The bound for $B_{N}^{(p)}[U]$ provided by  Lemma \ref{ABbound} completes the proof.
\end{proof}

\subsubsection{Weak large deviation upper bound for the constrained model}

We first derive a preliminary  weak  large deviation upper bound for the constrained model. 

\begin{prop}\label{wldubc}
 Let $\mu \in   \mathcal{M}_{1,\mathsf{c}}(\mathbb{R})$. For any $\kappa, \eta, \tau>0$ small enough and $M_a>0$  large enough, there exists $C>0$ such that for $N$ large enough and $\delta$ small enough 
\beq
\overline{\Pi}_{N,\mathsf{c}}[B_\mathsf{c}(\mu,\delta)] \leq   \mathrm{e}^{C N^{\frac{7}{8}} }   \mathrm{e}^{N (J_{\bs{\mf{r}}}[\mu]+ \tau)}
\enq
for some $C >0$ and for $N$ sufficiently large. Here, $J_{\bs{\mf{r}}}$ is as introduced in \eqref{definition fnelle Jr} while $\bs{\mf{r}} \, = \, (\eta, \kappa,\mathbf{M})$

\end{prop}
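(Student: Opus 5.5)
We start from the definition of $\overline{\Pi}_{N,\mathsf{c}}[B_\mathsf{c}(\mu,\delta)]$ and insert the density bound of Proposition \ref{integralupperbound}, taking $\gamma=\frac{\ell}{2M_1}$ as in Lemma \ref{regularisedbound}. The Vandermonde factor $\Delta(\bs{\lambda}_N^+)$ in the density combines with $\{\Delta(\bs{\lambda}_N^+)\Delta(\bs{\lambda}_N^-)\}^{-1/2}$ to give $\{\Delta(\bs{\lambda}_N^+)/\Delta(\bs{\lambda}_N^-)\}^{1/2}$. We also split every weight symmetrically,
\[
\mathrm{e}^{-V(\lambda_k^+)}\le \mathrm{e}^{-\frac12 V(\lambda_k^+)-\frac12 V(\lambda_k^-)+\frac12|V(\lambda_k^+)-V(\lambda_k^-)|},
\]
and control the error term with Corollary \ref{Vrelation}. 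This costs at most a factor $\mathrm{e}^{\tilde C+\frac{C}{N}\sum_k (V(\lambda_k^+)+V(\lambda_k^-))}$, which is absorbed into a small fraction of the $\kappa$-margin of $V$ once $N$ is large. By Cauchy--Schwarz on the $\bs{\lambda}_N^+$-integral, the integrand then factorises into a product of a "$+$" part and a "$-$" part. Each part has the form
\[
\prod_j\big\{\ln|\lambda_j^+-\lambda_j^-|^{-1}\vee\gamma N\big\}\,\mathrm{e}^{-V(\lambda_j^\pm)}\;\mathcal{T}(\bs{\lambda}_N^\pm).
\]
The "$-$" part additionally carries the Jacobian ratio $\Delta(\bs{\lambda}_N^+)/\Delta(\bs{\lambda}_N^-)$, so after squaring we obtain $\big[\int(\cdots)^{\pm}\,\mathrm{d}\bs{\lambda}_N^+\big]^{1/2}$ for each sign.

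For the "$-$" factor we use Proposition \ref{jacobian}: $\Delta(\bs{\lambda}_N^+)/\Delta(\bs{\lambda}_N^-)$ is precisely the Jacobian of $\bs{\lambda}_N^-\mapsto\bs{\lambda}_N^+$. We therefore change variables to $\bs{\lambda}_N^-$, and the constraint $\delta(\ov{\bs{\lambda}}_N^+)$ becomes $\delta(\ov{\bs{\lambda}}_N^-)$ because $P^+$ and $P^-$ differ by a constant. After this, both factors have the same structure in a single vector $\bs{x}_N=\bs{\lambda}_N^\pm$, with the indicator $\op{L}_N^{(\bs{\lambda}_N^+)}\in B(\mu,\delta)$. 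By Proposition \ref{kappeler} this indicator implies $\op{L}_N^{(\bs{x}_N)}\in B(\mu,2\delta)$ for $N$ large. In each factor we write
\[
\mathrm{e}^{-V}=\mathrm{e}^{-V+\frac{\kappa}{2}V+W}\cdot \mathrm{e}^{-W-\frac{\kappa}{2}V}
\]
and apply Lemma \ref{regularisedbound} to the first piece. This bounds it by $|\ln\veps_N|^N\mathrm{e}^{NJ_{\bs{\mf{r}}}[\op{L}_N^{(\bs{x}_N)}]}$, except that Lemma \ref{regularisedbound} contains $-(1-\frac\kappa2)V+W$ without the truncation at $-M_3$. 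Since truncation only increases the value, that inequality holds a fortiori. The prefactor $\frac{(N-1)!}{|\ln 2\veps_N|^{N-1}}\,|\ln\veps_N|^N$ is at most $\mathrm{e}^{O(N)}$, and we combine it with the $(N-1)!$ coming from ordering: desymmetrising over $\mathbb{R}^N$ produces a $1/N!$. After this accounting only polynomial factors remain. The point requiring care is that Proposition \ref{integralupperbound} carries no spare $N!$; we will check that $(N-1)!/N!$ indeed cancels, and otherwise absorb the mismatch into $\mathrm{e}^{CN^{7/8}}$.

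By continuity of $J_{\bs{\mf{r}}}$ (Lemma \ref{Jcontinuous}), for $\delta$ small we have $J_{\bs{\mf{r}}}[\nu]\le J_{\bs{\mf{r}}}[\mu]+\tau$ on $B(\mu,2\delta)$, so $\mathrm{e}^{NJ}$ comes out of the integral. What remains is
\[
\int \delta(\ov{\bs{x}}_N)\,\mathcal{T}(\bs{x}_N)\prod_k \mathrm{e}^{-W(x_k)-\frac{\kappa}{2}V(x_k)}\,\mathrm{d}\bs{x}_N .
\]
We split $\mathcal{T}$ by Cauchy--Schwarz into its $A$-type product and its $B$-type product, and we may split $\frac{\kappa}{2}V=\frac\kappa4V+\frac\kappa4V$ so that each half retains enough confinement. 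The $A$-type piece is bounded by Lemma \ref{Acond}, whose $U$ satisfies \eqref{ecriture hypothese sur U} thanks to \ref{potentialh2}: $\eps V$ grows algebraically, which dominates $\ln^{14}$, and we renormalise to total mass one at an $N$-independent exponential cost. The $B$-type piece is bounded by Lemma \ref{Bcond}, giving $\mathrm{e}^{CN^{7/8}}$.

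The main obstacle is making the Cauchy--Schwarz step compatible with the constraint and the normalisations. Cauchy--Schwarz doubles the exponents $p$ in $\mathcal{T}$, which the lemmas allow for any fixed $p$. It also requires $\mathrm{e}^{-2W-\kappa V}$, a renormalised version of $W$ with $2\eps\le1$, which is why $\eps\le\frac12$. Finally, it must be done while $\delta(\ov{\bs{x}}_N)$ is still present, which we handle by conditioning on the $\delta$-function inside each Cauchy--Schwarz factor exactly as in Lemmas \ref{Acond} and \ref{Bcond}. Collecting terms gives $\mathrm{e}^{CN^{7/8}}\mathrm{e}^{N(J_{\bs{\mf{r}}}[\mu]+\tau)}$, since the factor $\mathrm{e}^{CN^{3/4}}$ from Proposition \ref{integralupperbound} is smaller.
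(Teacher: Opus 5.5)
Your route is the paper's: the density bound of Proposition \ref{integralupperbound}, the symmetric splitting of $V$ via Corollary \ref{Vrelation}, Cauchy--Schwarz over the $\pm$ parts, the change of variables of Proposition \ref{jacobian}, Lemma \ref{regularisedbound}, continuity of $J_{\bs{\mf{r}}}$, desymmetrisation, and a second Cauchy--Schwarz feeding Lemmas \ref{Acond}--\ref{Bcond}. Doing the $\pm$ Cauchy--Schwarz before Lemma \ref{regularisedbound}, rather than after as in \eqref{PiBarBound1}, changes nothing.

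The gap is in the bookkeeping of everything other than $\mathrm{e}^{NJ_{\bs{\mf{r}}}}$, which is the actual content of the estimate. Those factors must be $\mathrm{e}^{\mathrm{o}(N)}$. An $\mathrm{e}^{\mathrm{O}(N)}$ prefactor is therefore not acceptable, since it shifts the rate. A factorial mismatch could not be ``absorbed into $\mathrm{e}^{CN^{7/8}}$'' either. What saves you is an exact cancellation. After Lemma \ref{regularisedbound} and desymmetrisation, each Cauchy--Schwarz factor contributes $|\ln\veps_N|^{N}/N!$ under its square root. Together with the prefactor of $\overline{\Pi}_{N,\mathsf{c}}$ this gives
\[
\frac{(N-1)!}{N!}\,\frac{|\ln\veps_N|^{N}}{|\ln 2\veps_N|^{N-1}}=\frac{|\ln 2\veps_N|}{N}\Big(1+\frac{\ln 2}{|\ln 2\veps_N|}\Big)^{N}=\mathrm{O}(1)\,, \qquad |\ln 2\veps_N|=\tfrac{N\ell}{2}-\ln 2\,.
\]

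The same constraint governs the last Cauchy--Schwarz: each weight must have mass at most one per coordinate, since otherwise the product over $k$ produces a factor $c^{N}$. The paper writes $\mathrm{e}^{-W-\frac{\kappa}{4}V}=\mathrm{e}^{-\frac12 W}\cdot\mathrm{e}^{-\frac12(W+\frac{\kappa}{2}V)}$. This leads to $A^{(10)}_{N,\mathsf{c}}[W]$, where $\int\mathrm{e}^{-W}=1$ by the normalisation built into \eqref{Wchoice}. It also leads to $B^{(8)}_{N,\mathsf{c}}[W+\frac{\kappa}{2}V]$, whose weight has mass at most one because $V\geq 0$.

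Splitting both $W$ and the $V$-margin evenly between the two factors, as your earlier sentence suggests, works just as well. The renormalisation is then a gain rather than a cost. The weight $\mathrm{e}^{-2W-\kappa V}$ of your last paragraph does not work. Indeed $\int\mathrm{e}^{-2W}\,\mathrm{d}x=\|\mathrm{e}^{-W}\|^{2}_{L^{2}(\mathbb{R})}$ can exceed one and depends on $\phi$. Renormalising then costs $\mathrm{e}^{a(\phi)N}$ with $a(\phi)>0$, which is not of the form $\mathrm{e}^{CN^{7/8}}$ and would also ruin the optimisation over $\phi$ in Lemma \ref{lemme:wldubco}.

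The restriction $\eps\leq\frac12$ has nothing to do with $2\eps\leq 1$. Together with $\kappa<\frac12$ it gives $\eps+\kappa<1$, which is what makes the truncated term of $J_{\bs{\mf{r}}}$ bounded and continuous in Lemma \ref{Jcontinuous}.
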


\begin{proof}

Proposition \ref{integralupperbound} and Corollary \ref{Vrelation} ensure that there exists  $C > 0$ such that for $N$ sufficiently large
\beqa
\overline{\Pi}_{N,\mathsf{c}}[B_\mathsf{c}(\mu, \delta)] &\leq& \frac{ (N-1)! \,  \mathrm{e}^{CN^{ \frac{3}{4}} } }{  |\ln(2\veps_N)|^{N-1} }
\Int{\mathcal{A}_N }{}   \Big\{ \frac{\Delta(\boldsymbol{\lambda}_N^+)}{\Delta(\boldsymbol{\lambda}_N^-)} \Big\}^{\f{1}{2}}
\Big\{ \mc{T}(\boldsymbol{\lambda}^+_N) \mc{T}(\boldsymbol{\lambda}^-_N) \Big\}^{\f{1}{2}}
\delta\Big( \ov{\bs{\lambda}}_N^+  \Big)  \mathbbm{1}_{B(\mu, \delta)}\big( \op{L}_N^{(\boldsymbol{\lambda}_N^+)} \big) \nonumber \\
    &&\times \prod_{j=1}^N \max\big\{ \ln |\lambda_j^+ - \lambda_j^-|^{-1}   , \gamma N \big\}
\pl{k=1}{N} \pl{\ups=\pm}{}  \Big\{ \mathrm{e}^{  - \frac{1}{2}V(\lambda_k^{\ups}) + \frac{\tilde{C}}{N} V(\lambda_k^{\ups}) } \Big\}
\, \mathrm{d}\boldsymbol{\lambda}_N^+ \nonumber  \\
    &\leq & (N-1)!  |\ln(2\veps_N)|
\, \mathrm{e}^{CN^{\frac{3}{4}} } \Int{\mathcal{A}_N }{} \Big\{ \frac{\Delta(\boldsymbol{\lambda}_N^+)}{\Delta(\boldsymbol{\lambda}_N^-)} \Big\}^{\f{1}{2}}
\Big\{ \mc{T}(\boldsymbol{\lambda}^+_N) \mc{T}(\boldsymbol{\lambda}^-_N) \Big\}^{\f{1}{2}}
 \pl{\ups=\pm}{} \mathrm{e}^{\frac{N}{2} J_{\mf{r}}[\op{L}_N^{(\boldsymbol{\lambda}_N^{\ups})}] }  \nonumber  \\
    & & \times \pl{k=1}{N} \pl{\ups=\pm}{}
   \Big\{  \mathrm{e}^{- \frac{1}{2} ( W  +  \frac{\kappa}{4}   V ) (\lambda_k^{\ups}) }  \Big\}
\delta\Big( \ov{\bs{\lambda}}_N^+  \Big)  \mathbbm{1}_{B(\mu, \delta)}\big( \op{L}_N^{(\boldsymbol{\lambda}_N^+)} \big) \, \mathrm{d}\boldsymbol{\lambda}_N^+ \, .
\label{PiBarBound1}
\eeqa
In the second line, we have invoked Lemma \ref{regularisedbound} and used that, for $N$ sufficiently large, $\frac{\kappa}{4} - \frac{\tilde{C}}{N} \geq \frac{\kappa}{8}$. Moreover, we inserted artificially the functions $W$ by "tilting" the measure while removing  it in the function $J_{\mf{r}}$ as in the proofs of Sanov's and Cramer's  theorems \cite{DemboZ01}.
By virtue of Proposition \ref{kappeler}, observe that for $N$ sufficiently large one may symmetrise the integration domain 
\bem
\Big\{\bs{\la}_N^+ \in \mc{A}_N \, : \,  \op{L}_N^{(\boldsymbol{\lambda}_N^+)} \in  B(\mu, \delta) \Big\}  \\
\subset   \Big\{ \bs{\la}_N^+ \in \mc{A}_N \, : \, \op{L}_N^{(\boldsymbol{\lambda}_N^+)} \in  B(\mu, 2\delta) \Big\}
\bigcap \Big\{ \bs{\la}_N^+ \in \mc{A}_N \, : \, \op{L}_N^{(\boldsymbol{\lambda}_N^-)} \in  B(\mu, 2\delta) \Big\} \, .
\nonumber
\end{multline}
By the above and Lemma \ref{Jcontinuous}, for any $\tau > 0$, there is a $\delta > 0$ sufficiently small that
\beq
\frac{1}{2} J_{\mf{r}}[\op{L}_N^{(\boldsymbol{\lambda}_N^+)}]+\frac{1}{2} J_{\mf{r}}[\op{L}_N^{(\boldsymbol{\lambda}_N^-)}] \, \leq  \, J_{\mf{r}}[\mu] + \tau
\qquad  \text{ for all }  \quad  \op{L}_N^{(\boldsymbol{\lambda}_N^+)} \in  B(\mu, \delta) \, .
\label{ecriture upper bound Jr des L pm}
\enq

At this stage, it is useful to remind oneself that $\de\big( \ov{\bs{\la}}_N^+\big)$ is to be understood as the constraint $\la_N^+=-\ov{\bs{\la}}_{N-1}^+$
with all remaining variables $\ov{\bs{\la}}_{N-1}^+$ integrated over $\R^{N-1}$. Thus, there is no "distributional" problem to apply the Cauchy--Schwarz inequality
in the upper bound \eqref{PiBarBound1}, this once that the upper bound \eqref{ecriture upper bound Jr des L pm} is implemented.
Then Proposition \ref{jacobian} allows one to trade the integration over $\bs{\la}_N^+$ by one over $\bs{\la}_N^-$ while cancelling the ratio of Vandermondes
by the Jacobian of this change of variables. One then observes that $\mc{T}$ is symmetric which allows one to extend the integral to all of $\mathbb{R}^N$, for the price of a $\frac{1}{N!}$ factor.
\beq
\overline{\Pi}_{N,\mathsf{c}}[B_{\mathsf{c}}(\mu,\delta)]
\leq  \mathrm{e}^{CN^{\frac{3}{4}}}   \frac{|\ln(\veps_N)|}{N } \mathrm{e}^{N [J_{_{\bs{\mf{r}}}}[\mu]+ \tau]}
\Int{\mathbb{R}^N}{} \de( \ov{\boldsymbol{x}}_N ) \mc{T}(\bs{x}_N) \,  \pl{k=1}{N} \mathrm{e}^{- (W+\frac{\kappa}{4}V)(x_k) }\, \mathrm{d}\boldsymbol{x}_N \, .
\label{PiBarUBound1}
\enq
The function $W$ introduced in \eqref{Wchoice} does enjoy \eqref{ecriture hypothese sur U}.
Thus applying Cauchy--Schwarz at this stage allows one to conclude that, for some $C>0$,
\beq
\overline{\Pi}_{N,\mathsf{c}}[B_{\mathsf{c}}(\mu,\delta)]
\leq  \mathrm{e}^{CN^{\frac{3}{4}}}   \ \mathrm{e}^{N [J_{{\bs{\mf{r}}}}[\mu]+ \tau]}
\sqrt{ A_{N, \mathsf{c}}^{(10)} [W]\,  B_{N, \mathsf{c}}^{(8)}\big[W + \tfrac{\kappa}{2} V \big]  }  \, .
\label{PiBarUBound2}
\enq
Lemmata \ref{Acond}-\ref{Bcond} then yield the claim.

\end{proof}

\begin{rem}
The importance of Proposition \ref{jacobian} stems from the fact that the change of variables from $\boldsymbol{\lambda}_N^+ \to \boldsymbol{\lambda}_N^-$  eliminates the rather tricky
ratio $\frac{\Delta(\boldsymbol{\lambda}_N^+)}{\Delta(\boldsymbol{\lambda}_N^-)}$ which would have been difficult to bound.
\end{rem}

We are finally in position to prove the weak large deviation upper bound for the constrained model, namely \eqref{wldubco} of Proposition \ref{upperboundonballs}:
\begin{lemme}\label{lemme:wldubco}

 For every $\mu \in \mathcal{M}_{1,\mathsf{c}}(\mathbb{R})$ we have
\begin{align*}%
\limsup_{\delta \searrow 0} \limsup_{N \to +\infty} \frac{1}{N}\ln \overline{\Pi}_{N,\mathsf{c}}[B_{\mathsf{c}}(\mu,\delta)] \leq - I[\mu] \, .
\end{align*}
\end{lemme}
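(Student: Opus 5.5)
Starting from Proposition \ref{wldubc}, for any admissible choice of parameters $\bs{\mf{r}}=(\eta,\kappa,\mathbf{M})$, of $\tau>0$ and of the tilting data $(\eps,\phi)$ entering $W$ through \eqref{Wchoice}, I will take $\frac{1}{N}\ln$, let $N\to+\infty$ (the factor $\mathrm{e}^{CN^{7/8}}$ is subexponential) and then $\delta\searrow 0$. This gives
\[
\limsup_{\delta \searrow 0}\limsup_{N\to+\infty}\frac1N \ln \overline{\Pi}_{N,\mathsf{c}}[B_{\mathsf{c}}(\mu,\delta)] \le J_{\bs{\mf{r}}}[\mu]+\tau .
\]
Since the left-hand side does not depend on the parameters, I will let $\tau\searrow0$ and minimise the right-hand side over all parameters. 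It then remains to show that
\[
\inf_{\eps,\phi,\kappa,\eta,\mathbf{M}} J_{\bs{\mf{r}}}[\mu]\le -I[\mu].
\]

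For the limits in $\mathbf{M}$ I will take them in the order $M_2\to\infty$, $M_0\to\infty$, $M_3\to\infty$, $M_1\to\infty$, and finally $\eta,\kappa\searrow0$. Each step uses monotone convergence. The function $f_{\kappa,\mathbf{M}}(x,y)$ decreases to $\ln|x-y|-\frac{\ell\kappa}{4M_1}(V(x)+V(y))$, so its $\mu(\dd y)$ integral decreases to a limit that may be $-\infty$; this is harmless because $\ln_{M_1}$ is nondecreasing and bounded below by $-\ln M_1$. The term $\frac{\ell\kappa}{4M_1}V$ only lowers things, so I will simply bound it by zero inside the logarithm, using that $\ln_{M_1}$ is increasing, and keep the bound
\[
\int \ln_{M_1}\Big(1+\tfrac2\ell\int\ln|x-y|\,\dd\mu(y)+\eta\Big)\dd\mu(x).
\]
As $M_1\to\infty$, $\ln_{M_1}(z)\searrow \ln z$ for $z>0$ and $\searrow -\infty$ for $z\le 0$. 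If $\mu\notin\mathcal C$ at $\eta=0$, I will pick $\eta$ small so that the set $\{1+\frac2\ell\int\ln|x-y|\dd\mu+\eta\le0\}$ still has positive mass; the bound then tends to $-\infty$, consistent with $I[\mu]=+\infty$. Otherwise the limit $\eta\searrow0$ gives $\int\ln\max\{0,1+\frac2\ell\int\ln|x-y|\dd\mu\}\dd\mu$. Upper integrability, needed for monotone convergence, follows from $\ln z\le z$ and $\int\int\ln_+|x-y|\,\dd\mu\,\dd\mu<\infty$ when $\int V\dd\mu<\infty$; when $\int V\dd\mu=\infty$ the second part of $J$ already forces $-\infty$.

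The second term of $J$ becomes $\int(-(1-\kappa)V+W)\dd\mu$ as $M_3\to\infty$. Substituting $W=\eps V-\phi+\ln\int \mathrm{e}^{\phi-\eps V}$, this equals
\[
-(1-\kappa-\eps)\int V\dd\mu-\Big(\int\phi\,\dd\mu-\ln\int \mathrm{e}^{\phi-\eps V}\dd y\Big).
\]
Taking the infimum over bounded Lipschitz $\phi$ of the $-(\cdots)$ part yields $-\sup_\phi\{\cdots\}$, which by the Donsker--Varadhan variational formula (as in Lemma 4.5.8 / Sanov, \cite{DemboZ01}) equals $-H(\mu\,|\,\mathrm{e}^{-\eps V}\dd y)=\mathrm{Ent}[\mu]-\eps\int V\dd\mu$. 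Hence the second term is at most $-(1-\kappa)\int V\dd\mu+\mathrm{Ent}[\mu]$. Letting $\kappa\searrow0$ and combining with the first term gives $-I[\mu]$.

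The main obstacle is the order of limits combined with potential infinities. The infimum over $\phi$ has to be taken with $\eps$ fixed, while $\eps$ is itself tied to the admissibility of $W$ in \eqref{ecriture hypothese sur U}; this works because $W\ge\eps V+\text{const}$ and $V$ grows polynomially. I also need to handle the case $\mathrm{Ent}[\mu]=-\infty$, where the relative entropy is infinite and the supremum over $\phi$ is $+\infty$, which again gives $-\infty$. Finally, one must make sure the bound on the logarithmic term and the entropy bound can be taken simultaneously: this is fine since the two terms of $J$ depend on disjoint sets of parameters apart from $\kappa$, so infimising each separately is legitimate.
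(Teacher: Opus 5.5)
Your proposal is correct and follows the paper's proof essentially step by step. You start from Proposition \ref{wldubc}, remove the regularisations $M_a,\eta,\kappa$ by monotone convergence, use $V\ge 0$ to discard the $\frac{\ell\kappa}{4M_1}V$ terms, dispose of the case $\int V\,\mathrm{d}\mu=+\infty$ via $M_3\to+\infty$, and then optimise over $\phi$ with the Donsker--Varadhan formula so that the $\eps$-dependence cancels. The order in which you remove the parameters differs from the paper's, but this is immaterial since each limit is monotone and the parameters enter independently.
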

\begin{proof}
In view of Proposition \ref{wldubc}, we have for every $\mu\in   \mathcal{M}_{1,\mathsf{c}}(\mathbb{R})$ and for every $\tau>0$, provided the parameters  $\kappa,\eta$ are small enough and the parameters $M_{a}$ large enough,
$$ \limsup_{\delta \searrow 0} \limsup_{N \to +\infty} \frac{1}{N} \ln  \overline{\Pi}_{N,\mathsf{c}}[B_{\mathsf{c}}(\mu,\delta)] \leq   J_{{\bs{\mf{r}}}}[\mu]+ \tau \, .$$
If $\Int{\mathbb{R}}{} V(x) \, \mathrm{d}\mu(x) = +\infty$, then we may send $M_{3} \to +\infty$ and conclude that
\begin{align*}
\limsup_{\delta \searrow 0} \limsup_{N \to +\infty} \frac{1}{N} \ln  \overline{\Pi}_{N,\mathsf{c}}[B_{\mathsf{c}}(\mu,\delta)] \leq   -\infty \, .
\end{align*}
Hence we now only focus on $\Int{\mathbb{R}}{} V(x) \, \mathrm{d}\mu(x)  < +\infty$. In this case, let us begin by sending $\tau \searrow 0$, which is trivial since $J_{{\bs{\mf{r}}}}$ does not depend on $\tau$. Next let us send $M_3 \nearrow +\infty$. 
The function $\min\Big\{  (1-\kappa) V(x)  - W(x),  M_3\Big\}$ converges pointwise to  $(1-\kappa) V(x)  - W(x)$, is bounded from below by a constant, and the sequence is clearly increasing in $M_3$. Hence by monotone convergence
\beq
\Int{\mathbb{R}}{}\min\Big\{  (1-\kappa) V(x)  - W(x),   M_3\Big\} \,  \mathrm{d}\mu(x)  \tend  \Int{\mathbb{R}}{} \Big\{ (1-\kappa) V(x)  - W(x)\Big\} \,  \mathrm{d}\mu(x)  \;.
\enq
Likewise, $\vp_{\eta}(x) \, = \, \ln_{M_1}\Big( 1+ \frac{2}{\ell}  \Int{\mathbb{R}}{} f_{\kappa, \mathbf{M}}(x,y) \, \mathrm{d}\mu (y)+ \eta \Big) \searrow \vp_{0^+}(x) $
as $\eta \searrow 0^+$ and $\vp_{\eta}$ is bounded from above by a constant independent of $\eta$. Thus, by monotone convergence
$\Int{\R}{}\vp_{\eta}(x)  \dd \mu(x)  \tend \Int{\R}{}\vp_{0^+}(x) \dd \mu(x) $.

Further, as $M_2 \nearrow + \infty$, $f_{\kappa, \mathbf{M}}(x,y) \searrow \ln_{M_0}|x-y| \, - \, \tfrac{\ell \kappa }{4M_1} \big( V(x)+ V(y) \big)$. Moreover, by the growth condition on the potential \ref{potentialh2}, the sequence of functions is bounded from above by a constant independent of $M_2$. Hence, by monotone convergence and continuity of $\ln_{M_1}$,  one has pointwise in $x$ as $M_2 \nearrow + \infty$
\beq
\vp_{0^+}(x) \;  \tend  \;
\ln_{M_1}\Big( 1+ \frac{2}{\ell}  \Int{\mathbb{R}}{} \Big\{ \ln_{M_0}|x-y| \, - \, \tfrac{\ell \kappa }{4M_1} \big( V(x)+ V(y) \big)  \Big\} \, \mathrm{d}\mu (y)  \Big) \;.
\enq
This is clearly a monotone limit and, for the same reason as before, this sequence of functions is bounded from above  by a constant independent of $M_2$. Hence by monotone convergence
\begin{align*}
    \Int{\mathbb{R}}{}\vp_{0^+}(x) \, \mathrm{d}\mu(x) \tend \Int{\mathbb{R}}{}\ln_{M_1}\Big( 1+ \frac{2}{\ell}  \Int{\mathbb{R}}{} \Big\{ \ln_{M_0}|x-y| \, - \, \tfrac{\ell \kappa }{4M_1} \big( V(x)+ V(y) \big)  \Big\} \, \mathrm{d}\mu (y)  \Big) \, \mathrm{d}\mu(x)
\end{align*}
as $M_2 \nearrow + \infty$. In a similar manner one sees that $\ln_{M_0}|x-y| \searrow \ln|x-y|$ pointwise as $M_0 \nearrow  +\infty$ in a decreasing fashion, and the function $\ln_{M_0}|x-y| \, - \, \tfrac{\ell \kappa }{4M_1} \big( V(x)+ V(y) \big)$ is bounded from above by a constant independent of $M_0$. Hence, again by monotone convergence,
\begin{align*}
    &\Int{\mathbb{R}}{}\ln_{M_1}\Big( 1+ \frac{2}{\ell}  \Int{\mathbb{R}}{} \Big\{ \ln_{M_0}|x-y| \, - \, \tfrac{\ell \kappa }{4M_1} \big( V(x)+ V(y) \big)  \Big\} \, \mathrm{d}\mu (y)  \Big) \, \mathrm{d}\mu(x) \\
    &\tend \Int{\mathbb{R}}{}\ln_{M_1}\Big( 1+ \frac{2}{\ell}  \Int{\mathbb{R}}{} \Big\{ \ln|x-y| \, - \, \tfrac{\ell \kappa }{4M_1} \big( V(x)+ V(y) \big)  \Big\} \, \mathrm{d}\mu (y)  \Big) \, \mathrm{d}\mu(x)
\end{align*}
as $M_0 \nearrow +\infty$. Thus, so far we have
\begin{align*}
&\limsup_{\delta \searrow 0}\limsup_{N \to +\infty}\frac{1}{N} \ln  \overline{\Pi}_{N,\mathsf{c}}[B_{\mathsf{c}}(\mu,\delta)]\\
&\leq  \Int{\mathbb{R}}{}\ln_{M_1}\bigg\{ 1+ \frac{2}{\ell}
\Int{\mathbb{R}}{} \Big[ \ln |x - y| - \frac{\ell \kappa}{4M_1} V(x)  - \frac{\ell \kappa}{4M_1} V(y) \Big] \, \mathrm{d}\mu (y)\bigg\} \, \mathrm{d}\mu(x) \\
& - (1-\kappa) \Int{\mathbb{R}}{} V(x)  \,  \mathrm{d}\mu(x) +  \Int{\mathbb{R}}{} W(x)  \,  \mathrm{d}\mu(x) \\
&\leq \Int{\mathbb{R}}{}\ln_{M_1}\Big\{ 1+ \frac{2}{\ell} \Int{\mathbb{R}}{}  \ln |x - y|  \, \mathrm{d}\mu (y)\Big\}
\, \mathrm{d}\mu(x)  - (1-\kappa) \Int{\mathbb{R}}{} V(x)  \,  \mathrm{d}\mu(x) +  \Int{\mathbb{R}}{} W(x)  \,  \mathrm{d}\mu(x) \,
\end{align*}
where we have used the assumption that $V \geq 0$. At this stage, it is straightforward to send $\kappa \searrow 0$. It thus remains to send $ M_1 \to +\infty$.
Clearly
\beq
\Psi_{M_1}(x) :=\ln_{M_1}\Big\{ 1+ \frac{2}{\ell} \Int{\mathbb{R}}{}  \ln |x - y|  \, \mathrm{d}\mu (y)\Big\}
 \, \searrow   \ln\max\Big\{ 0, 1+ \frac{2}{\ell} \Int{\mathbb{R}}{}  \ln |x - y|  \, \mathrm{d}\mu (y)\Big\}
\nonumber
\enq
is a decreasing fashion as $M_1 \nearrow +\infty$. Moreover,
$$\Psi_{M_1}(x) \leq \ln\Big\{ 1+ \frac{2}{\ell} \ln (1+|x|) \, + \,  \frac{2}{\ell}  \Int{\R}{}  \ln (1+|y|) \dd \mu(y) \Big\} \in L^1(\mu) $$
owing to the hypothesis on the growth at infinity of $V$. Thus, by monotone convergence
$ \Int{\R}{}\Psi_{M_1}(x) \dd\mu(x) \tend \Int{\R}{}\Psi_{+\infty}(x) \dd\mu(x)$.
All-in-all, we have
\begin{align*}
&\limsup_{\delta \searrow 0}\limsup_{N \to +\infty}\frac{1}{N} \ln  \overline{\Pi}_{N,\mathsf{c}}[B_{\mathsf{c}}(\mu,\delta)]\\
&\leq \Int{\mathbb{R}}{}\ln\max \Big\{ 0,  1+ \frac{2}{\ell} \Int{\mathbb{R}}{}  \ln |x - y|  \, \mathrm{d}\mu (y)\Big\}
\, \mathrm{d}\mu(x)  - \Int{\mathbb{R}}{} V(x)  \,  \mathrm{d}\mu(x) +  \Int{\mathbb{R}}{} W(x)  \,  \mathrm{d}\mu(x) \, .
\end{align*}
Finally, recall that in the choice of $W$ given in \eqref{Wchoice} the function $\phi \in \mathrm{BL}(\mathbb{R})$ was arbitrary\symbolfootnote[2]{ We denote by
$\mathrm{BL}(\mathbb{R})$ the space of bounded Lipschitz functions on $\R$.}. We may thus optimise the above upper bound with respect to $\phi \in \mathrm{BL}(\mathbb{R})$.
\begin{align}
&\limsup_{\delta \searrow 0}\limsup_{N \to +\infty}\frac{1}{N} \ln  \overline{\Pi}_{N,\mathsf{c}}[B_{\mathsf{c}}(\mu,\delta)]\nonumber\\
&\leq \Int{\mathbb{R}}{}\ln\max \Big\{ 0,  1+ \frac{2}{\ell} \Int{\mathbb{R}}{}  \ln |x - y|  \, \mathrm{d}\mu (y)\Big\}
\, \mathrm{d}\mu(x)  -(1-\eps) \Int{\mathbb{R}}{} V(x)  \,  \mathrm{d}\mu(x)  + \ln \mathsf{Z}_{\eps V}\nonumber \\
&- \sup_{\phi \in \mathrm{BL}(\mathbb{R})} \bigg[ \Int{\mathbb{R}}{} \phi(x) \, \mathrm{d}\mu(x)
- \ln\Big\{\Int{\mathbb{R}}{} \mathrm{e}^{\phi(y)} \frac{\mathrm{e}^{-\eps V(y)}}{\mathsf{Z}_{\eps V}} \, \mathrm{d}y \Big\} \bigg]\label{labb}
\end{align}
where $\mathsf{Z}_{\eps V} = \Int{\mathbb{R}}{} \mathrm{e}^{-\eps V(x)} \,\mathrm{d}x $. However, $\mathrm{BL}(\mathbb{R})$ is dense in $\mc{C}_b^{0}(\mathbb{R})$, the space of continuous and bounded functions on the real line equipped with the supremum norm.
Furthermore
$$ \phi \mapsto \Int{\mathbb{R}}{} \phi(x) \, \mathrm{d}\mu(x)- \ln\Big\{\Int{\mathbb{R}}{} \mathrm{e}^{\phi(y)} \frac{\mathrm{e}^{-\eps V(y)}}{\mathsf{Z}_{\eps V}} \, \mathrm{d}y \Big\}$$
is continuous in the supremum norm. Hence we may replace the supremum over $\mathrm{BL}(\mathbb{R})$ with one over $\mc{C}^{0}_b(\mathbb{R})$.
Next, if we let $\mathrm{d}\nu_{\eps V}(y) = \frac{\mathrm{e}^{-\eps V(y)}}{\mathsf{Z}_{\eps V}} \, \mathrm{d}y$
then by the Donsker--Varadhan variational formula for the relative entropy (see Lemma 1.4.3 of \cite{DupuisE97} or Corollary 6.2.3 of \cite{DemboZ01}),
\begin{align*}
\sup_{\phi \in \mc{C}_b^{0}(\mathbb{R}) } \bigg[ \Int{\mathbb{R}}{} \phi(x) \, \mathrm{d}\mu(x)
- \ln\Big\{\Int{\mathbb{R}}{} \mathrm{e}^{\phi(y)} \frac{\mathrm{e}^{-\eps V(y)}}{\mathsf{Z}_{\eps V}} \, \mathrm{d}y \Big\} \bigg]  =  D(\mu \,  \| \,  \nu_{\eps V}) \, .
\end{align*}
The proof is complete since
$D(\mu \,  \| \,  \nu_{\eps V}) = - \mathrm{Ent}[\mu] + \eps \Int{\mathbb{R}}{} V \, \mathrm{d}\mu + \ln \mathsf{Z}_{\eps V} \, $ and we now observe that the dependence on $\eps$ on the {\it rhs} of \eqref{labb} entirely disappears, recovering the rate function $I[\mu]$. 
\end{proof}
\subsubsection{Weak large deviation principle for the unconstrained model}
The proof is very similar to that of the constrained model as we can derive an analogue to Proposition \ref{wldubc} saying that for 
 $\mu \in \mathcal{M}_1(\mathbb{R})$, for  $\kappa, \eta, \tau>0$ small enough and $M_a>0$  large enough, there exists $C>0$ such that for $N$ large enough
\beq
\overline{\Pi}_{N}[B(\mu,\delta)] \leq   \mathrm{e}^{C N^{\frac{7}{8}} }   \mathrm{e}^{N (J_{\bs{\mf{r}}}[\mu]+ \tau)}
\enq
for some $C >0$ and for $N$ sufficiently large. Here, $J_{\bs{\mf{r}}}$ is as introduced in \eqref{definition fnelle Jr} while $\bs{\mf{r}} \, = \, (\eta, \kappa,\mathbf{M})$. Indeed, 
the bounds on the density are exactly the same and we arrive at exactly the same upper bound as \eqref{PiBarUBound2} except that
$A_{N, \mathsf{c}}^{(p)}[ W], B_{N, \mathsf{c}}^{(p)}\big[W + \tf{\kappa V}{2} \big] $ are replaced by
$A_{N}^{(p)}[ W ], B_{N}^{(p)}\big[W + \tf{\kappa V}{2} \big]$ which we bounded in Lemma \ref{ABbound}.
The proof of \eqref{prop:wldub} then follows exactly as in the proof of Lemma \ref{lemme:wldubco}.

\subsection{Proof of exponential tightness}

In this subsection we prove Proposition \ref{exponentialtightness}. Starting from Equation \ref{PiBarBound1}
with $B_{\mathsf{c}}(\mu,\de)$ replaced by some Borel set $E \subset \mc{M}_{1,\mathsf{c}}(\R)$ we find
\begin{align*}
\overline{\Pi}_{N,\mathsf{c}}[E] &\leq  (N-1)!  \, \mathrm{e}^{CN^{\frac{3}{4}} } \Int{\mathcal{A}_N }{}
\Big\{ \frac{\Delta(\boldsymbol{\lambda}_N^+)}{\Delta(\boldsymbol{\lambda}_N^-)} \Big\}^{\f{1}{2}}
\Big\{ \mc{T}(\boldsymbol{\lambda}^+_N) \mc{T}(\boldsymbol{\lambda}^-_N) \Big\}^{\f{1}{2}}
 \pl{\ups=\pm}{} \mathrm{e}^{\frac{N}{2} J_{\mf{r}}[\op{L}_N^{(\boldsymbol{\lambda}_N^{\ups})}] } \mathbbm{1}_{E}\big( \op{L}_N^{(\boldsymbol{\lambda}_N^+)} \big) \nonumber  \\
    & \times \pl{k=1}{N} \pl{\ups=\pm}{}
   \Big\{  \mathrm{e}^{- \frac{1}{2} ( W  +  \frac{\kappa}{4}   V ) (\lambda_k^{\ups}) }  \Big\}
\; \delta\big( \ov{\bs{\lambda}}_N^+  \big)  \, \mathrm{d}\boldsymbol{\lambda}_N^+ \, .
\end{align*}
Note that by following the proof of Lemma \ref{regularisedbound} one may, in fact, set the regularisation parameter $M_3=+\infty$ in $J_{\mf{r}}$,
what is assumed from now on.

It is shown in the proof of Lemma \ref{compactlevelsets} that the set
$$F_{V,\mathsf{c}}(L) \,   = \, \Big\{ \mu \in \mathcal{M}_{1,\mathsf{c}}(\mathbb{R}) \, : \, \Int{\mathbb{R}}{} V(x) \, \mathrm{d}\mu(x) \leq L\Big\} \, $$
is compact.  By Lemma \ref{Vrelation} there is a $C > 0$ such that (for $N$ sufficiently large)
$$\Big\{ \Int{\mathbb{R}}{} V(x) \, \mathrm{d}\op{L}_N^{(\boldsymbol{\lambda}_N^+) }(x) > L \Big\}
\subset
\Big\{ \Int{\mathbb{R}}{} V(x) \, \mathrm{d}\op{L}_N^{(\boldsymbol{\lambda}_N^+) }(x) > \frac{1}{2} L-C \Big\} \bigcap
\Big\{ \Int{\mathbb{R}}{} V(x) \, \mathrm{d}\op{L}_N^{(\boldsymbol{\lambda}_N^-) }(x) > \frac{1}{2}L-C \Big\} \, .$$
Next, if we make the choice $\phi \equiv 0$, it is easy to see that
\bem
J_{\mf{r}}[\mu] \leq - (1-\kappa - \eps) \Int{\mathbb{R}}{}V(x) \, \mathrm{d}\mu(x) +\eta  + \frac{4}{\ell} \Int{\mathbb{R}}{}\ln(1+|x|) \, \mathrm{d}\mu(x)
+ \ln\Big( \Int{\mathbb{R}}{} \mathrm{e}^{-\eps V(y)}\, \mathrm{d}y \Big) \\
\leq - \frac{1}{4} \Int{\mathbb{R}}{}V(x) \, \mathrm{d}\mu(x)  + C^{\prime}
\end{multline}
for some constant $C^{\prime} >0$, if we require $0 < \eps , \kappa \leq \frac{1}{4}$. Hence there exists a $C^{\prime\prime} > 0$ such that
$$
\Big\{ \Int{\mathbb{R}}{} V(x) \, \mathrm{d}\op{L}_N^{(\boldsymbol{\lambda}_N^+)}(x)  > L \Big\} \subset
\Big\{ J_{\mf{r}}[\op{L}_N^{(\boldsymbol{\lambda}_N^{+})}] \, + \, J_{\mf{r}}[\op{L}_N^{(\boldsymbol{\lambda}_N^{-})}] \, \le  \,
- \frac{1}{4} L + C^{\prime\prime}\Big\} \, .$$
Hence taking $E = F_{V,\mathsf{c}}(L)^c$ one may upper bound the factors containing the rate function $J_{\mf{r}}$ by $\ex{- \tf{N L}{2} }$.
The remaining integral has  already been upper bounded in the proof of Proposition \ref{wldubc} by a factor of $\text{e}^{ CN^{ \tf{7}{8}} }$. Thus
\begin{align*}
 \limsup_{N \to +\infty}  \frac{1}{N}\ln  \overline{\Pi}_{N,\mathsf{c}}\big[ F_{V,\mathsf{c}}(L)^c \big] \leq  - \frac{L}{8} + C  \, .
\end{align*}
If we now let $L \to +\infty$ we arrive at Proposition \ref{exponentialtightness}. The proof for the unconstrained model goes along the same lines.





\section{Study of the rate function}\label{studygrf}

The rate function $I$ of interest to this section has been introduced in \eqref{ratefunction}. To start with, one should observe that
the expression given in \eqref{ratefunction} needs some further precision since, in principle,
each term may be infinite and so one may be left with an indeterminate expression $+\infty - \infty$.
This issue is clarified by the lemma below.
\begin{lemme}
Let $\mu \in \mathcal{M}_1(\mathbb{R})$ be such that $\Int{\mathbb{R}}{} V(x) \, \mathrm{d}\mu(x) < +\infty$.
Then there are constants $C^{\prime}, C^{\prime\prime} \in \mathbb{R}$ such that
the following lower bounds are satisfied
\begin{align}
&\frac{1}{2}\Int{\mathbb{R}}{}V(x) \, \mathrm{d}\mu(x) - C^{\prime}
 \leq \Int{\mathbb{R}}{} V(x) \, \mathrm{d}\mu(x) - \Int{\mathbb{R}}{} \ln \max\Big\{ 0 ,  1+ \frac{2}{\ell} \Int{\mathbb{R}}{} \ln|x-y| \, \mathrm{d}\mu(y) \Big\} \, \mathrm{d}\mu(x) \\
&\text{and } -\frac{1}{4} \Int{\mathbb{R}}{} V(x) \, \mathrm{d}\mu(x) - C^{\prime\prime}     \leq   - \mathrm{Ent}[\mu] \, .
\end{align}
Thus in particular, $\Int{\mathbb{R}}{} V(x) \, \mathrm{d}\mu(x) < +\infty$ implies that $I[\mu] = +\infty$ if and only if $- \mathrm{Ent}[\mu] = +\infty$.
Combining these two we see that there is a constant $C > 0$ such that
\begin{align}\label{ratefunctionlowerbound}
I[\mu] \geq \frac{1}{4}\Int{\mathbb{R}}{}V(x) \, \mathrm{d}\mu(x) - C \, .
\end{align}
Hence we naturally define $I[\mu] = +\infty$ whenever $\Int{\mathbb{R}}{}V(x) \, \mathrm{d}\mu(x) \, = \,  +\infty$ so that (\ref{ratefunctionlowerbound}) holds for all probability measures.
\end{lemme}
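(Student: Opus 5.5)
We treat the two lower bounds separately and then combine them; $V\ge 0$ is assumed throughout.

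\textbf{First bound.} We control the double-logarithmic term from above. Since $\ln|x-y|\le \ln(1+|x|)+\ln(1+|y|)$, we get
\[
1+\frac{2}{\ell}\Int{\mathbb{R}}{}\ln|x-y|\,\mathrm{d}\mu(y)\le 1+\frac{2}{\ell}\ln(1+|x|)+\frac{2}{\ell}m_\mu,
\qquad m_\mu=\Int{\mathbb{R}}{}\ln(1+|y|)\,\mathrm{d}\mu(y).
\]
Because $\ln\max\{0,\cdot\}$ is nondecreasing, and $\ln(1+a+b)\le \ln(1+a)+\ln(1+b)$ for $a,b\ge0$, this gives
\[
\Int{\mathbb{R}}{}\ln\max\Big\{0,1+\frac{2}{\ell}\Int{\mathbb{R}}{}\ln|x-y|\,\mathrm{d}\mu(y)\Big\}\mathrm{d}\mu(x)
\le \Int{\mathbb{R}}{}\ln\Big(1+\frac{2}{\ell}\ln(1+|x|)\Big)\mathrm{d}\mu(x)+\ln\Big(1+\frac{2}{\ell}m_\mu\Big).
\]
Under \ref{potentialh2}, $V(x)\ge c|x|^\theta-c'$. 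Hence $\ln(1+|x|)\le \tfrac{\ell}{4}\cdot\frac{1}{4}V(x)+C$, and similarly $\ln(1+\frac{2}{\ell}\ln(1+|x|))\le \frac14 V(x)+C$. The second logarithm is at most $\frac{2}{\ell}m_\mu\le \frac14\Int{\mathbb{R}}{} V\,\mathrm{d}\mu+C$. Subtracting from $\Int{\mathbb{R}}{} V\,\mathrm{d}\mu$ yields the first inequality with coefficient $\frac12$.

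\textbf{Second bound.} We use the Gibbs (relative entropy) inequality. Set $\nu=\mathsf{Z}^{-1}e^{-V/4}\,\mathrm{d}x$ with $\mathsf{Z}=\Int{\mathbb{R}}{} e^{-V/4}<\infty$, which is finite by \ref{potentialh2}. If $\mu$ is not absolutely continuous, $-\mathrm{Ent}[\mu]=+\infty$ and there is nothing to prove. Otherwise $D(\mu\|\nu)\ge 0$ reads
\[
-\mathrm{Ent}[\mu]+\frac14\Int{\mathbb{R}}{} V\,\mathrm{d}\mu+\ln\mathsf{Z}\ge 0.
\]
The only delicate point is justifying this identity when the integrand of $\mathrm{Ent}$ has an ambiguous sign. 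We handle it by splitting $\rho\ln\rho$ into its positive and negative parts. The negative part is integrable against Lebesgue measure after comparison with $e^{-V/4}$, using $\rho\ln\rho\ge -\rho\,V/4 - e^{-V/4}\cdot e^{-1}$, which follows from $a\ln a\ge a\ln b - b/e$ type bounds. So $-\mathrm{Ent}[\mu]$ is well defined in $(-\infty,+\infty]$, and the inequality holds with $C''=\ln\mathsf{Z}$.

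\textbf{Consequences.} When $\Int{\mathbb{R}}{} V\,\mathrm{d}\mu<\infty$, the first two terms of $I$ sum to a finite quantity bounded below, and the entropy term lies in $(-\infty,+\infty]$. Hence $I[\mu]=+\infty$ if and only if $-\mathrm{Ent}[\mu]=+\infty$. Adding the two bounds gives $I[\mu]\ge \frac14\Int{\mathbb{R}}{} V\,\mathrm{d}\mu-C$. When $\Int{\mathbb{R}}{} V\,\mathrm{d}\mu=\infty$, the bound holds trivially by the convention $I[\mu]=+\infty$.

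The main obstacle is only the bookkeeping: making sure each term of $I$ is well defined before adding, which is settled by the integrability of the negative part of $\rho\ln\rho$.
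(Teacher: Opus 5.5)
\textbf{The two lower bounds.} Your proofs of both inequalities are correct and follow essentially the paper's route. For the interaction term you use $\ln|x-y|\le\ln(1+|x|)+\ln(1+|y|)$ together with the growth of $V$. The paper instead applies $\ln(1+z)\le z$ to the whole argument and gets $\frac{4}{\ell}\int\ln(1+|x|)\,\mathrm{d}\mu$ in one step, where you split with $\ln(1+a+b)\le\ln(1+a)+\ln(1+b)$; the difference is cosmetic. For the entropy you use nonnegativity of the relative entropy with respect to $\mathsf{Z}^{-1}e^{-V/4}\,\mathrm{d}x$, as the paper does. Your pointwise bound $\rho\ln\rho\ge-\frac14\rho V-e^{-1}e^{-V/4}$ makes explicit what the paper handles only by convention. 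Note also that integrating it already gives the second inequality directly, with $C''=e^{-1}\mathsf{Z}$, so Gibbs' inequality is not even needed.

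\textbf{The gap.} The step that fails is in your last paragraph, where you claim that when $\int V\,\mathrm{d}\mu<\infty$ the first two terms of $I$ sum to a finite quantity. The first inequality only bounds that sum from below. Nothing bounds $-\int\ln\max\{0,1+\frac{2}{\ell}\int\ln|x-y|\,\mathrm{d}\mu(y)\}\,\mathrm{d}\mu(x)$ from above, and it equals $+\infty$ as soon as $1+\frac{2}{\ell}\int\ln|x-y|\,\mathrm{d}\mu(y)\le 0$ on a set of positive $\mu$-measure.

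Concretely, take $\mu$ uniform on $[0,\delta]$. On the support one has $\int\ln|x-y|\,\mathrm{d}\mu(y)\le\ln\delta-1$. So for $\delta<e^{1-\ell/2}$ the interaction term is $+\infty$ and $I[\mu]=+\infty$. Yet $\int V\,\mathrm{d}\mu<\infty$ and $-\mathrm{Ent}[\mu]=\ln(1/\delta)$ is finite. Hence the ``only if'' half of the stated equivalence is false and cannot be proved. The paper does not prove it either; it merely asserts it as a consequence.

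What the two bounds legitimately give is weaker. All three terms of $I$ lie in $(-\infty,+\infty]$, so $I[\mu]$ is well defined with no $\infty-\infty$ ambiguity. Moreover, $-\mathrm{Ent}[\mu]=+\infty$ forces $I[\mu]=+\infty$. The final bound $I[\mu]\ge\frac14\int V\,\mathrm{d}\mu-C$ is unaffected, since it only adds the two lower bounds.
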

\begin{proof}
From the inequality $\ln|x-y| \leq \ln(1+|x|) + \ln(1+|y|)$, and $\ln(1+x) \leq x$ for $x\geq 0$, we have
\begin{align*}
\Int{\mathbb{R}}{} \ln \max\Big\{ 0 ,  1+ \frac{2}{\ell} \Int{\mathbb{R}}{} \ln|x-y| \, \mathrm{d}\mu(y) \Big\} \, \mathrm{d}\mu(x) \, \leq  \,
\frac{4}{\ell} \Int{\mathbb{R}}{} \ln(1+|x|) \, \mathrm{d}\mu(x) \, .
\end{align*}
By \ref{potentialh2}, there exists $C^{\prime}>0$ such that $\frac{4}{\ell} \ln(1+|x|) \leq \frac{1}{2}V(x) + C^{\prime}$, which, all-in-all, entails the first lower bound.
For the second one, we use the non-negativity of the relative entropy $D(\mu \, \| \, \nu) $, or equivalently the Donsker--Varadhan variational formula.
If we consider the probability measure $\mathrm{d}\nu_{\tf{V}{4}}(x) = \frac{ \mathrm{e}^{-\frac{1}{4}V(x)} }{ \mathsf{Z}_{\tf{V}{4}} }\, \mathrm{d}x$, where
$\mathsf{Z}_{\tf{V}{4}} = \Int{\mathbb{R}}{} \mathrm{e}^{-\frac{1}{4}V(x)} \, \mathrm{d}x$, then
$$-\mathrm{Ent}[\mu]= D(\mu \, \| \,  \nu) -\frac{1}{4}\Int{\mathbb{R}}{} V \, \mathrm{d}\mu - \ln \mathsf{Z}_{\tf{V}{4}}\,  \geq \,
-\frac{1}{4}\Int{\mathbb{R}}{} V \, \mathrm{d}\mu - \ln \mathsf{Z}_{\tf{V}{4}} \, .$$
\end{proof}
\subsection{Lower semi-continuity }
We start by proving the first part of  Proposition \ref{propositiongrf}, namely that $I$ is lower semi-continuous on $\mathcal{M}_1(\mathbb{R}) $ equipped with its weak topology.
The proof will build on two auxiliary lemmata.

\begin{lemme}\label{Itildesupremum} Assume that hypothesis \ref{potentialh2} holds. 
Let $\tilde{I} : \mathcal{M}_1(\mathbb{R}) \longrightarrow  \intof{-\infty}{+\infty}$ be defined by
$$ \tilde{I}[\mu]\overset{\mathrm{def}}{=}\Int{\mathbb{R}}{} V(x) \, \mathrm{d}\mu(x) -
\Int{\mathbb{R}}{} \ln \max\Big\{ 0 ,  1+ \frac{2}{\ell} \Int{\mathbb{R}}{} \ln|x-y| \, \mathrm{d}\mu(y) \Big\} \, \mathrm{d}\mu(x)$$
and likewise,  for $\eta_1, \eta_2 , M_1, M_2 > 0$, let $\bs{\mf{p}} = ( \eta_1, \eta_2 , M_1, M_2 ) $ and
\begin{align*}
&\tilde{I}_{\bs{\mf{p}}}[\mu] \overset{\mathrm{def}}{=} (1-2\eta_1) \Int{\mathbb{R}}{} \min\big\{ V(x) , M_1 \big\} \, \mathrm{d}\mu(x) \\
    &- \Int{\mathbb{R}}{} \ln_{\eta_2^{-1}}\Big\{   1+ \frac{2}{\ell} \Int{\mathbb{R}}{} \max\big\{ \ln|x-y| - \eta_1\eta_2 V(x) - \eta_1\eta_2 V(y), -M_2 \big\}
\, \mathrm{d}\mu(y) \Big\}
\, \mathrm{d}\mu(x) \, .
\end{align*}
with $\ln_{M}$ as introduced in \eqref{definition ln M regularise}.
Then, one has
\begin{equation}\label{Isupremum}
\tilde{I}[\mu] = \sup_{\bs{\mf{p}} \in (\R^+)^4 } \tilde{I}_{\bs{\mf{p}}}[\mu] \, .
\end{equation}
\end{lemme}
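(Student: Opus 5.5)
We prove the two inequalities $\tilde I_{\bs{\mf p}}\le \tilde I$ for every $\bs{\mf p}$, and $\sup_{\bs{\mf p}}\tilde I_{\bs{\mf p}}\ge \tilde I$, by monotone limits in the parameters. Throughout, write $q_\mu(x)=1+\frac{2}{\ell}\int\ln|x-y|\,\mathrm d\mu(y)$, which is well defined in $[-\infty,+\infty)$ whenever $\int\ln(1+|y|)\mathrm d\mu(y)<\infty$. If $\int V\mathrm d\mu=+\infty$, then \ref{potentialh2} still makes the regularised logarithmic terms finite, so the sup is handled separately.

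\textbf{Upper bound $\tilde I_{\bs{\mf p}}[\mu]\le\tilde I[\mu]$.} Assume $\int V\mathrm d\mu<\infty$; otherwise $\tilde I[\mu]=+\infty$ and there is nothing to prove. Put $g=\max\{\ln|x-y|-\eta_1\eta_2(V(x)+V(y)),-M_2\}$. Then $\int g\,\mathrm d\mu(y)\le \max\{q\text{-part},\,-M_2\}$, and we bound the logarithm using the Lipschitz/concavity property of $\ln_{\eta_2^{-1}}$ at scale $\eta_2$. The key elementary inequality is
\[
\ln_{\eta_2^{-1}}(a-b)\ \ge\ \ln\max\{0,a\}-\eta_2^{-1}b\qquad (b\ge 0),
\]
valid because $\ln_{\eta_2^{-1}}\ge\ln\max\{0,\cdot\}$ and has Lipschitz constant $\eta_2^{-1}$. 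This does not immediately give the sign we need, because $-M_2$ raises $g$. Instead, split according to whether $q_\mu(x)$ is large or small.

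Where the inner integrand is clipped at $-M_2$, the $V$-penalty dominates. Write $\ln|x-y|-\eta_1\eta_2V(x)-\eta_1\eta_2V(y)\ge -M_2$ only on a set where $\ln|x-y|\ge -M_2$. Concretely, we show that
\[
\ln_{\eta_2^{-1}}\Big(1+\tfrac{2}{\ell}\int g\,\mathrm d\mu\Big)\ \ge\ \ln\max\{0,q_\mu(x)\}\ -\ \tfrac{2}{\ell}\eta_1\Big(V(x)+\textstyle\int V\mathrm d\mu\Big)\cdot(\text{const}),
\]
and then absorb the error into $2\eta_1\int V\mathrm d\mu$. For this we use $\ln_{\eta_2^{-1}}(u)\ge \ln\max\{0,u+\eta_2\cdot t\}-t$, which holds for $t\ge0$ because the slope of $\ln$ on $[\eta_2,\infty)$ is at most $\eta_2^{-1}$. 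Combined with $\min\{V,M_1\}\le V$ and $V\ge0$, this yields $\tilde I_{\bs{\mf p}}\le\tilde I$. The constant in front of $\eta_1$ is exactly what the factor $(1-2\eta_1)$ is designed to pay for, and this is where I expect to spend the most care: tuning the inequality so that the loss from the $V$-penalty inside the log is at most $\eta_1\int V\mathrm d\mu$ per occurrence (twice, once for $V(x)$ and once for $\int V(y)$).

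\textbf{Lower bound via monotone limits.} Let $M_1\to\infty$ with monotone convergence, using $\min\{V,M_1\}\nearrow V$. Let $M_2\to\infty$: the integrand $g$ decreases to $\ln|x-y|-\eta_1\eta_2(V(x)+V(y))$ and is bounded above by $\ln(1+|x|)+\ln(1+|y|)$, which is integrable by \ref{potentialh2}. Monotone convergence for the inner integral gives a limit, continuity of $\ln_{\eta_2^{-1}}$ passes it through the log, and the outer integral converges by monotone convergence because the integrand is bounded above by an $L^1(\mu)$ function. Next let $\eta_1\to0$ with $\eta_2$ fixed: the inner integral increases to $\int\ln|x-y|\,\mathrm d\mu(y)$ when $\int V\mathrm d\mu<\infty$, and the $-\ln_{\eta_2^{-1}}$ term then decreases to $-\int\ln_{\eta_2^{-1}}(q_\mu)\,\mathrm d\mu$; this step is handled by dominated convergence with the upper bound above and the lower bound $-\ln\eta_2^{-1}$. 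Finally let $\eta_2\to0$: $\ln_{\eta_2^{-1}}(q_\mu)\searrow\ln\max\{0,q_\mu\}$, and monotone convergence, again dominated above by $\frac{4}{\ell}\ln(1+|x|)$, gives $\tilde I[\mu]$.

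When $\int V\mathrm d\mu=\infty$, the log term stays bounded above by $\frac4\ell\int\ln(1+|x|)\mathrm d\mu+C$ if that is finite; otherwise we first use $\eta_1>0$ to bound the inner integral by a constant, making the log term bounded. In either case the $M_1$-limit forces $\tilde I_{\bs{\mf p}}\to+\infty$, since $(1-2\eta_1)\int\min\{V,M_1\}\mathrm d\mu\to\infty$. Since each limit is a supremum over parameters (approached along monotone sequences), we get $\sup_{\bs{\mf p}}\tilde I_{\bs{\mf p}}\ge\tilde I$. Together with the upper bound this proves \eqref{Isupremum}.
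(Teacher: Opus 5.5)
Your strategy is the paper's. You use a Lipschitz estimate on $\ln_{\eta_2^{-1}}$ to absorb the $V$-penalty into the prefactor $(1-2\eta_1)$, then iterated limits $M_1\to\infty$, $M_2\to\infty$, $\eta_1\to0$, $\eta_2\to0$. The reverse inequality is fine as written; your dominated convergence in $\eta_1$ replaces the paper's explicit bound $\frac{2}{\ell}\eta_1\int(V(x)+V(y))\,\mathrm{d}\mu(y)$.

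The gap is in the forward inequality $\tilde I_{\bs{\mf{p}}}[\mu]\le\tilde I[\mu]$, exactly at the ``tuning'' you postpone. The clipping at $-M_2$ is not the obstacle. Since $g\ge\ln|x-y|-\eta_1\eta_2(V(x)+V(y))$ and $\ln_{\eta_2^{-1}}$ is nondecreasing, the clipping can simply be dropped, and no splitting is needed. (Your claim $\int g\,\mathrm{d}\mu(y)\le\max\{\cdot,-M_2\}$ goes the wrong way, by Jensen.) Your inequality $\ln_{\eta_2^{-1}}(u)\ge\ln\max\{0,u+\eta_2t\}-t$ with $t=\frac{2}{\ell}\eta_1\big(V(x)+\int V\,\mathrm{d}\mu\big)$ then gives
\[
\tilde I_{\bs{\mf{p}}}[\mu]\le(1-2\eta_1)\int\min\{V,M_1\}\,\mathrm{d}\mu+\frac{4\eta_1}{\ell}\int V\,\mathrm{d}\mu-\int\ln\max\{0,q_\mu\}\,\mathrm{d}\mu .
\]
So the loss per occurrence is $\frac{2}{\ell}\eta_1\int V\,\mathrm{d}\mu$, not $\eta_1\int V\,\mathrm{d}\mu$. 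To conclude you need both $1-2\eta_1\ge0$ (to replace $\min\{V,M_1\}$ by $V$) and $\ell\ge2$.

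Neither condition can be tuned away, because without them the statement is false.

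\emph{Case $\eta_1>\frac12$.} Fix $c$ and $M_2$ large, put $\eta_1=c/\eta_2$ and $M_1=\eta_2^2$, and let $\eta_2\to0$. The first term of $\tilde I_{\bs{\mf{p}}}$ is $O(c\,\eta_2)$. Meanwhile the argument of $\ln_{\eta_2^{-1}}$ is below $\eta_2$ on a set of $\mu$-mass bounded away from $0$, and the log term is bounded below elsewhere. Hence $\tilde I_{\bs{\mf{p}}}[\mu]\to+\infty$, and the supremum over $(\mathbb{R}^+)^4$ is $+\infty$ for every $\mu$ not concentrated on $\{V=0\}$.

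\emph{Case $\ell<2$.} Take $\mu$ an arcsine law, whose logarithmic potential is constant on its support. Scale it so that $q_\mu\equiv Q$ there, with $\eta_2<Q<\frac{2}{\ell}\eta_2$. For $\eta_1$ small and $M_1\ge\sup_{\mathrm{supp}\,\mu}V$, the bound $-\ln(1-s)\ge s$ gives $\lim_{M_2\to\infty}\tilde I_{\bs{\mf{p}}}[\mu]\ge\tilde I[\mu]+\eta_1\big(\frac{4\eta_2}{\ell Q}-2\big)\int V\,\mathrm{d}\mu$. This exceeds $\tilde I[\mu]$ as soon as $\int V\,\mathrm{d}\mu>0$.

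The paper's own proof has the same two slips. Its intermediate line carries the penalty without the factor $\frac{2}{\ell}$, and it uses $\min\{V,M_1\}\le V$ whatever the sign of $1-2\eta_1$. The repair is to restrict and rebalance the parameters, for example:
\begin{itemize}
\item take $0<\eta_1<\frac{\ell}{4}$ with prefactor $1-\frac{4}{\ell}\eta_1$; or
\item keep $1-2\eta_1$ with $\eta_1<\frac12$ and use the penalty $\frac{\ell}{2}\eta_1\eta_2(V(x)+V(y))$ inside the maximum.
\end{itemize}
Your reverse inequality only uses $\eta_1\to0$, and Lemma~\ref{Itildecontinuous} is unaffected. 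So with this change your argument goes through, and the lower semicontinuity of $\tilde I$ still follows.
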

\begin{proof}
First of all, one has  $\tilde{I}[\mu] \geq \tilde{I}_{\bs{\mf{p}}}[\mu]$ owing to the chain of bounds
\beqa
\tilde{I}[\mu]
&\ge& \hspace{-3mm}   \Int{\R}{} V(x) \dd\mu(x) - \Int{\R}{} \ln_{\eta_2^{-1}} \Big\{ 1 + \frac{2}{\ell} \Int{\R}{} \ln|x-y| \dd\mu(y) \Big\}\dd \mu(x) \nonumber\\
%
%
%
%
&\ge&\hspace{-3mm} (1-2\eta_1) \Int{\R}{}  V(x)\dd\mu(x)  \nonumber \\
&& - \Int{\R}{} \ln_{\eta_2^{-1}} \Big\{ 1 +  \Int{\R}{} \big[ \frac{2}{\ell}  \ln|x-y|  -  \eta_1 \eta_2 \big(V(x) + V(y) \big) \big] \dd\mu(y)  \Big\}\dd\mu(x) \;,
\eeqa
the final inequality building on the fact that $\ln_{\eta^{-1}}$ has Lipschitz constant $\eta^{-1} > 0$. Now, the last of the lower
bounds above is bounded from below by $\tilde{I}_{\bs{\mf{p}}}[\mu]$ since taking the additional maximum involving $M_2 > 0$ and minimum involving $M_1 >0$  only makes the corresponding functions smaller.
 Therefore,
$$\tilde{I}[\mu] \geq \sup_{\bs{\mf{p}} \in (\R^+)^4 } \tilde{I}_{\bs{\mf{p}}}[\mu]  \, .$$
We now establish the reverse inequality.
When $M_1 \nearrow +\infty$,  since $V$ is bounded from below, monotone convergence ensures that
$$\Int{\mathbb{R}}{} \min\{ V(x), M_1 \} \, \mathrm{d}\mu(x) \longrightarrow \Int{\mathbb{R}}{} V(x)\, \mathrm{d}\mu(x) \, .$$
If $\Int{\mathbb{R}}{} V(x) \, \mathrm{d}\mu(x) = +\infty$, then $\underset{M_1 \nearrow +\infty}{\lim}\tilde{I}_{\bs{\mf{p}}}[\mu] = +\infty = I[\mu]$.
Hence without loss of generality we may assume $\Int{\mathbb{R}}{} V(x) \, \mathrm{d}\mu(x) < +\infty$.
Next, the function $\ln|x-y| - \eta_1 \eta_2 \big(V(x) +  V(y) \big)$ is bounded from above by a constant,
and $\max\big\{ \ln|x-y| - \eta_1 \eta_2 \big(V(x) +  V(y) \big) , - M_2 \big\}$ is decreasing as $M_2\nearrow +\infty$ increases, hence by monotone convergence
\begin{align*}
& \lim_{M_2\nearrow +\infty}
   \frac{2}{\ell} \Int{\mathbb{R}}{} \max\big\{ \ln|x-y| - \eta_1 \eta_2 \big( V(x) + V(y) \big) , -M_2 \big\} \, \mathrm{d}\mu(y)\qquad\qquad\\
   &\qquad= \mc{L}[\mu](x) \, := \, \frac{2}{\ell} \Int{\mathbb{R}}{} \Big\{ \ln|x-y| - \eta_1 \eta_2 \big(V(x) + V(y) \big) \Big\} \, \mathrm{d}\mu(y)\,.
\end{align*}
Again, since $\ln|x-y| - \eta_1 \eta_2 \big(V(x) + V(y) \big)$ is bounded from above by an $\eta_1, \eta_2$-dependent constant,
$ \mc{L}[\mu](x)$
is bounded from above by the same constant. Since the overall integrand is decreasing and bounded from above, monotone convergence entails that
\begin{align*}
    & \lim_{M_2\nearrow \infty}\Int{\mathbb{R}}{} \ln_{\eta_2^{-1}}\Big\{  1+ \frac{2}{\ell}
    \Int{\mathbb{R}}{} \max\big\{ \ln|x-y| -  \eta_1 \eta_2 \big(V(x) + V(y) \big), -M_2 \big\} \, \mathrm{d}\mu(y) \Big\} \, \mathrm{d}\mu(x)  \\
    &=\Int{\mathbb{R}}{} \ln_{\eta_2^{-1}}\Big\{   1+ \mc{L}[\mu](x)\Big\}
\, \mathrm{d}\mu(x)\,.
\end{align*}
Next, we take the  limit as $\eta_1 \searrow 0$.
By using that $\eta_2$ is the Lipschitz constant of $\ln_{\eta_2^{-1}}$, one gets
\begin{align*}
&\Big| \ln_{\eta_2^{-1}}\Big\{  1+ \frac{2}{\ell} \Int{\mathbb{R}}{} \big\{ \ln|x-y| - \eta_1 \eta_2 \big(V(x) + V(y) \big) \big\} \, \mathrm{d}\mu(y) \Big\} \\
    &  \hspace{1cm} - \ln_{\eta_2^{-1}}\Big\{   1+ \frac{2}{\ell} \Int{\mathbb{R}}{} \ln|x-y|   \, \mathrm{d}\mu(y) \Big\} \Big|
   \;  \leq  \; \frac{2}{\ell} \eta_1 \Int{\mathbb{R}}{} \big( V(x) + V(y) \big) \, \mathrm{d}\mu(y) .
\end{align*}
If we then integrate over $x$ and note that $\Int{\mathbb{R}}{}V(x)\, \mathrm{d}\mu(x) < +\infty$ \textit{ex hypothesi}, we may take $\eta_1 \searrow 0$ and conclude the desired result. Finally, we observe that
$$\ln_{\eta_2^{-1}}\Big\{   1+ \frac{2}{\ell} \Int{\mathbb{R}}{} \ln|x-y|   \, \mathrm{d}\mu(y) \Big\}$$
is decreasing as $\eta_2$ decreases and is bounded from above by $C + \frac{2}{\ell} \ln(1+|x|)$ for some constant $C$, which is $\mu$-integrable.
Hence, letting $\eta_2 \searrow 0$, by monotone convergence we arrive at (\ref{Isupremum}).
\end{proof}
\begin{lemme}\label{Itildecontinuous} Given hypothesis \ref{potentialh2}, for every $\bs{\mf{p}} = (\eta_1,\eta_2,M_1,M_2)\in (\R^+)^4$, the functional
$\tilde{I}_{\bs{\mf{p}}} : \mathcal{M}_1(\mathbb{R}) \longrightarrow \intof{-\infty}{+\infty}$ is continuous in the weak topology.
\end{lemme}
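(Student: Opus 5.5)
The functional $\tilde{I}_{\bs{\mf{p}}}$ splits into two pieces, and I will show each is weakly continuous. The first piece is
\[
\mu\mapsto(1-2\eta_1)\int_{\mathbb R}\min\{V,M_1\}\,\mathrm d\mu .
\]
Since $V$ is continuous and $V\ge 0$ (after the normalisation $\inf V=0$), the integrand $\min\{V,M_1\}$ is bounded and continuous. Weak continuity of this piece is then immediate from the definition of the weak topology.

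The second piece is $\mu\mapsto \int \ln_{\eta_2^{-1}}\{1+\frac2\ell \Phi_\mu(x)\}\,\mathrm d\mu(x)$, where
\[
\Phi_\mu(x)=\int g(x,y)\,\mathrm d\mu(y),\qquad g(x,y)=\max\{\ln|x-y|-\eta_1\eta_2(V(x)+V(y)),-M_2\}.
\]
First I check that $g$ is bounded and continuous on $\mathbb R^2$.
\begin{itemize}
\item The lower bound is $-M_2$.
\item For the upper bound, use $\ln|x-y|\le\ln(1+|x|)+\ln(1+|y|)$ together with \ref{potentialh2}, which gives $\eta_1\eta_2V(x)\ge \ln(1+|x|)-C$ for some constant $C$ depending on $\eta_1\eta_2$. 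Hence $g\le 2C$.
\item Continuity holds off the diagonal. On the diagonal, $\ln|x-y|\to-\infty$, so the max with $-M_2$ makes $g=-M_2$ in a neighbourhood of it.
\end{itemize}
The function $h(x)=\ln_{\eta_2^{-1}}(1+\frac2\ell\,\cdot)$ is Lipschitz with constant $\frac{2}{\ell}\eta_2^{-1}$ (up to the factor) and bounded on bounded sets.

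Now take $\mu_n\to\mu$ weakly. The key step is to show that $\Phi_{\mu_n}\to\Phi_\mu$ uniformly on compact sets. The family $\{g(x,\cdot)\}_{x\in K}$ is uniformly bounded, and I claim it is equicontinuous on compacts of $y$ when $x$ ranges over a compact $K$. This holds because $g$ is uniformly continuous on $K\times K'$ for every compact $K'$. By Prokhorov, $(\mu_n)$ is tight, so for $\varepsilon>0$ there is a compact $K'$ with $\mu_n(K'^c)<\varepsilon$ for all $n$. A standard argument then gives the uniform convergence: cover $K$ by finitely many points using uniform continuity, use weak convergence at those finitely many continuous bounded functions $g(x_i,\cdot)$, and control the tails by tightness together with $\sup|g|$. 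Moreover $\Phi_\mu$ is continuous and bounded, with $|\Phi_\mu|\le\sup|g|$.

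Finally, I write
\[
\int h\circ\Phi_{\mu_n}\,\mathrm d\mu_n-\int h\circ\Phi_\mu\,\mathrm d\mu=\int (h\circ\Phi_{\mu_n}-h\circ\Phi_\mu)\,\mathrm d\mu_n+\Big(\int h\circ\Phi_\mu\,\mathrm d\mu_n-\int h\circ\Phi_\mu\,\mathrm d\mu\Big).
\]
The second bracket tends to zero because $h\circ\Phi_\mu$ is bounded and continuous. For the first bracket, split the integral over $K$ and $K^c$, where $K$ is a tightness compact. On $K$ the Lipschitz bound and the uniform convergence of $\Phi_{\mu_n}$ make it small. On $K^c$ it is bounded by $2\sup|h|$ on $[-\sup|g|,\sup|g|]$ times $\varepsilon$.

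The main obstacle is the uniform convergence of $\Phi_{\mu_n}$ on compacts, specifically verifying the continuity and boundedness of $g$ near the diagonal and at infinity. The max with $-M_2$ and the $V$-penalties are exactly what make this work.
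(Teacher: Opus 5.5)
Your proof is correct and follows the paper's argument. The first term is handled by boundedness and continuity, and the second by the Lipschitz bound on $\ln_{\eta_2^{-1}}$ together with the same two-term split $\int (h[\mu_n]-h[\mu])\,\mathrm{d}\mu_n+\int h[\mu]\,\mathrm{d}(\mu_n-\mu)$. The only difference is how you get uniform convergence of the inner integral. The paper uses that $g\equiv -M_2$ outside a compact subset of $\mathbb{R}^2$, so $g$ is uniformly continuous and $\int g(x,y)\,\mathrm{d}(\mu_n-\mu)(y)\to 0$ uniformly in all $x\in\mathbb{R}$, with no tightness needed. You only obtain uniform convergence on compacts, using just boundedness and continuity of $g$, and control the tails by tightness of $(\mu_n)$; both routes are valid.
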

\begin{proof}
$\mu \mapsto (1-2\eta_1)\Int{\mathbb{R}}{} \min\{ V(x), M_1 \big\} \, \mathrm{d}\mu(x)$ is clearly continuous in the weak topology since the integrand is a bounded continuous function.
Hence we need only show that
\begin{align*}
&\psi[\mu] \, =  \, \Int{\mathbb{R}}{} h[\mu](x)\mathrm{d}\mu(x)\quad \mbox{ with } \quad
h[\mu](x) \, = \,  \ln_{\eta_2^{-1}}\Big\{  1+ \frac{2}{\ell} \Int{\mathbb{R}}{} g(x,y) \, \mathrm{d}\mu(y) \Big\}
\end{align*}
where $  g(x,y) := \max\big\{ \ln|x-y| - \eta_1 \eta_2 \big(V(x) + V(y) \big), -M_2 \big\}$ is continuous.
Now given two probability measures $\mu, \nu$ and fixed $x\in \R$, the Lipschitz property of $\ln_{\eta_2^{-1}}$ yields
$$ \big| h[\mu](x) - h[\nu](x) \big| \, \le \,  \frac{2}{\ell \eta_2} \Big| \Int{\R}{} g(x,y)\mathrm{d}(\mu-\nu)(y)\Big|\,.$$
Since $g$ is bounded and continuous, and is equal to the constant $-M_2$ outside of a compact subset of $\mathbb{R}^2$, it is thus uniformly continuous in the whole of $\mathbb{R}^2$. Hence, as $\mu \to \nu$ weakly
\begin{align*}
    \Big| \Int{\R}{} g(x,y)\mathrm{d}(\mu-\nu)(y)\Big| \to 0 & & \text{ \textit{uniformly} in } x \, .
\end{align*}
Then observing that $x \mapsto h[\nu](x)$ is bounded and continuous, we deduce that
\bem
\Big| \Int{\mathbb{R}}{} h[\mu](x) \, \mathrm{d}\mu(x) - \Int{\mathbb{R}}{} h[\nu](x) \, \mathrm{d}\nu(x) \Big|
\leq \Big| \Int{\mathbb{R}}{} h[\nu](x) \, \mathrm{d}(\mu-\nu)(x) \Big|  \\
\, + \,  \Big| \Int{\R}{} g(x,y)\mathrm{d}(\mu-\nu)(y)\Big| \, \mathrm{d}\mu(x) \to 0
\end{multline}
as $\mu \to \nu$ weakly.
\end{proof}

We have now sufficiently prepared the ground to establish the following.
\begin{lemme}\label{lemmalsc}
$I$ is lower semi-continuous on $\mathcal{M}_1(\mathbb{R}) $ equipped with its weak topology.
\end{lemme}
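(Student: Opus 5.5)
We split $I = \tilde{I} - \mathrm{Ent}$, with $\tilde{I}$ as in Lemma \ref{Itildesupremum}, and show that each part is lower semi-continuous. Lower semi-continuity is preserved under sums of functionals bounded below, and under suprema. The one subtlety is that $-\mathrm{Ent}$ alone is not bounded below on $\mathcal{M}_1(\mathbb{R})$, so we first rebalance the two terms with the potential.

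We fix a small $\epsilon\in(0,1)$ and introduce the probability measure $\mathrm{d}\nu_{\epsilon V}(x) = \mathsf{Z}_{\epsilon V}^{-1}\mathrm{e}^{-\epsilon V(x)}\,\mathrm{d}x$. This is well defined by \ref{potentialh2}, exactly as in the proof of Lemma \ref{lemme:wldubco}. Whenever $\int V\,\mathrm{d}\mu<+\infty$, the identity used there gives
\begin{align*}
I[\mu] = \tilde{I}_\epsilon[\mu] + D(\mu\,\|\,\nu_{\epsilon V}) - \ln \mathsf{Z}_{\epsilon V},
\end{align*}
where $\tilde{I}_\epsilon$ is $\tilde{I}$ with $V$ replaced by $(1-\epsilon)V$. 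When $\int V\,\mathrm{d}\mu=+\infty$, both sides are $+\infty$, because the first lemma of this section shows that $\tilde{I}_\epsilon[\mu]\ge \tfrac{1-2\epsilon}{2(1-\epsilon)}\cdot\ldots$. More precisely, for $\epsilon$ small, $\tilde{I}_\epsilon[\mu]\ge c\int V\,\mathrm{d}\mu - C'$, since $\frac{4}{\ell}\ln(1+|x|)\le \frac{1-\epsilon}{2}V(x)+C'$. Also $D\ge0$. So the identity holds on all of $\mathcal{M}_1(\mathbb{R})$, with both summands valued in $(-C,+\infty]$.

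For the relative entropy, we use the Donsker--Varadhan formula already invoked in Lemma \ref{lemme:wldubco}:
\begin{align*}
D(\mu\,\|\,\nu_{\epsilon V})=\sup_{\phi\in\mathcal{C}^0_b(\mathbb{R})}\Big[\int\phi\,\mathrm{d}\mu-\ln\int\mathrm{e}^{\phi}\,\mathrm{d}\nu_{\epsilon V}\Big].
\end{align*}
Each functional inside the supremum is weakly continuous in $\mu$, so $D(\cdot\,\|\,\nu_{\epsilon V})$ is lower semi-continuous.

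For $\tilde{I}_\epsilon$, we apply Lemma \ref{Itildesupremum} with potential $(1-\epsilon)V$; its hypothesis \ref{potentialh2} still holds. This writes $\tilde{I}_\epsilon=\sup_{\bs{\mf{p}}}\tilde{I}_{\epsilon,\bs{\mf{p}}}$. By Lemma \ref{Itildecontinuous}, each $\tilde{I}_{\epsilon,\bs{\mf{p}}}$ is weakly continuous, and it is real valued because $\min\{V,M_1\}$ and the regularised logarithm are bounded. Hence $\tilde{I}_\epsilon$ is lower semi-continuous as a supremum of continuous functionals.

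Finally, take $\mu_n\to\mu$ weakly. Since $\liminf(a_n+b_n)\ge\liminf a_n+\liminf b_n$ whenever the sequences are bounded below, we obtain $\liminf_n I[\mu_n]\ge I[\mu]$, and this also covers the case where either limit is $+\infty$. We expect the main obstacle to be making sure the splitting $I=\tilde{I}_\epsilon+D-\ln\mathsf{Z}$ never produces $\infty-\infty$, and that the uniform lower bound on $\tilde{I}_\epsilon$ holds. Both reduce to the growth bound \ref{potentialh2} together with $\ln|x-y|\le\ln(1+|x|)+\ln(1+|y|)$, which is exactly the argument of the first lemma of this section.
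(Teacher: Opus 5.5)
Your proof is correct. Structurally it matches the paper's proof, which also splits off the entropy, uses Donsker--Varadhan for that piece, and writes the rest as a supremum of continuous functionals. The one modification you make is substantive: it repairs a step that is false as the paper states it.

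The paper splits $I=\tilde I-\mathrm{Ent}$ directly and asserts that $\mu\mapsto-\mathrm{Ent}[\mu]$ is lower semi-continuous ``by the Donsker--Varadhan formula''. That formula concerns entropy relative to a \emph{probability} measure. Entropy relative to Lebesgue measure is not weakly lower semi-continuous on $\mathcal{M}_1(\mathbb{R})$. For instance, $\mu_n=(1-\tfrac1n)\mathbbm{1}_{[0;1]}(x)\,\mathrm{d}x+\tfrac1n\mathrm{e}^{-n^2}\mathbbm{1}_{[2;2+\mathrm{e}^{n^2}]}(x)\,\mathrm{d}x$ converges weakly to $\mathbbm{1}_{[0;1]}(x)\,\mathrm{d}x$, whose $-\mathrm{Ent}$ equals $0$. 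Yet $-\mathrm{Ent}[\mu_n]=(1-\tfrac1n)\ln(1-\tfrac1n)-\tfrac{\ln n}{n}-n\to-\infty$. This does not contradict the lemma, since $\int V\,\mathrm{d}\mu_n\to+\infty$; it only breaks that decomposition.

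You move $\epsilon\int V\,\mathrm{d}\mu$ into the entropy, which gives you three things:
\begin{itemize}
\item $D(\mu\,\|\,\nu_{\epsilon V})\geq 0$, to which Donsker--Varadhan genuinely applies;
\item $\tilde I_\epsilon$, which still falls under Lemmata \ref{Itildesupremum}--\ref{Itildecontinuous} with $(1-\epsilon)V$;
\item a uniform lower bound on $\tilde I_\epsilon$, so no $\infty-\infty$ arises when you add the two pieces.
\end{itemize}
This is the same device the paper uses in the proof of Lemma \ref{lemme:wldubco}. Your version is therefore the paper's argument with its gap closed, at the cost of one extra line.

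Two small cleanups.
\begin{itemize}
\item When $\int V\,\mathrm{d}\mu=+\infty$, do not try to deduce $\tilde I_\epsilon[\mu]=+\infty$ from your lower bound. If $\int\ln(1+|y|)\,\mathrm{d}\mu(y)=+\infty$, the term $\int\ln\max\{0,\dots\}\,\mathrm{d}\mu$ can itself be $+\infty$, and the difference defining $\tilde I_\epsilon[\mu]$ is then undefined. Instead set $\tilde I_\epsilon[\mu]=+\infty$ by the same convention as for $I$. This is consistent with the supremum representation: take $\eta_1<\tfrac12$ and let $M_1\nearrow+\infty$; then $\tilde I_{\epsilon,\bs{\mf{p}}}[\mu]\to+\infty$, because the regularised logarithmic term stays bounded.
\item Delete the unfinished expression ``$\tfrac{1-2\epsilon}{2(1-\epsilon)}\cdot\ldots$''.
\end{itemize}
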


\begin{proof}
$\mu \mapsto -\mathrm{Ent}[\mu]$ is lower semicontinuous by the Donsker--Varadhan variational formula,
and lower semicontinuity is preserved under summation, hence we need only show that $\tilde{I}$ is lower semicontinuous.
This follows because it is the supremum of a family of continuous functionals, as per Lemmata \ref{Itildesupremum} and \ref{Itildecontinuous}.
\end{proof}
We next prove that the level sets of $I$ are compact.
\begin{lemme}\label{compactlevelsets} $I $ has compact level sets. That is, for all $L > 0$,
$\big\{  \mu \in \mathcal{M}_1(\mathbb{R}) \, : \, I[\mu] \leq L\big\} $ is compact.
\end{lemme}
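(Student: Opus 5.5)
We need to show that for each $L$ the level set $\{I\le L\}$ is compact. Since Lemma \ref{lemmalsc} already shows that $I$ is lower semi-continuous, this set is closed in $\mathcal{M}_1(\mathbb{R})$. It therefore suffices to show that it is contained in a compact subset of $\mathcal{M}_1(\mathbb{R})$. By Prokhorov's theorem, it is enough to prove that the set is tight.

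For tightness we use the lower bound \eqref{ratefunctionlowerbound}, $I[\mu]\ge \frac14\int V\,\mathrm{d}\mu - C$. Hence $I[\mu]\le L$ gives $\int V\,\mathrm{d}\mu\le 4(L+C)$, so
\[
\{I\le L\}\subset F_V(4(L+C)), \qquad F_V(K):=\Big\{\mu\in\mathcal{M}_1(\mathbb{R}) : \int_{\mathbb{R}} V\,\mathrm{d}\mu\le K\Big\}.
\]
By Hypothesis \ref{potentialh2} there are $c,R>0$ with $V(x)\ge c|x|^\theta$ for $|x|\ge R$. Markov's inequality then gives, uniformly over $\mu\in F_V(K)$ and $A\ge R$,
\[
\mu\big(\{|x|>A\}\big)\le \frac{K}{cA^\theta}.
\]
This is tightness. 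Moreover $F_V(K)$ is itself closed: $\mu\mapsto\int\min\{V,M\}\,\mathrm{d}\mu$ is weakly continuous, and $\int V\,\mathrm{d}\mu=\sup_M\int \min\{V,M\}\,\mathrm{d}\mu$, so $\mu\mapsto \int V\,\mathrm{d}\mu$ is lower semi-continuous. Thus $F_V(K)$ is compact, and the level set, being a closed subset of it, is compact. This settles the statement on $\mathcal{M}_1(\mathbb{R})$.

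The constrained claim is the delicate step, and it is also the one later used for exponential tightness. Here $\mathcal{M}_{1,\mathsf{c}}(\mathbb{R})$ carries the subspace weak topology, so we must show that $F_{V,\mathsf{c}}(K)=F_V(K)\cap\mathcal{M}_{1,\mathsf{c}}(\mathbb{R})$ is compact, or equivalently that $F_{V,\mathsf{c}}(K)$ is closed in $\mathcal{M}_1(\mathbb{R})$. The plan is to take $\mu_n\in F_{V,\mathsf{c}}(K)$ with $\mu_n\to\mu$ weakly and check two things.

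First, $\mu\in F_V(K)$, by the closedness of $F_V(K)$ above.

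Second, $\mu$ is centred. The condition $\theta>1$ gives uniform integrability of $x$: for $A\ge R$,
\[
\sup_n\int_{|x|>A}|x|\,\mathrm{d}\mu_n\le \sup_n\int_{|x|>A}\frac{V(x)}{cA^{\theta-1}}\,\mathrm{d}\mu_n(x)\le \frac{K}{cA^{\theta-1}}\xrightarrow[A\to\infty]{}0.
\]
Split $x=x\mathbbm{1}_{|x|\le A}+x\mathbbm{1}_{|x|>A}$, and handle the first part with a bounded continuous truncation of $x$. Combining weak convergence on that part with the uniform tail bound above gives $\int x\,\mathrm{d}\mu_n\to\int x\,\mathrm{d}\mu$. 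Hence $\int x\,\mathrm{d}\mu=0$, and $\int|x|\,\mathrm{d}\mu<\infty$ follows from the same bound.

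Therefore $F_{V,\mathsf{c}}(K)$ is compact, and the constrained level sets $\{I\le L\}\cap\mathcal{M}_{1,\mathsf{c}}(\mathbb{R})$ are closed subsets of it by lower semi-continuity, hence compact.
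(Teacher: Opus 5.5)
Your proof is correct and follows the paper's argument: the level set is closed by the lower semi-continuity of $I$, it sits inside $F_V(4(L+C))$ by \eqref{ratefunctionlowerbound}, $F_V$ is closed as an intersection of the closed sets $F_{V\wedge M}$, and compactness then follows from tightness and Prokhorov's theorem. The only cosmetic differences are that you quantify tightness via $V\ge c|x|^\theta$ and Markov where the paper only uses $V\to+\infty$, and that your constrained-case argument reproduces the paper's next lemma ($\theta>1$ giving uniform integrability of $x$ on $F_V(K)$, plus a truncation to pass the vanishing first moment to the limit).
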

\begin{proof} We show that $I^{-1}(]-\infty ; L])$ is compact by showing it is a closed subset of a compact set.
$I^{-1}( \intof{-\infty}{L})$ is closed by the lower semi-continuity of $I$ proven in Lemma \ref{lemmalsc}.
Next by (\ref{ratefunctionlowerbound}) we have that
\begin{equation}\label{ineqc}
I^{-1}(\intof{-\infty}{L}) \subset F_V(4L+C) \quad \e{with} \quad
F_V(L) \, = \, \Big\{ \mu \in \mathcal{M}_1(\mathbb{R}) \, : \, \Int{\mathbb{R}}{} V(x) \, \mathrm{d}\mu(x) \leq L \Big\}
\end{equation}
for some constant $C > 0$. We claim that the latter set is compact, which would prove our claim.
Indeed, it is closed since  by monotone convergence theorem, for every probability measure $\mu$,
$\underset{M > 0}{\sup}\Int{\R}{} \{ M \wedge V(x) \} \,  \mathrm{d}\mu(x) = \Int{\R}{} V(x) \, \mathrm{d}\mu(x)$, so that
$$ F_V(L)=\bigcap_{M>0}F_{ V\wedge M }(L) $$
where $F_{V\wedge M }(L)$ is closed since the function $V\wedge M $ is bounded continuous. Next,
$V(x) \to +\infty$ as $|x| \to +\infty$, hence given any $\epsilon > 0$, there is a compact set $K_\epsilon$ such that for all
$x \in K_\epsilon^c$, $V(x)\geq \frac{L}{\epsilon}$. Hence, for every probability measure $\mu$
$$\mu[K^c_{\eps}] \frac{L}{\epsilon} \leq \Int{\mathbb{R}}{} V(x) \, \mathrm{d}\mu(x)\,.$$
As a consequence,
$$F_V(L)\subset \bigcap_{\epsilon>0} \Big\{ \mu \in \mathcal{M}_1(\mathbb{R}) \, : \mu[K_\epsilon^c]\le \epsilon \Big\}$$
and so $F_V(L)$ is uniformly tight. Hence by Prokhorov's theorem $F_V(L)$ is compact.

\end{proof}
We next show that this property extends to the constrained model:
\begin{lemme} Assume \ref{potentialh2} holds with $\theta>1$.
Then, the restriction of $I$ to $\mathcal{M}_{1, \mathsf{c} }(\mathbb{R})$ has compact level sets.
That is, for $L >0$, $$\Big\{ \mu \in \mathcal{M}_1(\mathbb{R}) \, : \, I[\mu]\leq L \Big\} \cap \mathcal{M}_{1,\mathsf{c}}(\mathbb{R})$$
is compact in the subspace topology of $\mathcal{M}_{1,\mathsf{c} }(\mathbb{R})$.\end{lemme}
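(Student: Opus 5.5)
We prove that the level set $\{ I \le L\} \cap \mathcal{M}_{1,\mathsf{c}}(\mathbb{R})$ is both closed in $\mathcal{M}_{1,\mathsf{c}}(\mathbb{R})$ and relatively compact there, with all limits taken with respect to the weak topology.

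By \eqref{ineqc}, the set is contained in $F_V(4L+C)\cap \mathcal{M}_{1,\mathsf{c}}(\mathbb{R})$. By Lemma \ref{compactlevelsets}, $\{I\le L\}$ is compact in $\mathcal{M}_1(\mathbb{R})$. Since $\mathcal{M}_1(\mathbb{R})$ is metrisable via $\op{d}_{\mathrm{BL}}$, it suffices to argue with sequences. Take any sequence $(\mu_n)$ in $\{I\le L\}\cap\mathcal{M}_{1,\mathsf{c}}(\mathbb{R})$. It has a subsequence converging weakly to some $\mu\in\mathcal{M}_1(\mathbb{R})$, and lower semi-continuity (Lemma \ref{lemmalsc}) gives $I[\mu]\le L$. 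It remains to show that $\mu\in\mathcal{M}_{1,\mathsf{c}}(\mathbb{R})$, namely that $\int |x|\,\mathrm{d}\mu<+\infty$ and $\int x\,\mathrm{d}\mu=0$. The subspace topology then makes the intersection compact, since a subset that is compact in the ambient metric space and contained in the subspace is compact in the subspace topology.

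\textbf{Finite first moment.} By \ref{potentialh2} with $\theta>1$, there are constants $c>0$ and $C'$ such that $V(x)\ge c|x|^\theta - C'$ for all $x$. Hence $\int |x|^\theta \,\mathrm{d}\mu_n \le (4L+C+C')/c =: K$ uniformly in $n$. Since $x\mapsto |x|^\theta$ is continuous and nonnegative, truncating it by $\min\{|x|^\theta,M\}$ and letting $M\to\infty$ (monotone convergence together with weak convergence) gives $\int |x|^\theta\,\mathrm{d}\mu\le K$. In particular $\int|x|\,\mathrm{d}\mu<+\infty$.

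\textbf{Zero mean.} This is the step where $\theta>1$ is essential, and it is the only delicate point, because $x\mapsto x$ is unbounded and weak convergence alone does not control it. We use uniform integrability. For $R>0$,
\[
\Big|\int x\,\mathrm{d}\mu_n - \int x\,\mathrm{d}\mu\Big| \le \Big|\int \chi_R(x)\,x\,\mathrm{d}(\mu_n-\mu)\Big| + \int_{|x|>R}|x|\,\mathrm{d}(\mu_n+\mu),
\]
where $\chi_R$ is a continuous cutoff equal to $1$ on $[-R;R]$ and supported in $[-R-1;R+1]$. The first term tends to $0$ as $n\to\infty$, since $\chi_R(x)\,x$ is bounded and continuous. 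For the second term, $|x|\le |x|^\theta R^{1-\theta}$ on $\{|x|>R\}$, so it is bounded by $2KR^{1-\theta}$, uniformly in $n$. Letting $n\to\infty$ and then $R\to\infty$ yields $\int x\,\mathrm{d}\mu=\lim_n\int x\,\mathrm{d}\mu_n=0$. Therefore $\mu\in\mathcal{M}_{1,\mathsf{c}}(\mathbb{R})$, which completes the argument.

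This same computation, with $F_{V,\mathsf{c}}(L)$ in place of $\{I\le L\}$, also shows that $F_{V,\mathsf{c}}(L)$ is compact, as was claimed in the proof of exponential tightness.
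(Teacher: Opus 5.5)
Your proof is correct and is essentially the paper's argument: both use the uniform bound on $\int |x|^\theta\,\mathrm{d}\mu$ coming from \ref{potentialh2} with $\theta>1$, together with a continuous cutoff and the tail estimate $R^{1-\theta}$, to show that the mean passes to weak limits within $F_V(4L+C)$. The only difference is presentational: you argue with sequences, whereas the paper proves continuity of $\mu\mapsto\int x\,\mathrm{d}\mu$ on $F_V(L)$ and concludes that the level set is a compact set intersected with a closed set.
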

\begin{proof}
The subtlety here is that $\mathcal{M}_{1,\mathsf{c}}(\mathbb{R})$ is not a closed subset of $\mathcal{M}_{1}(\mathbb{R})$.
However we claim that its intersection with the compact set
$F_V(4L+C)$ is closed, see \eqref{ineqc} and the proof of Lemma \ref{compactlevelsets}.
when  \ref{potentialh2} holds with $\theta>1$. Indeed,  we  have for $\mu\in F_V(L)$
\begin{equation}\label{bo1}
\Int{\mathbb{R}}{} |x|^{\theta} \, \mathrm{d}\mu (x) \leq C_1 L + C_2
\end{equation}
for some constants $C_1 , C_2 > 0$.
We then argue that, $\mu\mapsto \int x \, \mathrm{d}\mu(x)$ is continuous on $F_V(L)$. To see this, let $ 0 \leq \varphi_{M} \leq 1$ be a continuous function,  equal to
$1$ on $[-M; M]$ and equal to $0$ on $[-M-1; M+1]^c$. Let $(\mu_n)_{n \in \mathbb{R}}$ be a sequence of probability measures converging weakly to $\mu$
and such that $\mu_n \in F_V(L)$. Since $F_V(L)$ is closed (see the proof of Lemma \ref{compactlevelsets}), we must have $\mu \in F_V(L)$. Then
\begin{align*}
\Big| \Int{\mathbb{R}}{} x \, \mathrm{d}(\mu_n - \mu)(x) \Big|  &\leq \Big| \Int{\mathbb{R}}{} x \, \varphi_M(x) \, \mathrm{d}(\mu_n - \mu)(x) \Big|
+ \Big| \Int{\mathbb{R}}{} x \, \big( 1- \varphi_M(x) \big) \, \mathrm{d}\mu_n(x) \Big| \\
    &\quad + \Big| \Int{\mathbb{R}}{} x \, \big( 1 - \varphi_M(x) \big) \, \mathrm{d}\mu(x) \Big| \, .
\end{align*}
By writing $|x| = |x|^{1-\theta}|x|^{\theta}$ we have the bound
$$
\Big| \Int{\mathbb{R}}{} x \, \big( 1 - \varphi_M(x) \big)  \, \mathrm{d}\nu(x) \Big|  \leq M^{1-\theta} (C_1 L + C_2)
$$
for all $\nu \in F_V(L)$. Given any $\eps >0$, by taking $M >0$ sufficiently large we can make
all such integrals less than $\frac{\eps}{2}$. Finally, we observe that $x \, \varphi_M(x)$ is bounded continuous, hence
\begin{align*}
    \limsup_{n \to +\infty}\Big| \Int{\mathbb{R}}{} x \, \mathrm{d}(\mu_n - \mu)(x) \Big|  \leq \eps \,  .
\end{align*}
Since $\eps > 0$ was arbitrary, we deduce the claim that $\mu\mapsto \int x \, \mathrm{d}\mu(x)$ is continuous on $F_V(L)$. It follows that
$$F_{V}(4L+C) \cap \Big\{ \mu \in \mathcal{M}_{1}(\mathbb{R}): \Int{\R}{}  x \, \mathrm{d} \mu(x)=0 \Big\}$$
is closed in the topology of $\mathcal{M}_1(\mathbb{R})$. The conclusion then follows, since
by \eqref{ineqc}, the set
$$\Big\{ \mu \in \mathcal{M}_1(\mathbb{R}) \, : \, I[\mu]\leq L \Big\} \cap \mathcal{M}_{1,\mathsf{c}}(\mathbb{R})$$
is equal to
$$ \Big\{ \mu \in \mathcal{M}_1(\mathbb{R}) \, : \, I[\mu]\leq L \Big\} \cap F_{V}(4L+C) \cap \Big\{ \mu \in \mathcal{M}_{1}(\mathbb{R}): \Int{\R}{}  x \, \mathrm{d} \mu(x)=0 \Big\}$$
which is the intersection of a compact set with a closed set and is therefore compact.

\end{proof}

Putting together \eqref{ratefunctionlowerbound} and Lemma \ref{compactlevelsets} we deduce that
\begin{cor}\label{lemmaexistsmin}
$I : \mathcal{M}_1(\mathbb{R}) \longrightarrow ]-\infty,+\infty]$ is lower semicontinuous and has compact level sets. Therefore, it
achieves its minimal value. Furthermore $I$ when restricted to $\mathcal{M}_{1,\mathsf{c}}(\mathbb{R})$ achieves its minimal value in $\mathcal{M}_{1,\mathsf{c}}(\mathbb{R})$.
\end{cor}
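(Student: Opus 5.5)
The plan is to derive Corollary \ref{lemmaexistsmin} directly from Lemma \ref{lemmalsc}, Lemma \ref{compactlevelsets}, the lower bound \eqref{ratefunctionlowerbound}, and the compactness lemma for the constrained model proved just above.

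\textbf{Unconstrained model.} First I check that $I$ is not identically $+\infty$. A standard Gaussian-like measure does not work directly, because the $\ln\max\{0,\cdot\}$ term forces the constraint $1+\frac{2}{\ell}\int\ln|x-y|\,\mathrm{d}\mu(y)\ge 0$. Instead I take $\mu$ uniform on $[-R,R]$ with $R$ large. Then $\int\ln|x-y|\,\mathrm{d}\mu(y)\ge \ln R - C$ uniformly in $x\in[-R,R]$, so the argument of the logarithm is bounded below by a positive constant. Its logarithm is therefore bounded, $\int V\,\mathrm{d}\mu<\infty$ by continuity of $V$, and the entropy is finite. Hence $m:=\inf I<+\infty$.

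By \eqref{ratefunctionlowerbound} and $V\ge 0$ we have $I\ge -C$, so $m>-\infty$. Fix $L>m$. The level set $\{I\le L\}$ is nonempty and compact by Lemma \ref{compactlevelsets}. Take a minimising sequence inside it and extract a weakly convergent subsequence $\mu_n\to\mu^*$. Lower semicontinuity (Lemma \ref{lemmalsc}) then gives $I[\mu^*]\le\liminf_n I[\mu_n]=m$, so $\mu^*$ is a minimiser.

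\textbf{Constrained model.} The same argument runs in the subspace topology of $\mathcal{M}_{1,\mathsf{c}}(\mathbb{R})$. The only point needing care is that the infimum over $\mathcal{M}_{1,\mathsf{c}}(\mathbb{R})$ is finite. The uniform measure on $[-R,R]$ has zero mean and a finite first moment, so it lies in $\mathcal{M}_{1,\mathsf{c}}(\mathbb{R})$ and again gives a finite value of $I$.

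By the preceding lemma, $\{I\le L\}\cap\mathcal{M}_{1,\mathsf{c}}(\mathbb{R})$ is compact, and lower semicontinuity carries over to the restriction. Hence the limit of a minimising sequence stays in $\mathcal{M}_{1,\mathsf{c}}(\mathbb{R})$ and attains the constrained infimum. This step uses hypothesis \ref{potentialh2} with $\theta>1$, as that lemma does.

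\textbf{Main obstacle.} The only non-formal step is showing that $I$ is not identically $+\infty$, that is, exhibiting a measure in $\mathcal{C}$ with finite $I$. Taking a uniform measure on a sufficiently wide interval settles this, because its logarithmic potential exceeds $-\ell/2$ everywhere on its support.
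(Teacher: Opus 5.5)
Your proof is correct and follows the paper's route. The paper obtains the corollary by combining lower semicontinuity (Lemma \ref{lemmalsc}), compactness of the level sets (Lemma \ref{compactlevelsets}, and its constrained analogue, which needs $\theta>1$) and the lower bound \eqref{ratefunctionlowerbound}, which is exactly your direct-method argument. Your additional check that $\inf I<+\infty$, using the uniform measure on $[-R,R]$ (whose logarithmic potential is at least $\ln R-1$ on its support), is correct and is left implicit in the paper; attainment would hold trivially even if $I\equiv+\infty$, but finiteness is what makes the minimiser meaningful for the later uniqueness argument.
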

\subsection{Strict convexity and minimisers}\label{sec:proofmin}
We now prove Proposition \ref{propositiongrfmin}.
We claim that $I$ is strictly convex. Note that this remains true for its restriction to $\mathcal{M}_{1,\mathsf{c}}(\mathbb{R})$ since the constraint is linear.
We already know that $I$ achieves its minimal value by Corollary \ref{lemmaexistsmin}, hence by strict convexity this minimiser is unique
(in $\mathcal{M}_1(\mathbb{R})$ and $\mathcal{M}_{1,\mathsf{c}}(\mathbb{R})$ respectively).
\begin{prop}[Strict convexity of the rate functional]\label{propconv}
For any pair of probability measures $\mu_0, \mu_1 \in \mathcal{M}_1(\mathbb{R})$ (possibly equal), and any $\alpha \in \intff{0}{1}$, we have
$$I[(1-\alpha) \mu_0 + \alpha \mu_1 ] \leq (1-\alpha) I[\mu_0] + \alpha I[\mu_1] \, .$$
If $I[\mu_0], I[\mu_1] < +\infty$ and $ \mu_0 \neq \mu_1$, then for all $\alpha \in ]0;1[$ we have the strict inequality
$$I[(1-\alpha) \mu_0 + \alpha \mu_1 ] < (1-\alpha) I[\mu_0] + \alpha I[\mu_1] \, .$$
\end{prop}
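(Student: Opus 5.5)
The natural plan is to split $I$ into its three pieces and prove that each is convex along the segment $\mu_\alpha=(1-\alpha)\mu_0+\alpha\mu_1$, with at least one of them strictly convex when $\mu_0\neq\mu_1$. We may assume $I[\mu_0],I[\mu_1]<+\infty$, since otherwise the inequality is trivial. Then $\int V\,\mathrm{d}\mu_i<\infty$, and by the preceding lemma $\mathrm{Ent}[\mu_i]$ is finite. The three pieces behave as follows.

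\begin{itemize}
\item The potential term $\int V\,\mathrm{d}\mu$ is linear in $\mu$.
\item The term $-\mathrm{Ent}[\mu]=\int \rho\ln\rho$ is strictly convex on absolutely continuous measures, because $t\mapsto t\ln t$ is strictly convex. Equality in Jensen, pointwise in $x$, forces $\rho_0=\rho_1$ a.e., hence $\mu_0=\mu_1$.
\item The middle term therefore only needs to be convex, which amounts to showing that
\[
G[\mu]:=\int \ln\max\Big\{0,\,1+\tfrac{2}{\ell}U^\mu(x)\Big\}\,\mathrm{d}\mu(x),\qquad U^\mu(x):=\int\ln|x-y|\,\mathrm{d}\mu(y),
\]
is concave along the segment.
\end{itemize}

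For the concavity of $G$, I would write $q_\alpha(x)=1+\tfrac2\ell U^{\mu_\alpha}(x)=(1-\alpha)q_0(x)+\alpha q_1(x)$, which is affine in $\alpha$. Then
\[
G(\alpha):=G[\mu_\alpha]=\int \ln_+ q_\alpha\,\mathrm{d}\mu_\alpha,
\]
where $\ln_+$ means $\ln\max\{0,\cdot\}$ with the value $-\infty$ allowed. This is not obviously concave: the integrand is concave in $\alpha$, but the integrating measure also moves with $\alpha$. The key idea is the identity $\ln t=\inf_{s>0}\{t/s+\ln s-1\}$, valid for $t>0$ and extending to the value $-\infty$ at $t\le 0$ when restricted to the cone. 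This gives
\[
G[\mu]=\inf_{s>0\ \text{measurable}}\int\Big(\frac{q_\mu(x)}{s(x)}+\ln s(x)-1\Big)\mathrm{d}\mu(x).
\]
For fixed $s$, the functional inside is
\[
\mu\mapsto \int\big(\ln s-1\big)\,\mathrm{d}\mu+\int\frac{\mathrm{d}\mu(x)}{s(x)}+\frac2\ell\iint\frac{\ln|x-y|}{s(x)}\,\mathrm{d}\mu(x)\,\mathrm{d}\mu(y),
\]
which is a quadratic form in $\mu$ plus a linear part. Its second variation along $\mu_1-\mu_0=:\nu$ is the bilinear quantity $\tfrac{2}{\ell}\iint \ln|x-y|/s(x)\,\mathrm{d}\nu(x)\,\mathrm{d}\nu(y)$, and that weight $1/s(x)$ ruins the sign argument.

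So I would instead take $s$ tied to $\mu$, checking the bound directly through the first variation. Concavity is equivalent to the tangent inequality
\[
G[\mu_1]\le G[\mu_0]+\frac{\mathrm{d}}{\mathrm{d}\alpha}G(\alpha)\Big|_{\alpha=0}.
\]
The derivative equals
\[
\int\ln q_0\,\mathrm{d}\nu+\frac2\ell\int\frac{U^{\nu}}{q_0}\,\mathrm{d}\mu_0 .
\]
Plan: use $\ln q_1\le \ln q_0+(q_1-q_0)/q_0$ to bound $\int\ln q_1\,\mathrm{d}\mu_1$. This produces $\int\ln q_0\,\mathrm{d}\mu_1+\tfrac2\ell\int U^\nu/q_0\,\mathrm{d}\mu_1$. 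Subtracting the tangent leaves a residual
\[
\frac2\ell\int \frac{U^\nu}{q_0}\,\mathrm{d}\nu .
\]
Showing this residual is $\le 0$ is the \textbf{main obstacle}: Lemma 1.8 of \cite{SaffT97} (as used in Lemma \ref{almostposdef}) gives $\int U^\nu\,\mathrm{d}\nu\le0$ for zero-mass $\nu$, but here the weight $1/q_0$ is present. I would first try to remove the weight by reparametrising along a different path, namely a weighted interpolation $\mathrm{d}\mu_\alpha\propto q$-adjusted. Failing that, I would rewrite $G$ through the Yang--Yang/Doyon-type dressing: introduce $\rho=\mu/q$ (the ``density of states''), under which the middle term becomes jointly concave via $\mu\ln q=\mu\ln\mu-\mu\ln\rho$, a relative-entropy-type perspective function that is jointly convex in $(\mu,\rho)$. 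Combined with the linear relation $\mu=\rho+\tfrac2\ell\,\rho\,U^\mu$ and the negativity of the logarithmic energy on zero-mass measures, this should yield the convexity of $I$.

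Finally, strictness follows from the entropy term as described above. The positive-part truncation is handled by noting that finiteness of $I$ forces $q_i>0$ $\mu_i$-a.e., and convexity of the set $\mathcal C$ then controls the truncation along the segment.
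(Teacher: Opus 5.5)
Your plan depends on the claim that the middle term $-G[\mu]=-\int\ln_+ q_\mu\,\mathrm{d}\mu$ (with $q_\mu=1+\frac{2}{\ell}U^\mu$) is convex on its own, with strictness supplied by the entropy. That claim is not available, and this is a genuine gap. The remark following the paper's proof states explicitly that the interaction term is \emph{not} convex by itself. Your own computation shows why. With $\nu=\mu_1-\mu_0$, $\dot\rho=\rho_1-\rho_0$ and $\dot q=\frac{2}{\ell}U^\nu$, one has
\[
\frac{\mathrm{d}^2}{\mathrm{d}\alpha^2}\big(-G[\mu_\alpha]\big)=\int\Big(\frac{\rho_\alpha\,\dot{q}^{2}}{q_\alpha^{2}}-\frac{2\,\dot\rho\,\dot q}{q_\alpha}\Big)\mathrm{d}x .
\]
The cross term equals $\frac{4}{\ell}\iint\ln|x-y|\,q_\alpha(x)^{-1}\,\mathrm{d}\nu(x)\,\mathrm{d}\nu(y)$. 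This is a mutual energy between two \emph{different} signed measures, $\nu/q_\alpha$ and $\nu$. It has no sign, and Lemma~1.8 of \cite{SaffT97} does not apply because it needs the same zero-mass measure on both sides. This is exactly your unresolved residual $\frac{2}{\ell}\int U^\nu/q_0\,\mathrm{d}\nu$.

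Your fallbacks do not repair this:
\begin{itemize}
\item $\mu/q_\mu$ is not affine in $\mu$.
\item The relation $\mu=\rho+\frac{2}{\ell}\rho\,U^\mu$ is bilinear, not linear.
\item Joint convexity of $(\mu,\rho)\mapsto\mu\ln(\mu/\rho)$ would push $G$ towards convexity, which is the wrong direction for what you need.
\end{itemize}

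The missing idea is to keep the entropy together with the interaction term rather than reserving it for strictness. Since $-\mathrm{Ent}[\mu]=\int\rho\ln\rho\,\mathrm{d}x$, the two nonlinear pieces combine into
\[
S[\mu]=\int\rho(x)\ln\frac{\rho(x)}{q_\mu(x)}\,\mathrm{d}x .
\]
For every $x$, the map $\alpha\mapsto(\rho_\alpha(x),q_\alpha(x))$ is affine. The function $(u,v)\mapsto u\ln(u/v)$ is jointly convex on $\mathbb{R}_+^2$, being the perspective of $-\ln$. Convexity of $I$ therefore follows at once, with no sign information on logarithmic energies.

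The entropy is then used up. Because the perspective function is $1$-homogeneous, hence affine along rays, strictness needs a separate argument, which the paper gives as follows.
\begin{itemize}
\item Equality at one $\alpha$ forces equality for all $\alpha\in]0;1[$.
\item Hence the pointwise second derivative vanishes a.e. That second derivative is the perfect square $\big(\dot\rho/\sqrt{\rho_\alpha}-\sqrt{\rho_\alpha}\,\dot q/q_\alpha\big)^2$, i.e.\ your $-G''$ integrand plus the entropy's $\dot\rho^{2}/\rho_\alpha$.
\item Multiplying the vanishing bracket by $q_\alpha\dot\rho/\sqrt{\rho_\alpha}$ and integrating gives $\int q_\alpha\dot\rho^{2}/\rho_\alpha\,\mathrm{d}x=\frac{2}{\ell}\iint\ln|x-y|\,\mathrm{d}\nu(x)\,\mathrm{d}\nu(y)$.
\item The left side is nonnegative and the right side nonpositive, so $\nu=0$.
\end{itemize}
This last step is where the negativity of the logarithmic energy of a zero-mass measure actually enters, now in its unweighted form.
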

\begin{proof}

$\mu \mapsto \Int{\mathbb{R}}{} V(x) \, \mathrm{d}\mu(x)$ is linear and therefore convex, hence we need only show that
$$
\mu \mapsto S[\mu] \overset{\mathrm{def}}{=} -\Int{\mathbb{R}}{} \ln\max\Big\{ 0,  1+ \frac{2}{\ell} \Int{\mathbb{R}}{} \ln|x-y| \, \mathrm{d}\mu(x) \Big\}
\, \mathrm{d}\mu(x) + \Int{\mathbb{R}}{} \ln \frac{\mathrm{d}\mu(x)}{\mathrm{d}x}\, \mathrm{d}\mu(x)
$$
is strictly convex. Without loss of generality we can assume $I[\mu] < +\infty$, which in particular implies that $\mu$ has a density which we denote $\varrho$, and furthermore that $\mu$ satisfies
\begin{align*}
1+ \frac{2}{\ell} \Int{\mathbb{R}}{} \ln|x-y| \, \mathrm{d}\mu(y) > 0 & & \text{for } \mu\text{--a.e. } x \, .
\end{align*}
We begin by remarking that $S$, in a certain sense, takes the form of a relative entropy. More precisely, if we let $f(u,v) = u \ln \frac{u}{v}$ for $u,v > 0$, then
$$S[\mu] = \Int{\mathbb{R}}{} f\bigg( \varrho(x) ,   1 + \frac{2}{\ell} \Int{\mathbb{R}}{} \ln|x-y|\, \varrho(y) \, \mathrm{d}y \bigg) \, \mathrm{d}x \, .$$
Note that the second argument of $f$ is not necessarily a normalised density, but this doesn't matter. Convexity of $S$ then immediately follows since $f$ is convex in $\mathbb{R}^+ \times \mathbb{R}^+$ (see Example 3.19 of \cite{boyd2004convex}) and $\varrho$ appears linearly in each argument of $f$.

Let us now prove strict convexity. To show this we show that if $\mu_0, \mu_1 \in \mathcal{M}_1(\mathbb{R})$ are probability measures, and if $S[(1-\alpha) \mu_0 +  \alpha \mu_1] = (1-\alpha) S[\mu_0] + \alpha S[\mu_1]$ for some $\alpha \in ]0;1[$, then $\mu_0 = \mu_1$. As before, we can assume $I[\mu_a] < +\infty$, $\mu_a$ has density $\varrho_a$, and
\begin{align*}
    1 + \frac{2}{\ell} \Int{\mathbb{R}}{}\ln|x-y| \, \mathrm{d}\mu_a(y) > 0, & & \text{for } \mu_a \text{--a.e. } x
\end{align*}
for $a \in \{ 0, 1 \}$. Let $\mu_\alpha = (1-\alpha) \mu_0 +  \alpha \mu_1$ and let $\varrho_\alpha$ be its associated density, for $\alpha \in ]0;1[$.

We first remark that, by convexity of $S$ if $S[(1-\alpha) \mu_0 +  \alpha \mu_1] = (1-\alpha) S[\mu_0] + \alpha S[\mu_1]$ for \textit{some} $\alpha \in ]0;1[$ then in fact this holds for \textit{all} $\alpha \in ]0;1[$. To see this, let $\alpha^\prime \in ]0;1[$ be any other point, and without loss of generality assume $ \alpha < \alpha^\prime$. Then $\mu_{\alpha} = (1- \frac{\alpha}{\alpha^\prime}) \mu_0 + \frac{\alpha}{\alpha^\prime} \mu_{\alpha^\prime} $ is a convex combination. Then by convexity of $S$,
\begin{align*}
    (1-\alpha) S[\mu_0] + \alpha S[\mu_1] = S[\mu_{\alpha}] &\leq (1- \frac{\alpha}{\alpha^\prime}) S[\mu_0] + \frac{\alpha}{\alpha^\prime} S[\mu_{\alpha^\prime}]  \leq (1-\alpha) S[\mu_0] +  \alpha S[\mu_1] \, .
\end{align*}
Hence we have equality throughout, and so $S[\mu_{\alpha^\prime}] = \alpha^\prime S[\mu_0] + (1- \alpha^\prime) S[\mu_1]$.

By the convexity of $f$, if $S[(1-\alpha) \mu_0 +  \alpha \mu_1] = (1-\alpha) S[\mu_0] + (1-\alpha) S[\mu_1]$ then it must be the case that
\begin{align*}
 f\bigg( \varrho_\alpha(x) ,   1 + \frac{2}{\ell} \Int{\mathbb{R}}{} \ln|x-y|\, \varrho_\alpha(y) \, \mathrm{d}y \bigg) &= (1-\alpha) f\bigg( \varrho_0(x) ,   1 + \frac{2}{\ell} \Int{\mathbb{R}}{} \ln|x-y|\, \varrho_0(y) \, \mathrm{d}y \bigg) \\
 &\quad + \alpha f\bigg( \varrho_1(x) ,   1 + \frac{2}{\ell} \Int{\mathbb{R}}{} \ln|x-y|\, \varrho_1(y) \, \mathrm{d}y \bigg)
\end{align*}
for almost every $x \in \mathbb{R}$ with respect to Lebesgue measure, and for all $\alpha \in ]0;1[$. Hence the second derivative with respect to $\alpha$
of the \textit{lhs}, which is clearly a smooth function of $\alpha$, must vanish identically.
If we denote $U[\mu](x) := \Int{\mathbb{R}}{} \ln|x-y| \, \mathrm{d}\mu(y)$,
this yields
\beq
\bigg(
\frac{\varrho_1(x) - \varrho_0(x)}{\sqrt{\varrho_\alpha(x)}}  \, - \,
\frac{\sqrt{\varrho_\alpha(x)}}{1 + \frac{2}{\ell} U[\mu_\alpha](x) } \frac{2}{\ell} U[\mu_1 - \mu_0](x)  \bigg)^2 \; = \; 0
\qquad \forall \alpha \in ]0;1[
\enq
for Lebesgue-almost every $x$. Taking the square root, multiplying by
$\big( 1 + \frac{2}{\ell} U[\mu_\alpha](x) \big) \frac{\varrho_1(x) - \varrho_0(x)}{\sqrt{\varrho_\alpha(x)}}$
and integrating with respect to Lebesgue measure yields
\begin{align*}
\Int{\mathbb{R}}{} \frac{(\varrho_1(x) - \varrho_0(x))^2}{\varrho_\alpha(x)}  \bigg\{ 1 + \frac{2}{\ell} U[\mu_\alpha](x) \bigg\} \, \mathrm{d}x
= \frac{2}{\ell} \Int{\mathbb{R}^2}{} \ln|x-y| \, \mathrm{d}(\mu_1-\mu_0)(x) \, \mathrm{d}(\mu_1-\mu_0)(y) \, .
\end{align*}
The \textit{lhs} is non-negative, where we recall that $1 + \frac{2}{\ell} U[\mu_\alpha](x) >0$ for $\mu_0$ and $\mu_1$-almost every $x$. The \textit{rhs}  is non-positive as it may be expressed as,
$$-\frac{2}{\ell} \Int{0}{+\infty} \frac{1}{t} \bigg|\Int{\R}{}  \e{e}^{\i tx} \mathrm{d}(\mu_0-\mu_1)(x)\bigg|^2 \mathrm{d} t \, ,$$
see \textit{e.g.} the proof of Lemma 2.6.2 of \cite{AndersonGZ10}. Hence both sides vanish, yielding $\mu_0=\mu_1$.
\end{proof} 

\begin{rem}
We note that the "interaction" term in the rate function $I$ (the term containing two logarithms) is not convex by itself but is only convex when combined with the Shannon entropy term. This is gesturing towards the idea that these two terms naturally belong together, and that the interaction term is a \textit{modification} of the Shannon entropy which takes into account how the scattering of solitons modifies their effective volume.
\end{rem}

Since $I$ is a good rate function and it is strictly convex, $I$ achieves its minimal value at unique probability measures $\nu_\ell$ and $\nu_{\ell, \mathsf{c}}$  on $\mathcal{M}_1(\mathbb{R})$ and $\mathcal{M}_{1,\rm{c}}(\mathbb{R})$
respectively. This completes the proof of Proposition  \ref{propositiongrfmin}. We finally prove Lemma \ref{relBeta}, which relates the minimiser of $I$ to the equilibrium measure of a high temperature $\beta$-ensemble. We begin by deriving Euler--Lagrange equations for the minimisation of $I$.
\begin{prop}[Euler--Lagrange equation for the unconstrained model] Let $\nu_\ell$ be the minimiser of the rate function $I$ \eqref{ratefunction}  in the space $\mathcal{M}_1(\mathbb{R})$. Then $\nu_\ell$ is absolutely continuous with respect to Lebesgue measure and there is a constant $C_\ell \in \mathbb{R}$ such that its density $\frac{\mathrm{d\nu_\ell}}{\mathrm{d}x}$ satisfies
    \begin{align}\label{eulerlagrange}
       &V(x) - \ln\Big( 1 + \frac{2}{\ell} \Int{\mathbb{R}}{}\ln|x-y| \, \mathrm{d}\nu_\ell(y) \Big)  -\Int{\mathbb{R}}{} \frac{\frac{2}{\ell} \ln|x-y|}{1 + \frac{2}{\ell} \Int{\mathbb{R}}{}\ln|y-z| \, \mathrm{d}\nu_\ell(z)}  \, \mathrm{d}\nu_\ell(y) + \ln \frac{\mathrm{d\nu_\ell}}{\mathrm{d}x} = C_\ell \, , \nonumber \\
    &\text{for } \mathrm{Leb}-\text{a.e. } x \in \mathbb{R}   \, . 
    \end{align}
where $1 + \frac{2}{\ell} \Int{\mathbb{R}}{}\ln|x-y| \, \mathrm{d}\nu_\ell(y) > 0$ Lebesgue almost everywhere. Furthermore, viewed as an equation in $\nu_\ell$, the solution is unique. More precisely, if there exists a constant $C_\ell$ and a measure $\nu$ such that \eqref{eulerlagrange} is satisfied (upon replacing $\nu_\ell$ by $\nu$) then in fact $\nu = \nu_\ell$.
\end{prop}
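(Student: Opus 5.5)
The Euler--Lagrange equation will come from first-order perturbations of $I$ at its minimiser, and uniqueness from the strict convexity already established in Proposition \ref{propconv}. Write $\nu=\nu_\ell$, let $\varrho$ be its density (which exists since $I[\nu]<+\infty$), and set $q(x)=1+\frac{2}{\ell}U[\nu](x)$, where $U[\mu](x)=\int\ln|x-y|\,\dd\mu(y)$. Finiteness of $I[\nu]$ gives $q>0$ $\nu$-a.e.

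\textbf{Step 1: extending $q>0$ beyond the support.} We first show $q>0$ Lebesgue-a.e., which is needed to make sense of the equation off the support of $\nu$. Suppose $\varrho=0$ on a set $A$ of positive Lebesgue measure. Perturb by $\nu_t=(1-t)\nu+t\sigma$, with $\sigma$ uniform on a bounded piece of $A$. The entropy term then gains a contribution of order $t\ln t$, which decreases $I$ at infinite slope. The $\ln q$ term is at worst of order $t$, unless $q_{\nu}\le 0$ on $A$. This is exactly where $q_{\nu_t}$ on the new mass may be tiny or negative, making $-\int\ln\max\{0,q\}\,\dd\nu_t=+\infty$.

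We must therefore compare two effects and choose $\sigma$ carefully. Since $q_\nu$ is upper semicontinuous (indeed superharmonic-like), the set where $q_\nu>0$ is the relevant one. So I would first prove the density is positive a.e. on $\{q_\nu>0\}$ by the $t\ln t$ argument, with $\sigma$ supported where $q_\nu>c>0$. I would then argue that $\{q_\nu\le0\}$ is Lebesgue-null: since $q_\nu\to+\infty$ as $|x|\to\infty$ and $U[\nu]$ is real-analytic off the support, together with the constraint on the support, this should follow. This step is the main obstacle, and if the direct argument fails I would state $q>0$ only on $\mathrm{supp}\,\varrho$, as the proposition does.

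\textbf{Step 2: the first variation.} For bounded compactly supported $h$ with $\int h=0$, $h\ge-\varrho$, and $\varrho+th$ staying in the region where $q>c$, compute $\frac{\dd}{\dd t}I[\nu+th]|_{t=0}=0$. The three terms of $I$ contribute as follows:
\begin{itemize}
\item $\int Vh$ from the potential term;
\item $-\int \ln q\, h-\int\!\!\int \frac{\frac{2}{\ell}\ln|x-y|}{q(y)}h(x)\varrho(y)\,\dd x\,\dd y$ from the interaction term, using the symmetry of $\ln|x-y|$;
\item $\int(\ln\varrho+1)h$ from the entropy term.
\end{itemize}
Dominated convergence is justified by the local integrability of $\ln|\cdot|$ and by truncating $h$ to regions where $\varrho\in[a,b]$ and $q>c$. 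Since the resulting integral against every mean-zero $h$ vanishes, the bracketed function equals a constant $C_\ell$ a.e. on $\{\varrho>0\}$. Combined with Step 1 this gives \eqref{eulerlagrange} Lebesgue-a.e.

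\textbf{Step 3: uniqueness.} Suppose $\nu$ solves \eqref{eulerlagrange}. Then $\varrho>0$ everywhere, with an explicit exponential form. Using convexity of $f(u,v)=u\ln(u/v)$, the equation is precisely the statement that the directional derivative of the convex functional $I$ at $\nu$ vanishes in every admissible direction $\mu-\nu$. Concretely, $I[\mu]\ge I[\nu]+\int(\text{EL bracket})\,\dd(\mu-\nu)=I[\nu]$, by the supporting-hyperplane inequality for $f$ integrated pointwise. Hence $\nu$ is a minimiser, and by Proposition \ref{propositiongrfmin} it equals $\nu_\ell$. The integrability checks needed to pass from the pointwise convexity inequality of $f$ to the integrated one, which require $\int V\,\dd\mu<\infty$, are routine.
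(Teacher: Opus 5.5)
Your overall plan (first variation at the minimiser, then convexity for sufficiency and uniqueness) is the paper's, and your Step 3 is essentially its sufficiency argument. The necessity part, however, has two genuine gaps.

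\textbf{First gap: the second half of Step 1.} The claim that $\{q_\nu\le0\}$ is Lebesgue-null is not established, and your sketch cannot establish it. Suppose you grant $q_\nu\ge 0$ everywhere (principle of domination) and discreteness of $\{q_\nu=0\}$ off the support (harmonicity of $U[\nu]$ there, plus $q_\nu\to+\infty$). This still only handles $\mathbb{R}\setminus\mathrm{supp}\,\nu$, whereas the dangerous set is $\mathrm{supp}\,\nu\cap\{\varrho=0\}$.

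Your fallback also misreads the statement: it asserts $q>0$ and the identity \emph{Lebesgue}-a.e. Without this step, your Step 2 only yields the identity on $\{\varrho>0\}$.

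The paper needs no separate Step 1. From the one-sided inequality $\int V_{\mathrm{eff}}\,\dd(\mu-\nu_\ell)\ge 0$ it gets $V_{\mathrm{eff}}\ge C_\ell$ Lebesgue-a.e., hence $\varrho>0$ a.e. Then $\nu_\ell$ and Lebesgue measure are mutually absolutely continuous, so $q>0$ Lebesgue-a.e. is just $q>0$ $\nu_\ell$-a.e.

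Your worry is nonetheless legitimate. The paper's implication ``$\varrho(x)=0\Rightarrow V_{\mathrm{eff}}(x)=-\infty$'' tacitly needs $q(x)>0$. On $\{\varrho=0,\,q=0\}$ the $t\ln t$ terms coming from the entropy and from $-\ln q$ cancel. So the paper passes over that set rather than resolving it.

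\textbf{Second gap: the derivative in Step 2.} Truncating $h$ does not justify the two-sided derivative. The term $-\int\ln q_{\nu+th}\,\dd\nu$ involves $q_{\nu+th}=q_\nu+t\frac{2}{\ell}U[h]$ at every point of $\mathrm{supp}\,\nu$, not just on $\mathrm{supp}\,h$. Where $q_\nu$ is small and $tU[h]<0$ it becomes negative, so $I[\nu+th]$ may be $+\infty$ for all small $t$ of one sign. Differentiating under the integral also requires $\int |U[h]|\,q_\nu^{-1}\,\dd\nu<+\infty$, i.e.\ essentially $\mathsf{m}(\ell)<+\infty$. The paper only obtains that \emph{after} the Euler--Lagrange equation.

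The paper sidesteps this by using only convex combinations $(1-t)\nu_\ell+t\mu$. Then $(\varrho_t,q_t)$ is affine in $t$, and joint convexity of $u\ln(u/v)$ makes the difference quotients monotone. Monotone convergence therefore applies without any a priori control of $\int q_\nu^{-1}\,\dd\nu$. Two-sidedness is recovered afterwards by inserting $\mu=(1\pm\phi)\nu_\ell/Z_{\pm\phi}$ into the one-sided inequality.
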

\begin{proof}
Let $\nu_\ell \in \mathcal{M}_1(\mathbb{R})$ be the minimiser of $I$ and let $\mu \in \mathcal{M}_1(\mathbb{R})$ be arbitrary save for the requirement that $I[\mu] < +\infty$. Then for any $0 < t \leq 1$, the ratio $\frac{I[(1-t)\nu_\ell + t \mu] - I[\nu_\ell]}{t} \geq 0$ and furthermore by convexity of $I$ is monotonically decreasing as $t \searrow 0$. Hence the limit  $\lim_{t \searrow 0}\frac{I[(1-t)\nu_\ell + t \mu] - I[\nu_\ell]}{t}$ exists. By monotone convergence theorem, we can take the limit under the integral sign and find
\begin{equation*}
      \Int{\mathbb{R}}{}V_{\mathrm{eff}}(x) \, \mathrm{d}(\mu - \nu_\ell)(x)  \geq 0 \, \text{ where } \, 
     \begin{aligned}[t]
         V_{\mathrm{eff}}(x) &= V(x) -\ln\Big( 1 + \frac{2}{\ell} \Int{\mathbb{R}}{}\ln|x-y| \, \mathrm{d}\nu_\ell(y) \Big) \\
         &\quad -\Int{\mathbb{R}}{} \frac{\frac{2}{\ell} \ln|x-y|}{1 + \frac{2}{\ell} \Int{\mathbb{R}}{}\ln|y-z| \, \mathrm{d}\nu_\ell(z)}  \, \mathrm{d}\nu_\ell(y)  + \ln \frac{\mathrm{d\nu_\ell}}{\mathrm{d}x}   \, .
     \end{aligned}
\end{equation*}

If we let $C_\ell = \Int{\mathbb{R}}{}V_{\mathrm{eff}} \, \mathrm{d} \nu_\ell$, then $\Int{\mathbb{R}}{}\{ V_{\mathrm{eff}} - C_\ell\} \, \mathrm{d}\mu   \geq 0$ for all probability measures $\mu$ such that $I[\mu] < +\infty$. From this it follows that $V_{\mathrm{eff}} \geq C_\ell$ Lebesgue almost everywhere. If $\frac{\mathrm{d\nu_\ell}}{\mathrm{d}x} = 0$ then $V_{\mathrm{eff}}(x) = -\infty $, and so in fact $\mathrm{supp}\, \nu_\ell = \mathbb{R}$.

Next, we argue that in fact $V_{\mathrm{eff}}(x) = C_\ell$ Lebesgue almost everywhere. Let $\phi$ be a continuous function on the real line such that $|\phi| \leq \frac{1}{2}$ and consider the measure
\begin{align*}
    \mathrm{d}\nu^{ \phi}(x) &= \frac{1}{Z_{\phi}} (1+ \phi(x))\,  \mathrm{d}\nu_\ell(x) \, , & & Z_{ \phi} = \Int{\mathbb{R}}{}(1+ \phi(x)) \,  \mathrm{d}\nu_\ell(x) \, .
\end{align*}
If we now take $\mu = \nu^{ \phi}$ we have
\begin{align*}
    \frac{1}{Z_{ \phi}}  \Int{\mathbb{R}}{} \{  V_{\mathrm{eff}}(x) - C_{\ell}\}  (1 + \phi(x)) \, \mathrm{d}\nu_{\ell}(x) \geq 0 \, .
\end{align*}
We immediately see that the $1$ term disappears and hence 
\begin{align*}
      \Int{\mathbb{R}}{} \{  V_{\mathrm{eff}}(x) - C_{\ell}\}   \phi(x) \, \mathrm{d}\nu_{\ell}(x) \geq 0 \, .
\end{align*}
Since we can replace $\phi$ with $-\phi$, we must have $\Int{\mathbb{R}}{} \{  V_{\mathrm{eff}}(x) - C_{\ell}\}   \phi(x) \, \mathrm{d}\nu_{\ell}(x) = 0$. By homogeneity, this must hold for all $\phi$ bounded continuous. Since $\nu_{\ell}$ is supported on the whole real line  we must have $V_{\mathrm{eff}}(x) - C_{\ell} = 0$ for Lebesgue almost every $x$.

Finally, let us demonstrate that \eqref{eulerlagrange} is not only a necessary but also a sufficient condition for a minimiser. We recall from convexity of $I$ we have, $$\frac{I[(1-t)\nu + t \mu] - I[\nu]}{t} \geq \lim_{t \searrow 0}\frac{I[(1-t)\nu + t \mu] - I[\nu]}{t}$$ for all $0 < t \leq 1$ and for $\mu,\nu$ distinct probability measures. Taking $t = 1$ we have $I[ \mu] - I[\nu] \geq \lim_{t \searrow 0}\frac{I[(1-t)\nu + t \mu] - I[\nu]}{t}$. If $\nu$ satisfies \eqref{eulerlagrange} for \textit{some} constant $C_\ell$, then, by our previous arguments, $\lim_{t \searrow 0}\frac{I[(1-t)\nu + t \mu] - I[\nu]}{t} = 0$. Hence $I[ \mu] - I[\nu] \geq 0$ and so $\nu$ is a minimiser of $I$. But, by strict convexity of $I$,  the minimiser of $I$ is unique, so $\nu = \nu_{\ell}$. 
\end{proof}
In a similar manner, see the end of Section 3 of \cite{GuionnetM22}, one may derive an Euler--Lagrange equation for the minimiser $\mu_P$ of the free energy $I_P^C$ \eqref{def:CoulombEntropy} of the high-temperature $\beta$-ensemble with inverse temperature $P > 0$. That is, there exists a constant $\tilde{C}_P$ such that
\begin{align}\label{betaeulerlagrange}
    V(x) - 2 P \Int{\mathbb{R}}{} \ln|x-y| \, \mathrm{d}\mu_P(y) + \ln \frac{\mathrm{d}\mu_P}{\mathrm{d} x} = \tilde{C}_P \, , & & \text{for } \mathrm{Leb}\text{--a.e. } x \in \mathbb{R} \, .
\end{align}
If $V$ satisfies \ref{potentialh1} and \ref{potentialh2} then $I_P^C$ is strictly convex, and so \eqref{betaeulerlagrange} is a necessary and sufficient condition for $\mu_P$ to be the minimiser of $I_P^C$. Using this fact, let us now establish a relationship between the two. In \cite{GuionnetM22} Guionnet and Memin show that under the hypothesis that $V$ is continuous and $V(x) = (1+ \mathrm{o}(1)) a x^{2k}$ as $|x| \to \infty$ for some $a > 0$, $P \mapsto \mu_P$ is differentiable. From this, they introduce the probability measure $\chi_P = \partial_P (P\mu_P)$. Then, from \eqref{betaeulerlagrange} they deduce that $\chi_P$ satisfies the relation
\begin{align}
    \frac{\mathrm{d}\chi_P}{\mathrm{d}x} = \Big( C_P^\prime + 2P \Int{\mathbb{R}}{} \ln|x-y| \, \mathrm{d}\chi_P(y) \Big) \frac{\mathrm{d}\mu_P}{\mathrm{d}x} \, ,
\end{align}
for some constant $C_P^\prime$. Rearranging for $\mu_P$ and inserting this into \eqref{betaeulerlagrange} we find that $\chi_P$ solves \eqref{eulerlagrange} for the choice $\ell = \frac{C_P^\prime}{P}$. By uniqueness of the solution of \eqref{eulerlagrange} we have $\chi_P = \nu_{\ell}|_{\ell = \frac{C_P^\prime}{P}}$.

Furthermore, there is a similar relationship in the other direction. Let $\nu_{\ell} \in \mathcal{M}_1(\mathbb{R})$ be the minimiser of $I$, and define the probability measure
\begin{align*}
    \mathrm{d}\tilde{\chi}_{\ell}(x) = \frac{1}{\mathsf{m}(\ell)} \frac{\mathrm{d}\nu_{\ell}(x)}{1 + \frac{2}{\ell}\Int{\mathbb{R}}{} \ln |x-y| \, \mathrm{d}\nu_{\ell}(y)}\, , & &\text{where } \,  \mathsf{m}(\ell) = \Int{\mathbb{R}}{} \frac{\mathrm{d}\nu_{\ell}(x)}{1 + \frac{2}{\ell}\Int{\mathbb{R}}{} \ln |x-y| \, \mathrm{d}\nu_{\ell}(y)} \, .
\end{align*}
Before moving on, we should demonstrate that $\mathsf{m}(\ell) < +\infty$. To do this, let us integrate both sides of \eqref{eulerlagrange} with respect to $\nu_{\ell}$. This yields the formula $I[\nu_{\ell}] - 1 + \mathsf{m}(\ell) = C_\ell$. Since $I[\nu_{\ell}]$ and $C_{\ell}$ are finite, so is $\mathsf{m}(\ell)$. If we insert $\tilde{\chi}_{\ell}$ into \eqref{eulerlagrange} we find $\tilde{\chi}_{\ell}$ solves \eqref{betaeulerlagrange} with $P = \frac{\mathsf{m}(\ell)}{\ell}$ and $\tilde{C}_P = C_\ell - \mathsf{m}(\ell)$. Thus by the uniqueness of solutions to \eqref{betaeulerlagrange} we have $\tilde{\chi}_{\ell} = \mu_P|_{P = \frac{\mathsf{m}(\ell)}{\ell}}$. This concludes the proof of Lemma \ref{relBeta}.

\subsection{Continuity along special sequences}\label{sec:continuity along  good subsequences}
In this subsection, we prove Lemma \ref{lemmeapprox}.

\begin{defin}
Let $\op{s}_{\rho}$ be the dilation with scale $\rho > 0$: $\op{s}_{\rho}(x)=\rho x$. Given $\mu \in \mathcal{M}_1(\mathbb{R})$, its pushforward by $\op{s}_{\rho}$
is the unique probability measure $\op{s}_{\rho \#}\mu$  such that
$$\Int{\mathbb{R}}{} f(x) \, \mathrm{d}\op{s}_{\rho \#}\mu( x)  \, =  \, \Int{\mathbb{R}}{} f(\rho  x) \,  \mathrm{d}\mu(x) \, .$$
Note that $\mathrm{supp} \big[ \op{s}_{\rho \#} \mu \big] \,  =  \,  \rho \, \mathrm{supp} [ \mu ]$. For a probability measure $\mu$ and $\tau>0$, we denote by $\mu_\tau$
the probability measure obtained by the convolution
\begin{equation}\label{defconv}
\mu_\tau=\frac{1}{2\tau} \mathbbm{1}_{[-\tau;\tau]} \ast \mu\,.
\end{equation}
\end{defin}
Our candidate for the desired sequence $\mu_p$ of Lemma \ref{lemmeapprox} will be of the form $\mu^K_{\rho,\tau}=(\mu^K_\rho)_\tau$
for some constants $\rho,\tau>0$ and well-tailored $K=[-K_1;K_2]$  for some constants $K_1, K_2 > 0$. Here,
 we  have set
\beq
\mu^K_{\rho}  \, =  \, \frac{ \mathbbm{1}_{ [-K_1;K_2]}   }{  \op{s}_{\rho\#} \mu[ K ] } \cdot \op{s}_{\rho\#} \mu \;.
\label{definition mesures mu K rho}
\enq

In the rest of this section we consider $\mu$ so that $I[\mu]$ is finite. We have seen that this implies that

\beq\label{cond1}
1+\frac{2}{\ell} \Int{\mathbb{R}}{} \ln|x-y| \, \mathrm{d}\mu(y) \geq  0 \qquad  \dd \mu(x)-\e{a.e.} \; \e{on} \; \;  \mathbb{R} \,,
\enq
and
\beq\label{cond2}
\Int{\R}{} |x|^{\theta} \,   \dd \mu(x) \, < \,  + \infty \quad \e{and} \quad
\Int{\R}{} x  \,  \dd \mu(x) \,= \,  0\;,
\enq
where the second condition in \eqref{cond2} holds for the constrained model.
\begin{rem}
Note that the requirement \eqref{cond1} implies that $\mu$ has no atoms and the first part of \eqref{cond2} is due to Hypothesis \ref{potentialh2}.
\end{rem}
We first remark that dilation of $\mu$ allows one to enforce the second condition in Lemma \ref{lemmeapprox}, while verifying the first. Namely

\begin{lemme}\label{weakdilation}
Let $\mu \in \mathcal{M}_1(\mathbb{R})$ and $\rho > 1$. Then $\op{s}_{\rho \#} \mu \to \mu$ as $\rho \searrow 1$ in the weak topology. Moreover,
\begin{align*}
   &\text{if } \int x \,  \mathrm{d} \op{s}_{\rho \#} \mu(x) =0   & &\text{ then } & & \int x \,  \mathrm{d} \mu(x) =0  \, , \\
   &\text{and if} \int |x|^\theta \,  \mathrm{d}\mu(x) < +\infty  & & \text{ then } & &  \int |x|^\theta \, \mathrm{d} \op{s}_{\rho \#} \mu(x) < +\infty \, .
\end{align*}
Furthermore,
if $\mu$ satisfies \eqref{cond1} then
\begin{align}\label{bound}
    1+\frac{2}{\ell} \Int{\mathbb{R}}{} \ln|x-y| \, \mathrm{d}(\op{s}_{\rho \#} \mu) (y) \, \geq \, 2\kappa := \frac{2}{\ell} \ln \rho > 0,\quad \op{s}_{\rho \#} \mu \quad a.s.
\end{align}
\end{lemme}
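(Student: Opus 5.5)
The lemma has three parts, and I would handle each one directly from the definition of the pushforward.

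\emph{Weak convergence.} For a bounded Lipschitz $f$ with $\norm{f}_{\mathrm{BL}}\le 1$,
\[
\Big|\int f\,\dd(\op{s}_{\rho\#}\mu-\mu)\Big| \le \int \min\{2,(\rho-1)|x|\}\,\dd\mu(x).
\]
This tends to $0$ as $\rho\searrow1$ by dominated convergence, and the bound is uniform in $f$. Hence $\op{d}_{\mathrm{BL}}(\op{s}_{\rho\#}\mu,\mu)\to0$.

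\emph{Moment statements.} These are immediate from
\[
\int x\,\dd\op{s}_{\rho\#}\mu=\rho\int x\,\dd\mu, \qquad \int|x|^\theta\,\dd\op{s}_{\rho\#}\mu=\rho^\theta\int|x|^\theta\,\dd\mu .
\]
Since $\rho>0$, the first mean vanishes if and only if the second does. The $\theta$-moment stays finite. In the first-moment case I would note that integrability of $|x|$ is preserved in the same way.

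\emph{Bound \eqref{bound}.} This is the substantive part. I would compute the logarithmic potential of the dilated measure at a point $x=\rho u$ of its support:
\[
\int \ln|\rho u-y|\,\dd(\op{s}_{\rho\#}\mu)(y)=\int\ln|\rho u-\rho v|\,\dd\mu(v)=\ln\rho+\int\ln|u-v|\,\dd\mu(v).
\]
Therefore
\[
1+\frac{2}{\ell}\int\ln|x-y|\,\dd(\op{s}_{\rho\#}\mu)(y)=\frac{2}{\ell}\ln\rho+\Big(1+\frac{2}{\ell}\int\ln|u-v|\,\dd\mu(v)\Big).
\]
By \eqref{cond1}, the bracket is $\ge0$ for $\mu$-a.e.\ $u$. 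The set of $x=\rho u$ with $u$ in that full-measure set has full $\op{s}_{\rho\#}\mu$-measure, by definition of the pushforward. On that set the expression is $\ge \frac{2}{\ell}\ln\rho=2\kappa$, and $2\kappa>0$ because $\rho>1$.

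The only delicate point I anticipate is this null-set bookkeeping, namely checking that ``$\mu$-a.s.'' transports to ``$\op{s}_{\rho\#}\mu$-a.s.'' under the bijection $\op{s}_\rho$. I would also make sure the potential is well defined, possibly as $-\infty$; this follows from $\int\ln(1+|v|)\,\dd\mu<\infty$, which in turn follows from \eqref{cond2}.
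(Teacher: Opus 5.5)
Your proof is correct and follows the paper's own argument. The paper bounds $\bigl|\int [f(\rho x)-f(x)]\,\mathrm{d}\mu(x)\bigr|$ by splitting at $[-R;R]$, which is an explicit version of your dominated-convergence bound via $\min\{2,(\rho-1)|x|\}$; it calls \eqref{bound} ``obvious by rescaling,'' which is precisely your identity $\ln|\rho u-\rho v|=\ln\rho+\ln|u-v|$; and it treats the moment claims as immediate, just as your scaling identities do.
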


\begin{proof}

Let $R > 0$ be some (large) number. Then, we have
\bem
   \bigg|  \Int{\mathbb{R}}{} f(x) \, \mathrm{d}\op{s}_{\rho \#} \mu(x) - \Int{\mathbb{R}}{} f(x) \, \mathrm{d} \mu(x) \bigg|  \, = \,
   \bigg| \Int{\mathbb{R}}{} [f(\rho x) - f(x)] \, \mathrm{d}\mu(x)  \bigg|  \\
\, \leq \,  2 \mu\big[[-R;R]^c\big] \| f \|_{L^\infty(\mathbb{R})} \, + \,  R (\rho-1) \| f \|_{\mathrm{BL}}\;.
\end{multline}
Given any $\eps > 0$, we may take $R$ sufficiently large so that 
$2 \mu\big[ [-R;R]^c \big] \leq \frac{\eps}{2}$, and then take $\rho-1 > 0$ sufficiently small so that $R (\rho-1) \leq \frac{\eps}{2}$. Taking the supremum over $\| f \|_{\mathrm{BL}} \leq 1$ we have $d_{\mathrm{BL}}(\mu, \op{s}_{\rho \#} \mu) \leq \eps$. \eqref{bound} is obvious by rescaling.

\end{proof}
We next need to compactify our measures. One subtlety is that we must compactify in a way that preserves the vanishing of the first moment. This is handled by the following lemma.

\begin{lemme}\label{compactification}
Let $\mu \in \mathcal{M}_1(\mathbb{R})$ be atomless and such that there exists $\theta>1$
$$
\Int{\mathbb{R}}{} |x|^{\theta} \, \mathrm{d}\mu(x) \, < \,  +\infty \qquad  \mbox{and }  \qquad  \Int{\mathbb{R}}{} x \, \mathrm{d}\mu(x) \, = \,  0 \, .
$$
Then, there exists a constant $\eta_0(\mu)> 0$ such that for any $0 < \eta < \eta_0(\mu)$, there exist $K_1, K_2 > 0$ such that
\begin{align*}
\Int{-K_1}{K_2} x \, \mathrm{d}\mu(x) = 0 \;, \qquad \mu\big[ [-K_1; K_2]^c \big] \,  \leq \, \eta \;, \qquad
\Int{}{} \mathbbm{1}_{[-K_1;K_2]^c}(x)  |x| \, \mathrm{d}\mu(x) \,  \leq  \, \eta \, .
\end{align*}
Furthermore $\mathbbm{1}_{[-K_1;K_2]} \to 1$ as $\eta \searrow 0$ $\mu$-almost everywhere, in an increasing fashion. Finally, there exists $C>0$ such that
\beq
K_1+K_2 \, \leq \, \f{ C }{ \eta^{   \f{1}{\theta-1} }  } \;.
\label{ecriture borne sup a priori sur croissance des Ka}
\enq
\end{lemme}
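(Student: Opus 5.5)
We need $K_1,K_2$ with $\int_{-K_1}^{K_2}x\,d\mu=0$, small tail mass and small tail first moment, and $K_1+K_2\le C\eta^{-1/(\theta-1)}$. The plan is to fix $K_2$ as a function of $K_1$ by an intermediate value argument and then choose $K_1$ large.

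Set $m_+(t)=\int_{[0,t]}x\,d\mu(x)$ and $m_-(s)=\int_{[-s,0]}|x|\,d\mu(x)$. Since $\mu$ is atomless, both functions are continuous and nondecreasing. They increase to $M_+=\int_{x>0}x\,d\mu$ and $M_-=\int_{x<0}|x|\,d\mu$ respectively, and these are finite because $\theta>1$ gives a first moment. The centering assumption gives $M_+=M_-=:M$.

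If $M=0$, then $\mu=\delta_0$, which is excluded since $\mu$ is atomless. Hence $M>0$. Let $\eta_0(\mu)$ be a small multiple of $\min\{M,1\}$.

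The tail bounds come from Markov's inequality with the $\theta$-moment. For $|x|>R\ge1$,
\[
|x|\le |x|^\theta R^{1-\theta}.
\]
Writing $m_\theta=\int|x|^\theta d\mu$, the tail first moment beyond $R$ is therefore at most $m_\theta R^{1-\theta}$, and the same bound controls the tail mass. Thus
\[
R_\eta := (2m_\theta/\eta)^{1/(\theta-1)}
\]
makes both tail quantities $\le\eta/2$ on $\{|x|>R_\eta\}$.

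Now take $K_1\ge R_\eta$. We have $m_-(K_1)\in[M-\eta/2,M]$, which lies in $(0,M]$ once $\eta$ is small. Since $m_+$ is continuous from $0$ to $M$, there is a smallest $K_2$ with $m_+(K_2)=m_-(K_1)$, and this gives $\int_{-K_1}^{K_2}x\,d\mu=0$.

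Next I need $K_2$ to be large, i.e.\ $K_2\gtrsim R_\eta$. Because $m_+(K_2)\ge M-\eta/2$, the positive tail beyond $K_2$ has first moment at most $\eta/2$.

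For the tail mass beyond $K_2$, the bound $\mu[(K_2,\infty)]\le \eta/(2K_2)$ suffices when $K_2\ge1$. If instead $K_2<1$, then $m_+(1)\ge M-\eta/2$, so the first moment on $(1,\infty)$ is $\le\eta/2$ and the mass there is $\le\eta/2$. The mass on $(K_2,1]$ is handled as follows: choose $K_1=R_{\eta'}$ with a smaller $\eta'$, and use the continuity of the distribution function near $K_2$. This boundary case is the main delicate point.

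A cleaner route avoids the case split. Choose $K_1$ larger so that $m_-(K_1)\ge M-\eta'$, where $\eta'$ satisfies
\[
\eta'<\int_{(t_\eta,\infty)}x\,d\mu \quad\text{with}\quad \mu[(t_\eta,\infty)]=\eta/2,
\]
which is possible since $\mu$ is atomless. This forces $K_2\ge t_\eta$, so the positive tail mass is $\le\eta/2$. Combining, the total tail mass and tail first moment are both $\le\eta$.

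The same construction can be run symmetrically: fix $K_2$ first and solve for $K_1$. This lets me take both $K_a$ of order $R_{c\eta}$, which gives the bound $K_1+K_2\le C\eta^{-1/(\theta-1)}$.

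I expect the main obstacle to be making the choice quantitative on both sides at once, so that the centering equation holds and the size bound on $K_1+K_2$ holds. I will handle it by selecting $K_1,K_2$ as the first crossing times of $m_\pm$ at a common level $M-c\eta$, using Markov's inequality with the $\theta$-moment. Concretely, set
\[
K_2=\inf\{t: m_+(t)=M-c\eta\},\qquad K_1=\inf\{s: m_-(s)=M-c\eta\}.
\]
Both levels are equal, so centering holds exactly. The Markov bound gives $K_a\le(m_\theta/(c\eta))^{1/(\theta-1)}$, since $m_\pm(R)\ge M-m_\theta R^{1-\theta}$.

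The tail first moments are each $c\eta$. The tail mass beyond $K_a$ is at most $c\eta/K_a$, and this is $\le\eta/2$ provided $K_a\ge 2c$. That condition holds for small $\eta$, because $K_a\to\infty$ as $\eta\to0$ whenever $\mu$ has unbounded support; in the bounded-support case one instead takes $K_a=\sup|\mathrm{supp}\,\mu|$ outright, and then there is no tail at all. Finally, $K_a(\eta)$ is nondecreasing as $\eta\searrow0$, which gives the monotone convergence $\mathbbm{1}_{[-K_1;K_2]}\to1$ $\mu$-a.e.
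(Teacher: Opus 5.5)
Your final construction is the paper's: first crossing times of $m_\pm$ at a common level $M-c\eta$, with Markov's inequality and the $\theta$-moment for the size bound. But the tail-mass step has a real gap.

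You need $K_a\ge 2c$, and you justify it by ``$K_a\to\infty$ whenever $\mu$ has unbounded support'', handling bounded support by taking $K_a=\sup|\mathrm{supp}\,\mu|$. This is false when the support is unbounded on only one side. Take $\mu=\frac12\,\mathrm{Unif}[-\delta,0]+\frac12\nu$, with $\nu$ the exponential law on $(0,\infty)$ of mean $\delta/2$. This $\mu$ is atomless, centred, has all moments finite, and has unbounded support. Yet $K_1<\delta$ for every $\eta$. The left tail $[-\delta,-K_1)$ carries first moment $c\eta$, so its mass is at least $c\eta/\delta$, which exceeds $\eta$ as soon as $\delta<c$. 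Hence with a $\mu$-independent $c$, and $\eta_0$ chosen only in terms of $M$, the bound $\mu\big[[-K_1;K_2]^c\big]\le\eta$ fails, and neither of your two cases covers this measure.

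The fix is what the paper does, and it removes all case distinctions. Let $\tilde K_1,\tilde K_2>0$ be the points where $m_-$ and $m_+$ first reach $M/2$. As long as $c\eta\le M/2$, the level $M-c\eta$ is at least $M/2$, so $K_a\ge\tilde K_a$ uniformly in such $\eta$. Now choose the $\mu$-dependent constant $c=\frac12\min\{1,\tilde K_1,\tilde K_2\}$. Since $|x|\ge\min\{1,\tilde K_1,\tilde K_2\}$ on $[-K_1;K_2]^c$, Chebyshev gives $\mu\big[[-K_1;K_2]^c\big]\le(\min\{1,\tilde K_1,\tilde K_2\})^{-1}\cdot 2c\eta=\eta$. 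The tail first moment is $2c\eta\le\eta$. Your Markov bound $K_a\le(m_\theta/(c\eta))^{1/(\theta-1)}$ still gives $K_1+K_2\le C\eta^{-1/(\theta-1)}$, with $C$ depending on $\mu$, which the statement allows. Monotonicity in $\eta$ follows as you said from taking first crossings.
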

Note that when we do not prescribe $\Int{-K_1}{K_2} x \, \mathrm{d}\mu=0$, the proof is straightforward by Chebyshev's inequality and holds for any $\theta>0$.
\begin{proof}
Let $M_{\tf{1}{2}} \, = \,  \Int{ \R^-}{} |x| \, \mathrm{d}\mu(x) \,  = \,  \Int{\R^+}{} |x| \, \mathrm{d}\mu(x)$. Further, given $y \geq 0$, define
\beq
    F_1(y) \, = \,  \Int{-y}{0} |x| \, \mathrm{d}\mu(x)   \qquad \e{and} \qquad     F_2(y) \, = \,  \Int{0}{y} |x| \, \mathrm{d}\mu(x) \, .
\enq
Clearly $F_1$ and $F_2$ are increasing functions,
\beq
F_1(0) \, = \,  F_2(0) \, = \,  0 \qquad \e{and} \qquad \underset{y \to +\infty}{\lim} F_1(y)  \,  = \underset{y \to +\infty}{\lim} F_2(y) \, = M_{\tf{1}{2}} \, .
\enq
Furthermore, because $\mu$ is non-atomic, $F_1$ and $F_2$ are continuous and $M_{\tf{1}{2}} > 0$.
By the intermediate value theorem there exists $\tilde{K}_1, \tilde{K}_2 > 0$
such that $F_1(\tilde{K}_1) = F_2(\tilde{K}_2) = \frac{1}{2}M_{\tf{1}{2}}$. Then given any $0 < \eta < \frac{1}{2}M_{\tf{1}{2}}$ let $K_1, K_2$ be defined by
$$M_{\tf{1}{2}} - F_1(K_1) \, = \,  M_{\tf{1}{2}} - F_2(K_2) \, =  \, \frac{ \eta }{2}  \min\{1, \tilde{K}_1, \tilde{K}_2 \} \, .$$
Clearly, one has
\beq
\Int{-K_1}{K_2} x \, \mathrm{d}\mu(x)  \, =  \, F_2(K_2) - F_1(K_1) = 0
\enq
and
\beq
 \Int{\R}{} \mathbbm{1}_{[-K_1;K_2]^c}(x) |x| \, \mathrm{d}\mu(x) \, = \,  2 M_{\tf{1}{2}} -  F_2(K_2) - F_1(K_1) \, \leq \,  \eta \, .
\enq
Finally by Chebyshev's inequality, and the fact that $K_1 \geq \tilde{K}_1$ and $K_2 \geq \tilde{K}_2$,
$$\mu\big[[-K_1; K_2]^c\big] \, \leq \,  \frac{1}{\min \{\tilde{K}_1, \tilde{K}_2, 1 \} } \Int{\R}{} \mathbbm{1}_{[-K_1;K_2]^c}(x)|x| \, \mathrm{d}\mu(x) \leq \eta \, .$$
Finally, one may estimate the growth of $K_{1},K_2$ by using Chebyshev's inequality. Indeed
\beq
\eta \, = \frac{ 2 }{\min \{\tilde{K}_1, \tilde{K}_2, 1 \} }  \, \Int{K_2}{+\infty}|x| \, \mathrm{d}\mu(x)
%
 \, \leq \,   \frac{ 2 K_2^{1-\theta }}{\min \{\tilde{K}_1, \tilde{K}_2, 1 \} } \Int{\mathbb{R}}{}|x|^{ \theta } \, \mathrm{d}\mu(x) = C K_2^{1-\theta} \;,
\enq
 and similar bounds hold for $K_1$.

\end{proof}

We remark that the requirement that $\mu$ is atomless cannot be dropped since it is possible to produce examples of atomic measures
for which \textit{no} compactification will preserve the first moment. We now apply Lemma \ref{compactification} to the measure $\op{s}_{\rho\#} \mu$ with  $\eta >0$ to yet be determined
and check that the associated measure $\mu^K_\rho$ defined in \eqref{definition mesures mu K rho} with $(K_1,K_2)$ as in
 Lemma \ref{compactification}
satisfies the hypotheses $\mathrm{i})-\mathrm{iv})$ of Lemma \ref{lemmeapprox}.
\begin{lemme}\label{weakdilationcomp}
Let $\mu \in \mathcal{M}_1(\mathbb{R})$ and $\rho > 1$, $\eta>0$. Then $\mu^K_\rho \to \op{s}_{\rho \#} \mu$ as $\eta\searrow 0$ in the weak topology. Moreover, $\mu^K_\rho$ is compactly supported,  $\int x \,  \mathrm{d} \mu^K_\rho(x)=0$ if $\int x \,  \mathrm{d} \mu(x)=0$ and $\int |x|^\theta  \, \mathrm{d} \mu^K_\rho(x)$ is finite if $\int |x|^\theta \,  \mathrm{d}\mu(x)$ is. Furthermore,
if $\ln(1+| \cdot  |)$ is $\mu$-integrable and $\mu$ satisfies \eqref{cond1}, then there exists $\eta'>0$ so that for all $\eta\in ]0;\eta']$
\begin{align}\label{bound2}
  1+\frac{2}{\ell} \Int{\mathbb{R}}{} \ln|x-y| \, \mathrm{d}\mu^K_\rho (y) \, \geq \, \kappa=\frac{1}{\ell} \ln \rho > 0,\quad \mu^K_\rho \quad a.e.
\end{align}
\end{lemme}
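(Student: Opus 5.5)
We verify the listed properties one at a time, taking $(K_1,K_2)=(K_1(\eta),K_2(\eta))$ from Lemma \ref{compactification} applied to $\nu:=\op{s}_{\rho\#}\mu$. Lemma \ref{weakdilation} guarantees that $\nu$ is atomless and has a finite $\theta$-moment, and that its mean vanishes whenever that of $\mu$ does. Compact support is immediate because $\mathrm{supp}\,\mu^K_\rho\subset[-K_1;K_2]$. The mean of $\mu^K_\rho$ vanishes because $\int_{-K_1}^{K_2}x\,\dd\nu=0$. The $\theta$-moment of $\mu^K_\rho$ is at most $\nu[K]^{-1}\int|x|^\theta\dd\nu<+\infty$, using $\nu[K]\ge 1-\eta\ge 1/2$.

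For weak convergence, take $\|f\|_{\mathrm{BL}}\le 1$ and write
\[
\int f\,\dd\mu^K_\rho-\int f\,\dd\nu=\Big(\frac{1}{\nu[K]}-1\Big)\int_K f\,\dd\nu-\int_{K^c}f\,\dd\nu .
\]
This is bounded by $\frac{\eta}{1-\eta}+\eta$, so $\op{d}_{\mathrm{BL}}(\mu^K_\rho,\nu)\le 3\eta\to 0$.

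The main obstacle is \eqref{bound2}. Set $U_\nu(x)=\int\ln|x-y|\,\dd\nu(y)$. For $x\in K$ we have
\[
\int\ln|x-y|\,\dd\mu^K_\rho(y)=\frac{1}{\nu[K]}\Big(U_\nu(x)-\int_{K^c}\ln|x-y|\,\dd\nu(y)\Big).
\]
By \eqref{bound}, $1+\frac{2}{\ell}U_\nu\ge 2\kappa$ holds $\nu$-a.e., and hence $\mu^K_\rho$-a.e. since $\mu^K_\rho\ll\nu$. The plan is to show that the error term is small uniformly for $x\in K$. The first error comes from the factor $1/\nu[K]$. If $U_\nu(x)\ge 0$, dividing by $\nu[K]\le1$ only increases it. If $U_\nu(x)<0$, then $U_\nu(x)\ge-\ell/2$, so multiplying by $\nu[K]^{-1}=1+\mathrm O(\eta)$ costs only $\mathrm O(\eta)$.

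The second error is the tail term $\int_{K^c}\ln|x-y|\,\dd\nu(y)$ with $x\in K$ and $y\notin K$. Its positive part is at most $\ln(1+|x|)\,\nu[K^c]+\int_{K^c}\ln(1+|y|)\,\dd\nu(y)$. The integral is small by integrability of $\ln(1+|\cdot|)$, but the term $\ln(1+|x|)\nu[K^c]$ is dangerous because $|x|$ can be as large as $K_1+K_2$. I would handle it with \eqref{ecriture borne sup a priori sur croissance des Ka}, which gives $\ln(1+K_1+K_2)\lesssim\ln(1/\eta)$. Combined with $\nu[K^c]\le\eta$, this makes the term $\mathrm O(\eta\ln(1/\eta))\to0$.

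The negative part, coming from $y$ close to $x$ near the endpoints $-K_1,K_2$, can only help the lower bound: subtracting a negative quantity increases the potential. So only the positive part needs control. Choosing $\eta'$ small enough that the total error is at most $\kappa$ then gives $1+\frac2\ell\int\ln|x-y|\,\dd\mu^K_\rho(y)\ge 2\kappa-\kappa=\kappa$, $\mu^K_\rho$-a.e.
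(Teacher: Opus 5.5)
Your proof is correct and follows essentially the paper's argument: the same split into the normalisation error coming from $1/\nu[K]$, the main term controlled by \eqref{bound}, and the tail over $K^{\mathrm{c}}$, bounded via $\ln|x-y|\le\ln(1+|x|)+\ln(1+|y|)$. Your explicit appeal to \eqref{ecriture borne sup a priori sur croissance des Ka} to get $\eta\ln(1+\max\{K_1,K_2\})\to0$ is exactly what the paper's terse ``the rhs tends to $0$'' implicitly relies on.
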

\begin{proof}

It is direct to see that
\beq
\e{d}_{\e{BL}}\big( \op{s}_{\rho\#} \mu , \mu^K_{\rho} \big)  \, \leq \,   \op{s}_{\rho\#}   \mu[ K^{\e{c}} ] \leq   \eta  \, .
\label{ecritrue estimee d BL sur compactification dilation mu}
\enq
Lemma \ref{compactification} implies the other statements except for \eqref{bound2}.
Furthermore, for $x \in K \cap \mathrm{supp}\big[ \op{s}_{\rho\#} \mu\big]$, we have
\beq
1+ \frac{2}{\ell} \Int{\mathbb{R}}{} \ln|x-y| \, \mathrm{d}\mu^K_{\rho}(y) \, = \, \mc{T}_1 \, + \,  \mc{T}_2 \, + \, \mc{T}_3
\enq
where
\beq
\mc{T}_1 \; = \; 1 - \frac{1}{\op{s}_{\rho\#} \mu[ K] } \, =\,  - \frac{  \op{s}_{\rho\#} \mu[ K^{\e{c}} ]  }{  \op{s}_{\rho\#} \mu[ K ] } \, \geq \,  \f{-\eta }{1-\eta} \;,
\enq
\beq
\mc{T}_2 \; = \; \frac{ 1  }{ \op{s}_{\rho\#} \mu[ K ] }  \Big\{ 1+ \frac{2}{\ell} \Int{\mathbb{R}}{} \ln|x-y| \, \mathrm{d}\op{s}_{\rho\#} \mu(y)   \Big\} \, \geq \, 2\kappa \;,
\enq
$\mathrm{d}\op{s}_{\rho\#} \mu(x)$ a.e. by \eqref{bound}. Finally,
\begin{align*}
\mc{T}_3 &\; = \;  \frac{ -2   }{ \ell \op{s}_{\rho\#} \mu[ K ] } \Int{\mathbb{R}}{} \mathbbm{1}_{K^{\e{c}} }(y) \ln|x-y| \, \mathrm{d}\op{s}_{\rho\#} \mu(y) \\
&\geq \frac{ -2   }{ \ell (1-\eta)  }\Int{\mathbb{R}}{} \mathbbm{1}_{K^{\e{c}} }(y) \Big( \ln(1+|x|) +\ln(1+|y|) \Big)  \, \mathrm{d}\op{s}_{\rho\#} \mu(y) \\
&\geq  \frac{ -2    }{ \ell (1-\eta)  } \Big\{ \eta \ln \big( 1+\e{max}\{K_1,K_2\} \big) \, + \, \Int{\mathbb{R}}{} \mathbbm{1}_{K^{\e{c}} }(y) \ln(1+|y|) \, \mathrm{d}\op{s}_{\rho\#} \mu(y) \Big\} \, . \\
\end{align*}
Since by assumption $\int \ln(1+|y|) \, \mathrm{d}\mu(y) < +\infty$, the \textit{rhs} tends to $0$ as $\eta \searrow 0$.
Putting this all together, one gets that there exists $\eta^{\prime}>0$ small enough such that
\beq
1+ \frac{2}{\ell} \Int{\mathbb{R}}{} \ln|x-y| \, \mathrm{d}\mu^K_{\rho}(y) \, \geq  \,  \kappa  \qquad  \mathrm{d}\mu^K_{\rho}(x) \; \; \e{a.e.}
\enq
whenever $0< \eta \leq \eta^{\prime}$.

\end{proof}
Finally, to obtain a measure with bounded density, for $0 < \tau \leq \frac{\delta}{2}$ with $\de>0$ small enough, let
$$\mu^K_{\rho,\tau} \, = (\mu^K_\rho)_\tau= \, \frac{1}{2\tau} \mathbbm{1}_{[-\tau;\tau]} \ast \mu^K_{\rho} \, .$$
To ensure that it still satisfies $\mathrm{ii})$ of Lemma \ref{lemmeapprox},
we will use the following two results. First, we will rely on the following Lemma
from Saff and Totik (see Section I.3, p. 43, in \cite{SaffT97}).

\begin{lemme}[Principle of Domination]
\label{Lemme Principe Domination}
Let $\mu \in \mathcal{M}_1(\mathbb{R})$ be  compactly supported and such that
$$\Int{\mathbb{R}^2}{} \ln \frac{1}{|x-y|}\, \mathrm{d}\mu(x) \, \mathrm{d}\mu(y) < +\infty \,. $$
Suppose there exists a constant $C$ such that
$$ \Int{\mathbb{R}}{} \ln|x-y| \, \mathrm{d}\mu(y) \geq C \qquad \dd \mu(x)-\e{a.e.} \; on \;\;  \mathrm{supp}[\mu] \;. $$
Then, in fact,
$$ \Int{\mathbb{R}}{} \ln|x-y| \, \mathrm{d}\mu(y) \geq C  \qquad \text{for every} \quad  x \in \mathbb{R} \, .$$

\end{lemme}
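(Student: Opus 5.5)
This is a cited classical result (Saff and Totik, Section I.3), so the plan is to recall the standard potential-theoretic argument rather than invent a new one. Write $U^\mu(x) = \int \ln\frac{1}{|x-y|}\,\mathrm{d}\mu(y)$ for the logarithmic potential. The hypothesis says $U^\mu \le -C$ $\mu$-a.e. on $\mathrm{supp}[\mu]$, and the claim is that $U^\mu \le -C$ on all of $\mathbb{R}$. This is the maximum principle for logarithmic potentials of finite-energy measures, with the constant potential in the domination principle taken to be the one generated by a multiple of the equilibrium measure.

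The steps, in order, are as follows.

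First, use finite energy together with Lusin's theorem and lower semicontinuity of $U^\mu$. For every $\epsilon>0$ this gives a compact set $F\subset\mathrm{supp}[\mu]$ with $\mu[F^c]<\epsilon$ on which the restriction $U^{\mu|_F}$ is continuous; this is the usual Evans-type continuity argument. Set $\mu_F = \mu|_F$ and $\mu' = \mu - \mu_F$.

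Second, note that $U^{\mu_F} = U^{\mu} - U^{\mu'}$. On $F$ we have $U^{\mu_F}\le -C - U^{\mu'}$. Since $U^{\mu'}$ is superharmonic, it is bounded below on compacts by $-\mu'(\mathbb{R})\ln(\mathrm{diam}+\dots)$, which is small as $\epsilon \to 0$ after accounting for the far-field behaviour. Consequently $U^{\mu_F} \le -C + o(1)$ on $F$, up to a set of zero capacity.

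Third, apply the continuity principle and the classical maximum principle to $U^{\mu_F}$ on the open set $\mathbb{C}\setminus F$. There $U^{\mu_F}$ is harmonic, and at infinity $U^{\mu_F}(z) \to -\infty$, like $-\mu(F)\ln|z|$. By the maximum principle for harmonic functions, applied on large discs minus $F$ with the boundary values controlled via continuity at $F$, we obtain $U^{\mu_F}\le -C+o(1)$ on $\mathbb{C}$. In particular this holds on $\mathbb{R}$.

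Fourth, let $\epsilon\to0$. Then $\mu_F\uparrow\mu$, and $U^{\mu_F}\to U^{\mu}$ pointwise by monotone convergence, splitting the kernel into its positive and negative parts and using compact support. This yields the claim at every $x\in\mathbb{R}$.

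The main obstacle is the treatment of polar exceptional sets in Step 2, together with the boundary behaviour at $F$ in Step 3. Both are handled by the continuity principle, which states that if $U^{\mu_F}|_F$ is continuous then $U^{\mu_F}$ is continuous on $\mathbb{C}$. Finite energy ensures that $\mu$ charges no polar set, so the a.e. hypothesis suffices.

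In the paper the lemma is only used as a black box, to pass from \eqref{bound2} (which holds $\mu^K_\rho$-a.e.) to a bound holding everywhere for the mollified measure $\mu^K_{\rho,\tau}$. For that reason we simply invoke \cite{SaffT97}.
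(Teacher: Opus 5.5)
Your proposal matches the paper, which gives no argument of its own and simply imports the lemma from \cite{SaffT97}. Your sketch is the standard proof given there, specialised to a zero comparison measure: Lusin plus the continuity principle produce $\mu_F$ with continuous potential, the maximum principle is applied off $\mathrm{supp}[\mu_F]$, and then $\epsilon\to0$.

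One correction to your commentary. The exceptional set in Step~2 is $\mu$-null, not polar, so the fact that $\mu$ charges no polar set is not what makes the a.e.\ hypothesis suffice. The set is removed because a $\mu_F$-full set is dense in $\mathrm{supp}[\mu_F]$ and $U^{\mu_F}$ is continuous there. The bound therefore holds on all of $\mathrm{supp}[\mu_F]$, which is the set to use in Step~3 instead of $F$.
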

This allows us to show that
\begin{lemme}\label{principdom} Let $\mu \in \mathcal{M}_1(\mathbb{R})$ be compactly supported and such that
\begin{align}
1 + \frac{2}{\ell} \Int{\mathbb{R}}{} \ln|x-y| \, \mathrm{d}\mu(y) \geq \kappa > 0 \qquad \dd \mu(x)\;\;  \e{a.e.} \; x \in \mathbb{R} \;.
\label{ecriture borne mu ae sur integrale logarithmique}
\end{align}
Given  $\tau > 0$ set $\mu_\tau = \frac{1}{2\tau} \mathbbm{1}_{[-\tau;\tau]} \ast \mu$. Then,
\begin{align*}
1 + \frac{2}{\ell} \Int{\mathbb{R}}{} \ln|x-y| \, \mathrm{d}\mu_\tau(y) \geq \kappa  & & \forall x \in \mathbb{R} \, .
\end{align*}
\end{lemme}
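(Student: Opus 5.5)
The plan is to write the logarithmic potential of $\mu_\tau$ as an average of shifted potentials of $\mu$, and then reduce the claim to the Principle of Domination, Lemma \ref{Lemme Principe Domination}. Set $U[\mu](x)=\Int{\mathbb{R}}{}\ln|x-y|\,\mathrm{d}\mu(y)$. By Fubini,
$$U[\mu_\tau](x)=\frac{1}{2\tau}\Int{-\tau}{\tau} U[\mu](x-s)\,\mathrm{d}s \, .$$
This is legitimate because $\mu$ is compactly supported, so $\ln|x-y|$ is bounded above on the relevant region and the integral is well defined in $[-\infty,+\infty)$.

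It therefore suffices to show that $1+\frac{2}{\ell}U[\mu](z)\ge\kappa$ for \emph{every} $z\in\mathbb{R}$. Averaging this pointwise inequality over $z=x-s$, $s\in[-\tau;\tau]$, then gives the claim for all $x\in\mathbb{R}$.

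To get the pointwise bound, I will apply Lemma \ref{Lemme Principe Domination} with the constant $C=\frac{\ell}{2}(\kappa-1)$. That lemma requires $\mu$ to have finite logarithmic energy, which I verify as follows:
\begin{itemize}
\item The hypothesis \eqref{ecriture borne mu ae sur integrale logarithmique} gives $U[\mu](x)\ge C$ for $\mu$-a.e. $x$.
\item Integrating against $\mu$ yields $\Int{}{}\!\!\Int{}{}\ln|x-y|\,\mathrm{d}\mu\,\mathrm{d}\mu\ge C>-\infty$.
\item Since $\mu$ is compactly supported, $\ln|x-y|\le\ln(\mathrm{diam}\,\mathrm{supp}\,\mu)$ is bounded above, so there is no $\infty-\infty$ issue.
\end{itemize}
Hence the energy $\Int{}{}\!\!\Int{}{}\ln\frac{1}{|x-y|}\,\mathrm{d}\mu\,\mathrm{d}\mu$ is finite. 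The lower bound holding $\mu$-a.e. on $\mathrm{supp}[\mu]$ is exactly the hypothesis, so the lemma upgrades it to $U[\mu](z)\ge C$ for all $z\in\mathbb{R}$, i.e. $1+\frac{2}{\ell}U[\mu](z)\ge\kappa$ everywhere.

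The only delicate point is the measure-theoretic justification of the Fubini/averaging step, since $U[\mu]$ may take the value $-\infty$ a priori. Once the pointwise lower bound $U[\mu]\ge C$ is in hand, however, $U[\mu]$ is bounded below by $C$ and bounded above locally uniformly by compactness of the support. It is thus locally integrable, and Tonelli applied to the nonnegative function $\ln|x-s-y|$ minus an upper bound handles the interchange. Everything else is immediate.
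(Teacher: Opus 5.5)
Your proposal is correct and follows the paper's proof. The paper also upgrades the $\mu$-a.e.\ bound to an everywhere bound via the Principle of Domination (Lemma~\ref{Lemme Principe Domination}), then averages over the translates $x-s$, $s\in[-\tau;\tau]$. Two details the paper leaves implicit are filled in by you: the check that $\mu$ has finite logarithmic energy (integrate the hypothesis against $\mu$ and use compact support), and the Tonelli justification of the averaging step.
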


\begin{proof}
By Lemma \ref{Lemme Principe Domination}, \eqref{ecriture borne mu ae sur integrale logarithmique} holds, in fact, for all $x \in \R$. It is then enough to integrate that inequality over $x$
 \textit{versus} $\frac{1}{2\tau} \mathbbm{1}_{[-\tau;\tau]}(s-x)$.

\end{proof}

We directly deduce from Lemmas \ref{weakdilationcomp} and \ref{principdom}  that
\begin{lemme}\label{weakdilationcompreg}
Let $\mu \in \mathcal{M}_1(\mathbb{R})$ and $\rho > 1$, $\eta,\delta>0$. Then $\mu^K_{\rho,\tau} \to \mu^K_{\rho}$ as $\tau\searrow 0$ in the weak topology.
Moreover, $\mu^K_{\rho,\tau}$ is compactly supported, has a bounded density,  $\int x \,  \mathrm{d} \mu^K_{\rho,\tau}(x)=0$ if
$\int x \, \mathrm{d} \mu(x)=0$ and $\int |x|^\theta \,  \mathrm{d} \mu^K_{\rho,\tau}(x)$ is finite if $\int |x|^\theta \, \mathrm{d}\mu(x)$ is. Furthermore,
if $\mu$ satisfies \eqref{cond1}, there exists $\eta'>0$ so that for $\eta\in ]0;\eta']$
\begin{align}\label{bound3}
  1+\frac{2}{\ell} \Int{\mathbb{R}}{} \ln|x-y| \, \mathrm{d}\mu^K_{\rho,\tau} (y) \, \geq \, \kappa =\frac{1}{\ell} \ln \rho > 0,\quad \forall x\in \mathbb{R}.
\end{align}
\end{lemme}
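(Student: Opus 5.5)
The plan is to treat the properties in the order they are listed, inheriting as much as possible from Lemma \ref{weakdilationcomp} and using Lemma \ref{principdom} only for the lower bound \eqref{bound3}.

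\textbf{Weak convergence.} For any $f$ with $\|f\|_{\mathrm{BL}}\le 1$,
\[
\int f\,\mathrm{d}\mu^K_{\rho,\tau}-\int f\,\mathrm{d}\mu^K_\rho=\int \frac{1}{2\tau}\int_{-\tau}^{\tau}\big[f(x+s)-f(x)\big]\,\mathrm{d}s\,\mathrm{d}\mu^K_\rho(x).
\]
The inner bracket is at most $|s|\le\tau$ in absolute value. Hence $\mathrm{d}_{\mathrm{BL}}(\mu^K_{\rho,\tau},\mu^K_\rho)\le\tau$, which gives $\mu^K_{\rho,\tau}\to\mu^K_\rho$ as $\tau\searrow0$.

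\textbf{Support, density, moments.}
\begin{itemize}
\item By Lemma \ref{weakdilationcomp}, $\mu^K_\rho$ is supported in $K=[-K_1;K_2]$. Therefore $\mu^K_{\rho,\tau}$ is supported in $[-K_1-\tau;K_2+\tau]$, which is compact.
\item Its density is $x\mapsto\frac{1}{2\tau}\mu^K_\rho([x-\tau;x+\tau])$. This is bounded by $\frac{1}{2\tau}$, so the density is bounded.
\item Convolving with the symmetric uniform density adds an independent centred variable $U$ on $[-\tau;\tau]$. The first moment is therefore unchanged, and it vanishes whenever $\int x\,\mathrm{d}\mu=0$, again by Lemma \ref{weakdilationcomp}.
\item For the $\theta$-moment, $|x+u|^\theta\le 2^{\theta}(|x|^\theta+\tau^\theta)$. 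This is immediate anyway, since $\mu^K_{\rho,\tau}$ is compactly supported.
\end{itemize}

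\textbf{The lower bound \eqref{bound3}.} This is the only substantive step. Lemma \ref{weakdilationcomp} already provides $\eta'$ such that, for $\eta\le\eta'$,
\[
1+\tfrac{2}{\ell}\int\ln|x-y|\,\mathrm{d}\mu^K_\rho(y)\ge\kappa\qquad \mu^K_\rho\text{-a.e.},
\]
and $\mu^K_\rho$ is compactly supported. I would then apply Lemma \ref{principdom} to $\mu^K_\rho$; it yields \eqref{bound3} for every $x\in\mathbb R$.

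\textbf{Main obstacle: finite logarithmic energy.} Lemma \ref{principdom} rests on the Principle of Domination, Lemma \ref{Lemme Principe Domination}, which requires
\[
\int\!\!\int\ln\frac{1}{|x-y|}\,\mathrm{d}\mu^K_\rho\,\mathrm{d}\mu^K_\rho<+\infty .
\]
I would obtain this by integrating the a.e. bound against $\mathrm{d}\mu^K_\rho(x)$, which gives
\[
\int\!\!\int\ln|x-y|\,\mathrm{d}\mu^K_\rho\,\mathrm{d}\mu^K_\rho\ \ge\ \tfrac{\ell}{2}(\kappa-1)>-\infty .
\]
Meanwhile $\ln|x-y|$ is bounded above on the compact support, so the energy is finite.

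The integrability of $\ln(1+|\cdot|)$ with respect to $\mu$, which Lemma \ref{weakdilationcomp} needs, holds because $\int|x|^\theta\,\mathrm{d}\mu<\infty$ by \eqref{cond2}. The condition \eqref{cond1} is assumed directly. Together these give \eqref{bound3} uniformly in $\tau>0$ and all $x\in\mathbb R$.
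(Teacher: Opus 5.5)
Your proposal is correct and follows the paper's route. The paper deduces this lemma directly from Lemma \ref{weakdilationcomp}, which supplies the properties of $\mu^K_\rho$, and Lemma \ref{principdom}, which applies the Principle of Domination and then averages over the convolution kernel; this is exactly your argument. Your explicit check that $\mu^K_\rho$ has finite logarithmic energy (integrate the $\mu^K_\rho$-a.e. bound and use that $\ln|x-y|$ is bounded above on the compact support) supplies a hypothesis of Lemma \ref{Lemme Principe Domination} that the paper leaves implicit.
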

We can therefore take $\mu_p= \mu^{K^p}_{\rho_p,\tau_p}$ with $K^p_1,K^p_2$ large enough and $\rho_p,\tau_p>0$ small enough so that
$\e{d}_{BL}(\mu,\mu_p)\le 1/p$ to complete the proof of  Lemma \ref{lemmeapprox}$\mathrm{i)-v)}$.
So, we are left with verifying Lemma \ref{lemmeapprox}-$\mathrm{vi}$, which will take the rest of this section. Since $I$ is lower semi-continuous, we only need to prove that
\begin{equation}\label{contI}
\limsup_{\rho \searrow 1 }\lim_{\eta\searrow 0} \limsup_{\tau \searrow 0 } I[\mu^K_{\rho,\tau}]\le I[\mu]\end{equation}

We first take the limit $\tau\searrow 0$. To this end,
we invoke

\begin{lemme}\label{entropybound} Assume that \ref{potentialh1} holds. Then there exists universal non-negative finite constants $C_1',C_2'$
so that for every probability measure $\mu$ on $\mathbb R$ and $\tau\in [0;1]$, with $\mu_\tau$ as in \eqref{defconv}, we have

$$I[\mu_\tau]\le I[\mu]+ (C_1'I[\mu]+C_2')\tau\,.$$
\end{lemme}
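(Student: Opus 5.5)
We split $I[\mu_\tau]$ into the potential term, the entropy term, and the double-logarithmic interaction term, and bound each one separately in terms of $I[\mu]$ and $\tau$. We may assume $I[\mu]<+\infty$, since otherwise there is nothing to prove; in particular $\int V\,\mathrm{d}\mu<+\infty$.

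\textbf{Potential term.} We write $\int V\,\mathrm{d}\mu_\tau=\frac{1}{2\tau}\int_{-\tau}^{\tau}\int V(x+u)\,\mathrm{d}\mu(x)\,\mathrm{d}u$. By the mean value theorem and \ref{potentialh1}, $|V(x+u)-V(x)|\le |u|\,(C_1V(x)+C_2)$ for $|u|\le\tau\le 1$. Hence
\[
\int V\,\mathrm{d}\mu_\tau\le \int V\,\mathrm{d}\mu+\tau\Big(C_1\int V\,\mathrm{d}\mu+C_2\Big).
\]
By \eqref{ratefunctionlowerbound}, $\int V\,\mathrm{d}\mu\le 4I[\mu]+4C$, so the error has the required form $(C_1'I[\mu]+C_2')\tau$.

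\textbf{Entropy term.} The Shannon entropy increases under convolution with a probability density. Indeed, $\mu_\tau$ is the law of $X+U$ with $U$ uniform and independent of $X$, and by concavity of $-u\ln u$ (Jensen applied to the mixture $\mu_\tau=\frac{1}{2\tau}\int_{-\tau}^{\tau}(\mu\ast\delta_u)\,\mathrm{d}u$) we get $\mathrm{Ent}[\mu_\tau]\ge\mathrm{Ent}[\mu]$. The mixture statement is cleanest through the relative-entropy reformulation relative to $\mathrm{e}^{-V}$, to avoid $\infty-\infty$ issues. Therefore $-\mathrm{Ent}[\mu_\tau]\le-\mathrm{Ent}[\mu]$, which costs nothing.

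\textbf{Interaction term.} This is the main obstacle, since we need a \emph{lower} bound on $\int\ln\max\{0,1+\frac{2}{\ell}U[\mu_\tau]\}\,\mathrm{d}\mu_\tau$, where $U[\nu](x)=\int\ln|x-y|\,\mathrm{d}\nu(y)$.

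First, $U[\mu_\tau]=U[\mu]\ast\frac{1}{2\tau}\mathbbm 1_{[-\tau,\tau]}$, and the map $z\mapsto\frac{1}{2\tau}\int_{-\tau}^{\tau}\ln|z+u|\,\mathrm{d}u$ is $\ge\ln|z|$ wherever it matters (superharmonicity of $-\ln|\cdot|$ in the plane, i.e. the mean over a segment dominates the value at the centre for $\ln|\cdot|$ once $|z|\ge\tau$, with a bounded correction otherwise). So $U[\mu_\tau](x)\ge U[\mu](x)$ up to an $O(\tau)$-controlled error.

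The harder point is that the outer integral is against $\mu_\tau$ rather than $\mu$. We write
\[
\int \psi\,\mathrm{d}\mu_\tau=\frac{1}{2\tau}\int_{-\tau}^{\tau}\int\psi(x+u)\,\mathrm{d}\mu(x)\,\mathrm{d}u,\qquad \psi=\ln\max\{0,1+\tfrac{2}{\ell}U[\mu_\tau]\}.
\]
The function $U[\mu_\tau](x+u)$ is the average of $U[\mu]$ over $[x+u-\tau,x+u+\tau]$. Using the principle of domination (Lemma \ref{Lemme Principe Domination}) when $\mu$ is compactly supported, the positivity of $1+\frac{2}{\ell}U[\mu]$ on the support extends to all $x$, so $\psi$ stays finite near $\mathrm{supp}\,\mu$.

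The cleanest route is a joint convexity argument. Writing $S[\nu]=\int f\big(\varrho_\nu,1+\tfrac{2}{\ell}U[\nu]\big)\,\mathrm{d}x$ with $f(u,v)=u\ln(u/v)$, as in Proposition \ref{propconv}, we note that $\mu_\tau$ is the average of the translates $\mu\ast\delta_u$. Translation invariance gives $S[\mu\ast\delta_u]=S[\mu]$, and the convexity of $S$ proved in Proposition \ref{propconv} (which already combines the interaction and entropy terms) yields $S[\mu_\tau]\le S[\mu]$.

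Combining this with the linear potential estimate gives
\[
I[\mu_\tau]\le I[\mu]+\tau\big(C_1'I[\mu]+C_2'\big).
\]
The delicate part is justifying convexity of $S$ over an integral (continuous) family of measures rather than a finite convex combination. We handle it by Jensen applied pointwise to the jointly convex, positively homogeneous $f$. The linearity of $\nu\mapsto(\varrho_\nu,1+\tfrac{2}{\ell}U[\nu])$ is preserved under averaging because $\int\frac{\mathrm{d}u}{2\tau}=1$, and positive homogeneity lets Jensen pass to the integral form $f\big(\int a_u,\int b_u\big)\le\int f(a_u,b_u)$. Measurability and integrability conditions are checked via monotone truncation as in Lemma \ref{Itildesupremum}.
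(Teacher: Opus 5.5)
Your final argument is the paper's proof. The paper applies the convexity of $I$ (Proposition~\ref{propconv}) to $\mu_\tau$, viewed as the mixture of the translates $\mathfrak{t}_{t\#}\mu$, to get $I[\mu_\tau]\le\frac{1}{2\tau}\int_{-\tau}^{\tau}I[\mathfrak{t}_{t\#}\mu]\,\mathrm{d}t$. It then notes that only $\int V\,\mathrm{d}\mu$ changes under translation and bounds that change via \ref{potentialh1} and \eqref{ratefunctionlowerbound}, exactly as your combination of convexity of $S$ with the mean-value estimate does. Your pointwise Jensen remark is a reasonable justification of the continuous-mixture step, which the paper takes for granted.

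You should delete the preliminary term-by-term paragraphs. The separate entropy bound is redundant, since $S$ already contains the entropy. The claimed segment-average inequality for $\ln|\cdot|$ is also backwards: for $|z|\ge\tau$ the map $u\mapsto\ln|z+u|$ is concave on $[-\tau,\tau]$, so its average is at most $\ln|z|$. Moreover, the deficit is of order one, not $\mathrm{O}(\tau)$, when $|z|\sim\tau$.
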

\begin{proof} Note that we may assume that $I[\mu]$ is finite to prove the desired inequality.
In fact, by the convexity of $I$ proven in Proposition \ref{propconv}, we find that
$$I[\mu_\tau]\le \frac{1}{2\tau}\Int{-\tau}{\tau} I[\mf{t}_{t\#}\mu] \mathrm{d}t\,$$
where $\mf{t}_{t\#}\mu(A)=\mu(t+A)$ is the translation of the measure by $t$. We finally notice that in the rate function,
only the term from the potential varies with the translation: for every $t\in [-\tau;\tau]$
$$I[\mf{t}_{t\#}\mu]-I[\mu]= \int V(t+x)\mathrm{d}\mu(x)-\int V(x)\mathrm{d}\mu(x)\,.$$
By \ref{potentialh1}, we find that for $\tau\le 1$
$$\left|\int V(t+x)\mathrm{d}\mu(x)-\int V(x)\mathrm{d}\mu(x)\right|\le t \int (C_1 V(x)+C_2) \mathrm{d}\mu(x)$$
which completes the proof with \eqref{ratefunctionlowerbound}.

\end{proof}

We now focus on taking the $\eta \searrow 0$  limit. First, one observes that owing to $V$ being bounded from below, and $K_1, K_2 \nearrow + \infty$
as $\eta \searrow 0$, one may apply monotone convergence theorem to get that
$$ \Int{\mathbb{R}}{} V(x)  \, \mathrm{d}\mu^K_{\rho}(x) \; \longrightarrow \;  \Int{\mathbb{R}}{} V(x )  \, \mathrm{d}\op{s}_{\rho\#}\mu(x) \; . $$
Further, we focus on the entropy term. One has, upon denoting $\sg_{\rho}$ the density of  $\op{s}_{\rho \#} \mu$,
$$
\Int{\mathbb{R}}{} \ln \big[ \sg^K_{\rho}(x) \big] \, \sg^K_{\rho}(x) \, \mathrm{d}x  \,  =  \,
\frac{ 1 }{   (\op{s}_{\rho \#} \mu)[K] } \Int{ K }{} \ln  \big[ \sg_\rho(x) \big] \, \sg_\rho(x)   \, \mathrm{d}x
\, - \,  \ln \big\{ (\op{s}_{\rho \#} \mu)[K] \big\} \, .
$$
\begin{lemme}
Let $\mu \in \mathcal{M}_1(\mathbb{R})$. If $I[\mu] < +\infty$ then $\mu$ is absolutely continuous and its density satisfies
$\Int{\mathbb{R}}{} |\ln \frac{\mathrm{d}\mu}{\mathrm{d}x}(x)| \, \mathrm{d}\mu(x) < +\infty$.
\end{lemme}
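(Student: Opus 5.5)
We have to show that if $I[\mu]<+\infty$, then $\mu$ has a density $\varrho$ and $\int|\ln\varrho|\,\mathrm{d}\mu<+\infty$. The plan is to write $|\ln\varrho|=(\ln\varrho)_++(\ln\varrho)_-$ and to bound the $\mu$-integral of each part separately.

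First, absolute continuity. By definition $\mathrm{Ent}[\mu]=-\infty$ unless $\mu\ll\mathrm{d}x$. By the lemma at the beginning of Section \ref{studygrf}, finiteness of $I[\mu]$ forces $\int V\,\mathrm{d}\mu<+\infty$. Moreover, $I[\mu]<+\infty$ holds if and only if $-\mathrm{Ent}[\mu]<+\infty$, since the first two terms of $I$ are bounded below by $\frac12\int V\,\mathrm{d}\mu-C'$. Hence $-\mathrm{Ent}[\mu]<+\infty$, so $\mu$ has a density $\varrho$. We also get $-\mathrm{Ent}[\mu]>-\infty$ from the bound $-\mathrm{Ent}[\mu]\ge -\frac14\int V\,\mathrm{d}\mu-C''$. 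The identity
\[
\int\ln\varrho\,\mathrm{d}\mu=-\mathrm{Ent}[\mu]
\]
has to be read through the relative entropy $D(\mu\,\|\,\nu_{V/4})$, which is finite. The point of the lemma is to upgrade this to integrability of the absolute value.

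The key step is to control the negative part, since the positive part then follows from the finite relative entropy. Write
\[
(\ln\varrho)_-=\ln\frac{1}{\varrho}\,\mathbbm{1}_{\varrho<1}.
\]
Split according to whether $\varrho(x)\ge \mathrm{e}^{-V(x)/4}$ or not. On the set where $\varrho\ge \mathrm{e}^{-V/4}$ we have $(\ln\varrho)_-\le V/4$, whose $\mu$-integral is finite. On the complementary set, $\varrho<\mathrm{e}^{-V/4}$, we bound
\[
\varrho\ln\frac{1}{\varrho}\le C\sqrt{\varrho}\le C\,\mathrm{e}^{-V/8},
\]
which is Lebesgue integrable because $V$ grows like $|x|^\theta$ by \ref{potentialh2}. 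Hence $\int(\ln\varrho)_-\,\mathrm{d}\mu<\infty$.

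For the positive part, use the pointwise identity
\[
(\ln\varrho)_+=\ln\varrho+(\ln\varrho)_-.
\]
Integrating against $\mu$ turns this into $-\mathrm{Ent}[\mu]$ plus a finite quantity. To make the manipulation rigorous, I would instead write
\[
D(\mu\,\|\,\nu_{V/4})=\int\ln\frac{\varrho}{\mathrm{e}^{-V/4}/\mathsf Z}\,\mathrm{d}\mu<\infty,
\]
whose integrand has a negative part that is integrable (the standard fact that $u\ln u\ge u-1$). Its positive part is therefore integrable too. It dominates $(\ln\varrho)_+ - V/4-|\ln\mathsf Z|$, which gives $\int(\ln\varrho)_+\,\mathrm{d}\mu<\infty$.

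The only delicate point is the bookkeeping around signs in the entropy, namely justifying the splitting of $\int\ln\varrho\,\mathrm{d}\mu$ via the relative entropy. This is exactly why I would route everything through $D(\mu\,\|\,\nu_{V/4})$ rather than manipulate $\mathrm{Ent}[\mu]$ directly.
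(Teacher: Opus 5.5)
Your proof is correct and follows essentially the paper's route: the negative part of $\ln\varrho$ is controlled by comparing $\varrho$ with $\mathrm{e}^{-V/4}$ (the paper packages your case split into the single inequality $a\ln a^{-1}\le ab+\mathrm{e}^{-b-1}$ with $b=V/4$), and the positive part comes from the upper bound on $\int\ln\varrho\,\mathrm{d}\mu$ forced by $I[\mu]<+\infty$. The paper reads that bound off directly from $I[\mu]\ge\int(V-\frac{4}{\ell}\ln(1+|x|))\,\mathrm{d}\mu+\int\ln\varrho\,\mathrm{d}\mu$, whereas you route it through the finiteness of $D(\mu\,\|\,\nu_{V/4})$, with cleaner sign bookkeeping; one correction is that your ``if and only if'' should be only the implication $I[\mu]<+\infty\Rightarrow-\mathrm{Ent}[\mu]<+\infty$ (the converse fails if $1+\frac{2}{\ell}\int\ln|x-y|\,\mathrm{d}\mu(y)\le 0$ on a set of positive $\mu$-measure), but that implication is all you use.
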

\begin{proof}
Without loss of generality, assume $V \geq 0$. By \eqref{ratefunctionlowerbound}, $I[\mu] < +\infty$ implies $\Int{\mathbb{R}}{} V \, \mathrm{d}\mu < +\infty$. Furthermore, if $\mu$ was not absolutely continuous then $I[\mu] = +\infty$. Next, from the concavity of $x \mapsto x \ln{x^{-1}}$ for $x > 0$ we have
\begin{align*}
    a \ln{a^{-1}} \leq a b + \mathrm{e}^{-b-1}, & & \text{for all } a > 0, \,  b \in \mathbb{R} \, .
\end{align*}
Let us take $a = \frac{\mathrm{d}\mu}{\mathrm{d}x}$, $b = c V(x)$ for $c >0$, and then integrate both sides with respect to $\mu$ over the set
$\{ \frac{\mathrm{d}\mu}{\mathrm{d}x} < 1 \}$. This gives
\begin{align*}
\Int{\{ \frac{\mathrm{d}\mu}{\mathrm{d}x} < 1\}}{} \Big|\ln \frac{\mathrm{d}\mu}{\mathrm{d}x}(x)\Big| \, \mathrm{d}\mu(x)
\leq c \Int{\mathbb{R}}{} V(x) \, \mathrm{d}\mu(x) +  \mathrm{e}^{-1} \Int{\mathbb{R}}{} \mathrm{e}^{- c V(x) }\, \mathrm{d}\mu(x) \, .
\end{align*}
Then, using the inequality $\ln|x-y| \leq \ln(1+|x|) + \ln(1+|y|)$ we have
\begin{align*}
I[\mu] &\geq \Int{\mathbb{R}}{}\Big\{  V(x) -  \frac{4}{\ell}\ln(1+|x|) \Big\}\, \mathrm{d}\mu(x) + \int \ln \frac{\mathrm{d}\mu}{\mathrm{d}x}(x) \; \mathrm{d}\mu(x) \\
&= \Int{\mathbb{R}}{}\Big\{  V(x) -  \frac{4}{\ell}\ln(1+|x|) \Big\}\, \mathrm{d}\mu(x) +
\Int{\mathbb{R}}{} |\ln \frac{\mathrm{d}\mu}{\mathrm{d}x}(x)|\, \mathrm{d}\mu(x) -
2\Int{\{ \frac{\mathrm{d}\mu}{\mathrm{d}x} < 1\}}{} \Big|\ln \frac{\mathrm{d}\mu}{\mathrm{d}x}(x)\Big| \, \mathrm{d}\mu(x) \\
&\geq \Int{\mathbb{R}}{}\Big\{ (1-2c) V(x) -  \frac{4}{\ell}\ln(1+|x|) \Big\}\, \mathrm{d}\mu(x)
+ \Int{\mathbb{R}}{} \Big|\ln \frac{\mathrm{d}\mu}{\mathrm{d}x}(x) \Big|\, \mathrm{d}\mu(x) \\
&\quad - 2\mathrm{e}^{-1} \Int{\mathbb{R}}{} \mathrm{e}^{- c V(x) }\, \mathrm{d}\mu(x) \, .
\end{align*}
If we now take $c=\frac{1}{4}$, by \ref{potentialh2}, $\frac{1}{2} V(x) -  \frac{4}{\ell}\ln(1+|x|)$ is bounded from below by a constant and $\Int{\mathbb{R}}{} \mathrm{e}^{- \frac{1}{4} V(x) }\, \mathrm{d}\mu(x) < +\infty$. Hence $\Int{\mathbb{R}}{} |\ln \frac{\mathrm{d}\mu}{\mathrm{d}x}(x)|\, \mathrm{d}\mu(x) < +\infty$.
\end{proof}

From the above lemma, we conclude that $\ln \sg \in L^{1}(\mu)$ with $\dd \mu(x) \, = \, \sg(x) \dd x $. Furthermore, a direct calculation yields
$ \sg_{\rho}(x) =\rho^{-1} \sg\big(\tf{x}{\rho} \big)$. Thus,
\bem
 \Int{ K }{} \ln  \big[ \sg_\rho(x) \big] \, \sg_\rho(x)   \, \mathrm{d}x \, = \,  \Int{ \rho K }{} \ln \Big( \f{ \sg(x) }{ \rho} \Big)  \, \sg(x)   \, \mathrm{d}x \\
\limit{\eta}{0}  \Int{ \R }{} \ln \Big( \f{ \sg(x) }{ \rho} \Big)  \, \sg(x)   \, \mathrm{d}x \; = \; \Int{ \R }{} \ln  \sg_{\rho}(x)   \, \sg_{\rho}(x)   \, \mathrm{d}x
\end{multline}
by dominated convergence, since
\beq
\mathbbm{1}_{\rho K }(x) \ln \Big( \f{ \sg(x) }{ \rho} \Big)  \, \sg(x) \; \leq \; \sg(x) \ln \rho  \,  +\,    \sg(x) \ln  \sg(x)  \in L^{1}\big( \dd x \big) \;.
\enq
Given that $ (\op{s}_{\rho \#} \mu)[K] \tend 1$ as $\eta \tend 0$, we infer that
\beq
\Int{\mathbb{R}}{} \ln \sg^K_{\rho}(x) \, \sg^K_{\rho}(x) \, \mathrm{d} x \limit{\eta}{0} \Int{ \R }{} \ln  \sg_{\rho}(x)   \, \sg_{\rho}(x) \, \mathrm{d} x  \;.
\enq

We now estimate the $\eta \tend 0$ limit of the remaining contribution
\begin{lemme}
It holds
$$\Int{\mathbb{R}}{} \ln\bigg( 1 + \frac{2}{\ell} \Int{\mathbb{R}}{} \ln |x - y|  \, \mathrm{d}\mu^K_{\rho}(y)\bigg)  \, \mathrm{d}\mu^K_{\rho}(x)
\underset{\eta \searrow 0}{\longrightarrow}
\int \ln\bigg( 1 + \frac{2}{\ell} \Int{\mathbb{R}}{} \ln |x - y|  \, \mathrm{d}(\op{s}_{\rho\#}\mu)(y)\bigg)  \, \mathrm{d}(\op{s}_{\rho\#}\mu)(x) \, .$$
\end{lemme}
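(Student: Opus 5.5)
We prove the convergence by splitting the integrand into a part controlled by dominated convergence and a part that vanishes. Write $\nu=\op{s}_{\rho\#}\mu$, $q(x)=1+\frac{2}{\ell}\Int{\R}{}\ln|x-y|\,\dd\nu(y)$ and $q_K(x)=1+\frac{2}{\ell}\Int{\R}{}\ln|x-y|\,\dd\mu^K_\rho(y)$. The decomposition from the proof of Lemma \ref{weakdilationcomp} gives, for $x\in K$,
\beq
q_K(x)=\frac{q(x)}{\nu[K]}+\mc{T}_1+\mc{T}_3(x),
\enq
with $|\mc{T}_1|\le \eta/(1-\eta)$. Since $\mu^K_\rho$ is the restriction of $\nu$ to $K$ renormalised by $\nu[K]\to1$, the quantity of interest is
\beq
\frac{1}{\nu[K]}\Int{\R}{}\mathbbm{1}_K(x)\ln q_K(x)\,\dd\nu(x).
\enq
It therefore suffices to show that $\mathbbm{1}_K\ln q_K\to\ln q$ pointwise $\nu$-a.e. and to produce a dominating function in $L^1(\nu)$.

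For the pointwise convergence, fix $x$. We have $\mc{T}_1\to0$, and
\beq
|\mc{T}_3(x)|\le C\Int{\R}{}\mathbbm{1}_{K^c}(y)\,\big|\ln|x-y|\big|\,\dd\nu(y).
\enq
This tends to $0$ by dominated convergence whenever $\Int{\R}{}\big|\ln|x-y|\big|\,\dd\nu(y)<\infty$. That integral is finite for $\nu$-a.e. $x$: $\nu$ has finite logarithmic energy, because $I[\mu]<\infty$ forces $q$ to be $\nu$-a.e. nonnegative and $\ln(1+|\cdot|)$ to be integrable. Hence $q_K(x)\to q(x)$ and $\mathbbm{1}_K(x)\to1$ for $\nu$-a.e. $x$. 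By \eqref{bound}, $q\ge 2\kappa$ $\nu$-a.e., so $\ln q_K(x)\to\ln q(x)$.

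For the domination, \eqref{bound2} gives the lower bound $q_K\ge\kappa$ on $\mathrm{supp}\,\mu^K_\rho$, which yields $\ln q_K\ge\ln\kappa$. For the upper bound, the inequality $\ln|x-y|\le\ln(1+|x|)+\ln(1+|y|)$ gives
\beq
q_K(x)\le 1+\frac{2}{\ell}\Big(\ln(1+|x|)+\frac{1}{1-\eta}\Int{\R}{}\ln(1+|y|)\,\dd\nu(y)\Big),
\enq
so that
\beq
|\mathbbm{1}_K\ln q_K|\le C+\ln\big(1+\tfrac{2}{\ell}\ln(1+|x|)\big),
\enq
which lies in $L^1(\nu)$ uniformly in small $\eta$. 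Dominated convergence then gives the claim.

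The main obstacle is the pointwise vanishing of $\mc{T}_3(x)$. Near $x$ the integrand $\ln|x-y|$ is unbounded below, so the crude estimate used in Lemma \ref{weakdilationcomp} only gives a one-sided bound; that suffices for the lower bound, but convergence needs both sides. The plan therefore relies on finiteness of $\Int{\R}{}|\ln|x-y||\,\dd\nu(y)$ for $\nu$-a.e. $x$, which follows from $\nu$ having finite logarithmic energy (itself a consequence of $I[\mu]<\infty$ through the entropy term). If that control fails, I would instead handle the part of $K^c$ near $x$ directly, using that $K^c$ eventually lies far from any fixed $x$: for $|y|>\max(K_1,K_2)$ and $x$ fixed, $|x-y|\ge1$ eventually, so only the positive part of the logarithm matters. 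This second argument alone actually suffices, since $\ln(1+|y|)$ is integrable.
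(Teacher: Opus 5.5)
Your argument is correct, but it is organised differently from the paper's. Write $\nu=\op{s}_{\rho\#}\mu$ as you do.

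\textbf{The paper's route.} It splits off the contribution of $x\in K^{\mathrm c}$ as a separate term $\mc{T}$. This requires $q_K\ge\kappa$ at \emph{every} $x\in\R$, which implicitly uses the principle of domination, Lemma~\ref{Lemme Principe Domination}. It then linearises the logarithm using its Lipschitz constant on $[\kappa;+\infty[$, so the remaining error is bounded by
\[
\frac{2}{\ell\kappa}\Big\{\frac{\nu[K^{\mathrm c}]}{\nu[K]}\int_{\mathbb{R}^2}\big|\ln|x-y|\big|\,\mathrm{d}\nu^{\otimes 2}(x,y)+\frac{1}{\nu[K]}\int_{\mathbb{R}^2}\big|\ln|x-y|\big|\,\mathbbm{1}_{K^{\mathrm c}}(y)\,\mathrm{d}\nu^{\otimes 2}(x,y)\Big\}.
\]
This is killed by a single dominated convergence on $\R^2$, once $|\ln|x-y||\in L^1(\nu^{\otimes2})$ is obtained by integrating $q\ge 0$ against $\nu$.

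\textbf{Your route.} You keep $\mathbbm{1}_K$ inside the integrand and run two nested dominated convergences. The first is in $y$ for fixed $x$, giving $q_K(x)\to q(x)$. The second is in $x$, with the explicit envelope $C+\ln(1+\frac{2}{\ell}\ln(1+|x|))$. This has two economies. The lower bound is needed only $\mu^K_\rho$-a.e., which is exactly \eqref{bound2}, so the domination principle never enters. And you need only the one-variable fact $\int_{\mathbb{R}}|\ln|x-y||\,\mathrm{d}\nu(y)<+\infty$ for $\nu$-a.e.\ $x$. In exchange, the paper's route is quantitative: it bounds the error explicitly through $\nu[K^{\mathrm c}]$ and a tail of the logarithmic energy.

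\textbf{Two small corrections.}

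First, the one-variable integrability needs no appeal to log-energy or entropy. For $\nu$-a.e.\ $x$, $q(x)\ge 0$ directly gives
\[
\int_{\mathbb{R}}\ln^{-}|x-y|\,\mathrm{d}\nu(y)\le \frac{\ell}{2}+\ln(1+|x|)+\int_{\mathbb{R}}\ln(1+|y|)\,\mathrm{d}\nu(y),
\]
where $\ln^{-}=\max\{-\ln,0\}$.

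Second, your fallback is misphrased. A point $y\in K^{\mathrm c}$ only satisfies $|y|>\min(K_1,K_2)$, not $\max$. Moreover, $K_1,K_2$ need not tend to $+\infty$: they stay bounded when $\nu$ has bounded support on one side. So $|x-y|\ge 1$ can fail. What is true is the following. Since $K$ increases and $\nu$ is atomless, for $\nu$-a.e.\ $x$ one eventually has $\mathrm{dist}(x,K^{\mathrm c})\ge c_x>0$. Hence
\[
\int_{K^{\mathrm c}}\big|\ln|x-y|\big|\,\mathrm{d}\nu(y)\le\big(|\ln c_x|+\ln(1+|x|)\big)\,\nu[K^{\mathrm c}]+\int_{K^{\mathrm c}}\ln(1+|y|)\,\mathrm{d}\nu(y)\longrightarrow 0 .
\]
With this correction the fallback does work on its own, without any logarithmic-energy input.
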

\begin{proof}
Recall that $K=\intff{-K_1}{K_2}$ is the support of $\mu^K_{\rho}$. First of all, one has
\bem
\Int{\mathbb{R}}{} \ln\bigg( 1 + \frac{2}{\ell} \Int{\mathbb{R}}{} \ln |x - y|  \, \mathrm{d}\mu^K_{\rho}(y)\bigg)  \, \mathrm{d}\mu^K_{\rho}(x)  \\
\, = \, \Int{\R}{}  \ln\bigg( 1 + \frac{2}{\ell} \Int{K}{} \ln |x - y|  \, \mathrm{d}\mu^K_{\rho}(y)\bigg)  \, \f{ \mathrm{d}(\op{s}_{\rho\#}\mu)(x) }{ (\op{s}_{\rho\#}\mu)[K] }
 \, - \, \f{ \mc{T} }{   (\op{s}_{\rho\#}\mu)[K] }\, ,
\nonumber
\end{multline}
where we have set
\beq
 \mc{T} \ = \, \Int{ K^{\e{c}} }{}  \ln\bigg( 1 + \frac{2}{\ell} \Int{K}{} \ln |x - y|  \, \mathrm{d}\mu^K_{\rho}(y)\bigg)  \,  \mathrm{d}(\op{s}_{\rho\#}\mu)(x)
\enq
Further, for $m \leq x \leq M$ it holds that $|\ln x| \, \leq \, |\ln m|+ |\ln M|$. Hence, since
\beq
\forall x \in \R \, , \quad 1 + \frac{2}{\ell} \Int{K}{} \ln |x - y|  \, \mathrm{d}\mu^K_{\rho}(y) \, \geq \, \kappa
\quad \e{and} \quad \ln|x-y| \, \leq \, \ln (1+|x|) + \ln (1+|y|)  \;,
\enq
we get that
\bem
|\mc{T}| \, \leq \,  (\op{s}_{\rho\#}\mu)[K^{\e{c}}] \big| \ln \kappa \big|\, + \,
\Int{ K^{\e{c}} }{}  \ln\bigg( 1 + \frac{2}{\ell} \ln (1+|x|)  \, + \, \f{2}{\ell} \Int{K}{} \ln (1+|y|)  \, \mathrm{d}\mu^K_{\rho}(y)\bigg)  \,  \mathrm{d}(\op{s}_{\rho\#}\mu)(x) \\
\, \leq \, \eta \big| \ln \kappa \big| \, + \, C \Int{ K^{\e{c}} }{}  \ln(1+|x|) \,  \mathrm{d}(\op{s}_{\rho\#}\mu)(x)
\end{multline}
which tends to $0$ as $\eta\searrow 0$. We now focus on the last difference:
\bem
\De \, = \,
\bigg| \Int{\R}{}  \ln\bigg( 1 + \frac{2}{\ell} \Int{K}{} \ln |x - y|  \, \mathrm{d}\mu^K_{\rho}(y)\bigg)  \,  \mathrm{d}(\op{s}_{\rho\#}\mu)(x) \\
 \hspace{3cm} \, - \, \Int{\R}{}  \ln\bigg( 1 + \frac{2}{\ell} \Int{\R}{} \ln |x - y|  \, \mathrm{d}(\op{s}_{\rho\#}\mu)(y)\bigg)  \,   \mathrm{d}(\op{s}_{\rho\#}\mu)(x)  \bigg|  \\
 \, \leq \,  \f{2}{\ell \kappa} \Int{\R}{}   \mathrm{d}(\op{s}_{\rho\#}\mu)(x)   \Big| \Int{\R}{} \ln |x - y|   \mathrm{d}\big(\mu^K_{\rho}-\op{s}_{\rho\#}\mu \big)(y) \Big| \\
\, \leq \,  \f{2}{\ell \kappa} \bigg\{  \f{ (\op{s}_{\rho\#}\mu)[K^{\e{c}}] }{ (\op{s}_{\rho\#}\mu)[K] } \Int{ \R^2 }{}  \big| \ln |x - y| \big|    \mathrm{d}^2(\op{s}_{\rho\#}\mu)(x,y) \\
\, + \, \Int{ \R^2 }{}  \big| \ln |x - y| \big|  \mathbbm{1}_{K^{\e{c}}}(y)  \f{ \mathrm{d}^2(\op{s}_{\rho\#}\mu)(x,y) }{ (\op{s}_{\rho\#}\mu)[K] } \bigg\} \, .
\end{multline}
The vanishing of $\De$ will then follow from $(x,y) \mapsto \big| \ln |x - y| \big| \in L^1\big( (\op{s}_{\rho\#}\mu)^{\otimes 2} \big)$.
Indeed, the first term will go to $0$
due to the prefactor and the second one by dominated convergence since $\mathbbm{1}_{K^{\e{c}}}(y) \tend 0$ pointwise.
Recall that it holds
\begin{align*}
1+ \frac{2}{\ell} \Int{\mathbb{R}}{} \ln|x-y| \, \mathrm{d}(\op{s}_{\rho\#}\mu)(y) \geq 0, & & \op{s}_{\rho\#}\mu-\text{a.e. } \,  x \in \mathbb{R},
\end{align*}
and so $\Int{\mathbb{R}}{} \ln|x-y| \, \mathrm{d}(\op{s}_{\rho\#}\mu)(x) \, \mathrm{d}(\op{s}_{\rho\#}\mu)(y) \geq - \frac{\ell}{2} $. Thus
\bem
\Int{\mathbb{R}}{} |\ln|x-y|| \, \mathrm{d}(\op{s}_{\rho\#}\mu)(x) \, \mathrm{d}(\op{s}_{\rho\#}\mu)(y)  \\
= 2 \Int{\mathbb{R}^2}{}\ln|x-y| \, \mathbbm{1}_{|x-y| \geq 1} \, \mathrm{d}(\op{s}_{\rho\#}\mu)(x) \, \mathrm{d}(\op{s}_{\rho\#}\mu)(y)
 - \Int{\mathbb{R}^2}{} \ln|x-y| \, \mathrm{d}(\op{s}_{\rho\#}\mu)(x) \, \mathrm{d}(\op{s}_{\rho\#}\mu)(y) \\
 \leq 4 \Int{\mathbb{R}^2}{}\ln(1+|x|) \, \mathrm{d}(\op{s}_{\rho\#}\mu)(x) + \frac{\ell}{2} < +\infty \, .
\end{multline}
\end{proof}

We have now reached the final step of the proof, namely to take the $\rho \searrow 1$ limit. Because $\ln$ is an increasing function, we have that
\bem
\Int{\mathbb{R}}{} \ln\bigg( 1 + \frac{2}{\ell} \Int{\mathbb{R}}{} \ln |x - y|  \, \mathrm{d}(\op{s}_{\rho\#}\mu) (y)\bigg)  \, \mathrm{d}(\op{s}_{\rho\#}\mu)(x) \\
= \Int{\mathbb{R}}{} \ln\bigg( 1 + \f{2}{\ell} \ln \rho \, + \,  \frac{2}{\ell} \Int{\mathbb{R}}{} \ln |x - y|  \, \mathrm{d}\mu (y)\bigg)  \, \mathrm{d}\mu(x) \\
 \geq  \Int{\mathbb{R}}{} \ln\bigg( 1   \, + \,  \frac{2}{\ell} \Int{\mathbb{R}}{} \ln |x - y|  \, \mathrm{d}\mu (y)\bigg)  \, \mathrm{d}\mu(x)   \, .
\end{multline}
With regard to the entropy, given that the densities of $\op{s}_{\rho\#}\mu$ and $\mu$ are related as $\sg_\rho(x) \, = \,  \rho^{-1}\sg(\tf{x}{\rho} )$ one has
$$ \Int{\mathbb{R}}{} \ln \big[ \sg_\rho(x) \big]\, \sg_\rho(x) \, \mathrm{d}x  \,  = \,  - \ln \rho \,  +\,
\Int{\mathbb{R}}{} \ln \big[ \sg(x) \big] \, \sg(x) \, \mathrm{d}x \underset{\rho\searrow 1}{\longrightarrow}
\Int{\mathbb{R}}{} \ln \big[\sg(x) \big] \, \sg(x) \, \mathrm{d}x \, .$$

It thus remains to establish the limit for the term involving the potential $V$.
\begin{lemme}
Let $V$ be continuous and satisfying \ref{potentialh2}, and suppose $\mu \in \mathcal{M}_1(\mathbb{R})$ is such that $\Int{\mathbb{R}}{} V \, \mathrm{d}\mu <+\infty$. Then,
$$\lim_{\rho\searrow 1} \Int{\mathbb{R}}{} V(\rho x) \, \mathrm{d}\mu(x)  = \Int{\mathbb{R}}{} V( x) \, \mathrm{d}\mu(x) \, .$$
\end{lemme}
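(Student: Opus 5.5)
The natural route is dominated convergence applied to $x\mapsto V(\rho x)$ for $\rho\in\intof{1}{2}$. Pointwise convergence $V(\rho x)\to V(x)$ as $\rho\searrow 1$ follows from the continuity of $V$. What remains is a $\mu$-integrable majorant of $V(\rho x)$ that holds uniformly for $\rho\in\intof{1}{2}$.

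We obtain the majorant from the two-sided algebraic growth in \ref{potentialh2}. Since $0<\liminf_{|x|\to\infty} V(x)/|x|^\theta$ and $\limsup_{|x|\to\infty}V(x)/|x|^\theta<+\infty$, there exist constants $0<c\le C$ and $R>0$ such that
\[
c|x|^\theta\le V(x)\le C|x|^\theta \qquad \text{for } |x|\ge R .
\]

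\emph{Case $|x|\ge R$.} Here $|\rho x|\ge R$ as well, so
\[
V(\rho x)\le C\rho^\theta|x|^\theta\le C2^\theta|x|^\theta\le \frac{C2^\theta}{c}\,V(x).
\]

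\emph{Case $|x|<R$.} Here $|\rho x|\le 2R$, so $V(\rho x)\le \sup_{[-2R;2R]}V<+\infty$, which is finite by continuity of $V$.

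Since $V\ge 0$ by our normalisation, combining the two cases gives
\[
0\le V(\rho x)\le \frac{C2^\theta}{c}\,V(x)+\sup_{[-2R;2R]}V \qquad \text{for all } x\in\R,\ \rho\in\intof{1}{2}.
\]
The right-hand side lies in $L^1(\mu)$ because $\Int{\mathbb{R}}{} V\,\mathrm{d}\mu<+\infty$. Dominated convergence then yields the claim.

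The one step needing care is the comparison $|x|^\theta\lesssim V(x)$ at large $|x|$, since it is what converts a bound on $V(\rho x)$ into an integrable function of $V(x)$. That step uses only the $\liminf$ half of \ref{potentialh2}. Hypothesis \ref{potentialh1} is not needed anywhere in the argument.
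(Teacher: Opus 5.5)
Your proof is correct and follows essentially the same route as the paper: both use the two-sided growth hypothesis \ref{potentialh2} to dominate $V(\rho x)$ by a constant multiple of $V(x)$ plus a constant away from a compact set, and use continuity of $V$ on compact sets. The only difference is presentation. The paper unpacks this into an explicit tail estimate on $[-R,R]^{c}$ followed by uniform continuity on $[-R,R]$, whereas you package the same bounds into a single application of dominated convergence.
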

\begin{proof}
 By \ref{potentialh2}, there exists $C_1, \tilde{C}_1 \in \mathbb{R}$ and $C_2, \tilde{C}_2 > 0$ such that
$$C_1 + C_2 |x|^\theta \leq V(x) \leq \tilde{C}_1 + \tilde{C}_2 |x|^\theta \, .$$ Next, let $R > 0$ be a (large) constant. Then
\begin{align*}
\Int{[-R;R]^{\e{c}}}{} V(\rho x) \, \mathrm{d}\mu(x) &\leq \tilde{C}_1 \mu\big[ [-R;R]^{\e{c}}\big]
+ \tilde{C}_2\rho^\theta  \Int{[-R;R]^{\e{c}}}{} |x|^\theta \, \mathrm{d}\mu(x) \\
&\leq (\tilde{C}_1 - C_1 C_2^{-1} \rho^\theta) \mu\big[ [-R;R]^{\e{c}}\big]
+ \tilde{C}_2 C_2^{-1}\rho^\theta \hspace{-3mm} \Int{[-R;R]^{\e{c}}}{} \hspace{-3mm} V(x) \, \mathrm{d}\mu (x) \, .
\end{align*}
Then given any $\eps > 0$
\beq
\limsup_{\rho \searrow 1} \hspace{-3mm}   \Int{[-R;R]^{\e{c}}}{}  \hspace{-3mm}  V(\rho x) \, \mathrm{d}\mu(x)
\leq (\tilde{C}_1 - C_1 C_2^{-1}) \mu\big[ [-R;R]^{\e{c}}\big]
+ \tilde{C}_2 C_2^{-1}  \hspace{-3mm} \Int{[-R;R]^{\e{c}}}{} \hspace{-3mm}  V(x) \, \mathrm{d}\mu(x) \, \leq  \, \eps
\nonumber
\enq
for $R > 0$ sufficiently large. By uniform continuity on compact sets $$\Int{-R}{R} V(\rho x) \, \mathrm{d}\mu(x) \overset{\rho \searrow 1}{\longrightarrow}  \Int{-R}{R} V(x) \, \mathrm{d}\mu(x)$$ by uniform continuity of $V$ on compact sets. Since $\eps >0$ was arbitrary we conclude the result.
\end{proof}





\section*{Acknowledgement}

K.K.K. acknowledges support from  CNRS and ENS de Lyon.  A.G., K.K.K. and A.L. are supported  by the ERC Project LDRAM : ERC-2019-ADG Project 884584.
K.K.K. and A.L. are supported by the joint AND-DFG TSF24 project ANR-24-CE92-0033
 T.G. acknowledges the support of PRIN 2022 (2022TEB52W) "The charm of
integrability: from nonlinear waves to random matrices"-– Next Generation EU grant – PNRR Investimento M.4C.2.1.1 - CUP: G53D23001880006; the GNFM-INDAM group and the research project Mathematical Methods in NonLinear Physics (MMNLP), Gruppo 4-Fisica Teorica of INFN.
The authors wish to thank Alexander Its and Herbert Spohn for the many insightful comments during the preparation of this manuscript.





\appendix





\section{Conditions on the roots}
\label{section:condroots}

In our rate function $I$, a necessary condition for $I[\mu]< +\infty$ is that $1+ \frac{2}{\ell} \Int{\mathbb{R}}{} \ln|x-y| \, \mathrm{d}\mu(y) \geq 0$
for $\mu$-almost every $x \in \mathbb{R}$. This unusual-looking non-local constraint arises from the requirement
that both $P-2\varepsilon_N$ and $P+2\varepsilon_N$ have all real roots (where $P$ is defined in \eqref{defP1}).
In this section we prove a finite-$N$ version of this inequality. Namely, we consider how the reality of the roots
of $P\pm 2\varepsilon_N$ implies a lower bound on
\begin{align}\label{Petamin}
    \min_{k \in \intn{1}{N}} \left( 1- \frac{1}{\ln(2\veps_N)} \ln |P^\prime(\eta_k)| \right)
\end{align}
and vice versa, where we recall that the entries of the vector $\boldsymbol{\eta}_N$ are given by the roots of $P$. We begin by establishing a characterisation of the reality of the roots of $P\pm 2\varepsilon_N$.

\begin{prop}\label{etalambdaequivalence}
Let $\boldsymbol{\lambda}^+_N \in \mathbb{R}^N_{<}$. Then the following are equivalent.
\begin{enumerate}
    \item $\boldsymbol{\lambda}^+_N \in \mathcal{A}_N$.
    \item There exists a vector $\boldsymbol{\eta}_N \in \mathbb{R}^N_{<}$ such that the following holds. Let
$$P(x) := \prod_{k=1}^N ( x - \eta_k)$$ and let $\zeta_1, \dots, \zeta_{N-1} \in \mathbb{R}$ be the roots of $P^\prime$. Then $|P(\zeta_k)| > 2\veps_N$ for all $k \in \intn{1}{N-1}$ and $\lambda_1^+, \dots, \lambda_N^+$ are the roots of $P - 2\veps_N$.
\end{enumerate}
Note that this vector $\boldsymbol{\eta}_N \in \mathbb{R}^N_{<}$, if it exists, is unique since we fix the ordering.
\end{prop}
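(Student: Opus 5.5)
The plan is to prove both implications by elementary real-variable arguments on the polynomials, using that $P$, $P^+=P-2\veps_N$ and $P^-=P+2\veps_N$ differ only by constants. Throughout, $P^+(x)=\prod_k(x-\lambda_k^+)$, so $P$ is forced to be $P^++2\veps_N$. Hence the only genuine content is a characterisation of when $P$ and $P+4\veps_N$ (equivalently $P\pm2\veps_N$) all have $N$ real simple roots. Uniqueness of $\bs{\eta}_N$ is immediate: $P$ is determined by $\bs{\la}_N^+$, and its roots are fixed once we order them.

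For $(1)\Rightarrow(2)$, assume $\bs{\la}_N^+\in\mc{A}_N$, so $P^+$ has $N$ distinct real roots and $P^-=P^++4\veps_N$ also has $N$ real distinct roots. Set $P=P^++2\veps_N$.
\begin{itemize}
\item By Rolle, $P'$ has exactly one zero $\zeta_k\in\,]\lambda_k^+;\lambda_{k+1}^+[$ for each $k$, and these are all its zeros since $\deg P'=N-1$.
\item On $[\lambda_k^+;\lambda_{k+1}^+]$, $P^+$ has constant sign $(-1)^{N-k}$ and a unique extremum at $\zeta_k$, so the graph of $P^+$ is a single hump there.
\item A horizontal line $y=c$ with $c$ strictly between $0$ and $P^+(\zeta_k)$ meets $P^+$ twice on this interval. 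A line with $|c|$ beyond $|P^+(\zeta_k)|$ on the same side does not meet it there, and a tangency gives a double root.
\item The real roots of $P^-$ are the solutions of $P^+=-4\veps_N$. Outside $[\lambda_1^+;\lambda_N^+]$, $P^+$ is monotone, so the line $y=-4\veps_N$ meets $P^+$ at most once on each of the two outer half-lines. Counting by parity shows exactly one crossing occurs outside $[\lambda_1^+;\lambda_N^+]$ when $N$ is odd, and either two or zero when $N$ is even, depending on the sign.
\item All remaining roots must come from humps with $(-1)^{N-k}=-1$. Each such hump contributes two roots exactly when $P^+(\zeta_k)<-4\veps_N$, that is $P(\zeta_k)<-2\veps_N$.
\item If some hump fails this, the count of real roots of $P^-$ drops below $N$, or a double root appears, contradicting distinctness.
\end{itemize}
So $|P(\zeta_k)|>2\veps_N$ on the negative humps. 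On the positive humps, $P(\zeta_k)=P^+(\zeta_k)+2\veps_N>2\veps_N$ holds automatically. This gives (2) with $\bs{\eta}_N$ the roots of $P$. We still need that these roots are real and simple: on each hump $|P|$ exceeds $2\veps_N$ at $\zeta_k$, while $P=\pm2\veps_N$ of opposite signs occur at the roots of $P^\mp$. The intermediate value theorem then gives $N$ real sign changes of $P$, hence $\bs{\eta}_N\in\R^N_<$.

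For $(2)\Rightarrow(1)$, take $P$ with simple real roots and $|P(\zeta_k)|>2\veps_N$. For every $c$ with $|c|\le2\veps_N$, $P-c$ changes sign between consecutive critical values and on both unbounded ends. By the intermediate value theorem this yields $N$ distinct real roots, one in each interval $]\zeta_{k-1};\zeta_k[$ with $\zeta_0=-\infty$ and $\zeta_N=+\infty$. Taking $c=\pm2\veps_N$ shows both $P^+$ and $P^-=P^++4\veps_N$ have $N$ real distinct roots, so $\bs{\la}_N^+\in\mc{A}_N$.

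The main obstacle is the careful root count for $P^-$ in the first direction, in particular handling the unbounded end intervals with the correct parity of $N$. I would organise this by noting that, by alternation, $(-1)^{N-k}P(\zeta_k)>0$ for all $k$. It then suffices to show that $P\pm2\veps_N$ each having $N$ real simple roots forces $(-1)^{N-k}P(\zeta_k)>2\veps_N$. This follows by applying the single-hump picture to whichever of $P\pm2\veps_N$ has the hump at $\zeta_k$ on the opposite side.
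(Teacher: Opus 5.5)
Your proposal is correct and is essentially the paper's proof. For (1)$\Rightarrow$(2) the paper likewise counts roots of $P^-=P^++4\veps_N$ hump by hump: at most two on each negative hump of $P^+$ by monotonicity on either side of $\zeta_k$, and at most one on the left half-line when $N$ is odd. This forces exactly two per hump and hence $|P(\zeta_k)|>2\veps_N$, and for (2)$\Rightarrow$(1) the paper uses the same intermediate value argument. There is one cosmetic slip: $P=+2\veps_N$ at the roots of $P^+$ and $P=-2\veps_N$ at those of $P^-$, not the reverse. Also, the shortcut in your last paragraph (Rolle applied to both $P^\pm$ gives $(-1)^{N-k}P^\pm(\zeta_k)>0$, whence $(-1)^{N-k}P(\zeta_k)>2\veps_N$) would in fact let you dispense with the counting altogether.
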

\begin{proof}
Let $\boldsymbol{\lambda}^+_N \in \mathbb{R}^N_{<}$, and define
\begin{align*}
P^+(x) = \prod_{k=1}^N ( x  - \lambda_k^+) \, , & & P(x) = P^+(x) + 2\veps_N \, .
\end{align*}
Suppose that ii) holds. Then by the intermediate value theorem, for every $k \in \intn{1}{N-1}$, there is a zero of $P^\prime$ in the interval $]\eta_k ; \eta_{k+1}[$. Let us call this zero $\zeta_k$, and we see by counting zeroes that $\zeta_1, \dots, \zeta_{N-1}$ are \textit{all} the zeros of $P^\prime$. Suppose that $P < 0$ on the interval $]\eta_k ; \eta_{k+1}[$.

Then by the intermediate value theorem, since $P(\zeta_k) < - 2\veps_N$, there must be a pair of solutions $\lambda_k^-$, $\lambda_{k+1}^-$ to the equation $P(\mu) = - 2\veps_N$ such that $$\eta_k < \lambda_k^- < \zeta_k < \lambda_{k+1}^- < \eta_{k+1} \, .$$ If $N$ is even there are $\frac{N}{2}$ such intervals, and hence we have a complete set of real distinct roots for the polynomial $P + 2\veps_N$. If $N$ is odd there are $\frac{N-1}{2}$ such intervals, which gives us $N-1$ real distinct roots. In this latter case, an additional root is found $\lambda_1^- < \eta_1$, since $P(\eta_1) = 0$ and $P(\mu)  \to -\infty$ as $\mu \to -\infty$. Thus $P + 2\veps_N$ has $N$ distinct real roots. A completely symmetrical argument shows that the roots of $P^+ = P- 2\veps_N$ are distinct.

For the converse statement, suppose by way of contradiction that i) is true but ii) is false. By similar reasoning as before, the intermediate value theorem
tells us there exists a series of roots of $P^\prime$ which interlace with the roots of $P^+$,
$$\lambda_k^+ < \zeta_k < \lambda_{k+1}^+ \, .$$
On every interval $]\lambda_k^+ ; \lambda_{k+1}^+[$ on which $P^+$ is negative, $P^- = P^+ + 4\veps_N$ can have at most two simple roots, since $P^+$
first monotonically decreases and then monotonically increases on this interval. Since $\boldsymbol{\lambda}^+_N \in \mathcal{A}_N$, by counting roots in a similar
manner as before, we must have pairs of simple roots of $P^-$,  $\lambda_k^-$, $\lambda_{k+1}^-$, on all of these intervals. Hence
$$\lambda_k^+ < \lambda_k^- < \zeta_k < \lambda_{k+1}^- < \lambda_{k+1}^+ \, .$$
Then, by the intermediate value theorem, $P = P^+ + 2\veps_N$ must have  roots $\{ \eta_k \}_{k=1}^N$ such that
$$\lambda_k^+ < \eta_k < \lambda_k^- < \zeta_k < \lambda_{k+1}^- < \eta_{k+1} < \lambda_{k+1}^+ \, .$$
Finally, suppose by way of contradiction that $|P(\zeta_k)| \leq 2 \veps_N$ for some $k \in \intn{1}{N-1}$. If $|P(\zeta_k)| = 2 \veps_N$,
then $P^+(\zeta_k)P^-(\zeta_k) = P(\zeta_k)^2 - 4 \veps_N^2 = 0$, and so $\zeta_k$ is a root of either $P^+$ or $P^-$, which would contradict the simplicity,
\textit{i.e.} distinctness, of the roots. If $|P(\zeta_k)| < 2 \veps_N$ then, since $\zeta_k$ is a local extremum, we have $|P(\mu)| < 2 \veps_N$ for all
$\mu \in ]\lambda_k^+; \lambda_{k+1}^+[ \cup ]\lambda_k^-; \lambda_{k+1}^-[ $, and so $P(\mu) = \pm 2 \veps_N$ has no solutions on this interval. But this contradicts
the existence of two roots on this interval.
\end{proof}
From the above proposition, the idea is to deduce a lower bound on $\min_{j \in \intn{1}{N}} |P^\prime(\eta_j)|$ from the assumption of
a lower bound on $\min_{k \in \intn{1}{N-1}} |P(\zeta_k)|$, since the latter is equivalent to $\boldsymbol{\lambda}^+_N \in \mathcal{A}_N$,
or equivalently to $P\pm 2\varepsilon_N$ having full sets of real roots. Such a relation is given by the following inequality.
\begin{prop}\label{inequality}

Let $\bs{\eta}_N \in \R^N_{<}$, then one has
\begin{align*}
 \left( \min_{k \in \intn{1}{N-1}} |P(\zeta_k)|\right)^{\frac{N-1}{N}} \leq N \min_{j \in \intn{1}{N}} |P^\prime(\eta_j)| \, .
\end{align*}
\end{prop}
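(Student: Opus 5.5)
The starting point is the two factorisations $P(x)=\prod_{i=1}^N(x-\eta_i)$ and $P'(x)=N\prod_{k=1}^{N-1}(x-\zeta_k)$, with the interlacing $\eta_k<\zeta_k<\eta_{k+1}$ (as in the proof of Proposition \ref{etalambdaequivalence}). The inequality compares a minimum over $k$ with a minimum over $j$, so it suffices to fix the index $j$ attaining $\min_j|P'(\eta_j)|$ and prove
\[
m^{\frac{N-1}{N}}\le N|P'(\eta_j)|, \qquad m:=\min_k|P(\zeta_k)|.
\]

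The first step is the resultant identity. Evaluating each factorisation at the roots of the other gives
\[
\prod_{k=1}^{N-1}|P(\zeta_k)|=\prod_{k}\prod_{i}|\zeta_k-\eta_i|=N^{-N}\prod_{i=1}^N|P'(\eta_i)| .
\]
This identity alone only controls geometric means, so a single index $j$ has to be isolated. For that I would write, for the fixed $j$,
\[
|P'(\eta_j)|=N\prod_k|\eta_j-\zeta_k| ,
\]
and compare it factor by factor with $\prod_k|P(\zeta_k)|^{1/N}$. The exponent $\tfrac{N-1}{N}$ suggests taking the $N$-th power,
\[
|P'(\eta_j)|^N=N^N\prod_k|\eta_j-\zeta_k|^N ,
\]
and aiming to bound $\prod_k|P(\zeta_k)|$ from above by $N^{N}\,\max_k|P(\zeta_k)|^{\,?}\,\prod_k|\eta_j-\zeta_k|^N$ in a way that closes with the minimum $m$.

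The concrete tool I would use for this is the partial-fraction identity at a critical point,
\[
0=\frac{P'(\zeta_k)}{P(\zeta_k)}=\sum_i\frac{1}{\zeta_k-\eta_i}.
\]
This forces $|\zeta_k-\eta_j|$ not to be too small relative to the other distances $|\zeta_k-\eta_i|$, with a loss of at most a factor $N$. Alternatively, I would write $|P(\zeta)|=\bigl|\int_{\eta_j}^{\zeta}P'\bigr|$ for the critical point $\zeta\in\{\zeta_{j-1},\zeta_j\}$ adjacent to $\eta_j$. The goal is a bound of the shape
\[
|P(\zeta)|\le |\zeta-\eta_j|\sup_{[\eta_j,\zeta]}|P'| ,
\]
with the supremum controlled by $|P'(\eta_j)|$ times ratios $\frac{|x-\zeta_i|}{|\eta_j-\zeta_i|}$. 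These ratios are $\le1$ for critical points on the far side and are handled through the product identity for those on the near side.

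The main obstacle is this supremum step. The function $|P'|$ need not be monotone on $[\eta_j,\zeta]$, and the factors $|x-\zeta_i|$ for $\zeta_i$ on the side of $\eta_j$ opposite to $\zeta$ grow as $x$ moves toward $\zeta$. I would try first to absorb exactly these growing factors using AM--GM over all $N$ choices of base point. That is, I would average the inequality over $j$ weighted by the resultant identity; this averaging is where I expect the power $\frac{N-1}{N}$ and the constant $N$ to appear.
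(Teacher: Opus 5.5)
Your proposal stops where the proof has to happen, and the completion you suggest cannot work. Averaging over the base point $j$ with the resultant identity only controls $\prod_j|P'(\eta_j)|$. Indeed, the resultant identity and $\prod_k|P(\zeta_k)|\ge m^{N-1}$ give at once $N\,m^{\frac{N-1}{N}}\le\bigl(\prod_{j}|P'(\eta_j)|\bigr)^{1/N}$. That is a geometric-mean statement and says nothing about the smallest $|P'(\eta_j)|$, which is the whole content of the proposition; you observe this yourself before reverting to averaging.

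The one-sided route through a single adjacent critical point is not merely awkward: with a fixed neighbour the inequality can be false. Take $N=3$ and $(\eta_1,\eta_2,\eta_3)=(-1,-1+\delta,1)$ with $\delta\to0$. Then $\zeta_2\to\tfrac13$, so $|P(\zeta_2)|^{2/3}\to(32/27)^{2/3}$, while $3|P'(\eta_2)|\sim6\delta$. Correspondingly $\sup_{[\eta_2,\zeta_2]}|P'|/|P'(\eta_2)|$ is of order $\delta^{-1}$. Only the other neighbour $\zeta_1$ works here. Your sketch has no mechanism for choosing the neighbour, nor for bounding the growing factors $|x-\zeta_i|/|\eta_j-\zeta_i|$ with total loss $N$.

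The missing idea is to interpolate between the two neighbouring critical points, rather than average over base points; this is how the paper proceeds. For $2\le j\le N-1$ write $\eta_j=\alpha\zeta_{j-1}+(1-\alpha)\zeta_j$ with $\alpha\in(0,1)$. No $\eta_i$ with $i\neq j$ lies in $(\zeta_{j-1},\zeta_j)$, so each $x\mapsto|x-\eta_i|$ is affine there. Weighted AM--GM then gives $|\zeta_{j-1}-\eta_i|^{\alpha}|\zeta_j-\eta_i|^{1-\alpha}\le|\eta_j-\eta_i|$, and multiplying over $i\neq j$,
\[
|P(\zeta_{j-1})|^{\alpha}\,|P(\zeta_j)|^{1-\alpha}\le|\zeta_{j-1}-\eta_j|^{\alpha}\,|\zeta_j-\eta_j|^{1-\alpha}\,|P'(\eta_j)| .
\]
The factors that grow towards one neighbour are thus balanced by those that shrink towards the other.

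Your partial-fraction identity then closes the argument, used in the form $|\zeta-\eta_j|\le N\min_i|\zeta-\eta_i|\le N|P(\zeta)|^{1/N}$ for $\zeta\in\{\zeta_{j-1},\zeta_j\}$. This holds because the root nearest to $\zeta$ is one of its two neighbours, and the two neighbouring distances are within a factor $N$ of each other. It yields $\bigl(|P(\zeta_{j-1})|^{\alpha}|P(\zeta_j)|^{1-\alpha}\bigr)^{\frac{N-1}{N}}\le N|P'(\eta_j)|$, whose left side is at least $m^{\frac{N-1}{N}}$.

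The cases $j=1$ and $j=N$ are genuinely one-sided and easy. For $j=1$, $|\zeta_1-\eta_i|<|\eta_1-\eta_i|$ for $i\ge2$ gives $|P(\zeta_1)|\le|\zeta_1-\eta_1|\,|P'(\eta_1)|$, and $\eta_1$ is the root nearest to $\zeta_1$; $j=N$ is symmetric.
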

\begin{proof}
To begin with, assume $k \not\in \{ 1 ,N \}$. Then by the interlacing property there is an $\alpha \in ]0;1[$ such that
$\eta_k =  \a \zeta_{k-1} + (1-\alpha) \zeta_k$. Then by Young's inequality (or Jensen's, equivalently)
\begin{align*}
&|P(\zeta_{k-1})|^{\alpha} |P(\zeta_{k})|^{1-\alpha} \\
&\leq |\zeta_{k-1} - \eta_k|^\alpha |\zeta_{k} - \eta_k|^{1-\alpha} |P^\prime(\eta_k)| \\
&\leq N \Big(  |\zeta_{k-1} - \eta_k| \wedge  |\zeta_{k-1} - \eta_{k-1}|  \Big)^\alpha
\Big(  |\zeta_{k} - \eta_k|  \wedge  |\zeta_{k} - \eta_{k-1}| \Big)^{1-\alpha} |P^\prime(\eta_k)| \\
&\leq N |P(\zeta_{k-1})|^\frac{\alpha}{N} |P(\zeta_{k})|^{\frac{1-\alpha}{N}} |P^\prime(\eta_k)|
\end{align*}
where in the second inequality we have used Lemma \ref{Lemme espacement racine lambda pm et zeta j}.
Thus,
$$N |P^{\prime}(\eta_k)| \geq   \big( |P(\zeta_{k-1})|^{\alpha} |P(\zeta_{k})|^{1-\alpha} \big)^{ (N-1)/N }$$
For $k=1$ we may write
$$|P(\zeta_1)| \leq |\zeta_1 - \eta_1| |P^\prime(\eta_1)| \leq |P(\zeta_1)|^\frac{1}{N}|P^\prime(\eta_1)| $$
and similarly for $k=N$. Then, taking the minimum of both sides yields the result.
\end{proof}
Put together,  Propositions \ref{etalambdaequivalence}-\ref{inequality} imply that
\begin{align}
    \min_{k \in \intn{1}{N}} \Big( 1- \frac{1}{\ln(2\veps_N)} \ln |P^\prime(\eta_k)| \Big) \geq - \mathrm{O}\Big( \frac{\ln N}{N} \Big)\,.
\end{align}
Thus, although we are not able to show that \eqref{Petamin} is strictly positive
for any $\bs{\eta}_N$ subordinate to $\bs{\la}^+_N\in \mc{A}_N$, it is bounded from below by a negative number which tends to $0$. Let us also remark, though it is not an observation we use, that one can show that $1- \frac{1}{\ln(2\veps_N)} \ln |P^\prime(\eta_k)| $ is positive \textit{on average}; more precisely we have the following identity.
\begin{align*}
\prod_{k=1}^N |P^\prime(\eta_k)| = N^{N} \prod_{j=1}^{N-1} |P(\zeta_j)| \, .
\end{align*}
This follows from the formulas $P^\prime(\eta_k) = N \prod_{j=1}^{N-1}(\eta_k - \zeta_j)$ and $P(\zeta_j) = \prod_{k=1}^N (\zeta_j - \eta_k)$. Taking the logarithm of both sides we find
\begin{align}
1 - \frac{1}{N} \sum_{k=1}^N \frac{1}{\ln(2\veps_N)} \ln |P^\prime(\eta_k)| \geq - \frac{ \ln  N }{\ln 2\veps_N}+   \frac{1}{N} > 0
\end{align}
where we note that $\ln 2\veps_N < 0$ and is of order $N$, so both terms are positive and are overall of order $\mathrm{O}( \frac{\ln N }{N})$.

Next, we may ask if a converse statement is true: namely, if one has a lower bound on \eqref{Petamin}, whether that implies that $P\pm 2\varepsilon_N$ have complete sets of real roots. The answer to this, strictly speaking, is no. In addition to a lower bound on $\min_{k \in \intn{1}{N}} |P^\prime(\eta_k)|$ one must also have a lower bound on the interparticular spacing. More precisely, we have the following inequality.
\begin{prop}\label{Pzetalowerbound2} $\sqrt{|P^\prime(\eta_k)P^\prime(\eta_{k+1})|} \leq 4 \frac{1}{|\eta_k - \eta_{k+1}|}|P(\zeta_k)|$.
\end{prop}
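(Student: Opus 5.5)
We want to prove $\sqrt{|P^\prime(\eta_k)P^\prime(\eta_{k+1})|}\le 4\,|P(\zeta_k)|/|\eta_k-\eta_{k+1}|$, where $\zeta_k\in\,]\eta_k;\eta_{k+1}[$ is the unique critical point of $P$ between the two consecutive roots. The plan is to factor $P$ as $P(x)=(x-\eta_k)(x-\eta_{k+1})Q(x)$, with $Q(x)=\prod_{j\neq k,k+1}(x-\eta_j)$. Then
\[
|P^\prime(\eta_k)|=|\eta_{k+1}-\eta_k|\,|Q(\eta_k)|,\qquad |P^\prime(\eta_{k+1})|=|\eta_{k+1}-\eta_k|\,|Q(\eta_{k+1})|,
\]
so that
\[
\sqrt{|P^\prime(\eta_k)P^\prime(\eta_{k+1})|}=|\eta_{k+1}-\eta_k|\sqrt{|Q(\eta_k)Q(\eta_{k+1})|}.
\]
The claim therefore reduces to showing
\[
|\eta_{k+1}-\eta_k|^2\sqrt{|Q(\eta_k)Q(\eta_{k+1})|}\le 4|P(\zeta_k)|.
\]

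Rather than evaluating at $\zeta_k$ directly, we use that $\zeta_k$ maximises $|P|$ on $[\eta_k;\eta_{k+1}]$, since $P$ has constant sign there and a single critical point. Hence $|P(\zeta_k)|\ge |P(m)|$ with $m=\tfrac{\eta_k+\eta_{k+1}}{2}$. At the midpoint,
\[
|P(m)|=\tfrac14|\eta_{k+1}-\eta_k|^2\,|Q(m)|,
\]
so it suffices to prove $|Q(m)|\ge\sqrt{|Q(\eta_k)Q(\eta_{k+1})|}$. This is where the factor $4$ comes from.

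The last inequality factorises over the roots $\eta_j$ with $j\neq k,k+1$. All such $\eta_j$ lie outside $[\eta_k;\eta_{k+1}]$, so we need, for each such $j$,
\[
|m-\eta_j|^2\ge|\eta_k-\eta_j|\,|\eta_{k+1}-\eta_j|.
\]
This is AM–GM: the distances $|\eta_k-\eta_j|$ and $|\eta_{k+1}-\eta_j|$ have the same sign orientation, and their arithmetic mean is exactly $|m-\eta_j|$. Taking the product over $j$ and then a square root gives the result.

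The only delicate point is justifying that $|P|$ attains its maximum on $[\eta_k;\eta_{k+1}]$ at $\zeta_k$. This follows because the roots of $P^\prime$ interlace strictly with the simple roots of $P$ (as noted in the proof of Proposition \ref{etalambdaequivalence}). Everything else is elementary.
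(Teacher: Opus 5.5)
Your proof is correct and is essentially the paper's argument: apply AM--GM to each factor with $j\neq k,k+1$ to bound $\sqrt{|P^\prime(\eta_k)P^\prime(\eta_{k+1})|}$ by the midpoint value $4|\eta_k-\eta_{k+1}|^{-1}\,\big|P\big(\tfrac{\eta_k+\eta_{k+1}}{2}\big)\big|$. Then conclude using that $\zeta_k$ maximises $|P|$ on $[\eta_k;\eta_{k+1}]$.
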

\begin{proof} 
We note from the AM-GM inequality that
\begin{align*}
&\sqrt{|P^\prime(\eta_k)P^\prime(\eta_{k+1})|}= |\eta_k-\eta_{k+1}| \prod_{\substack{j = 1 \\ j \neq k,k+1}}^{N}\sqrt{|\eta_k-\eta_j||\eta_{k+1}-\eta_j|} \\ &\leq |\eta_k-\eta_{k+1}| \prod_{\substack{j = 1 \\ j \neq k,k+1}}^{N}\big|\frac{\eta_k+\eta_{k+1}}{2}-\eta_j \big| = 4|\eta_k-\eta_{k+1}|^{-1}\big|P\big(\frac{\eta_k+\eta_{k+1}}{2}\big)\big| \, .
\end{align*}
The conclusion follows from the fact that $\zeta_k$ maximises $|P|$ in $[\eta_k  ; \eta_{k+1}]$.
\end{proof}
\begin{cor}\label{Pzetalowerbound} Let $\delta > 0$ and assume $|\eta_i - \eta_{i+1}| \geq N^{-1 - \delta}$ for all $i = 1, \dots, N-1$. Then
\begin{align}
\frac{1}{4} N^{-1 - \delta} \min_{j\in \intn{1}{N}} |P^\prime(\eta_j)| \leq \min_{k \in \intn{1}{N-1}} |P(\zeta_k)| \, .
\end{align}
\end{cor}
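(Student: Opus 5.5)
The plan is to read the corollary off Proposition \ref{Pzetalowerbound2}, one critical point at a time, and then take minima. Fix $k\in\intn{1}{N-1}$. Proposition \ref{Pzetalowerbound2} gives
\begin{align*}
|P(\zeta_k)| \;\geq\; \frac{1}{4}\,|\eta_k-\eta_{k+1}|\,\sqrt{|P^\prime(\eta_k)P^\prime(\eta_{k+1})|}\, .
\end{align*}

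For the first factor, the hypothesis gives $|\eta_k-\eta_{k+1}|\geq N^{-1-\delta}$. For the second, the geometric mean of two non-negative numbers is at least their minimum. Hence
\begin{align*}
\sqrt{|P^\prime(\eta_k)P^\prime(\eta_{k+1})|}\;\geq\;\min\{|P^\prime(\eta_k)|,|P^\prime(\eta_{k+1})|\}\;\geq\;\min_{j\in\intn{1}{N}}|P^\prime(\eta_j)|\, .
\end{align*}

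Multiplying the two lower bounds gives $|P(\zeta_k)|\geq \frac14 N^{-1-\delta}\min_j|P^\prime(\eta_j)|$ for every $k\in\intn{1}{N-1}$. Since the right-hand side does not depend on $k$, taking the minimum over $k$ on the left yields the claimed inequality.

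No real obstacle remains, because all the substance sits in Proposition \ref{Pzetalowerbound2}. The one point to check is that its proof applies to every $k\in\intn{1}{N-1}$. It does: it only uses that $\zeta_k$ maximises $|P|$ on $[\eta_k;\eta_{k+1}]$, and this follows from the interlacing $\eta_k<\zeta_k<\eta_{k+1}$ established in Proposition \ref{etalambdaequivalence}.
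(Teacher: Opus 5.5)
Your proposal is correct and is essentially the paper's argument: the corollary is stated without a separate proof, as an immediate consequence of Proposition \ref{Pzetalowerbound2}. One lower-bounds $|\eta_k-\eta_{k+1}|$ by $N^{-1-\delta}$ and $\sqrt{|P^\prime(\eta_k)P^\prime(\eta_{k+1})|}$ by $\min_{j\in\intn{1}{N}}|P^\prime(\eta_j)|$, then takes the minimum over $k$. Your check that $\zeta_k$ maximises $|P|$ on $[\eta_k;\eta_{k+1}]$ is the same Rolle-and-root-counting fact the paper relies on.
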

\begin{lemme}
\label{Lemme espacement racine lambda pm et zeta j}
The following inequalities hold.
\begin{alignat*}{2}
\frac{1}{N} |\lambda_j^\pm - \zeta_j | &\leq |\lambda_{j+1}^\pm - \zeta_j | &&\leq N |\lambda_j^\pm - \zeta_j | \, , \\
\frac{1}{N} |\zeta_k - \eta_{k+1}| &\leq  \; \;   |\zeta_k - \eta_k| &&\leq N |\zeta_k - \eta_{k+1}|  \; .
\end{alignat*}
\end{lemme}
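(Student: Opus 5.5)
We are to prove Lemma \ref{Lemme espacement racine lambda pm et zeta j}: for a real-rooted degree-$N$ polynomial, a critical point $\zeta_j$ lying between two consecutive roots has distance ratio to these roots bounded by $N$. The argument is the same for the pairs $(\lambda_j^\pm,\lambda_{j+1}^\pm)$, roots of $P^\pm$, and $(\eta_k,\eta_{k+1})$, roots of $P$. The reason is that $P$, $P^+$, $P^-$ differ by constants, so they all have derivative $P'$. Hence $\zeta_j$ is also the critical point of $P^\pm$ between its consecutive roots $\lambda_j^\pm<\lambda_{j+1}^\pm$. This uses the interlacing established in Proposition \ref{etalambdaequivalence}, under the standing assumption that $\bs{\la}_N^+\in\mc{A}_N$ so that all roots are real and simple. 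So the plan is to prove one abstract statement: if $Q(x)=\prod_{a=1}^N(x-x_a)$ has $x_1<\dots<x_N$ and $\zeta\in\,]x_k;x_{k+1}[$ with $Q'(\zeta)=0$, then $\tfrac1N|\zeta-x_k|\le|\zeta-x_{k+1}|\le N|\zeta-x_k|$.

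To prove this, I would write the logarithmic derivative at $\zeta$:
\[
0=\frac{Q'(\zeta)}{Q(\zeta)}=\sum_{a=1}^N\frac{1}{\zeta-x_a}.
\]
I then split the sum according to which side of $\zeta$ each root lies on. The roots $x_a$ with $a\le k$ lie to the left and contribute positive terms; the largest of these is $1/(\zeta-x_k)$, and every other term is at most this value. The roots $x_a$ with $a\ge k+1$ contribute negative terms, the largest in absolute value being $1/(x_{k+1}-\zeta)$. Balancing the positive and negative parts gives
\[
\frac{1}{\zeta-x_k}\le\sum_{a\le k}\frac{1}{\zeta-x_a}=\sum_{a\ge k+1}\frac{1}{x_a-\zeta}\le\frac{N-k}{x_{k+1}-\zeta}\le\frac{N}{x_{k+1}-\zeta}.
\]
This yields $x_{k+1}-\zeta\le N(\zeta-x_k)$. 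Exchanging the roles of the two sides gives $\zeta-x_k\le k(x_{k+1}-\zeta)\le N(x_{k+1}-\zeta)$, which together give both inequalities.

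The only point needing care is to check which pairs the lemma applies to in each case. The critical point $\zeta_j$ must be the one located between $\lambda_j^\pm$ and $\lambda_{j+1}^\pm$, and likewise $\zeta_k\in\,]\eta_k;\eta_{k+1}[$. Both placements follow from the ordering $\lambda_k^\pm<\zeta_k<\lambda_{k+1}^\pm$ and $\eta_k<\zeta_k<\eta_{k+1}$ derived in the proof of Proposition \ref{etalambdaequivalence}, together with the count of $N-1$ critical points given by Rolle's theorem. With that settled, the abstract inequality applies directly to $Q=P^\pm$ and to $Q=P$, and no other obstacle is expected.
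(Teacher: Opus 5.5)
Your proof is correct and follows the paper's own argument. Both evaluate the logarithmic derivative $\sum_a (\zeta_j - x_a)^{-1}=0$ at the critical point, bound one side's sum below by its nearest-root term and the other side's sum above by $N$ times its nearest-root term, swap the roles for the reverse inequality, and apply this verbatim to $P^\pm$ and to $P$. Your bookkeeping even yields the slightly sharper constants $N-k$ and $k$. The placement $\zeta_j\in\,]x_j;x_{j+1}[$ follows from Rolle's theorem applied separately to each real-rooted polynomial, so you do not really need Proposition \ref{etalambdaequivalence}.
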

\begin{proof}
Since $\zeta_j$ is a simple root of $(P^\pm)^\prime$, one has
\begin{equation}\label{simple}
0 = \frac{(P^\pm)^\prime(\zeta_j)}{P^\pm(\zeta_j)} = \sul{k=1}{N} \frac{1}{\zeta_j - \lambda_k^\pm}\,.\end{equation}
 Then
\begin{equation}\label{simple2}\frac{1}{|\zeta_j - \lambda_j^\pm|} \leq \sum_{k=1}^j \frac{1}{|\zeta_j - \lambda_k^\pm|} = \sum_{k=j+1}^N \frac{1}{|\zeta_j - \lambda_k^\pm|}\leq N
\frac{1}{|\zeta_j - \lambda_{j+1}^\pm|}  \, .\end{equation}
A similar argument with reverse inequalities yields the other inequality.
The case of the roots $\eta_k$ is dealt with analogously.

\end{proof}

\begin{lemme}\label{reciprocalbound} If $\ups_j = -\vsg $, $\vsg \in \{\pm\}$, then one has the upper bound
$$\sul{i=j+1}{N} \frac{ |\lambda_j^{-\vsg} - \lambda_j^{\vsg} |}{\lambda_i^{\vsg} - \lambda_j^{-\vsg} }
+ \sum_{i=1}^{j} \frac{|\lambda_{j+1}^{-\vsg} - \lambda_{j+1}^{\vsg} | }{ \lambda_{j+1}^{-\vsg} - \lambda_i^{\vsg} } \,  \leq N  \, \, .$$
\end{lemme}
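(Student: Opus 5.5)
The plan is to bound the two sums separately, the first by $j$ and the second by $N-j$, using only the logarithmic derivative of $P^{\vsg}$ and the ordering of the roots. This mirrors the argument of Lemma \ref{Lemme espacement racine lambda pm et zeta j}.

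\textbf{Ordering of the roots.} The hypothesis $\ups_j=-\vsg$ means the integration interval for $\mu_j$ is $[\lambda_j^{-\vsg};\lambda_{j+1}^{-\vsg}]$. By the interlacing \eqref{interlacing}, equivalently the proof of Proposition \ref{etalambdaequivalence}, $P^{\vsg}$ keeps a fixed sign on $]\lambda_j^{\vsg};\lambda_{j+1}^{\vsg}[$, and $P^{-\vsg}=P^{\vsg}\pm4\veps_N$ has both its roots $\lambda_j^{-\vsg},\lambda_{j+1}^{-\vsg}$ inside this interval, on either side of the unique critical point $\zeta_j$:
\[
\lambda_j^{\vsg}<\lambda_j^{-\vsg}<\zeta_j<\lambda_{j+1}^{-\vsg}<\lambda_{j+1}^{\vsg}.
\]
The function $|P^{\vsg}|$ is strictly increasing on $]\lambda_j^{\vsg};\zeta_j[$ and strictly decreasing on $]\zeta_j;\lambda_{j+1}^{\vsg}[$. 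Moreover $P^{\vsg}$ does not vanish at $\lambda_j^{-\vsg}$ or $\lambda_{j+1}^{-\vsg}$, since its value there is $\mp4\veps_N$.

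\textbf{First sum.} Set $b=\lambda_j^{-\vsg}$. Because $|P^{\vsg}|$ is increasing at $b$, its logarithmic derivative is positive there:
\[
\frac{(P^{\vsg})'(b)}{P^{\vsg}(b)}=\sum_{i=1}^N\frac{1}{b-\lambda_i^{\vsg}}>0
\quad\Longrightarrow\quad
\sum_{i=j+1}^N\frac{1}{\lambda_i^{\vsg}-b}<\sum_{i=1}^{j}\frac{1}{b-\lambda_i^{\vsg}}.
\]
Multiply by $\delta_j=b-\lambda_j^{\vsg}>0$. For every $i\le j$ one has $b-\lambda_i^{\vsg}\ge b-\lambda_j^{\vsg}=\delta_j$, so each term on the right is at most $1$ after multiplication. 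Hence the first sum in the statement is at most $j$.

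\textbf{Second sum.} Set $b'=\lambda_{j+1}^{-\vsg}>\zeta_j$. Now $|P^{\vsg}|$ is decreasing at $b'$, so the logarithmic derivative is negative there, which gives
\[
\sum_{i=1}^{j}\frac{1}{b'-\lambda_i^{\vsg}}<\sum_{i=j+1}^N\frac{1}{\lambda_i^{\vsg}-b'}.
\]
Multiply by $\delta_{j+1}=\lambda_{j+1}^{\vsg}-b'$. For $i\ge j+1$ one has $\lambda_i^{\vsg}-b'\ge\delta_{j+1}$, so each of the $N-j$ terms on the right is at most $1$. Hence the second sum is at most $N-j$.

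Adding the two bounds gives at most $N$, which is the claim. The only point needing care is the ordering $\lambda_j^{\vsg}<\lambda_j^{-\vsg}<\zeta_j<\lambda_{j+1}^{-\vsg}<\lambda_{j+1}^{\vsg}$ and the resulting monotonicity of $|P^{\vsg}|$, which fix the sign of the logarithmic derivative in each case. Both follow from the root-counting argument already carried out in Proposition \ref{etalambdaequivalence}; everything after that is elementary.
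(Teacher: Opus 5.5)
Your proof is correct and is essentially the paper's argument: both bound the first sum by $j$ and the second by $N-j$ using the partial-fraction expansion of $(P^{\vsg})'/P^{\vsg}$ together with the ordering $\lambda_j^{\vsg}<\lambda_j^{-\vsg}<\zeta_j<\lambda_{j+1}^{-\vsg}<\lambda_{j+1}^{\vsg}$. The only difference is the evaluation point. The paper first enlarges each term by replacing $\lambda_j^{-\vsg}$ and $\lambda_{j+1}^{-\vsg}$ in the denominators by $\zeta_j$, and then uses the exact balance coming from $(P^{\vsg})'(\zeta_j)=0$; you instead use the sign of the logarithmic derivative directly at $\lambda_j^{-\vsg}$ and $\lambda_{j+1}^{-\vsg}$, which skips that comparison step.
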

\begin{proof}
Let $\zeta_1 < \dots < \zeta_{N-1}$ be the roots of $(P^+)^\prime = (P^-)^\prime$. Then by the interlacing properties of the roots we have
\begin{align*}
\sum_{i=j+1}^{N} \frac{|\lambda_j^{-\vsg} - \lambda_j^{\vsg}|}{\lambda_i^{\vsg} - \lambda_j^{-\vsg} }
+ \sum_{i=1}^{j} \frac{|\lambda_{j+1}^{-\vsg} - \lambda_{j+1}^{\vsg}|}{\lambda_{j+1}^{-\vsg} - \lambda_i^{\vsg} }
\, \leq  \, \sum_{i=j+1}^{N} \frac{|\lambda_j^{-\vsg} - \lambda_j^{\vsg}|}{\lambda_i^{\vsg} - \zeta_j }
+ \sum_{i=1}^{j} \frac{|\lambda_{j+1}^{-\vsg} - \lambda_{j+1}^{\vsg}|}{\zeta_{j} - \lambda_i^{\vsg} } \, .
\end{align*}
Then from \eqref{simple}, we have
\begin{equation}\label{simple3}
\sum_{k=1}^j \frac{1}{|\zeta_j - \lambda_k^{\vsg}|} = \sum_{k=j+1}^N \frac{1}{|\zeta_j - \lambda_k^{\vsg}|}\,.
\end{equation}
Hence our quantity is bounded by
$$\sum_{i=1}^{j} \underbrace{\frac{|\lambda_j^{-\vsg} - \lambda_j^{\vsg}|}{\zeta_j - \lambda_i^{\vsg}  } }_{\leq 1} + \sum_{i=j+1}^{N}
\underbrace{\frac{|\lambda_{j+1}^{-\vsg} - \lambda_{j+1}^{\vsg}|}{ \lambda_i^{\vsg} - \zeta_{j}  }}_{\leq 1} \leq j + (N-j) = N \, .$$
\end{proof}

\begin{lemme}
\label{Lemme borne sup somme lambda pm pour borne des Bj sigma}
Given $\vsg\in \{\pm\}$ and $j \in \intn{1}{N-2}$, one has the upper bound
$$\om_j^{\vsg} \,= \,  \mathbbm{1}_{\{ \ups_j = -\vsg\}} \sum_{i=1}^j  \frac{|\lambda_{j+1}^{\vsg} - \lambda_{j+1}^{-\vsg} |}{\lambda_{j+1}^{-\vsg} - \lambda_{i}^{\vsg}}
+  \mathbbm{1}_{\{ \ups_{j+1}= -\vsg\}}
\sum_{i=j+2}^N \frac{|\lambda_{j+1}^{-\vsg} - \lambda_{j+1}^{\vsg}|}{\lambda_i^{\vsg} - \lambda_{j+1}^{-\vsg} }
\leq N+1 \, .$$
\end{lemme}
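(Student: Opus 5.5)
The plan is to mirror the proof of Lemma \ref{reciprocalbound}: treat the two sums separately, each being nonzero only under its indicator. Bounding each by a term-count, and then using the partial-fraction identity \eqref{simple3} to move between sums, should give a total of at most $N+1$.

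\emph{First sum}, present only when $\ups_j=-\vsg$. The interval of $\mu_j$ has endpoints $\lambda_j^{-\vsg}$ and $\lambda_{j+1}^{-\vsg}$, and by the interlacing \eqref{interlacing} the gap $[\lambda_{j+1}^{-\vsg};\lambda_{j+1}^{\vsg}]$ lies to the right of $\zeta_j$. Hence for $i\le j$ one has $\lambda_{j+1}^{-\vsg}-\lambda_i^{\vsg}\ge \zeta_j-\lambda_i^{\vsg}>0$, so each term is at most $|\lambda_{j+1}^{\vsg}-\lambda_{j+1}^{-\vsg}|/(\zeta_j-\lambda_i^{\vsg})$. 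By \eqref{simple3},
\[
\sum_{i=1}^{j}\frac{1}{\zeta_j-\lambda_i^{\vsg}}=\sum_{i=j+1}^{N}\frac{1}{\lambda_i^{\vsg}-\zeta_j}.
\]
Each term on the right is at most $1/|\lambda_{j+1}^{\vsg}-\lambda_{j+1}^{-\vsg}|$, because $\lambda_i^{\vsg}-\zeta_j\ge\lambda_{j+1}^{\vsg}-\zeta_j\ge|\lambda_{j+1}^{\vsg}-\lambda_{j+1}^{-\vsg}|$, the gap lying between $\zeta_j$ and $\lambda_{j+1}^{\vsg}$. The first sum is therefore at most $N-j$.

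\emph{Second sum}, present only when $\ups_{j+1}=-\vsg$. This is the mirror configuration around $\zeta_{j+1}$: now $\lambda_{j+1}^{-\vsg}$ is the left endpoint of the $\mu_{j+1}$ interval, so the gap $[\lambda_{j+1}^{\vsg};\lambda_{j+1}^{-\vsg}]$ lies to the left of $\zeta_{j+1}$. For $i\ge j+2$ one has $\lambda_i^{\vsg}-\lambda_{j+1}^{-\vsg}\ge\lambda_i^{\vsg}-\zeta_{j+1}$. Applying \eqref{simple3} at $\zeta_{j+1}$, the sum is transferred to $\sum_{i\le j+1}1/(\zeta_{j+1}-\lambda_i^{\vsg})$. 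Each of these terms is at most $1/|\lambda_{j+1}^{-\vsg}-\lambda_{j+1}^{\vsg}|$, since $\zeta_{j+1}-\lambda_i^{\vsg}\ge\zeta_{j+1}-\lambda_{j+1}^{\vsg}\ge$ the gap width. This yields at most $j+1$.

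Adding the two bounds gives $(N-j)+(j+1)=N+1$. The main point needing care is the geometric orientation: one must check that under each indicator condition the relevant gap really sits strictly between $\zeta$ and the root used for comparison. Since $\ups$ alternates in $j$, both indicators cannot hold for the same $\vsg$, so the bound in fact holds with room to spare; still, I would verify the orientation case by case ($N-j$ even or odd) using the interlacing \eqref{interlacing}, as in the proof of Lemma \ref{reciprocalbound}.
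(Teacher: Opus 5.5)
Your proposal is correct and follows essentially the paper's own argument. You replace each denominator by the distance from $\lambda_i^{\vsg}$ to $\zeta_j$ (resp.\ $\zeta_{j+1}$), transfer the sum across the critical point with \eqref{simple3}, and bound each transferred term by $1$ because the interval between $\lambda_{j+1}^{+}$ and $\lambda_{j+1}^{-}$ lies between that critical point and $\lambda_{j+1}^{\vsg}$; this gives $(N-j)+(j+1)=N+1$. Your extra remark that the two indicators are mutually exclusive (since $\ups_{j+1}=-\ups_j$) is also correct and sharpens the bound to $\max\{N-j,\,j+1\}\le N-1$, though the paper does not need it.
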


\begin{proof} By the same reasoning as in the proof of Lemma \ref{reciprocalbound}
\begin{align*}
\om_j^{\vsg} &\leq \mathbbm{1}_{\{ \ups_j = -\vsg\}} \sum_{i=1}^j  \frac{|\lambda_{j+1}^{\vsg} - \lambda_{j+1}^{-\vsg} |}{\zeta_j - \lambda_{i}^{\vsg}}
+  \mathbbm{1}_{\{ \ups_{j+1}= -\vsg \}}  \sum_{i=j+2}^N \frac{|\lambda_{j+1}^{-\vsg} - \lambda_{j+1}^{\vsg}| }{ \lambda_i^{\vsg} - \zeta_{j+1} } \\
&\leq \mathbbm{1}_{\{ \ups_j = -\vsg \}} \sum_{i=j+1}^N  \frac{|\lambda_{j+1}^{\vsg} - \lambda_{j+1}^{-\vsg} |}{ \lambda_{i}^{\vsg} - \zeta_j }
+  \mathbbm{1}_{\{ \ups_{j+1}= -\vsg \}}  \sum_{i=1}^{j+1} \frac{|\lambda_{j+1}^{-\vsg} - \lambda_{j+1}^{\vsg}|}
{\zeta_{j+1} - \lambda_i^{\vsg}  } \leq N+1 \, .
\end{align*}

\end{proof}

\section{Jacobian between the roots}

\begin{prop}\label{openness}
Let $t \in \mathbb{R}$, $\boldsymbol{x}_N \in \R^{N}_{<} $
and denote by $P_{\boldsymbol{x}_N}(\lambda) = \prod_{k=1}^N (\lambda- x_k)$ the monic polynomial of degree $N$ whose $N$ distinct real roots
are given by the coordinates of $\bs{x}_N$. Then
$$\mathcal{A}_N(t) := \big\{ \boldsymbol{x}_N \in \mathbb{R}^N_{<} \, : \,
P_{\boldsymbol{x}_N} - t \text{ has } N \text{distinct real roots} \big\} $$
is an open subset of the Weyl chamber $\R^{N}_{<}$.
\end{prop}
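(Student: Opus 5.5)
The plan is to give two characterisations, one via the roots and one via the critical values, and to use the second. For $\boldsymbol{x}_N \in \mathbb{R}^N_{<}$, let $\zeta_1(\boldsymbol{x}_N) < \dots < \zeta_{N-1}(\boldsymbol{x}_N)$ be the zeros of $P^\prime_{\boldsymbol{x}_N}$. These zeros are simple and strictly interlace with the $x_k$, by Rolle's theorem and a count of zeros.

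The first step is to show the characterisation
\begin{align*}
\boldsymbol{x}_N \in \mathcal{A}_N(t) \iff \text{for each } k, \ t \text{ lies strictly between } 0 \text{ and } \text{sign-appropriately beyond } P_{\boldsymbol{x}_N}(\zeta_k).
\end{align*}
Precisely, $(-1)^{N-k}\bigl(P_{\boldsymbol{x}_N}(\zeta_k) - t\bigr) > 0$ for every $k \in \intn{1}{N-1}$. The argument follows the proof of Proposition~\ref{etalambdaequivalence}.

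For necessity, suppose $P_{\boldsymbol{x}_N}-t$ has $N$ distinct real roots. Then its derivative $P^\prime_{\boldsymbol{x}_N}$ has exactly one zero strictly between consecutive roots, and these zeros are the $\zeta_k$. At $\zeta_k$ the value $P_{\boldsymbol{x}_N}(\zeta_k)-t$ must be nonzero, since otherwise that root would be a double root. Its sign is then forced to alternate as $(-1)^{N-k}$, because $P_{\boldsymbol{x}_N}-t$ is monic of degree $N$.

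For sufficiency, the alternating signs at $\zeta_1,\dots,\zeta_{N-1}$, together with the behaviour $\pm\infty$ at $\pm\infty$, give $N$ sign changes. By the intermediate value theorem this produces $N$ distinct real roots.

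The second step is continuity of the $\zeta_k$. Each $\zeta_k(\boldsymbol{x}_N)$ is a simple root of $P^\prime_{\boldsymbol{x}_N}$, whose coefficients are polynomial in $\boldsymbol{x}_N$. By the implicit function theorem, since $P^{\prime\prime}_{\boldsymbol{x}_N}(\zeta_k)\neq 0$ by simplicity, each $\zeta_k$ depends smoothly on $\boldsymbol{x}_N$ in a neighbourhood of any point of $\mathbb{R}^N_{<}$. Continuity of roots would also suffice here. Hence $\boldsymbol{x}_N \mapsto (-1)^{N-k}\bigl(P_{\boldsymbol{x}_N}(\zeta_k(\boldsymbol{x}_N)) - t\bigr)$ is continuous on the open set $\mathbb{R}^N_{<}$. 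The set $\mathcal{A}_N(t)$ is a finite intersection of preimages of $]0;+\infty[$ under these maps, so it is open in $\mathbb{R}^N_{<}$.

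The only delicate point is the labelling of the $\zeta_k$: they must remain ordered and simple throughout a neighbourhood. This holds automatically, since every $\boldsymbol{x}_N\in\mathbb{R}^N_{<}$ yields $N-1$ simple, strictly interlacing critical points.
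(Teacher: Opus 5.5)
Your proof is correct and follows essentially the same route as the paper: characterise membership in $\mathcal{A}_N(t)$ through the critical values of $P_{\boldsymbol{x}_N}$, then apply the implicit function theorem to the simple zeros of $P'_{\boldsymbol{x}_N}$ to get local continuity of these critical values. Your uniform condition $(-1)^{N-k}\bigl(P_{\boldsymbol{x}_N}(\zeta_k)-t\bigr)>0$ for all $k$ is a sign-symmetric repackaging of the paper's condition $t<P_{\boldsymbol{x}_N}(\theta_{N-2k})$ for $t>0$ (the remaining inequalities hold automatically there), so you simply avoid the paper's separate treatment of $t>0$, $t<0$ and $t=0$.
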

\begin{proof}
The case $t = 0$ is trivial. We shall discuss the proof when $t > 0$, the $t < 0$ case can be treated in the same way.
If $\mathcal{A}_N(t)=\emptyset$, then there is nothing more to do. Thus, let $\bs{x}_N \in \mathcal{A}_N(t)$.
Having simple roots, $P_{\boldsymbol{x}_N}$ alternates in sign on the intervals between the roots, so that  $P_{\boldsymbol{x}_N}(\lambda) > 0$
on
\beq
\intoo{-\infty}{x_1} \, \bigcup\limits_{p=1}^{\tf{N}{2} -1 } \, \intoo{ x_{N-2p} }{ x_{N-2p+1}  } \, \bigcup  \, \intoo{x_N}{+\infty}\, ,
\enq
resp.
\beq
\bigcup\limits_{p=1}^{\tf{(N-1)}{2} } \, \intoo{ x_{N-2p} }{ x_{N-2p+1}  } \, \bigcup  \, \intoo{x_N}{+\infty} \;,
\enq
for $N$-even, resp. $N$-odd. Thus, the real roots of $ P_{\boldsymbol{x}_N}-t$ belong only to these domains.
There is always a pair of
roots in $\intoo{-\infty}{x_1}$ and $\intoo{x_N}{+\infty}$ for $N$ even, and a single root in $\intoo{x_N}{+\infty}$ for $N$ odd.
Thus, in order to have $N$ simple roots, it follows that $N-2$, resp. $N-1$, of these have to belong to the union of bounded
intervals above. Now, denote by $\th_1(\bs{x}_N) <  \dots < \th_{N-1}(\bs{x}_N)$  the simple zeros of $P_{\boldsymbol{x}_N}^\prime$ which
interlace with the original roots as $x_k < \th_k(\bs{x}_N) < x_{k+1}$ for all $k \in \intn{ 1 }{ \lfloor \tfrac{N-1}{2} \rfloor  }$.
By the intermediate value theorem, $P_{\boldsymbol{x}_N}-t$ will have a pair of distinct roots on the interval $]x_{N - 2k}, x_{N-2k+1}[$ if and
only if $t < P_{\boldsymbol{x}_N}(\th_{N-2k}(\bs{x}_N))$.
%
%
%
%
%
%
%
%
Now, the coefficients of $P_{\boldsymbol{y}_N}^{\prime}$ are smooth in $\bs{y}_N\in \mathbb{R}^N_{<} $. Since  $P_{\boldsymbol{x}_N}^{\prime}$
has only simple roots, by the implicit function theorem, there exists a small open neighbourhood $U$ of $\bs{x}_N$ in $\R^N_{<}$
such that $\bs{y}_N \mapsto \bs{\th}_{N-2k}(\bs{y}_N)$ is smooth in $U$. Thus, $\bs{y}_N \mapsto
P_{\boldsymbol{y}_N}\big( \th_{N-2k}(\bs{y}_N) \big)$ is smooth in $U$. This ensures that there exists an open neighbourhood
of $\bs{x}_N$ in $\R^N_{<}$ such that $t < P_{\boldsymbol{y}_N}\big( \th_{N-2k}(\bs{y}_N) \big)$ on it, thus proving that
$\mathcal{A}_N(t)$ is open.
\end{proof}

\begin{prop}\label{diffeo}
Fix $t  \in \mathbb{R}$, and let $\boldsymbol{x}_N \in \mathcal{A}_N(t)$. Let $\boldsymbol{y}_N^t(\bs{x}_N) \in \mathbb{R}^N_{<}$ be the roots of
$P_{\boldsymbol{x}_N}(\lambda)-t$. Then $\boldsymbol{y}_N^t(\bs{x}_N)  \in \mathcal{A}_N(-t)$ and
the map $ \boldsymbol{y}_N^t: \mathcal{A}_N(t) \to \mathcal{A}_N(-t)$ is a diffeomorphism onto.
\end{prop}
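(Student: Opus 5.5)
The plan is to argue directly with the polynomial picture, in three steps: well-definedness, the target set, then smoothness and bijectivity.

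First I show that $\boldsymbol{y}_N^t$ is well defined and smooth on $\mathcal{A}_N(t)$. For $\boldsymbol{x}_N\in\mathcal{A}_N(t)$, the polynomial $P_{\boldsymbol{x}_N}-t$ has $N$ simple real roots. Its coefficients depend polynomially on $\boldsymbol{x}_N$, and its derivative $P'_{\boldsymbol{x}_N}$ does not vanish at any of these simple roots. The implicit function theorem applied to $(\boldsymbol{x},\lambda)\mapsto P_{\boldsymbol{x}}(\lambda)-t$ at each root therefore shows that the ordered roots $\boldsymbol{y}_N^t(\boldsymbol{x}_N)$ depend smoothly on $\boldsymbol{x}_N$. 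This holds on the open set $\mathcal{A}_N(t)$ (Proposition \ref{openness}), and ordering is preserved because the roots stay distinct.

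Second, I check that the image lies in $\mathcal{A}_N(-t)$. By construction $P_{\boldsymbol{y}}=P_{\boldsymbol{x}}-t$, since both are monic with the same roots. Hence $P_{\boldsymbol{y}}+t=P_{\boldsymbol{x}}$, which has the $N$ distinct real roots $\boldsymbol{x}_N$. So $\boldsymbol{y}\in\mathcal{A}_N(-t)$. The same argument with $t\to -t$ gives a smooth map $\boldsymbol{y}_N^{-t}:\mathcal{A}_N(-t)\to\mathcal{A}_N(t)$, and the identity $P_{\boldsymbol{x}}=P_{\boldsymbol{y}}+t$ shows that $\boldsymbol{y}_N^{-t}\circ\boldsymbol{y}_N^{t}=\mathrm{id}$ and $\boldsymbol{y}_N^{t}\circ\boldsymbol{y}_N^{-t}=\mathrm{id}$. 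This uses only uniqueness of the ordered root vector of a polynomial. Thus $\boldsymbol{y}_N^t$ is a bijection with smooth inverse, i.e.\ a diffeomorphism onto $\mathcal{A}_N(-t)$.

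Third, for later use in Proposition \ref{jacobian}, I record the Jacobian. Differentiating $P_{\boldsymbol{x}}(y_a)=t$ gives
\[
\partial_{x_b}y_a=\frac{P_{\boldsymbol{x}}(y_a)}{(y_a-x_b)\,P'_{\boldsymbol{x}}(y_a)}=\frac{t}{(y_a-x_b)P'_{\boldsymbol{x}}(y_a)} .
\]
This is a Cauchy-type matrix, and its determinant evaluates to a ratio of Vandermondes $\Delta(\boldsymbol{x})/\Delta(\boldsymbol{y})$ up to sign. Strictly, this is not needed for the diffeomorphism claim, since invertibility already follows from the smooth inverse.

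The main point of care is the smoothness of the ordered root map: the implicit function theorem must be applied globally on $\mathcal{A}_N(t)$, so I need simplicity of all roots there, which holds by definition. Smoothness of the inverse comes for free from the symmetric argument. There is no serious obstacle beyond the bookkeeping with the Cauchy determinant.
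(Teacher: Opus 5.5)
Your proposal is correct and follows the paper's proof: the implicit function theorem gives smoothness, the identity $P_{\boldsymbol{y}}=P_{\boldsymbol{x}}-t$ places the image in $\mathcal{A}_N(-t)$, and $\boldsymbol{y}_N^{-t}$ serves as the inverse (the paper splits this into surjectivity via $\boldsymbol{y}_N^{-t}$ and injectivity via uniqueness of the ordered root vector). Your observation that $\boldsymbol{y}_N^{-t}$ is smooth by the same argument is what justifies the paper's phrase ``local diffeomorphism''; the optional Jacobian formula in your third step only makes sense for $t\neq 0$ (at $t=0$ the map is the identity), but nothing depends on it.
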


\begin{proof}

By the implicit function theorem, simple roots of a polynomial are smooth functions of its coefficients locally, thus ensuring that
$\boldsymbol{y}_N^t$ is a local diffeomorphism. The fact that $\boldsymbol{y}_N^t(\bs{x}_N)  \in \mathcal{A}_N(-t)$ is clear.
One thus needs to establish the global character of the diffeomorphism.

It is direct to see that given $\bs{u}_N \in \mc{A}_N(-t)$, $P_{ \boldsymbol{y}_N^{-t}(\bs{u}_N) }=P_{\bs{u}_N}+t$. Since
$\boldsymbol{y}_N^{-t}(\bs{u}_N) \in \mc{A}_N(t) $, one has that $\boldsymbol{y}_N^{t} \circ\boldsymbol{y}_N^{-t}(\bs{u}_N) \in \mc{A}_N(-t)$
and $P_{ \boldsymbol{y}_N^{t}\circ \boldsymbol{y}_N^{-t}(\bs{u}_N) }=P_{\bs{u}_N}$. Thus,
any $\bs{u}_N \in \mc{A}_N(-t) $ is given by the image of $\boldsymbol{y}_N^{-t}(\bs{u}_N) \in \mc{A}_N(t) $ under $\boldsymbol{y}_N^{t}$.
This entails surjectivity.

As for injectivity, assume that there exists $\bs{x}_N, \bs{x}_N^{\prime}$ such that there exists $\bs{y}_N \in \mc{A}_N(-t)$
satisfying $P_{\bs{x}_N}-t=P_{\bs{y}_N}= P_{\bs{x}_N^{\prime}}-t$. However, then, since $\bs{x}_N, \bs{x}_N^{\prime} \in \R^N_{<}$
and $P_{\bs{x}_N}=P_{\bs{x}_N^{\prime}}$, one has $\bs{x}_N =\bs{x}_N^{\prime}$, which entails injectivity.

\end{proof}

Propositions \ref{openness} and \ref{diffeo} ensure that the Jacobians 
of $\boldsymbol{\lambda}_N^\pm(\boldsymbol{\eta}_N)$ and
of$\boldsymbol{\lambda}_N^+(\boldsymbol{\lambda}_N^-)$ exist. We now compute those explicitly.

\begin{prop}[Jacobian for change of variables between sets of roots]\label{jacobian}

It holds
\begin{align}
&\Big| \det_N \Big[ \op{D}_{ \boldsymbol{\lambda}_N^- } \boldsymbol{\lambda}_N^+ \Big] \Big| \, =  \,
\frac{\Delta(\boldsymbol{\lambda}_N^-)}{\Delta(\boldsymbol{\lambda}_N^+)} ,
& &\Big| \det_N \Big[ \op{D}_{ \boldsymbol{\eta}_N } \boldsymbol{\lambda}_N^+ \Big] \Big|
= \frac{\Delta(\boldsymbol{\eta}_{N})}{\Delta(\boldsymbol{\lambda}_N^+)} \, .
\end{align}
\end{prop}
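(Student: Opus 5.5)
Both identities are instances of one computation: for a fixed constant $t$, the Jacobian of the map sending the ordered roots $\bs{x}_N$ of a monic polynomial $P_{\bs{x}_N}$ to the ordered roots $\bs{y}_N$ of $P_{\bs{x}_N}-t$. By Propositions \ref{openness} and \ref{diffeo} this map is a smooth diffeomorphism between the relevant open sets, so it suffices to compute the derivative pointwise. For $\bs{\lambda}_N^+(\bs{\eta}_N)$ we take $P_{\bs{x}_N}=P$ with $t=2\veps_N$. For $\bs{\lambda}_N^+(\bs{\lambda}_N^-)$ we take $P_{\bs{x}_N}=P^-$ with $t=4\veps_N$, since $P^+=P^--4\veps_N$.

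The plan is to factor the map through the coefficient space. Let $\bs{c}(\bs{x}_N)=(c_0,\dots,c_{N-1})$ denote the non-leading coefficients of $P_{\bs{x}_N}$. The classical identity is
\[
\Big|\det_N\big[\op{D}_{\bs{x}_N}\bs{c}\big]\Big| \, = \, \Delta(\bs{x}_N) .
\]
To justify it, note that $\partial_{x_k} P_{\bs{x}_N}(\lambda) = -\prod_{j\neq k}(\lambda-x_j)$. The Jacobian matrix therefore has as columns the coefficient vectors of the Lagrange-type polynomials $\prod_{j\neq k}(\lambda-x_j)$. Evaluating these polynomials at the points $x_1,\dots,x_N$ gives a diagonal matrix with entries $P'_{\bs{x}_N}(x_k)$. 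Since evaluation is multiplication by the Vandermonde matrix, one obtains $\Delta(\bs{x}_N)\cdot|\det \op{D}\bs{c}| = \prod_k |P'_{\bs{x}_N}(x_k)| = \Delta(\bs{x}_N)^2$.

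Subtracting the constant $t$ only shifts $c_0$, so it is a translation in coefficient space with unit Jacobian. Hence
\[
\bs{y}_N \, = \, \bs{c}^{-1}\big(\bs{c}(\bs{x}_N)-t\,\bs{e}_1\big),
\]
and the chain rule gives
\[
\Big|\det_N\big[\op{D}_{\bs{x}_N}\bs{y}_N\big]\Big| \, = \, \frac{\Delta(\bs{x}_N)}{\Delta(\bs{y}_N)} .
\]
Here the local invertibility of $\bs{c}$ at $\bs{y}_N$ holds because its roots are distinct, which is the case on $\mc{A}_N(-t)$. Substituting $(\bs{x}_N,\bs{y}_N)=(\bs{\eta}_N,\bs{\lambda}_N^+)$ and $(\bs{\lambda}_N^-,\bs{\lambda}_N^+)$ yields the two claimed identities.

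The only mildly delicate step is the Vandermonde identity for $\op{D}\bs{c}$. As an alternative, one can differentiate the defining relation $P_{\bs{y}_N}(\lambda)=P_{\bs{x}_N}(\lambda)-t$ directly: evaluate at $\lambda=y_a$ and use $\partial_{x_k}P_{\bs{x}_N}(y_a) = -P_{\bs{x}_N}(y_a)/(y_a-x_k) = -t/(y_a-x_k)$. This gives
\[
\op{D}\bs{y} \, = \, \mathrm{diag}\big(1/P'_{\bs{y}}(y_a)\big)\,\big[\,t/(y_a-x_k)\,\big]_{a,k}.
\]
Its determinant can be computed with the Cauchy determinant formula together with the identity $\prod_{a,k}(y_a-x_k)=\prod_a(-t)$. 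This route reaches the same ratio, and it can serve as a cross-check on signs and constants.
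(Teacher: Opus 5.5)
Your proof is correct, and its main line takes a different route from the paper's.

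The paper works directly with the roots. It differentiates $P^-(\lambda_i^+)=4\varepsilon_N$ implicitly to get $\partial\lambda_i^+/\partial\lambda_j^- = \big[(P^-)'(\lambda_i^+)/P^-(\lambda_i^+)\big]^{-1}(\lambda_i^+-\lambda_j^-)^{-1}$. It then evaluates the determinant with the Cauchy determinant formula together with $\prod_i (P^+)'(\lambda_i^+)=\pm\Delta(\boldsymbol{\lambda}_N^+)^2$. This is exactly your ``alternative'' cross-check.

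Your primary argument instead factors through coefficient space. The identity $|\det \mathrm{D}\boldsymbol{c}|=\Delta$ says $\Delta(\boldsymbol{x}_N)\,\mathrm{d}\boldsymbol{x}_N=\mathrm{d}\boldsymbol{c}$. A constant shift is a translation of $c_0$, so $\Delta(\boldsymbol{\lambda}_N^+)\,\mathrm{d}\boldsymbol{\lambda}_N^+=\Delta(\boldsymbol{\lambda}_N^-)\,\mathrm{d}\boldsymbol{\lambda}_N^-=\Delta(\boldsymbol{\eta}_N)\,\mathrm{d}\boldsymbol{\eta}_N$.

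What this buys you:
\begin{itemize}
\item It dispenses with the Cauchy determinant entirely.
\item It explains conceptually why the Vandermonde ratio cancels under the change of variables $\boldsymbol{\lambda}_N^+\to\boldsymbol{\lambda}_N^-$ in the proof of Proposition~\ref{wldubc}. The Vandermonde-weighted measure on ordered roots is just Lebesgue measure on coefficients, which is translation invariant.
\end{itemize}
In exchange, the paper's route produces the Jacobian matrix entrywise and never leaves the root variables.

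There is one small slip in your cross-check. Since $\prod_k(y_a-x_k)=P_{\boldsymbol{x}_N}(y_a)=t$, you get $\prod_{a,k}(y_a-x_k)=t^N$, not $(-t)^N$. This is immaterial here because only absolute values are claimed.
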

\begin{proof}
The proof for both claims is the same, so that we only focus on the first identity.
By definition we have $P^-(\lambda_i^+) = 4\veps_N$ or equivalently
$$\sum_{k=1}^N \ln |\lambda_i^+ - \lambda_k^-| = \ln(4\veps_N) \, .$$
Differentiating with respect to $\lambda_j^-$ yields
$$-\frac{1}{ \lambda_i^+ - \lambda_j^-} + \sum_{k=1}^N \frac{1}{\lambda_i^+ - \lambda_k^-} \frac{\partial \lambda_i^+}{\partial \lambda_j^-} = 0  \, . $$
Thus, upon denoting $Q_i := \sul{k=1}{N}  \frac{1}{\lambda_i^+ - \lambda_k^-}\, = \, \frac{(P^-)^\prime(\lambda_i^+)}{P^-(\lambda_i^+)} $
one has
$$\frac{\partial \lambda_i^+}{\partial \lambda_j^-} = \frac{1}{Q_i}\frac{1}{ \lambda_i^+ - \lambda_j^-} \, .$$
Hence
\begin{equation}\label{jaco1} \det_N \Big[ \op{D}_{ \boldsymbol{\lambda}_N^- } \boldsymbol{\lambda}_N^+ \Big]
\, = \,  \pl{i=1}{N}\Big\{ Q_i \Big\}^{-1}  \, \det_N \Big[ \frac{1}{ \lambda_i^+ - \lambda_j^-} \Big] \, .\end{equation}
The claim then follows upon  observing that since $(P^+)^\prime = (P^-)^\prime$,
$$\pl{i=1}{N} (P^-)^\prime(\lambda_i^+) = (-1)^{\frac{N(N-1)}{2}}\Delta(\boldsymbol{\lambda}_N^+)^2$$
and using the Cauchy determinant formula 
$$
\det_N \Big[ \frac{1}{ \lambda_i^+ - \lambda_j^-} \Big] \, = \,  (-1)^{\frac{N(N-1)}{2}}
\Delta(\boldsymbol{\lambda}_N^+)\Delta(\boldsymbol{\lambda}_N^-) \pl{a=1}{N}  \Big\{P^-(\lambda_a^+)  \Big\}^{-1}\,.
$$
%
%
%
%
%
%
%
%
%
%
%
\end{proof}





\section{Bounding the integral $C_N^{(p)}[U]$}





In this section we establish a suitable upper bound on the $N$-fold integral introduced in \eqref{definition integrale multiple CNp}.
First, however, we need an auxiliary lemma

\begin{lemme}[Chernoff bound]\label{chernoff}

Let $\{X_i\}_{i=1}^{n-1}$ be a collection of $n-1$ random uniformly distributed iid variables on $\intff{0}{1}$. Then, for any  $\xi \geq 1$,
it holds
$$\mathbb{P}\bigg[ \sum_{j=1}^{n-1} \big|\ln X_i \big| \geq (n-1)\xi \bigg] \,  \leq  \, \big( \xi \mathrm{e}^{1- \xi} \big)^{n-1}\, .$$
\end{lemme}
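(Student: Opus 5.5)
This is the classical Chernoff argument, using the exponential moment of $|\ln X|$ for $X$ uniform on $\intff{0}{1}$. First compute, for $0\le t<1$,
\begin{align*}
\mathbb{E}\big[\mathrm{e}^{t|\ln X|}\big]=\Int{0}{1} x^{-t}\,\mathrm{d}x=\frac{1}{1-t}\,.
\end{align*}
The point is that $|\ln X|$ is exponentially distributed with parameter $1$. Indeed $\mathbb{P}[|\ln X|\ge s]=\mathbb{P}[X\le \mathrm{e}^{-s}]=\mathrm{e}^{-s}$.

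Next, for any $t\in\intfo{0}{1}$, apply Markov's inequality to $\exp\{t\sum_i|\ln X_i|\}$ and use independence to factorise the expectation. Writing $n-1=m$, this gives
\begin{align*}
\mathbb{P}\Big[\sum_{i=1}^{m}|\ln X_i|\ge m\xi\Big]\le \mathrm{e}^{-tm\xi}\,(1-t)^{-m}=\Big(\frac{\mathrm{e}^{-t\xi}}{1-t}\Big)^{m}.
\end{align*}

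The last step is to optimise over $t$. The exponent $-t\xi-\ln(1-t)$ is minimised at $t=1-\xi^{-1}$. This value lies in $\intfo{0}{1}$ precisely because $\xi\ge1$, which is where the hypothesis on $\xi$ is used. Substituting it gives $\mathrm{e}^{-(\xi-1)}\,\xi=\xi\,\mathrm{e}^{1-\xi}$ inside the $m$-th power, which is the claimed bound.

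No real obstacle is expected. The only point to check is that the optimal $t$ is admissible, which is exactly the condition $\xi\ge1$. For $\xi=1$ one may simply take $t=0$, and the bound reduces to the trivial bound $1$.
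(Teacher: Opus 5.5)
Your proposal is correct and follows the paper's proof exactly: Markov's inequality applied to $\mathrm{e}^{\alpha\sum_i|\ln X_i|}$, the moment identity $\mathbb{E}[\mathrm{e}^{\alpha|\ln X|}]=\frac{1}{1-\alpha}$ for $0\le\alpha<1$, and the choice $\alpha=1-\xi^{-1}$, which is admissible precisely because $\xi\geq 1$. Your added remark that $|\ln X|$ is exponentially distributed with parameter $1$, and your verification that this $\alpha$ is the optimiser, are accurate but not needed, since the paper simply substitutes this value.
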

\begin{proof} For $0 \leq \alpha < 1$, by Markov's inequality
\begin{align*}
\mathbb{P}\bigg[ \sum_{j=1}^{n-1} \big|\ln X_i \big| \geq (n-1)\xi \bigg]  &= \mathbb{P}\bigg[ \mathrm{e}^{\alpha \sum_{j=1}^{n-1} |\ln X_i| }  \geq \mathrm{e}^{\alpha (n-1)\xi } \bigg] \leq \bigg( \frac{\mathbb{E}[\mathrm{e}^{\alpha |\ln X | }] } {\mathrm{e}^{\alpha \xi}}\bigg)^{n-1} \, .
\end{align*}
A straightforward calculation shows that $\mathbb{E}[\mathrm{e}^{\alpha |\ln X | } ]  = \frac{1}{1-\alpha}$ (for $0 \leq \alpha < 1$). Then, setting
$\alpha = 1 - \xi^{-1}$ allows one to conclude.
\end{proof}

The proof of the upper bound, will  rely on the following concept of $\eps$-cluster.

\begin{defin}[$\epsilon$-cluster structure] Let $\eps>0$ and $\bs{n}_K=(n_1, \dots, n_K)\in (\mathbb{N}^*)^K$ with $\ov{\bs{n}}_K=\sum_{k=1}^K n_k = N$. One says
that $\bs{x}_N \in \R^{N}_{<}$ has the $\epsilon$-cluster $\bs{n}_K$  if the following is true.
Define $i_1=1$ and $i_k = n_1 + \dots + n_{k-1}+1$ for $k\geq 2$, so that $x_{i_k}$ is the leftmost particle in the $k^{\e{th}}$ cluster.

\begin{enumerate}
\item For all $k = 1, \dots, K$,  $x_{i_k + j} - x_{i_k + j-1} \leq \epsilon$ for all $j = 1, \dots, n_k -1$.
\item For all $k = 2, \dots, K$,  $x_{i_{k}}  - x_{i_{k} -1} > \epsilon$.
\end{enumerate}
We let $\mc{E}_{\eps}( \bs{n}_K)$ be the set of $\bs{x}_N \in \R^{N}_{<}$ that have $\epsilon$-cluster $\bs{n}_K$.

\end{defin}

It is easy to see that every element of $\R^{N}_{<}$ has exactly one $\eps$-cluster structure, so that the sets $\mc{E}_{\eps}( \bs{n}_K)$ partition $\R^{N}_{<}$.

\begin{prop}\label{CNbound}
Let $p \geq 1$, $U\in \mc{C}^0(\R)$ be such that  $\Int{\mathbb{R}}{}\mathrm{e}^{-U(x)} \, \mathrm{d}x = 1$.
 Then, upon fixing the notation $\mc{L}n_0(x)= \ln \big( x  \wedge 1\big) $ for $x \geq 0$, one has that the $N$-fold integral
$$
C_N^{(p)} [U] = \Int{\mathbb{R}^N }{}
\varpi_p(\bs{x}_N)
\pl{k=1}{N} \mathrm{e}^{- U(x_k)}\, \mathrm{d}\boldsymbol{x}_N \;, \quad
\varpi_p(\bs{x}_N) \, = \, \pl{j=1}{N} \bigg\{ 1 + \frac{1}{ N^{\frac{5}{4}}} \sum_{\substack{i=1 \\ i \neq j}}^N \mc{L}n_0^2|x_i - x_j| \bigg\}^p
$$
admits the upper bound $C_N^{(p)} [U] \leq \mathrm{e}^{C N^{\frac{3}{4}}\ln^2 N}$ for some $C>0$ and any $N$ large enough. $C$ may depend on $p$ and $U$.
\end{prop}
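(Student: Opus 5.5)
We plan to use the $\eps$-cluster decomposition with $\eps = N^{-a}$ for a suitable $a>0$ (tentatively $a=1$). By symmetry of $\varpi_p$ we reduce the integral to $N!$ times the integral over $\R^N_<$, and then split $\R^N_<=\bigsqcup_{\bs{n}_K}\mc{E}_\eps(\bs{n}_K)$. Since $\mc{L}n_0$ vanishes at distances $\geq 1$, pairs with $|x_i-x_j|>\eps$ contribute at most $\ln^2\eps = a^2\ln^2 N$ each. This gives
$$
\frac{1}{N^{5/4}}\sum_{i\neq j}\mc{L}n_0^2|x_i-x_j| \le N^{-1/4}a^2\ln^2N + \frac{1}{N^{5/4}}\sum_{i \sim j}\mc{L}n_0^2|x_i-x_j|,
$$
where $i\sim j$ means $i,j$ lie in the same cluster. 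After taking the product over $j$, the first term costs at most $\exp(pN^{3/4}a^2\ln^2N)$, which is exactly the target order. It therefore remains to show that the intra-cluster contributions, summed over cluster structures, cost at most $\mathrm{e}^{CN^{3/4}\ln^2N}$.

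Inside a cluster of size $n$, consecutive gaps are at most $\eps$, so $|x_i-x_j|\le n\eps$. We bound $\mc{L}n_0^2|x_i-x_j|$ by the square of $|\ln|x_i-x_j||$ and note that for $i<j$ in the same cluster $|x_i-x_j|\ge$ the gap $g$ adjacent to $i$ (or to $j$). It is cleaner to use $1+\sum_s b_s\le\prod(1+b_s)$ and $(1+b)^p\le\mathrm{e}^{pb}$, which gives
$$
\varpi_p\le \mathrm{e}^{pN^{3/4}a^2\ln^2 N}\exp\Big(\tfrac{2p}{N^{5/4}}\sum_{i\sim j}\ln^2|x_i-x_j|\Big).
$$
We then use $\ln^2|x_i-x_j|\le \ln^2 g_{m}$ for a gap $g_m$ between $i$ and $j$ of minimal size $\le |x_i-x_j|$ (any gap between them is smaller than the distance). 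Charging each pair to the left gap of its right endpoint, each gap $g_m$ receives at most $n$ pairs, so the exponent is bounded by $\frac{2pn_{\max}}{N^{5/4}}\sum_m\ln^2 g_m$.

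The main obstacle is the case of large clusters, where $n_{\max}$ may be comparable to $N$. If every cluster has size $\le N^{1/4}$, the coefficient of $\ln^2 g_m$ is $\lesssim N^{-1}$. Integrating the gaps as independent variables (the change of variables from positions to gaps has unit Jacobian) with density controlled by $\|\mathrm{e}^{-U}\|$ on a window of length $\eps$ gives factors $\int_0^\eps \mathrm{e}^{cN^{-1}\ln^2 g}\,dg\le \eps(1+o(1))$, which are harmless. For big clusters we will instead use the Chernoff bound Lemma~\ref{chernoff}: rescaling the gaps in a cluster by $\eps$ makes them uniform-like on $[0,1]$, with $\ln^2 g\le 2\ln^2\eps+2\ln^2(g/\eps)$. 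The tail event that $\sum|\ln(g/\eps)|$ exceeds $(n-1)\xi$ has probability $(\xi\mathrm{e}^{1-\xi})^{n-1}$, which beats the exponential weight $\exp(cN^{-1/4}\cdot\text{stuff})$. We may need to refine the charging, for example by bounding $\sum_j\ln^2$ via $(\sum|\ln|)^2/$something, and to interpolate using Hölder.

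Finally, the sum over cluster structures $\bs{n}_K$ is at most $2^{N-1}$. This factor is compensated because each intra-cluster gap integral gains a factor $\eps\,\|\mathrm{e}^{-U}\|_\infty$. The $N!$ must also be compensated: it is absorbed by the ordering when we undo the symmetrisation, since we integrate the free variables (the cluster leaders) against $\mathrm{e}^{-U}$ with total mass $1$. Collecting the bounds yields $C_N^{(p)}[U]\le\mathrm{e}^{CN^{3/4}\ln^2N}$.
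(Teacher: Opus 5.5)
Your opening reduction coincides with the paper's: ordering the variables, the $\varepsilon$-cluster decomposition, and bounding inter-cluster pairs by $\ln^2\varepsilon$ at a cost $\mathrm{e}^{pN^{3/4}\ln^2\varepsilon}$. The intra-cluster estimate, however, has a genuine gap, located in the step you call cleaner. The inequality $(1+b)^p\le\mathrm{e}^{pb}$ replaces the weight $\{1+N^{-5/4}\ln^2 g\}^p$, whose singularity at coalescing points is only a power of a logarithm, by $\exp(cN^{-5/4}\ln^2 g)$. This is not integrable at $g=0$: with $g=\mathrm{e}^{-t}$ one has $\int_0^{\varepsilon}\mathrm{e}^{c\ln^2 g/N}\,\mathrm{d}g=\int_{|\ln\varepsilon|}^{+\infty}\mathrm{e}^{ct^2/N-t}\,\mathrm{d}t=+\infty$ for every $c>0$ and every $N$. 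Hence your displayed majorant of $\varpi_p$ has infinite integral as soon as one cluster contains two points, and the claimed bound $\int_0^\varepsilon\mathrm{e}^{cN^{-1}\ln^2 g}\,\mathrm{d}g\le\varepsilon(1+o(1))$ is false. Lemma~\ref{chernoff} cannot repair this. After rescaling, $|\ln(g/\varepsilon)|$ has exponential tails, so probabilities decay only like $\mathrm{e}^{-s}$ in $s=\sum|\ln(g/\varepsilon)|$. On the event that a single gap carries this whole sum, your weight is at least $\mathrm{e}^{cs^2/N^{5/4}}$, and no concentration bound beats a weight that is Gaussian in $s$.

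What is missing is to keep the weight a polynomial of controlled total degree in the logarithms, and to decouple the factors inside a cluster before using concentration. The paper does this in steps:
\begin{itemize}
\item it rescales a cluster of size $n$ to the unit cube and applies H\"older's inequality to the $n$ factors attached to its particles, each raised to the power $np$;
\item it shifts variables so that each resulting integral is at most $2^{n-1}\int_{[0,1]^{n-1}}\{1+2N^{-5/4}\sum_i\ln^2 x_i\}^{np}\,\mathrm{d}x_1\cdots\mathrm{d}x_{n-1}$;
\item it dominates this integrand by $\{1+\sqrt{2}N^{-5/8}\sum_i|\ln x_i|\}^{2np}$, a polynomial of degree $2np$ in $X=\frac{1}{n-1}\sum_i|\ln x_i|$, and only then applies Lemma~\ref{chernoff}.
\end{itemize}
The outcome is $(CN^{3/8})^{2np}$ per cluster, i.e.\ a factor $N^{O(p)}$ per clustered particle.

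Your charging of pairs onto single gaps cannot substitute for this decoupling. Roughly $pn^2$ units of exponent must be shared among only $n-1$ independent gaps. For a cluster of size comparable to $N$, the gap integrals therefore cost of order $\mathrm{e}^{cN^2\ln N}$ even if the polynomial form is kept, and $\varepsilon^{n-1}$ with $\varepsilon=N^{-a}$, $a$ fixed, cannot compensate that.

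Finally, $a=1$ is too small. Ordering the $K$ cluster leaders absorbs only $K!$, not $N!$, so a factor $N!/K!\le N^{N-K}$ remains. Together with the factor $N^{O(p)}$ per clustered particle, it must be beaten by $\varepsilon^{N-K}$. This forces $\varepsilon$ to be a $p$-dependent negative power of $N$; the paper takes $\varepsilon=N^{-3-3p/4}$.
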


\begin{proof}

One starts by changing the integration domain from $\R^N$ to $\R^{N}_{<}$, thus producing the additional factor of $N!$.
Pick $ \epsilon$ positive and small and consider the associated resolution of unity subordinate to having a given $\bs{n}_K$
$\eps$-cluster
$$1 = \sum_{K=1}^N \sum_{\substack{ \bs{n}_K \in (\mathbb{N}^*)^K \\ \ov{\bs{n}}_K = N}} \hspace{-3mm} \mathbbm{1}_{ \mc{E}_{\eps}( \bs{n}_K)}(\bs{x}_N) \,. $$
 This decomposes the original integral as
\begin{align*}
C_N^{(p)} [U] =  N! \sum_{K=1}^N \sum_{\substack{ \bs{n}_K \in (\mathbb{N}^*)^K \\ \ov{\bs{n}}_K = N}}
\Int{ \R^N_{<}}{}
\varpi_p(\bs{x}_N)
\pl{k=1}{N} \Big\{ \mathrm{e}^{ - U(x_k) } \Big\}
\mathbbm{1}_{\mc{E}_{\eps}( \bs{n}_K) }(\bs{x}_N)
\, \mathrm{d}\bs{x}_N, .
\end{align*}
Then decomposing products with respect to the cluster structure $\pl{j=1}{N} = \pl{k=1}{K} \pl{j=i_k}{i_k + n_k -1}$ leads to
\beqa
\varpi_p(\bs{x}_N)  & = &
\prod_{k=1}^K \prod_{j=i_k}^{i_k + n_k -1} \bigg\{ 1 + \frac{1}{ N^{\tf{5}{4}}} \sum_{\substack{i=1 \\ i \neq j}}^N \mc{L}n_0^2|x_i - x_j| \bigg\}^p \\
&\leq & \prod_{k=1}^K \prod_{j=i_k}^{i_k + n_k -1}
\bigg\{ 1 + \frac{1}{ N^{\tf{5}{4}}} \hspace{-2mm}\sum_{\substack{i=i_k \\ i \neq j}}^{i_k+n_k-1} \hspace{-3mm} \mc{L}n_0^2|x_i - x_j| \bigg\}^p
\Big\{ 1+ \frac{\ln^2 \epsilon}{N^{\tf{1}{4}}}\Big\}^{pN} \, .
\eeqa
Further, upon denoting $\mf{u} = \norm{  \mathrm{e}^{-U} }_{L^{\infty}(\R)}$, given $\bs{x}_N \in\mc{E}_{\eps}(\bs{n}_K)$, one has the upper bound
\beq
\pl{k=1}{N}\mathrm{e}^{-U(x_k) } \leq \mf{u}^{N-K} \pl{k=1}{K} \mathrm{e}^{- U(x_{i_k})  }\, .
\label{product decomposition}
\enq
Obviously, when $n_k = 1$, one has
$$\prod_{j=i_k}^{i_k + n_k -1} \bigg\{ 1 + \frac{1}{ N^{\tf{5}{4}}} \hspace{-3mm} \sum_{\substack{i=i_k \\ i \neq j}}^{i_k+n_k-1}  \hspace{-2mm} \mc{L}n_0^2|x_i - x_j| \bigg\}^p = 1 \, .$$
Further, observe that when $\bs{x}_N \in\mc{E}_{\eps}(\bs{n}_K)$  it holds that
$$ x_{i_k} < x_{i_k+1} < \dots < x_{i_{k}+n_k-1} < x_{i_k} + (n_k-1)\epsilon \leq x_{i_k} + N \epsilon \, . $$
Moreover, the product involving the logarithmic terms only depends on the differences of the variables so that the blocs become independent. Thus, the upper bound \eqref{product decomposition}
and the  change of variables $x_{i_k+p}=x_{i_k}+y_{k,p}$ lead to
$$
C_N^{(p)} [U]  \leq   N! \Big\{ 1+ \frac{\ln^2 \epsilon}{N^{\tf{1}{4}}}\Big\}^{pN} \sum_{K=1}^N \sum_{\substack{ \bs{n}_K \in (\mathbb{N}^*)^K \\ \ov{\bs{n}}_K = N}}  \mf{u}^{N-K}
\Int{ \R^K_{<}}{} \pl{k=1}{K} \Big\{  \mathrm{e}^{- U(x_{i_k})  } \dd x_{i_k} \Big\}  \times \pl{s=1}{K} \mc{J}_{n_s}
$$

where we set $\mc{J}_1 = 1$ while, for $n \geq 2$, since $|\mc{L}n_{0}(|x|)| \leq |\ln |x||$,
\bem
\mc{J}_n  :=
\Int{0}{ N \epsilon }  \mathbbm{1}_{\R^{n-1}_{<}}(\bs{x}_{n-1}) \,
\bigg\{ 1 + \frac{1}{N^{\tf{5}{4}}} \sum_{i=1}^{n-1} \ln^2|x_i| \bigg\}^p  \\
\hspace{5mm} \times \pl{j=1}{n-1} \bigg\{  1 +  \frac{1}{N^{\tf{5}{4}}}\sul{ \substack{i = 1 \\ i \neq j } }{n-1} \ln^2|x_i - x_j| + \frac{ \ln^2 |x_j| }{N^{\tf{5}{4}}}   \bigg\}^p \, \mathrm{d}\bs{x}_{n-1} \, .
\end{multline}
The integrals over the Weyl chamber involving the $x_{i_k}$ produce, after symmetrisation, a $\tf{1}{K!}$ contribution.
Thus, one arrives at
\begin{align}\label{boundonC}
C_N^{(p)} [U] \leq \Big\{ 1+ \frac{\ln^2 \epsilon}{N^{\tf{1}{4}}}\Big\}^{pN} \sum_{K=1}^N \, \sum_{\substack{ \bs{n}_K \in (\mathbb{N}^*)^K \\ \ov{\bs{n}}_K = N}}
\hspace{-3mm} \mf{u}^{N-K}  \f{ N!  }{ K! }   \pl{k=1}{K} \mc{J}_{n_k} \, .
\end{align}
It remains to upper bound $\mc{J}_n$.
One starts by symmetrising the integration domain, which incurs a $\tf{ 1 }{ (n-1)! }$ factor,
and rescaling the variables by $N\epsilon$. Given the upper bound $\ln^2|N\epsilon x_i| \leq 2 \ln^2|x_i| +2 \ln^2 |N \epsilon|$
and the inequality $(1+a+b)\leq (1+a)(1+b)$ for $a,b\geq 0$, one gets for $n \geq 2$
\beq\label{boundonC2}
\mc{J}_n \leq \frac{  (N \epsilon)^{n-1}  }{(n-1)!} \Big\{ 1 + 2(n-1) \frac{ \ln^2 |N \epsilon|}{N^{\tf{1}{4}}}\Big\}^{pn} \wt{\mc{J}}_n
\enq
where we have set
\beq
\wt{\mc{J}}_n \, = \,  \Int{0}{1}  \bigg\{ 1 + 2 \sul{i=1}{n-1} \frac{  \ln^2|x_i| }{N^{\tf{5}{4}}} \bigg\}^p \,
\prod_{j=1}^{n-1} \bigg\{ 1 + 2 \frac{  \ln^2 |x_j|  }{N^{\tf{5}{4}}} + \frac{2}{N^{\tf{5}{4}}}\sul{ \substack{i = 1 \\ i \neq j} }{n-1} \ln^2|x_i - x_j|  \bigg\}^p \, \mathrm{d}\bs{x}_{n-1}  \, .
\enq
Applying Hölder's inequality to each of the functions occurring in the integrand of  $\wt{\mc{J}}_n$ leads to
\begin{align*}
\wt{\mc{J}}_n  &\leq \bigg[ \Int{0}{1} \bigg\{ 1 + 2 \sul{i=1}{n-1} \frac{ \ln^2|x_i| }{N^{\tf{5}{4}}} \bigg\}^{np}  \, \mathrm{d}\bs{x}_{n-1} \bigg]^{\frac{1}{n}} \\
&\times
\pl{j=1}{n-1} \bigg[ \Int{0}{1}  \Big\{ 1+ 2 \frac{ \ln^2 |x_j| }{ N^{\tf{5}{4}} }  + \frac{2}{ N^{\tf{5}{4}} }
\sul{\substack{i = 1 \\ i \neq j}}{n-1} \ln^2|x_i - x_j|   \Big\}^{np} \, \mathrm{d}\bs{x}_{n-1} \bigg]^{\frac{1}{n}}
\end{align*}
In the $j^{\e{th}}$ integral occurring in the second line, one shifts $x_i \to x_{i}+x_j$ for all $i \in \intn{1}{n-1} \setminus \{ j \}$.
The integration region for $x_i$ thus becomes $\intff{-x_j}{ 1- x_j} \subset [-1,1]$.
By using that the resulting integrand is even with respect to $x_i$ and strictly positive, one arrives to
\beq
\wt{\mc{J}}_n  \, \leq  \, 2^{n-1} \!  \Int{0}{1} \bigg\{ 1 + 2 \sul{i=1}{n-1} \frac{ \ln^2|x_i| }{N^{\tf{5}{4}}} \bigg\}^{np}  \, \mathrm{d}\bs{x}_{n-1}\, .
\enq
At this stage, the inequalities $\sul{i=1}{n-1} \ln^2(x_i) \leq (\sul{i=1}{n-1} |\ln(x_i)|)^2$ and $1+x^2 \leq (1+x)^2$ yield
 \beq
\wt{\mc{J}}_n  \, \leq  \, 2^{n-1} \!  \Int{0}{1} \bigg\{ 1 +    \sqrt{2}
 \sul{i=1}{n-1} \frac{ |\ln(x_i)|  }{ N^{\tf{5}{8}} } \bigg\}^{2np}  \, \mathrm{d}\bs{x}_{n-1}\,  .
\enq
This term can be estimated by using the concentration inequalities
provided by Lemma \ref{chernoff}. For $n \geq 2$ one sets $X = \frac{1}{n-1}\sum_{j=1}^{n-1} |\ln(x_i)|$ so that by using $n-1 \leq N$,
one is led by integration by parts to
 \begin{align*}
\wt{\mc{J}}_n &\leq 2^{n-1} \mathbb{E}\bigg[  \Big\{ 1 +  \sqrt{2} \frac{n-1}{N^{\tf{5}{8}} }X \Big\}^{2np}  \bigg]  \\
%
%
%
%
%
&\leq  2^n  \Big\{ 1 + \sqrt{2}N^{  \frac{3}{8}} \Big\}^{2np}   + 2^{n+\f{1}{2}}np \frac{n-1}{ N^{\tf{5}{8}} }
\Int{1}{+\infty} \Big\{ 1 +  \sqrt{2} \frac{n-1}{N^{\tf{5}{8}} } \xi \Big\}^{2np-1}
 \mathbb{P}\big[ \{X \geq \xi\} \big] \, \mathrm{d}\xi \, .
\end{align*}
By using that $\xi \mathrm{e}^{1-\frac{1}{2}\xi} \leq 2$, the bound provided by Lemma  \ref{chernoff}  leads to
\beq
\wt{\mc{J}}_n  \, \leq  \, 2^n  \Big\{ 1 + \sqrt{2}N^{  \frac{3}{8}} \Big\}^{2np}   + 2^{2n-\f{1}{2}}np \frac{n-1}{ N^{\tf{5}{8}} } \,  \mc{W}
\enq
where, for $\eta>0$ and small enough
\beq
\mc{W} \, = \, \Int{1}{+\infty} h_{n}(\xi) \mathrm{e}^{-\frac{\eta }{2} \xi } \, \mathrm{d}\xi
\qquad \e{with} \qquad h_n(\xi) \, = \, \Big\{ 1 +  \sqrt{2} \frac{n-1}{N^{\tf{5}{8}} } \xi \Big\}^{2np-1}\mathrm{e}^{-\frac{1}{2}(n-1-\eta)\xi} \, .
\enq
This last integral can be upper bounded by studying the variations of $h_n$. One may show that $h_n$ achieves its maximum on $\intfo{1}{+\infty}$ at
\beq
\xi_* \, = \, \e{max} \bigg\{ 1,   \f{ \sqrt{2} N^{\tf{5}{8}} }{ (n-1)(n-1-\eta) } \Big[ \sqrt{2} \f{ 2np-1 }{  N^{\tf{5}{8}} } \, - \, \f{n-1-\eta}{2} \Big] \bigg\}
\enq
It is then direct to check that there exists $C>0$ such that, uniformly in $n\in\intn{1}{N}$, $h_n(\xi_*)\leq \big( C N^{\tf{3}{8}} \big)^{2np-1} $. Thus,
$\mc{W} \leq c^{\prime} \big( C N^{\tf{3}{8}} \big)^{2np-1} $. Upon readjusting $C$ if need be, this entails that
$\wt{\mc{J}}_n  \leq \big( C N^{\tf{3}{8}} \big)^{2np} $.

Note that for $n \geq 2$, one has  $n \leq 2(n-1)$
so that one may write up an upper bound valid up to $n=1$: $\wt{\mc{J}}_n  \leq \big( C N^{\tf{3}{8}} \big)^{4(n-1)p}$.
In its turn, this translates into
$$\mc{J}_n \leq \frac{1}{(n-1)!} \Big\{ 1 + 2 (n-1) \frac{ \ln^2 (N \epsilon) }{ N^{\tf{5}{4}}}\Big\}^{pn}
\Big[ N  \epsilon \big( C N^{ \tf{3}{8}} \big)^{4p} \Big]^{n-1}\, .$$
Substituting this in \eqref{boundonC}, we find, for some constant $C > 0$ depending only on $p$
\begin{align*}
C_N^{(p)} [U] \leq \Big\{ 1+  \frac{ 2 }{N^{\tf{1}{4}} }  \big[ \ln^2( N \epsilon)+\ln^2(\epsilon) \big] \Big\}^{2pN}  \sum_{K=1}^N \frac{N! }{K!}  \Big[ C N  \epsilon \big( N^{ \tf{3}{8}} \big) ^{4p} \Big]^{N - K}
\hspace{-4mm} \sum_{\substack{ \bs{n}_K \in (\mathbb{N}^*)^K \\ \ov{\bs{n}}_K = N}} \prod_{k=1}^K \frac{1}{(n_k-1)!  }  \, .
\end{align*}
The last summation can be computed in closed form by the multinomial expansion
\begin{align*}
 \sum_{\substack{ \bs{n}_K \in (\mathbb{N}^*)^K \\ \ov{\bs{n}}_K = N}} \prod_{k=1}^K \frac{1}{(n_k-1)!  }
 =  \sum_{\substack{ \bs{n}_K \in (\mathbb{N})^K \\ \ov{\bs{n}}_K = N-K}}  \prod_{k=1}^K \frac{1}{n_k ! } = \frac{K^{N-K}}{(N-K)!} \leq \frac{N^{N-K}}{(N-K)!} \;.
\end{align*}
Thus overall, after summing up the remaining binomial expansion,
\begin{align*}
C_N^{(p)} [U] \leq \Big\{ 1+ \frac{ 2 }{N^{\tf{1}{4}} }  \big[ \ln^2( N \epsilon)+\ln^2(\epsilon) \big] \Big\}^{32pN}  \Big\{ 1 +  C  \epsilon  N^{2+ \f{3p}{4}}  \Big\}^N \;.
\end{align*}
At this stage, one takes $\epsilon = N^{-3 -\f{3p}{4}}$ thus ensuring the existence of $C^{\prime}, C^{\prime \prime}>0$ such that
\begin{align*}
C_N^{(p)} [U] \leq  C^{\prime} \Big\{ 1+ C^{\prime \prime} \frac{\ln^2 N}{N^{\frac{1}{4}}}\Big\}^{2pN}   \, .
\end{align*}
 This concludes the proof of Proposition \ref{CNbound}.

\end{proof}





\section{Auxiliary determinant identities}





\begin{lemme}
\label{Lemme invertibilite matrice A}

Let $\bs{x}_N\in \R^N$ have pairwise distinct entries. The matrix $\op{A}(\boldsymbol{x}_N)$ introduced in \eqref{definition matrice A2}
is invertible, and
\beq
\big(\op{A}^{-1}(\boldsymbol{x}_N)\big)_{ij} =  \mathbbm{1}_{i \leq j} \prod_{\substack{k=j+1 }}^N (x_i - x_k) \, .
\label{ecriture A inverse}
\enq
\end{lemme}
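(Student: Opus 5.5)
Both matrices are upper triangular, so invertibility is immediate once we look at the diagonal, and the real task is to verify the claimed inverse by direct multiplication. From \eqref{definition matrice A2}, $A(\bs{x}_N)_{ij}=0$ for $i>j$. The diagonal entry is $A_{ii}=\prod_{m=i+1}^N (x_i-x_m)^{-1}$, which is nonzero because the entries of $\bs{x}_N$ are pairwise distinct. Hence $\det_N[\op{A}(\bs{x}_N)]=\prod_{i}A_{ii}\neq 0$. Call $\op{G}$ the candidate matrix in \eqref{ecriture A inverse}, with $G_{ij}=\mathbbm{1}_{i\le j}\prod_{k=j+1}^N(x_i-x_k)$; it is also upper triangular. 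It therefore suffices to check $\op{A}\op{G}=\op{I}_N$.

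The product $\op{A}\op{G}$ is upper triangular, so $(\op{A}\op{G})_{ij}=0$ for $i>j$. On the diagonal,
\[
(\op{A}\op{G})_{ii}=A_{ii}G_{ii}=\prod_{m=i+1}^N (x_i-x_m)^{-1}\prod_{k=i+1}^N(x_i-x_k)=1 .
\]
For $i<j$ we must show
\[
\sum_{s=i}^{j} \prod_{\substack{m=i\\ m\neq s}}^N \frac{1}{x_s-x_m}\,\prod_{k=j+1}^N (x_s-x_k)=0 .
\]
In each term the factors with $m\ge j+1$ cancel against the second product. What remains is
\[
\sum_{s=i}^{j}\prod_{\substack{m=i\\ m\neq s}}^{j}\frac{1}{x_s-x_m},
\]
a sum over a block of $j-i+1\ge 2$ distinct points.

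This is exactly the residue identity already used in the proof of Proposition~\ref{exactlowerbound}. The integral
\[
\frac{1}{2\pi\i}\oint \prod_{m=i}^{j}\frac{1}{z-x_m}\,\dd z ,
\]
taken over a large circle enclosing $x_i,\dots,x_j$, equals the sum of the residues, which is the sum above. Because the integrand decays like $z^{-(j-i+1)}$ with $j-i+1\ge 2$, the integral vanishes (equivalently, the sum is the leading coefficient of the Lagrange interpolant of the constant $1$ at $j-i+1$ nodes). Hence $\op{A}\op{G}=\op{I}_N$, and since $\op{A}$ is invertible this gives $\op{A}^{-1}=\op{G}$.

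The only delicate point is the index bookkeeping in the cancellation step: the product in $A_{sj}$ starts at $m=s$, while in $A_{is}$ it runs over $m\ge i$, $m\ne s$. The cancellation of the tail $m>j$ is what makes the remaining block sum a clean partial-fraction identity. Taking the diagonal case together with the vanishing off-diagonal sums completes the verification.
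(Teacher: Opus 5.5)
Your proof is correct and matches the paper's argument. You multiply $\op{A}(\bs{x}_N)$ by the candidate inverse, cancel the factors with index beyond $j$ so that the $(i,j)$ entry reduces to $\sum_{s=i}^{j}\prod_{m=i,\, m\neq s}^{j}(x_s-x_m)^{-1}$, and identify this as the contour integral of $\prod_{m=i}^{j}(z-x_m)^{-1}$, which equals $\delta_{ij}$. The only differences are cosmetic: you handle the diagonal directly and check triangularity separately, which is redundant since a one-sided inverse of a square matrix suffices, and your closing remark about the index range in $A_{sj}$ plays no role in the computation.
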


\begin{proof}
Let $B_{ij} = \mathbbm{1}_{i \leq j} \prod_{\substack{k=j+1 }}^N (x_i - x_k)$.
Then
$$\sum_{\ell=1}^N A_{i \ell} B_{\ell j} = \mathbbm{1}_{i \leq j} \sum_{\ell=i}^j \prod_{\substack{m= i \\ m \neq \ell }}^j \frac{1}{x_\ell - x_m}
= \Oint{ \Ga( \{x_a\} ) }{} \pl{m=i}{j}  \frac{1}{z- x_m} \, \frac{   \mathrm{d}z }{ 2 \pi \i } \, =  \, \delta_{ij}$$
where $\Ga( \{x_a\} )$ is a small counterclockwise index one loop around $x_1, \dots, x_N$. The final equality follows from expanding the contour to infinity.
\end{proof}
 By reversing the order of the matrix product, one gets the identity
\begin{cor}\label{unusual}
\begin{align*}
\mathbbm{1}_{i \leq j} \sum_{\ell=i}^j \prod_{m=\ell+1}^N (x_i - x_m) \prod_{\substack{m = \ell \\ m \neq j}}^N \frac{1}{x_j - x_m}
= \delta_{ij} & & i,j=1,\dots, N\, .
\end{align*}
\end{cor}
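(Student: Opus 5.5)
The plan is to deduce the identity from Lemma~\ref{Lemme invertibilite matrice A} by pure linear algebra, with no further computation. Set $B_{ij}=\mathbbm{1}_{i\leq j}\prod_{k=j+1}^N (x_i-x_k)$. The proof of that lemma shows $\op{A}(\bs{x}_N)\,\op{B}=\op{I}_N$.

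Since $\op{A}(\bs{x}_N)$ and $\op{B}$ are square $N\times N$ matrices, a right inverse is automatically a two-sided inverse. Indeed, $\det\op{A}\cdot\det\op{B}=1$ forces $\op{A}$ to be invertible, and then $\op{B}=\op{A}^{-1}$. Hence also $\op{B}\,\op{A}(\bs{x}_N)=\op{I}_N$. Alternatively, one can simply note that both $\op{A}$ and $\op{B}$ are upper triangular with nonvanishing diagonal entries, $A_{ii}=\prod_{m>i}(x_i-x_m)^{-1}$ and $B_{ii}=\prod_{k>i}(x_i-x_k)$. This gives invertibility directly.

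It then remains to write out the $(i,j)$ entry of $\op{B}\op{A}$:
\[
(\op{B}\op{A})_{ij}=\sum_{\ell=1}^N \mathbbm{1}_{i\leq \ell}\prod_{m=\ell+1}^N (x_i-x_m)\;\mathbbm{1}_{\ell\leq j}\prod_{\substack{m=\ell\\ m\neq j}}^N\frac{1}{x_j-x_m}.
\]
The two indicator functions combine into $\mathbbm{1}_{i\leq j}$ together with the restriction $\ell\in\intn{i}{j}$. This is exactly the left-hand side of the corollary, so it equals $\delta_{ij}$.

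There is no real obstacle. The only point to watch is the bookkeeping of the indicator functions and of the index ranges in the two products, namely $\prod_{m=\ell+1}^N$ coming from $B_{i\ell}$ and $\prod_{m=\ell,\,m\neq j}^N$ coming from $A_{\ell j}$.

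As a sanity check, one could also verify the identity directly in the spirit of the lemma's proof. For $i<j$, the sum over $\ell$ can be read as the residue sum of the rational function $z\mapsto\prod_{m=i+1}^{N}(x_i-\cdot)\ldots$ around $x_j$. However, the matrix-inverse argument is cleaner, and it is the one I would present.
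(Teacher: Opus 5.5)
Your proof is correct and is the paper's argument: the paper obtains the corollary from $\op{A}(\bs{x}_N)\,\op{B}=\op{I}_N$ in Lemma~\ref{Lemme invertibilite matrice A} simply ``by reversing the order of the matrix product'' and reading off $(\op{B}\op{A})_{ij}$. Your justification that a one-sided inverse of a square matrix is two-sided, via determinants or via triangularity with nonzero diagonal, makes explicit the step the paper leaves implicit.
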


\begin{lemme}
\label{Lemme ecriture action inverse A}
Let  $\bs{\la}^{\pm}_N\in \R^N$ have pairwise distinct entries,
$q_{i-1}^\pm$ be as in \eqref{definition polynome qi pm}, $P^{\pm}$ as in \eqref{definition P pm} and
$(\op{A}^{-1}(\boldsymbol{\lambda}^\pm_N))_{ij}$ as defined through \eqref{ecriture A inverse}. Then, it holds
\begin{align*}
\sum_{\ell=1}^N \big( \op{A}^{-1}(\boldsymbol{\lambda}^\pm_N) \big)_{i\ell}  \frac{q_{\ell-1}^\pm(\mu)}{P^\pm(\mu)}  = \frac{1}{\mu - \lambda_i^\pm} \, .
\end{align*}

\end{lemme}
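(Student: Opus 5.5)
The identity amounts to the partial fraction decomposition of $1/(\mu-\lambda_i^\pm)$ in the basis $q_{\ell-1}^\pm/P^\pm$. I would substitute the explicit entries of $\op{A}^{-1}$ from Lemma \ref{Lemme invertibilite matrice A}. Write $x_k=\lambda_k^\pm$, $q_{\ell-1}(\mu)=\prod_{m=1}^{\ell-1}(\mu-x_m)$ and $P(\mu)=\prod_{m=1}^N(\mu-x_m)$. Then
\[
\frac{q_{\ell-1}(\mu)}{P(\mu)}=\prod_{m=\ell}^N\frac{1}{\mu-x_m},
\]
and the left-hand side of the claim becomes
\[
\sum_{\ell=i}^N \prod_{k=\ell+1}^N (x_i-x_k)\,\prod_{m=\ell}^N \frac{1}{\mu-x_m}.
\]

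The cleanest route is a telescoping argument. Write $x_i-x_\ell=(\mu-x_\ell)-(\mu-x_i)$, and set
\[
T_\ell=\prod_{k=\ell}^N\frac{x_i-x_k}{\mu-x_k}, \qquad T_{N+1}=1 .
\]
The $\ell$-th summand equals $\dfrac{1}{x_i-x_\ell}\,T_\ell$ for $\ell>i$. The $\ell=i$ summand is $\dfrac{1}{\mu-x_i}\prod_{k>i}\dfrac{x_i-x_k}{\mu-x_k}=\dfrac{T_{i+1}}{\mu-x_i}$.

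It is better to verify the identity in the equivalent form obtained by multiplying through by $\mu-x_i$. The summand $\dfrac{(\mu-x_i)\,T_{\ell+1}}{\mu-x_\ell}$ can be rewritten as
\[
\frac{(\mu-x_i)\,T_{\ell+1}}{\mu-x_\ell}
=\frac{(\mu-x_\ell)-(x_i-x_\ell)}{\mu-x_\ell}\,T_{\ell+1}
=T_{\ell+1}-T_\ell .
\]
Summing over $\ell$ from $i+1$ to $N$ therefore telescopes to $T_{N+1}-T_{i+1}=1-T_{i+1}$. Adding the $\ell=i$ term, which after multiplication by $\mu-x_i$ is $T_{i+1}$, gives a total of $1$. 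This is exactly the claim.

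As an alternative, one could argue by residues in $\mu$. Both sides are rational functions vanishing at infinity with simple poles only at the $x_j$, $j\ge i$, so it suffices to match residues. That would require Corollary \ref{unusual}-type identities, so the telescoping route is simpler.

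The only real obstacle is keeping the index conventions straight: $\op{A}^{-1}$ is upper triangular, so the sum runs over $\ell\ge i$, and the empty product gives $T_{N+1}=1$. Beyond that the computation is routine.
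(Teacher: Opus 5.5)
Your proof is correct, and it takes a different route from the paper's. With $T_\ell=\prod_{k=\ell}^N\frac{x_i-x_k}{\mu-x_k}$, the $\ell$-th summand is $T_{\ell+1}/(\mu-x_\ell)$. The identity $(\mu-x_i)T_{\ell+1}/(\mu-x_\ell)=T_{\ell+1}-T_\ell$ holds for every $\ell\ge i$; at $\ell=i$ it gives the term $T_{i+1}$, since $T_i=0$. The sum therefore collapses to $T_{N+1}=1$.

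The paper instead argues by residues. Both sides are rational in $\mu$, vanish at infinity and have only simple poles at the $\lambda_j^\pm$. The residue of the left-hand side at $\lambda_j^\pm$ is exactly the left-hand side of Corollary \ref{unusual}, i.e. the $(i,j)$ entry of $\op{A}^{-1}\op{A}$, hence $\delta_{ij}$.

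The paper's route makes the structure visible. By partial fractions, $q^\pm_{\ell-1}/P^\pm=\sum_j \big(\op{A}(\bs{\la}_N^\pm)\big)_{\ell j}\,(\mu-\lambda^\pm_j)^{-1}$, so the lemma is just $\op{A}^{-1}\op{A}=\op{I}$. But it depends on the contour-integral proof of Lemma \ref{Lemme invertibilite matrice A}, on the passage to the reversed product in Corollary \ref{unusual}, and on distinct roots so that the poles are simple.

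Your computation is self-contained and uses only the explicit formula \eqref{ecriture A inverse}. Its core step never divides by $x_i-x_\ell$ (only your side remark does), so it proves the identity even without assuming the $\lambda^\pm_k$ are distinct.
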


\begin{proof}
The identity reduces to the claim that
\begin{align*}
\sul{\ell=i}{N}   \frac{  \prod_{\substack{k=\ell+1 }}^N (\lambda_i^\pm - \lambda_k^\pm)    }{\prod_{k=\ell}^N (\mu - \lambda_k^\pm)}
=  \frac{1}{\mu - \lambda_i^\pm} \, .
\end{align*}
Both sides tend to $0$ as $\mu \to \infty$ and both have only simple poles. Hence one only needs to verify that the residues on both sides equal.
\begin{align*}
\mathrm{Res} \bigg(
\sum_{\ell=i}^{N}     \frac{ \prod_{k=\ell+1 }^N (\lambda_i^\pm - \lambda_k^\pm) }{\prod_{k=\ell}^N (\mu - \lambda_k^\pm)} \dd \mu,
\mu = \lambda_j^\pm  \bigg)
&=\mathbbm{1}_{i \leq j}  \sum_{\ell=i}^j   \frac{  \prod_{\substack{k=\ell+1 }}^N (\lambda_i^\pm - \lambda_k^\pm)  }{\prod_{\substack{k=\ell \\ k \neq j}}^N (\lambda_j^\pm - \lambda_k^\pm)} \\
&= \delta_{ij}
\end{align*}
by Corollary \ref{unusual}.
\end{proof}

\printbibliography


\end{document}